\documentclass{mem-mod}

\usepackage{amssymb,mathtools,xypic}
\usepackage{enumerate,xspace,stmaryrd}
\usepackage{bbm} 
\usepackage[hang,flushmargin]{footmisc}

\usepackage[original]{imakeidx}
\makeindex[name=defs,title={Index of terminology}]
\makeindex[name=symbols,columns=3,title={List of notations}]

\counterwithout{figure}{chapter}

\usepackage[colorlinks=true, citecolor=blue]{hyperref}


\newcommand\Gm{\Gamma}
\newcommand\gm{\gamma}
\newcommand\eps{\varepsilon}
\newcommand\Om{\Omega}
\newcommand\om{\omega}
\newcommand\tGm{{\tilde\Gm}}

\newcommand\CC{\mathbb{C}}
\newcommand\NN{\mathbb{N}}
\newcommand\QQ{\mathbb{Q}}
\newcommand\RR{\mathbb{R}}
\newcommand\ZZ{\mathbb{Z}}
\newcommand\DD{\mathbb{D}}

\DeclareMathOperator{\Rea}{Re} 
\DeclareMathOperator{\Ima}{Im}
\DeclareMathOperator{\image}{im}
\newcommand\oh{\mathrm{O}}
\newcommand\Gf{\Gamma} 
\newcommand{\mc}{\mathcal}
\newcommand{\wt}{\widetilde}
\newcommand{\wh}{\widehat}
\newcommand{\setmid}{\;:\;}
\newcommand{\ceqq}{\,=\,}
\DeclareMathOperator{\pr}{pr}

\newcommand\uhp{{\mathfrak H}}
\newcommand{\shp}{S\uhp}
\newcommand\lhp{{\mathfrak H}^-}
\newcommand\proj[1]{{\mathbb{P}^1_{#1}}}
\newcommand\fd{{\mathfrak{F}}}

\newcommand{\fixTS}{\theta}

\DeclareMathOperator{\Fct}{Fct} 
\DeclareMathOperator{\Op}{Op}
\DeclareMathOperator{\Hom}{Hom}

\DeclareMathOperator{\SL}{SL}
\DeclareMathOperator{\PSL}{PSL}
\DeclareMathOperator{\PGL}{PGL}
\DeclareMathOperator{\GL}{GL}

\newcommand{\mat}[4]{\begin{pmatrix} #1&#2\\#3&#4\end{pmatrix}}
\newcommand{\textmat}[4]{\left(\begin{smallmatrix} #1&#2 \\ #3&#4
\end{smallmatrix}\right)}
\newcommand{\bmat}[4]{\begin{bmatrix} #1&#2\\#3&#4\end{bmatrix}}
\newcommand{\textbmat}[4]{\left[\begin{smallmatrix} #1&#2 \\ #3&#4
\end{smallmatrix}\right]}

\DeclareMathOperator{\id}{id}
\newcommand{\one}{\mathbbm{1}}
\newcommand{\hyp}{{\rm h}}
\DeclareMathOperator{\Tr}{Tr}


\DeclareMathOperator{\TO}{\mathcal L}
\newcommand{\fast}{\text{\rm fast}}
\newcommand{\slow}{\mathrm{slow}} 

\newcommand\tro[1]{\TO^\slow_{#1}}
\newcommand\ftro[1]{\TO^\fast_{#1}}
\newcommand\ftroP[1]{\TO^{\mathrm P}_{#1}}

\newcommand{\Perd}{\,\mc P\hspace*{-0.5mm}}
\newcommand{\Per}{\,\mc P}
\DeclareMathOperator{\Pert}{Per}

\newcommand\av[1]{\mathrm{Av}_{\!#1}}


\newcommand\E{{\mathcal E}}
\newcommand\A{{\mathcal A}}
\DeclareMathOperator{\MCF}{MCF}

\newcommand\FE{{\mathsf{FE}}}
\newcommand\BFE{{\mathsf{BFE}}}


\newcommand\G[2]{{\mathcal{G}_{\!#1}^{#2}}}
\newcommand\N[2]{{\mathcal{N}_{\!#1}^{#2}}}
\newcommand\V[2]{{\mathcal{V}_{\!#1}^{#2}}}
\newcommand\W[2]{{\mathcal{W}_{\!#1}^{#2}}}

\newcommand{\spaceext}{\mc V_{s}^{\omega,\mathrm{ext}}(D_\RR)}
 
\newcommand{\cond}{\mathrm{cd}}
\newcommand{\van}{\mathrm{van}} 
\newcommand{\tsic}{\textsl{sic}\xspace}  
\newcommand{\tvan}{\textsl{van}\xspace} 
\newcommand{\tsmp}{\textsl{smp}\xspace}  
\newcommand{\taj}{\textsl{aj}\xspace} 
\newcommand{\texc}{\textsl{exc}\xspace}  
\newcommand{\tcond}{\textsl{cd}\xspace}
\newcommand{\teext}{\textsl{ext}\xspace}
\newcommand\fs{{\om^\ast}}
\newcommand\fxi{{\om(\Xi)}}
\newcommand\smp{{\mathrm{smp}}}
\newcommand\aj{{\mathrm{aj}}}
\newcommand\exc{{\mathrm{exc}}}
\newcommand\mext{\mathrm{ext}}
\newcommand\extm{\mathrm{ext}}
\newcommand\sic{\mathrm{sic}}

\newcommand\bsing{{\mathsf{bdSing}\,}}
\newcommand\sing[1]{{\mathsf{Sing}_{#1}}}

\newcommand\coh{{\mathbf{r}}}
\newcommand\qcoh{{\mathbf{q}}}
\newcommand\cocu{{\mathbf{c}}}
\newcommand\ucoc{{\mathbf{u}}}
\newcommand\alphu{{\mathbf{t}}}

\newcommand\tess{\mathcal{T}}

\newcommand\pc{{\mathbf{pc}}}


\numberwithin{equation}{section}

\newcounter{sectold}

\setcounter{secnumdepth}{3} 

\newtheorem{thm}{Theorem}[section]
\newtheorem*{lem*}{Lemma}
\newtheorem{mainthm}{Theorem}

\newtheorem{mainthmprim}{Theorem}

\newtheorem{prop}[thm]{Proposition}
\newtheorem{cor}[thm]{Corollary}
\newtheorem{lem}[thm]{Lemma}

\theoremstyle{definition}
\newtheorem{defn}[thm]{Definition}
\theoremstyle{remark}

\newtheorem{rmk}[thm]{Remark}

\begin{document}

\frontmatter

\title[Transfer operators and automorphic forms]{Eigenfunctions of transfer
operators and automorphic forms for Hecke triangle groups of infinite
covolume}

\author{Roelof Bruggeman}
\address{Roelof Bruggeman, Mathematisch Instituut, Universiteit Utrecht, Postbus 80010, 3508 TA\ Utrecht, Nederland}
\email{r.w.bruggeman@uu.nl}

\author{Anke Pohl}
\address{Anke Pohl, University of Bremen, Department 3 -- Mathematics, Bibliothekstr.\@ 5, 28359 Bremen, Germany}
\email{apohl@uni-bremen.de}
\thanks{Funded by the Deutsche Forschungsgemeinschaft (DFG, German Research Foundation) -- project no.~264148330 and PO~1483/2-1.}

\date{}

\subjclass[2010]{Primary: 11F12, 11F67, 37C30; Secondary: 11F72, 30F35, 37D40}

\keywords{automorphic forms, funnel forms, transfer operator, cohomology, mixed cohomology, period functions, Hecke triangle groups, infinite covolume}

\begin{abstract}
We develop cohomological interpretations for several types of automorphic forms for Hecke triangle groups of infinite covolume. We then use these interpretations to establish explicit isomorphisms between spaces of automorphic forms, cohomology spaces and spaces of eigenfunctions of transfer operators. These results show a deep relation between spectral entities of Hecke surfaces of infinite volume and the dynamics of their geodesic flows.
\end{abstract}

\maketitle

\tableofcontents

\mainmatter


\markboth{1. INTRODUCTION}{1. INTRODUCTION}
\section{Introduction}\label{sec:intro}

\subsection*{Motivational background} 

For various classes of geometrically finite hyperbolic surfaces\footnote{For brevity, we refer throughout to $\Gamma\backslash\uhp$ as a hyperbolic surface also in the case that the fundamental group~$\Gamma$ is not torsion-free and hence $\Gamma\backslash\uhp$ has singularity points and should more correctly be called an orbifold.} $X=\Gamma\backslash\uhp$, the resonances of the Laplace--Beltrami operator~$\Delta_X$ on~$X$ (i.\,e., the spectral parameters of the  $L^2$-eigenfunctions, and the scattering resonances of~$\Delta_X$) are characterized as the `non-topological' zeros of the Selberg zeta function~$Z_X$ for~$X$. As is well-known, the Selberg zeta function is a generating function for the geodesic length spectrum of $X$. We refer to Section~\ref{sec:SZF} for more details. Since the introduction of this zeta function by Selberg in the year~1956 the characterization of resonances as zeros of $Z_X$ has been crucial for many results on the existence, distribution and localization of resonances and Laplace eigenvalues. One classical example of such a result---proven by Selberg himself---is the existence of an abundance of Maass cusp forms for the modular surface~$X_{\text{mod}}=\PSL_2(\ZZ)\backslash\uhp$ and certain other arithmetic surfaces, resulting in Weyl's law for their spectral parameters. 

In addition to being of immediate practical use, the Selberg zeta function yields a deep connection between spectral entities of~$X$, namely the resonances, and geometric-dynamical entities of~$X$, namely the lengths of periodic geodesics. From the point of view of physics, the Selberg zeta function establishes a relation between quantum and classical mechanics, and thereby contributes to the understanding of Bohr's correspondence principle.

For certain classes of hyperbolic surfaces~$X=\Gamma\backslash\uhp$ of \emph{finite area} an even deeper relation between certain spectral objects of~$X$, namely the Maass cusp forms, and the periodic geodesics of~$X$ is known: By using well-chosen discretizations of the geodesic flow on~$X$ 
and carefully designed cohomological interpretations of automorphic forms and functions, Maass cusp forms (and occasionally also some other automorphic functions) can be characterized as eigenfunctions of certain operators---known as \emph{transfer operators}---that derive purely from the dynamics of the periodic geodesics on~$X$. 
As a by-product, this approach provides a notion of period functions for Maass cusp forms that is purely based on dynamics and transfer operators. 

This transfer-operator-based characterization of Maass cusp forms connects the forms themselves, not only their spectral parameters, with the dynamics on the hyperbolic surface~$X$, and hence establishes a link between spectral and dynamical properties of~$X$ that is deeper than the one provided by the Selberg zeta function.  In what follows we briefly survey the existing results. 

The seminal result of this kind was for the modular surface~$X_{\text{mod}}=\PSL_2(\ZZ)\backslash\uhp$, by combination of \cite{Mayer_thermo, Mayer_thermoPSL, Bruggeman_lewiseq, Chang_Mayer_transop, LZ01} and taking advantage of a discretization of the geodesic flow that can be found in~\cite{Artin, Series}. We present this result in more detail. For the geodesic flow on~$X_{\text{mod}}$, there exist a discretization and a symbolic dynamics that are closely related to the Gauss map
\[
 F\colon [0,1]\smallsetminus\QQ \to [0,1]\smallsetminus\QQ\,,\quad x\mapsto \tfrac1x \mod 1\,,
\]
as was noted, for example, by E.~Artin~\cite{Artin}. See also Series's more detailed and geometric presentation~\cite{Series}. Mayer \cite{Mayer_thermo,Mayer_thermoPSL} studied the associated transfer operator~$\TO_{F,s}$ with parameter~$s\in\CC$, $\Rea s\gg 1$. This is the operator
\begin{equation}\label{TO_Mayer}
 \TO_{F,s} f(x) = \sum_{n\in\NN} (x+n)^{-2s} f\left( \frac{1}{x+n}\right)\,,
\end{equation}
acting on appropriate spaces of functions~$f$ defined on (a complex neighborhood of) the interval~$[0,1]$. He found a Banach space~$\mc B$ on which, for $\Rea s > 1/2$, the operator~$\TO_{F,s}$ acts as a self-map, is nuclear of order zero, and the map $s\mapsto\TO_{F,s}$ has a meromorphic extension to all of~$\CC$ (whose images for $\Rea s \leq \frac12$ we continue to denote by~$\TO_{F,s}$). The Fredholm determinant of the transfer operator family~$\TO_{F,s}$, $s\in\CC$, determines the Selberg zeta function~$Z_{X_{\text{mod}}}$ via
\begin{equation}\label{SZF_modular}
 Z_{X_{\text{mod}}}(s) = \det\left(1-\TO_{F,s}\right)\det\left(1+\TO_{F,s}\right)\,.
\end{equation}
Thus, the zeros of~$Z_{X_{\text{mod}}}$ are determined by the eigenfunctions with eigenvalue~$\pm 1$ of~$\TO_{F,s}$ in~$\mc B$. Since certain of the zeros of~$Z_{X_{\text{mod}}}$ are identical to the spectral parameters of the Maass cusp forms for~$\PSL_2(\ZZ)$, the natural question on the explicit relation between eigenfunctions of~$\TO_{F,s}$ and Maass cusp forms arose. 

Both Lewis--Zagier \cite{LZ01} and Chang--Mayer \cite{Chang_Mayer_transop} showed that, for any spectral parameter~$s\in\CC$ with $\Rea s\in (0,1)$, the vector space spanned by the $\pm1$-eigen\-func\-tions of~$\TO_{F,s}$ is linear isomorphic to the space of highly regular solutions of the functional equation
\begin{equation}\label{modular_funceq}
 f(x) = f(x+1) + (x+1)^{-2s} f\left(\frac{x}{x+1}\right)\,,\qquad x\in\RR_{>0}\,,
\end{equation}
where the $+1$-eigenfunctions of~$\TO_{F,s}$ correspond to those solutions that satisfy in addition $f(x)=f(1/x)$, and the $-1$-eigenfunctions of~$\TO_{F,s}$ correspond to the solutions satisfying $f(x)=-f(1/x)$. By Lewis--Zagier~\cite{LZ01} (using $L$-series, Mellin transform, and Fourier expansions) and, alternatively, by Bruggeman~\cite{Bruggeman_lewiseq} (using hyperfunction cohomology), these highly regular solutions of the functional equation~\eqref{modular_funceq} are isomorphic to the Maass cusp forms for~$\PSL_2(\ZZ)$ with spectral parameter~$s$. 

This isomorphism between Maass cusp forms and highly-regular solutions of the functional equation in~\eqref{modular_funceq} can be expressed in several ways. One realization---that will be of interest for our work---is by a certain \emph{integral transform} that is given by integrating the Maass cusp form against a Poisson-type kernel in the Green's form. (This integral transform is closely related to the Poisson transformation.) In analogy to the concept of period polynomials in Eichler--Shimura theory, the solutions of~\eqref{modular_funceq} are therefore called \emph{period functions}. 

These two isomorphisms combined provide, for $\Rea s \in (0,1)$, an isomorphism between the sum of the $\pm1$-eigenspaces of~$\TO_{F,s}$ and the space~$\MCF_s\bigl(\PSL_2(\ZZ)\bigr)$ of the Maass cusp forms for~$\PSL_2(\ZZ)$ with spectral parameter~$s$:
\begin{equation}\label{iso_TO_MCF}
 \left\{ f\in\mc B \setmid \TO_{F,s}f = f\right\} \oplus \left\{ f\in\mc B \setmid \TO_{F,s}f=-f\right\} \;\cong\; \MCF_s\bigl(\PSL_2(\ZZ)\bigr)\,.
\end{equation}

We emphasize that the isomorphism in~\eqref{iso_TO_MCF} does not make use of the equality in~\eqref{SZF_modular}, which we included above for completeness of the historical line of thoughts. Again for the purpose of completeness we remark that in combination with the Selberg trace formula, the equality in~\eqref{SZF_modular} implies that the sum of the dimensions of the Jordan blocks of~$\TO_{F,s}$ with eigenvalues~$\pm 1$ equals the dimension of the space of Maass cusp forms with spectral parameter~$s$. Taking advantage of tools from number theory and harmonic analysis, then also a precise isomorphism map between the Maass cusp forms and the eigenfunctions of transfer operators can be found (at least in the situation of the modular surface~$X_\text{mod}$). Within the transfer operator approach, however, such an isomorphism can be given without relying on the Selberg trace formula, and this isomorphism has a clear geometric motivation. (See the discussion in Section~\ref{sec:heuristic}.)

This transfer operator approach for the modular surface~$X_{\text{mod}}$ allows dynamical characterizations also for certain other eigenfunctions of the Laplacian \cite{Chang_Mayer_period, Chang_Mayer_transop, LZ01}, and the factorization in~\eqref{SZF_modular} could be explained \cite{Efrat_spectral, Bruggeman_lewiseq, LZ01, Moeller_Pohl, Pohl_spectral_hecke}. An alternative transfer-operator-based characterization of the Maass cusp forms is provided by  the combination of~\cite{Mayer_Stroemberg, BM09, Mayer_Muehlenbruch_Stroemberg}. This approach proceeds from a discretization of the geodesic flow giving rise to a transfer operator with properties similar to the one in~\eqref{TO_Mayer}. (See below for a further, more recent alternative that is of a different nature and shows additional features.) Taking advantage of twists by finite-dimensional representations these results could be extended to certain finite covers of the modular surface \cite{Chang_Mayer_transop, Deitmar_Hilgert, Fraczek_Mayer_Muehlenbruch}. All of these transfer operator approaches have in common that the transfer operators contain \emph{infinite sums} (as in~\eqref{TO_Mayer}) and hence their eigenfunctions with eigenvalue~$1$ are not automatically seen to satisfy a finite-term functional equation (as, e.\,g., \eqref{modular_funceq}).

With~\cite{Pohl_Symdyn2d}, Pohl provided a construction of discretizations and symbolic dynamics for the geodesic flow on a rather large class of hyperbolic surfaces~$\Gamma\backslash\uhp$ (of finite or infinite area) for which the arising transfer operators are \emph{sums} of only \emph{finitely many terms}. Further, for all hyperbolic surfaces~$\Gamma\backslash\uhp$ of \emph{finite area}, Bruggeman--Lewis--Zagier~\cite{BLZm} provided a characterization of the Maass cusp forms of~$\Gamma$ as cocycles in certain parabolic $1$-cohomology spaces. 

For all those Fuchsian groups~$\Gamma$ that are both cofinite and admissible for the construction in~\cite{Pohl_Symdyn2d}, the transfer operator eigenfunctions with eigenvalue~$1$ serve as building blocks for the parabolic $1$-cocycles. The eigenspaces with eigenvalue~$1$ of the transfer operators are then seen to be isomorphic to the parabolic $1$-cohomology spaces \cite{Pohl_mcf_general, Pohl_mcf_Gamma0p, Moeller_Pohl}. In combination with~\cite{BLZm}, it follows that the space of these transfer operator eigenfunctions is thus isomorphic to the space of Maass cusp forms for~$\Gamma$. As in the case of the modular group, for any such~$\Gamma$, the isomorphism is given by an (explicit) integral transform. The defining equation (rather, system of equations) of these transfer operator eigenfunctions consists of finitely many terms, by the very choice of the discretization of the geodesic flow. Therefore, these eigenfunctions constitute (dynamically defined) period functions for the Maass cusp forms. We refer to the survey~\cite{Pohl_Zagier} for a rather informal presentation of this type of transfer operator approaches to Maass cusp forms.

For the modular surface, this construction recovers the functional equation in~\eqref{modular_funceq} without a detour via an infinite-term transfer operator. However, an extension of this construction finds again Mayer's transfer operator~\eqref{TO_Mayer} as well as a dynamical explanation of the factorization in~\eqref{SZF_modular} (alternative to Efrat's approach~\cite{Efrat_spectral}) and separated period functions and functional equations for odd and even Maass cusp forms, recovering Lewis' equation in~\cite{Lewis}. See~\cite{Moeller_Pohl,Pohl_spectral_hecke}.

\subsection*{Aim of this article}

The construction of the discretizations of the geodesic flow in~\cite{Pohl_Symdyn2d} and of slow transfer operators applies to (certain) hyperbolic surfaces of \emph{infinite area} as well. Therefore it is a natural question to which extent such dynamical and transfer-operator-based characterizations of automorphic forms are possible for hyperbolic surfaces of \emph{infinite area}. With this paper we initiate the investigation of the realm of such characterizations. 

Since hyperbolic surfaces~$X=\Gamma\backslash\uhp$ of infinite area have no embedded $L^2$-eigen\-val\-ues, and hence their pure point spectrum is finite, the major bulk of the interesting spectral values is given by the scattering resonances. For this reason we will consider also automorphic forms that are more general than Maass cusp forms, namely the set of \emph{funnel forms}, its subset of \emph{resonant funnel forms} and its even more restrictive subset of \emph{cuspidal funnel forms}. All these automorphic forms are characterized by natural conditions on their growth behavior at the cusps and funnels. The essence of their properties is sketched in what follows.

The set~$\A_s(\Gamma)$ of \emph{funnel forms} for~$\Gamma$ with spectral parameter~$s$ consists of the \mbox{$\Gamma$-}invariant Laplace eigenfunctions with eigenvalue~$s(1-s)$ that have an \emph{$s$-analytic boundary behavior} near all funnels. See Section~\ref{sect-daf} for more details. The $s$-analytic boundary behavior at funnels is the minimal property of Laplace eigenfunctions to allow for an identification with (equivalence classes) of complex period functions. Funnel forms may have large growth towards any cusp.

Its subset~$\A_s^1(\Gamma)$ of \emph{resonant funnel forms} consists of those funnel forms that---along any geodesic $\gamma$ going to a cusp of~$X$---behave like
\[
 p\,\gamma(t)^{1-s} + O\big(e^{-c\gamma(t)}\big)\qquad\text{as $t\to\infty$}
\]
for suitable $p\in\CC$, $c>0$. We remark that while the behavior at funnels is related to the spectral parameter~$s$, the behavior towards cusps is related to the opposite spectral parameter~$1-s$. The subset~$\A_s^0(\Gamma)$ of \emph{cuspidal funnel forms} are those resonant funnel forms that have exponential decay in any cusp. We refer to Section~\ref{sect-daf} for precise definitions.

In this paper we restrict the discussion to Hecke triangle groups of infinite covolume. This allows us to provide explicit formulas and calculations, and to avoid bookkeeping of several orbits of cuspidal points and funnel representatives. Moreover, we impose the restriction $\Rea s\in (0,1)$ on the spectral parameters~$s\in\CC$. Due to the latter restriction we can build on several results on parabolic cohomology from~\cite{BLZm} for the necessary development of cohomological interpretations of automorphic forms. Otherwise these constructions would have been needed to be extended to~$s$ running through all of~$\CC$ and would have resulted in a much longer treatise. We hope to return to such generalizations in a future paper. 

For the remainder of this introduction let $\Gamma\coloneqq \Gamma_\lambda$ denote the Hecke triangle group with cusp width~$\lambda>2$ (which implies that $\Gamma_\lambda$ has infinite covolume). As a subgroup of~$\PSL_2(\RR)$, it is generated by the two elements
\[
 S \coloneqq \bmat{0}{1}{-1}{0} \quad\text{and}\quad T_\lambda\coloneqq \bmat{1}{\lambda}{0}{1}\,.
\]
In \cite{Pohl_hecke_infinite}, Pohl provided so-called slow and fast transfer operators for $\Gamma$. The \emph{slow transfer operator}~$\tro s$ of~$\Gamma$ with parameter~$s\in\CC$ acts on vectors of functions
\begin{equation}\label{functionvector}
 f = \begin{pmatrix} f_1 \colon (-1,\infty)\to\CC \\ f_2\colon (-\infty,1)\to\CC \end{pmatrix}
\end{equation}
by
\begin{align*}
 \left( \tro s f\right)_1(x) & = (\lambda+x)^{-2s} f_1 \left(\frac{-1}{\lambda+x}\right) + f_1(x+\lambda) + (\lambda+x)^{-2s}f_2\left(\frac{-1}{\lambda+x}\right)\,,
 \\
 \left( \tro s f\right)_2(x) & = (\lambda-x)^{-2s} f_1\left(\frac{1}{\lambda-x}\right) + f_2(x-\lambda) + (\lambda-x)^{-2s}f_2\left(\frac{1}{\lambda-x}\right)\,.
\end{align*}
The notion and essence of a slow transfer operator (as opposed to a fast transfer operator) will be  explained in Section~\ref{sect-to}. 

We let $\FE_s^\omega(\CC)$ denote the space of the $1$-eigenfunctions~$f=(f_1,f_2)$ of~$\tro s$ (i.\,e., $f=\tro s f$) for which $f_1$ and~$f_2$ extend holomorphically to~$\CC\smallsetminus (-\infty,-1]$ and~$\CC\smallsetminus [1,\infty)$, respectively. In other words, these functions are the holomorphic solutions to the functional equation~$f=\tro s f$ with a large domain of definition. Further let $\BFE_s^\omega(\CC)$ be the subspace of~$\FE_s^\omega(\CC)$ consisting of the elements of the form~$(-b,b)$ for some entire~$\lambda$-periodic function~$b$. We call $\FE_s^\omega(\CC)$ the space of \emph{period functions} and $\BFE_s^\omega(\CC)$ the space of \emph{boundary period functions}, justified by Theorems~\ref{thmA_new} and \ref{thmB_new} below. The spaces~$\FE_s^\omega(\CC)$ and $\BFE_s^\omega(\CC)$ allow us to characterize the funnel forms~$\A_s(\Gamma)$.

\begin{mainthm}\label{thmA_new}
For $s\in\CC$, $\Rea s \in (0,1)$, $s\neq\frac12$, there is a surjective linear map
\[ 
A_s\colon \FE^\om_s(\CC) \rightarrow \A_s(\Gm) 
\]
from the space of period functions~$\FE^\om_s(\CC)$ to the space of
funnel forms~$\A_s(\Gm)$. The map~$A_s$ descends to an isomorphism
\[
 \FE^\omega_s(\CC)/\BFE_s^\omega(\CC) \to \A_s(\Gamma)\,.
\]
\end{mainthm}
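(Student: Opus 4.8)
The plan is to construct the map $A_s$ explicitly as an integral transform and then analyze its kernel and image by cohomological means. First I would set up the correspondence on the level of the geodesic flow discretization underlying Pohl's slow transfer operator~$\tro s$: a $1$-eigenfunction $f=(f_1,f_2)$ of~$\tro s$ decomposes the boundary circle $\partial\uhp = \RR\cup\{\infty\}$ into the two half-line pieces on which $f_1$ and $f_2$ live, and glueing $f_1,f_2$ along the branch-cut data $\CC\smallsetminus(-\infty,-1]$ and $\CC\smallsetminus[1,\infty)$ produces a holomorphic section of an appropriate line bundle over a neighborhood of $\proj\RR\smallsetminus\{\text{fixed points of }S,T_\lambda\}$. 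From such a section one builds a cocycle in the parabolic $1$-cohomology $H^1_{\pb}(\Gamma;\mc V^\omega_s)$ using that $S$ and $T_\lambda$ generate~$\Gamma$ and that the functional equation $f=\tro s f$ is precisely the cocycle relation transcribed through the edges of the tessellation $\tess$ coming from the discretization. This step is where the finite-term structure of~$\tro s$ pays off: it yields a genuine finite-term functional equation rather than an infinite sum, so the passage to cohomology is direct.

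Next I would invoke the cohomological interpretation of funnel forms developed earlier in the paper (the analogue, for $\Gamma_\lambda$ of infinite covolume, of the Bruggeman--Lewis--Zagier parabolic cohomology picture in~\cite{BLZm}): there is an isomorphism between $\A_s(\Gamma)$ and a suitable parabolic (or mixed) $1$-cohomology space with values in the module $\mc V^\omega_s$ of $s$-analytic functions, realized concretely by integrating the Laplace eigenfunction $u\in\A_s(\Gamma)$ against the Green's-form Poisson kernel along paths in~$\uhp$. Composing the cocycle-building map of the previous paragraph with the inverse of this isomorphism defines $A_s\colon\FE^\omega_s(\CC)\to\A_s(\Gamma)$, and by construction $A_s$ is an integral transform of the funnel form against the Poisson kernel, which is what justifies calling elements of $\FE^\omega_s(\CC)$ period functions. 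Surjectivity of $A_s$ then follows from surjectivity (indeed bijectivity) of the cohomological interpretation together with a surjectivity statement at the transfer-operator level: every relevant cohomology class admits a representative cocycle whose restriction to the tessellation edges assembles into an eigenfunction of $\tro s$. This last surjectivity is the crux and I expect it to rest on an explicit choice of cocycle representative adapted to $\tess$, together with a regularity (holomorphic extension) argument ensuring the resulting $f_1,f_2$ extend to $\CC\smallsetminus(-\infty,-1]$ and $\CC\smallsetminus[1,\infty)$ respectively.

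It remains to identify the kernel of $A_s$ with $\BFE^\omega_s(\CC)$. One inclusion is a direct computation: if $f=(-b,b)$ with $b$ entire and $\lambda$-periodic, then the functional equation $f=\tro s f$ forces a relation on $b$, and one checks that the associated cocycle is a coboundary, hence maps to $0$ in cohomology and so $A_s f = 0$; this uses the $\lambda$-periodicity to realize $b$ as (a multiple of) the value of a potential at the parabolic vertex fixed by $T_\lambda$. For the reverse inclusion, suppose $A_s f = 0$; then the cocycle attached to $f$ is a coboundary, i.e.\ there is a single $s$-analytic function $\psi$ on a neighborhood of $\proj\RR$ minus the relevant fixed points with $f_1,f_2$ expressed through $\psi$ and its $\Gamma$-translates. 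Using the functional equation together with the invariance of $\psi$ under the relevant group elements, one shows $\psi$ must be $T_\lambda$-invariant and holomorphic across the cuts, hence entire and $\lambda$-periodic, and that $f = (-\psi,\psi)$ up to the identifications, placing $f$ in $\BFE^\omega_s(\CC)$. The main obstacle, as noted, is the surjectivity/regularity step: controlling the domains of holomorphy of $f_1$ and $f_2$ when reconstructing an eigenfunction from a cohomology class, and ensuring the branch cuts land exactly on $(-\infty,-1]$ and $[1,\infty)$; I would handle this by tracking, edge by edge in $\tess$, which $\Gamma$-translate of the fundamental domain contributes, exactly as in the finite-covolume Hecke case treated in~\cite{Moeller_Pohl, Pohl_spectral_hecke}, but now with the funnel boundary behavior replacing the cuspidal one at the funnel ends.
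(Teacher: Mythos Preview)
Your overall architecture is right: the paper also factors $A_s$ through a cohomology space, as $A_s = \ucoc_s \circ \pc$, where $\pc$ sends a period function to a cocycle and $\ucoc_s$ sends a cocycle class to a funnel form, and the kernel identification indeed reduces to showing that $\pc$ carries $\BFE_s^\om(\CC)$ onto the coboundaries. So the skeleton of your plan matches the paper.

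The genuine gap is the intermediary cohomology space. You propose parabolic cohomology $H^1_{\pb}(\Gamma;\V s \om)$ with the analytic principal-series module, by direct analogy with the cofinite case in \cite{BLZm}. The paper explains why this cannot work: funnels are caused by the \emph{absence} of elements in $\Gamma$, so no condition on group cocycles at specific elements can encode the $s$-analytic boundary behavior near the ordinary set. Instead the paper builds a new cohomology on the $\Gamma$-invariant set $\Xi = \Gamma\infty \cup \Gamma 1$, where $\Gamma 1$ are \emph{ordinary} points marking funnel-interval endpoints; takes values in a semi-analytic module $\V s {\fxi;\exc;\aj}(\proj\RR)$ allowing singularities on $\Xi$ with an analytic-jump condition at $\Gamma 1$; and imposes two extra, non-cohomological constraints on cocycles: a singularity condition $\sic$ restricting $\bsing c(\xi,\eta)$ to $\{\xi,\eta\}$, and a vanishing condition $\van$ requiring $c(1,\lambda-1)=0$ on the funnel interval $(\lambda-1,1)_c$. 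The condition $\van$ has no analogue in the cofinite picture and is exactly what, under $\pc$, is equivalent to the transfer-operator eigenequation $\tro s f = f$ (Lemma~19.1); without it you cannot recover the functional equation from the cocycle, and without $\Gamma 1 \subset \Xi$ you cannot even formulate it.

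Two further points where your sketch is too optimistic. First, the surjectivity from cohomology classes to funnel forms (the map $\ucoc_s$) is the hardest step in the paper and is not a matter of tracking edges: it goes through the sheaf $\W s \om$ of $s$-analytic boundary germs, a Cauchy-type integral theorem for the kernel $q_s(z,z')$ (Theorem~14.3), and a delicate construction of $C^2$ representatives whose singularities can be pushed arbitrarily close to the funnel edge $f_0$; the $s$-analytic boundary behavior of $u_\psi$ is then read off from the vanishing condition $\van$ (Proposition~15.4). Second, in the kernel computation your suggestion that the potential $\psi$ is $T_\lambda$-invariant and entire is correct for $p(\infty)$, but showing $p(\xi)=0$ for $\xi\in\Gamma 1$ requires a separate argument using invariance under the \emph{hyperbolic} element $TS$ and the fact (Lemma~13.3) that nonzero analytic vectors cannot be fixed by a hyperbolic element when $\Rea s>0$; this is where the funnel geometry enters the coboundary analysis.
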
 
In a nutshell, this correspondence identifies a funnel form~$u\in\nobreak\A_s(\Gamma)$ with a period function~$f=(f_1,f_2)\in \FE_s^\omega(\CC)$ if 
\begin{align*}
 f_1(t) &= \int_{-1}^\infty \{ u, R(t;\cdot)^s \} \qquad\text{on $(-1,\infty)$}
\intertext{and}
 f_2(t) &= \int_1^\infty \{u, R(t;\cdot)^s \} \qquad\text{on $(-\infty,1)$\,.}
\end{align*}
Here $\{\cdot,\cdot\}$ denotes the Green's form on~$C^\infty(\uhp)$ and $R(\cdot;\cdot)$ the Poisson-type kernel. (See Section~\ref{sect-coaief} for details.) The integration is along any path in~$\uhp$ with endpoints as indicated. In this way we find a linear map from funnel forms to period functions, subject to some corrections due to problems of convergence of the integrals. (In the case of non-convergence, a regularization is possible.) A natural map~$A_s$ in the opposite direction is seen to have a non-trivial kernel that is precisely~$\BFE_s^\om(\CC)$. We refer to Chapter~\ref{part:autom} for precise statements.

To identify which of the period functions arise from resonant and cuspidal funnel forms we will
need to understand how the growth properties of funnel forms at the cusp of a Hecke triangle group are characterized in terms of properties of the period functions. For that we will take advantage of a certain \emph{fast transfer operator}~$\ftro s$. This fast transfer operator is not precisely the one provided in~\cite{Pohl_hecke_infinite} but can easily be deduced from it. As any fast transfer operator, also this one arises from a certain induction process on parabolic elements in the slow transfer operator~$\tro s$, and hence it captures very well the growth behavior at the cusp. See Section~\ref{sect-to} for the detailed construction of~$\ftro s$ and its properties.

The operator~$\ftro s$ acts on functions~$f$ of the form as in~\eqref{functionvector} by
\begin{align*}
 \left(\ftro s f\right)_1(x) & = \sum_{n\geq 1} \frac{1}{(n\lambda +x)^{2s}} \left(f_1+f_2\right)\left(\frac{-1}{n\lambda+x}\right)
 \intertext{and}
 \left(\ftro s f\right)_2(x) & = \sum_{n\geq 1} \frac{1}{(n\lambda -x)^{2s}} \left(f_1+f_2\right)\left(\frac{1}{n\lambda-x}\right)\,.
\end{align*}
We let
\[
 \FE_s^{\omega,1}(\CC) \coloneqq  \left\{ f\in \FE_s^\omega(\CC) \setmid f = \ftro s f\right\}
\]
denote the set of period functions that are also $1$-eigenfunctions of the fast transfer operator, and we set
\[
 \FE_s^{\omega,0}(\CC) \coloneqq  \left\{ f=(f_1,f_2)\in \FE_s^{\omega,1}(\CC) \setmid f_1(0) = -f_2(0) \right\}\,.
\]
The boundary period functions contained in~$\FE_s^{\omega,1}(\CC)$ or $\FE_s^{\omega,0}(\CC)$ are trivial (see Proposition~\ref{prop:coboundaries}):
\[
 \FE_s^{\omega,1}(\CC)\cap \BFE_s^\omega(\CC) = \FE_s^{\omega,0}(\CC)\cap\BFE_s^\omega(\CC) = \{0\}\,.
\]
The spaces~$\FE_s^{\omega,1}(\CC)$ and~$\FE_s^{\omega,0}(\CC)$ allow us to characterize the spaces of resonant funnel forms~$\A_s^1(\Gamma)$ and cuspidal funnel forms~$\A_s^0(\Gamma)$, respectively.

\begin{mainthm}\label{thmB_new}
Let $s\in \CC$, $\Rea s\in (0,1)$. 
\begin{enumerate}[{\rm (i)}]
\item For~$s\not=\frac12$, the map~$A_s$ in Theorem~\ref{thmA_new} induces a bijective linear map
\[
 \FE_s^{\omega,1}(\CC) \to \A_s^1(\Gamma)\,.
\]
\item On~$\FE_s^{\omega,0}(\CC)$, the map~$A_s$ in Theorem~\ref{thmA_new} has a natural extension to $s=\frac12$. This extension yields a bijective linear map 
\[
 \FE_s^{\omega,0}(\CC) \to \A_s^0(\Gamma)
\]
for all~$s\in\CC$ with~$\Rea s\in (0,1)$.
\end{enumerate}
\end{mainthm}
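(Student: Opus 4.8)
The plan is to derive Theorem~\ref{thmB_new} from Theorem~\ref{thmA_new} together with the structural relationship between the two transfer operators: $\ftro s$ is obtained from $\tro s$ by inducing over the cyclic parabolic subgroup $\langle T_\lambda\rangle$ fixing the cusp. Writing $\tro s = P + Q$ with $P$ the pure parabolic shift, $(Pf)_1(x) = f_1(x+\lambda)$, $(Pf)_2(x) = f_2(x-\lambda)$, and $Q$ the remaining $S$-twisted part, one has $\ftro s = \sum_{k\ge 0} P^k Q$; hence for a period function $f = \tro s f$ the finite telescoping $f = \sum_{k=0}^{N-1} P^k Q f + P^N f$ yields
\[
 f - \ftro s f \;=\; \lim_{N\to\infty} P^N f\,,
\]
the pair of constants $\bigl(\lim_{y\to +\infty} f_1(y),\ \lim_{y\to-\infty} f_2(y)\bigr)$ measuring the failure of $f$ to decay at the parabolic fixed point. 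So ``$f = \ftro s f$'' is precisely an analytic smallness condition for $f$ at the cusp, ``$f_1(0) = -f_2(0)$'' is the corresponding condition at the $\Gamma$-equivalent cusp $S\infty = 0$, and $s = \tfrac12$ is exactly the critical line on which the series defining $\ftro s$ (and, on the automorphic side, the integral transform of Theorem~\ref{thmA_new}) loses absolute convergence unless $(f_1+f_2)(0) = 0$. Injectivity of the restricted maps is automatic: by Theorem~\ref{thmA_new} the kernel of $A_s$ is $\BFE_s^\omega(\CC)$, and $\FE_s^{\omega,1}(\CC)\cap\BFE_s^\omega(\CC) = \FE_s^{\omega,0}(\CC)\cap\BFE_s^\omega(\CC) = \{0\}$ by Proposition~\ref{prop:coboundaries} (at $s = \tfrac12$ one uses the extended map, whose kernel still lies in $\BFE_{1/2}^\omega(\CC)$). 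Thus only the identification of the images remains.

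For the inclusion $A_s\bigl(\FE_s^{\omega,1}(\CC)\bigr) \subseteq \A_s^1(\Gamma)$, take $f \in \FE_s^{\omega,1}(\CC)$ and set $u = A_s(f)$. Since $f = \ftro s f$, the identity above gives $\lim_N P^N f = 0$, i.e.\ the components of $f$ vanish at the parabolic fixed point. Feeding the integral representation $f_j(t) = \int \{u, R(t;\cdot)^s\}$ of Theorem~\ref{thmA_new} into an analysis of the asymptotics of $u$ along a geodesic ray into the cusp, I would show that the $n$-fold parabolic iteration packaged in $\ftro s$ resums exactly the contributions of the $\langle T_\lambda\rangle$-translates near the cusp, so that the vanishing of those boundary constants kills every term of the cusp expansion of $u$ of the ``wrong'' exponent $\gamma(t)^{s}$, leaving $u = p\,\gamma(t)^{1-s} + O(e^{-c\gamma(t)})$ with $p$ a fixed nonzero multiple of $(f_1+f_2)(0)$. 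Hence $u \in \A_s^1(\Gamma)$, and $u \in \A_s^0(\Gamma)$ precisely when $(f_1+f_2)(0) = 0$, that is, when $f \in \FE_s^{\omega,0}(\CC)$. Under the hypothesis $(f_1+f_2)(0) = 0$ this computation remains valid down to and including $s = \tfrac12$ and produces the extension of $A_s$ promised in part~(ii).

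For surjectivity, let $u \in \A_s^1(\Gamma)$ and pick, by Theorem~\ref{thmA_new}, a period function $f_0$ with $A_s(f_0) = u$, unique modulo $\BFE_s^\omega(\CC)$. Because $\ftro s f$ depends on $f$ only through the component sum $f_1 + f_2$ and every element of $\BFE_s^\omega(\CC)$ has vanishing component sum, $\ftro s$ is constant on the coset $f_0 + \BFE_s^\omega(\CC)$. The heart of the argument is to show that the resonant growth of $u$ forces $\ftro s f_0 - f_0 \in \BFE_s^\omega(\CC)$ (equivalently, via the displayed identity, that $\lim_N P^N f_0 = (-b,b)$ for a constant $b$); this is the ``cohomological triviality'' of the parabolic induction, and it is exactly here that the resonant condition enters---a non-resonant funnel form would contribute a genuine $\gamma(t)^{s}$-term, violating $c_1 = -c_2$. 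Granting this, $f := \ftro s f_0$ lies in the coset of $f_0$, so $A_s(f) = u$ and $\ftro s f = \ftro s f_0 = f$, whence $f \in \FE_s^{\omega,1}(\CC)$ is the desired preimage; uniqueness is the injectivity already established. If $u \in \A_s^0(\Gamma)$, exponential decay of $u$ forces $p = 0$, i.e.\ $(f_1+f_2)(0) = 0$, so $f \in \FE_s^{\omega,0}(\CC)$, and the same reasoning goes through at $s = \tfrac12$ with the extended transform, proving part~(ii). (An alternative to the resummation step is to build $f$ directly from $u$ via the regularised integral transform and verify $f = \ftro s f$ by unfolding the $\langle T_\lambda\rangle$-sum against the resonant asymptotics of $u$.)

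The main obstacle is the two-way dictionary used in the last two paragraphs, between the functional equation $f = \ftro s f$ and the statement that $u = A_s(f)$ has only the single resonant term $p\,\gamma(t)^{1-s}$ at the cusp with exponentially small remainder. Making this precise requires uniform control of the tail of $\sum_{n\ge 1}(n\lambda \pm x)^{-2s}(f_1+f_2)(\mp 1/(n\lambda \pm x))$, its comparison with the Fourier expansion of $u$ at the cusp, and careful bookkeeping of how $f_1(0) + f_2(0)$ encodes the resonant coefficient $p$ and simultaneously governs convergence exactly at the critical line $s = \tfrac12$---where, incidentally, the exponents $s$ and $1-s$ coincide, which is why part~(i) is restricted to $s \ne \tfrac12$ while part~(ii), resting on $(f_1+f_2)(0) = 0$, is not.
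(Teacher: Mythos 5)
Your proposal takes a genuinely different route from the paper, which proves Theorem~\ref{thmB_new} by factoring through the cohomology spaces: Theorem~\ref{thm:cohominter} identifies $\A_s^1$ and $\A_s^0$ with $H^1_\Xi\bigl(\Gm;\V s{\fxi;\exc,\smp;\aj}\bigr)^\van_\sic$ and $H^1_\Xi\bigl(\Gm;\V s{\fxi;\exc,\infty;\aj}\bigr)^\van_\sic$, while Theorem~\ref{thm-FE-coh} identifies the same cohomology spaces with $\FE_s^{\om,1}(\CC)$ and $\FE_s^{\om,0}(\CC)$; Theorem~\ref{thmB_new} is then just the composition. You instead propose to restrict $A_s$ directly and match the eigenfunction equation $f=\ftro s f$ with the cusp asymptotics of $u=A_s(f)$ by resummation. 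The algebraic ingredients in your sketch are correct: with $\tro s = P+Q$ one does have $\ftro s=\sum_{k\ge0}P^kQ$, the telescoping identity $f=\sum_{k<N}P^kQf+P^Nf$ holds, your injectivity argument from Proposition~\ref{prop:coboundaries} is exactly what the paper uses, and the heuristic that $f=\ftro s f$ encodes decay at the cusp while $(f_1+f_2)(0)=0$ controls the resonant coefficient $p$ is the right intuition.

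However, there is a genuine gap in the way you characterize the subspace. Your criterion ``$\lim_N P^N f = 0$, i.e.\ the components vanish at the parabolic fixed point'' is too coarse. Membership in $\FE_s^{\om,1}(\CC)$ corresponds, via the cocycle $\pc(f)$, to the potential $\tilde p(\infty)$ having a \emph{simple singularity} at $\infty$ in the sense of Definition~\ref{def:conditions}, i.e.\ a full two-sided asymptotic expansion $|t|^{-2s}\sum_{m\ge-1}c_mt^m$; membership in $\FE_s^{\om,0}(\CC)$ corresponds to $c_{-1}=0$. For $\Rea s>0$ every term of such an expansion tends to $0$, so the pointwise vanishing of $\lim_N P^Nf$ cannot distinguish $\FE_s^{\om,1}$ from $\FE_s^{\om,0}$, nor from the generic $f\in\FE_s^\om(\CC)$ that happens to decay; and a generic element of $\FE_s^\om(\CC)$ is only required to satisfy the condition~\texc at $\infty$, which permits unbounded growth along $\RR$, so $\lim_NP^Nf$ may fail to exist at all. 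What the paper actually uses (Proposition~\ref{prop:EF_TO_fast}, via the averaging asymptotics~\eqref{avasexp}) is the complete asymptotic \emph{structure}, not a limit value.

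The second gap is the ``two-way dictionary'' you explicitly defer. You cannot get it by feeding the integral formula $f_j(t)=\int\{u,R(t;\cdot)^s\}$ into an asymptotic analysis, because (a) this is the inverse of $A_s$, not $A_s$ itself, and (b) the integral diverges unless $u\in\A_s^0$ and must be replaced by the regularized cocycle $c^u(e_\infty)=-\av{s,T}^+c^u(f_\infty)$; after regularization the asymptotics of $u$ at the cusp are controlled by the boundary germ value $\psi(e_\infty)$, and the translation into a $y^{1-s}+O(e^{-cy})$ expansion requires the boundary germ machinery cited in the proof of Theorem~\ref{thm-alcoh} (Propositions~9.11 and~9.15 of BLZm). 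Your ``cohomological triviality of the parabolic induction'' step for surjectivity is precisely the content of Propositions~\ref{prop:iso_exc} and~\ref{prop:EF_TO_fast}, and the uniform $s=\tfrac12$ extension in part~(ii) rests on the cancellation $(f_1+f_2)(0)=0$ removing the pole of $\av{s,T}^\pm$ and of the cocycle construction simultaneously (Proposition~\ref{fast_merom}(iv)--(v) and Lemma~\ref{lem:coc_unique}(ii)). So while your outline identifies the right phenomena, the cohomological factorization is not a detour the paper could have skipped: it is where the dictionary you call the ``main obstacle'' is actually built.
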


As in the case of hyperbolic surfaces of finite area and dynamical characterizations of Maass cusp forms, we will prove Theorems~\ref{thmA_new} and~\ref{thmB_new} in a two-step approach with cohomology spaces as mediator between automorphic forms and eigenfunctions of the transfer operators.

We will first develop several cohomological interpretations of funnel forms, resonant funnel forms and cuspidal funnel forms. The cohomology spaces that we will use are not completely standard, and---due to the presence of a funnel---are more involved than the parabolic cohomology spaces from~\cite{BLZm} for finite area surfaces. Furthermore, in order to fully capture the behavior of the considered automorphic forms at cusps and funnels we will use subspaces of these cohomology spaces determined by conditions that appear to be non-cohomological but of geometric nature. 

A distinguished role is played by cocycles that are defined on the $\Gamma$-invariant set 
\[
 \Xi \coloneqq \Gamma\,1 \cup \Gamma\,\infty
\]
and that have values in carefully chosen $\Gamma$-modules~$M$ that depend on whether we aim at a cohomological interpretation of all funnel forms~$\A_s(\Gamma)$ or all resonant funnel forms~$\A_s^1(\Gamma)$ or all cuspidal funnel forms~$\A_s^0(\Gamma)$. The set~$\Xi$ consists of all representatives of the cusp of~$\Gamma\backslash\uhp$ and all boundaries of all connected representatives of a well-chosen kind of the funnel of~$\Gamma\backslash\uhp$. These are exactly the points which are crucial for distinguishing generic Laplace eigenfunctions from funnel forms, and generic funnel forms from resonant funnel forms, and generic resonant funnel forms from cuspidal funnel forms. The $\Gamma$-modules~$M$ consist of semi-analytic functions on $\proj\RR$ with additional regularity-like properties at (and near) the points in~$\Xi$.  Thus, for any $*\in \{\underbar{\ \ }, 0, 1\}$ and any~$s\in\CC$, $\Rea s\in (0,1)$ (and $s\not=\tfrac12$ for non-cuspidal funnel forms)  we will define $\Gamma$-modules $M(\A_s^*(\Gamma))$ and certain additional conditions $\cond(\A_s^*(\Gamma))$ on cohomology classes such that
\[
 \A_s^*(\Gamma) \cong H^1_\Xi\big(\Gamma; M(\A_s^*(\Gamma))\big)^{\cond(\A_s^*(\Gamma))}\,.
\]
In Chapter~\ref{part:autom} we will see that this isomorphism is rather explicit and constructive. 

Even though in this article we will use these cohomological interpretations of automorphic forms only to establish an isomorphism between funnel forms (and subspaces) to certain spaces of eigenfunctions of transfer operators, these intermediate cohomological results are clearly of independent interest. In particular they contribute to the increasing zoo of cohomological interpretations of automorphic functions and forms that started with the work by Eichler~\cite{Eichler} and Shimura~\cite{Shimura}, and is continued by, e.\,g., \cite{Knopp_cohom,  Bunke_Olbrich_1, Bunke_Olbrich_2, Bunke_Olbrich_annals, Deitmar_Hilgert_cohom, Knopp_Mawi, BLZm, BCD}.

After having established the cohomological interpretation of the funnel forms we will show that the eigenfunctions with eigenvalue~$1$ of the transfer operator~$\TO_s^\slow$ of sufficient regularity or the joint eigenfunctions of~$\TO_s^\slow$ and~$\TO_s^\fast$ serve as building blocks for well-chosen representatives of the cocycle classes, resulting in explicit and constructive isomorphisms 
\[
 \FE_s^\omega(\CC)/\BFE_s^\omega(\CC) \cong H^1_\Xi\big(\Gamma; M(\A_s(\Gamma))\big)^{\cond(\A_s(\Gamma))}
\]
and
\[
 \FE_s^{\omega,*}(\CC) \cong H^1_\Xi\big(\Gamma; M(\A_s^*(\Gamma))\big)^{\cond(\A_s^*(\Gamma))}
\]
for~$*\in\{0,1\}$. Here again we will see that the set~$\Xi$ plays a special role: It naturally emerges from the discretization of the geodesic flow (which is the basis for the transfer operators and hence for the sets~$\FE_s^{\om,*}(\CC)$).

The restriction to Hecke triangle groups allows us to establish also the following refinement of the isomorphisms between the eigenspaces of the transfer operators, the cohomology spaces and the automorphic forms. Hecke triangle groups enjoy an exterior symmetry which allows to separate even and odd automorphic forms. The transfer operators $\TO_s^\slow$ and $\TO_s^\fast$, the defining properties of period functions as well as the cohomology spaces are compatible with this symmetry. For that reason Theorems~\ref{thmA_new} and \ref{thmB_new} have rather straightforward extensions with respect to this parity. 

To state the refined result we add a `$+$' to the notation if we restrict to the objects invariant under the exterior symmetry, and a `$-$' if we restrict to the anti-invariant objects.

\begin{mainthm}\label{thmC}
The isomorphisms between period functions, cohomology spaces and funnel forms from above induce the following isomorphisms:
\begin{enumerate}[{\rm (i)}]
\item For $s\in\CC$, $\Rea s \in (0,1)$, $s\not=\tfrac12$, 
\[
 \FE_s^{\om,\pm}(\CC)/\BFE_s^{\om,\pm}(\CC) \cong H^{1,\mp}_\Xi\bigl(\Gamma; M(\A_s(\Gamma))\bigr)^{\cond(\A_s(\Gamma))} \cong \A_s^{\pm}(\Gamma)\,.
\]
\item For $s\in\CC$, $\Rea s\in (0,1)$, $s\not=\tfrac12$,
\[
 \FE_s^{\om,1,\pm}(\CC) \cong H^{1,\mp}_\Xi\bigl(\Gamma; M(\A_s^1(\Gamma))\bigr)^{\cond(\A_s^1(\Gamma))} \cong \A_s^{1,\pm}(\Gamma)\,.
\]
\item For $s\in\CC$, $\Rea s \in (0,1)$,
\[
 \FE_s^{\om,0,\pm}(\CC) \cong H^{1,\mp}_\Xi\bigl(\Gamma; M(\A_s^0(\Gamma))\bigr)^{\cond(\A_s^0(\Gamma))} \cong \A_s^{0,\pm}(\Gamma)\,.
\]
\end{enumerate}
\end{mainthm}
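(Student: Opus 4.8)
The plan is to deduce Theorem~\ref{thmC} from Theorems~\ref{thmA_new} and~\ref{thmB_new} together with the cohomological interpretations discussed above, by checking that every map in the chain
\[
 \FE_s^{\om,*}(\CC) \;\longrightarrow\; H^{1}_\Xi\bigl(\Gamma; M(\A_s^*(\Gamma))\bigr)^{\cond(\A_s^*(\Gamma))} \;\xrightarrow{\ \sim\ }\; \A_s^*(\Gamma)
\]
intertwines the action of the exterior symmetry on each of the three spaces, and then restricting to eigenspaces. So the first step is to set up the exterior symmetry carefully and uniformly. The Hecke triangle group~$\Gamma_\lambda$ is normalized by the reflection~$J\coloneqq\textmat{-1}{0}{0}{1}$ (equivalently $z\mapsto -\bar z$ on~$\uhp$), and conjugation by~$J$ fixes $S$ and inverts~$T_\lambda$; this induces an involution on~$\Gamma$, hence on any $\Gamma$-module built functorially from $\proj\RR$ and on the cocycle spaces, on $\uhp$ and hence on $\A_s(\Gamma)$ (by $u\mapsto u\circ J$), and on vectors $f=(f_1,f_2)$ by swapping $f_1\leftrightarrow f_2$ composed with $x\mapsto -x$. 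I would record once and for all that each of $\tro s$, $\ftro s$, the conditions defining $\FE_s^{\om,1}(\CC)$ and $\FE_s^{\om,0}(\CC)$, the module constructions $M(\A_s^*(\Gamma))$, the geometric conditions $\cond(\A_s^*(\Gamma))$, and the set~$\Xi=\Gamma 1\cup\Gamma\infty$ (note $J$ fixes $\infty$ and sends $1$ to $-1\in\Gamma 1$) are preserved by the respective involution. Most of these are immediate from the explicit formulas, e.g. the $J$-equivariance of $\tro s$ follows by swapping the two component equations and substituting $x\mapsto -x$.

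The second step is to verify that the explicit isomorphisms from Theorems~\ref{thmA_new}, \ref{thmB_new}, and the earlier cohomological isomorphism $\A_s^*(\Gamma)\cong H^1_\Xi(\Gamma;M(\A_s^*(\Gamma)))^{\cond}$ are equivariant for these involutions. For the map $A_s$ realized by the integral transform $f_i(t)=\int\{u,R(t;\cdot)^s\}$, equivariance reduces to the transformation behavior of the Green's form and of the Poisson-type kernel~$R$ under~$J$: one checks $R(-t;J z)^s$ relates to $R(t;z)^s$ appropriately and that $J$ pulls back the Green's form to itself (up to the relevant sign), so that replacing $u$ by $u\circ J$ and $t$ by $-t$ swaps the roles of the two integration rays $(-1,\infty)$ and $(1,-\infty)$, i.e. swaps $f_1$ and $f_2$ with the sign change $t\mapsto -t$ — exactly the module-side involution. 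For the cohomological leg the equivariance is built into the fact that the comparison maps (restriction of cocycles, evaluation against the relevant sections, etc.) are natural in the $\Gamma$-module and in the parabolic/funnel data, all of which carry compatible $J$-actions. Once all three maps in the chain are equivariant, the composite isomorphisms of Theorems~\ref{thmA_new} and~\ref{thmB_new} are equivariant, and we may pass to $(\pm)$-eigenspaces on all three sides.

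The third step is purely formal: an equivariant linear isomorphism between representations of the order-two group $\langle J\rangle$ restricts to isomorphisms of the $(+1)$- and $(-1)$-eigenspaces; since $\tfrac12\in\CC$ has characteristic zero, each space decomposes as the direct sum of its two eigenspaces and the restrictions are the claimed maps. I would note here that the sign conventions cause the promised $\mp$ on the cohomology side against the $\pm$ on period functions and funnel forms — this is a bookkeeping consequence of the fact that the integral transform $A_s$ anti-commutes (rather than commutes) with the two copies of $J$ on its source and target in one of the two legs, owing to the orientation-reversing nature of~$J$ and the sign in the Green's form; one fixes the labelling by declaring `$+$' on each space to mean invariance under the involution \emph{natural to that space}, and then tracking the single sign flip through the kernel $R$. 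For part~(iii) one additionally invokes Theorem~\ref{thmB_new}(ii) to include $s=\tfrac12$, checking that the natural extension of $A_s$ there is still $J$-equivariant (the pointwise condition $f_1(0)=-f_2(0)$ is visibly stable under $f_1\leftrightarrow f_2$, $x\mapsto-x$ since $0\mapsto 0$), and that $\BFE_s^{\om,\pm}(\CC)=\BFE_s^\om(\CC)\cap\FE_s^{\om,\pm}(\CC)$ is the kernel of the restricted map, which follows from Theorem~\ref{thmA_new} plus equivariance.

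The main obstacle I expect is not conceptual but notational: pinning down, consistently across all of $\tro s$, $\ftro s$, $R$, the Green's form, the modules $M(\A_s^*(\Gamma))$, and the geometric conditions $\cond$, exactly which involution plays the role of `the exterior symmetry' and with which sign — in particular getting the $\pm$/$\mp$ correspondence between the three columns right, and making sure the involution on the cohomology side is the one induced by $g\mapsto JgJ^{-1}$ on $\Gamma$ together with the induced map on $M$, rather than its negative. Once the dictionary between the involutions is fixed (which amounts to one careful computation with $R$ and $\{\cdot,\cdot\}$ under $z\mapsto-\bar z$, and one with the defining formulas of the two transfer operators under $f\mapsto\bigl((x\mapsto f_2(-x)),(x\mapsto f_1(-x))\bigr)$), the rest is the formal eigenspace decomposition and poses no difficulty. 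A secondary, minor point is to confirm that the conditions $\cond(\A_s^*(\Gamma))$ — being `geometric' conditions at the points of $\Xi$ — are literally $J$-stable as subsets, which is clear because $J$ permutes $\Xi$ and preserves the local structure at $1$ and $\infty$ used to phrase those conditions.
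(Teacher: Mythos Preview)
Your plan is essentially the paper's proof: define the involutions $\mc J^F$, $\mc J_s^H$, $\mc J_s^E$ induced by $J=\textbmat{-1}{0}{0}{1}$, verify via the identities $\{u\circ J,v\circ J\}=-\{u,v\}\circ J$ and $R(t;\cdot)\circ J=R(-t;\cdot)$ that the integral transform is anti-equivariant, check separately that $\pc$ is anti-equivariant, and then pass to eigenspaces. One small correction to your sign bookkeeping: it is not that one leg anti-commutes and the other commutes, but that \emph{both} legs (period functions $\to$ cohomology and funnel forms $\to$ cohomology) are anti-equivariant, so the composite $A_s$ is equivariant (whence $\pm\leftrightarrow\pm$ between period functions and funnel forms) while each leg separately gives the $\pm\leftrightarrow\mp$ matching with cohomology.
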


We provide a brief overview of the structure of this paper. It is divided into seven chapters. In Chapter~\ref{part:intro} we will provide the necessary background information on Hecke triangle groups, transfer operators, automorphic forms, period functions and related objects. After having introduced all these objects, we will provide---in Section~\ref{sec:heuristic}---an informal presentation of some insights about the isomorphisms and their constitutions.  In Chapter~\ref{part:cohom} we will discuss the cohomology spaces that we will use for the proofs of Theorems~\ref{thmA_new} and~\ref{thmB_new}. In Chapter~\ref{part:autom} we will establish isomorphisms between the spaces of (resonant and cuspidal) funnel forms and suitable cohomology spaces. In Chapter~\ref{part:TO} we will construct the isomorphisms between the spaces of eigenfunctions of the transfer operators and the cohomology spaces. In Chapter~\ref{part:proof_mainthms} we will combine all these results to proofs of Theorems~\ref{thmA_new} and~\ref{thmB_new}. In addition we will provide a brief summary of the constructions in Chapters~\ref{part:autom} and~\ref{part:TO}. In Chapter~\ref{part:parity} we will discuss the extensions with respect to parity and prove Theorem~\ref{thmC}. In the final Chapter~\ref{part:complements} we will  show a complementary result used for the motivation in Chapter~\ref{part:intro} and we will discuss future research directions.

\subsection*{Acknowledgement} RB thanks the University of Bremen for support, great hospitality and working conditions during a research stay. AP wishes to thank the Max Planck Institute for Mathematics in Bonn for great hospitality and excellent working conditions during part of the preparation of this manuscript. Both authors wish to thank the referees for helpful comments and suggestions.


\setcounter{sectold}{\arabic{section}}

\chapter{Preliminaries, properties of period functions, and some insights}\label{part:intro}
\markboth{I. PRELIMINARIES}{1. PRELIMINARIES}

\setcounter{section}{\arabic{sectold}}

This chapter serves two purposes: 
\begin{enumerate}[1)]
 \item We introduce automorphic forms, transfer operators, and related objects for Hecke triangle groups of infinite co-area. After fixing some basic notations, reviewing the necessary background information on hyperbolic geometry, and surveying Hecke triangle groups in Sections~\ref{sec:notation}-\ref{sec:hecke}, we will define, in Section~\ref{sect-daf}, the notions of funnel forms, resonant funnel forms and cuspidal funnel forms. 
 
 For the transfer operators and the cohomology spaces that we will use for the proofs of Theorems~\ref{thmA_new} and \ref{thmB_new}, principal series representations are essential. We will recall these representations in Section~\ref{sect-psa}. In Section~\ref{sect-to} we will present the two families of slow and fast transfer operators families, define the notion of period functions, and discuss some of their first properties.
 \item We will present, in Section~\ref{sec:heuristic}, some insights and a motivation for the structure of the isomorphism between eigenfunctions of transfer operators and cocycles.
\end{enumerate}

\section{Notations}\label{sec:notation}\markright{2. NOTATIONS}

We set $\one\coloneqq \textmat{1}{0}{0}{1}$, and let 
\index[symbols]{P@$\PSL_2(\RR)$}
\[
 \PSL_2(\RR) \coloneqq  \SL_2(\RR)/\{\pm\one\}
\]
denote the projective group of $\SL_2(\RR)$. We denote the image of~$\textmat{a}{b}{c}{d}\in\SL_2(\RR)$ by
\[
 \bmat{a}{b}{c}{d}\,.
\]
\index[symbols]{$\textbmat{a}{b}{c}{d}$}
For any complex number~$z\in\CC$ we set $x = x(z) = \Rea z$ and $y = y(z) = \Ima z$ if not noted otherwise.\index[symbols]{x@$x(z)$}\index[symbols]{Y@$y(z)$} For~$r\in\RR$ we set\index[symbols]{R@$\RR_{\geq r}$}
\[
 \RR_{\geq r} \coloneqq  \{ x\in\RR \setmid x\geq r\}
\]
and\index[symbols]{C@$\CC_{\Rea \geq r}$}
\[
 \CC_{\Rea \geq r} \coloneqq  \{ z\in\CC\setmid \Rea z\geq r\}\,,
\]
and define analogously $\RR_{\leq r}$, $\RR_{>r}$, $\RR_{<r}$, $\CC_{\Rea \leq r}$, $\CC_{\Rea > r}$, $\CC_{\Rea < r}$. Further, $\NN = \{1,2,\ldots\}$ \index[symbols]{N@$\NN$} denotes the set of natural numbers (without~$0$), and $\NN_0 \coloneqq  \NN\cup\{0\}$. \index[symbols]{N@$\NN_0$}

\section{Elements from hyperbolic geometry}\label{sec:hypgeom}
\markright{3. HYPERBOLIC GEOMETRY}

\subsection{Models and isometries} 

For the hyperbolic plane,\index[defs]{hyperbolic plane} we will use almost exclusively the upper half plane model\index[symbols]{Haa@$\uhp$}\index[defs]{upper half plane model}
\[
\uhp\coloneqq  \{ z\in\CC \setmid \Ima z >0 \}\,,\quad ds_z^2 = \frac{dz\,d\overline{z}}{(\Ima z)^2}\,.
\]
For a very few figures we will use the disk model\index[symbols]{D@$\DD$}\index[defs]{disk model}
\[
 \DD \coloneqq  \{ w\in\CC \setmid |w|<1\}\,,\quad ds_w^2 = 4 \frac{dw\,d\overline{w}}{\left( 1 - |w|^2\right)^2}\,.
\]
The equivalence between these two models is given by any Cayley transform, e.\,g.,
\[
 \uhp\to\DD\,,\quad z\mapsto \frac{i-z}{i+z}\,.
\]
In what follows we present all additional objects only for the upper half plane model~$\uhp$. 

The (positive) Laplace--Beltrami operator~$\Delta$\index[symbols]{D@$\Delta$}\index[defs]{Laplace--Beltrami operator} on~$\uhp$ is 
\begin{equation}\label{Laplace}
 \Delta = -y^2\left( \partial_x^2 + \partial_y^2\right)\,.
\end{equation}
We identify the group of orientation-preserving Riemannian isometries\index[defs]{isometries of hyperbolic plane} of~$\uhp$ with the group~$\PSL_2(\RR)$. The action on~$\uhp$ is then given by fractional linear transformations, thus
\begin{equation}\label{fraclinaction}
 gz \coloneqq  \frac{az +b}{cz+d}
\end{equation}
for~$g=\textmat{a}{b}{c}{d}\in \PSL_2(\RR)$, $z\in\uhp$. This action extends continuously to all of the Riemann sphere\index[defs]{Riemann sphere}\index[defs]{complex projective line}\index[defs]{projective line!complex} (complex projective line)~$\proj\CC = \CC\cup\{\infty\}$.\index[symbols]{P@$\proj\CC$} It preserves the upper half plane~$\uhp$, the lower half plane\index[symbols]{Hab@$\uhp^-$}
\[
 \uhp^- \coloneqq  \{z\in\CC \setmid \Ima z<0\}\,,
\]
and the real projective line\index[defs]{real projective line}\index[defs]{projective line!real}\index[symbols]{P@$\proj\RR$}
\[
 \proj\RR = \RR \cup\{\infty\}\,. 
\]
We note that 
\[
 \proj\RR = \partial\uhp = \partial\uhp^-\,,
\]
where the boundaries of~$\uhp$ and~$\uhp^-$ are taken in~$\proj\CC$.\index[symbols]{D@$\partial\uhp$}\index[symbols]{D@$\partial\uhp^-$} The \index[defs]{boundary of~$\uhp$} boundary of~$\uhp$ in~$\proj\CC$ coincides with the geodesic boundary of~$\uhp$. We let
\[
 \overline\uhp = \uhp \cup \proj\RR
\]
denote the \index[defs]{closure of~$\uhp$}\index[symbols]{Hac@$\overline\uhp$} closure of~$\uhp$ in~$\proj\CC$.

\subsection{Classification of isometries}\label{sec:hypclass}

We call an element~$g\in\PSL_2(\RR)$ \emph{hyperbolic}\index[defs]{hyperbolic element}\index[defs]{element!hyperbolic} if the action of~$g$ on~$\overline{\uhp}$ has exactly two fixed points. We call it \emph{elliptic}\index[defs]{elliptic element}\index[defs]{element!elliptic} if it has a single fixed point in~$\uhp$; we call it \emph{parabolic}\index[defs]{parabolic element}\index[defs]{element!parabolic} if it has a single fixed point in~$\partial\uhp$. Equivalently, $g$ is hyperbolic, elliptic or parabolic if and only if~$g\not=\id$ and, for any representative~$h\in\SL_2(\RR)$ of~$g$, we have $|\Tr h|>2$, $|\Tr h|<2$ or $|\Tr h|=2$, respectively.

\subsection{Cusps, funnels, limit set, and ordinary points}\label{sec:ordinary}
Let $\Gamma$ be a Fuchsian group, that is, a discrete subgroup of~$\PSL_2(\RR)$. We call a point~$x\in\proj\RR$ \emph{cuspidal}\index[defs]{cuspidal} if it is fixed by a parabolic element of~$\Gamma$. In that case, we call its~$\Gamma$-orbit a 
\index[defs]{cusp}\emph{cusp} of~$\Gamma$. We remark that the cuspidality of a point in~$\proj\RR$ depends on the choice of~$\Gamma$.

The \emph{limit set}~$\Lambda = \Lambda(\Gamma)$ \index[defs]{limit set}\index[symbols]{L@$\Lambda,\Lambda(\Gamma)$} of~$\Gamma$ is the set of accumulation points of~$\Gamma z$ for any~$z\in\uhp$ that is not fixed by an element of~$\Gamma$ other than the identity. The set~$\Lambda$ is independent of the choice of~$z$, and it is contained in~$\proj\RR$. Each element of~$\Lambda$ is called a \emph{limit point}\index[defs]{limit point}\index[defs]{point!limit} of~$\Gamma$. By a slight abuse\footnote{Classically, the set of ordinary points is the complement of~$\Lambda$ in~$\overline\uhp$, see the definition in~\cite[Chapter~III, Section~1]{Leh}.} of notion, we call 
\begin{equation}
\Omega \coloneqq  \Omega(\Gamma) \coloneqq  \proj\RR\smallsetminus\Lambda
\end{equation}
the set of \index[symbols]{O@$\Omega,\Omega(\Gamma)$}\index[defs]{ordinary point}\index[defs]{point!ordinary}\emph{ordinary points} of~$\Gamma$. The cuspidal points of~$\Gamma$ are contained in~$\Lambda$. We call a connected component of~$\Omega$ a \emph{funnel interval}\index[defs]{funnel interval} of~$\Gamma$. Each funnel interval contains at least one maximal interval of points that are pairwise non-equivalent under the action of~$\Gamma$. Any such maximal interval we call a \emph{funnel representative}.\index[defs]{funnel representative} A \emph{funnel}\index[defs]{funnel} is the~$\Gamma$-orbit of a funnel representative.

Let $X=\Gamma\backslash\uhp$ be the hyperbolic surface with fundamental group~$\Gamma$. We allow $X$ to have conical singularities, in which case $X$ is not a surface in the strict sense but an orbifold. A conical singularity\index[defs]{conical singularity} of~$X$ (a proper orbifold point) corresponds to an equivalence class of elliptic elements in~$\Gamma$. The fundamental group~$\Gamma$ is called \emph{geometrically finite}\index[defs]{group!geometrically finite} if it has a fundamental domain~$\mathfrak{F}$ in~$\uhp$ with finitely many geodesic sides. It is called \emph{cofinite}\index[defs]{group!cofinite} if the area of~$\mathfrak{F}$ is finite with respect to the hyperbolic measure induced by the invariant measure~$y^{-2}\, dx\,dy$ on~$\uhp$.

The surface~$X$ may have \emph{ends}. An \index[defs]{end}\emph{end} of~$X$ refers here to a connected component of the complement of the compact core of~$X$, or almost equivalently, a connected component of $X\smallsetminus K$ for a very large compact subset~$K$ of~$X$. (We refer to~\cite{Borthwick_book} for the notion of the compact core of hyperbolic surfaces.) The ends of~$X$ of finite area correspond to the cusps of~$\Gamma$, the ends of infinite area correspond to the funnels. 

We refer to Section~\ref{sec:hecke} for examples, where we present the Hecke triangle groups of infinite covolume, which are the Fuchsian groups whose automorphic forms we discuss in this article.

\subsection{Geodesics, resonances, and the Selberg zeta function}\label{sec:SZF}

The Selberg zeta function is an important entity in the study of the spectral theory of hyperbolic surfaces. In this article, we use this zeta function only for motivational purposes. Nevertheless, for completeness, we now provide its full definition and briefly recall some of its properties. In a nutshell, it is a generating function for the geodesic length spectrum of the hyperbolic surface under consideration whose zero set contains all resonances. As such, it is a mediator between geometric and spectral properties of hyperbolic surfaces. In what follows we give more details.

The (unit speed) \emph{geodesics}\index[defs]{geodesic}\index[defs]{geodesic!on~$\uhp$} on the upper half plane~$\uhp$ are precisely the images of the curve 
\[
 \RR\to\uhp\,,\quad t\mapsto ie^t
\]
under the action of~$\PSL_2(\RR)$. The images of the geodesics, the complete \index[defs]{geodesic arc}\emph{geodesic arcs}, are the vertical lines in~$\uhp$ and the euclidean semi-circles in~$\uhp$ with center in~$\RR$. 

Let $\Gamma$ be a geometrically finite Fuchsian group, and let 
\[
 \pi\colon \uhp \to \Gamma\backslash\uhp
\]
be the canonical projection map. The \emph{geodesics}\index[defs]{geodesic!on~$\Gamma\backslash\uhp$} on $X\coloneqq \Gamma\backslash\uhp$ are the images of the geodesics on~$\uhp$ under~$\pi$. That is, if $\gamma\colon \RR\to\uhp$ is a geodesic on~$\uhp$, then 
\[
 \hat\gamma\coloneqq \pi\circ\gamma \colon \RR\to X\,,\quad t\mapsto \pi(\gamma(t))
\]
is a geodesic on~$X$. Conversely, every geodesic on~$X$ arises in this way. 

Whereas the arcs of geodesics on~$\uhp$ are all very similiar to each other and have only simple shapes, those of the geodesics on~$X$ can be quite different from each other. In particular, a geodesic~$\hat\gamma$ on~$X$ might be \index[defs]{periodic geodesic}\index[defs]{geodesic!periodic}\emph{periodic}, i.\,e., there exists~$t_0>0$ such that for all~$t\in\RR$ we have 
\[
 \hat\gamma(t+t_0) = \hat\gamma(t)\,.
\]
In this case, the minimal such~$t_0$ is called the \emph{primitive period length} of~$\hat\gamma$. We consider two geodesics~$\hat\gamma$ and~$\hat\eta$ on~$X$ as \emph{equivalent}\index[defs]{equivalent geodesics}\index[defs]{geodesic!equivalent} if they trace out the same arc in the same orientation, that is, if there exists $s\in\RR$ such that $\hat\gamma(\cdot) = \hat\eta(\cdot + s)$. We remark that equivalent geodesics have the same primitive period length (if one of the geodesics, and hence the other, is periodic). We let $L_X$ denote the \emph{primitive geodesic length spectrum} of~$X$, that is the multiset of the primitive period lengths of all equivalence classes of periodic geodesics on~$X$.

The \index[symbols]{Z@$Z_X$}\index[defs]{Selberg zeta function}\emph{Selberg zeta function}~$Z_X$ of~$X$ is determined by the Euler product 
\begin{equation}
 Z_X(s) \coloneqq \prod_{\ell\in L_X} \prod_{k=0}^\infty \left(1-e^{-(s+k)\ell}\right)\,,
\end{equation}
which is known to converge for $\Rea s \gg 1$ and which has a meromorphic continuation to all of~$\CC$. See~\cite{Selberg, Venkov_book, Guillope}. 

The zero set of~$Z_X$ has a spectral meaning, as we will explain now. For accuracy, we remark that this spectral interpretation of the zeros of the Selberg zeta function is known for all hyperbolic surfaces of \emph{finite area}. In the case that the hyperbolic surface $X=\Gamma\backslash\uhp$ has \emph{infinite area}, the spectral interpretation has so far been established for \emph{torsion-free} fundamental groups~$\Gamma$ only. (See the references below.) However, we suppose that the existing methods apply to non-torsion-free, non-cofinite geometrically finite Fuchsian groups~$\Gamma$ as well and that the spectral interpretation remains valid. The results in this paper also support this speculation.

A \index[defs]{resonance}\emph{resonance} of the hyperbolic surface~$X$ (of finite or infinite area) is a pole of the meromorphic continuation of the resolvent 
\begin{equation}
 R(s) \coloneqq \bigl(\Delta - s(1-s)\bigr)^{-1}
\end{equation}
of the Laplace--Beltrami operator~$\Delta$ on~$X$, understood as a map 
\[
 R(s)\colon L^2_{\textnormal{comp}}(X) \to H^2_{\textnormal{loc}}(X)\,.
\]
(Here, $L^2_{\textnormal{comp}}(X)$ is the space of compactly supported $L^2$-functions, and $H^2_{\textnormal{loc}}(X)$ is the space of functions that are locally in the Sobolev space~$H^2(X)$.) In particular, each spectral parameter~$s$ of any $L^2$-eigenfunction of~$\Delta$ is a resonance. The zero set of~$Z_X$ consists of the resonances and the so-called topological zeros~\cite{Selberg, Hejhal1, Hejhal2, Venkov_book, Patterson_Perry, BJP_szf}. This relation gives both a spectral meaning to the zeros of the Selberg zeta function and a dynamical interpretation of the resonances, and it has been useful for proving many results on resonances and geodesics.

\subsection{Intervals and rounded neighborhoods}

In extension of the notion of intervals in~$\RR$, we  call connected subsets of~$\proj\RR$ also \emph{intervals}\index[defs]{interval in~$\proj\RR$}.  We take advantage of the cyclic order\index[defs]{cyclic order} of~$\proj\RR = \RR\cup\{\infty\}$ to write intervals in~$\proj\RR$ other than~$\emptyset$, $\proj\RR$ and~$\proj\RR\smallsetminus\{\text{point}\}$. Thus, for any~$a,b\in\RR$, in terms of intervals in~$\RR$, the interval $(a,b)_c\subseteq\proj\RR$\index[symbols]{$(\cdot,\cdot)_c$} equals
\begin{equation}\label{cycle_int}
(a,b)_c = 
\begin{cases}
 (a,b) & \text{if $a < b$\,,}
 \\
 (a,\infty)\cup\{\infty\}\cup (-\infty,b) & \text{if $a>b$\,,}
\end{cases}
\end{equation}
and analogously if~$a=\infty=-\infty$ or~$b=\infty$. In particular,
\[
 (\infty,b)_c = (-\infty,b)
\]
for any~$b\in\RR$. Note that the symbol~$(a,a)_c$ cannot be defined consistently to~\eqref{cycle_int}, and is therefore left undefined.

The real projective line~$\proj\RR$ is embedded into the complex projective line~$\proj\CC=\CC\cup\{\infty\}$ (in the canonical way). We now introduce properties of open subsets of~$\proj\CC$ that we will use further below (e.\,g., in Section~\ref{sec:real_complex}) for neighborhoods in~$\proj\CC$ of intervals in~$\proj\RR$. 

For~$z_0\in\CC$, $r>0$ we let\index[symbols]{B@$B_r(z)$}
\[
 B_r(z_0) \coloneqq  \left\{ z\in\CC \setmid |z-z_0|<r \right\}
\]
denote the open ball in~$\CC$ with center~$z_0$ and radius~$r$. Let~$U$ be an open subset of~$\proj\CC$, and~$x\in\RR$. We say that~$U$ is
\begin{itemize}
\item \emph{left-rounded at~$x$}\index[defs]{left-rounded} if $B_\eps(x-\eps) \subseteq U$ for some~$\eps>0$,
\item \emph{right-rounded at~$x$}\index[defs]{right-rounded} if $B_\eps(x+\eps)\subseteq U$ for some~$\eps>0$,
\item \emph{rounded at~$x$}\index[defs]{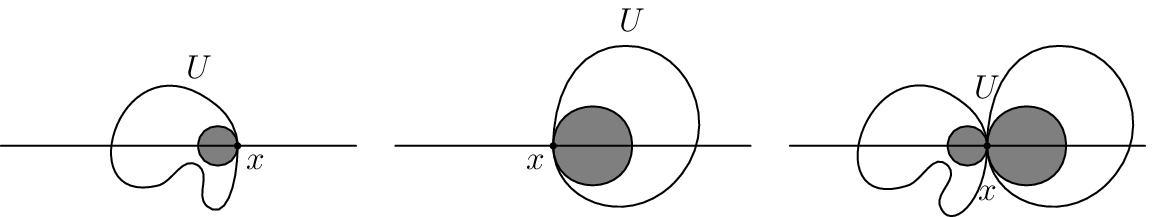} if $U$ is left- and right-rounded at~$x$.
\end{itemize}
Likewise, we say that $U$ is
\begin{itemize}
 \item \emph{left-rounded at~$\infty$} if there exists~$x_0\in\RR$ such that $\CC_{\Rea > x_0}\subseteq U$,
 \item \emph{right-rounded at~$\infty$} if there exists~$x_0\in\RR$ such that $\CC_{\Rea < x_0}\subseteq U$,
 \item \emph{rounded at~$\infty$}\index[defs]{rounded at~$\infty$} if $U$ is left- and right-rounded at $\infty$.
\end{itemize}
One easily checks that a set~$U$ is rounded at~$\infty$ if and only if for each~$g\in\PSL_2(\RR)$ the set~$gU$ is rounded at~$g\infty$ (cf.\@ \cite[Section~3.1.1, 3.3]{Adam_Pohl}). Thus, the properties of being rounded at some point is stable under conjugation by the elements in~$\PSL_2(\RR)$. Also the properties of being left- or right-rounded are stable. Figure~\ref{fig:rounded} illustrates the notions of left-, and right-roundedness, and of roundedness.
\begin{figure}
\centering
\includegraphics[width=0.95\linewidth]{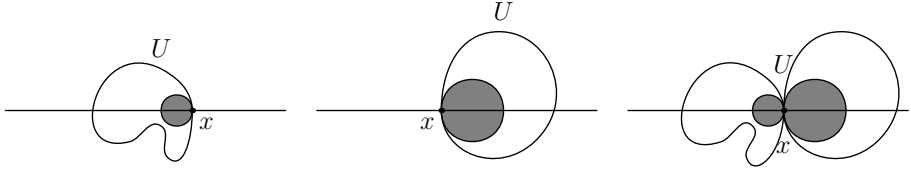}
\caption{Examples of open sets~$U$ that are left-, right-, and rounded at~$x$, respectively. The grey circles indicate the disks touching~$x$ from the left, right, and both sides, respectively.}\label{fig:rounded}
\end{figure}

\section{Hecke triangle groups with infinite covolume}\label{sec:hecke}
\markright{4. HECKE TRIANGLE GROUPS}

Hecke triangle groups were used by Hecke in his studies of functional equations for Dirichlet series. See \cite{Hecke_groups} or Hecke's lecture notes \cite[Chap.~II]{He}. The Hecke triangle groups with infinite covolume form a $1$-parameter family~$(\Gamma_\lambda)_{\lambda\in\RR_{>2}}$ of Fuchsian groups. More precisely, the \index[defs]{Hecke triangle group}\index[symbols]{Gaam@$\Gamma_\lambda$} Hecke triangle group~$\Gamma = \Gamma_\lambda$ with parameter~$\lambda\in\RR$, $\lambda>2$, is the subgroup of~$\PSL_2(\RR)$ generated by\index[symbols]{T@$T=\bmat 1\lambda01$}\index[symbols]{S@$S=\bmat 0{-1}10$}
\begin{equation}
 T=T_\lambda=\bmat 1\lambda01 \quad\text{and}\quad S=\bmat 0{-1}10\,.
\end{equation}
Figure~\ref{fig-fd} shows two fundamental domains for~$\Gamma\backslash\uhp$.\index[symbols]{F@$\fd_0$}\index[symbols]{F@$\fd_1$}
\begin{figure}
\centering
\includegraphics[width=0.95\linewidth]{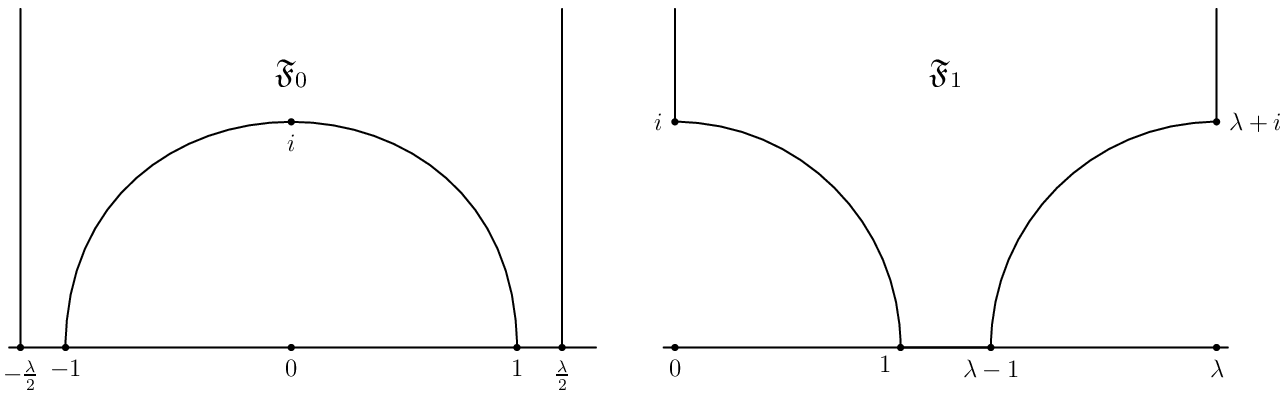}
\caption{Two fundamental domains for~$\Gm\backslash\uhp$.}\label{fig-fd}
\end{figure}

The Hecke triangle surface~$\Gamma\backslash\uhp$ has one cusp, one conical singularity, and one funnel. In~$\overline{\uhp}$, the cusp is represented by~$\infty$, and hence the set of all cuspidal points of~$\Gamma$ is the~$\Gamma$-orbit~$\Gamma\infty$. The conical singularity is represented by~$i$. The funnel of~$\Gamma\backslash\uhp$ is represented by~$[1,\lambda-1)$, and the set of ordinary points is
\begin{equation}
 \Omega = \bigcup_{\gamma\in\Gamma} \gamma[1,\lambda-1]\,.
\end{equation}
We can deduce the funnel interval~$I$ containing~$[1,\lambda-1]$ from the fundamental domain~$\fd_1$ in~Figure~\ref{fig-fd} (right) and its side-pairings, as explained in what follows. The side-pairings of~$\fd_1$ are given by~$T$ and~$TS$. The map~$T$ identifies the two vertical sides both ending in~$\infty$, and~$TS$ identifies the two sides which are touching the funnel representative~$[1,\lambda-1)$. The location and shape of the neighboring translates~$TS\fd_1$ and~$(TS)^{-1}\fd_1$ of~$\fd_1$ (see Figure~\ref{fig:funnel_interval}) show that 
\[
 TS[1,\lambda-1] \cup (TS)^{-1}[1,\lambda-1]\subseteq I\,.
\]
A straightforward induction yields that 
\[
 I = \bigcup_{n\in\ZZ} (TS)^n [1,\lambda-1] = (\fixTS^-,\fixTS^+)\,,
\]
where\index[symbols]{T@$\fixTS^\pm$}
\begin{equation}
 \fixTS^\pm \coloneqq \frac{\lambda \pm \sqrt{\lambda^2-4}}{2} 
\end{equation}
are the repelling~($-$) and attracting~($+$) fixed points of~$TS$. 

On the funnel interval~$(\fixTS^-,\fixTS^+)$, the cyclic group~$\langle TS\rangle$ acts discontinuously.  A fundamental domain for this action is given by~$[1,\lambda-1)$, which is a funnel representative. Since~$\Gamma\backslash\uhp$ has a single funnel, any other funnel interval is of the form~$g(\fixTS^-,\fixTS^+)$ for some~$g\in\Gamma$, and each such interval is a funnel interval. 

\begin{figure}
\centering
\includegraphics[width=0.95\linewidth]{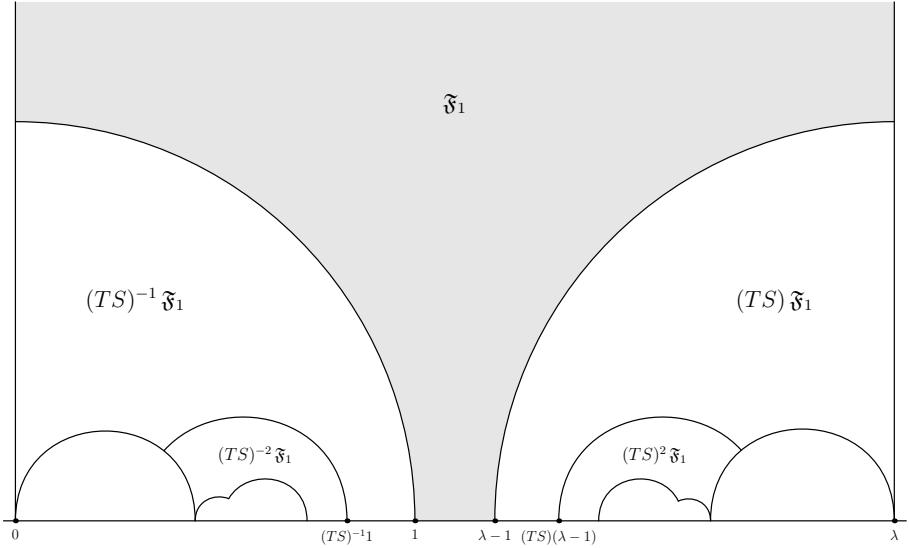}
\caption{Neighboring $\Gamma$-translates of~$\fd_1$ at the funnel.}
\label{fig:funnel_interval}
\end{figure}

The fundamental domain~$\fd_0$ of~$\Gm$ in~Figure~\ref{fig-fd} leads to the presentation
\begin{equation}\label{presentation_Gamma} \Gamma = \langle T,S \mid S^2 = 1 \rangle \end{equation}
by considering the neighboring fundamental domains (see
Figure~\ref{fig:Gamma_nbh}) or, equivalently, by applying Poincar\'e's Theorem on fundamental polyhedra~\cite{Maskit}.
\begin{figure}
\centering
\includegraphics[width=0.95\linewidth]{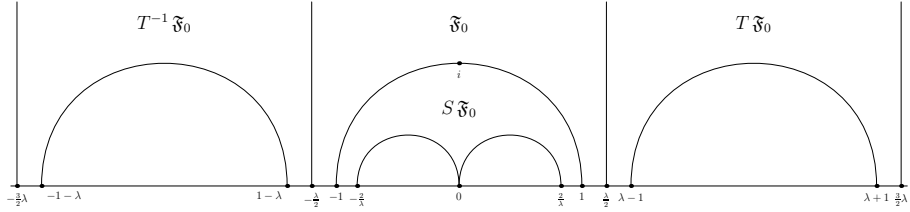}
\caption{Fundamental domain~$\fd_0$ for~$\Gamma$, and neighboring
translates.}\label{fig:Gamma_nbh}
\end{figure}

\section{Automorphic forms}\label{sect-daf}
\markright{5. AUTOMORPHIC FORMS}

We state the definitions of the relevant spaces of automorphic forms only specified for Hecke triangle groups, which is sufficient for our purposes. Their generalizations to arbitrary Fuchsian groups are straightforward. Throughout let~$\Gamma$ be a Hecke triangle group. 

For any~$s\in \CC$ we let $\E_s\coloneqq  \E_s^{\Gm}$\index[symbols]{Eaa@$\E_s$, $\E_s^\Gamma$} denote the space of~\emph{$\Gamma$-invariant Laplace eigenfunctions with spectral parameter}~$s$. Recall the hyperbolic Laplacian from~\eqref{Laplace}. Thus, the space~$\E_s$ consists of all functions~$u\colon\uhp\rightarrow\nobreak\CC$ that satisfy
\begin{enumerate}[(a)]
\item\label{constantorbit} $u(\gm  z) = u(z)$ for all~$\gm\in \Gm$, all~$z\in\uhp$, and
\item $\Delta u = s(1-s)\, u$.
\end{enumerate}
The $\Gamma$-invariance in~\eqref{constantorbit} shows that each~$u\in\E_s$ descends to a function on $\Gamma\backslash\uhp$. Hence we may as well consider $\E_s$ to be a space of functions $\Gamma\backslash\uhp\to\CC$. 

\subsection{Funnel forms of different types}\label{sec:def_ff} 

We define subspaces of~$\E_s$ by imposing conditions of regularity near the funnel and of growth at the cusp of~$\Gamma\backslash\uhp$. Recall that $\Omega$ denotes the set of ordinary points of~$\Gamma$ in~$\proj\RR$. 

We say that a function~$u\in\nobreak\mc E_s$ is a \emph{funnel form} if for each open interval~$I\subseteq\Omega$ the map
\begin{equation}
 \uhp\to\CC\,,\quad z\mapsto y^{-s}u(z)
\end{equation}
extends to a real-analytic function on a neighborhood of~$I$ in~$\CC$. In other words, there exists a neighborhood~$U$ of~$I$ in~$\CC$ and a real-analytic function~$A$ on~$U$ such that
\begin{equation}\label{def_ff}
u(z) = y^s \, A(z)\quad\text{for all~$z\in U\cap\uhp$\,.}
\end{equation}
We call this property \index[defs]{$s$-analytic boundary behavior}\emph{$s$-analytic boundary behavior near~$I$}, and $A$ a \index[defs]{real-analytic core}\emph{real-analytic core of~$u$ near~$I$}.  We denote the space of funnel forms by~$\A_s\coloneqq  \A_s(\Gamma)$.\index[defs]{funnel form}\index[symbols]{AAA@$\A_s, \A_s(\Gamma)$}

We remark that the map~$u$ is real-analytic on~$\uhp$ as being a Laplace eigenfunction. Thus, also the core~$A$ is real-analytic on all of~$\uhp$, and the requirement in~\eqref{def_ff} is on the real-analytic extendability of $z\mapsto y^{-s}u(z)$ beyond~$\uhp$. In particular, we may always assume that $U$ contains all of~$\uhp$, a property that will simplify the proof of Lemma~\ref{lem-io} below.

In view of Theorem~\ref{thmA_new}, the definition of funnel forms is seen to be natural. One of our objectives is to find an identification of those period functions that do not necessarily satisfy any further restrictions with suitable Laplace eigenfunctions. The $s$-analytic boundary behavior is just the right property of Laplace eigenfunctions to be required at the funnel in order to reach this goal. 

Since $\Gamma$ is a Hecke triangle group, the set~$\Omega$ of ordinary points of~$\Gamma$ is the union
\[
 \Omega = \bigcup_{\gamma\in\Gamma} \gamma (\fixTS^-,\fixTS^+)\,,
\]
where $\fixTS^\pm$ are the attracting and repelling fixed points of~$TS$ (see Section~\ref{sec:hecke}). This union can be made disjoint by restricting to only certain~$\Gamma$-translates of the interval~$(\fixTS^-,\fixTS^+)$. More precisely, let~$R\subseteq\Gamma$ be a set of representatives for the coset spaces~$\Gamma/\langle TS\rangle$. Then
\begin{equation}\label{Omega_funddom}
 \Omega = \bigsqcup_{\gamma \in R} \gamma(\fixTS^-,\fixTS^+) \qquad\text{(disjoint union).}
\end{equation}
The property of having $s$-analytic boundary behavior is stable under the action of~$\PSL_2(\RR)$. In other words, if $u\in\E_s$ has $s$-analytic boundary behavior near the interval~$I\subseteq\Omega$, and $g$ is an element in~$\Gamma$ then, using~\eqref{constantorbit}, one easily checks that $u$ also has \mbox{$s$-}ana\-lytic behavior near~$gI$. From~\eqref{Omega_funddom} it thus follows that $u\in \E_s$ is a funnel form if and only if~$u$ satisfies~\eqref{def_ff} for~$I=(\fixTS^-,\fixTS^+)$.

We define the space~$\A_s^1 \coloneqq  \A_s^1(\Gamma)$ of \emph{resonant funnel
forms}\index[defs]{resonant funnel form}\index[defs]{funnel form!resonant}\index[symbols]{AAC@$\A_s^1,\A_s^1(\Gamma)$} to be all those funnel forms~$u\in \A_s$ for which there exist constants~$p\in \CC$ and~$c>0$ (both depending on~$u$) such that
\begin{equation}\label{resgrwth} 
u(z) \ceqq  p\, y^{1-s} + \oh\left( e^{-cy}\right)\qquad\text{as $y\uparrow\infty$}\,. 
\end{equation}

Finally, we define the space~$\A_s^0\coloneqq  \A_s^0(\Gm)$ of \emph{cuspidal funnel forms}\index[defs]{cuspidal funnel form}\index[defs]{funnel form!cuspidal}\index[symbols]{AAE@$\A_s^0,\A_s^0(\Gamma)$} to be the subspace of~$\A_s^1$ consisting of those funnel forms~$u$ for which there exists~$c>0$ (depending on~$u$) such that
\begin{equation}\label{expdecay} 
u(z) \ceqq  \oh\left( e^{-cy}\right)\quad\text{as $y\uparrow\infty$}\,.
\end{equation}

We note that even though~$\E_s =\E_{1-s}$, each of the defined subspaces~$\A_s$, $\A_s^1$, $\A_s^0$ of forms with spectral parameter~$s$ might be different from the corresponding subspace for the spectral parameter~$1-s$.

\subsection{Fourier expansion}
The subspaces~$\A_s^1$ and~$\A_s^0$ are characterized within $\A_s$ by properties of the Fourier expansions at the cusp.

For each~$s\in\CC$, the functions in~$\E_s$ are $\lambda$-periodic along the real axis. If $s\not=1/2$, then the Fourier expansion\index[defs]{Fourier expansion} of~$u\in\E_s$ at the cusp~$\infty$ is given by (see \cite[Section~1.2 and 8.1]{BLZm})
\begin{equation}\label{Feu}
\begin{aligned}
u(z) &= A_0 \lambda^{s-1} y^{1-s} + y^\frac12 \lambda^{s-\frac12}\sum_{\substack{n\in\ZZ\\ n\not=0}} A_n K_{s-\frac12} \left( 2\pi |n| \frac{y}{\lambda}\right) e^{2\pi i n \frac{x}{\lambda}}
 \\
  & \quad + B_0\lambda^{-s} y^s + y^\frac12 \Gamma\left(s+\tfrac12\right) \pi^{\frac12 - s} \lambda^{s-\frac12}
  \\
  & \hphantom{ \quad + B_0\lambda^{-s} y^s + y^\frac12 \Gamma\left(s+\tfrac12\right) } \times\sum_{\substack{n\in\ZZ\\ n\not=0}} B_n |n|^{\frac12-s} I_{s-\frac12}\left( 2\pi |n| \frac{y}{\lambda}\right) e^{2\pi i n \frac{x}{\lambda}}\,,
\end{aligned}
\end{equation}
where~$I_{\nu}$ and~$K_{\nu}$ are the modified Bessel functions (exponentially increasing and exponentially decreasing, respectively). For~$n\in\ZZ$, the coefficients~$A_n, B_n$ are suitable complex numbers (independent of~$x$ and~$y$). 

If~$s=1/2$, then the zero-th term of the Fourier expansion of $u\in\E_s$ has a different form than in~\eqref{Feu}. For $u\in\E_{\frac12}$ the expansion is 
\begin{equation}
\begin{aligned}
 u(z) &= A_0 y^\frac12 + y^\frac12 \sum_{\substack{n\in\ZZ\\ n\not=0}} A_n K_{0} \left( 2\pi |n| \frac{y}{\lambda}\right) e^{2\pi i n \frac{x}{\lambda}}
\\
 & \quad 
 + B_0 y^\frac12 \ln y + y^\frac12 \sum_{\substack{n\in\ZZ\\ n\not=0}} B_n  I_{0}\left( 2\pi |n| \frac{y}{\lambda}\right) e^{2\pi i n \frac{x}{\lambda}}\,.
\end{aligned}
\end{equation}
For any~$s\in\CC$, the factor in front of~$I_{s-1/2}$ ensures that the terms behave like~$y^s$ as~$y\downarrow 0$. See the discussion in \cite[Sections~1.2 and 8.1]{BLZm}. 

The subspace~$\A_s^1$ consists of those~$u\in\A_s$ for which $B_n=0$ for all~$n\in\ZZ$. The subspace~$\A_s^0$ is characterized by the additional property that also~$A_0=0$.

\section{Principal series}\label{sect-psa} 
\markright{6. PRINCIPAL SERIES}
For the transfer operators, defined in Section~\ref{sect-to} below, as well as the modules of the cohomology spaces, defined in Chapter~\ref{part:cohom} below, the spaces and the action of the principal series representations of~$\PSL_2(\RR)$ are crucial. For these, various models are known. (See, e.\,g., \cite[Section~2.1]{BLZ13}) We present the model that we will use for our applications, the line model.

Within the line model, the spaces of the principal series representations of the Lie group~$\PSL_2(\RR)$ are certain spaces of functions on~$\proj\RR$. In order to provide explicit formulas, we use two coordinate charts for the (one-dimensional, smooth) manifold~$\proj\RR=\RR\cup\{\infty\}$, namely $(\RR,\id)$ and $(\RR,S)$, where~$S\colon t\mapsto -1/t$ is identified with the action of the element~$S=\textbmat{0}{1}{-1}{0}$ on $\proj\RR$. Thus, the image of the chart $(\RR,\id)$ is $\RR=\proj\RR\smallsetminus\{\infty\}$, and the image of the chart $(\RR,S)$ is $\proj\RR\smallsetminus\{0\}$. The change of charts is given by $t\mapsto -1/t$, thus by the action of~$S$. With respect to this manifold atlas, each function on~$\proj\RR$ is given by a pair~$(f, f_\infty)$ of functions 
\[
 f,f_\infty\colon \RR\to \CC\,.
\]
To be element of a representation space of the principal series of~$\PSL_2(\RR)$ \emph{with spectral parameter~$s\in\CC$}, one of the conditions the pair $(f,f_\infty)$ needs to satisfy is the relation 
\[
 f(t) = |t|^{-2s} f_\infty\bigl(-1/t\bigr)\quad\text{for $t\in\RR\smallsetminus\{0\}$\,.}
\]
In addition we will either require regularity properties that will allow to determine the value of~$f_\infty$ at~$0$ from the knowledge of the behavior of $f_\infty$ in a punctured neighborhood of~$0$, and hence from knowing the function~$f$, or we will not be interested in the precise value of $f_\infty$ at~$0$. We refer to the discussions below for further details. Therefore, by slight abuse of notation, we identify $f$ with the pair~$(f,f_\infty)$ and consider~$f$ as a function on all of~$\proj\RR$ that is `well-behaved' for $|t|\to\infty$ .

The action of~$\PSL_2(\RR)$ on the spaces of the principal series representations  with spectral parameter~$s$ is given by\index[symbols]{T@$\tau_s(g)$}
\begin{equation}\label{taudef} 
\left(\tau_{s}\big(g^{-1}\big) f\right)(t) \coloneqq  f\vert_{2s}g(t) \coloneqq  |ct+d|^{-2s} \, f\left( \frac{at+b}{ct+d}\right)\,,
\end{equation}
where 
\[
 g=\bmat abcd \, \in \PSL_2(\RR)\,.
\]
(We note that we assume that $f\colon \proj\RR\to\CC$ is sufficiently well-behaved for $|t|\to\infty$ such that~\eqref{taudef} is well-defined for~$t=-d/c$.) Throughout we will work with right modules for the representation spaces, which causes the inverse~$g^{-1}$ in~\eqref{taudef}. We refer to~\cite{BLZ13} and~\cite[\S2]{BLZm} for more details and other models.

\subsection{Regularity at infinity}\label{sec:regularity}

Let~$s\in\CC$, let~$I\subseteq\proj\RR$ be an open interval with~$\infty\in I$, and let~$f\colon I\to\CC$ be a function. We say that~$f$ is \emph{real-analytic at~$\infty$ for the spectral parameter~$s$}\index[defs]{real-analytic at~$\infty$} if and only if there exists~$g\in\PSL_2(\RR)$ such that $gI\subseteq\RR$ and the map~$\tau_s(g)f$ is real-analytic at~$g\infty$ (that is, $\tau_s(g)f$ is real-analytic in a neighborhood of~$g\infty$). A standard choice for~$g$ is
\begin{equation}\label{standardchoice}
 g = S = \bmat 0{-1}10\,,
\end{equation}
resulting in the characterization that $f$ is real-analytic at~$\infty$ if and only if for all sufficiently large~$|t|$, the function~$f$ is given by a power series in~$-1/t$ times~$|t|^{-2s}$, thus
\begin{equation}
 f(t) = |t|^{-2s} \sum_{n=0}^\infty c_nt^{-n}\,.
\end{equation}
Analogously, we define the notions of \emph{smoothness ($C^\infty$) at~$\infty$}\index[defs]{smooth at~$\infty$}\index[defs]{$C^\infty$ at~$\infty$} or any other type of regularity at~$\infty$ (e.\,g., $C^p$, $p\in\RR_{\geq 0}$, etc.).\index[defs]{regularity at~$\infty$} In particular, smoothness of~$f$ at~$\infty$ is characterized by the existence of an asymptotic expansion
\begin{equation}
 f(t) \sim |t|^{-2s} \sum_{n=0}^\infty c_nt^{-n} \qquad\text{as $|t|\to\infty$\,.}
\end{equation}
We remark that the notion of real-analyticity, smoothness, etc., at~$\infty$ does not depend on the choice of~$g\in\PSL_2(\RR)$ with~$g\infty\in\RR$. 

We further remark that for any open subset~$U\subseteq \proj\RR$, any point~$x\in U$ and any element $g\in\PSL_2(\RR)$, the action~$\tau_s(g)$ preserves real-analyticity and smoothness at~$x$. That is, for any function~$h\colon U\to\CC$, the function~$h$ is real-analytic or smooth at~$x$ if and only if~$\tau_s(g)h$ is real-analytic or smooth at~$gx$, respectively.

In Section~\ref{sec:PGL} we will define the extension of~$\tau_s$ to~$\PGL_2(\RR)$. In anticipation of this definition we remark that all of the definitions and characterizations in the present section extend without changes to the action by elements in~$\PGL_2(\RR)$.

\subsection{Presheaves and sheaves}\label{sec:sheaves}

Let~$s\in\CC$. For any open subset~$I\subseteq \proj\RR$ and different types of conditions~\tcond on functions on~$I$ (e.\,g., regularity, growth, local behavior) we set\index[symbols]{Vaaaa@$\V s \cond$}\index[symbols]{Vaaab@$\V s \om$}
\begin{equation}
 \V s \cond (I) \coloneqq  \left\{ f\colon I \to \CC \setmid \text{$f$ satisfies~\tcond} \right\}\,.
\end{equation}
We refer to Chapter~\ref{part:cohom} for the list of conditions that we will use. Since all of these conditions are local and $(\Gamma,\tau_s)$-equivariant for any Hecke triangle group~$\Gamma$ (even for any Fuchsian group~$\Gamma$), we obtain $(\Gamma,\tau_s)$-equivariant presheaves or sheaves~$\V s \cond$ with bijective linear transformation maps
\[
 \tau_s(g)\colon \V s \cond (I) \to \V s \cond (gI)\qquad (g\in\Gamma)\,.
\]

\subsection{Holomorphic extensions}\label{sec:holomext}

For the notion of complex period functions, we will consider real-analytic functions~$f$ on some open subset~$I\subseteq\RR$ (or~$\proj\RR$) and apply the definition in~\eqref{taudef} to their extension to a holomorphic function on some open neighborhood of~$I$ in~$\CC$ (or~$\proj\CC$). See, e.\,g., Section~\ref{sect-to}. This extension of the definition in~\eqref{taudef} can only be applied with caution as explained in what follows.

Let~$s\in\CC$, let~$\Gamma$ be a Hecke triangle group, and let~$I\subseteq\proj\RR$ be an open subset. Any real-analytic function~$f\colon I\to\CC$ has an holomorphic extension to some open neighborhood of~$I$ in~$\proj\CC$. For every~$g\in\Gamma$, the function~$\tau_{s}(g)f$ has a holomorphic extension as well. However, in whatever way we choose the holomorphic extension of the automorphy factor~$|ct+\nobreak d|^{-2s}$ in~\eqref{taudef}, in general, the representation property 
\begin{equation}\label{nearly_repr}
 \tau_s(g_1)\tau_s(g_2)f = \tau_s(g_1g_2)f
\end{equation}
does not extend to outside of~$\proj\RR$. In other words, $\tau_s$ does not define a representation of~$\Gamma$ on spaces of holomorphic functions. 

However, in all situation where we will make use of the relation~\eqref{nearly_repr} for holomorphic functions~$f$, we will only consider a certain semi-subgroup of~$\Gamma$. For this semi-subgroup, a common choice of the holomorphic extension of the automorphy factor in~\eqref{taudef} is indeed possible. We refer to Section~\ref{sec:real_complex} below, in particular to Proposition~\ref{prop-FE-RC}, for a prominent example of such a situation.

\section{Transfer operators and period functions}\label{sect-to}
\markright{7. TRANSFER OPERATORS AND PERIOD FUNCTIONS}

Throughout let~$\Gamma$ be a Hecke triangle group. The families of slow and fast transfer operators for~$\Gamma$ that we will use in Theorems~\ref{thmA_new} and~\ref{thmB_new} have been developed in~\cite{Pohl_hecke_infinite}. More precisely, the family~$\big(\tro s\big)_{s\in\CC}$ of slow transfer operators is exactly the one from~\cite{Pohl_hecke_infinite}; we will review it in Section~\ref{sec:slowTO}. The family~$\big(\ftro s\big)_{s\in\CC}$ of fast transfer operators arises by a reduction and simplification of the one in~\cite{Pohl_hecke_infinite}, capturing the essential spectral properties of the latter. The transfer operators~$\ftro s$ can also be derived directly from the slow transfer operators~$\tro s$. In Section~\ref{sec:fastTO} we will discuss both ways of their construction in more details. All these transfer operators arise directly from the discretizations of the geodesic flow on the unit sphere bundle of~$\Gm\backslash \uhp$ that were developed in~\cite{Pohl_Symdyn2d, Pohl_hecke_infinite}, or are closely related to those. We will briefly review these discretizations in Section~\ref{sec:discr}.

As already mentioned in the Introduction, period functions are eigenfunctions of the transfer operators with eigenvalue~$1$. Precise definitions will be given in Section~\ref{sec:periodfunctions}, and the relation between the space of real period functions and the space of complex period functions will be discussed in Section~\ref{sec:real_complex}.

\subsection{Discretizations and transfer operators}\label{sec:discr}

For any sufficiently nice discrete dynamical system~$F\colon D\to D$ (e.\,g., $D$ being an open subset\footnote{In the construction of the transfer operators below, $D$ will be a subset of a disjointified union of open sets of~$\RR$. The transfer operator itself acts initially on a space of functions with domain~$D$. The transfer operator is then extended in a continuous way to a space of functions with an open set as domain, which in a certain sense, is an open hull of~$D$. For details we refer to the detailed construction in~\cite{Pohl_hecke_infinite}. } of~$\RR$, and $F$ being a differentiable function with at most countable preimages) the associated transfer operator~$\TO_{s,F}$ with parameter~$s\in\CC$ is defined by
\begin{equation}
 \TO_{s,F}f(x) \coloneqq  \sum_{y\in F^{-1}(x)} \big| F'(y)\big|^{-s} f(y)\,,
\end{equation}
acting on appropriate spaces of functions~$f$ on~$D$. The choice of these spaces depends on the applications.  The transfer operators in~\cite{Pohl_hecke_infinite} are associated to a discrete dynamical system that arises from the discretization of the geodesic flow on $\Gamma\backslash\uhp$ as provided in \cite{Pohl_Symdyn2d, Pohl_hecke_infinite}. We briefly recall the rough structure of such discretizations and refer to \cite{Pohl_Symdyn2d, Pohl_hecke_infinite} for details. 

Throughout, for any vector~$v$ in the unit tangent bundle~$\shp$ of~$\uhp$, we let $\gamma_v$ denote the geodesic on~$\uhp$ determined by~$v$. Thus, $\gamma_v$ is the unique geodesic satisfying 
\begin{equation}
 \gamma_v'(0) = v\,.
\end{equation}
Likewise, for any vector~$\hat v\in\Gamma\backslash \shp$ of the unit tangent bundle of~$\Gamma\backslash\uhp$, we let $\hat\gamma_v$ denote the geodesic on~$\Gamma\backslash\uhp$ determined by~$\hat v$. Thus
\begin{equation}
 \hat\gamma_v\!'(0) = \hat v\,.
\end{equation}
A \emph{(strong) cross section}\index[defs]{cross section} for the geodesic flow on~$\Gamma\backslash\uhp$ is a subset~$\hat C$ of~$\Gamma\backslash \shp$ such that 
\begin{itemize}
\item every geodesic~$\hat\gamma$ on~$\Gamma\backslash\uhp$ intersects~$\hat C$ in a discrete sequence of times, which is either empty or `future-infinite.' More precisely, if we consider the derivative of~$\hat\gamma$ as a curve in~$\Gamma\backslash \shp$, thus
 \[
  {\hat\gamma}\,'\colon \RR\to\Gamma\backslash \shp\,,
 \]
then the set 
\[
 \left\{ t\in\RR_{\geq 0} \setmid \hat\gamma'(t) \in \hat C \right\}
\]
is required to be discrete in~$\RR$ and either empty or infinite. This set may and does depend on the specific geodesic~$\hat\gamma$. 
\item every periodic geodesic of~$\Gamma\backslash\uhp$ intersects~$\hat C$.
\end{itemize}
The \emph{first return map}~$\hat R\colon\hat C\to \hat C$ \index[defs]{first return map} of a cross section~$\hat C$ is the map which assigns to~$\hat v\in \hat C$ the vector~$\hat\gamma_v\!'(t_0)\in\hat C$, where 
\begin{equation}\label{eq:def_firstreturn}
 t_0 \coloneqq  \min\left\{ t>0 \setmid \hat\gamma_v\!'(t) \in \hat C\right\}
\end{equation}
is the \emph{first return time} of~$\hat v$.\index[defs]{first return time} The first return map is a discretization of the geodesic flow on~$\Gamma\backslash\uhp$.

In~\cite{Pohl_Symdyn2d, Pohl_hecke_infinite} a construction of a cross section~$\hat C$ is proposed for which there exists a subset~$C^*$ of~$\shp$ that contains exactly one representative of each element in~$\hat C$ and which decomposes into the two sets 
\begin{itemize}
\item $C^*_1$, which consists of all unit tangent vectors~$v\in \shp$ that are based on the geodesic on~$\uhp$ connecting~$-1$ and~$\infty$, and for which the geodesic~$\gamma_v$ ends in the interval~$(-1,\infty)_c$ (that is, $\gamma_v(\infty)\in (-1,\infty)_c$) but not in a funnel interval or a cuspidal point, and 
\item $C^*_2$, which consists of all unit tangent vectors~$v\in \shp$ that are based on the geodesic on~$\uhp$ connecting~$1$ and~$\infty$, and for which the geodesic~$\gamma_v$ ends in the interval~$(\infty,1)_c$  but not in a funnel interval or a cuspidal point.
\end{itemize}
The sets~$C^*_1$ and~$C^*_2$ are visualized in Figure~\ref{fig:slowTO}, the directions of the unit tangent vectors belonging to~$C^*_1$ and~$C^*_2$ are indicated in grey.

\begin{figure}
\centering
\includegraphics[width=0.95\linewidth]{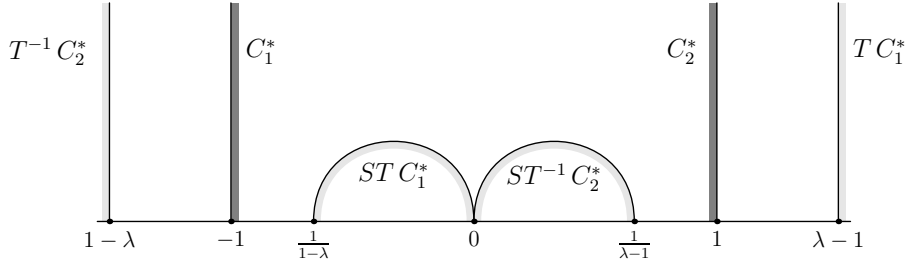}
\caption{Cross section for slow transfer operator.}\label{fig:slowTO}
\end{figure}

The set~$C^*$ determines a map~$F$ on a certain subset~$D$ of
\[
\bigl((-1,\infty)_c\times\{1\}\bigr) \cup \bigl((\infty,1)_c\times\{2\}\bigr)
\]
as follows: For any~$\hat v\in \hat C$ let~$v$ denote the (unique) representative of~$\hat v$ in~$C^*$. Set (`coding bit of~$\hat v$')
\begin{equation}
 b(\hat v) \coloneqq
 \begin{cases}
  1 & \text{if $v\in C^*_1$\,,}
  \\
  2 & \text{if $v\in C^*_2$\,.}
 \end{cases}
\end{equation}
Let 
\[
 \pr\colon \hat C\to \RR\times\{1,2\},\quad \hat v\mapsto \bigl(\gamma_v(\infty),b(\hat v)\bigr)
\]
denote the map which assigns to~$\hat v\in \hat C$ the future endpoint of the geodesic on~$\uhp$ determined by the representative~$v\in C^*$ of~$\hat v$ and its coding bit~$b(\hat v)$. Then~$F$ is the unique map for which the diagram
\[
\xymatrix{
\hat C \ar[r]^{\hat R} \ar[d]_{\pr} & \hat C  \ar[d]^{\pr}
\\
D \ar[r]^F & D
}
\]
commutes. 

The set~$D$ consists of the endpoints~$\gamma_v(\infty)$ of the geodesics~$\gamma_v$ determined by the elements~$v\in C^*$, and a symbol that captures if $\gamma_v$ intersects $C^*_1$ or $C^*_2$ (in other words, if $v\in C^*_1$ or $v\in C^*_2$). Thus,
\[
 D \;=\; \Big(\bigl( (-1,\infty)_c\smallsetminus( \Omega(\Gamma) \cup \Gamma\infty) \bigr) \times \{1\}\Big) \cup \Big(\bigl( (\infty,1)_c\smallsetminus( \Omega(\Gamma) \cup \Gamma\infty) \bigr) \times \{2\}\Big)\,.
\]

The map~$F$ can be read off from Figure~\ref{fig:slowTO} in the following way: For~$(x,j)\in\nobreak D$ we pick a vector~$v\in C^*_j$ such that~$\gamma_v(\infty)=x$. Let~$t_0>0$ be the first return time of~$v$, that is (cf.~\eqref{eq:def_firstreturn})
\begin{equation}
 t_0 \coloneqq \min\{ t>0 \setmid \gamma_v'(t)\in \Gm\,C^* \}\,,
\end{equation}
and suppose that $\gamma_v'(t_0)\in g\,C^*_k$ ($g\in\Gm$). (We note that~$g$ and~$k$ are uniquely determined.) Then
\begin{equation}
 F\bigl(x,j\bigr) = \bigl(g^{-1}\,x, k\bigr)\,.
\end{equation}
One easily sees that~$g$ and~$k$ do not depend on the choice of~$v$. We refer to~\cite{Pohl_Symdyn2d, Pohl_hecke_infinite} for more details. 

The discrete dynamical system~$(D,F)$ gives rise to the \emph{slow transfer operators}, defined in Section~\ref{sec:slowTO} below. An induction of the system~$(D,F)$ on the parabolic elements then gives rise to the \emph{fast transfer operators} from~\cite{Pohl_hecke_infinite}. This induction process is equivalent to work with a certain sub-cross section of~$\hat C$; it has the effect that the arising discrete dynamical system is uniformly expanding. 

The motivation to call these transfer operators \emph{slow} or \emph{fast} originates in the properties of the underlying discrete dynamical systems. In both cases, the discrete dynamical systems are piecewise given by fractional linear transformation of certain elements in~$\Gamma$. For slow transfer operators, the discrete dynamical system is finitely branched. More precisely, it decomposes into finitely many pieces only, and hence is reminiscent of a slow continued fraction algorithm (equivalently, a Farey algorithm). Whereas for fast transfer operators, the discrete dynamical system is infinitely branched and reminiscent of a fast continued fraction algorithm.

\subsection{Slow transfer operators}\label{sec:slowTO}

As mentioned in Section~\ref{sec:discr}, the \emph{slow transfer operator}~$\tro s$ arises from the discretization of the geodesic flow on~$\Gamma\backslash \shp$ that is provided in~\cite{Pohl_hecke_infinite} (a simplication of the discretization in~\cite{Pohl_Symdyn2d}). The operator~$\tro s$ is (see \cite{Pohl_hecke_infinite})\index[defs]{slow transfer operator}\index[defs]{transfer operator!slow}\index[symbols]{L@$\tro s$}
\begin{equation}\label{stroGm} 
\tro s \ceqq 
\begin{pmatrix} \tau_s(T^{-1}S) + \tau_s(T^{-1}) & \tau_s(T^{-1}S) \\
\tau_s(TS) & \tau_s(TS) + \tau_s(T)
\end{pmatrix},
\end{equation}
acting on functions vectors 
\[
 \begin{pmatrix} f_1\colon (-1,\infty)_c\to\CC \\ f_2\colon (\infty,1)_c\to\CC \end{pmatrix}\,.
\]
Well-definedness of this operator follows directly from its geometric construction, for details we refer to~\cite{Pohl_hecke_infinite}. It can also easily be checked by a straightforward calculation. The transfer operator~$\tro s$ can be considered to act on various spaces. In what follows we present the spaces that we will use.

To simplify notation we use the following conventions: For any sets~$D_1, D_2$ we let 
\[
 D_1 \uplus D_2
\]
denote the \emph{disjointified union} of~$D_1$ and~$D_2$. \index[defs]{disjointified union}\index[symbols]{$\uplus$} If the sets~$D_1$ and $D_2$ are \emph{disjoint}, then their disjointified union equals their disjoint union, which we also denote by\index[symbols]{$\sqcup$}
\[
 D_1 \sqcup D_2\,.
\]
If the sets~$D_1, D_2$ are \emph{not disjoint initially}, then forming their disjointified union includes to \emph{disjointify}~$D_1$ and $D_2$ \index[defs]{disjointification} (i.\,e., to consider them as being disjoint) by, e.\,g., using the identifications $D_1\cong D_1\times\{1\}$ and $D_2\nobreak\cong\nobreak D_2\nobreak\times\nobreak\{2\}$. The disjointified union is then identified with the disjoint union of the disjointified sets, thus
\[
  D_1 \uplus D_2 \cong \bigl(D_1\times\{1\}\bigr) \sqcup \bigl(D_2\times\{2\}\bigr)\,.
\]
In what follows we will always suppress the identification of~$D_j$ with $D_j\times\{j\}$ ($j=1,2$) from the notation. Further, for open subsets~$D_1, D_2\subseteq\nobreak\RR$ or $D_1,D_2\subseteq\CC$ and $p\in \ZZ_{\geq 0}\cup\{\infty, \omega\}$ we will use throughout the natural identification\index[symbols]{C@$C^p(D_1\uplus D_2)$}
\[
 C^p(D_1\uplus D_2) \cong C^p(D_1) \times C^p(D_2)\,,\qquad f \leftrightarrow \big(f\vert_{D_1}, f\vert_{D_2}\big)\,.
\]

From~\eqref{stroGm} we immediately see that the transfer operator~$\tro s$ (for any~$s\in\CC$) acts as a linear map on any of the spaces
\[
C^p\big((-1,\infty)_c \uplus (\infty,1)_c\big)\,,\quad p\in\ZZ_{\geq 0}\cup\{\infty,\omega\}\,.
\]
We set \index[symbols]{D@$D_\RR$}
\begin{equation}\label{eq:def_DR}
 D_\RR \coloneqq  (-1,\infty)_c \uplus (\infty,1)_c
\end{equation}
and will make particular use of the space $C^\omega(D_\RR)$.\index[symbols]{C@$C^\omega(D_\RR)$}

We will also need domains of definition for~$\tro s$ that consists of extensions of the spaces~$C^p(D_\RR)$ to spaces of functions with complex domains. Working out the expression for the element~$\tro s f$ in~\eqref{stroGm} with~$f=(f_1,f_2)$ we find
\begin{align}\label{TO1}
 (\tro s f)_1(t) & =  |t+\lambda|^{-2s}f_1\left( \frac{-1}{t+\lambda} \right) + f_1(t+\lambda) + |t+\lambda|^{-2s} f_2\left(\frac{-1}{t+\lambda}\right)
 \intertext{for $t\in (-1,\infty)_c$, and}
 \label{TO2}
 (\tro s f)_2(t) & = |t-\lambda|^{-2s}f_1\left(\frac{1}{\lambda-t}\right) + |t-\lambda|^{-2s}f_2\left(\frac{1}{\lambda-t}\right) + f_2(t-\lambda)
\end{align}
for $t\in (\infty,1)_c$. We extend the automorphy factor~$|t+\lambda|^{-2s}$ in~\eqref{TO1} holomorphically to~$\CC\smallsetminus (-\infty,-\lambda]$ by 
\[
z\mapsto (z+\lambda)^{-2s}\,,
\]
and we extend the automorphy factor~$|t-\lambda|^{-2s}$ in~\eqref{TO2} holomorphically to the domain~$\CC\smallsetminus [\lambda,\infty)$ by 
\[
 z\mapsto (\lambda-z)^{-2s}\,.
\]
With these extensions, one easily sees that the transfer operator~$\tro s$ also acts on the spaces~$C^p(D_\CC)$, $p\in\RR_{\geq 0}\cup\{\infty,\omega\}$, where\footnote{Our discussion shows that we may be able to use larger domains in the function spaces. We refer to Section~\ref{sec:real_complex} below for an extended discussion. For our applications, $D_\CC$ is sufficient.} \index[symbols]{D@$D_\CC$}
\[
 D_\CC \coloneqq  \big(\CC\smallsetminus (-\infty,-1]\big)\uplus \big(\CC\smallsetminus [1,\infty)\big)\,.
\]
We will mainly use~$C^\omega(D_\CC)$. \index[symbols]{C@$C^\omega(D_\CC)$} (We recall that for~$C^\omega(D_\RR)$ the~$\omega$ refers to real-analytic functions, but for~$C^\omega(D_\CC)$ to holomorphic functions.) 

Another natural choice for the holomorphic extensions of the automorphy factors would be $\big((z\pm\lambda)^2\big)^{-s}$, which is holomorphic on $\CC\smallsetminus \bigl( \mp \lambda+i\RR\bigr)$. The difference between these two choices of holomorphic extensions is only the maximal domain to which the functions can be extended, not the values on the common domain.

We call eigenfunctions of~$\tro s$ with eigenvalue~$1$ also \emph{$1$-eigenfunctions}.\index[defs]{$1$-eigenfunction} The usage of~`$1$-' here is not related to the one for $1$-cohomology spaces. 

\subsection{Period functions}\label{sec:periodfunctions}
Let~$s\in\CC$ and let~$K\in \{\RR,\CC\}$. We say that a function~$f\in\nobreak C^\om(D_K)$ is a \emph{$K$-period function} for the spectral parameter~$s$ if it satisfies that $\tro s f=f$. If~$K=\RR$, then we call a $K$-period function also a \index[defs]{real period function}\index[defs]{period function!real}\emph{real period function}. Likewise, $\CC$-period functions are also called \index[defs]{complex period function}\index[defs]{period function!complex}\emph{complex period functions}. We let \index[symbols]{FE@$\FE_s^\om$}
\begin{equation}
\FE_s^\om(K)\coloneqq  \Bigl\{ f\in C^\omega(D_K)\setmid \tro s f = f \Bigr\}
\end{equation}
denote the space of all $K$-period functions. 

If $b\in C^\om(K)$, then $(-b,b)\in \FE_s^\om(K)$ if and only if $\tau_s(T)b=b$.
We define the subspace of \emph{boundary $K$-period functions}\index[defs]{boundary period function}\index[defs]{boundary $K$-period function}\index[defs]{period function!boundary} by \index[symbols]{BFE@$\BFE_s^\om$}
\begin{equation}
 \BFE_s^\om(K) \coloneqq  \Bigl\{ (-b,b) \setmid b\in C^\omega(K),\ \tau_s(T)b=b\Bigr\} \,.
\end{equation}
The spaces~$\FE_s^\omega(K)$ and~$\BFE_s^\om(K)$ are vector spaces. 

The space~$\FE_s^\om(\CC)$ can also be characterized as
\begin{equation}
 \FE_s^\om(\CC) = \left\{ f\in \FE_s^\om(\RR) \setmid \text{$f$ extends to an element in $C^\omega(D_\CC)$}\right\}\,,
\end{equation}
and hence can be understood as a subspace of~$\FE_s^\om(\RR)$. Indeed, every complex period function obviously restricts to a real period function, and hence
\[
 \FE_s^\om(\CC) \subseteq \left\{ f\in \FE_s^\om(\RR) \setmid \text{$f$ extends to an element in $C^\omega(D_\CC)$}\right\}\,.
\]
Conversely, if $f=(f_1,f_2)\in\FE_s^\om(\RR)$ is a real period function such that $f_1$ has a holomorphic extension~$\tilde f_1$ to~$\CC\smallsetminus(-\infty,-1]$ and $f_2$ has a holomorphic extension~$\tilde f_2$ to~$\CC\smallsetminus [1,\infty)$, then the identity theorem of complex analysis implies that the functional equation 
\[
 \begin{pmatrix} f_1 \\ f_2 \end{pmatrix} = \begin{pmatrix} \tau_s(T^{-1}S)f_1 + \tau_s(T^{-1})f_1 + \tau_s(T^{-1}S)f_2 \\ \tau_s(TS)f_1 + \tau_s(TS)f_2 + \tau_s(T)f_2 \end{pmatrix} = \tro s\begin{pmatrix} f_1 \\ f_2 \end{pmatrix}
\]
remains valid for the holomorphic extension~$\tilde f=(\tilde f_1,\tilde f_2)$ of~$f$. Thus $\tilde f=\tro s \tilde f$, and hence   
\[
 \FE_s^\om(\CC) \supseteq \left\{ f\in \FE_s^\om(\RR) \setmid \text{$f$ extends to an element in $C^\omega(D_\CC)$}\right\}\,.
\]

\subsection{Real and complex period functions}\label{sec:real_complex}

We now investigate the extendability properties of real period functions. In Proposition~\ref{prop-re} we will show that real period functions~$f\in \FE_s^\omega(\RR)$ uniquely extend to holomorphic functions~$\tilde f$ on certain larger domains in~$\RR$ preserving the property of being a $1$-eigenfunction of~$\tro s$. Moreover, we will show, in Proposition~\ref{prop-FE-RC}, that under certain conditions the domain of holomorphy can be chosen so large that the extension~$\tilde f$ of $f$ is a complex period function, thus an element of~$\FE_s^\omega(\CC)$. We start by discussing the limiting obstacles for such extensions. 

Each period function~$f=(f_1,f_2) \in \FE^\om_s(\RR)$ is, by definition, a $1$-eigenfunction of the transfer operator~$\tro s$. Written out, the relation $\tro sf = f$ becomes (see also~\eqref{TO1} and~\eqref{TO2})
\begin{align}
\begin{aligned}\label{FEeqa}
f_1(x) &= 
\tau_s(T^{-1}S) f_1(x) + \tau_s(T^{-1})f_1(x) + \tau_s(T^{-1}S)f_2(x)
\\
& = 
(\lambda+x)^{-2s}\,f_1\left( \frac{-1}{\lambda+x}\right) + f_1(x+\lambda) +
(\lambda+x)^{-2s}\,f_2\left( \frac{-1}{\lambda+x}\right)\,,
\end{aligned}
\intertext{and}
\begin{aligned}\label{FEeqb}
f_2(x) &= \tau_s(TS)f_1(x)  + \tau_s(T)f_2(x)+\tau_s(TS)f_2(x)
\\
&= (\lambda-x)^{-2s} f_1\left( \frac{1}{\lambda-x}\right) + f_2(x-\lambda)
 +(\lambda-x)^{-2s} f_2\left( \frac{1}{\lambda-x}\right)\,,
\end{aligned}
\end{align}
where $x\in(-1,\infty)$ in the first two equations~\eqref{FEeqa}, and $x\in(-\infty,1)$ in the latter two equations~\eqref{FEeqb}.

There are two types of obstacles limiting the domain to which a generic period function~$f$ extends real-analytically or holomorphically. The first one is the extendability of the automorphy factors in~\eqref{FEeqa}-\eqref{FEeqb} deriving from the~$\tau_s$-action, thus the domain of the complex logarithm, which we already alluded at in Section~\ref{sec:slowTO}.

Due to the factor~$(\lambda + x)^{-2s}$ in~\eqref{FEeqa}, the domain of extendability of~$f_1$ is contained in~$\CC\smallsetminus (-\infty, -\lambda]$. (We choose throughout the principal value for the logarithm, thus the logarithm is holomorphic on $\CC\smallsetminus (-\infty,0]$.) Analogously, the domain of any extension of~$f_2$ is contained in~$\CC\smallsetminus [\lambda,\infty)$. 

The second obstacle are the repelling fixed points of the hyperbolic elements acting in~\eqref{FEeqa}-\eqref{FEeqb} as well as the paths points in~$\CC$ take towards the attracting fixed points of the hyperbolic and parabolic elements in~\eqref{FEeqa}-\eqref{FEeqb}. The latter is an issue for extensions into the complex plane only. An extended discussion is provided right before  Proposition~\ref{prop-FE-RC}. We provide more details regarding the former issue. For simplicity of exposition, the discussion is restricted to real domains.

The image of the domain of~$f_1$ under the hyperbolic element~$ST$ acting in~\eqref{FEeqa} is compactly contained in the domain of~$f_1$ as well as of~$f_2$, and analogously for the image of the domain of~$f_2$ under the hyperbolic element~$ST^{-1}$ acting in~\eqref{FEeqb}:
\[
 \overline{ST(-1,\infty)},\ \overline{ST^{-1}(-\infty,1)} \subseteq (-1,\infty)\cap (-\infty,1)\,.
\]
Moreover, the parabolic elements~$T$ and~$T^{-1}$ acting in~\eqref{FEeqa} and~\eqref{FEeqb} pull the domain of~$f_1$ and~$f_2$ towards their attracting fixed point~$\infty$ through the domain of~$f_1$ and~$f_2$, respectively. We note that $\infty$ is a boundary point of both domains of definition. Therefore, using~\eqref{FEeqa}-\eqref{FEeqb} for bootstrapping (and ignoring for the moment possible restrictions from automorphy factors) allows us to simultaneously extend the domain of~$f_2$ until the repelling fixed point
\[
 \fixTS^+ = \frac{\lambda+\sqrt{\lambda^2-4}}2
\]
of~$ST^{-1}$ (which is the attracting fixed point of~$TS$), and the domain of~$f_1$ until the repelling fixed point
\[
 -\fixTS^+ = \frac{-\lambda-\sqrt{\lambda^2-4}}2
\]
of~$ST$ such that the extended functions remain real-analytic and still satisfy~\eqref{FEeqa} and~\eqref{FEeqb} on all of~$(-\fixTS^+,\infty)\times (-\infty,\fixTS^+)$. Beyond~$\pm\fixTS^+$ the actions of~$ST$ and~$ST^{-1}$, respectively, are expanding which yields that in general no further extension is possible.

In Proposition~\ref{prop-re} below we discuss a more general situation, considering regularities weaker than real-analyticity as well.

\begin{prop}\label{prop-re}
Let $p\in \NN_0\cup\{\infty,\omega\}$, let 
\begin{align*}
f_1&\colon (-1,\infty)\to \CC\,,
\\
f_2 & \colon (-\infty, 1)\to \CC
\end{align*}
be $p$~times continuously differentiable functions, and suppose that $(f_1,f_2)$ satisfies~\eqref{FEeqa} and~\eqref{FEeqb} on~$(-1,\infty)\times (-\infty,1)$. Then there are unique $p$~times continuously differentiable extensions
\begin{align*}
 \tilde f_1 &\colon (-\fixTS^+,\infty)\to\CC\,,
 \\
 \tilde f_2 &\colon (-\infty,\fixTS^+)\to\CC
\end{align*}
of~$f_1$ and~$f_2$, respectively, such that $(\tilde f_1,\tilde f_2)$ satisfies relations \eqref{FEeqa} and~\eqref{FEeqb} on~$(-\fixTS^+,\infty)\times(-\infty,\fixTS^+)$.
\end{prop}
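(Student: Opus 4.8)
The plan is to extend $f_2$ to $(-\infty,\fixTS^+)$ and $f_1$ to $(-\fixTS^+,\infty)$ by using the functional equations~\eqref{FEeqa}--\eqref{FEeqb} as a bootstrapping device, in a direction where the relevant M\"obius transformations act as contractions (so the arguments on the right-hand side stay within the already-known domains). Concretely, I would first analyze~\eqref{FEeqb}: solving for $f_2(x-\lambda)$ gives
\[
 f_2(x-\lambda) = f_2(x) - (\lambda-x)^{-2s} f_1\!\left(\tfrac{1}{\lambda-x}\right) - (\lambda-x)^{-2s} f_2\!\left(\tfrac{1}{\lambda-x}\right),
\]
valid for $x\in(-\infty,1)$, which expresses $f_2$ on $(-\infty,1-\lambda)$ in terms of its known values on $(-\infty,1)$ together with values of $f_1,f_2$ at points $\tfrac{1}{\lambda-x}$; since $x<1$ we have $\tfrac{1}{\lambda-x}\in(0,\tfrac{1}{\lambda-1})\subseteq(-1,\infty)\cap(-\infty,1)$, so the right-hand side is already defined. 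Iterating $T^{-1}$ in this way pushes the domain of $f_2$ step by step toward $-\infty$; since the only true obstruction is the fixed point $\fixTS^+$ of $ST^{-1}$ (equivalently the attracting fixed point of $TS$), and $T^{-1}$ moves points toward $\infty$ while the contraction toward $\fixTS^+$ governs the inductive argument, a finite number of such steps covers all of $(-\infty,\fixTS^+)$. The symmetric manipulation of~\eqref{FEeqa} extends $f_1$ to $(-\fixTS^+,\infty)$.

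Next I would organize the induction cleanly. Fix a compact subinterval $[a,\fixTS^+-\delta]$ of $(-\infty,\fixTS^+)$ (and symmetrically for $f_1$); I would show that after finitely many applications of the $T^{-1}$-step the domain of the extended $f_2$ contains this interval, with the extension being $p$ times continuously differentiable because each step only involves composition with the real-analytic maps $x\mapsto x-\lambda$ and $x\mapsto \tfrac{1}{\lambda-x}$, multiplication by the $C^\omega$ automorphy factor $(\lambda-x)^{-2s}$ (which is defined and smooth on $(-\infty,\lambda)\supseteq(-\infty,\fixTS^+)$), and addition — all of which preserve $C^p$ for every $p\in\NN_0\cup\{\infty,\omega\}$. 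The key geometric input is that $ST^{-1}$ is hyperbolic with repelling fixed point $\fixTS^+$, so on any interval bounded away from $\fixTS^+$ it is a uniform contraction toward a point strictly inside $(-1,\infty)\cap(-\infty,1)$; this is exactly the content already sketched in the paragraph preceding the proposition, and it guarantees termination of the bootstrapping in finitely many steps. After extending both functions I would verify that the extended pair $(\tilde f_1,\tilde f_2)$ still satisfies~\eqref{FEeqa}--\eqref{FEeqb} on $(-\fixTS^+,\infty)\times(-\infty,\fixTS^+)$: on the original domain this holds by hypothesis, and on the enlarged domain it holds by the very construction (each newly defined value was defined precisely so that the relevant equation holds), so one only needs to check consistency where the various branches of the construction overlap, which follows from the identity theorem in the $p=\omega$ case and from a direct continuity/induction argument for finite $p$.

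Finally, uniqueness: any $p$ times continuously differentiable extension $(\tilde f_1,\tilde f_2)$ satisfying the functional equations on the larger rectangle must, by~\eqref{FEeqb} solved for $f_2(x-\lambda)$, have its values on $(-\infty,1-\lambda)$ determined by its values on $(-\infty,1)$ and on $(-1,\infty)\cap(-\infty,1)$; inducting on the number of $T^{-1}$-steps shows the extension is forced everywhere on $(-\infty,\fixTS^+)$, and symmetrically for $\tilde f_1$. I expect the main obstacle to be bookkeeping rather than conceptual: one must check carefully that at each bootstrapping step the arguments $\tfrac{1}{\lambda-x}$ (resp.\ $\tfrac{-1}{\lambda+x}$) and $x-\lambda$ (resp.\ $x+\lambda$) land in domains where the relevant function has \emph{already} been defined by an earlier step, so that the induction is well-founded and does not implicitly require extension beyond $\pm\fixTS^+$; this is where the precise location of the fixed points $\pm\fixTS^+$ and the contraction/expansion dichotomy of $ST^{\mp1}$ enters, and where one must be slightly careful near the boundary point $\infty$ which is shared by both domains of definition.
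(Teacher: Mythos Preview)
Your overall strategy—bootstrapping via the functional equations, with the iteration governed by the contraction of $ST$ and $ST^{-1}$ toward their attracting fixed points inside $(-1,1)$, and uniqueness by reading the same equations as forcing the extended values—is exactly the paper's approach. However, your concrete first step goes in the wrong direction. You solve \eqref{FEeqb} for $f_2(x-\lambda)$ and say this ``expresses $f_2$ on $(-\infty,1-\lambda)$ in terms of its known values on $(-\infty,1)$'' and that ``iterating $T^{-1}$ pushes the domain of $f_2$ toward $-\infty$''. But $f_2$ is already defined on all of $(-\infty,1)\supset(-\infty,1-\lambda)$, so this step is vacuous; the task is to extend $f_2$ to the \emph{right}, onto $[1,\fixTS^+)$. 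The analogous remark applies to your treatment of $f_1$.

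The fix is simply to read \eqref{FEeqb} as written, i.e.\ as a \emph{definition} of $f_2(x)$ for new $x$ just to the right of $1$: for $x\in[1,\lambda-1)$ one has $x-\lambda\in[1-\lambda,-1)\subset(-\infty,1)$ and $\tfrac{1}{\lambda-x}\in\bigl[\tfrac{1}{\lambda-1},1\bigr)\subset(-1,\infty)\cap(-\infty,1)$, so every argument on the right-hand side already lies in the original domains. This is the paper's first induction step. Iterating (and doing the symmetric thing for $f_1$ with \eqref{FEeqa}) extends $(f_1,f_2)$ to the increasing intervals $\bigl((T^{-1}S)^n(-1),\infty\bigr)\times\bigl(-\infty,(TS)^n1\bigr)$, whose union is $(-\fixTS^+,\infty)\times(-\infty,\fixTS^+)$. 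With the orientation corrected, the remainder of your outline—preservation of $C^p$-regularity under composition with M\"obius maps and multiplication by the real-analytic automorphy factor, consistency on overlaps, and uniqueness by the same inductive reading—goes through and coincides with the paper's proof.
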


\begin{proof}
Since $\fixTS^+$ is the attracting fixed point of~$TS$, and $-\fixTS^+$ is the attracting fixed point of~$T^{-1}S$, we have
\[
 \big(T^{-1}S\big)^n(-1) \searrow -\fixTS^+\quad\text{and}\quad \big(TS\big)^n1 \nearrow \fixTS^+
\]
as~$n\to\infty$.  We claim that for every~$n\in\NN$ there exist unique $C^p$~extensions 
\begin{align*}
f_{1,n}&\colon \left( \big(T^{-1}S)^n(-1),\infty\right) \to \CC\,,
\\
f_{2,n}&\colon \left( -\infty, \big(TS)^n1\right) \to\CC
\end{align*}
of~$f_1,f_2$, respectively, such that $(f_{1,n},f_{2,n})$ satisfies~\eqref{FEeqa}-\eqref{FEeqb} on 
\[
 \left( \big(T^{-1}S)^n(-1),\infty\right) \times \left( -\infty, \big(TS)^n1\right)\,.
\]
The limiting case~$n\to\infty$ then establishes the existence of~$(\tilde f_1, \tilde f_2)$ with the properties claimed in the statement of the proposition. We set~$f_{1,0}\coloneqq  f_1$, $f_{2,0}\coloneqq  f_2$, and iteratively (cf.~\eqref{FEeqa}-\eqref{FEeqb}) for~$n=1,2,3,\ldots$,
\begin{align*}
f_{1,n} &\coloneqq \tau_s( T^{-1}S ) f_{1,n-1} + \tau_s(T^{-1})f_{1,n-1} + \tau_s(T^{-1}S)f_{2,n-1}
\\
f_{2,n} &\coloneqq \tau_s(TS)f_{1,n-1} + \tau_s(TS)f_{2,n-1} + \tau_s(T)f_{2,n-1}\,.
\end{align*}
Since $(f_1,f_2)$ satisfy~\eqref{FEeqa}-\eqref{FEeqb} on~$(-1,\infty)\times (-\infty,1)$ by assumption, it immediately follows that $f_{1,1}$ and $f_{2,1}$ are well-defined and~$C^p$ on 
\[
 \left( \big(T^{-1}S\big)(-1),\infty\right)= (1-\lambda,\infty) \quad\text{and}\quad \left( -\infty, \big(TS)1\right)=(-\infty, \lambda-1)\,,
\]
respectively, and satisfy~\eqref{FEeqa}-\eqref{FEeqb} on these domains. Note that all automorphy factors in~\eqref{FEeqa} are well-defined and real-analytic on~$(-\fixTS^+,\infty)$, and all automorphy factors in~\eqref{FEeqb} are well-defined and real-analytic on~$(-\infty,\fixTS^+)$. 
Since we have, for each~$n\in\NN$,
\begin{align*}
 ST\left( \big(T^{-1}S\big)^n(-1),\infty\right) & = \left( \big(T^{-1}S\big)^{n-1}(-1),ST.\infty\right) \subseteq (1-\lambda,0)
 \\
 & \subseteq (1-\lambda,\infty)\cap (-\infty, \lambda-1)\,,
 \\
 T\left( \big(T^{-1}S)\big)^n(-1),\infty\right) & \subseteq (-1,\infty)\,,
 \\
 ST^{-1}\left(-\infty, \big(TS\big)^n1 \right) & \subseteq (1-\lambda,\infty)\cap (-\infty, \lambda-1)\,,
 \\
 T^{-1} \left(-\infty, \big(TS\big)^n1 \right) & \subseteq (-\infty, 1)\,,
\end{align*}
induction over~$n$ shows the claim. This completes the proof.
\end{proof}

For any period function~$f=(f_1,f_2)\in \FE_s^\om(\RR)$, the component functions~$f_1$ and~$f_2$ are
real-analytic, and hence have holomorphic extensions to some complex neighborhoods~$U_1$
of~$(-1,\infty)$ and~$U_2$ of~$(-\infty,1)$, respectively. Obviously, these holomorphic extensions satisfy~\eqref{FEeqa}-\eqref{FEeqb} as long as both sides of all equalities in~\eqref{FEeqa}-\eqref{FEeqb} are well-defined on the considered points.  

In general, there is no enforcement that the functions~$f_1$ and~$f_2$ have holomorphic extensions to all of~$\CC \smallsetminus(-\infty,-1]$ and~$\CC\smallsetminus [1,\infty)$, respectively. In turn, the real period function~$f\in\nobreak\FE_s^\om(\RR)$ does not necessarily extend to a complex period function~$\tilde f\in\nobreak\FE_s^\om(\CC)$. Bootstrapping using~\eqref{FEeqa}-\eqref{FEeqb} does not need to yield a holomorphic extension of~$f$ to all of~$\big(\CC \smallsetminus(-\infty,-1]\big)\times \big(\CC\smallsetminus [1,\infty)\big)$ because the actions of~$T$ (in~\eqref{FEeqa}) and of~$T^{-1}$ (in~\eqref{FEeqb}) do not contract towards the real axis, only towards~$\infty + i\RR$ (in contrast, the other elements~$ST$ and~$ST^{-1}$ acting in~\eqref{FEeqa} and~\eqref{FEeqb}, respectively, contract towards the real axis). Hence, in general, one cannot establish contraction into a neighborhood onto which holomorphic extendability is already secured. However, as soon as the neighborhoods~$U_1$ and~$U_2$ are left- and right-rounded at~$\infty$, respectively, bootstrapping is possible.

\begin{prop}\label{prop-FE-RC} 
Let $f=(f_1,f_2)\in \FE_s^\om(\RR)$. Suppose that~$f_1$ has a holomorphic extension to a complex neighborhood~$U_1$ of~$(-1,\infty)$ that is left-rounded at~$\infty$ and that $f_2$ has a holomorphic extension to a complex neighborhood~$U_2$ of~$(-\infty,1)$ that is right-rounded at~$\infty$. Then the map~$f$ extends uniquely to a complex period function~$\tilde f \in \FE^\om_s(\CC)$.
\end{prop}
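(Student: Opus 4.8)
The plan is to upgrade the holomorphic extensions provided by the hypotheses — which a priori live only on the given neighborhoods $U_1,U_2$ — to holomorphic extensions on the full domains $\CC\smallsetminus(-\infty,-1]$ and $\CC\smallsetminus[1,\infty)$, by iterating the functional equations~\eqref{FEeqa}--\eqref{FEeqb} just as in the proof of Proposition~\ref{prop-re}, but now tracking complex neighborhoods rather than real intervals. First I would observe that, since $U_1$ is left-rounded at $\infty$ and $U_2$ is right-rounded at $\infty$, there exists $x_0\in\RR$ with $\CC_{\Rea>x_0}\subseteq U_1$ and $\CC_{\Rea<-x_0}\subseteq U_2$ (after enlarging $x_0$ and shrinking to common bounds), and moreover $U_1\supseteq(-1,\infty)$, $U_2\supseteq(-\infty,1)$ are complex neighborhoods of those real intervals. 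The automorphy factors have been fixed in Section~\ref{sec:slowTO}: $(\lambda+z)^{-2s}$ is holomorphic on $\CC\smallsetminus(-\infty,-\lambda]$ and $(\lambda-z)^{-2s}$ on $\CC\smallsetminus[\lambda,\infty)$, so these impose no obstruction on the target domains $\CC\smallsetminus(-\infty,-1]$ and $\CC\smallsetminus[1,\infty)$. Uniqueness of the extension is immediate from the identity theorem, so the content is existence.

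The core step is a bootstrapping argument. Define $f_{1,0}\coloneqq f_1$, $f_{2,0}\coloneqq f_2$ on $U_1,U_2$, and iteratively
\begin{align*}
 f_{1,n} &\coloneqq \tau_s(T^{-1}S)f_{1,n-1} + \tau_s(T^{-1})f_{1,n-1} + \tau_s(T^{-1}S)f_{2,n-1},
 \\
 f_{2,n} &\coloneqq \tau_s(TS)f_{1,n-1} + \tau_s(TS)f_{2,n-1} + \tau_s(T)f_{2,n-1},
\end{align*}
on the natural domains of definition. The point is that the maps $z\mapsto T^{-1}Sz$ and $z\mapsto TSz$ (i.e.\ $ST$ and $ST^{-1}$ read appropriately) contract towards the real axis — more precisely they map their arguments into regions compactly contained in both $U_1$ and $U_2$ near the real interval $(1-\lambda,0)$ — so the terms $\tau_s(T^{-1}S)f_{1,n-1}$, $\tau_s(TS)f_{2,n-1}$, etc., are already defined once we know $f_{1,n-1},f_{2,n-1}$ on a neighborhood of the real axis; while the parabolic terms $\tau_s(T^{-1})f_{1,n-1}$ and $\tau_s(T)f_{2,n-1}$ translate horizontally towards $\infty$, and because $U_1$ is left-rounded at $\infty$ and $U_2$ is right-rounded at $\infty$ these translates stay inside $U_1$, $U_2$ (this is exactly where roundedness at $\infty$ is used, in contrast to the purely real argument where $\infty$ is merely a boundary point). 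Inductively one checks that $f_{1,n}$ is holomorphic on a complex neighborhood of $\bigl((T^{-1}S)^n(-1),\infty\bigr)$ that is still left-rounded at $\infty$, and $f_{2,n}$ on a complex neighborhood of $\bigl(-\infty,(TS)^n1\bigr)$ still right-rounded at $\infty$; by the identity theorem these agree with the previously constructed functions on overlaps, so they patch to a single holomorphic extension. Since $(T^{-1}S)^n(-1)\searrow -\fixTS^+$ and $(TS)^n1\nearrow\fixTS^+$, passing to $n\to\infty$ gives holomorphic extensions past $\pm\fixTS^+$; and beyond that a separate, simpler consideration using only the translation terms $T^{\mp1}$ and the already-secured roundedness at $\infty$ fills in the rest of $\CC\smallsetminus(-\infty,-1]$ and $\CC\smallsetminus[1,\infty)$.

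The main obstacle — and the step requiring genuine care — is the inductive verification that the contracting images $ST\bigl(\text{nbhd of }((T^{-1}S)^n(-1),\infty)\bigr)$ and $ST^{-1}\bigl(\text{nbhd of }(-\infty,(TS)^n1)\bigr)$ really do land inside \emph{complex} neighborhoods of the relevant real intervals on which the previous iterate is already defined, and simultaneously that the left/right-roundedness at $\infty$ is propagated through each step. For the real intervals this is the chain of inclusions displayed in the proof of Proposition~\ref{prop-re}; here one must thicken each inclusion to a complex neighborhood, using that a Möbius transformation maps a sufficiently thin tube around a compact real arc into a prescribed complex neighborhood of the image arc, and invoking the stability of roundedness at $\infty$ under $\PSL_2(\RR)$ noted in Section~\ref{sec:hypgeom} (an element fixing $\infty$, such as $T^{\pm1}$, preserves roundedness at $\infty$). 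Once the bookkeeping of neighborhoods is done, the rest — holomorphy of each $f_{j,n}$, agreement on overlaps, and the conclusion $\tilde f=\tro s\tilde f$ via the identity theorem exactly as in Section~\ref{sec:periodfunctions} — is routine.
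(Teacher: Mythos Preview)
Your approach is essentially the one the paper has in mind. The paper does not give a detailed proof of this proposition but instead refers to two bootstrapping arguments in the literature: a geometric one in \cite[Proof of Proposition~3.7]{Adam_Pohl} and an explicit one in the style of \cite[Chapter~III.4]{LZ01}. Your proposal is precisely the second of these---iterating the functional equations~\eqref{FEeqa}--\eqref{FEeqb} while tracking complex neighborhoods, using roundedness at~$\infty$ to control the parabolic translations and contraction to control the hyperbolic terms---and this is correct.

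Two small remarks. First, your organization around the points~$\pm\fixTS^+$ is slightly misdirected: those thresholds are the obstruction for the \emph{real} extension in Proposition~\ref{prop-re}, but the complex targets $\CC\smallsetminus(-\infty,-1]$ and $\CC\smallsetminus[1,\infty)$ meet the real axis only in $(-1,\infty)$ and $(-\infty,1)$, so the genuine work here is filling the imaginary direction, not pushing past~$\pm\fixTS^+$ on~$\RR$. Your ``separate, simpler consideration'' and the main induction are really one and the same mechanism: at each bootstrapping step the parabolic term shifts by~$\lambda$ into the already-secured half-plane, while the hyperbolic terms land in a fixed bounded neighborhood of~$0$ inside $U_1\cap U_2$. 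Second, the paper singles out one subtlety you touch on only briefly: since $\tau_s$ is not a representation on holomorphic functions (Section~\ref{sec:holomext}), one must check that for the specific products of operators $\tau_s(P)$, $P\in\{T^{-1}S,TS,T^{-1},T\}$, actually occurring in the iteration, the automorphy factors admit a common holomorphic extension consistent with~\eqref{FEeqa}--\eqref{FEeqb}. This is routine but should be stated.
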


For a proof of Proposition~\ref{prop-FE-RC} one can proceed as in~\cite[Proof of Proposition~3.7]{Adam_Pohl} (see also~\cite[Section~3.4]{Adam_Pohl}), where a geometric variant of bootstrapping is used. Alternatively, a proof can be provided by means of a bootstrapping analogously to~\cite[Chapter III.4]{LZ01} taking advantage of the explicit formulas for the acting elements in~\eqref{FEeqa}-\eqref{FEeqb}. For both variants it is important to note that during the bootstrapping procedure only certain products of the operators~$\tau_s(P)$ with $P \in \{ T^{-1}S, TS, T^{-1},T\}$ are applied to the pairs of holomorphic functions that arise in the process. In other words, only for a certain selected subset of elements~$g\in\Gamma$, the operators~$\tau_s(g)$ need to be able to act on holomorphic functions with certain domains. It is rather easy to see that for these elements, a joint holomorphic extension of the automorphy factors is possible. (Compare the discussion in Section~\ref{sec:holomext}.) It can be taken in agreement with~\eqref{FEeqa}-\eqref{FEeqb}.

\subsection{Fast transfer operators}\label{sec:fastTO}

The \emph{fast transfer operator}~$\ftro s$ \index[defs]{fast transfer operator}\index[defs]{transfer operator!fast}\index[symbols]{L@$\ftro s$} that we use here is given (initially only formally) as 
\begin{equation}\label{ftro}
 \ftro s = \begin{pmatrix}
 \sum\limits_{n \geq 1} \tau_s(T^{-n}S) & \sum\limits_{n \geq 1} \tau_s(T^{-n}S)
 \\[3mm]
 \sum\limits_{n\geq 1} \tau_s(T^n S) & \sum\limits_{n\geq 1} \tau_s(T^n S)
\end{pmatrix}\,.
\end{equation}
This definition differs from the fast transfer operator as developed in~\cite[\S3.3]{Pohl_hecke_infinite}. Therefore, before we specify the realm of the spectral parameter~$s$ and the domains on which~$\ftro s$ is an actual operator, we provide two ways of constructing~$\ftro s$. These two approaches are related but nevertheless provide different insights. 

At the current stage of exposition, both deductions should be understood as calculations on a symbolic level only. Taking into account the discussions of convergence and meromorphic extensions that are provided in Sections~\ref{sect-osav} and~\ref{sec:fastconv}, these calculations can easily be converted into proper ones. Both deductions of~$\ftro s$ stress the role of $1$-eigenfunctions. We will discuss the motivation in Section~\ref{sec:essential}.

\medskip

\subsubsection{Construction based on the fast transfer operator family in~\cite{Pohl_hecke_infinite}}\label{sec:fastTOP} 

As mentioned in Section~\ref{sec:discr}, the discrete dynamical system that gives rise to the fast transfer operator~$\ftroP s$ as developed in~\cite[\S3.3]{Pohl_hecke_infinite} arises from the discrete dynamical system~$(D,F)$ that gives rise to the slow transfer operator~$\tro s$ by an induction process on parabolic elements. For details we refer to~\cite{Pohl_hecke_infinite}. 

The fast transfer operator~$\ftroP s$ is given by  \index[symbols]{L@$\ftroP s$}
\begin{equation}\label{eq:ftroP}
 \ftroP s = 
\begin{pmatrix}
\tau_s(T^{-1}S) & \sum\limits_{n\in\NN}\tau_s(T^{-n}) & 0 & \tau_s(T^{-1}S)
\\[1mm]
\tau_s(T^{-1}S) & 0 & 0 & \tau_s(T^{-1}S)
\\[1mm]
\tau_s(TS) & 0 & 0 & \tau_s(TS)
\\[1mm]
\tau_s(TS) & 0 & \sum\limits_{n\in\NN}\tau_s(T^n) & \tau_s(TS)
\end{pmatrix}\,,
\end{equation}
acting on function vectors
\[
\begin{pmatrix}
f_1 \colon (-1,1)_c\to\CC
\\[1mm]
f_2 \colon (-1,\infty)_c\to\CC
\\[1mm]
f_3 \colon (\infty,1)_c\to\CC
\\[1mm]
f_4 \colon (-1,1)_c\to\CC
\end{pmatrix}\,.
\]
A straightforward calculation shows that the $1$-eigenspaces of~$\ftroP s$ and~$\ftro s$ are in bijection via the maps
\[
 (f_1,f_2,f_3,f_4) \mapsto  (f_1,f_4)
\] 
and
\[
 \big(f_1, \tau_s(T^{-1}S)(f_1+f_4), \tau_s(TS)(f_1+f_4), f_4\big)  \mapsfrom  (f_1,f_4)\,.
\]

\medskip

\subsubsection{Constructing the fast transfer operator from the slow transfer operator}\label{sec:constrRoelof}

The equation~$\tro sf=f$ for the period function $f=(f_1,f_2)$ can be written as
\begin{equation}\label{prto} 
\begin{cases} \left(1-\tau_s(T^{-1})\right) f_1 & \ceqq 
\tau_s(T^{-1}S)\, \left( f_1+f_2\right)\,,\\
\left( 1-\tau_s(T) \right) f_2 &\ceqq  \tau_s(TS) \,\left(f_1+f_2\right)\,.
\end{cases}
\end{equation}
Formally, the sum
\[
 \sum_{n\geq 0} \tau_s(T^{\pm n})
\]
is an inverse of $1-\tau_s(T^{\pm 1})$, respectively. For the period function $(f_1,f_2) $ this gives (again only formally)
\begin{align*}
\begin{pmatrix} f_1\\f_2\end{pmatrix}
&\stackrel{\hphantom{\eqref{prto}}}{=} \begin{pmatrix} \sum\limits_{n\geq 0}  \tau_s(T^{-n}) \, \bigl( 1-\tau_s(T^{-1})\bigr) f_1
\\[2mm]
  \sum\limits_{n\geq 0} \tau_s(T^n) \, \bigl( 1-\tau_s(T)\bigr) f_2
\end{pmatrix} \\
&
\stackrel{\eqref{prto}}= 
\begin{pmatrix}  \sum\limits_{n\geq 0} \tau_s(T^{-n}) \tau_s(T^{-1}S) (f_1+f_2)
\\[2mm]
 \sum\limits_{n\geq 0} \tau_s(T^{n}) \tau_s(TS) (f_1+f_2)
\end{pmatrix}\,,
\end{align*}
thus
\[
 f=\ftro s f\,.
\]
We stress that these calculations are only on a formal basis. Taking into account questions of convergence, we will see in Chapter~\ref{part:TO} that not all period functions are also $1$-eigenfunctions of~$\ftro s$.

\medskip

\subsubsection{Essential properties of the fast transfer operator}\label{sec:essential}

As shown in~\cite{Pohl_hecke_infinite}, there exists a Banach space~$\mc B$ on which, for~$\Rea s >\tfrac12$, the transfer operator~$\ftroP s$ acts as a nuclear operator of order zero. The map~$s\to\ftroP s$ extends meromorphically to all of~$\CC$. The Fredholm determinant of this transfer operator family equals the Selberg zeta function~$Z_X$ of~$\Gamma\backslash\uhp$, thus
\begin{equation}\label{szf_to}
 Z_X(s) = \det\left(1-\ftroP s\right)\,.
\end{equation}
Due to the relation between the zeros of~$Z_X$ and the resonances of the Laplacian on~$X$, the $1$-eigenfunctions of~$\ftroP s$ are of particular interest. 

In addition, a straightforward application of~\cite{FP_szf} in combination with~\eqref{szf_to} shows that, on a suitable Banach space, 
\begin{equation}
 Z_X(s) = \det\left(1-\ftro s\right)\,.
\end{equation}
See Section~\ref{sec:fredholm} for details. These two facts indicate that with regard to investigations of Laplace eigenfunctions, the transfer operators~$\ftroP s$ and~$\ftro s$ are interchangeable.

Furthermore, in~\cite{Adam_Pohl}, it is shown that the spaces of $1$-eigenfunctions of~$\ftroP s$ are isomorphic to certain spaces of $1$-eigenfunctions of~$\tro s$. The isomorphism in~\cite{Adam_Pohl} and the construction in Section~\ref{sec:constrRoelof} are closely related. This again indicates that for the investigation between automorphic forms and transfer operator eigenfunctions, the eigenfunctions with eigenvalue $1$ are the ones (possibly even the only ones) of interest, and the transfer operators~$\ftroP s$ and~$\ftro s$ are equally suitable.

\medskip

\subsubsection{Comments on convergence and meromorphic extension}

One easily sees that for~$s\in\CC$, $\Rea s > \frac12$, and $f=(f_1,f_2)\in C^\omega(D_\RR)$, the infinite sums in~\eqref{ftro} converge. Proceeding from this, and showing meromorphic continuation of the map~$s\mapsto \ftro s$ is by now standard. The key step is to relate the infinite sums in~\eqref{ftro} to the Hurwitz zeta function and to take advantage of the meromorphic continuation of the Hurwitz zeta function.

There is some flexibility for the function spaces on which the fast transfer operators~$\ftro s$ become actual operators. For various applications of transfer operators, e.\,g., for representing the Selberg zeta function as in~\eqref{szf_to}, the precise spaces are rather unimportant as long as they contain functions of sufficient regularity and sufficiently many $1$-eigenfunctions. For our applications, however, the choice of the spaces is of utmost importance. 

Furthermore, the so-called one-sided averages (see Section~\ref{sect-osav}) which govern the convergence and meromorphic continuation of the fast transfer operators are convenient for other purposes as well. Therefore, we will provide, in Sections~\ref{sect-osav} and~\ref{sec:fastconv}, a rather detailed discussion of the spaces of definition for~$\ftro s$, the convergence and meromorphic continuation.

\subsection{One-sided averages}\label{sect-osav}

For~$s\in\CC$ we set (initially only formally) \index[defs]{average!one-sided}\index[defs]{one-sided average}\index[symbols]{A@$\av{s,T}^\pm$}
\begin{equation}\label{def_averages}
 \av{s,T}^+ \coloneqq  \sum_{n\geq 0}\tau_s(T^{-n})\quad\text{and}\quad \av{s,T}^- \coloneqq  -\sum_{n\leq -1}\tau_s(T^{-n}) = - \sum_{n\geq 1}\tau_s(T^n)\,.
\end{equation}
As in~\cite{BLZm}, we call these operators `averages' although, due to the omission of any actual averaging, this terminology is not completely correct. We recall several results from~\cite[Section~4.2]{BLZm}.

Let $(\alpha,\beta)_c$ be an interval in~$\proj\RR$ with $\infty\in (\alpha,\beta)_c$. Suppose that $\varphi\in \V s \om((\alpha,\beta)_c)$. Then the map
\begin{equation}
 h(t) \coloneqq  \tau_s(S)\varphi(t) = |t|^{-2s}\varphi\left(-\tfrac1t\right)
\end{equation}
is real-analytic in~$0$. Hence, in a neighborhood of~$0$, the map $h$ has a (convergent) power series expansion
\begin{equation}\label{h_exp}
 h(t)=\sum_{m\geq 0} a_m t^m\,.
\end{equation}
Applying the one-sided averages to~$\varphi$ gives
\begin{align}
\av{s,T}^+\varphi(t)  &=  \sum_{n\geq 0} \varphi(t+n\lambda) = \sum_{n\geq 0} \frac{1}{|t+n\lambda|^{2s}} h\left( - \frac{1}{t+n\lambda}\right) \label{conv+}
\intertext{and}
\av{s,T}^-\varphi(t) & = -\sum_{n\geq 1} \varphi(t-n\lambda) = -\sum_{n\geq 1} \frac{1}{|t-n\lambda|^{2s}} h\left( -\frac{1}{t-n\lambda}\right)\,. \label{conv-}
\end{align}
Since~$h$ is bounded in a neighborhood of~$0$, the infinite series in~\eqref{conv+} and~\eqref{conv-} are compactly convergent for~$\Rea s > \tfrac12$. Further, since $h$ is real-analytic in a neighborhood of~$0$, it follows that for~$\Rea s>\tfrac12$,  
\begin{equation}\label{eq:Av_ra}
\av{s,T}^+ \varphi \in \V s \om\left( ( \alpha,\infty)_c\right)\,,\quad
\av{s,T}^- \varphi \in \V s \om \left( ( \infty, \beta+\lambda)_c\right)\,. 
\end{equation}
Moreover, if we extend the automorphy factors~$|t+n\lambda|^{-2s}$ in~\eqref{conv+} holomorphically to~$\CC\smallsetminus (-\infty,0]$ by~$(z+n\lambda)^{-2s}$, and extend the automorphy factors~$|t-n\lambda|^{-2s}$ in~\eqref{conv-} holomorphically to~$\CC\smallsetminus[\lambda,\infty)$ by~$(z-n\lambda)^{-2s}$, then the map~$\av{s,T}^+\varphi$ is holomorphic on a right half plane, and~ $\av{s,T}^-\varphi$ is holomorphic on a left half plane for $\Rea s>\tfrac12$.  

The functions in~\eqref{eq:Av_ra} are not necessarily real-analytic in~$\infty$. However, we have the asymptotic expansions \index[defs]{average!asymptotic expansion}
\begin{equation}\label{avasexp} 
\begin{aligned}
\av{s,T}^+\varphi(t) &\sim |t|^{-2s} \sum_{m\geq -1} c_m\,t^m && \quad\text{as $t\uparrow\infty$}\,,
\\
\av{s,T}^-\varphi(t) &\sim |t|^{-2s} \sum_{m\geq -1} c_m \, t^m && \quad\text{as $t\downarrow-\infty$}\,, 
\end{aligned} 
\end{equation}
\emph{with the same coefficients}~$c_m$ in both cases. The dependence of
the coefficients on the~$a_m$ is provided by~\cite[(4.12)]{BLZm}. In
particular~$c_{-1}$ is a multiple of the coefficient~$a_0$ in~\eqref{h_exp}.

If the coefficient~$a_0$ in~\eqref{h_exp} vanishes, and hence $h(t)=O(t)$ as~$t\to 0$, then~\eqref{conv+} and~\eqref{conv-} converge for all~$\Rea s>0$, and all of the above remains valid for any~$s\in\CC$ with~$\Rea s>0$. If~$a_0\not=0$ then we can define~$\av{s,T}^\pm\varphi$ by treating the term~$|t|^{-2s} a_0$ separately with the help of the Hurwitz zeta function. This gives a first order singularity at~$s=\frac12$, and defines 
\[
s\mapsto \av{s,T}^\pm \varphi = \av{s,T}^\pm\tau_s(S)h
\]
as a meromorphic function of~$s$ on the region~$\Rea s>0$ (for any fixed~$h$). We continue to denote this meromorphic continuation by~$\av{s,T}^\pm$. The asymptotic expansions in~\ref{avasexp} remain valid for the meromorphic continuations.

For any~$\Rea s>0$, we have
\begin{equation}\label{avrel}
 \av{s,T}^\pm \left( 1-\tau_s(T^{-1} )\right) \varphi \ceqq 
 \left(1-\tau_s(T^{-1})\right) \av {s,T}^\pm \varphi \ceqq  \varphi\,.
\end{equation}

\subsection{Convergence and meromorphic extension of fast transfer operators}\label{sec:fastconv}

The considerations on one-sided averages in Section~\ref{sect-osav} allow us to almost immediately establish the fast transfer operators as actual operators on certain function spaces. As in Section~\ref{sect-osav}, the infinite sums in~\eqref{ftro} (and hence~$\ftro s$) converge for~$s\in\CC$ with~$\Rea s$ sufficiently large when applied to elements in the considered function spaces, and then admit a meromorphic continuation. As above, meromorphic continuation of the map $s\mapsto\ftro s$ means that for any function vector~$f=(f_1,f_2)$ in the considered function space and any points~$x_1\in (-1,\infty)_c$, $x_2\in (\infty,1)_c$, the map
\[
 s\mapsto  \begin{pmatrix} \sum\limits_{n\geq 1} \tau_s(T^{-n}S)\big(f_1+f_2\big)(x_1) \\[2mm] \sum\limits_{n\geq 1}\tau_s(T^nS)\big(f_1+f_2)(x_2) \end{pmatrix}
\]
extends meromorphically. (We note that for $\Rea s\gg 1$, this is just the definition of~$\ftro s$, see~\eqref{ftro}.)

Here, we are only interested in spectral parameters~$s\in\CC$ with~$\Rea s \in\nobreak (0,1)$, and we therefore restrict all considerations to this domain. We set \index[symbols]{C@$C^{\omega,0}(D_\RR)$}
\begin{equation}
 C^{\omega,0}(D_\RR) \coloneqq  \left\{\, (f_1,f_2)\in C^\omega(D_\RR) \setmid f_1(0)+f_2(0) = 0 \,\right\}
\end{equation}
and let  \index[symbols]{O@$\Op$}
\[
 \Op\big(C^\omega(D_\RR), C^\omega(D_\RR) \big)
\]
denote the vector space of the linear operators $C^\om(D_\RR)\to C^\om(D_\RR)$.

\begin{prop}\label{fast_merom}
\begin{enumerate}[{\rm (i)}]
\item\label{fmi} For~$s\in\CC, \Rea s>\frac12$, Equation~\eqref{ftro} defines~$\ftro s$ as a linear operator on~$C^\omega(D_\RR)$.
\item\label{fmii} The map 
\[
\CC_{\Rea >\frac12} \to \Op\big(C^\omega(D_\RR), C^\omega(D_\RR) \big)\,,\quad s\mapsto \ftro s
\]
extends meromorphically to~$\CC_{\Rea > 0}$ with a pole at~$s=\frac12$ of order at most~$1$. 
\item\label{fmiii} For~$s\in\CC, \Rea s>0$, $s\not=\frac12$, the $1$-eigenfunctions of~$\ftro s$ in~$C^\omega(D_\RR)$ are in~$\FE_s^\om(\CC)$.
\item\label{fmiv} If the domain of the transfer operators~$\ftro s$ is restricted to~$C^{\omega,0}(D_\RR)$, then the map~$s\mapsto \ftro s$ is holomorphic on~$\CC_{\Rea > 0}$.
\item\label{fmv} For~$s\in\CC, \Rea s>0$, the $1$-eigenfunctions of~$\ftro s$ in~$C^{\omega,0}(D_\RR)$ are in~$\FE_s^\om(\CC)$.
\item\label{fmvi} For~$s\in\CC, \Rea s > 0$, the space~$\BFE_s^\om(\CC)$ is contained in the kernel of~$\ftro s$.
\end{enumerate}
\end{prop}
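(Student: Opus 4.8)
The plan is to organize the proof around the reduction, already carried out on the symbolic level in Section~\ref{sec:constrRoelof}, of the entries of~$\ftro s$ to one-sided averages. Concretely, for~$f=(f_1,f_2)$ one has
\[
 \big(\ftro s f\big)_1 = \av{s,T}^+\tau_s(T^{-1}S)(f_1+f_2)\,,\qquad
 \big(\ftro s f\big)_2 = -\av{s,T}^-\tau_s(TS)(f_1+f_2)\,,
\]
where the sign and the shift by one in the index range are bookkept using the defining formulas~\eqref{def_averages}. Thus every analytic statement about~$\ftro s$ will be deduced from the corresponding statement in Section~\ref{sect-osav} applied to the auxiliary function $\varphi\coloneqq\tau_s(T^{-1}S)(f_1+f_2)$ (for the first component) respectively $\varphi\coloneqq\tau_s(TS)(f_1+f_2)$ (for the second). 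First I would verify that for $f\in C^\omega(D_\RR)$ these $\varphi$ are real-analytic on intervals in~$\proj\RR$ containing~$\infty$ (this is where one uses that $\tau_s(T^{-1}S)$ and $\tau_s(TS)$ carry $(-1,\infty)_c$ and $(\infty,1)_c$ to intervals whose closure avoids the branch cuts and contains a neighbourhood of~$\infty$), so that the machinery of Section~\ref{sect-osav} applies verbatim. Part~\eqref{fmi} and the target domains of the resulting real-analytic functions then follow from~\eqref{eq:Av_ra}, after checking that $(\alpha,\infty)_c\supseteq(-1,\infty)_c$ and $(\infty,\beta+\lambda)_c\supseteq(\infty,1)_c$ for the relevant endpoints; part~\eqref{fmii} follows from the meromorphic continuation of $s\mapsto\av{s,T}^\pm\varphi$ with its first-order pole at~$s=\tfrac12$ coming from the $a_0$-term, i.e.\ from the value of $\tau_s(S)\varphi$ at~$0$, which is a nonzero multiple of $(f_1+f_2)$ evaluated at the point $S\cdot 0$-type argument.

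For~\eqref{fmiv} the key observation is that the coefficient $a_0$ in the expansion~\eqref{h_exp} of $h=\tau_s(S)\varphi$ is, up to a nonzero constant, exactly $(f_1+f_2)(0)$: tracing the definitions, $\tau_s(S)\tau_s(T^{-1}S)(f_1+f_2)$ and $\tau_s(S)\tau_s(TS)(f_1+f_2)$ evaluated at~$0$ both reduce to a multiple of $(f_1+f_2)(0)$ because $S$ and the relevant group elements send~$0$ to~$0$ or compose to such a map. Hence on $C^{\omega,0}(D_\RR)$ one has $a_0=0$, the series~\eqref{conv+}--\eqref{conv-} converge for all~$\Rea s>0$, and the singular Hurwitz-zeta term never appears, giving holomorphy on $\CC_{\Rea>0}$. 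For~\eqref{fmvi}, if $f=(-b,b)\in\BFE_s^\om(\CC)$ then $f_1+f_2=0$ identically, so both entries of~$\ftro s f$ vanish term by term; this is immediate once convergence is available, which it is on all of $\CC_{\Rea>0}$ precisely because the auxiliary $\varphi$ is zero, so no regularization is needed.

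The main work—and the step I expect to be the genuine obstacle—is~\eqref{fmiii} and~\eqref{fmv}: showing that a $1$-eigenfunction $f$ of~$\ftro s$ actually lies in $\FE_s^\om(\CC)$, i.e.\ is a \emph{complex} period function. The plan has two stages. First, show $f\in\FE_s^\om(\RR)$: from $\ftro s f=f$ one recovers the slow functional equation by applying $1-\tau_s(T^{-1})$ to the first component and $1-\tau_s(T)$ to the second and using the average relation~\eqref{avrel}, which collapses $\big(1-\tau_s(T^{-1})\big)\av{s,T}^+\varphi=\varphi$ back to exactly the right-hand sides of~\eqref{FEeqa}--\eqref{FEeqb}; one must check this algebra is legitimate on the real domains, using the asymptotic expansions~\eqref{avasexp} to control behavior at~$\infty$. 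Second—and this is the crux—upgrade to a complex period function via Proposition~\ref{prop-FE-RC}: the point is that the components $f_i=\big(\ftro s f\big)_i$, being one-sided averages, automatically extend holomorphically to a \emph{half-plane} (right half-plane for $f_1$, left half-plane for $f_2$) by the holomorphic extensions of the automorphy factors $(z+n\lambda)^{-2s}$ and $(z-n\lambda)^{-2s}$ discussed after~\eqref{conv+}--\eqref{conv-}. Such a holomorphic extension to a half-plane is in particular left-rounded (resp.\ right-rounded) at~$\infty$, so the hypotheses of Proposition~\ref{prop-FE-RC} are met and $f$ extends to $\tilde f\in\FE_s^\om(\CC)$. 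The delicate bookkeeping is to make sure the domain of the half-plane extension genuinely contains a set of the form $\CC_{\Rea>x_0}$ (resp.\ $\CC_{\Rea<x_0}$) \emph{together with} a complex neighbourhood of the real interval, so that Proposition~\ref{prop-FE-RC} applies as stated; this follows by combining the half-plane holomorphy with the real-analyticity of $f_1,f_2$ on $(-1,\infty)$ and $(-\infty,1)$ already established, and patching the two on their overlap by the identity theorem.
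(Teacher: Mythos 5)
Your plan follows essentially the same approach as the paper's own proof: recast the entries of~$\ftro s$ as one-sided averages, feed them into the machinery of Section~\ref{sect-osav} for convergence and meromorphic continuation (and the role of the coefficient~$a_0$), recover the slow equation via~\eqref{avrel}, and then upgrade to a complex period function via the half-plane holomorphy of the averages and Proposition~\ref{prop-FE-RC}. You are, if anything, more explicit than the paper about the intermediate step that a $1$-eigenfunction of~$\ftro s$ must first be shown to be a $1$-eigenfunction of~$\tro s$ (hence in $\FE_s^\om(\RR)$) before Proposition~\ref{prop-FE-RC} can be invoked.

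One concrete slip you should fix before relying on it: the displayed second entry $-\av{s,T}^-\tau_s(TS)(f_1+f_2)$ equals $\sum_{n\geq 2}\tau_s(T^nS)(f_1+f_2)$, which drops the $n=1$ summand of~\eqref{ftro}; the correct reduction is $-\av{s,T}^-\tau_s(S)(f_1+f_2)=\sum_{n\geq 1}\tau_s(T^nS)(f_1+f_2)$, equivalently $(1-\av{s,T}^-)\tau_s(TS)(f_1+f_2)$. With your version the recovery step in parts~(iii)/(v) yields $(1-\tau_s(T))f_2=\tau_s(T^2S)(f_1+f_2)$ rather than~\eqref{prto}, so "one recovers the slow functional equation" would literally fail. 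The error is purely notational in the sense that it does not change any of your qualitative conclusions — both $\tau_s(S)(f_1+f_2)$ and $\tau_s(TS)(f_1+f_2)$ are real-analytic on intervals in~$\proj\RR$ containing~$\infty$, both give $h(0)=(f_1+f_2)(0)$ for the leading coefficient in~\eqref{h_exp}, and both yield a left half-plane holomorphic extension — but the reduction itself must be stated correctly for the eigenfunction-equation manipulations to go through.
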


\begin{proof}
Let~$f=(f_1,f_2)\in C^\om(D_\RR)$. Then $\tau_s(S) \left(f_1+f_2\right) \in \V s \om \left((1,-1)_c \right)$ and $\tau_s(T^{-1}S) \left(f_1+f_2\right) \in \V s \om \left((1-\lambda,-1-\lambda)_c \right)$. From the considerations in Section~\ref{sect-osav} it follows that in the region of absolute
convergence we have
\[
\ftro s \begin{pmatrix} f_1\\ f_2 \end{pmatrix}
\ceqq  \begin{pmatrix} \av{s,T}^+ \tau_s(T^{-1}S)(f_1+f_2) \\[1mm]
-\av {s,T}^- \tau_s(S)(f_1+f_2)
\end{pmatrix}\,.
\]
The resulting vector~$(\tilde f_1,\tilde f_2)$ satisfies 
\[  
\tilde f_1\in C^\om \left((1-\lambda, \infty)_c\right)\,,\quad \tilde f_2\in C^\om\left(
(\infty,-1+\lambda)_c\right)\,. 
\]
This proves~\eqref{fmi} and~\eqref{fmiv}. Restricting appropriately to~$(-1,\infty)_c$ or to~$(\infty,1)_c$ gives a meromorphic family of operators in~$C^\om(D_\RR)$. This implies~\eqref{fmii}.

The function~$t\mapsto \tilde f_1(t)$ has a holomorphic extension to a right half-plane, and the map~$t\mapsto \tilde f_2(t)$ extends holomorphically to a left half-plane. Therefore, if $(\tilde f_1,\tilde f_2)$ happens to be equal to~$(f_1,f_2)$, then  Proposition~\ref{prop-FE-RC} yields $(f_1,f_2) \in \FE_s^\om(\CC)$. From this~\eqref{fmiii} and~\eqref{fmv} follow. If~$f\in \BFE_s^\om(\CC)$, then $f_1+f_2=0$, and hence~$\ftro s f =0$. This shows~\eqref{fmvi}.
\end{proof}

\subsection{Spaces of complex period functions}\label{sec:def_cplxperiod}

In Section~\ref{sec:periodfunctions} we defined the space of complex period functions with spectral parameter~$s\in\CC$ to be 
\[
 \FE_s^\om(\CC)  = \left\{\,f\in C^\om(D_\CC) \setmid f=\TO_s^\slow f\,\right\}\,.
\]
In the Introduction we further defined the subspaces~$\FE_s^{\om,1}(\CC)$ and~$\FE_s^{\om,0}(\CC)$ of the space~$\FE_s^\om(\CC)$ which will be needed for a transfer-operator based interpretation of resonant funnel forms and cuspidal funnel forms. Since the definitions of both subspaces involve the transfer operator~$\TO_s^\fast$, Proposition~\ref{fast_merom} indicates that the domains in~$\CC$ of the parameter~$s$ for which these definitions are used need to be chosen with care. We set \index[symbols]{FE@$\FE_s^{\om,1}(\CC)$}
\begin{equation}
 \FE_s^{\om,1}(\CC) \coloneqq \bigl\{\, f\in\FE_s^\om(\CC) \setmid f=\TO_s^\fast f\,\bigr\}
\end{equation}
for~$s\in\CC$, $\Rea s\in (0,1)$, $s\not=\frac12$, and \index[symbols]{FE@$\FE_s^{\om,0}(\CC)$}
\begin{equation}
 \FE_s^{\om,0}(\CC) \coloneqq \bigl\{\, f = (f_1,f_2)\in\FE_s^{\om,1}(\CC) \setmid f_1(0) = -f_2(0)\,\bigr\}
\end{equation}
for~$s\in\CC$, $\Rea s\in (0,1)$. It follows immediately from Proposition~\ref{fast_merom} that these spaces are well-defined.

\section{An intuition and some insights}\label{sec:heuristic}
\markright{8. AN INTUITION}

In the Introduction, right after the statement of Theorem~\ref{thmA_new}, we alluded at the fact that the relation between $1$-eigenfunctions of the transfer operator~$\tro s$ and Laplace eigenfunctions with spectral parameter~$s$ is essentially given by a certain integral transform, up to sophistifications due to problems of convergence. In this section we briefly present an intuition about the relation between $1$-eigen\-func\-tions of the slow transfer operator~$\tro s$, $1$-cocycles with suitable coefficient spaces, and Laplace eigenfunctions. We hope that the explanation illuminates the steps and choices in Chapters~\ref{part:cohom}-\ref{part:TO}, in particular the definitions in~\eqref{c1short}-\eqref{c2short}, and the approach of Proposition~\ref{prop:cocycle}. The isomorphisms in~\cite{BLZm, Moeller_Pohl, Pohl_mcf_Gamma0p, Pohl_mcf_general, Pohl_spectral_hecke} between Maass cusp forms for cofinite Fuchsian groups, parabolic $1$-cocycles and $1$-eigenfunctions of slow transfer operators (equivalently, period functions for Maass cusp forms) have the same structure as the isomorphisms proposed here, and can be based on the same intuition. We refer to~\cite{Pohl_Zagier} for an informal presentation of the isomorphisms in the situation of cofinite Fuchsian groups, elaborated for the example of the modular group~$\PSL_2(\ZZ)$. In what follows we also discuss the additional influence from the funnel of the Hecke triangle surfaces.

We recall from Figure~\ref{fig:slowTO} (on p.~\pageref{fig:slowTO}) the set~$C^*=C^*_1\cup C^*_2$ of representatives for a cross section~$\wh C$ of the geodesic flow on~$\Gamma\backslash\uhp$ which gives rise to the slow transfer operator family~$\bigl(\tro s\bigr)_s$. We refer to~\eqref{stroGm} for its explicit formula. We recall further that for any unit tangent vector~$v\in \shp$ we let~$\gamma_v$ denote the geodesic on~$\uhp$ determined by~$v$.

Let~$f=(f_1,f_2)$ be a $1$-eigenfunction of~$\tro s$, let~$u$ be a Laplace eigenfunction with spectral parameter~$s$, and let~$c$ denote a cocycle in a $1$-cohomology space to be specified in Chapter~\ref{part:cohom}-\ref{part:TO}. For the presentation of the intuition we consider these $1$-cohomology spaces to be some rather abstract vector spaces, and think about a $1$-cocycle to be a map~$c\colon \Xi\times\Xi\to V$ defined on a set~$\Xi$ and a $\Gm$-module and vector space~$V$ (both to be determined) such that~$c$ satisfies certain compatibility properties; the intuition helps to clarify all necessary sets, modules and properties. Suppose that there are isomorphisms (somewhat similar to those for cofinite Fuchsian groups) between period functions, the $1$-cohomology spaces, and the Laplace eigenfunctions under which~$f$, $u$, and~$c$ are isomorphic. 

The linking pin between~$f=(f_1,f_2)$, $u$ and~$c$ is the set~$C^*$ of representatives for the cross section~$\wh C$, and its~$\Gamma$-translates.
For~$j\in\{1,2\}$, we call the set
\begin{equation}
 S(C^*_j)\coloneqq  \left\{\, \gamma_v(\infty) \setmid v\in C^*_j\,\right\} \subseteq \proj\RR
\end{equation}
the \emph{shadow}\index[defs]{shadow} of~$C^*_j$ in~$\proj\RR$, and the set 
\begin{equation}
 B(C^*_j)\coloneqq  \left\{\, \gamma_v(0) \setmid v\in C^*_j \,\right\} \subseteq \uhp
\end{equation}
the \emph{base}\index[defs]{base} of~$C^*_j$. Thus, 
\begin{align*}
 S(C^*_1) &= (-1,\infty)\,, &B(C^*_1) &= -1 + i\RR_{>0}
 \intertext{and}
 S(C^*_2) & = (-\infty,1)\,, &B(C^*_2)&= 1 + i\RR_{>0}\,.
\end{align*}
The Laplace eigenfunction~$u$ is (essentially) identified with the family of integrals 
\begin{equation}
 t\mapsto \int_{gB(C^*_j)} \omega_s(u,t)\,,\qquad (g\in\Gamma,\ j\in\{1,2\})\,,
\end{equation}
where~$\omega_s$ is a one-form that will be defined in Section~\ref{sect-coaief}. One of the achievements in~\cite{BLZm} was to establish such an identification for cofinite Fuchsian groups.

For $j\in\{1,2\}$, the functions~$f_j$ are associated to~$C^*_j$. We may imagine~$f_j$ to be defined on~$C^*_j$ and to satisfy so many invariances that the actual domain of definition of~$f_j$ is the shadow~$S(C^*_j)$. 

The cocycle~$c$ is associated to the family 
\[
\Gamma C^* = \bigcup_{g\in\Gamma} gC^*
\]
in the following way: For any two endpoints~$\xi,\eta$ of~$\Gamma C^*$ we imagine $c(\xi,\eta)$ to `live' on the geodesic connecting~$\xi$ and~$\eta$, more precisely to be defined on the set of tangent vectors to this geodesic, and to obey so many invariances that $c(\xi,\eta)$ descends to a function on~$\proj\RR$. This idea is identical to the one above for~$f=(f_1,f_2)$. We make it more rigorous. 

Let 
\[
 \Xi \coloneqq  \Gamma 1 \cup \Gamma\infty
\]
denote the set of endpoints of~$\Gamma C^*$ in~$\partial\uhp$, and let $V=V_s$ denote a $\Gamma$-sheaf of functions on~$\proj\RR$ in the sense of Section~\ref{sec:sheaves}. The properties of these functions are specified in Chapters~\ref{part:cohom}-\ref{part:TO}. Following the idea above, we imagine~$c(-1,\infty)\vert_{(-1,\infty)}$ to be defined on~$C^*_1$, and we identify~$C^*_1$ with its base~$B(C^*_1)$ and all unit tangent vectors~$v$ based in~$B(C^*_1)$ such that~$\gamma_v$ ends in~$S(C^*_1)$. The base~$B(C^*_1)$ corresponds to the geodesic from~$-1$ to~$\infty$ and hence to apply the cocycle~$c$ to~$(-1,\infty)$, and $S(C^*_1)$ is the domain of~$c(-1,\infty)\vert_{(-1,\infty)}$. Combining this with the intuition about~$f=(f_1,f_2)$ from above, we use the identification
\begin{equation}\label{def_heur_f1}
 c(-1,\infty)\vert_{(-1,\infty)} = f_1\,.
\end{equation}
Analogous argumentation suggests
\begin{equation}\label{def_heur_f2}
 c(1,\infty)\vert_{(-\infty,1)} = -f_2\,.
\end{equation}
The `$-$' is caused by the orientation of~$C^*_2$, which is opposite to the one of~$C^*_1$.

If we want to find a definition of~$c(-1,\infty)$ on~$(-\infty,-1)$ that is consistent with this intuition, then we are guided to ask for the set of unit tangent vectors that are based on~$B(C^*_1)$ but are `on the side other than~$C^*_1$'. As indicated by Figure~\ref{fig:cocycle}, this set is not contained in the $\Gamma$-translates of~$C^*$. 
\begin{figure}
\centering 
\includegraphics[width=0.95\linewidth]{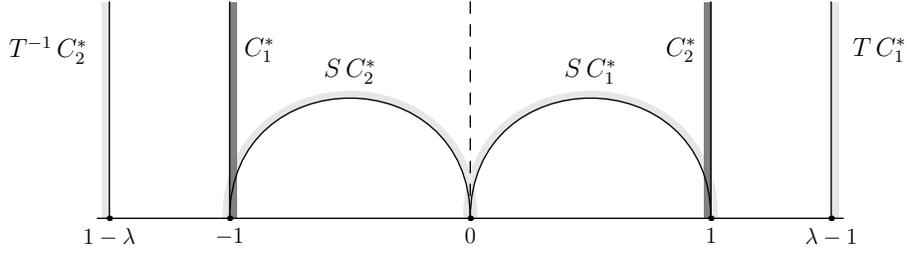}
\caption{The set~$C^*$ of representatives of cross section, and relevant \mbox{$\Gamma$-}translates.}\label{fig:cocycle}
\end{figure}
For that reason we take the next available set, namely~$T^{-1}C^*_2$. Hence
\begin{equation}\label{def_heur1}
 c(-1,\infty)\vert_{(-\infty,-1)} = -\tau_s(T^{-1})f_2\,.
\end{equation}
Again, the `$-$' is caused by the orientation of~$T^{-1}C^*_2$. Analogously, we are guided to set
\begin{equation}
 c(1,\infty)\vert_{(1,\infty)} = \tau_s(T)f_1\,.
\end{equation}
The relation in~\eqref{def_heur1} can also be argued in the following way: Instead of thinking of~$c(-1,\infty)$ to be related to the geodesic from~$-1$ and~$\infty$, we can use any path connecting~$-1$ and~$\infty$. In particular, we can use the geodesic from~$-1$ to~$1-\lambda$ and then the geodesic from~$1-\lambda$ to~$\infty$. Following the ideas above we find
\begin{equation}\label{vanish1}
 c(1-\lambda,\infty)\vert_{(-\infty,1-\lambda)} = -\tau_s(T^{-1})f_2
\end{equation}
and
\begin{equation}\label{vanish2}
 c(-1,1-\lambda)\vert_{(-\infty, 1-\lambda)\cup (-1,\infty)} = 0\,.
\end{equation}
The domain~$(1-\lambda,-1)$ is contained in a funnel interval, and hence representatives of periodic geodesics on~$\Gamma\backslash\uhp$ cannot end here. For that reason, the precise definition of cocycles on this interval always has to be adapted to the applications considered. Relations~\eqref{vanish1} and~\eqref{vanish2} give, on~$(-\infty,1-\lambda)$, the equality
\[
 c(-1,\infty) = c(-1,1-\lambda) + c(1-\lambda,\infty)\,.
\]

The leading idea is that we are allowed to exchange tangent vectors by `earlier' tangent vectors. This means that if the tangent vector~$v\in \shp$ is given, and $w$ is tangent to~$\gamma_v$ at some time~$t<0$, we are allowed to use~$w$ instead of~$v$. For example, for each vector on~$TC_1^*$ there is exactly one `earlier' vector on 
\[
 SC_1^* \cup SC_2^* \cup C_1^*\,.
\]
Thus, on~$(\lambda-1,\infty)$ we have
\begin{equation}\label{heur_cycle}
 \tau_s(T)c(-1,\infty) = \tau_s(S)c(-1,\infty) + \tau_s(S)c(1,\infty) + c(-1,\infty)\,.
\end{equation}
Alternatively, we find on~$(\lambda-1,\infty)$ the relation
\[
 c(\lambda-1,\infty) = c(1,0)+c(0,-1)+c(-1,\infty)\,.
\]
Using~\eqref{def_heur_f1} and~\eqref{def_heur_f2} in~\eqref{heur_cycle} we get
\[
 \tau_s(T)f_1 = \tau_s(S)f_1 + \tau_s(S)f_2 + f_1\,,
\]
which is recovering part of the properties of~$f$ being a $1$-eigenfunction of~$\tro s$. The remaining equality follows analogously.

We end this section with the example of how to find a formula for~$c(0,\infty)$. To that end, we consider the dashed line in Figure~\ref{fig:cocycle}. Constructing the tangent space to~$0+i\RR_{>0}$ from `earlier' vectors that we find on $\Gamma$-translates of~$C_1^*$ and~$C_2^*$ we get (compare with~\eqref{def_grpcoc} below)
\begin{equation}
 c(0,\infty) = 
 \begin{cases}
  \tau_s(S)f_2 + f_1 & \text{on $(0,\infty)$}
  \\
  -f_2 -\tau_s(S)f_1 & \text{on $(-\infty,0)$\,.}
 \end{cases}
\end{equation}

A major part of the discussions in Chapters~\ref{part:cohom}--\ref{part:TO} will be devoted to turn this intuition into rigorous proofs, to identify which modules~$V_s$ are needed and which properties have to be asked for the period functions to characterize the funnel forms, the resonant funnel forms, and the cuspidal funnel forms.


\setcounter{sectold}{\arabic{section}}

\markboth{II. SEMI-ANALYTIC COHOMOLOGY}{II. SEMI-ANALYTIC COHOMOLOGY}

\chapter{Semi-analytic cohomology}\label{part:cohom}

\setcounter{section}{\arabic{sectold}}

For proving the isomorphisms claimed in 
Theorems~\ref{thmA_new} and \ref{thmB_new} between spaces of funnel forms and 
spaces of $1$-eigenfunctions of the slow transfer operators we use cohomology 
spaces as intermediate objects. In this chapter, we present definitions of these 
cohomology spaces. 

In Chapter~\ref{part:autom}, we will then establish linear bijections between 
spaces of funnel forms and these cohomology spaces, and, in Chapter~\ref{part:TO}, linear bijections between spaces of period functions and the same 
cohomology spaces. 

The cohomology theory we will use is a generalization of the standard group cohomology. The 
latter can be described with homogeneous cocycles on a set on which the group 
acts freely. As alluded at in Section~\ref{sec:heuristic}, we will use a variant of group cohomology based on subsets of~$\proj\RR$ on which the Hecke triangle group~$\Gm$ does \emph{not} act freely. We will provide a detailed construction in 
Section~\ref{sec:cohom_abstract}. A special case of this construction gives the parabolic cohomology spaces, which were
essential for~\cite{BLZm} and which were used in the transfer-operator-based characterizations of Laplace 
eigenfunctions in, e.\,g.,~\cite{BM09, Moeller_Pohl, Pohl_mcf_general}.

In Section~\ref{sec:modules} we will discuss the modules that we will use as values for the cocycle classes. These modules consist of (subspaces of the complex vector space of) \emph{semi-analytic} functions on the projective 
line~$\proj\RR$. These are real-analytic functions on~$\proj\RR$ that are 
allowed to have finitely many singularities. Depending on whether we seek to 
characterize funnel forms, resonant funnel forms or cuspidal funnel forms, we 
require additional properties modelled such that the cohomology classes 
correspond to the kind of funnel forms under investigation. 

We need the spaces of first cohomology only, for which reason we 
restrict major parts of the discussion to these. Throughout let~$\Gamma=\Gamma_\lambda$ be a Hecke triangle group. We remark that the constructions in Sections~\ref{sec:groupcohom}-\ref{sec:setcohom} apply without changes to arbitrary groups.

\section{Abstract cohomology spaces}\label{sec:cohom_abstract} 
\markright{9. COHOMOLOGY SPACES}

Let~$M$ be a~$\Gamma$-module that is a vector space over~$\CC$. Throughout we work with right $\Gamma$-modules, with the action denoted by~$v \mapsto v|g$ for~$v\in M$, $g\in\Gamma$. We refer to~\cite{Brown} for a general reference on group cohomology. 

\subsection{Standard group cohomology}\label{sec:groupcohom}

We use the standard description of the first cohomology space~$H^1(\Gm;M)$ of standard 
group cohomology of~$\Gm$ with values in the module~$M$ in terms of \emph{inhomogeneous} cocycles.\index[defs]{cocycle!inhomogeneous} The space of inhomogeneous $1$-cocycles is \index[symbols]{Z@$Z^1(\Gamma;M)$}
\begin{equation}
 Z^1(\Gm;M) \ceqq  \Bigl\{ \psi \setmid \Gm \rightarrow M \ \left\vert\  
\forall\,{g,h\in \Gm}\colon\psi_{gh} = \psi_g|h + \psi_h \right.\Bigr\}\,,
\end{equation}
the space of inhomogeneous $1$-coboundaries is \index[symbols]{B@$B^1(\Gamma;M)$}
\begin{equation}
B^1(\Gm;M) \ceqq  \Bigl\{ \psi\in Z^1(\Gm;M) \ \left\vert\ \exists\,{a\in M}\ 
\forall\,{g\in
\Gm}\colon \psi_g = a|(1-g) \right.\Bigr\}\,,
\end{equation}
and the first cohomology space is the quotient space \index[symbols]{H@$H^1(\Gamma;M)$}\index[defs]{cohomology!group}
\begin{equation}
H^1(\Gm;M) \ceqq  Z^1(\Gm;M) /B^1(\Gm;M)\,.
\end{equation}

\subsection{Cohomology on an invariant set}\label{sec:setcohom}

We construct cohomology spaces with cocycles on a set with a~$\Gamma$-action that is \emph{not necessarily free} by extending the construction of standard group cohomology with homogeneous cocycles on a set with a free~$\Gamma$-action. We use the discussion in~\cite[Section~5.1, Section~6]{BLZm} as a base, and adapt it to non-free~$\Gamma$-actions.

Let~$\Xi$ be a set with a~$\Gm$-action that does not need to be free. For 
traditional reasons we suppose that~$\Gm$ acts on~$\Xi$ from the left, and turn 
it into a right~$\Gamma$-action by taking inverses. 

\medskip

\subsubsection{Chain complex}

For~$i\in \NN_0$ let~$\CC[\Xi^{i+1}]$ \index[symbols]{C@$\CC[\Xi^{i+1}]$} be the
vector space of finite $\CC$-linear combinations of the form
\[ 
\sum_{(\xi_0,\ldots,\xi_i)\in \Xi^{i+1}} c_{\xi_0,\ldots,\xi_i}
 (\xi_0,\ldots,\xi_i)  
\]
such that $c_{\xi_0,\ldots,\xi_i}\in\CC$ for all~$(\xi_0,\ldots,\xi_i)\in 
\Xi^{i+1}$ and $c_{\xi_0,\ldots,\xi_i}=0$ for all but finitely many~$(\xi_0,\ldots,\xi_i)\in \Xi^{i+1}$. We endow~$\CC[\Xi^{i+1}]$ with the right~$\Gm$-action induced by \index[symbols]{$(\xi_0,\ldots,\xi_i)\vert g$}
\begin{equation}
(\xi_0,\ldots,\xi_i)|g = (g^{-1}\xi_0,\ldots,g^{-1}\xi_i)\,,
\end{equation}
where~$(\xi_0,\ldots,\xi_i)\in \Xi^{i+1}$, $g\in\Gm$.
Let 
\[
\partial_i\colon\CC[\Xi^{i+1}]\rightarrow \CC[\Xi^i] 
\]
denote the~$\CC$-linear $\Gm$-equivariant \emph{boundary map} \index[defs]{boundary map}
induced by
\begin{equation}\label{eq:partial} \partial_i(\xi_0,\ldots,\xi_i) \coloneqq  \sum_{j=0}^i (-1)^j
(\xi_0,\ldots, \widehat{\xi_j},\ldots, \xi_i)
\,,\end{equation}
and let~$\eps\colon\CC[\Xi]\rightarrow \CC$ be the \emph{augmentation map} \index[defs]{augmentation map} induced by
$\eps(\xi)=1$. In~\eqref{eq:partial}, the symbol $(\xi_0,\ldots, \widehat{\xi_j},\ldots, \xi_i)$ denotes the 
$i$-tuple that arises by omitting~$\xi_j$ from the \mbox{$(i+1)$-}tuple~$(\xi_0,\ldots, 
\xi_j,\ldots, \xi_i)$. Then
\[
\ldots \stackrel{\partial_3}{\longrightarrow} \CC[\Xi^3]
 \stackrel{\partial_2}{\longrightarrow} \CC[\Xi^2]
 \stackrel{\partial_1}{\longrightarrow} \CC[\Xi]
 \stackrel{\eps}{\longrightarrow} \CC \longrightarrow 0 
\]
is a chain complex.

\medskip

\subsubsection{Cohomology spaces}

We consider the induced complex \index[defs]{cochain complex}
\begin{align*}
& 0 \longrightarrow \Hom_{\CC[\Gm]}(\CC[\Xi];M)
\stackrel{d^0}{\longrightarrow} \Hom_{\CC[\Gm]}(\CC[\Xi^2];M)
\stackrel{d^1}{\longrightarrow} \ldots\,,
\end{align*}
where the coboundary maps~$d^\bullet$ are induced by the boundary maps~$\partial_\bullet$.
For~$i\in\NN_0$ we let \index[defs]{space of cocycles} \index[symbols]{Z@$Z^i_\Xi(\Gm;M)$}
\begin{equation}
 Z^i_\Xi(\Gm;M) \coloneqq  \ker d^i
\end{equation}
denote the space of \emph{$i$-cocycles of the cohomology on~$\Xi$}, \index[defs]{cocycle} and we let \index[defs]{space of coboundaries}\index[symbols]{B@$B^i_\Xi(\Gm;M)$}
\begin{equation}
 B^i_\Xi(\Gm;M) \coloneqq  \image d^{i-1}
\end{equation}
denote the space of \emph{$i$-coboundaries} (with the definition $B^0_\Xi(\Gm;M) \coloneqq  \{0\}$). \index[defs]{coboundary} The $i$-th cohomology space of the cohomology on~$\Xi$ is then the quotient space \index[defs]{cohomology space!on $\Gm$-invariant set} \index[symbols]{H@$H^i_\Xi(\Gm;M)$}\index[defs]{cohomology!on $\Gm$-invariant set}
\begin{equation}
H^i_\Xi(\Gm;M) = Z^i_\Xi(\Gm;M)/B^i_\Xi(\Gm;M)\,.
\end{equation}
Throughout, we let $[c]$ denote the element in~$H^i_\Xi(\Gm;M)$ \index[symbols]{$[c]$} that is represented by the cocycle~$c\in Z^i_\Xi(\Gm;M)$.

All $i$-cocycles are determined by maps~$\Xi^{i+1}\rightarrow M$. In the case~$i=1$ (the only case we will use), we can and shall identify the $1$-cocycles with the maps \index[symbols]{C@$c$}
\[
c\colon\Xi\times\Xi\to M
\]
that satisfy the \emph{cocycle relation} \index[defs]{cocycle relation}
\begin{equation}
 c(\xi,\eta)+c(\eta,\zeta) = c(\xi,\zeta)  
\end{equation}
for all~$\xi,\eta,\zeta\in\Xi$, and the 
\emph{$\Gm$-equivariance} \index[defs]{$\Gamma$-equivariance!cocycle}\index[defs]{cocycle!$\Gamma$-equivariance}
\begin{equation}
 c(\xi,\eta)|g = c(g^{-1}\xi,g^{-1}\eta)
\end{equation}
for all~$\xi,\eta\in\Xi$, $g\in \Gm$.

The $1$-coboundaries can be identified with those maps~$c\colon\Xi\times \Xi\to M$ for which there exists a \emph{$\Gm$-equivariant} \index[defs]{$\Gamma$-equivariant function}\index[defs]{$\Gamma$-equivariance!function} function~$f\colon \Xi\to M$ (i.\,e., $f(g^{-1}\xi) = f(\xi)|g$ for~$g\in\Gm$, $\xi\in\Xi$) such that
\begin{equation}
 c(\xi,\eta) = f(\xi) - f(\eta)
\end{equation}
for all~$(\xi,\eta)\in\Xi^2$.

\medskip

\subsubsection{Potentials, and relation to standard group cohomology}\label{sec:potentials}
Let $c$ be a cocycle in $ Z^1_\Xi(\Gm;M)$. A \emph{potential}~$p$ \index[defs]{potential} \index[symbols]{P@$p$} of~$c$ is a
map~$p\colon \Xi\rightarrow M$ such that~$c=dp$, hence
\[
c(\xi_1,\xi_2) = dp(\xi_1,\xi_2) = p(\xi_1)-p(\xi_2)
\]
for all~$\xi_1,\xi_2\in\Xi$. One easily checks the following properties:

\begin{enumerate}[{\rm (i)}]
\item\label{pot_shift} The set of potentials of~$c$ is nonempty and parametrized by~$M$. Indeed, for any choice~$\xi_0\in \Xi$, the map
\begin{equation}\label{pick_p}
 p\colon\Xi\to M\,,\quad p(\xi) \coloneqq  c(\xi,\xi_0)
\end{equation}
is a potential of~$c$ (satisfying the additional property~$p(\xi_0)=0$). If~$p$ is a potential of~$c$ then, for each~$b\in M$, the map 
\[
 p+b\colon \Xi \to M,\quad \xi\mapsto p(\xi) + b\,,
\]
is also a potential of~$c$. Conversely, the difference between any two potentials of~$c$ is constant, and given by an element in~$M$. 

\item The cocycle~$c$ has a $\Gamma$-equivariant potential if and only if~$c$ is a coboundary.

\item For any potential~$p$ of~$c$ and any~$g\in\Gm$, the quantity \index[symbols]{P@$\psi_g$} \index[defs]{group cocycle associated to potential}
\begin{equation}\label{cocpot} 
\psi_g \coloneqq  p(g^{-1} \xi) - p(\xi)|g
\end{equation}
does not depend on the choice~$\xi\in \Xi$. It defines an inhomogeneous group cocycle~$\psi \in Z^1(\Gm;M)$. Choosing another potential of~$c$ results in changing~$\psi$ by a group coboundary. In turn, the map \index[symbols]{P@$\Phi_\Xi$}
\begin{equation}\label{map_standcohom}
\Phi_\Xi\colon H^1_\Xi(\Gamma;M) \to H^1(\Gamma;M)\,,\quad [c] \mapsto [\psi]\,,
\end{equation}
where~$\psi\in Z^1(\Gamma;M)$ is formed by picking any representative~$c$ of~$[c]$ and any potential~$p$ of~$c$, is well-defined, linear and natural. 
\item The image of~$\Phi_\Xi$ consists of those group cohomology classes in~$H^1(\Gamma;M)$ that have for each~$\xi\in\Xi$ a representative~$\psi=\psi(\xi)$ which vanishes on the elements of the stabilizer group
\[
 \Gamma_\xi\coloneqq  \left\{ g\in\Gamma\setmid g^{-1}\xi=\xi \right\}
\]
of~$\xi$ in~$\Gamma$. Thus, for all~$g\in\Gamma_\xi$ we have $\psi_g=0$. We note that the group cocycle~$\psi$ may depend on~$\xi$.
\item An equivalent way to describe the assignment of~$[\psi]$ to~$[c]$ results from combining~\eqref{cocpot} with~\eqref{pick_p}. Then 
\begin{equation}
 \psi \colon\Gamma\to M\,,\quad \psi_g \coloneqq  c\big(g^{-1}\xi_0,\xi_0\big)
\end{equation}
for some~$\xi_0\in\Xi$. We remark that~$\psi$ depends on the choice of~$c$ and~$\xi_0$, its group cohomology class however does not.
\end{enumerate}

The map~$\Phi_\Xi$ provides a close relation between cohomology on $\Gm$-invariant sets and standard group cohomology. The following lemma shows that the relation is deeper than seen directly from~\eqref{map_standcohom}.

\begin{lem}\label{lem:cocgrp_inj}
The map~$\Phi_\Xi$ in~\eqref{map_standcohom} is injective. 
\end{lem}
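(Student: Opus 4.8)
The plan is to show that any cocycle $c \in Z^1_\Xi(\Gm;M)$ whose image $\Phi_\Xi([c])$ vanishes in $H^1(\Gm;M)$ is already a $\Xi$-coboundary; since $\Phi_\Xi$ is linear, this yields injectivity. First I would fix a base point $\xi_0 \in \Xi$ and use the potential $p(\xi) \coloneqq c(\xi,\xi_0)$ of $c$ from~\eqref{pick_p}. By the description of $\Phi_\Xi$ via~\eqref{cocpot}, the associated inhomogeneous group cocycle is $\psi_g = p(g^{-1}\xi) - p(\xi)|g$ (independent of $\xi\in\Xi$), and the hypothesis $\Phi_\Xi([c]) = 0$ means that $[\psi] = 0$ in $H^1(\Gm;M)$, i.e.\ there is an element $a \in M$ with $\psi_g = a|(1-g) = a - a|g$ for all $g \in \Gm$.

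Next I would correct the potential by this constant: set $q \coloneqq p - a \colon \Xi \to M$. By property~(i) in Section~\ref{sec:potentials}, $q$ is again a potential of $c$, so $c(\xi,\eta) = q(\xi) - q(\eta)$ for all $\xi,\eta \in \Xi$. The whole point of the shift is that $q$ is now $\Gm$-equivariant: for $g \in \Gm$ and any $\xi \in \Xi$, using $p(g^{-1}\xi) = \psi_g + p(\xi)|g$,
\[
 q(g^{-1}\xi) = p(g^{-1}\xi) - a = \bigl(\psi_g + p(\xi)|g\bigr) - a = (a - a|g) + p(\xi)|g - a = \bigl(p(\xi) - a\bigr)|g = q(\xi)|g\,.
\]
Hence $c$ admits a $\Gm$-equivariant potential, and by property~(ii) in Section~\ref{sec:potentials} this means precisely that $c \in B^1_\Xi(\Gm;M)$, so $[c] = 0$ in $H^1_\Xi(\Gm;M)$. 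This proves the lemma.

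I do not expect a genuine obstacle here: the argument is a direct unwinding of the definitions of potentials and of the map $\Phi_\Xi$, and the only point requiring care is keeping the twist $|g$ in its correct place in~\eqref{cocpot} together with the convention (stated in Section~\ref{sec:setcohom}) that the left $\Gm$-action on $\Xi$ has been converted to a right action by inversion. An alternative, more structural route would invoke the comparison of the complex $\CC[\Xi^{\bullet+1}]$ with the bar resolution of $\CC[\Gm]$, but the explicit shift-by-a-constant computation above is the shortest and most transparent presentation and avoids any homological-algebra bookkeeping.
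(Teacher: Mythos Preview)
Your proof is correct and is essentially identical to the paper's own argument: both fix a potential $p$ of $c$, write the coboundary condition $\psi_g = a|(1-g)$ for some $a\in M$, and verify that the shifted potential $p-a$ is $\Gm$-equivariant, forcing $c\in B^1_\Xi(\Gm;M)$. The only difference is notational (the paper writes $m$ and $f$ where you write $a$ and $q$).
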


\begin{proof}
Let~$c\in Z^1_\Xi(\Gamma;M)$, let~$p\colon\Xi\to M$ be a potential of~$c$, and suppose that the associated group cocycle~$\psi\colon\Gamma\to M$ is a coboundary. To show injectivity of~$\Phi_\Xi$ it suffices to show that~$c\in B^1_\Xi(\Gamma;M)$. 

Since $\psi\in B^1(\Gamma;M)$ there exists~$m\in M$ such that 
\[
\psi_g = m|(1-g) 
\]
for all~$g\in\Gamma$. We set
\[
 f\coloneqq  p-m\colon \Xi\to M\,,\quad f(\xi) = p(\xi)-m\,,
\]
and claim that~$f$ is a~$\Gamma$-equivariant potential of~$c$. Recalling~\eqref{pot_shift} from above, it suffices to show the~$\Gamma$-equivariance. Thus, for all~$g\in\Gamma$, $\xi\in\Xi$ we have
\[
 p(g^{-1}\xi) - p(\xi)|g = \psi_g = m|(1 - g)\,,
\]
and hence
\[
 f(g^{-1}\xi) = p(g^{-1}\xi) - m = \big( p(\xi) - m \big)|g = f(\xi)|g\,.
\]
This completes the proof.
\end{proof}

Lemma~\ref{lem:potentials} below provides a characterization of potentials, which we will take advantage of in the cohomological interpretation of automorphic forms. For a cocycle~$c\in Z^1_\Xi(\Gamma;M)$ we call any pair~$(p,\psi)$ consisting of a potential~$p$ of~$c$ and a group cocycle~$\psi$ defined as in~\eqref{cocpot} using~$p$ a \emph{pgc-pair associated to~$c$} (`\textbf{p}otential--\textbf{g}roup \textbf{c}ocycle pair'). \index[defs]{pgc-pair}

\begin{lem}\label{lem:potentials}
Let~$R$ be a set of representatives of~$\Gm\backslash \Xi$, i.\,e., $\Xi = \bigsqcup_{r\in R} \Gm r$. Further let~$\psi\in\nobreak Z^1(\Gamma;M)$ be a group cocycle. 
\begin{enumerate}[{\rm (i)}]
\item\label{lem:poti}
For each~$r\in R$ let~$p_r\in M$ be such that 
\begin{equation}\label{pot_well}
 \forall\, g\in\Gamma_r\colon p_r = p_r|g + \psi_{g}\,.
\end{equation}
Then
\begin{enumerate}[{\rm (a)}]
\item there exists a unique map~$p\colon \Xi\to M$ such that 
\begin{enumerate}[{\rm (1)}]
\item for all~$r\in R$ we have~$p(r) = p_r$,
\item for all~$\gamma\in\Gamma$, $\xi\in\Xi$ we have $p(\gamma^{-1}\xi) = p(\xi)|\gamma + \psi_{\gamma}$.
\end{enumerate}
\item there exists a (unique) cocycle in~$Z^1_\Xi(\Gamma;M)$ with potential~$p$.
\end{enumerate}
\item\label{lem:potii} If to $c\in Z^1_\Xi(\Gamma;M)$ is associated the pgc-pair~$(p,\psi)$, then for every~$r\in R$ the relation~\eqref{pot_well} holds with~$p_r\coloneqq  p(r)$.
\end{enumerate}
\end{lem}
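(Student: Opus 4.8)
The statement is a bookkeeping lemma that translates the constraint ``$\psi_g = 0$ on stabilizers'' (which appeared in the discussion of the image of $\Phi_\Xi$) into an explicit recipe for building potentials — and hence cocycles on $\Xi$ — out of a group cocycle $\psi$ together with a compatible choice of values on orbit representatives. The whole proof is an exercise in checking well-definedness, so the only real work is making sure the orbit-dependent choices are forced to be consistent. I would organize it around the three assertions (i)(a), (i)(b), (ii) in that order.

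For part (i)(a), I would define $p$ orbit by orbit. Fix $r\in R$ and $\xi\in\Gamma r$; choose $\gamma\in\Gamma$ with $\xi=\gamma^{-1}r$, and \emph{set} $p(\xi)\coloneqq p_r|\gamma + \psi_\gamma$, motivated by the desired relation (2) rewritten with $\gamma$ in place of $\gamma^{-1}$. The point to verify is independence of the choice of $\gamma$: if also $\xi=\gamma'^{-1}r$ then $g\coloneqq \gamma'\gamma^{-1}\in\Gamma_r$, and one computes, using the cocycle relation $\psi_{g\gamma}=\psi_g|\gamma+\psi_\gamma$ for $\psi$, that $p_r|\gamma'+\psi_{\gamma'} = (p_r|g + \psi_g)|\gamma + \psi_\gamma$; now hypothesis~\eqref{pot_well} says exactly $p_r = p_r|g+\psi_g$, so this collapses to $p_r|\gamma+\psi_\gamma$, proving well-definedness. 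By construction $p(r)=p_r$ (take $\gamma=1$, using $\psi_1=0$), so (1) holds, and a one-line cocycle-identity computation gives the transformation rule (2) for all $\gamma\in\Gamma$, $\xi\in\Xi$; uniqueness of $p$ is immediate since (1) pins down $p$ on $R$ and (2) propagates it over each orbit.

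For part (i)(b), I would set $c(\xi,\eta)\coloneqq p(\xi)-p(\eta)$. The cocycle relation $c(\xi,\eta)+c(\eta,\zeta)=c(\xi,\zeta)$ is automatic from the telescoping difference. For $\Gamma$-equivariance, $c(g^{-1}\xi,g^{-1}\eta) = p(g^{-1}\xi)-p(g^{-1}\eta) = \bigl(p(\xi)|g+\psi_g\bigr) - \bigl(p(\eta)|g+\psi_g\bigr) = \bigl(p(\xi)-p(\eta)\bigr)|g = c(\xi,\eta)|g$, where the $\psi_g$ terms cancel — this cancellation is the whole reason the construction works. Uniqueness is clear since the potential determines the cocycle. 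Part (ii) is the converse consistency check: given the pgc-pair $(p,\psi)$ of a cocycle $c$, for $g\in\Gamma_r$ the defining relation~\eqref{cocpot} of $\psi$ (applied at the point $\xi=r$, where it is independent of the base point) reads $\psi_g = p(g^{-1}r) - p(r)|g = p(r) - p(r)|g$, which rearranges to~\eqref{pot_well} with $p_r=p(r)$.

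I do not expect any genuine obstacle: the ``hard part'', such as it is, is simply being careful that \eqref{pot_well} is precisely the obstruction that makes the orbitwise definition in (i)(a) independent of the representative $\gamma$, and keeping track of left-versus-right conventions (the inverse in $(\xi_0,\dots)|g = (g^{-1}\xi_0,\dots)$ and in~\eqref{cocpot}) so that the $\psi$-terms cancel in the equivariance check rather than accumulating. Once the definition in (i)(a) is shown well-defined, everything else is a short formal manipulation with the cocycle identity for $\psi$.
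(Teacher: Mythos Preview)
Your proof is correct and complete. The paper does not provide a proof of this lemma, treating it as a routine verification in the spirit of the easily checked properties listed just before it; your argument is precisely the natural one the paper leaves implicit, with the key point being that condition~\eqref{pot_well} is exactly the compatibility needed to make the orbitwise definition of~$p$ independent of the chosen coset representative.
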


\subsection{Relation to parabolic cohomology spaces}

The cohomological interpretation of automorphic forms for cofinite Fuchsian groups in~\cite{BLZm}, and their relation to eigenfunctions of transfer operators in~\cite{Moeller_Pohl, Pohl_mcf_general, Pohl_mcf_Gamma0p, BM09} takes advantage of parabolic cohomology spaces. We refer to~\cite{BLZm} for the definition of parabolic cohomology spaces, and will now briefly explain (for readers familiar with~\cite{BLZm}) the relation between parabolic cohomology spaces and cohomology spaces on sets with a~$\Gamma$-action.

The definitions and statements in Sections~\ref{sec:groupcohom}-\ref{sec:setcohom} apply to any group~$\Gamma$, even though we considered only Hecke triangle groups. In this section~$\Gamma$ necessarily is a Hecke triangle group. Further we let $\Xi = \Gm\infty \sqcup \Gm 1$. In Chapters~\ref{part:autom}-\ref{part:TO} we will use the cohomology spaces with this~$\Gamma$-invariant set.

The set~$\Xi$ is the disjoint union of the orbit of the cusp of~$\Gm$ and the orbit of one ordinary point. The subgroup~$\Gm_1$ fixing~$1$ is trivial, whereas the subgroup~$\Gm_\infty$ fixing~$\infty$ is generated by~$T$. The discussion in Section~\ref{sec:potentials} shows that the subspace of~$H^1(\Gamma;M)$ corresponding to~$H^1_\Xi(\Gm;M)$ under the map~$\Phi_\Xi$ from~\eqref{map_standcohom} consists of those group cohomology classes that contain a cocycle satisfying~$\psi_T=0$. Thus, $H^1_\Xi(\Gm;M)$ is canonically isomorphic to the \emph{parabolic cohomology space}~$H^1_{\textnormal{par}}(\Gm;M)$.\index[defs]{parabolic cohomology space}\index[defs]{cohomology space!parabolic}

\section{Modules}\label{sec:modules}\markright{10. MODULES}

For the cohomological interpretation of automorphic forms we will use the cohomology spaces defined in Section~\ref{sec:setcohom} with specific modules of \emph{semi-analytic functions} on~$\proj\RR$. We will provide detailed definitions in Sections~\ref{sect-sars}-\ref{sec:submodules}. In addition we will need cohomology spaces whose elements satisfy additional properties which cannot be modelled as properties of the modules. These properties will be discussed in Sections~\ref{sect-cococ}-\ref{sec:sing}. Throughout let~$\Gamma=\Gamma_\lambda$ be a Hecke triangle group and set \index[symbols]{X@$\Xi$}
\[
 \Xi \coloneqq  \Gamma1 \sqcup \Gamma\infty\,.
\]

\subsection{Modules of semi-analytic functions}\label{sect-sars}

We use a notion of semi-analyt\-icity on~$\Xi$ that is analogous to~\cite[Definition~10.2]{BLZm} where semi-analyticity was defined to mean that singularities may occur anywhere in $\proj\RR$ or were restricted to cuspidal points. However, the funnel present in our situation requires an adaptation, that we provide in what follows.

Recall the sheaf~$\V s \om$ of real-analytic functions from Section~\ref{sect-psa}. For any finite subset~$F\subset\proj\RR$, we let (as in~\cite[(2.21--22)]{BLZm}) \index[symbols]{Vaaah@$\V s \om[F]$} 
\begin{equation}
\V s \om[F] \coloneqq  \V s \om \bigl( \proj\RR \smallsetminus F \bigr)
\end{equation}
denote the linear space of real-analytic functions on~$\proj\RR\smallsetminus F$. We endow  
\[
 \mc I\coloneqq  \Bigl\{ \text{$F\subseteq \proj\RR$ finite}\Bigr\}
\]
with the structure of a directed set by setting $F_1\leq F_2$ if~$F_1\subseteq 
F_2$, we identify~$\V s \om[F_1]$ with its image in~$\V s \om[F_2]$ under the 
canonical embedding if $F_1\leq F_2$, and we let \index[symbols]{Vaaad@$\V s \fs(\proj\RR)$}
\begin{equation}
\V s \fs\!(\proj\RR) \coloneqq  \varinjlim \V s \om [F]\,
\end{equation}
denote the direct limit of the direct system~$(\V s \om[F])_{F\in\mc I}$. Since for any~$g\in\PSL_2(\RR)$ and~$F\in\mc I$ we have
\[
\tau_s(g) \V s \om[F] \ceqq  \V s \om[gF]\,,
\]
the space~$\V s \fs\!(\proj\RR)$ is a~$\PSL_2(\RR)$-module for the action~$\tau_s$. We 
remark that the space~$\V s \om[F]$ itself typically is \emph{not} a~$\PSL_2(\RR)$-module, not even a~$\Gamma$-module. We call~$\V s \fs\!(\proj\RR)$ a \emph{module of semi-analytic functions}. \index[defs]{module of semi-analytic functions}

For~$f\in \V s \fs\!(\proj\RR)$ we denote by \index[symbols]{B@$\bsing$}\index[defs]{singularities!boundary}\index[defs]{boundary singularities}
\begin{equation}
\bsing f \coloneqq  \bigcap \big\{ F\in \mc I \setmid f\in \V s \om[F]\big\}
\end{equation}
the minimal set~$F\in \mc I$ such that~$f\in \V s \om[F]$. We call $\bsing f$ the \emph{set of boundary singularities} of~$f$. The space of elements~$f\in \V s \fs\!(\proj\RR)$ with $\bsing f = \emptyset$ is identical to the~$\PSL_2(\RR)$-module~$\V s \om(\proj\RR)$, and hence $\V s \om(\proj\RR)$ is seen to be a submodule of~$\V s \fs\!(\proj\RR)$. For elements $f_1,f_2$ of~$\V s \fs\!(\proj\RR)$ we write \index[symbols]{$\equiv$}
\begin{equation}\label{eq:equiv_fns}
f_1\equiv f_2 \quad\text{if}\quad f_1-f_2\in \V s \om(\proj\RR)\,.
\end{equation}
This means that for any representatives~$\tilde f_j\in \V s \om[F_j]$ of~$f_j$, $j\in\{1,2\}$, the element
\[
\tilde f_1-\tilde f_2 \in \V s \om[F_1\cup F_2]
\]
extends to an element of~$\V s \om(\proj\RR)$.

We let \index[symbols]{Vaaad@$\V s \fxi(\proj\RR)$}
\begin{equation}
\V s \fxi(\proj\RR) = \varinjlim \V s \om [F]\qquad\text{ with }F \subseteq \Xi = \Gm\,\infty \cup\Gamma\, 1\,
\end{equation}
denote the~$\Gamma$-submodule of~$\V s \fs\!(\proj\RR)$ consisting of the elements~$f\in \V s \fs\!(\proj\RR)$ with 
\[
\bsing f\subseteq \Xi\,.
\]
The~$\Gamma$-module~$\V s \fxi(\proj\RR)$ will be crucial for the cohomological characterization of automorphic forms for the non-cofinite Hecke triangle group~$\Gm$. (See Chapter~\ref{part:autom} below.) We note that~$\V s \fxi(\proj\RR)$ is a~$\Gamma$-module, but not a~$\PSL_2(\RR)$-module. 

For the isomorphisms between cohomology spaces and eigenfunctions of transfer operators we will also need spaces of elements in the injective limit that are not necessarily defined on all of~$\proj\RR$. (See Chapter~\ref{part:TO} below.) For any open subset~$I\subseteq\nobreak\proj\RR$ we let \index[symbols]{Vaaad@$\V s \fxi(I)$}
\begin{equation}
 \V s \fxi(I) \coloneqq \varinjlim \V s \om \bigl( I \smallsetminus F \bigr)\qquad\text{ with }F \subseteq \Xi = \Gm\,\infty \cup\Gamma\, 1
\end{equation}
denote the space of restrictions of the elements in~$\V s \fxi(\proj\RR)$ to~$I$. We use the equivalence relation~$\equiv$ defined in~\eqref{eq:equiv_fns} also for these restricted elements but only requesting that $f_1-f_2\in\V s \om(I)$. We remark that typically the space~$\V s \fxi(I)$ is not a $\Gamma$-mod\-ule. (The assignment $I \rightarrow V^{\om(\Xi)}_s(I)$ for open $I\subset \proj\RR$ determines a presheaf, a property we will not use in this article.) Further, we will refer to elements of~$\V s \fxi(I)$ as `functions,' even though it is a slight abuse of the concept of an injective limit.

\subsection{Submodules of semi-analytic vectors}\label{sec:submodules}

We now define several submodules 
\[
\V s {\fxi;\cond_1;\cond_2}(\proj\RR)
\]
of~$\V s \fxi(\proj\RR)$ by imposing conditions on the type of singularities the 
considered functions may have at the points in~$\Xi$. The conditions~$\tcond_1$ are requirements on the type of singularities at the cuspidal points~$\Gamma\infty$, whereas the conditions~$\tcond_2$ are requirements on the singularities at the points of the orbit~$\Gm\, 1$. Throughout we use `smooth' to mean~$C^\infty$.

\begin{defn}\label{def:conditions}
Let $x_0\in\proj\RR$ and~$f\in \V s \fxi(\proj\RR)$.
\begin{enumerate}[{\rm (i)}]
\item We say that~$f$ has a \emph{simple singularity} \index[defs]{simple singularity} at~$x_0$ if the map
\[
 \proj\RR\smallsetminus\bsing f  \to \proj\RR\,,\quad x\mapsto m(x)f(x)
\]
with 
\[ 
m(x) \coloneqq  
\begin{cases}
(x-x_0) &\text{ if } x_0\in \RR\,,
\\
x^{-1}&\text{ if }x_0=\infty\,.
\end{cases}
\]
extends to a smooth (i.\,e., $C^\infty$)~function in a neighborhood of~$x_0$ in~$\proj\RR$.
\item We say that~$f$ is \emph{smooth} \index[defs]{smooth} at~$x_0$ if~$f$ has a smooth extension to 
a neighborhood of~$x_0$ in~$\proj\RR$.
\item We say that~$f$ has an \emph{analytic jump} \index[defs]{analytic jump} at~$x_0$ if there exist points~$\alpha,\beta\in\proj\RR$ such that~$x_0\in (\alpha,\beta)_c$ and the restrictions 
of~$f$ to~$(\alpha,x_0)_c$ and~$(x_0,\beta)_c$ are real-analytic, hence
\[
 f_\ell\coloneqq  f\vert_{(\alpha,x_0)_c} \in \V s \om\big( (\alpha,x_0)_c 
\big),\quad f_r \coloneqq  f\vert_{(x_0,\beta)_c} \in \V s \om\big( (x_0,\beta)_c 
\big)\,,
\]
and both functions~$f_\ell$ and~$f_r$ have a real-analytic continuation to a 
neighborhood of~$x_0$ in~$\proj\RR$. See Figure~\ref{fig:anjump}.
\begin{figure}
\centering
\includegraphics[width=0.6\linewidth]{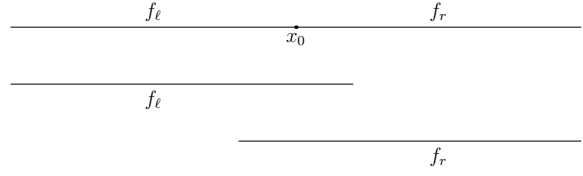}
\caption{Analytic jump.}\label{fig:anjump}
\end{figure}

\item We say that~$f$ satisfies the condition~\texc \index[defs]{condition!$\exc$}\index[defs]{exc@$\exc$}\index[symbols]{E@$\exc$} at~$x_0$ if~$f$ admits a 
holomorphic extension to an open subset of~$\proj\CC$ that is rounded at~$x_0$.
\end{enumerate}
\end{defn}

\begin{defn}\label{def:modulescond}
\begin{enumerate}[{\rm (i)}]
\item We let $\V s {\fxi;\smp;}(\proj\RR)$ \index[symbols]{Vaaaz@$\V s {\fxi;\smp;}(\proj\RR)$} \index[symbols]{S@$\smp$}\index[defs]{smp@$\smp$}\index[defs]{condition!$\smp$} denote the module consisting of the functions~$f\in\V s \fxi(\proj\RR)$ which, for all~$\xi\in\bsing(f) \cap \Gamma\infty$, have a simple 
singularity at~$\xi$.
\item We let $\V s {\fxi;\infty;}(\proj\RR)$ \index[symbols]{Vaaaz@$\V s {\fxi;\infty;}(\proj\RR)$} \index[defs]{condition!$\infty$} denote the module consisting of those functions~$f$ in $\V s \fxi(\proj\RR)$ which are smooth at all~$\xi\in\bsing(f)\cap\Gamma\infty$.
\item We let $\V s {\fxi;\exc;}(\proj\RR)$ \index[symbols]{Vaaaz@$\V s {\fxi;\exc;}(\proj\RR)$} denote the module consisting of those~$f\in\nobreak\V s \fxi(\proj\RR)$ which, for all~$\xi\in\bsing(f)\cap\Gamma\infty$, satisfy the 
condition~\texc at~$\xi$.
\item We let $\V s {\fxi;-;\aj}(\proj\RR)$ \index[symbols]{Vaaaz@$\V s {\fxi;-;\aj}(\proj\RR)$} \index[defs]{aj@$\aj$}\index[defs]{condition!$\aj$}\index[symbols]{A@$\aj$} denote the module consisting of those~$f\in\V s \fxi(\proj\RR)$ which, for all~$\xi\in\bsing(f)\cap\Gamma1$, have an analytic jump 
at~$\xi$.
\end{enumerate}
\end{defn}

\begin{rmk}
\begin{enumerate}[{\rm (i)}]
\item Modules formed of semi-analytic functions with simple singularities or satisfying smoothness or the condition~\texc are of utmost importance in~\cite{BLZm} and~\cite{BCD} as well. In~\cite[Section~9.5]{BLZm}, rounded neighborhoods are called \emph{excised} \index[defs]{excised} which explains the acronym for the condition~\texc.
\item Let $f\in \V s \fxi(\proj\RR)$. Since the set~$\bsing(f)$ of points at which~$f$ is not real-analytic is discrete, and $f\in \V s \omega[\bsing(f)]$, the function~$f$ satisfies all the properties defined in 
Definition~\ref{def:conditions} at all points~$x_0\notin\bsing(f)$. In turn, in the statement of Definition~\ref{def:modulescond}, the restriction to the set~$\bsing(f)$ can be omitted at all places. 
\item The properties defined in Definition~\ref{def:conditions} and the conditions in Definition~\ref{def:modulescond} are preserved under the action~$\tau_s(g)$ for each~$g\in\Gamma$. Due to this stability, the spaces in Definition~\ref{def:modulescond} are indeed $\Gamma$-modules. All of them are submodules of~$\V s \fxi(\proj\RR)$, and all contain~$\V s \om(\proj\RR)$.

A slight caveat when considering stability under the~$\Gamma$-action is due for the condition~\texc. In case that we investigate the validity of the condition~\texc at~$\infty$ and use the element~$S$ (see 
\eqref{standardchoice}) for the transformation into $\RR$, resulting in the 
transformed function 
\[
 t\mapsto |t|^{-2s} f\left(-\tfrac1t\right)\,,
\]
we need to use~$z\mapsto (z^2)^{-s}$ for the holomorphic extension of~$t\mapsto 
|t|^{-2s}$, which has its discontinuities at~$i\RR$ and hence does not interfere 
with testing the satisfiability of~\texc.
\end{enumerate}
\end{rmk}

\subsection{Conditions on cocycles }\label{sect-cococ}

We now define some properties of cocycles that we will need for the cohomological interpretation of automorphic forms for~$\Gm$ but that are not modelled as properties of a well-chosen~$\Gm$-module. We refer to Section~\ref{sec:heuristic}, where an intuition about these conditions is explained.  

\begin{defn}
Let $W\subseteq \V s \fxi(\proj\RR)$ be a~$\Gamma$-submodule. 
\begin{enumerate}[{\rm (i)}] 
\item By~$Z^1_\Xi(\Gm;W)_\sic$ \index[symbols]{Z@$Z^1_\Xi(\Gm;W)_\sic$}  we denote the subspace of~$Z^1_\Xi(\Gm;W)$ consisting of the cocycles~$c$ that satisfy \index[defs]{condition!$\sic$}\index[defs]{singularity condition}\index[defs]{condition!singularity} \index[defs]{sic@$\sic$}\index[symbols]{S@$\sic$}
\[
\bsing c(\xi,\eta) \;\subseteq\; \{\xi,\eta\}\qquad\text{ for all
}\xi,\eta\in \Xi\,. 
\]
Further, we let~$B^1_\Xi(\Gm;W)_\sic$ \index[symbols]{B@$B^1_\Xi(\Gm;W)_\sic$} be the space of coboundaries~$df \in B^1_\Xi(\Gm;W)$ given by $\Gm$-equivariant maps~$f:\Xi \rightarrow W$ that satisfy
\[
\bsing f(\xi) \subseteq \{\xi\}\qquad\text{ for all } \xi\in \Xi\,.
\]
We define the semi-analytic cohomology space with \emph{singularity
condition} as \index[symbols]{H@$H^1_\Xi(\Gm;W)_\sic$} 
\[
H^1_\Xi(\Gm;W)_\sic \coloneqq  Z^1_\Xi(\Gm;W)_\sic  /
B^1_\Xi(\Gm;W)_\sic\,.
\]

\item We let~$Z^1_\Xi(\Gm;W)^\van$ \index[symbols]{Z@$Z^1_\Xi(\Gm;W)^\van$} be the subspace of~$Z^1_\Xi(\Gm;W)$ consisting of those cocycles~$c$ that satisfy the \emph{vanishing condition} \index[defs]{vanishing condition} \index[defs]{condition!vanishing}\index[defs]{condition!$\van$}\index[defs]{van@$\van$}\index[symbols]{V@$\van$}
\begin{equation}\label{def_tvan}
c(1,\lambda-1)\vert_{(\lambda-1,1)_c}=0\,.
\end{equation}
Further, we let \index[symbols]{B@$B^1_\Xi(\Gm;W)^\van$}
\[
B^1_\Xi(\Gm;W)^\van\coloneqq  B^1_\Xi(\Gm;W) \cap Z^1_\Xi(\Gm;W)^\van
\]
and  \index[symbols]{H@$H^1_\Xi(\Gm;W)^\van$}
\[
 H^1_\Xi(\Gm;W)^\van \coloneqq  Z^1_\Xi(\Gm;W)^\van  / B^1_\Xi(\Gm;W)^\van\,.
\]
\item We let \index[symbols]{Z@$Z^1_\Xi(\Gm;W)^\van_\sic$} \index[symbols]{B@$B^1_\Xi(\Gm;W)^\van_\sic$} \index[symbols]{H@$H^1_\Xi(\Gm;W)^\van_\sic$}
\begin{align*}
Z^1_\Xi(\Gm;W)^\van_\sic &\coloneqq  Z^1_\Xi(\Gm;W)_\sic \cap Z^1_\Xi(\Gm;W)^\van\,,
\\
B^1_\Xi(\Gm;W)^\van_\sic &\coloneqq  B^1_\Xi(\Gm;W)_\sic \cap B^1_\Xi(\Gm;W)^\van
\intertext{and}
H^1_\Xi(\Gm;W)^\van_\sic & \coloneqq  Z^1_\Xi(\Gm;W)^\van_\sic  / B^1_\Xi(\Gm;W)^\van_\sic\,.
\end{align*}
\end{enumerate}
\end{defn}

The definition of~$B^1_\Xi(\Gm;W)^\van$ implies that the space~$H^1_\Xi(\Gm;W)^\van$ is a subspace of~$H^1_\Xi(\Gm;W)$, namely the one consisting of those cohomology classes that have a representative in~$Z^1_\Xi(\Gm;W)^\van$. Proposition~\ref{prop:char_sic} below shows that the analogous statement holds for~$H^1_\Xi\bigl(\Gm;\V s \fxi(\proj\RR)\bigr)_\sic$, and then also for~$H^1_\Xi\bigl(\Gm;\V s \fxi(\proj\RR)\bigr)^\van_\sic$.

\begin{rmk}
For any cocycle~$c\in Z^1_\Xi(\Gm;W)$, the vanishing condition~\tvan as defined in~\eqref{def_tvan} is equivalent to
\[
 c(1,\infty) = \tau_s(T)c(-1,\infty) \qquad\text{on $(\lambda-1,1)_c$\,.}
\]
We will not make use of this alternative characterization here, but we note that it consistent with the intuition as presented in Section~\ref{sec:heuristic} indicating that $c(1,\infty)$ and $\tau_s(T) c(-1,\infty)$ can be interchanged outside of the interval~$(1,\lambda-1)$.
\end{rmk}

\begin{prop}\label{prop:char_sic}
Let $W$ be a~$\Gamma$-submodule of~$\V s \fxi(\proj\RR)$. Then $H^1_\Xi(\Gm;W)_\sic$ is a subspace of~$H^1_\Xi\big(\Gm;W\big)$. It consists of the cohomology classes in~$H^1_\Xi(\Gm;W)$ that have a representative in~$Z^1_\Xi(\Gm;W)_\sic$.
\end{prop}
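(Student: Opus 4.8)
The claim is that $H^1_\Xi(\Gm;W)_\sic$, defined as a quotient of its own cocycle and coboundary spaces, injects into $H^1_\Xi(\Gm;W)$ with image exactly the classes admitting a $\sic$-representative. The plan is to show two things: first, that the natural map $H^1_\Xi(\Gm;W)_\sic\to H^1_\Xi(\Gm;W)$ induced by the inclusions $Z^1_\Xi(\Gm;W)_\sic\hookrightarrow Z^1_\Xi(\Gm;W)$ and $B^1_\Xi(\Gm;W)_\sic\hookrightarrow B^1_\Xi(\Gm;W)$ is well-defined (immediate from $B^1_\Xi(\Gm;W)_\sic\subseteq B^1_\Xi(\Gm;W)\cap Z^1_\Xi(\Gm;W)_\sic$); second, that it is injective. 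The surjectivity onto the stated image is then a tautology once injectivity is in hand: the image consists precisely of classes with a representative in $Z^1_\Xi(\Gm;W)_\sic$.

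The heart of the matter is injectivity. Suppose $c\in Z^1_\Xi(\Gm;W)_\sic$ and $[c]=0$ in $H^1_\Xi(\Gm;W)$, i.e. $c=df$ for some $\Gamma$-equivariant map $f\colon\Xi\to W$, so $c(\xi,\eta)=f(\xi)-f(\eta)$. I must produce a $\Gamma$-equivariant $g\colon\Xi\to W$ with $dg=c$ and $\bsing g(\xi)\subseteq\{\xi\}$ for all $\xi\in\Xi$, which would show $c\in B^1_\Xi(\Gm;W)_\sic$, hence $[c]=0$ in $H^1_\Xi(\Gm;W)_\sic$. The idea is to modify $f$ by a constant: for a fixed base point $\xi_0\in\Xi$ set $g(\xi)\coloneqq f(\xi)-f(\xi_0)=c(\xi,\xi_0)$. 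This $g$ is still a potential of $c$, and now $g(\xi_0)=0$. The key observation is that for $\xi\ne\xi_0$ the singularity condition on $c$ gives $\bsing g(\xi)=\bsing c(\xi,\xi_0)\subseteq\{\xi,\xi_0\}$, which is close to but not quite what is required: there may be a spurious singularity at $\xi_0$. The remedy is to choose $\xi_0$ so that $c(\xi,\xi_0)$ has no singularity at $\xi_0$, or to argue that the $\xi_0$-component can be absorbed. Concretely, I would analyze the germ of $c(\xi,\xi_0)$ at $\xi_0$: since $c(\xi_0,\eta)=-c(\eta,\xi_0)$ and $c$ is a cocycle, the singular behavior of $c(\xi,\xi_0)$ at $\xi_0$ is governed by a single element of the stalk at $\xi_0$ that is independent of $\xi$ (it is the "jump across $\xi_0$" contributed by the second slot), and subtracting the corresponding germ — realized as a global semi-analytic function singular only at $\xi_0$ — yields the desired $\Gamma$-equivariant modification.

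The step I expect to be the main obstacle is precisely this germ-bookkeeping at the base point: one needs that the singular part of $c(\cdot,\xi_0)$ at $\xi_0$ is a well-defined element of $\V s \om[\{\xi_0\}]/\V s \om(\proj\RR)$ that does not depend on the first argument, and that it can be realized by a genuine element of $W$ whose subtraction restores $\Gamma$-equivariance (this uses that $\xi_0$ and its $\Gamma$-orbit are tracked correctly, and that $W$, being a $\Gamma$-submodule of $\V s \fxi(\proj\RR)$, actually contains such a function singular at one point of $\Xi$). I would handle this by working with the potential $p(\xi)=c(\xi,\xi_0)$ from \eqref{pick_p}, noting $\bsing p(\xi)\subseteq\{\xi,\xi_0\}$, and then showing that the "at $\xi_0$" parts of all the $p(\xi)$ agree (using the cocycle relation $c(\xi,\xi_0)=c(\xi,\eta)+c(\eta,\xi_0)$ and $\bsing c(\xi,\eta)\subseteq\{\xi,\eta\}$ to isolate the common contribution), so that a single correction term $b\in W$ with $\bsing b\subseteq\{\xi_0\}$ makes $g\coloneqq p-b$ satisfy $\bsing g(\xi)\subseteq\{\xi\}$; its $\Gamma$-equivariance is checked as in the proof of Lemma~\ref{lem:cocgrp_inj}, adjusting by the $\Gamma$-orbit of the correction. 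Once $g$ is built, $dg=dp=c$ and $g$ witnesses $c\in B^1_\Xi(\Gm;W)_\sic$, completing the proof of injectivity and hence of the proposition.
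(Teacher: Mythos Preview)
Your plan has a genuine gap in the equivariance step. You take the potential $p(\xi)=c(\xi,\xi_0)$, observe that $\bsing p(\xi)\subseteq\{\xi,\xi_0\}$, and propose to subtract a single correction $b\in W$ with $\bsing b\subseteq\{\xi_0\}$ to obtain $g=p-b$ with $\bsing g(\xi)\subseteq\{\xi\}$. But $p=f-f(\xi_0)$ (with $f$ the given $\Gamma$-equivariant potential), so $g=f-(f(\xi_0)+b)$, and $g$ is $\Gamma$-equivariant if and only if $f(\xi_0)+b$ is a $\Gamma$-\emph{invariant} element of $W$. Any $\Gamma$-invariant element of $\V s \fxi(\proj\RR)$ has a $\Gamma$-invariant finite set of boundary singularities, hence none; thus $b\equiv -f(\xi_0)$ modulo $\V s \om(\proj\RR)$, and your requirement $\bsing b\subseteq\{\xi_0\}$ becomes $\bsing f(\xi_0)\subseteq\{\xi_0\}$. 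In other words, the correction you want exists only if the $\Gamma$-equivariant potential $f$ already has the $\sic$ property at $\xi_0$---which is precisely what must be proved. The aside about ``adjusting by the $\Gamma$-orbit of the correction'' does not rescue this: propagating $b$ along the orbit would spread its singularities over all of $\Gamma\xi_0$, not confine them. (There is also a secondary issue: realizing the germ at $\xi_0$ by a global function via separation of singularities lands you in $\V s \om[\xi_0]$, not in the arbitrary submodule $W$.)

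The paper's proof bypasses the whole correction scheme by showing directly that $f$ itself already satisfies $\bsing f(\xi)\subseteq\{\xi\}$. From $c(\xi,\gamma\xi)=f(\xi)-\tau_s(\gamma)f(\xi)$ and $\bsing c(\xi,\gamma\xi)\subseteq\{\xi,\gamma\xi\}$ one deduces
\[
\bsing f(\xi)\;\subseteq\;\{\xi\}\cup\gamma\bigl(\bsing f(\xi)\cup\{\xi\}\bigr)
\]
for every $\gamma\in\Gamma$. Since $\bsing f(\xi)$ is finite, one can choose $\gamma$ so that $\gamma(\bsing f(\xi)\cup\{\xi\})$ is disjoint from $\bsing f(\xi)$, forcing $\bsing f(\xi)\subseteq\{\xi\}$. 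Thus $f$ is already a $\sic$-potential witnessing $c\in B^1_\Xi(\Gm;W)_\sic$, and no modification is needed.
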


\begin{proof}
It suffices to show that
\[
B^1_\Xi(\Gm;W)_\sic \ceqq  Z^1_\Xi(\Gm;W)_\sic \cap B^1(\Gm;W)\,,
\]
of which the inclusion~$\subseteq$ is obvious. To establish the converse inclusion, let 
\[
 c\in Z^1_\Xi(\Gm;W)_\sic \cap B^1(\Gm;W)
\]
and fix a~$\Gamma$-equivariant function~$f\colon\Xi\to W$ such that~$c=df$. Let~$\xi\in\Xi$, and set 
\[
 S(\xi) \coloneqq  \bsing f(\xi)\,.
\]
For any~$\gamma\in\Gamma$ we have
\[
 c(\xi,\gamma\xi) = f(\xi) - f(\gamma\xi) = f(\xi) - \tau_s(\gamma)f(\xi)\,.
\]
From~$c\in Z^1_\Xi(\Gm;W)_\sic$, and hence 
\[
 c(\xi,\gamma\xi) \subseteq \{\xi,\gamma\xi\},
\]
it follows that
\[
 S(\xi) \subseteq S(\xi) \cap \big( \{\xi,\gamma\xi\} \cup \gamma S(\xi)\big) = S(\xi) \cap \Big( \{\xi\} \cup \gamma\big( S(\xi) \cup\{\xi\} \big)\Big)\,.
\]
Thus
\begin{equation}\label{xi_sing}
 S(\xi) \subseteq S(\xi) \cap \bigcap_{\gamma\in\Gamma} \Big( \{\xi\} \cup \gamma\big( S(\xi) \cup\{\xi\} \big)\Big)\,.
\end{equation}
Since~$S(\xi)\cup\{\xi\}$ is finite, we find~$\gamma\in\Gamma$ such that 
\[
 S(\xi) \cap \gamma\big( S(\xi) \cup \{\xi\} \big) = \emptyset\,,
\]
which, together with~\eqref{xi_sing}, implies
\[
 S(\xi) \subseteq \{\xi\}\,.
\]
This completes the proof.
\end{proof}

\subsection{Cohomological interpretation of the singularity condition}\label{sec:sing}

We now show that for certain parameters~$s\in\CC$, the two  spaces~$H^1_\Xi\bigl(\Gm;\V s \fxi(\proj\RR)\bigr)_\sic$ and $H^1\bigl(\Gm;\V s \om(\proj\RR)\bigr)$ are (naturally) isomorphic. This constitutes a cohomological interpretation of the singularity condition.

\begin{prop}\label{prop-sic} 
Let $s\in\CC$, $\Rea s\in (0,1)$, $s\neq \frac12$. Then the natural map \index[symbols]{P@$\Psi_\Xi$}
\[
\Psi_\Xi\colon H^1\bigl(\Gm;\V s \om(\proj\RR)\bigr) \to H^1_\Xi\bigl(\Gm;\V s \fxi(\proj\RR)\bigr)
\]
is injective. Its image is the space~$H^1_\Xi\bigl(\Gm;\V s \fxi(\proj\RR)\bigr)_\sic$.
\end{prop}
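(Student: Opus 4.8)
The plan is to prove the two assertions — injectivity of $\Psi_\Xi$ and the identification of its image with $H^1_\Xi\bigl(\Gm;\V s \fxi(\proj\RR)\bigr)_\sic$ — by combining the general machinery of Section~\ref{sec:potentials} with a single ``singularity-removal'' argument that uses the hypothesis $\Rea s\in(0,1)$, $s\neq\frac12$ in an essential way. Recall that $\Psi_\Xi$ is the composite: a class in $H^1\bigl(\Gm;\V s \om(\proj\RR)\bigr)$ is a group cocycle $g\mapsto\psi_g$ with values in the $\PSL_2(\RR)$-module $\V s \om(\proj\RR)$, which we may regard as a group cocycle with values in the larger $\Gamma$-module $\V s \fxi(\proj\RR)$; since the stabilizer $\Gamma_1$ is trivial and $\Gamma_\infty=\langle T\rangle$, such a class lies in the image of $\Phi_\Xi$ from~\eqref{map_standcohom} precisely when it has a representative with $\psi_T=0$, and we should first record (or assume from the ``parabolic cohomology'' discussion just above) that every $H^1\bigl(\Gm;\V s \om(\proj\RR)\bigr)$-class admits such a representative for the relevant $s$ — this is where one invokes the input from~\cite{BLZm} that a real-analytic group cocycle can be normalized at the cusp. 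Then $\Psi_\Xi=\Phi_\Xi^{-1}$ (via Lemma~\ref{lem:cocgrp_inj}) applied to this normalized class, i.e.\ $\Psi_\Xi$ sends $[\psi]$ to the class of the $\Xi$-cocycle $c(g^{-1}\xi_0,\xi_0)=\psi_g$ built from a potential, and it is automatic from the construction that this $c$ has a potential $p$ with $p$ taking values in $\V s \om(\proj\RR)$ up to... no: the point is that $c(\xi,\eta)=p(\xi)-p(\eta)$ with $p$ real-analytic on $\proj\RR$ except possibly at $\xi,\eta$, so $\Psi_\Xi$ already lands in $Z^1_\Xi(\Gm;\V s \fxi(\proj\RR))_\sic$.

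For injectivity: a class in the kernel of $\Psi_\Xi$ gives a group cocycle $\psi$ with values in $\V s \om(\proj\RR)$ that becomes a coboundary in $Z^1_\Xi(\Gm;\V s \fxi(\proj\RR))$, i.e.\ there is a $\Gamma$-equivariant $f\colon\Xi\to\V s \fxi(\proj\RR)$ with $\psi_g = f(\xi)|g - f(g^{-1}\xi)$ for a chosen $\xi$; equivalently (tracing through Section~\ref{sec:potentials}) $\psi_g = a|(1-g)$ for some $a\in\V s \fxi(\proj\RR)$. I would then argue that $a$ can in fact be chosen in $\V s \om(\proj\RR)$: the coboundary condition forces $\bsing(a)$ to be a finite $\Gamma$-stable-up-to-$\psi$ set, and since $\psi$ is everywhere real-analytic, $\bsing(a|(1-g))=\bsing(a)\cup g^{-1}\bsing(a)$ must be controlled; running the same finite-intersection argument as in the proof of Proposition~\ref{prop:char_sic} (pick $\gamma\in\Gamma$ with $\bsing(a)\cap\gamma\bsing(a)=\emptyset$, which is possible because $\Gamma$ is not compact and $\bsing(a)$ is finite — any infinite-order element moves a given finite set off itself eventually) shows $\bsing(a)=\emptyset$, hence $a\in\V s \om(\proj\RR)$ and $\psi$ is already a coboundary there. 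This proves $\Psi_\Xi$ injective.

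For the image: one inclusion is the observation above, that $\operatorname{im}\Psi_\Xi\subseteq H^1_\Xi(\Gm;\V s \fxi(\proj\RR))_\sic$, which is immediate since the canonical $\Xi$-cocycle attached to a potential $p$ with $p(\xi)=c(\xi,\xi_0)$ has $\bsing c(\xi,\eta)=\bsing(p(\xi)-p(\eta))\subseteq\bsing p(\xi)\cup\bsing p(\eta)$, and one arranges $p$ so that $\bsing p(\xi)\subseteq\{\xi\}$ by the normalization; by Proposition~\ref{prop:char_sic} this membership is exactly the $\sic$-condition at the level of cohomology. The reverse inclusion is the substantial direction: given $c\in Z^1_\Xi(\Gm;\V s \fxi(\proj\RR))_\sic$, I must produce a representative $\psi$ of $\Phi_\Xi([c])$ with values in $\V s \om(\proj\RR)$. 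Choose a potential $p$ of $c$ with $p(\xi)=c(\xi,\xi_0)$ for some base point $\xi_0\in\Xi$, so $\bsing p(\xi)\subseteq\{\xi_0,\xi\}$; then $\psi_g=p(g^{-1}\xi_0)-p(\xi_0)|g=c(g^{-1}\xi_0,\xi_0)-0$ has $\bsing\psi_g\subseteq\{g^{-1}\xi_0,\xi_0\}$, so $\psi$ takes values in $\V s \fxi(\proj\RR)$ with uniformly bounded (two-point) singular support. The heart of the proof — and the main obstacle — is to show that this $\psi$ is cohomologous, within $Z^1(\Gm;\V s \fxi(\proj\RR))$, to a cocycle valued in $\V s \om(\proj\RR)$: one wants $b\in\V s \fxi(\proj\RR)$ with $\psi_g - b|(1-g)\in\V s \om(\proj\RR)$ for all $g$, i.e.\ $b$ should absorb the ``singularity part'' of $\psi$. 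Concretely, writing $\psi_g \equiv \beta_{g^{-1}\xi_0} - \beta_{\xi_0}|g$ modulo $\V s \om(\proj\RR)$ where $\beta_x$ captures a standard model singularity supported at $x$ (built from the local germ of a function in $\V s \om[\{x\}]$ with a one-point singularity — and this is where $\Rea s\in(0,1)$, $s\ne\frac12$ enters, guaranteeing the relevant local cohomology / extension statement, exactly as in~\cite[\S\S10--11]{BLZm}), one takes $b=\beta_{\xi_0}$ and checks $b|(1-g)\equiv \psi_g$. I expect the technical core to be this local analysis at a single point of $\Xi$: showing that any element of $\V s \fxi(\proj\RR)$ with a one-point (cuspidal or $\Gamma 1$-type) singularity is, modulo $\V s \om(\proj\RR)$, determined by a finite-dimensional local datum, and that $1-T$ acts invertibly on the relevant quotient at $\infty$ for $\Rea s\in(0,1)$, $s\ne\frac12$ — this non-degeneracy is precisely the spectral restriction on $s$ and is the step most likely to require care and an appeal to the cited results of~\cite{BLZm}.
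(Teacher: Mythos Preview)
Your injectivity argument is essentially correct and matches the paper's: once you know $\psi_g=a|(1-g)$ for some $a\in\V s\fxi(\proj\RR)$ with each $\psi_g$ real-analytic, picking $\gamma$ with $\bsing(a)\cap\gamma^{-1}\bsing(a)=\emptyset$ forces $\bsing(a)=\emptyset$, so $a\in\V s\om(\proj\RR)$.

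However, there are two genuine gaps elsewhere.

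\textbf{The definition of $\Psi_\Xi$.} You assert that every class in $H^1\bigl(\Gm;\V s\om(\proj\RR)\bigr)$ has a representative with $\psi_T=0$, and that this is ``the input from~\cite{BLZm}''. This is false in general: it would require $\psi_T=a|(1-T)$ for some $a\in\V s\om(\proj\RR)$, but the one-sided average $\av{s,T}^+\psi_T$ lands only in $\V s\om[\infty]$, not in $\V s\om(\proj\RR)$ (see~\eqref{eq:Av_ra} and~\eqref{avasexp}). The paper's definition of~$\Psi_\Xi$ (the paragraph just before the Proposition) does \emph{not} normalize inside $\V s\om(\proj\RR)$. Instead it sets
\[
p_\infty \coloneqq \av{s,T}^+\psi_T \;\in\; \V s\om[\infty] \;\subseteq\; \V s\fxi(\proj\RR)\,,\qquad p_1\coloneqq 0\,,
\]
and uses Lemma~\ref{lem:potentials} to build $c\in Z^1_\Xi\bigl(\Gm;\V s\fxi(\proj\RR)\bigr)$. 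This is where the hypothesis $\Rea s\in(0,1)$, $s\neq\tfrac12$ is actually used: the operator $\av{s,T}^+$ is defined on this range only by meromorphic continuation, with a pole at $s=\tfrac12$. Your guess that the restriction enters through ``$1-T$ acting invertibly on a local quotient'' or a ``finite-dimensional local datum'' is off target; once $\Psi_\Xi$ is defined, neither the injectivity nor the surjectivity argument uses the spectral restriction again. With the correct definition, the inclusion $\operatorname{im}\Psi_\Xi\subseteq H^1_\Xi\bigl(\Gm;\V s\fxi(\proj\RR)\bigr)_\sic$ is immediate, since $\bsing p(\infty)\subseteq\{\infty\}$ and $\bsing p(1)=\emptyset$.

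\textbf{Surjectivity onto the $\sic$-subspace.} Your plan is in the right direction but the construction of the ``model singularity $\beta_{\xi_0}$'' is left unspecified, and what you need is exactly Proposition~\ref{prop-sepsing} (separation of singularities): for $c\in Z^1_\Xi\bigl(\Gm;\V s\fxi(\proj\RR)\bigr)_\sic$ one writes each $c(\xi,\eta)\in\V s\om[\xi,\eta]$ as $A_\xi^\eta-A_\eta^\xi$ with $A_\xi^\eta\in\V s\om[\xi]$. The potential $q(\xi)=c(\xi,\infty)+A_\infty^0$ then satisfies $q(\xi)\equiv A_\xi^\infty$ modulo $\V s\om(\proj\RR)$, and a two-line computation using the cocycle relation and the equivariance $\tau_s(g)A_\xi^\eta\equiv A_{g\xi}^{g\eta}$ gives $\psi_T\equiv 0$ and $\psi_S\equiv 0$, hence $\psi\in Z^1\bigl(\Gm;\V s\om(\proj\RR)\bigr)$. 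Your $b$ is precisely $A_\infty^0$, but without the separation lemma you have no way to produce it, and no local analysis at a single point in the style you sketch is needed.
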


This characterization of~$H^1_\Xi\bigl(\Gm;\V s \fxi(\proj\RR)\bigr)_\sic$ will be used in the cohomological interpretation of automorphic forms in Chapter~\ref{part:autom}. In addition, it establishes a relation between the spaces of standard group cohomology  and certain spaces of cohomology on $\Gm$-invariant sets. A similar statement for certain cohomology spaces for the modular group~$\PSL_2(\ZZ)$ is contained in~\cite[Theorem 2.3]{BM09}.

Preparatory to the proof of Proposition~\ref{prop-sic} we provide a detailed definition of the map~$\Psi_\Xi$.

\medskip

\subsubsection{The natural map from analytic to semi-analytic cohomology}\label{def_natmap}

To simplify notation we set 
\[ 
 W_0 \coloneqq  \V s \om(\proj\RR)\quad\text{and}\quad W_1 \coloneqq  \V s \fxi(\proj\RR)\,.
\] 
Let~$\psi\in Z^1(\Gamma;W_0)$. We use Lemma~\ref{lem:potentials} to assign to~$\psi$ a cocycle in $Z^1_\Xi(\Gamma;W_1)$ as explained in what follows. 

Since $\Xi = \Gamma\infty \cup \Gamma1$, the set~$\{\infty,1\}$ serves as a set of representatives for~$\Gamma\backslash\Xi$. We set
\begin{equation}\label{repr_p}
 p_\infty\coloneqq \av{s,T}^+\psi_T \quad\text{and}\quad p_1\coloneqq  0\,,
\end{equation}
where~$\av{s,T}^+$ is the one-sided average from~\eqref{def_averages}. The discussion in Section~\ref{sect-osav} shows that~$p_\infty\in\V s \om[\infty]$ and 
\[
 \psi_T = \left( 1 - \tau_s\big(T^{-1}\big) \right)p_\infty\,.
\]
From the latter it easily follows that 
\[
 \psi_g = \left( 1-\tau_s\big(g^{-1}\big)\right) p_\infty
\]
for all~$g\in\Gamma_\infty= \{ T^n \setmid n\in\ZZ\}$. 

By Lemma~\ref{lem:potentials}, there is a unique cocycle~$c=c(\psi)\in Z^1_\Xi(\Gamma;W_1)$ with (unique) potential~$p\colon \Xi\to W_1$ such that~$p(\infty) = p_\infty$ and $p(1)=p_1$ (we remark that \emph{a fortiori} $\psi\in\nobreak Z^1(\Gamma;W_1)$). This construction provides a linear map 
\begin{equation}\label{map_cocycles}
 Z^1(\Gamma;W_0) \to Z^1_\Xi(\Gamma;W_1)\,.
\end{equation}
Further, if $\psi$ is a coboundary, then there exists $b\in W_1$ such that 
\[
 \Xi\to W_1\,,\quad \xi\mapsto p(\xi)-b\,,
\]
is~$\Gamma$-invariant, and hence $c=dp$ is a coboundary. Thus, the map in~\eqref{map_cocycles} restricts to a linear map
\begin{equation}\label{map_coboundaries}
 B^1(\Gamma;W_0) \to B^1_\Xi(\Gamma;W_1)\,.
\end{equation}
In turn, the maps in~\eqref{map_cocycles} and~\eqref{map_coboundaries} induce a (unique) linear map
\begin{equation}\label{map_spaces}
 \Psi_\Xi \colon H^1(\Gamma;W_0) \to H^1_\Xi(\Gamma;W_1)\,,\quad  [\psi] \mapsto [c(\psi)]\,.
\end{equation}
If we pick any elements~$q_\infty,q_1\in\V s \fxi(\proj\RR)$ such that 
\[
 \psi_T = \left( 1-\tau_s\big(T^{-1}\big)\right)q_\infty\,,
\]
then the potential~$q$ resulting from Lemma~\ref{lem:potentials} differs from~$p$ by a~$\Gm$-equivariant map, and hence the cocycle~$\tilde c\in Z^1_\Xi(\Gamma;W_1)$ associated to~$q$ and~$\psi$ is in the same cohomology class as~$c$. Thus, the map~$\Psi_\Xi$ in~\eqref{map_spaces} is indeed natural. 

\medskip

\subsubsection{Proof of Proposition 10.7}

For the proof of Proposition~\ref{prop-sic} we take advantage of the following result, 
provided by~\cite[Proposition~13.1]{BLZm}, on the possibility to separate the 
singularities of semi-analytic functions. It is a consequence of a refinement 
of Mittag-Leffler's Theorem on the simultaneous lifts of finite families of 
meromorphic functions, see, e.\,g.,~\cite[Theorem~1.4.5]{Ho}. 

\begin{prop}[{\cite[Proposition~13.1]{BLZm}}; separation of singularities]\label{prop-sepsing} 
For any function~$f\in \V s \om \bigl[\xi_1,\ldots,\xi_n\bigr]$ there are~$A_1,\ldots,A_n\in \V s \fs\!(\proj\RR)$ with
\[
\sum_{j=1}^n A_j \ceqq  f\,,\qquad \text{and}\qquad A_j \in \V s \om[\xi_j]
\quad\text{ for }j=1,\ldots,n\,.
\]
\end{prop}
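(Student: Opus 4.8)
The final statement to prove is Proposition~\ref{prop-sepsing} (separation of singularities), cited from \cite[Proposition~13.1]{BLZm}.

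\textbf{Plan of proof.}

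The plan is to reduce the statement to the classical Mittag--Leffler-type result on simultaneous lifts of meromorphic functions, as flagged in the excerpt via \cite[Theorem~1.4.5]{Ho}. First I would recall that an element $f \in \V s \om[\xi_1,\dots,\xi_n]$ is by definition a real-analytic function on $\proj\RR \smallsetminus \{\xi_1,\dots,\xi_n\}$, transforming in the line model of the principal series with parameter $s$. The real-analyticity means $f$ extends holomorphically to some open neighborhood $U$ of $\proj\RR \smallsetminus \{\xi_1,\dots,\xi_n\}$ in $\proj\CC$; equivalently, after moving any problematic $\xi_j = \infty$ into $\RR$ by an element of $\PSL_2(\RR)$ using $\tau_s$ (which preserves real-analyticity and the module structure), one may assume all $\xi_j \in \RR$ and that $f$ is holomorphic on an open set containing $\RR \smallsetminus \{\xi_1,\dots,\xi_n\}$. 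The isolated ``singularities'' at the $\xi_j$ are not poles in the naive sense but rather failures of holomorphic continuation across those boundary points; still, the germ of $f$ near each $\xi_j$ lives in a quotient-type space measuring obstruction to extension, which behaves like a principal-part space.

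Next I would invoke the refinement of Mittag--Leffler (\cite[Theorem~1.4.5]{Ho}): given a finite family of ``local obstructions'' — one at each $\xi_j$, prescribed by the local behavior of $f$ near $\xi_j$ — there exists, for each $j$, a function $A_j$ holomorphic on $\proj\CC$ away from $\xi_j$ (hence $A_j \in \V s \om[\xi_j]$ after transporting back and incorporating the automorphy factor $|t|^{-2s}$ appropriately near $\infty$, using the $(z^2)^{-s}$ branch as in the remark following Definition~\ref{def:modulescond}) whose singularity at $\xi_j$ reproduces exactly that local obstruction of $f$. Then $f - \sum_{j=1}^n A_j$ has no obstruction to holomorphic continuation at any $\xi_j$, so it extends to a function holomorphic on a full neighborhood of $\proj\RR$ in $\proj\CC$, i.e. it lies in $\V s \om(\proj\RR)$. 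In the notation $\equiv$ of \eqref{eq:equiv_fns} this is exactly $\sum_{j=1}^n A_j \equiv f$, which is the assertion.

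\textbf{Main obstacle.} The delicate point is not the complex-analytic Mittag--Leffler statement itself but making precise what the ``local obstruction'' at $\xi_j$ is and checking that the $A_j$ produced can be chosen \emph{globally} in $\V s \fs(\proj\RR)$ — that is, holomorphic on all of $\proj\CC \smallsetminus \{\xi_j\}$ rather than merely near $\xi_j$ — while respecting the principal-series automorphy factor at $\infty$ when $\xi_j = \infty$ or when transporting charts. One must verify that the branch choice $z \mapsto (z^2)^{-s}$ for the automorphy factor (whose cut is $i\RR$) does not interfere with isolating the singularity at a real point, and that the resulting $A_j$, once multiplied by this factor, is genuinely a member of $\V s \om[\xi_j]$; this is precisely the compatibility check alluded to in the remark after Definition~\ref{def:modulescond}. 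Once that bookkeeping is in place, the argument is a direct application of the cited theorem, so I would not grind through the Hörmander-style construction of the lifts but simply cite \cite[Theorem~1.4.5]{Ho} (equivalently \cite[Proposition~13.1]{BLZm}).
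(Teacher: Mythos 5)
Your proposal is correct and takes the same route as the paper, which itself offers no proof here but quotes the result from \cite[Proposition~13.1]{BLZm} and points, exactly as you do, to the Mittag--Leffler/Cousin-type separation-of-singularities theorem in \cite[Theorem~1.4.5]{Ho}. Your only imprecision is the claim that each $A_j$ may be taken holomorphic on all of $\proj\CC\smallsetminus\{\xi_j\}$; the separation theorem supplies holomorphy only on a complex neighborhood of $\proj\RR\smallsetminus\{\xi_j\}$, but that is precisely what membership in $\V s \om[\xi_j]$ requires, so nothing is lost.
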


\begin{proof}[Proof of Proposition~\ref{prop-sic}]
We set $W_0 \coloneqq  \V s \om(\proj\RR)$ and $W_1 \coloneqq  \V s \fxi(\proj\RR)$. We start by showing that the image of~$\Psi_\Xi$ is contained in~$H^1_\Xi(\Gamma;W_1)_\sic$, and that~$\Psi_\Xi$ is injective. For both claims let $[\psi]\in H^1(\Gamma;W_0)$, and let $c\in Z^1_\Xi(\Gamma;W_1)$ be the representative of~$\Psi_\Xi([\psi])$ that is constructed as in Section~\ref{def_natmap} from a representative~$\psi\in Z^1(\Gamma;W_0)$ for~$[\psi]$ and which has a potential~$p\colon\Xi\to W_1$ satisfying 
\[
p(\infty)=\av{s,T}^+\psi_T \quad\text{and}\quad p(1) = 0\,,
\]
see~\eqref{repr_p}. As seen in Section~\ref{def_natmap}, $p(\infty)\in \V s \om[\xi]$ for all~$\xi\in\Xi$. Therefore the cocycle~$c$ satisfies the singularity condition~\tsic. It now follows from Proposition~\ref{prop:char_sic} that $\Psi_\Xi([\psi]) \in H^1_\Xi(\Gamma;W_1)_\sic$.

To show the injectivity of the map~$\Psi_\Xi$, it suffices to prove that $\psi\in B^1(\Gamma;W_0)$ if $c \in B^1_\Xi(\Gamma;W_1)$. To that end suppose that~$c = dp$ is a coboundary. We note that the potential~$p$ is not necessarily $\Gm$-equivariant. However we find a $\Gm$-equivariant map~$f\colon\Xi \rightarrow W_1$ with~$c=df$. For all~$\xi\in\Xi$ we then have
\begin{equation}\label{eq:b}
 f(1) = f(\xi) - p(\xi)\,.
\end{equation}
We set $b\coloneqq -f(1)$. By Lemma~\ref{lem:potentials} we have for all~$g\in\Gamma$ and any~$\xi\in\Xi$:
\begin{align*}
 \psi_g & = p(g^{-1}\xi) - \tau_s(g^{-1})p(\xi)  
 \\
 & = f(g^{-1}\xi) - \tau_s\big(g^{-1}\big)f(\xi) + b - \tau_s\big(g^{-1}\big)b
 \\
 & =  \left( 1 - \tau_s\big(g^{-1}\big)\right)b\,.
\end{align*}
For establishing that~$\psi$ is a group coboundary, it remains to show that~$b\in W_0$. To that end we fix~$g\in\Gm$ such that 
\[
 g\,\bsing f(1) \cap \bsing f(1) = \emptyset\,.
\]
Since we have $\bsing p(\xi) \subseteq  \{\xi\}$ for all~$\xi\in\Xi$, it follows with~\eqref{eq:b} that
\begin{align*}
 \bsing f(1) & \subseteq \bsing p(\xi) \cup \bsing f(\xi) 
 \\
 & \subseteq \{\xi\} \cup \bsing f(\xi)
\end{align*}
for all~$\xi\in\Xi$. Thus
\begin{equation}\label{eq:xi_inter}
 \bsing f(1) \subseteq \bigcap_{\xi\in\Xi} \Bigl( \{\xi\}\cup\bsing f(\xi) \Bigr)\,.
\end{equation}
We now show that $\bsing f(1)=\emptyset$ by showing that the intersection in~\eqref{eq:xi_inter} is empty. To set end we set, for  each~$\xi\in\Xi$,
\[
 F(\xi) \coloneqq \{\xi\}\cup\bsing f(\xi)\,.
\]
The set~$F(\xi)$ a finite and contained in~$\Xi$ since $f$ maps into $W_1 = \V s \fxi(\proj\RR)$. Further, the $\Gamma$-equivariance of~$f$ yields that 
\begin{equation}\label{eq:Fg}
 F(g\xi) = gF(\xi)
\end{equation}
for each~$\xi\in\Xi$, $g\in\Gamma$. We pick $\xi_0\in\Xi$. Without loss of generality (using the finiteness of $F(\xi)$, $\xi\in\Xi$, and~\eqref{eq:Fg}) we may assume that $\infty\notin F(\xi_0)$. Hence $F(\xi_0)$ is a bounded subset of~$\RR$, and we find $n\in\ZZ$ such that 
\[
 T^n F(\xi_0) \cap F(\xi_0) = \emptyset\,.
\]
Using again~\eqref{eq:Fg}, we get $F(T^n\xi_0)\cap F(\xi_0) = \emptyset$, which implies
\[
 \bsing f(1) = \emptyset
\]
by~\eqref{eq:xi_inter}. Therefore $f(1)\in W_0$. Since $b=-f(1)$, it now follows that~$\psi$ is a coboundary, and hence~$\Psi_\Xi$ is injective.
 
To complete the proof of Proposition~\ref{prop-sic} it remains to show that the image of~$\Psi_\Xi$ exhausts~$H^1_\Xi(\Gamma;W_1)_\sic$. Let $c\in Z^1_\Xi(\Gamma;W_1)_\sic$. In what follows we will construct a group cocycle~$\psi\in Z^1(\Gamma;W_0)$ such that the construction from Section~\ref{def_natmap} gives a cocycle~$\tilde c\in Z^1_\Xi(\Gamma;W_1)_\sic$ in the same cohomology class as~$c$, which then finishes the proof. 
 
We recall that for each pair~$(\xi,\eta)\in \Xi^2$, the singularities of~$c(\xi,\eta)$ are contained in~$\{\xi,\eta\}$. By Proposition~\ref{prop-sepsing} we find and fix elements~$A_\xi^\eta\in \V s \om[\xi]$ and $A_\eta^\xi \in \V s \om[\eta]$ such that
\[
 c(\xi,\eta) = A^\eta_\xi - A_\eta^\xi\,.
\]
We choose~$A_\xi^\xi = 0$. The map~$q\colon\Xi\to W_1$, 
\[
 q(\xi) \coloneqq  c(\xi,\infty) + A_\infty^0
\]
is a potential of~$c$. The associated group cocycle~$\psi\in Z^1(\Gamma;W_1)$ is given by
\[
 \psi_{g^{-1}} = q(g\xi) - \tau_s(g)q(\xi)\,,
\]
which is independent of~$\xi\in\Xi$. 

We now show that~$\psi\in Z^1(\Gamma;W_0)$. For any three pairwise different elements~$\xi,\eta,\zeta\in \Xi$ we
rewrite the cocycle property of~$c$ as
\[ 
0 \ceqq  \bigl( A^\eta_\xi-A^\zeta_\xi\bigr) + \bigl(A_\eta^\zeta-A_\eta^\xi\bigr) + \bigl( A_\zeta^\xi-A_\zeta^\eta)\,.\]
The three terms on the right hand side have their singularities in~$\{\xi\}$, $\{\eta\}$,
$\{\zeta\}$, respectively. By an argument analogous to the one used above for establishing that~$b\in W_0$ we find 
\[
A_\xi^\eta\;\equiv\; A_\xi^\zeta\,,
\]
where~$\equiv$ denotes equality modulo~$W_0$ (see Section~\ref{sect-sars}). Analogously, we obtain 
\[
\tau_s(g) A_\xi^\eta \;\equiv\; A_{g\xi}^{g\eta}
\]
for all~$g\in\Gm$, all~$\xi,\eta\in\Xi$. It follows that for each~$\xi\in\Xi$,
\[
 q(\xi) = A_\xi^\infty- A_\infty^\xi+A_\infty^0 \equiv A_\xi^\infty\in \V s \om[\xi]\,.
\]
Further, $q(\infty)= A_\infty^0$ and $q(0)= A_0^\infty$. Therefore,
\begin{align*}
\psi_T &\ceqq  q(\infty) - \tau_s\big(T^{-1}\big)q(\infty) = A_\infty^0 - \tau_s(T^{-1})A_\infty^0 \;\equiv\;
A_\infty^0 - A_\infty^{-\lambda} \;\equiv\; 0\,,\\
\psi_S &\ceqq q(0)- \tau_s(S) q(\infty) \;\equiv\; A_0^\infty - A_0^\infty
\ceqq 0\,.
\end{align*}
Hence~$\psi\in Z^1(\Gm;W_0)$.

Applying now the construction from Section~\ref{def_natmap} to~$\psi$ and choosing~$p_\infty=q(\infty)$, $p_1=0$, yields a cocycle~$\tilde c\in Z^1_\Xi(\Gm;W_1)$ in the same cohomology class as~$c$. This completes the proof.
\end{proof}


\setcounter{sectold}{\arabic{section}}
\chapter{Automorphic forms and cohomology}\label{part:autom}

\setcounter{section}{\arabic{sectold}}

\markboth{III. AUTOMORPHIC FORMS AND COHOMOLOGY}{III. AUTOMORPHIC FORMS AND COHOMOLOGY}
  
In this chapter we will establish the following cohomological interpretation of funnel forms, resonant funnel forms and cuspidal funnel forms.

\begin{mainthm}\label{thm:cohominter}
For $s\in\CC$, $\Rea s\in (0,1)$, $s\not=\frac12$, 
\[
 \A_s \cong H^1_\Xi\bigl(\Gm; \V s {\fxi;\exc;\aj}(\proj\RR)\bigr)_\sic^\van
\]
and
\[
 \A^1_s \cong H^1_\Xi\bigl(\Gm; \V s {\fxi;\exc,\smp;\aj}(\proj\RR)\bigr)_\sic^\van\,.
\]
For $s\in\CC$, $\Rea s\in(0,1)$,
\[
 \A_s^0 \cong H^1_\Xi\bigl(\Gm; \V s {\fxi;\exc,\infty;\aj}(\proj\RR)\bigr)_\sic^\van\,.
\]
\end{mainthm}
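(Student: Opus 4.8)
The plan is to construct, for each type of funnel form, an explicit linear map from the space of automorphic forms to the corresponding cohomology space, and to prove it is an isomorphism by constructing an inverse. The central device is the integral transform alluded to after Theorem~\ref{thmA_new}: to a funnel form $u\in\A_s$ one associates, using the Green's form $\{\cdot,\cdot\}$ on $C^\infty(\uhp)$ and the Poisson-type kernel $R(\cdot;\cdot)$, a family of integrals along paths in $\uhp$ with endpoints in $\Xi=\Gm 1\cup\Gm\infty$. Concretely, first I would fix, for a geodesic or path from $\xi$ to $\eta$ (both in $\Xi$), the candidate cocycle value $c_u(\xi,\eta)(t) = \int_\eta^\xi \{u, R(t;\cdot)^s\}$ and check that this is independent of the path (closedness of the relevant one-form $\om_s(u,t)$, which holds because $u$ is a Laplace eigenfunction with the right spectral parameter), that it satisfies the cocycle relation $c_u(\xi,\eta)+c_u(\eta,\zeta)=c_u(\xi,\zeta)$ trivially from path-concatenation, and that it is $\Gm$-equivariant using the $\Gm$-invariance of $u$ and the equivariance of $R$ under $\tau_s$. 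This produces a class in $H^1_\Xi\bigl(\Gm;\V s{\fxi}(\proj\RR)\bigr)$.

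The next block of work is to identify \emph{exactly which} regularity, singularity and vanishing conditions the cocycle $c_u$ satisfies, matching them to the module $\V s{\fxi;\exc;\aj}(\proj\RR)$ and the conditions \tsic\ and \tvan. The singularities of $c_u(\xi,\eta)$ can only be located at $\xi$ and $\eta$ (this is where the kernel $R(t;\cdot)^s$ becomes singular), giving \tsic. The \emph{analytic jump} condition \taj\ at points of $\Gm 1$ comes from the $s$-analytic boundary behavior of $u$ near $\Om$ (Definition in Section~\ref{sec:def_ff}): a real-analytic core $A$ of $u$ across an ordinary interval lets one continue the two one-sided branches of $c_u$ separately across points of $\Gm 1$. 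The \texc\ condition at cuspidal points $\Gm\infty$ — holomorphic extension to a rounded neighbourhood — is where the funnel-surface subtleties enter; one deforms the integration path into the complex domain exploiting that $u$ has the generic (possibly large) growth of a funnel form at the cusp, controlled only up to the $y^{s}$ and $y^{1-s}$ terms of the Fourier expansion~\eqref{Feu}. For $\A^1_s$ the resonance condition~\eqref{resgrwth} kills the exponentially growing Bessel terms (the $B_n=0$ condition), which refines \texc\ to a \emph{simple singularity} \tsmp\ at cuspidal points, and for $\A^0_s$ the additional exponential decay~\eqref{expdecay} (so also $A_0=0$) upgrades this to genuine smoothness $\infty$ at the cusp; here the regularizations of divergent integrals and the natural extension of $A_s$ to $s=\tfrac12$ must be handled, paralleling the zero-th Fourier term discussion. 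The \tvan\ condition on $c_u(1,\lambda-1)$ over the funnel interval $(\lambda-1,1)_c$ follows because that interval lies inside a funnel representative, so the integral over a path between two funnel-boundary points can be collapsed using the $s$-analytic core of $u$ on $(\fixTS^-,\fixTS^+)$.

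For surjectivity and injectivity I would build the inverse map. Given a cocycle class in the target space, choose a good representative and a potential $p$ as in Lemma~\ref{lem:potentials}; the associated group cocycle $\psi$ lives (after applying the one-sided averages of Section~\ref{sect-osav} to handle $\psi_T$) in $Z^1\bigl(\Gm;\V s\om(\proj\RR)\bigr)$ by Proposition~\ref{prop-sic}, and the standard-cohomology machinery of~\cite{BLZm} (parabolic cohomology $\cong H^1_\Xi$) provides a Laplace eigenfunction $u$ via the inverse Poisson/Green transform. One then checks that the regularity conditions \taj, \texc, \tsmp, $\infty$ and \tvan\ translate back precisely into: $s$-analytic boundary behavior near $\Om$ (hence $u\in\A_s$); vanishing of the $B_n$ (hence $u\in\A^1_s$); and exponential decay (hence $u\in\A^0_s$). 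Injectivity amounts to showing that a funnel form whose cocycle is a coboundary in the singularity-restricted sense must vanish — again reducible, via $\Phi_\Xi$ being injective (Lemma~\ref{lem:cocgrp_inj}) and Proposition~\ref{prop-sic}, to the corresponding injectivity statement over $\V s\om(\proj\RR)$ from~\cite{BLZm}.

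\textbf{Main obstacle.} The hard part will be the analytic control at the cusp and funnel simultaneously: proving that the integral transform of a generic funnel form genuinely extends holomorphically to a rounded (excised) neighbourhood of each cuspidal point despite the form's potentially large growth there, and conversely that the \texc/\tsmp/$\infty$ hierarchy on cohomology exactly reproduces the growth hierarchy $\A_s\supseteq\A^1_s\supseteq\A^0_s$ — including the delicate regularization of divergent period integrals for $\Rea s\le\tfrac12$ and the continuous extension to $s=\tfrac12$ in the cuspidal case, where the zero-th Fourier term changes shape.
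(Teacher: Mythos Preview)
Your forward map is essentially the paper's: the integral $c_u(\xi,\eta)(t)=\int_\xi^\eta\{u,R(t;\cdot)^s\}$ gives the cocycle, and the verification of \tsic, \taj, \tvan, and \texc\ proceeds along the lines you indicate (Lemma~\ref{lem-io} and Proposition~\ref{prop-af-co}). One correction: you cannot integrate directly to a cuspidal endpoint, since the integral diverges for generic~$u\in\A_s$. The paper regularizes by defining $c_u(e_\infty)\coloneqq -\av{s,T}^+ c_u(f_\infty)$ rather than as a raw integral; this is how the one-sided averages enter the forward direction, not just the backward one.

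The genuine gap is in your inverse construction. You write that ``the standard-cohomology machinery of~\cite{BLZm} \dots\ provides a Laplace eigenfunction $u$ via the inverse Poisson/Green transform,'' but \cite{BLZm} treats only \emph{cofinite} groups, and the map $\E_s^\Gm\to H^1(\Gm;\V s\om(\proj\RR))$ is merely asserted injective there (Proposition~\ref{prop:isoEcohom}); it is not surjective, so there is no inverse transform to invoke. The paper's actual construction of $u_\psi$ from a cocycle is substantially more elaborate: it passes through the mixed \emph{tesselation} cohomology $H^1(F^\tess_\bullet;\,\cdot\,,\cdot)$ (Section~\ref{sec:cohomtess}) and replaces the sheaf $\V s\om$ by the isomorphic sheaf $\W s\om$ of $s$-analytic boundary germs (Section~\ref{sect-abg}); it then lifts to $C^2(\uhp)$-valued cochains $\tilde\psi_\eps$ in $\G s\fxi$ (Proposition~\ref{prop:eps_cochain}) and recovers $u_\psi$ via a Cauchy-type theorem for the kernel $q_s(z,z')=Q_{s-1}(\cosh d(z,z'))$ (Theorem~\ref{thm-psC}). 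None of this is in your sketch.

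Crucially, the funnel is where the new work lies. In the cofinite case one can always push the closed path $C$ far from any given compact~$Z$, but here the free boundary edge $f_0\subset\RR$ blocks that. The paper's fix is the $\eps$-dependent lift of $\psi(f_0)$: the condition \tvan\ says $\psi(f_0)$ is represented by~$0$ on a neighbourhood of $(\lambda-1,1)_c$, so one can choose $\tilde\psi_\eps(f_0)$ with $\sing_s\tilde\psi_\eps(f_0)\subset E_\eps(0)$ arbitrarily close to~$\RR$ (Proposition~\ref{prop:eps_cochain}\eqref{prop:cochainiii}). This is precisely what makes the construction of $u_\psi$ go through and, via Lemma~\ref{lem:1_boundary} and Proposition~\ref{prop-alff}, what forces $u_\psi$ to have $s$-analytic boundary behaviour on~$\Omega$. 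Your proposal flags ``analytic control at the cusp and funnel simultaneously'' as the obstacle but does not supply this mechanism; without it, there is no inverse map and hence no surjectivity.
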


The isomorphisms in Theorem~\ref{thm:cohominter} are not merely statements of existence or dimension equalities; we will provide explicit maps that realize these isomorphisms and provide further insights into them.  

For \emph{cofinite} Fuchsian groups, a cohomological interpretation of automorphic forms is provided by Bruggeman--Lewis--Zagier~\cite{BLZm}. We base our approach as much as possible on their work and put the emphasis of our discussion on the extension to funnels. To keep our paper to a reasonable length, we refer to results in~\cite{BLZm} whenever possible. However, we will always explain the general ideas of their results and hope that in this way the exposition is understandable also to readers unfamiliar with~\cite{BLZm}.

Our starting point is the cohomological interpretation, provided by~\cite{BLZm}, of $\Gamma$-invariant eigenfunctions with spectral parameter~$s$ of the Laplacian~$\Delta$ as cocycle classes in the first group cohomology space of~$\Gamma$ with values in~$\V s \om(\proj\RR)$. This interpretation, which we will recall in Section~\ref{sect-coaief}, is given by an explicit and injective integral transform that assigns to each element~$u\in\E_s^\Gamma$ a $1$-cocycle of the form 
\[
 \psi^u_g(t) = \int_{g^{-1}z_0}^{z_0} \omega_s(u,t) \qquad (g\in\Gamma,\ t\in\RR)\,,
\]
where~$\omega_s(u,\cdot)$ is a certain closed $1$-form on~$\uhp$ (evaluating~$u$ in Green's form against a Poisson-like kernel function) and where for the integration we fix an (arbitrary) base point~$z_0\in\nobreak\uhp$ and pick any path in~$\uhp$ from~$g^{-1}z_0$ to~$z_0$. A different choice of~$z_0$ changes~$\psi^u$ by a $1$-coboundary, and hence the cocycle~$\psi^u$ defines a cocycle class in~$H^1\bigl(\Gamma,\V s \om(\proj\RR)\bigr)$. 

We would like to determine the subsets of~$H^1\bigl(\Gamma,\V s \om(\proj\RR)\bigr)$ that correspond to the subsets~$\A_s^\ast$ ($\ast\in\{\underline{\ \ }, 0, 1\}$) of~$\E_s^\Gamma$ under this integral transform. For that we would need to characterize the behavior of the elements in~$\A_s^\ast$ at the ends of~$\Gamma\backslash\uhp$ (i.\,e., the cusp and the funnel) in terms of properties of the cocycle classes. 
For any hyperbolic surface~$N\backslash\uhp$, these two types of ends are caused by fundamentally different properties of~$N$: cusps of~$N\backslash\uhp$ are caused by the \emph{presence} of certain elements in~$N$ (namely, by parabolic elements), whereas funnels of~$N\backslash\uhp$ are caused by the \emph{lack} of sufficiently many elements in~$N$.

One of the major achievements of~\cite{BLZm} is to show that the behavior of \mbox{$N$-}in\-vari\-ant Laplace eigenfunctions at cusps can indeed be completely and constructively characterized by properties of the group cocycle classes at the parabolic elements responsible for the cusps. Even though the results of \cite{BLZm} are for hyperbolic surfaces of \emph{finite area}, we will see below that cusps of hyperbolic surfaces of \emph{infinite area} can be handled in the same way. 

In contrast, finding a characterization of the behavior of~$N$-invariant Laplace eigenfunctions at the funnels of~$N\backslash\uhp$ purely in terms of properties of group cocycles seems to be impossible due to the impossibility to describe funnels by the presence of specific elements in~$N$. The group cohomology of~$N$ is too coarse and too rigid for this purpose. 

The mixed cohomology spaces in~\cite{BLZm} turned out to be a flexible tool. We here use an extension of it that we will present in Section~\ref{sec:cohomtess} (restricted to Hecke triangle groups~$\Gamma$). This mixed cohomology is based on a tesselation of~$\uhp$ that is closely related to a suitable fundamental domain for~$\Gamma\backslash\uhp$. It allows us to treat the cusp, the funnel and the remaining compact part of~$\Gamma\backslash\uhp$ separately for all cohomological considerations. The behavior of Laplace eigenfunctions at the cusp and the funnel can be characterized by appropriate choices of modules in a mixed tesselation cohomology and some additional properties on the cocycle classes. See Section~\ref{sec:inj}-\ref{sect-coc-ff}. 

As a final step to complete the proof of Theorem~\ref{thm:cohominter} we will then show that these mixed tesselation cohomology spaces are naturally isomorphic to the cohomology spaces on the $\Gamma$-invariant set~$\Xi=\Gm 1\cup\Gm\infty$. See Section~\ref{sec:relacohom}.

\section{Invariant eigenfunctions via a group cohomology}
\label{sect-coaief}
\markright{11. INVARIANT EIGENFUNCTIONS VIA GROUP COHOMOLOGY}

In this section we recall the explicit map of the space
\[
\mc E_s^\Gamma = \bigl\{ \text{$f\colon \uhp\to\CC$ $\Gm$-invariant}\setmid \Delta f = s(1-s)f \bigr\}
\]
of $\Gamma$-invariant eigenfunctions with spectral parameter~$s$ of the Laplacian~$\Delta$ to the cohomology space~$H^1(\Gamma; \V s \om(\proj\RR))$ from~\cite{BLZm}. The basis tools to define the one-form~$\omega_s(u,t)$ in the introduction to this chapter are a Green's form and a Poisson-like kernel. 

For any functions~$u,v\in C^\infty(\uhp)$ we define their \emph{Green's
form} \index[defs]{Green's form}\index[symbols]{$\{\cdot,\cdot\}$} to be the smooth ($C^\infty$) differential form 
\begin{equation}\label{Gfacc2}
\begin{aligned}
 \bigl\{u,v\bigr\} & =  i \left( \frac{\partial u}{\partial
 z } \, v- u \frac{\partial
 v}{\partial z }\right)\, dz
 + i \left( u \,\frac{\partial v}{\partial
 \bar z }- \frac{\partial
 u}{\partial \bar z }\, v\right)\, d\bar z\\
 & = \left(\frac{\partial u}{\partial y}\,v - u\,\frac{\partial v}{\partial y}\right)\, dx 
 + \left(u\,\frac{\partial v}{\partial x} - \frac{\partial u}{\partial x}\,v \right)\,dy
 \,.
\end{aligned}
\end{equation}
The Green's form is $\PSL_2(\RR)$-equivariant, i.\,e., 
\[
\{u\circ g,v\circ g\} \ceqq  \{u,v\}\circ g\qquad (g\in\PSL_2(\RR))\,. 
\]
Moreover, if~$u$ and~$v$ are eigenfunctions of~$\Delta$ with the same
eigenvalue, then the form~$\{u,v\}$ is closed. The (parameter-free) \emph{Poisson-like kernel map} \index[defs]{Poisson-like kernel map} \index[symbols]{R@$R$} is given by 
\begin{equation}
 R\colon \RR\times\uhp\to\CC\,,\quad R(t;z) = \Ima \frac{1}{t-z}\,.
\end{equation}
For any~$s\in\CC$ and any fixed~$t\in\RR$ the map~$R(t;\cdot)^s$ is a~$\Delta$-eigenfunction with spectral parameter~$s$. For any fixed~$z\in\CC$, the function~$R(\cdot;z)^s$ determines an element of~$\V s \om(\proj\RR)$. Further, $R(\cdot;\cdot)^s$ is $\PSL_2(\RR)$-equivariant, which here means
\begin{equation}
 \tau_s(g)\big( R(\cdot;z)^s\big)(t) = R(t;gz)^s\,.
\end{equation}
See~\cite[(1.6), (1.7), (2.25)]{BLZm}. These properties imply that for~$u\in \E_s^\Gm$, $t\in\RR$ and~$z_1,z_2\in\uhp$ the integral 
\[
 \int_{z=z_1}^{z_2} \bigl\{ u, R(t;\cdot)^s\bigr\}
\]
does not depend on the choice of the path in~$\uhp$ from~$z_1$ to~$z_2$. The map
\begin{equation}\label{eq:integralc}
 \RR\to \CC\,,\quad t\mapsto \int_{z=z_1}^{z_2} \bigl\{ u, R(t;\cdot)^s\bigr\}
\end{equation}
determines an element~$c_u(z_1,z_2)\in \V s \om(\proj\RR)$ that satisfies 
\[
 c_u(g z_1,g z_2) \ceqq  \tau_s(g) \,
c_u(z_1,z_2)\qquad (g\in \Gamma)\,.
\]
Thus, for any choice of a base point~$z_0\in \uhp$ the map 
\begin{equation}
 \psi^{u,z_0} \colon \Gamma\to \V s \om(\proj\RR)\,,\qquad g\mapsto c_u(g^{-1}z_0,z_0)
\end{equation}
is an element of~$Z^1\bigl(\Gm;\V s \om(\proj\RR)\bigr)$. Its cohomology class~$[\psi^{u,z_0}]\in H^1\bigl(\Gm;\V s \om(\proj\RR)\bigr)$ does not depend on~$z_0$ and will therefore be denoted by~$[\psi^u]$. 

\begin{prop}[{\cite[Proposition~5.1]{BLZm}}]\label{prop:isoEcohom}
The map 
 \[
  \E^\Gm_s \rightarrow H^1\bigl(\Gm;\V s \om(\proj\RR)\bigr)\,,\quad u\mapsto [\psi^u]
 \]
is injective.
\end{prop}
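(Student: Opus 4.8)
The plan is to prove injectivity of the map $u \mapsto [\psi^u]$ by producing, from the vanishing of the cohomology class, enough information to reconstruct $u$ and conclude $u \equiv 0$. Concretely, suppose $u \in \E_s^\Gamma$ has $[\psi^{u}] = 0$ in $H^1\bigl(\Gamma;\V s \om(\proj\RR)\bigr)$. Unwinding the definition, this means there is an element $\varphi \in \V s \om(\proj\RR)$ such that $\psi_g^{u,z_0} = \varphi - \tau_s(g)\varphi = \varphi|(1-g)$ for all $g \in \Gamma$. Recalling $\psi_g^{u,z_0} = c_u(g^{-1}z_0, z_0)$ and the additivity/equivariance of $c_u$, the coboundary relation is equivalent to saying that the single function
\[
 \Phi \colon \uhp \to \V s \om(\proj\RR)\,, \qquad \Phi(z) \coloneqq c_u(z, z_0) + \varphi
\]
is $\Gamma$-equivariant, i.e. $\Phi(gz) = \tau_s(g)\Phi(z)$ for all $g \in \Gamma$, $z \in \uhp$. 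In other words, the locally defined "primitive" of the closed form $\{u, R(t;\cdot)^s\}$ descends to a $\Gamma$-equivariant (for the $\tau_s$-action in the $t$-variable) function on all of $\uhp$, with values in the space of globally real-analytic functions on $\proj\RR$.

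The next step is to extract $u$ from $\Phi$. The key identity is that $c_u(z_1, z_2)$, as a function of $t \in \RR$, is the integral of the Green's form $\{u, R(t;\cdot)^s\}$ along a path from $z_1$ to $z_2$; differentiating under the integral sign in the $z$-variable (using that the Green's form is exact as an integrand, so $d_z\, c_u(z, z_0) = \{u, R(t;\cdot)^s\}$ as a one-form on $\uhp$ valued in functions of $t$) recovers $u$ from $\Phi$ via evaluation of the Green's-form pairing against the kernel $R(t;\cdot)^s$. The point is that if $\Phi$ is $\Gamma$-equivariant and takes values in $\V s \om(\proj\RR)$ — functions with \emph{no} singularities on all of $\proj\RR$ — then the pairing of $u$ against the full family $\{R(t;\cdot)^s : t \in \proj\RR\}$ together with this global regularity forces $u$ to have no "polar part" anywhere on the boundary, which via a boundary-behavior / asymptotics argument (of the type underlying the Fourier expansions in Section~\ref{sect-daf}, or more directly the injectivity of the Poisson-type transform on hyperbolic functions) yields $u = 0$. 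Since this is precisely \cite[Proposition~5.1]{BLZm}, I would invoke that result: the argument there handles exactly the reconstruction of $u$ from the coboundary datum and the conclusion that $u$ vanishes, using properties of the kernel $R$ and standard boundary-value theory for Laplace eigenfunctions.

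The main obstacle — and the reason the cited proof is nontrivial rather than a one-line formal argument — is the passage from "$\Phi$ is $\Gamma$-equivariant with values in $\V s \om(\proj\RR)$" to "$u = 0$": one must show that a nonzero Laplace eigenfunction cannot have a $\tau_s$-equivariant primitive of its Green's form landing in the globally regular module. This requires genuine analytic input about the behavior of $u$ near the boundary $\proj\RR$ (e.g. that the "second solution" $y^{1-s}$-type asymptotics or the hyperfunction boundary value of $u$ would be detected by a singularity of $\Phi$), which is the content of the harmonic-analytic core of \cite{BLZm}. In the present exposition I would therefore simply cite Proposition~5.1 of \cite{BLZm} for the full argument, remarking only that none of its ingredients — the definition of $\omega_s(u,\cdot)$, the equivariance of the Green's form and of $R(\cdot;\cdot)^s$, and the boundary-behavior analysis — use cofiniteness of the group, so the statement transfers verbatim to the Hecke triangle group $\Gamma_\lambda$ of infinite covolume.
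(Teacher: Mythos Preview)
The paper provides no proof of its own for this proposition: it is stated directly as a citation of \cite[Proposition~5.1]{BLZm}, and is used as a black box (see also the proof of Proposition~\ref{prop:map_step1}). Your proposal arrives at the same conclusion---citing \cite[Proposition~5.1]{BLZm} and noting that cofiniteness is not used there---so in that sense it matches the paper exactly; the heuristic sketch you add beforehand is reasonable in outline but not needed, and its analytic core (the step from ``$\Phi$ is $\Gamma$-equivariant with values in $\V s \om(\proj\RR)$'' to ``$u=0$'') is, as you yourself acknowledge, precisely the nontrivial content supplied by the cited reference.
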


\begin{rmk}\label{rmk:transition}
In~\cite{BLZm} the Green's form~$[\cdot,\cdot]$ \index[symbols]{$[\cdot,\cdot]$} is used, which is given by
\[
 [u,v] = \frac{\partial u}{\partial z}\, v\, dz + u\, \frac{\partial v}{\partial\overline z}\, d\overline z\,.
\]
It is related to~$\{\cdot,\cdot\}$ by
\[
 \bigl\{u,v\bigr\}= 2i \bigl[u,v\bigr]- i d(uv)\,.
\]
See~\cite[\S1.3]{BLZm}. Therefore, the cocycle class~$[\psi^u]$ associated to~$u\in\E_s^\Gamma$ in Proposition~\ref{prop:isoEcohom} differs to the one associated to~$u$ by~\cite[Proposition~5.1]{BLZm} by a multiplicative factor of~$2i$. 
\end{rmk}

\section{Tesselation cohomology}\label{sec:cohomtess}
\markright{12. TESSELATION COHOMOLOGY}

We follow~\cite[\S6,\S11]{BLZm} for the definition of cohomology spaces and cocycles that are based on a tesselation of $\uhp$ adapted to the action of~$\Gamma$. For brevity, we refer to these cohomology spaces as \emph{tesselation cohomology spaces}. Our choice of the tesselation (see below) separates the influence of the cusp from the influence of the funnel and therefore allows us to discuss separately the cohomological interpretation of properties of automorphic forms at cusps and funnels.

\subsection{Choice of a tesselation, and cohomology}\label{sec:tesselation}

We use the fundamental domain 
\begin{equation}
 \fd_1 \coloneqq \{ z\in\uhp \setmid \Rea z \in (0,\lambda),\ |z|>1,\ |z-\lambda|>1\}
\end{equation}
for~$\Gamma$ in~$\uhp$ (see Figure~\ref{fig-fd1}; see also Figure~\ref{fig-fd}, p.~\pageref{fig-fd}) for the construction of the tesselation. It has the advantage over choosing~$\fd_0$ (see Figure~\ref{fig-fd}) that~$\fd_1$ represents the set of ordinary points by a \emph{connected} subset of~$\RR$, namely by $[1,\lambda-1]$. We fix~$Y>1$ and divide~$\fd_1$ into three parts by cutting~$\fd_1$ along the euclidean lines from~$i$ to~$\lambda+i$ and from~$iY$ to~$\lambda+iY$ (see Figure~\ref{fig-fd1}).
\begin{figure}
\centering
\includegraphics[height=8cm]{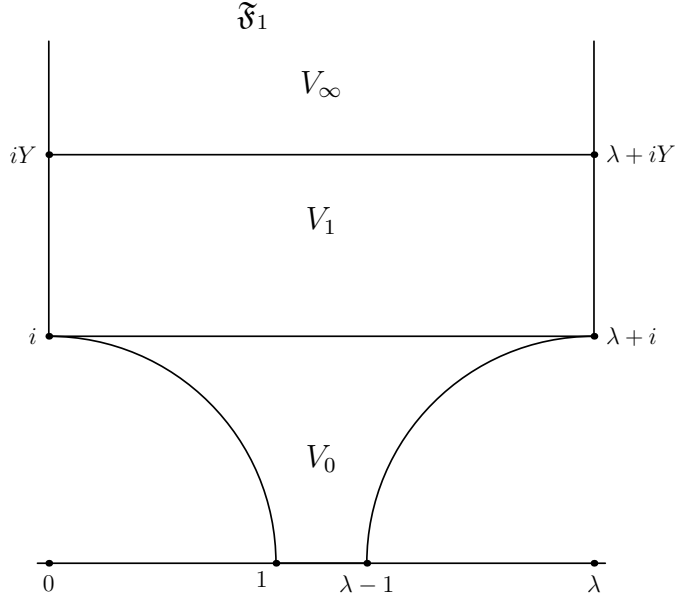}
\caption{Fundamental domain~$\fd_1$ used for the tesselation.
We consider the free side~$[1,\lambda-\nobreak 1]\subseteq\RR$ as part of
the boundary of the fundamental domain. To build the tesselation we
divide the fundamental domain into three parts by the euclidean lines
from~$i$ to~$\lambda+i$ and from~$iY$ to~$\lambda+iY$ for some (fixed)~$Y>1$. 
In this way we can separate the influence of the singularities at the
cusps from the influence of the singularities at the ordinary points.}
\label{fig-fd1}
\end{figure}
We denote the closures of these three parts in~$\proj\CC$ by~$V_0$, $V_1$ and~$V_\infty$ as indicated in Figure~\ref{fig-gen} (i.\,e., $V_0$ contains~$[1,\lambda-1]$, $V_\infty$ contains~$\infty$, and $V_1$ is the middle part). We extend the $\Gamma$-action on~$\uhp\cup\proj\RR$ to a $\Gamma$-action on the power set of~$\uhp\cup\proj\RR$ by considering the action of~$\Gamma$ as motions.  We use the tesselation\index[defs]{tesselation}\index[symbols]{T@$\tess$}
\begin{equation}
 \tess \coloneqq \bigl\{ gV_j \setmid  g\in\Gamma,\ j\in\{0,1,\infty\} \bigr\}
\end{equation}
of the \emph{extended upper half plane}\index[defs]{extended upper half plane}\index[defs]{upper half plane!extended}\index[symbols]{Had@$\uhp^*$} 
\[
 \uhp^* \coloneqq \uhp \cup \Gamma\infty \cup \Gamma\,[1,\lambda-1]\,.
\]
The tesselation cohomology is formed in analogy to simplicial cohomology, using the tesselation objects instead of simplices. In what follows we present the details.

For~$j\in\NN_0$ we let~$X^\tess_j$ denote the set of~$j$-dimensional objects associated to this tesselation, endowed with orientations as specified below. For~$j\geq 3$ we have $X^\tess_j=\emptyset$. The set~$X^\tess_0$ of \emph{vertices}\index[defs]{vertex of a tesselation}\index[defs]{tesselation!vertex}\index[symbols]{X@$X^\tess_0$} of~$\tess$ is 
\begin{equation}
\begin{aligned}
 X^\tess_0 &= \Gamma\,\{1,i,iY,\infty\}
 \\
 & = \Gamma\,1 \sqcup \Gamma\,i \sqcup \Gamma\,iY \sqcup \Gamma\,\infty\,. 
\end{aligned}
\end{equation}
The set~$X^\tess_1$ of \emph{edges}\index[defs]{edge of a tesselation}\index[defs]{tesselation!edge}\index[symbols]{X@$X^\tess_1$} of~$\tess$ is generated by the edges~$e_1,e_2,e_\infty, f_0,f_1,f_\infty$ indicated in Figure~\ref{fig-gen}. Thus,
\begin{equation}
 X^\tess_1 = \Gamma\,\{e_1,e_2,e_\infty,f_0,f_1,f_\infty\}\,.
\end{equation}
\begin{figure}
\centering
\includegraphics[height=8cm]{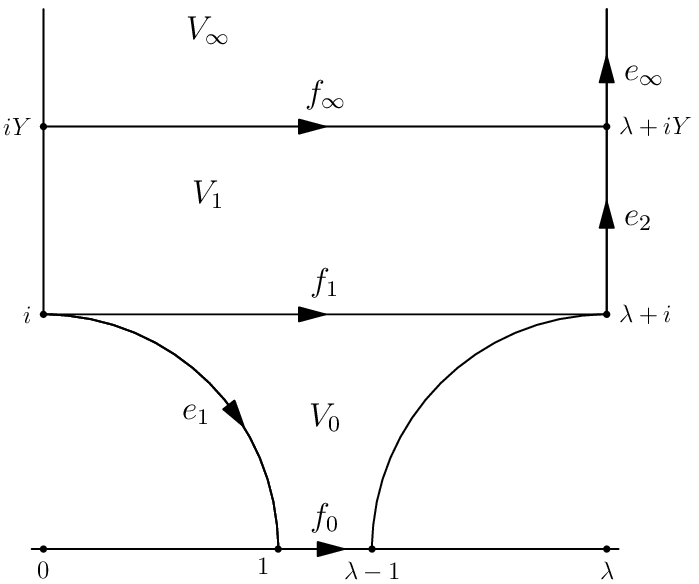}
\caption{Generating elements for the tesselation~$\tess$.} \label{fig-gen}
\end{figure}
For these basic edges we will use the orientations indicated in Figure~\ref{fig-gen}, and inherit it to all other elements of~$X^\tess_1$ via the~$\Gamma$-action. The choice of the specific orientation of the elements of~$X^\tess_1$ does not influence the qualitative structure of any of the following results.
Finally, the set~$X^\tess_2$ of \emph{faces}\index[defs]{face of a tesselation}\index[defs]{tesselation!face}\index[symbols]{X@$X^\tess_2$} is 
\begin{equation}
 X^\tess_2 = \Gamma\,\{V_0,V_1,V_\infty\}\,.
\end{equation}
We orientate the boundary of these basic faces~$V_0, V_1, V_\infty$ counterclockwise, and push this orientation to all other faces by the~$\Gamma$-action.
We stress that each of the sets~$X^\tess_j$, $j\in\NN_0$, consists of finitely many~$\Gm$-orbits and that the set~$\Xi$ used in Theorem~\ref{thm:cohominter} equals~$X^\tess_0\cap\proj\RR$.

We define the boundary maps~$\partial_j\colon \CC[X_j^\tess]\to\CC[X_{j-1}^\tess]$, $j\in\NN_0$, as well as the augmentation map as in simplicial (co-)homology, and we let $F^\tess_\bullet = \CC[X^\tess_\bullet]$ \index[symbols]{F@$F^\tess_\bullet$} denote the associated resolution. Throughout we will need only first cohomology spaces. We denote the space of coycles of~$F^\tess_\bullet$ with values in the~$\Gamma$-module~$M$ by~$Z^1(F^\tess_\bullet;M)$, \index[symbols]{Z@$Z^1(F^\tess_\bullet;M)$} the coboundary space by~$B^1(F^\tess_\bullet;M)$, \index[symbols]{B@$B^1(F^\tess_\bullet;M)$} and the cohomology space by~$H^1(F^\tess_\bullet;M)$. \index[symbols]{H@$H^1(F^\tess_\bullet;M)$}

We collect a few properties of~$F^\tess_\bullet$ and of the coycles in a tesselation cohomology. We recall that any element~$e\in X^\tess_1$ is an oriented edge between two points in~$X^\tess_0$, namely from its \emph{tail}~$t(e)$ \index[defs]{tail of an edge}\index[defs]{edge of a tesselation!tail}\index[symbols]{T@$t(e)$} to its \emph{head}~$h(e)$,\index[defs]{head of an edge}\index[defs]{edge of a tesselation!head}\index[symbols]{H@$h(e)$} which we shall denote also by~$e_{t(e),h(e)}$. \index[symbols]{E@$e_{t(e),h(e)}$} The element~$-e_{t(e),h(e)} \eqqcolon e_{h(e),t(e)}$ of~$\ZZ[X^\tess_1]$ is the edge with opposite orientation. We let $\CC_1[X^\tess_1]$ \index[symbols]{C@$\CC_1[X^\tess_1]$} denote the set of \emph{balanced} paths in~$X^\tess_1$, \index[defs]{balanced path} that is, the set of elements~$p\in\nobreak\CC[X^\tess_1]$ such that 
\[
 p = \sum_{j=1}^N \alpha_j b_j
\]
with~$N\in\NN$, $\alpha_j\in \CC$, $b_j\in X^\tess_1$ for all~$j\in\{1,\ldots, N\}$, $\alpha_1=1=\alpha_N$, and for all~$z\in X^\tess_0$
\[
 \sum_{\substack{j=1 \\ h(b_j)=z}}^{N-1} \alpha_jh(b_j) - \sum_{\substack{k=2 \\ t(b_k)=z}}^N \alpha_kt(b_k) = 0\,.
\]
In other words, $p$ can be understood as a path from~$t(b_1)$ to~$h(b_N)$ such that the weight of the path at its endpoints is~$1$ and at all inner points the sum of the weights of the incoming edges equals the sum of the weights of the outgoing edges. We set $t(p) \coloneqq t(b_1)$ and $h(p)\coloneqq t(b_N)$. A  cocycle in~$Z^1(F^\tess_\bullet;M)$ is completely determined by its values on~$\CC_1[X^\tess_1]$. Moreover, it is constant on all subsets 
\[
 \bigl\{ p\in \CC_1[X^\tess_1] \setmid \vphantom{\CC_1[X^\tess_1]} \big(t(p),h(p)\big) = (z_1,z_2)\bigr\}\qquad (z_1,z_2\in X^\tess_0)\,.
\]
These properties allow us to provide two alternative characterizations of the elements of~$Z^1(F^\tess_\bullet;M)$. The essence of both characterizations is to take advantage of the rigidity of the properties of cocycles and coboundaries to determine them completely by their properties on a subset of~$F^\tess_\bullet$ or a set closely related to~$F^\tess_\bullet$. The two characterizations are closely related but focus on slightly different properties. The first characterization indicates a close relation of tesselation cohomology to cohomology on~$\Gamma$-invariant sets (cf.~Section~\ref{sec:setcohom}).

\begin{enumerate}[1)]
\item Any map~$c\colon X^\tess_0\times X^\tess_0\to M$ that satisfies the cocycle relation 
\begin{equation}\label{cocrel2}
 c(z_1,z_2) + c(z_2,z_3) = c(z_1,z_3) \qquad (z_1,z_2,z_3\in X^\tess_0)
\end{equation}
and the~$\Gamma$-equivariance
\begin{equation}\label{cocequiv2}
 c(g\,z_1, g\,z_2) = g\,c(z_1,z_2) \qquad (z_1,z_2\in X^\tess_0,\ g\in\Gamma) 
\end{equation}
determines a unique cocycle~$\tilde c\in Z^1(F^\tess_\bullet;M)$ via
\begin{equation}\label{char2}
 \tilde c(p) = c(t(p),h(p))\qquad (p\in \CC_1[X^\tess_1])\,.
\end{equation}
The cocycle~$\tilde c$ from~\eqref{char2} is a coboundary if and only if there exists a \mbox{$\Gamma$-}equi\-variant map~$f\colon X^\tess_0\to\nobreak M$ such that for all~$z_1,z_2\in X^\tess_0$ we have
\begin{equation}\label{cobchar2}
 c(z_1,z_2) = f(z_1) - f(z_2)\,.
\end{equation}
Vice versa, every cocycle~$\tilde c$ defines a unique map~$c\colon X^\tess_0\times X^\tess_0\to M$ satisfying~\eqref{cocrel2} and~\eqref{cocequiv2}, by~\eqref{char2}. Moreover, if~$\tilde c$ is a coboundary, then~$c$ satisfies~\eqref{cobchar2}.
\item Alternatively, the elements of~$Z^1(F^\tess_\bullet;M)$ can be naturally identified with the \mbox{$\Gamma$-}equi\-variant  maps~$c\colon X^\tess_1\to M$ that satisfy $dc(V_j)=0$ for~$j\in\{0,1,\infty\}$. Coboundaries are identified with those maps~$c$ for which there exists a~$\Gamma$-equi\-variant map~$f\colon X^\tess_0\to M$ such that 
\begin{equation}
 c(e) = f(t(e))- f(h(e))
\end{equation}
for all~$e\in X^\tess_1$.
\end{enumerate}
In the following sections we will use these two characterizations of the elements in~$Z^1(F^\tess_\bullet;M)$ and $B^1(F^\tess_\bullet;M)$ without further discussions. As in Section~\ref{sec:setcohom} we associate to each cocycle~$c\in Z^1(F^\tess_\bullet;M)$ and each choice of~$z_0\in X^\tess_0$ a group cocycle~$\psi^{c,z_0}\in Z^1(\Gamma;M)$ by \index[symbols]{P@$\psi^c_g$}
\begin{equation}
 \psi^{c,z_0}_{g^{-1}} \coloneqq c(gz_0,z_0)  \qquad (g\in\Gamma)\,.
\end{equation}
(This definition may also be expressed as 
\[
\psi^{c,z_0}_{g^{-1}} = c(p_{gz_0,z_0})\,,
\]
where $p_{gz_0,z_0}$ is any element in~$\CC_1[X^\tess_1]$ with tail~$gz_0$ and head~$z_0$.) If~$c$ is a coboundary, then $\psi^{c,z_0}$ is a group coboundary. Clearly, the cocycle class of~$\psi^{c,z_0}$ does not depend on the choice of~$z_0$, for which reason we denote it also by~$[\psi^c]$, thus
\[
 [\psi^c] \coloneqq [\psi^{c,z_0}]\,.
\]
In turn, the map
\begin{equation}
 H^1(F^\tess_\bullet;M) \to H^1(\Gamma;M),\quad [c]\mapsto [\psi^c]
\end{equation}
is well-defined.

The resolution~$F^\tess_\bullet$ has a natural subresolution~$F^{\tess,Y}_\bullet$, \index[symbols]{F@$F^{\tess,Y}_\bullet$} built on the part of the tesselation that is completely contained in~$\uhp$: \index[symbols]{X@$X_0^{\tess,Y}$}\index[symbols]{X@$X_1^{\tess,Y}$}\index[symbols]{X@$X_2^{\tess,Y}$}
\begin{equation}
X_0^{\tess,Y} \coloneqq \Gm\; \{ i,i\nobreak Y\},\quad
X_1^{\tess,Y} \coloneqq \Gm\;\{e_1,e_2,f_1,f_\infty\},\quad\text{and}\quad
X_2^{\tess,Y} \coloneqq \Gm \;V_1.
\end{equation}
We denote the associated cocycle spaces and cohomology spaces by~$Z^1(F^{\tess,Y}_\bullet;M)$, $B^1(F^{\tess,Y};M)$ and~$H^1(F^{\tess,Y}_\bullet;M)$. \index[symbols]{Z@$Z^1(F^{\tess,Y}_\bullet;M)$}\index[symbols]{B@$B^1(F^{\tess,Y}_\bullet;M)$}\index[symbols]{H@$H^1(F^{\tess,Y}_\bullet;M)$} The elements of these classes can be characterized as above, restricting everything to~$F^{\tess,Y}_\bullet$, $X^{\tess,Y}_1$ and~$X^{\tess,Y}_0$.

\subsection{Relation to group cohomology}

Since~$F^\tess_\bullet$ is not a projective resolution, the cohomology spaces associated to~$F^\tess_\bullet$ are not (isomorphic to) the standard group cohomology spaces. However, the subresolution~$F^{\tess,Y}_\bullet$ is a projective resolution and hence the cohomology space~$H^1\bigl(F_\bullet^{\tess,Y};\V s \om(\proj\RR)\bigr)$ is naturally isomorphic to the space~$H^1\bigl(\Gamma;\V s \om(\proj\RR)\bigr)$. See \cite[Section~11.2]{BLZm}. 

We provide here a hands-on proof of the isomorphism  $H^1\bigl(F_\bullet^{\tess,Y};\V s \om(\proj\RR)\bigr) \cong \allowbreak
 H^1\bigl(\Gamma;\V s \om(\proj\RR)\bigr)$, using an adapted version of Lemma~\ref{lem:potentials}. We recall from Section~\ref{sec:tesselation} that $[\psi^c]$ denotes the group cocycle class associated to the cocycle class~$[c]$ in~$H^1(F^{\tess,Y}_\bullet;M)$ for any $\Gamma$-module~$M$.

\begin{lem}\label{lem:iso_cohom}
Let~$s\in\CC$. Then the map 
\[
H^1\bigl(F_\bullet^{\tess,Y};\V s \om(\proj\RR)\bigr) \to H^1\bigl(\Gamma;\V s \om(\proj\RR)\bigr)\,,\quad [c]\mapsto [\psi^c]
\]
is bijective.
\end{lem}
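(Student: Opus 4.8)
The statement is that the natural map $[c]\mapsto[\psi^c]$ from the tesselation cohomology $H^1(F_\bullet^{\tess,Y};\V s\om(\proj\RR))$ to group cohomology $H^1(\Gamma;\V s\om(\proj\RR))$ is bijective. Since $F_\bullet^{\tess,Y}$ is built on the part of the tesselation contained in $\uhp$, where $\Gamma$ acts freely, this resolution is projective, so abstractly the isomorphism is standard homological algebra; the point of the lemma is to give an explicit, hands-on version via an adapted Lemma~\ref{lem:potentials}. The plan is to prove injectivity and surjectivity separately, each by a potential/pgc-pair argument analogous to the one in Section~\ref{sec:potentials}, using $X_0^{\tess,Y}=\Gamma\{i,iY\}$ in place of $\Xi$.

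\textbf{Surjectivity.} Given a group cocycle $\psi\in Z^1(\Gamma;\V s\om(\proj\RR))$, I would construct a tesselation cocycle $c$ with $[\psi^c]=[\psi]$ by prescribing a potential $p\colon X_0^{\tess,Y}\to\V s\om(\proj\RR)$. The set $\{i,iY\}$ is a set of representatives for $\Gamma\backslash X_0^{\tess,Y}$, and both stabilizers $\Gamma_i$ and $\Gamma_{iY}$ in $\Gamma=\langle T,S\mid S^2=1\rangle$ are trivial (no elliptic element fixes $i$ or $iY$ inside $\uhp$ other than the identity, since the only elliptic generator-related element is $S$, whose fixed point is $i$ — wait: $S$ fixes $i$). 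One must be slightly careful: $S$ fixes $i$, so $\Gamma_i=\{1,S\}$. Then the analogue of condition~\eqref{pot_well} requires $p_i=\tau_s(S)p_i+\psi_S$, which is solvable for $p_i$ precisely because $\Rea s$ is such that $1-\tau_s(S)$ is invertible on the relevant space — this is where one invokes the hypotheses on $s$ implicitly, or more simply one chooses $Y$-vertices $iY$ instead, whose stabilizer \emph{is} trivial, and builds the potential freely there, propagating by $\Gamma$-equivariance via condition (2) of Lemma~\ref{lem:potentials}(i). Concretely I would set $p(iY)\coloneqq 0$ and $p(i)\coloneqq c(i,iY)$ for a yet-to-be-determined cocycle, or rather: pick $p_{iY}=0$, solve for $p_i$ from the $\Gamma_i$-compatibility relation, then Lemma~\ref{lem:potentials}(i) produces the unique $\Gamma$-equivariant-up-to-$\psi$ map $p$ and the unique cocycle $c\in Z^1(F_\bullet^{\tess,Y};\V s\om(\proj\RR))$ with potential $p$; checking $dc(V_1)=0$ is automatic since $c$ comes from a potential, and $\psi^{c,iY}_{g^{-1}}=p(giY)-\tau_s(g)p(iY)+\text{(}\psi\text{ correction)}$ recovers $\psi$ up to coboundary by construction.

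\textbf{Injectivity.} Suppose $c\in Z^1(F_\bullet^{\tess,Y};\V s\om(\proj\RR))$ has $\psi^c$ a group coboundary, say $\psi^c_g=m\vert(1-g)$ for some $m\in\V s\om(\proj\RR)$. Picking any potential $p$ of $c$ (which exists and is unique up to an additive constant in $\V s\om(\proj\RR)$, as in property~\eqref{pot_shift}), the relation $\psi^c_{g^{-1}}=p(gz_0)-\tau_s(g)p(z_0)$ combined with $\psi^c_{g^{-1}}=m\vert(1-g^{-1})$ shows, exactly as in the proof of Lemma~\ref{lem:cocgrp_inj}, that $f\coloneqq p-m$ is a $\Gamma$-equivariant potential of $c$, whence $c$ is a coboundary. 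Since everything here takes values in the honest $\PSL_2(\RR)$-module $\V s\om(\proj\RR)$ (no semi-analytic singularities to track), there is no $\bsing$ bookkeeping and the argument is a clean transcription of Lemma~\ref{lem:cocgrp_inj}.

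\textbf{Main obstacle.} The only genuinely delicate point is the stabilizer of $i$: because $S\in\Gamma$ fixes $i\in\uhp$, the $\Gamma$-action on $X_0^{\tess,Y}=\Gamma\{i,iY\}$ is \emph{not} free at the $\Gamma i$-orbit, so $F_\bullet^{\tess,Y}$ is not literally the bar resolution on a free set. The clean fix is to observe that $iY$ (with $Y>1$) has trivial stabilizer and lies on every $\Gamma$-orbit representative needed, and that the edges $e_1,e_2,f_1,f_\infty$ and the face $V_1$ are arranged so that the cocycle/coboundary data is equivalently encoded by $\Gamma$-equivariant data on the $\Gamma iY$-vertices alone; alternatively, one verifies directly that $F_\bullet^{\tess,Y}$ is a projective (indeed free) $\CC[\Gamma]$-resolution of $\CC$ by checking exactness and freeness face-by-face as in \cite[Section~11.2]{BLZm}, which dispenses with the issue entirely. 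I would take the latter route for rigor and the former as the guiding intuition.
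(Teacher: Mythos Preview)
Your injectivity argument is correct and essentially identical to the paper's: given $\psi^{c,z_0}\in B^1(\Gamma;\V s\om(\proj\RR))$ with $\psi^{c,z_0}_{g^{-1}}=(1-\tau_s(g))a$, the map $p:=q-a$ (where $q(x)=c(e_{x,z_0})$) is a $\Gamma$-equivariant potential of~$c$, so $c$ is a coboundary.

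The surjectivity argument, however, has a genuine gap at the point you yourself flag as the main obstacle. You correctly note that $\Gamma_i=\{\id,S\}$ and that the compatibility condition at~$i$ reads $p_i=\tau_s(S)p_i+\psi_S$, i.e.\ $(1-\tau_s(S))p_i=\psi_S$. But you then propose to solve this by inverting $1-\tau_s(S)$ ``because $\Rea s$ is such that $1-\tau_s(S)$ is invertible'' --- there is no hypothesis on~$s$ in the lemma (it holds for all $s\in\CC$), and $1-\tau_s(S)$ is never invertible on $\V s\om(\proj\RR)$ (any $\tau_s(S)$-invariant element is in the kernel). What you are missing is that the cocycle relation together with $S^2=\id$ forces $\tau_s(S)\psi_S=-\psi_S$; hence $\psi_S$ lies in the $(-1)$-eigenspace of $\tau_s(S)$, and one can simply take
\[
q_i\coloneqq\tfrac12\psi_S,\qquad q_{iY}\coloneqq 0,
\]
which satisfies the compatibility condition identically: $\tau_s(S)q_i+\psi_S=-\tfrac12\psi_S+\psi_S=\tfrac12\psi_S=q_i$. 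This is the paper's choice, and it works for every~$s$ with no further input. Your proposed workarounds do not: you cannot ``avoid $i$'' since $\Gamma i\subset X_0^{\tess,Y}$ and the potential must be defined there; and citing projectivity from \cite[Section~11.2]{BLZm} is a valid abstract proof but is exactly what the paper announces it is \emph{not} doing --- the lemma is presented as a hands-on alternative to that citation.
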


\begin{proof}
To establish surjectivity, let $[\psi]\in H^1\bigl(\Gamma;\V s \om(\proj\RR)\bigr)$ and pick a representative~$\psi\in Z^1\bigl(\Gamma;\V s \om(\proj\RR)\bigr)$. We construct a cocycle~$c\in Z^1\bigl(F_\bullet^{\tess,Y};\V s \om(\proj\RR)\bigr)$ such that $\psi^c=\psi$. To that end we recall that $X_0^{\tess,Y} = \Gamma\{i,iY\}$. We set 
\[
q_i \coloneqq \frac12\psi_S \quad\text{and}\quad q_{iY} \coloneqq 0 \in \V s \om(\proj\RR)\,. 
\]
Then the necessary analogue of~\eqref{pot_well} is satisfied: For all~$r\in\{i, iY\}$ and all~$g\in\Gamma_r$ we have
\begin{equation}\label{q_well}
 q_r = \tau_s(g)q_r + \psi_g\,.
\end{equation}
(We note that $\Gamma_i = \{\id, S\}$ and $\Gamma_{iY} = \{\id\}$.) We define $q\colon X_0^{\tess,Y} \to \V s \om(\proj\RR)$ by 
\begin{align*}
 q(gi) &\coloneqq \psi_{g^{-1}} + \tau_s(g)q_i
 \\
 q(giY) & \coloneqq \psi_{g^{-1}} + \tau_s(g)q_{iY}
\end{align*}
for all~$g\in\Gamma$. Using~\eqref{q_well} it follows that~$q$ is well-defined. Then~$c\coloneqq dq$ is an element of~$Z^1(F_\bullet^{\tess,Y}; \V s \om(\proj\RR))$, and $\psi^c=\psi$. 

A straightforward calculation shows that other choices for the representative~$\psi$ of~$[\psi]$ or for~$q_i$ and~$q_{iY}$ (obeying~\eqref{q_well}) lead to cocycles in the same cocycle class as~$c$.  Thus, the constructed cocycle class~$[c]\in H^1\bigl(F_\bullet^{\tess,Y};\V s \om(\proj\RR)\bigr)$ is independent of all choices, and is a preimage of~$[\psi]$. This shows surjectivity.

For injectivity it suffices to show that whenever we have $c\in Z^1\bigl(F_\bullet^{\tess,Y};\V s \om(\proj\RR)\bigr)$ and $z_0\in X_0^{\tess,Y}$ such that $\psi\coloneqq  \psi^{c,z_0}\in B^1(\Gamma;\V s \om(\proj\RR))$, then $c\in B^1(F_\bullet^{\tess,Y}; \V s \om(\proj\RR))$. Suppose that $c$, $z_0$ and~$\psi$ are of this form. Consider the potential
\[
 q\coloneqq  X_0^{\tess,Y} \to \V s \om(\proj\RR),\quad q(x) \coloneqq c(e_{x,z_0})
\]
of~$c$, where $e=e_{x,z_0}$ is any element in~$\CC_1[X^{\tess,Y}_1]$ with $\big(t(e),h(e)\big) = (x,z_0)$, and recall that 
\begin{equation}\label{eq:how_grpcoc}
 \psi_{g^{-1}} = q(gx) - \tau_s(g)q(x)
\end{equation}
for all~$g\in\Gamma$, $x\in X_0^{\tess,Y}$. Since $\psi$ is a coboundary, we find~$a\in \V s \om(\proj\RR)$ such that 
\begin{equation}\label{eq:grpcoc_cob}
 \psi_{g^{-1}} = \big( 1 - \tau_s(g)\big) a
\end{equation}
for all~$g\in\Gamma$. Let
\[
 p\coloneqq q - a \colon X_0^{\tess,Y} \to \V s \om(\proj\RR)\,. 
\]
Combining~\eqref{eq:how_grpcoc} and~\eqref{eq:grpcoc_cob} shows that for each~$g\in\Gamma$, $x\in X_0^{\tess,Y}$
\begin{align*}
 p(gx) - \tau_s(g)p(x) & = q(gx)- a - \tau_s(g)\big( q(x) - a\big) 
 \\
 & = q(gx) - \tau_s(g)q(x) - \big(1-\tau_s(g)\big) a = 0\,.
\end{align*}
Thus, $p$ is a $\Gamma$-equivariant potential of~$c$, and hence $c$ is a coboundary. 
\end{proof}

In Section~\ref{sect-coaief} we gave the interpretation of~$\E_s^\Gamma$ in terms of standard group cohomology. In terms of tesselation cohomology, the integral 
\[
c_u(z_1,z_2) = \int_{z_1}^{z_2} \bigl\{ u, R(t;\cdot)^s\bigr\}
\]
assigned in~\eqref{eq:integralc} to~$u\in\E_s^\Gamma$ and $z_1,z_2\in\uhp$ corresponds to the cocycle~$c_u$ in $Z^1\bigl(F^{\tess,Y}_\bullet;\V s \om(\proj\RR)\bigr)$ that is defined as \index[symbols]{C@$c_u$}
\begin{equation}\label{cu} 
c_u(x)(t) \ceqq  \int_{x} \bigl\{ u,
R(t;\cdot)^s\bigr\}\qquad (x\in \CC_1[X_1^{\tess,Y}])\,.
\end{equation}
Proposition~\ref{prop:isoEcohom} (which is just~\cite[Proposition~5.1]{BLZm}) induces the following tesselation cohomological interpretation of~$\E_s^\Gamma$.

\begin{prop}\label{prop:map_step1}\index[symbols]{R@$\coh_s$}
Let~$s\in\CC$, $\Rea s\in (0,1)$. The map 
\[
 \coh_s\colon \mc E_s^\Gamma \to H^1\bigl(F_\bullet^{\tess,Y};\V s \om(\proj\RR)\bigr)\,,\quad u\mapsto [c_u]\,,
\]
is injective.
\end{prop}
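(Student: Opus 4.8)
The plan is to deduce Proposition~\ref{prop:map_step1} directly by combining the injective integral transform of Proposition~\ref{prop:isoEcohom} with the isomorphism of Lemma~\ref{lem:iso_cohom}. First I would note that by Lemma~\ref{lem:iso_cohom} the map
\[
 H^1\bigl(F_\bullet^{\tess,Y};\V s \om(\proj\RR)\bigr) \to H^1\bigl(\Gamma;\V s \om(\proj\RR)\bigr)\,,\quad [c]\mapsto [\psi^c]\,,
\]
is a linear bijection, for every $s\in\CC$; in particular it is injective on the whole space, not only on a subspace, so composing $\coh_s$ with it loses no information about the kernel. Hence it suffices to show that the composition $u\mapsto [\psi^{c_u}]$ agrees (up to the harmless nonzero scalar noted in Remark~\ref{rmk:transition}) with the map $u\mapsto[\psi^u]$ from Proposition~\ref{prop:isoEcohom}, which is already known to be injective.

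The key step is therefore an identification of cocycles. Given $u\in\E_s^\Gamma$, the tesselation cocycle $c_u\in Z^1\bigl(F^{\tess,Y}_\bullet;\V s \om(\proj\RR)\bigr)$ is the one determined on balanced paths by~\eqref{cu}. By the construction recalled in Section~\ref{sec:tesselation}, the associated group cocycle is $\psi^{c_u,z_0}_{g^{-1}} = c_u(p_{gz_0,z_0})$ for a base vertex $z_0\in X_0^{\tess,Y}$ and any balanced path $p_{gz_0,z_0}$ from $gz_0$ to $z_0$. Using the characterization in~\eqref{char2} together with the fact that a cocycle in $Z^1(F^{\tess,Y}_\bullet;M)$ is constant on balanced paths with fixed endpoints, this value equals $\int_{z=gz_0}^{z_0} \{u,R(t;\cdot)^s\}$, the integral being path-independent in~$\uhp$ because $\{u,R(t;\cdot)^s\}$ is a closed form (both $u$ and $R(t;\cdot)^s$ being $\Delta$-eigenfunctions with the same eigenvalue). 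Comparing with the definition of $\psi^{u,z_0}$ in Section~\ref{sect-coaief}, namely $\psi^{u,z_0}_g = c_u(g^{-1}z_0,z_0) = \int_{z=g^{-1}z_0}^{z_0}\{u,R(t;\cdot)^s\}$, one sees that $\psi^{c_u,z_0}_{g^{-1}} = \psi^{u,z_0}_{g^{-1}}$ for all $g\in\Gamma$, i.e.\ the two group cocycles coincide; choosing $z_0\in X_0^{\tess,Y}\cap\uhp$ makes both constructions use the same base point, so there is no coboundary discrepancy and in fact $[\psi^{c_u}] = [\psi^u]$ in $H^1\bigl(\Gamma;\V s \om(\proj\RR)\bigr)$.

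Putting these together: if $\coh_s(u) = [c_u] = 0$ in $H^1\bigl(F_\bullet^{\tess,Y};\V s \om(\proj\RR)\bigr)$, then applying the bijection of Lemma~\ref{lem:iso_cohom} gives $[\psi^{c_u}] = 0$, hence $[\psi^u] = 0$ in $H^1\bigl(\Gamma;\V s \om(\proj\RR)\bigr)$, and Proposition~\ref{prop:isoEcohom} forces $u=0$. Linearity of $\coh_s$ is immediate from linearity of $u\mapsto c_u$, which in turn follows from linearity of the Green's form $\{u,v\}$ in its first argument and of integration. I expect the only genuinely delicate point to be the bookkeeping of base points and of the direction conventions in the definitions of $\psi^{c,z_0}$ versus $\psi^{u,z_0}$ (the inverse appearing in $\psi^{c,z_0}_{g^{-1}} = c(gz_0,z_0)$), together with verifying that the balanced-path value of $c_u$ really is the naive integral between the endpoints rather than a sum of contributions along the edges — this is exactly where one uses that $c_u$ is constant on balanced paths with the same tail and head, which is the content of~\eqref{char2} applied to the explicit edge integrals in~\eqref{cu}. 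Everything else is a formal consequence of results already established in the excerpt.
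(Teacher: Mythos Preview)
Your proposal is correct and follows essentially the same approach as the paper: compose $\coh_s$ with the bijection of Lemma~\ref{lem:iso_cohom} and invoke Proposition~\ref{prop:isoEcohom}. The paper's own proof is a two-line version of yours; you spell out in detail the identification $[\psi^{c_u}]=[\psi^u]$ (via the common base point $z_0\in X_0^{\tess,Y}$ and path-independence of the integral), which the paper leaves implicit.
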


\begin{proof}
The map 
\[
 \mc E_s^\Gamma \to H^1\bigl(\Gamma;\V s \om(\proj\RR)\bigr)\,,\quad u\mapsto [\psi^{c_u}]\,,
\]
is injective by~\cite[Proposition~5.1]{BLZm}. An application of Lemma~\ref{lem:iso_cohom} finishes the proof. 
\end{proof}

\subsection{Mixed cohomology spaces}\label{sec:def_mixed}

The tesselation~$\tess$ allows us to consider the Hecke triangle surface~$\Gamma\backslash\uhp$ as split into three parts: 
\begin{itemize}
\item a neighborhood of the cusp, represented by~$\Gamma\infty  \subseteq X^\tess_0$, 
\item a neighborhood of the funnel, represented by~$\Gamma 1\subseteq X^\tess_0$ and~$\Gamma f_0 \subseteq X^\tess_1$, and
\item the `inner part', represented by~$X^{\tess,Y}_0$, $X^{\tess,Y}_1$ and~$X^{\tess,Y}_2$.
\end{itemize}
The elements in~$\Gamma e_\infty$ and~$\Gamma V_\infty$ connect the cusp to the inner part, and the elements in~$\Gamma e_1$ and~$\Gamma V_0$ connect the funnel to the inner part. We remark that the tesselation contains no direct connections between the cusp and the funnel, and that the sets~$X^{\tess,Y}_j$, $j\in\{0,1,2\}$, that are purely related to the inner part, are $\Gamma$-invariant. Therefore the resolution~$F^\tess_\bullet$ admits a natural definition of mixed cohomology spaces that allow us to describe separately the behavior of cocycles (in particular, of their singularities) at the cusp and the funnel as well as their regularity in the inner part. We refer to~\cite[Definition~11.4]{BLZm} for a complete definition of mixed tesselation cohomology, and restrict our exposition to the elements that we will need for the cohomological interpretation of (resonant, cuspidal) funnel forms. 

Let~$V,W$ be $\Gamma$-modules (and complex vector spaces) such that~$V\subseteq W$. We denote the first mixed cohomology space by~$H^1(F^\tess_\bullet; V,W)$, \index[defs]{mixed cohomology spaces}\index[defs]{cohomology!mixed}\index[symbols]{H@$H^1(F^\tess_\bullet; V,W)$} the space of $1$-cocycles of mixed cohomology by~$Z^1(F^\tess_\bullet; V,W)$ \index[symbols]{Z@$Z^1(F^\tess_\bullet; V,W)$}, and its subspace of $1$-coboundaries of mixed cohomology by~$B^1(F^\tess_\bullet; V,W)$.\index[symbols]{B@$B^1(F^\tess_\bullet; V,W)$}   The $1$-cocycles~$c\in Z^1(F^\tess_\bullet;V,W)$ are determined by \mbox{$\Gm$-}equivariant maps~$c\colon X^\tess_1\rightarrow W$ such that $c(e)\in V$ if~$e\in X^{\tess,Y}_1$ and $dc(V_j)\nobreak=\nobreak0$ for~$j\in\{0,1,\infty\}$. Such a cocycle is a coboundary if there exists a \mbox{$\Gm$-}equivariant map~$f\colon X^\tess_0\rightarrow W$ such that $c(e) = f(t(e))-f(h(e))$ for all~$e\in X_1^\tess$ and $f(z)\in V$ if~$z\in\nobreak X^{\tess,Y}_0$. 

Each cocycle in~$Z^1(F^\tess_\bullet;V,W)$ restricts to a cocycle in~$Z^1(F^{\tess,Y}_\bullet;V)$. For `good' combinations of~$V$ and~$W$ (as we will have below) each cocycle in~$Z^1(F^{\tess,Y}_\bullet;V)$ extends (in at least one way) to a cocycle in the mixed cocycle space~$Z^1(F^\tess_\bullet;V,W)$. In this case the elements in the cohomology space~$H^1(F^\tess_\bullet;V,W)$ may be understood as refinements of the cocycle classes in~$H^1(F^{\tess,Y}_\bullet;V)$. Nevertheless, in the passage from $H^1(F^{\tess,Y}_\bullet;V)$ to $H^1(F^\tess;V,W)$ cocycle classes do not split, which is a property rather specific for first cohomology spaces. More precisely, for $j\in\{0,1\}$, the cohomology space~$H^j(F^\tess_\bullet;V,W)$ embeds into the space~$H^j(F^{\tess,Y}_\bullet;V)$; for $j\geq 2$ this is typically not true. In view of Proposition~\ref{prop:map_step1} such embedding properties are useful for characterizing the image of subsets of~$\mc E_s^\Gamma$ under~$\coh_s$. In Proposition~\ref{prop-cohAH} we will prove an even stronger embedding statement of which we will take advantage for the characterization of the $\coh_s$-images of the spaces of funnel forms, resonant funnel forms and cuspidal funnel forms. We refer to the introduction of Section~\ref{sec:inj} for more explanations.

The cocycles in~$Z^1(F^\tess_\bullet; V,W)$ can also be naturally identified (see also Section~\ref{sec:tesselation}, in particular~\eqref{char2}) with the maps 
\[
 c\colon X^\tess_0\times X^\tess_0\to W
\]
satisfying the cocycle relation and $\Gamma$-equivariance analogous to~\eqref{cocrel2} and~\eqref{cocequiv2}, as well as the additional property
\[
 c\big(X^{\tess,Y}_0 \times X^{\tess,Y}_0\big) \subseteq V\,.
\]
For $W\subseteq \V s \om(\Xi)$ we define singularity and vanishing conditions in analogy to those in Section~\ref{sect-cococ}: We let $Z^1(F^\tess_\bullet;V,W)_\sic$ \index[symbols]{Z@$Z^1(F^\tess_\bullet;V,W)_\sic$} denote the space of those cocycles~$c\in\nobreak Z^1(F^\tess_\bullet;V,W)$ that satisfy 
\[
 \bsing c(e) \subset \proj\RR\cap \{t(e),h(e)\} \qquad\text{for all $e\in X^\tess_1$\,.}
\]
Further, we let $Z^1(F^\tess_\bullet;V,W)^\van$ \index[symbols]{Z@$Z^1(F^\tess_\bullet;V,W)^\van$} be the space of the cocycles~$c\in Z^1(F^\tess_\bullet;V,W)$ that satisfy 
\[
c(f_0)\vert_{(\lambda-\nobreak 1,1)_c} = 0\,.
\]
We use the obvious definitions for the spaces~$Z^1(F^\tess_\bullet; V,W)_{\cond_1}^{\cond_2}$, $B^1(F^\tess_\bullet; V,W)_{\cond_1}^{\cond_2}$, $H^1(F^\tess_\bullet; V,W)_{\cond_1}^{\cond_2}$, where $\cond_1\in\{\underline{\ \ },\sic\}$, $\cond_2\in\{\underline{\ \ },\van\}$. \index[symbols]{Z@$Z^1(F^\tess_\bullet;V,W)^\van_\sic$}\index[symbols]{B@$B^1(F^\tess_\bullet;V,W)^\van_\sic$}\index[symbols]{H@$H^1(F^\tess_\bullet;V,W)^\van_\sic$}

\section{Extension of cocycles}\label{sec:inj}
\markright{13. EXTENSION OF COCYCLES}

By virtue of the map~$\coh_s$ from Proposition~\ref{prop:map_step1}, the space~$\A_s$ of funnel forms is linear isomorphic to a subspace of~$H^1\bigl(F^{\tess,Y}_\bullet; \V s \om(\proj\RR)\bigr)$. We aim at describing this subspace in cohomological terms. However, the  space~$H^1\bigl(F^{\tess,Y}_\bullet; \V s \om(\proj\RR)\bigr)$ is built using only the part of the tesselation~$\tess$ that is completely contained \emph{in} the upper half plane~$\uhp$. Hence  the properties of funnel forms at the \emph{ends} of~$\Gamma\backslash\uhp$ (i.\,e., at the cusp and at the funnel) cannot be characterized in terms of intrinsic properties of cocycle classes in~$H^1\bigl(F^{\tess,Y}_\bullet;\V s \om(\proj\RR)\bigr)$. In other words, the space~$\coh_s(\A_s)$ cannot be described using only the subresolution~$F^{\tess,Y}_\bullet$. In order to find a better description of~$\coh_s(\A_s)$, a natural approach is to extend the cocycle classes in~$\coh_s(\A_s)$ to cocycle classes in the mixed cohomology space $H^1\bigl(F^\tess_\bullet;\V s \om(\proj\RR), M\bigr)$ on the full resolution~$F^\tess_\bullet$. This approach immediately raises the questions of suitable choices for $\Gamma$-modules~$M$ and the uniqueness of such extensions.

The main result of this section is Proposition~\ref{prop-cohAH}, which settles a part of these questions. It will be complemented by Theorem~\ref{thm-alcoh} showing the surjectivity of the lower three horizontal embeddings in Proposition~\ref{prop-cohAH} and thereby establishing a complete cohomological characterization of funnel forms, resonant funnel forms and cuspidal funnel forms. Recall the definitions of the singularity condition~\tsic and of the vanishing condition~\tvan from Section~\ref{sec:def_mixed}.

\begin{prop}\label{prop-cohAH}\index[symbols]{R@$\coh_s$}
Let $s\in\CC$, $\Rea s\in (0,1)$. The map~$\coh _s$ from Proposition~\ref{prop:map_step1} 
induces the system of injective linear maps
\[
\xymatrix@C=2cm{
\mc E_s^\Gamma \ar@{^{(}->}[r]^(.3){\coh _s} & H^1\bigl(F_\bullet^{\tess,Y}; \V s \om(\proj\RR)\bigr)
\\
\A_s \ar@{^{(}->}[r]^(.3){\coh _s} \ar@{^{(}->}[u]
& H^1\bigl(F^\tess_\bullet;\V s \om(\proj\RR),\V s
{\fxi;\exc;\aj}(\proj\RR)\bigr)^\van_\sic \ar@{^{(}->}[u] \\
\A^1_s \ar@{^{(}->}[r]^(.3){\coh _s} \ar@{^{(}->}[u]
& H^1\bigl(F^\tess_\bullet;\V s \om(\proj\RR),\V s {\fxi;\exc,\smp;\aj}(\proj\RR)\bigr)^\van_\sic \ar@{^{(}->}[u]\\
\A^0_s \ar@{^{(}->}[r]^(.3){\coh _s} \ar@{^{(}->}[u]
& H^1\bigl(F^\tess_\bullet;\V s \om(\proj\RR),\V s
{\fxi;\exc,\infty;\aj}(\proj\RR)\bigr)^\van_\sic \ar@{^{(}->}[u] }
\]
where the restriction~$s\not=\frac12$ is imposed for the two middle instances of the map~$\coh_s$.
\end{prop}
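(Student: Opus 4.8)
The plan is to build on the three ingredients already in place: the injection $\coh_s$ of Proposition~\ref{prop:map_step1}, the explicit cocycle $c_u(x)(t) = \int_x \{u, R(t;\cdot)^s\}$ from~\eqref{cu}, and the classification of the subspaces $\A_s \supseteq \A_s^1 \supseteq \A_s^0$ of $\E_s^\Gamma$ in terms of Fourier expansions at the cusp and $s$-analytic boundary behaviour at the funnel (Section~\ref{sect-daf}). The strategy splits into four tasks: (1) produce a canonical extension of $c_u$ from $F^{\tess,Y}_\bullet$ to the full resolution $F^\tess_\bullet$; (2) show that for $u \in \A_s$ the extended cocycle takes values in $\V s {\fxi;\exc;\aj}(\proj\RR)$ and satisfies the singularity condition~\tsic and the vanishing condition~\tvan, with the sharper module $\V s {\fxi;\exc,\smp;\aj}$ when $u \in \A_s^1$ and $\V s {\fxi;\exc,\infty;\aj}$ when $u \in \A_s^0$; (3) verify that the vertical maps on the right are injective (i.e.\ that the mixed cohomology spaces with these conditions embed into $H^1(F^{\tess,Y}_\bullet;\V s \om(\proj\RR))$); (4) check that all the squares commute, so that $\coh_s$ on the subspaces is the restriction of $\coh_s$ on $\E_s^\Gamma$, hence injective by Proposition~\ref{prop:map_step1}.

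For task (1): on an edge $e = e_{z_1,z_2} \in X^\tess_1$ with $z_1,z_2 \in \uhp$ the integral $\int_{z_1}^{z_2}\{u,R(t;\cdot)^s\}$ still makes sense and is real-analytic in $t$ away from the set of endpoints of the geodesic carrying $e$; the $\Gamma$-equivariance and cocycle (i.e.\ $dc_u(V_j)=0$) relations follow from the $\PSL_2(\RR)$-equivariance and closedness of the Green's form, exactly as in~\cite{BLZm}. The new edges are $\Gamma f_0$ (the free side $[1,\lambda-1]$, touching the funnel) and $\Gamma e_1$ (connecting the funnel to the inner part); here one endpoint of $f_0$ is in $\Gamma\,1 \subseteq \proj\RR$, so one must integrate the $1$-form over a path whose endpoint lies on the boundary and control the behaviour of $t \mapsto c_u(f_0)(t)$ near $\proj\RR$. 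This is where the $s$-analytic boundary behaviour of $u$ at the funnel enters: writing $u = y^s A$ with $A$ real-analytic in a complex neighbourhood of $(\fixTS^-,\fixTS^+)$ (Section~\ref{sec:def_ff}) lets one push the integration path onto the boundary interval and shows that $c_u$ on edges with an endpoint at a point of $\Gamma\,1$ has at worst an analytic jump there — this is condition~\taj. For the edges meeting $\Gamma\infty$ (namely $\Gamma e_\infty$ and $\Gamma f_\infty$, whose boundary values feed into $c_u(\cdot,\infty)$), the behaviour of the integral as the path runs up the cusp is governed by the Fourier expansion~\eqref{Feu}: the $A_n$-terms (Bessel-$K$, exponentially decaying) contribute convergent integrals giving holomorphic extension to a rounded neighbourhood at $\infty$ — condition~\texc — while the $B_n$-terms (Bessel-$I$) and the $A_0 y^{1-s}$ term control the precise nature of the singularity at $\infty$. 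Thus $u \in \A_s$ already forces the $\texc$ and $\aj$ conditions and membership in $\V s \fxi(\proj\RR)$ (singularities only at $\Xi$); the additional vanishing of all $B_n$ for $u \in \A_s^1$ upgrades the singularity at $\infty$ to at most simple (\tsmp), and the further vanishing of $A_0$ for $u \in \A_s^0$ upgrades it to smooth ($\infty$). The vanishing condition~\tvan, $c_u(f_0)|_{(\lambda-1,1)_c}=0$, should be arranged by choosing the representative appropriately — the interval $(1,\lambda-1)$ lies in the funnel and carries no periodic-geodesic endpoints, so one is free to normalise $c_u$ on $f_0$ to vanish on the complementary arc, consistently with the heuristic of Section~\ref{sec:heuristic}, equations~\eqref{vanish1}--\eqref{vanish2}; alternatively one invokes the identity theorem to see the jump across $[1,\lambda-1]$ is the only obstruction and subtracts it off by a coboundary supported there.

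Task (3) — injectivity of the right-hand vertical maps — is the most structurally delicate point, but it is essentially formal: as remarked in Section~\ref{sec:def_mixed}, for $j \in \{0,1\}$ the mixed cohomology $H^j(F^\tess_\bullet;V,W)$ embeds into $H^j(F^{\tess,Y}_\bullet;V)$ because in first cohomology the refinement by the conditions at the ends cannot cause cocycle classes to split. Concretely, if a mixed cocycle restricts to a coboundary on $F^{\tess,Y}_\bullet$ via a $\Gamma$-equivariant potential $f$ on $X^{\tess,Y}_0$ with values in $V = \V s \om(\proj\RR)$, one extends $f$ over the remaining vertices $\Gamma\infty$ and $\Gamma\,1$ using the cocycle values on $e_\infty$, $e_1$, $f_0$; the point is that the extended potential automatically lands in $W$ and respects $\Gamma$-equivariance on the full vertex set, so the mixed cocycle is a mixed coboundary. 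The conditions $\sic$ and $\van$ are then checked to be preserved under this argument, using Proposition~\ref{prop:char_sic} for the singularity bookkeeping. I expect task (2) — matching the analytic fine structure of the integral $c_u$ at the funnel endpoint to the condition $\aj$, and simultaneously getting $\sic$ on those edges — to be the main obstacle, since it requires carefully deforming integration contours to the boundary using the real-analytic core $A$ of $u$ and tracking which endpoints singularities can appear at; the cusp side is by contrast a direct transcription of the finite-area analysis in~\cite{BLZm}. Finally, commutativity of the squares (task (4)) is immediate from the definitions: both horizontal maps send $u$ to the class of the same integral cocycle $c_u$, only viewed in a smaller or larger resolution, and the vertical inclusions are the tautological restriction maps; injectivity of $\coh_s$ on $\A_s^\ast$ then follows from injectivity on $\E_s^\Gamma$ (Proposition~\ref{prop:map_step1}) together with the injectivity of the right vertical maps just established.
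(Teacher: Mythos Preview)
Your overall architecture matches the paper's: Proposition~\ref{prop-af-co} handles your tasks (1)--(2), Lemma~\ref{lem:mixed_inj} is your task (3) (and your sketch of it is correct), and the proof of Proposition~\ref{prop-cohAH} combines them exactly as in your task (4). Two points in your plan for tasks (1)--(2) are off, one of them seriously.

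\textbf{The cusp side needs regularization, not direct integration.} For a general $u\in\A_s$ the Fourier coefficients $B_n$ (multiplying the exponentially growing Bessel-$I$ factors in~\eqref{Feu}) are unconstrained, so the integral $\int_{e_\infty}\{u,R(t;\cdot)^s\}$ along the vertical edge to~$\infty$ simply diverges; there is no ``singularity at~$\infty$'' to read off from it. The paper instead \emph{defines}
\[
c_u(e_\infty)\;\coloneqq\;-\av{s,T}^+\,c_u(f_\infty)\,,
\]
the one-sided average (Section~\ref{sect-osav}) of the already-convergent horizontal-edge value $c_u(f_\infty)\in\V s\om(\proj\RR)$. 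The identity~\eqref{avrel} then forces $dc_u(V_\infty)=0$, and the asymptotic expansion~\eqref{avasexp} of the average yields the condition \texc, \tsmp, or smoothness at the cuspidal singularity according as $u\in\A_s$, $\A_s^1$, or $\A_s^0$, following~\cite[Proposition~12.1]{BLZm}. Your Fourier-expansion heuristic is the right intuition for \emph{why} these conditions emerge, but it applies to the regularized object, not to a divergent integral. This is also why the extension to $F^\tess_\bullet$ is non-unique for general $u\in\A_s$ (Proposition~\ref{prop-af-co}\eqref{afcoiii}): one could equally use $\av{s,T}^-$, and the two choices differ by a coboundary.

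\textbf{The vanishing condition \tvan\ is automatic, not arranged.} No normalization or coboundary correction is needed: $c_u(f_0)\vert_{(\lambda-1,1)_c}=0$ holds identically for the integral. This is Lemma~\ref{lem-io}\eqref{ioii}: for $t\notin[\xi,\eta]$ one deforms the integration path onto the real segment $[\xi,\eta]\subseteq\Omega$, where the integrand~\eqref{acc-expr} vanishes because of the prefactor $y^{2s}$ (here $\Rea s>0$). The same lemma gives $I_u(\xi,\eta)\in\V s{\om;-;\aj}[\xi,\eta]$, establishing both \tsic\ and \taj\ on the funnel edges $e_1$ and $f_0$ at once --- so the part you flagged as the ``main obstacle'' is in fact the clean half.
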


A key step for the proof of Proposition~\ref{prop-cohAH} is to show that the cocycles in the space~$Z^1\bigl(F^{\tess,Y}_\bullet; \V s \om(\proj\RR)\bigr)$ that are associated to funnel forms by~\eqref{cu} extend to mixed cocycles in~$Z^1(F^\tess_\bullet; \V s \om(\proj\RR), M)^\van_\sic$, where the module~$M$ is chosen as indicated in Proposition~\ref{prop-cohAH}, depending on whether we consider the space~$\A_s$ of all funnel forms or we restrict to the space~$\A_s^1$ or to the space~$\A_s^0$. We discuss these extendabilities in the following proposition.

\begin{prop}\label{prop-af-co}
Let $s\in\CC$, $\Rea s \in (0,1)$.
\begin{enumerate}[{\rm (i)}]
\item\label{afcoi} For each cuspidal funnel form~$u\in \A_s^0$ the cocycle~$c_u \in Z^1\bigl( F^{\tess,Y}_\bullet;\V s \om(\proj\RR) \bigr)$ has a
unique extension to a cocycle in
\[ 
Z^1\bigl( F^\tess_\bullet;\V s \om(\proj\RR),\V s {\fxi;\exc,\infty;\aj}(\proj\RR)\bigr)^\van_\sic \,.
\]
\item\label{afcoii} Let $s\neq \frac12$. For each resonant funnel form~$u\in \A_s^1$ the associated cocycle~$c_u$ in $Z^1\bigl( F^{\tess,Y}_\bullet;\V s \om(\proj\RR) \bigr)$ has a unique extension to a cocycle in
 \[
 Z^1\bigl( F^\tess_\bullet;\V s \om(\proj\RR),\V s {\fxi;\exc,\smp;\aj}(\proj\RR)\bigr)^\van_\sic\,.
 \]
\item\label{afcoiii} Let $s\neq \frac12$. For each funnel form~$u\in \A_s$ the cocycle~$c_u \in Z^1\bigl( F^{\tess,Y}_\bullet;\V s \om(\proj\RR) \bigr)$ has
 an extension to a cocycle in
\[ 
Z^1\bigl( F^\tess_\bullet;\V s \om(\proj\RR),\V s {\fxi;\exc;\aj}(\proj\RR)\bigr)^\van_\sic\,,
\]
which is not unique. Any two choices for the extension of~$c_u$ differ by a coboundary.
\end{enumerate}
\end{prop}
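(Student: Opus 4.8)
The strategy is to decompose $\Gamma\backslash\uhp$ into its three pieces --- a neighborhood of the cusp (around $\Gamma\infty$), a neighborhood of the funnel (around $\Gamma 1$ and the free edge $\Gamma f_0$), and the inner part governed by the subresolution $F^{\tess,Y}_\bullet$ --- and to analyze the integrals $c_u(x)(t)=\int_x\{u,R(t;\cdot)^s\}$ separately near each piece. The edges of $F^\tess_\bullet$ not already present in $F^{\tess,Y}_\bullet$ are (up to the $\Gamma$-action) the cuspidal edge $e_\infty$ (from $iY$ toward $\infty$), the two funnel edges $f_1$ (inside $\uhp$, between the free side and the inner part, or rather the analogous ones in the face $V_0$), and the free edge $f_0$ with endpoints $1$ and $\lambda-1$, both on $\proj\RR$. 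Extending $c_u$ to a mixed cocycle amounts to prescribing $c_u(e_\infty)$ and $c_u(f_0)$ (the other new edges and faces then being forced by the cocycle relations $dc(V_j)=0$ for $j\in\{0,\infty\}$). So the core of the proof is: (a) define these two new values by the natural path integrals, (b) check they land in the prescribed module $\V s{\fxi;\exc;\aj}(\proj\RR)$, (c) check the singularity condition \tsic and the vanishing condition \tvan, and (d) settle uniqueness.

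\textbf{Key steps.}
First, for the cuspidal edge: I would set $c_u(e_\infty)(t)\coloneqq\int_{e_\infty}\{u,R(t;\cdot)^s\}$, with the integration path running from $iY$ out toward $\infty$. Because $u\in\A_s$ has $s$-analytic boundary behavior only at ordinary points, its behavior at the cusp $\infty$ is controlled by its Fourier expansion \eqref{Feu} (or its $s=\tfrac12$ analogue). The point is that, after subtracting off the $y^{1-s}$ and $y^s$ zeroth-order terms, the remainder decays exponentially, so the improper integral toward $\infty$ converges and yields a function of $t$ that extends holomorphically across $\infty$ in the sense of Section~\ref{sect-psa}, i.e.\ satisfies the condition \texc at $\infty$ and is $\tau_s$-translated to satisfy \texc at every point of $\Gamma\infty$. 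The zeroth-order terms contribute a multiple of the standard cocycle integral for the functions $y^{1-s}$ and $y^s$ against $R(t;\cdot)^s$; this is exactly the computation carried out in \cite[\S\S1,8]{BLZm}, and it produces at worst a simple singularity at $\infty$ (and, in the cuspidal case $\A_s^0$, even a smooth function at $\infty$, since both $A_0$ and $B_0$ vanish). This is where the distinction between the three modules $\V s{\fxi;\exc;\aj}$, $\V s{\fxi;\exc,\smp;\aj}$, $\V s{\fxi;\exc,\infty;\aj}$ enters: a generic funnel form leaves the $y^s$-term present (hence \texc only), a resonant funnel form kills the $I_{s-\frac12}$-terms (hence a simple singularity, \smp), a cuspidal funnel form kills everything (hence smoothness, $\infty$). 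For the funnel edges $f_1$ inside $\uhp$, the integral converges without issue and, using the $s$-analytic boundary behavior near $[1,\lambda-1]\subseteq\Omega$, one sees that $c_u(f_1)$ is real-analytic on an interval avoiding $1$ and $\lambda-1$, producing an analytic jump (the \aj~condition) at the funnel endpoints $1$ and $\lambda-1$; $\Gamma$-equivariance spreads this to all of $\Gamma 1$. For the free edge $f_0$ with both endpoints on $\proj\RR$: here I would deform the integration path toward the segment $(1,\lambda-1)$ of the real axis and use the $s$-analytic boundary behavior near that whole closed interval to show that $c_u(f_0)$, restricted to the complementary interval $(\lambda-1,1)_c$, extends real-analytically across $1$ and $\lambda-1$ and in fact vanishes there --- this is the geometric content of the vanishing condition \tvan, reflecting that no periodic geodesic lands inside the funnel interval and matching the heuristic identities \eqref{vanish1}--\eqref{vanish2}. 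The singularity condition \tsic, $\bsing c_u(e)\subseteq\{t(e),h(e)\}\cap\proj\RR$, then follows for all edges because each $c_u(x)$ is, away from the endpoints of $x$ in $\proj\RR$, given by a convergent integral of a real-analytic integrand, so it is real-analytic there; combined with Proposition~\ref{prop:isoEcohom}/\ref{prop:map_step1} and the characterization in Proposition~\ref{prop-sic} this pins down where singularities can sit. Finally, having defined $c_u$ on the generating edges consistently with all face relations, $\Gamma$-equivariance uniquely extends it to a mixed cocycle.

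\textbf{Uniqueness, and the main obstacle.}
For \eqref{afcoi} and \eqref{afcoii} (the cuspidal and resonant cases) uniqueness comes from the following rigidity: two extensions of $c_u$ differ by a mixed cocycle that vanishes on $F^{\tess,Y}_\bullet$, hence is determined by its values on $\Gamma e_\infty$ and $\Gamma f_0$; such a difference cocycle, being a coboundary on the inner part and having values in a module with the \smp~or $\infty$~condition at $\infty$, is forced to be identically zero --- essentially because an element of $\V s{\fxi;\smp;\aj}$ (resp.\ $\V s{\fxi;\infty;\aj}$) that is $\tau_s(T)$-invariant and has at worst a simple (resp.\ no) singularity at $\infty$ must be $0$ when $\Rea s\in(0,1)$, $s\neq\tfrac12$; this is where the spectral-parameter restriction is used and is the analogue of \cite[Prop.~13.2 ff.]{BLZm}. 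For \eqref{afcoiii} (all funnel forms) this rigidity fails precisely at $\infty$: a $\tau_s(T)$-invariant element of $\V s{\fxi;\exc;\aj}$ may carry a nontrivial $y^s$-type contribution (equivalently, a nonzero $B_0$), so there is a one-parameter family of allowable values for $c_u(e_\infty)$, and any two differ by the coboundary of a $\Gamma$-equivariant function supported at the cusp --- which is exactly the statement that the extension is non-unique but only up to a coboundary. \textbf{The hard part} will be step (b)--(c) at the cusp: rigorously justifying the interchange of the $t$-integration with the (improper) $z$-integration toward $\infty$, controlling the error terms in the Bessel-function asymptotics in \eqref{Feu} uniformly in $t$ on compacta, and extracting from the zeroth-order terms precisely a \texc-, \smp-, or $\infty$-type behavior (including the delicate $s=\tfrac12$ case for \eqref{afcoi}, where the $\ln y$ term appears). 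Everything at the funnel is comparatively soft, since there the relevant integrals converge absolutely and the $s$-analytic boundary behavior is exactly the hypothesis one needs; the genuine analytic work, and the place where \cite{BLZm} must be invoked most carefully, is the cuspidal end.
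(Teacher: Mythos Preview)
Your overall architecture is right---extend $c_u$ to the three missing edge orbits and check the module, \tsic, and \tvan conditions---and your treatment of the funnel side (the edges hitting $\Gamma 1$ and the free side $f_0$) matches the paper's, which uses the integral $I_u(\xi,\eta)(t)=\int_\xi^\eta\{u,R(t;\cdot)^s\}$ together with the explicit evaluation in Lemma~\ref{lem-io} to obtain the analytic-jump condition and the vanishing on $(\lambda-1,1)_c$.

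There is, however, a genuine gap at the cusp in case~\eqref{afcoiii}. You propose $c_u(e_\infty)(t)=\int_{e_\infty}\{u,R(t;\cdot)^s\}$ and claim that ``after subtracting off the $y^{1-s}$ and $y^s$ zeroth-order terms, the remainder decays exponentially.'' That is false for a general funnel form: the Fourier expansion~\eqref{Feu} contains the terms $B_n\,I_{s-1/2}(2\pi|n|y/\lambda)e^{2\pi inx/\lambda}$ for $n\neq0$, which grow exponentially as $y\to\infty$. So the improper integral along $e_\infty$ diverges, and no finite subtraction of zeroth-order terms fixes this. (Even for $u\in\A_s^1$, where the $B_n$ vanish, the remaining $A_0 y^{1-s}$ term makes the direct integral problematic for $\Rea s<\tfrac12$.) The paper sidesteps the issue entirely by \emph{not} integrating along $e_\infty$: it sets
\[
c_u(e_\infty)\;\coloneqq\;-\av{s,T}^+\,c_u(f_\infty)\,,
\]
using the one-sided average of Section~\ref{sect-osav} applied to the already-defined analytic element $c_u(f_\infty)\in\V s\om(\proj\RR)$. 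This converges for $\Rea s>\tfrac12$ and continues meromorphically to $\Rea s\in(0,1)$, $s\neq\tfrac12$; the relation $(1-\tau_s(T^{-1}))\av{s,T}^+=\id$ then forces $dc_u(V_\infty)=0$, and the module conditions \texc, \tsmp, $\infty$ are read off from the asymptotic expansion~\eqref{avasexp} via \cite[Proposition~12.1]{BLZm}. Only for $u\in\A_s^0$ does this coincide with the naive integral (see the Remark after the proof).

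Your uniqueness argument is also incomplete. You correctly observe that a difference cocycle vanishing on $F^{\tess,Y}_\bullet$ must have $c(e_\infty)$ a $\tau_s(T)$-invariant element of the module, forcing $c(e_\infty)=0$ under \tsmp\ or~$\infty$. But that alone does not kill $c(e_1)$ and $c(f_0)$: the face relation $dc(V_0)=0$ is one equation in these two unknowns. The paper's Lemma~\ref{lem:coc_unique} completes the argument at the funnel by first noting that the difference is a coboundary (Lemma~\ref{lem:vanish_cob}), so $c(f_0)=(1-\tau_s(TS))p(1)$ with $p(1)\in\V s\om[1]$; then the vanishing condition forces $p(1)=\tau_s(TS)p(1)$ on $(\lambda-1,1)_c$, and the hyperbolic-rigidity Lemma~\ref{lem:hyp_inv} (a real-analytic $\tau_s(\eta)$-invariant on an interval containing a fixed point of a hyperbolic $\eta$ must vanish when $s\notin-\NN_0$) gives $p(1)=0$, hence $c(f_0)=c(e_1)=0$. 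You should include this step.
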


We prepare the proof of Proposition~\ref{prop-af-co} (and eventually of Proposition~\ref{prop-cohAH}) with a series of lemmas. Lemma~\ref{lem-io} implies that for any funnel form~$u$ the integral in the definition in~\eqref{cu} of the cocycle~$c_u$ remains well-defined if the integration is performed along any element in~$\CC_1[X^\tess_1]$ that stays away from the cusp but may approach the funnel. It will help us to establish the extendability of the cocycles and to understand the structure of their set of singularities at ordinary points. We refer to Section~\ref{sec:def_ff} and in particular to~\eqref{def_ff} for the notion of $s$-analytic boundary behavior and real-analytic cores.

\begin{lem}\label{lem-io}
Let $s\in\CC$, $\Rea s > 0$, and let $J\subseteq \RR$ be an open interval. Suppose that $u\in C^2(\uhp)$
has $s$-analytic boundary behavior near~$J$ and satisfies $\Delta u = s(1-\nobreak s) \, u$.
\begin{enumerate}[{\rm (i)}]
\item\label{ioi} For all~$\xi,\eta\in J\cup\uhp$ and $t\in\RR\smallsetminus\{\xi,\eta\}$ and any path~$p$ in~$\uhp\cup\RR$ from~$\xi$ to~$\eta$ with at most its endpoints in~$\RR$ the integral \index[symbols]{I@$I_u$}
\begin{equation}\label{eq:def_int_trafo}
I_u(\xi,\eta)(t) \coloneqq \int_\xi^\eta \bigl\{u, R(t;\cdot)^s\bigr\}\ceqq \int_{p} \bigl\{u, R(t;\cdot)^s\bigr\}
\end{equation}
converges and its value is independent of the choice of~$p$.
\begin{center}
\includegraphics[width=8cm]{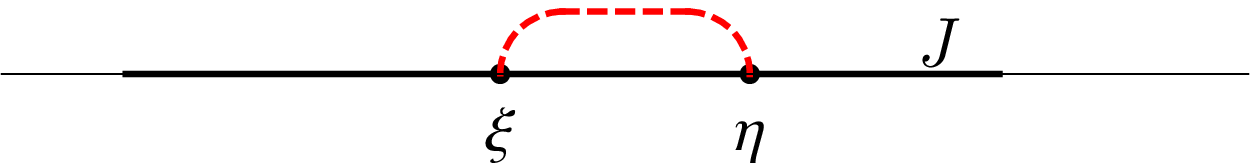}
\end{center}
The map 
\[
 \RR\to \CC,\quad t\mapsto I_u(\xi,\eta)(t)
\]
extends to an element of $\V s \om[\xi,\eta]$.
\item\label{ioii} Let $\xi,\eta\in J$ with $\xi<\eta$ and suppose that $A$ denotes a real-analytic core of~$u$ near~$J$. Then  
\begin{equation} \label{eq:Iu}
I_u(\xi,\eta)(t) \ceqq  \begin{cases}
0 &\text{ if } t\in (\eta,\xi)_c\,,\\
2\sqrt{\pi}\, \dfrac{\Gf(s+1/2)}{\Gf(s)}\, A(t)
&\text{ if } t\in (\xi,\eta)_c\,.
\end{cases}
\end{equation}
\item\label{ioiii} If $\xi\in J\cap\Gamma 1$ and $z\in\uhp$, then $I_u(\xi,z) \in \V s {\om;-;\aj}[\xi]$.
\item\label{ioiv} If $\xi,\eta\in J\cap\Gamma 1$, then $I_u(\xi,\eta)\in \V s {\om;-;\aj}[\xi,\eta]$.
\end{enumerate}
\end{lem}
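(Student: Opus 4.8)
\textbf{Plan of proof for Lemma~\ref{lem-io}.}

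The plan is to establish the four assertions in order, reducing each to facts about the Green's form, the Poisson-like kernel, and the $s$-analytic boundary behaviour. For part~\eqref{ioi}, the first observation is that $\{u, R(t;\cdot)^s\}$ is a closed $1$-form on $\uhp$ for every fixed $t$, since $u$ and $R(t;\cdot)^s$ are $\Delta$-eigenfunctions with the same eigenvalue $s(1-s)$; this already gives path-independence for paths lying entirely in $\uhp$. The remaining issue is convergence when an endpoint $\xi$ (or $\eta$) lies on $J$. Here I would use the $s$-analytic boundary behaviour: write $u(z) = y^s A(z)$ for a real-analytic core $A$ near $J$, and similarly expand $R(t;\cdot)^s = \Ima(1/(t-z))^s \sim (\text{const})\, y^s$ as $y\downarrow 0$ (for $t$ away from the endpoint). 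Substituting these into the coordinate expression~\eqref{Gfacc2} for $\{u,v\}$, the apparent singularities of type $y^{-1}$ coming from $\partial_y(y^s)$ cancel against the factor $y^{s}$ in the other function up to an integrable remainder, because $\Rea s > 0$ makes $y^{2s-1}$ integrable near $y=0$. Thus the integral along a path approaching $J$ transversally converges, and a standard limiting argument (approximating the path by paths in $\uhp$ and using Stokes/closedness) gives independence of the path. Real-analyticity of $t\mapsto I_u(\xi,\eta)(t)$ on $\RR\smallsetminus\{\xi,\eta\}$, hence membership in $\V s \om[\xi,\eta]$, follows since $R(\cdot;z)^s\in \V s \om(\proj\RR)$ depends real-analytically on $z$ and the integration is over a compact path; the behaviour at $\infty$ for the spectral parameter $s$ is inherited from $R(\cdot;z)^s$.

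For part~\eqref{ioii}, the plan is to deform the path of integration from $\xi$ to $\eta$ down to the boundary interval $[\xi,\eta]\subseteq J\subseteq\RR$, which is legitimate by part~\eqref{ioi}. On that boundary interval, $u = y^s A$ with $A$ real-analytic, so the Green's form $\{u, R(t;\cdot)^s\}$ restricted to the real axis can be computed explicitly. The two cases $t\in(\eta,\xi)_c$ and $t\in(\xi,\eta)_c$ correspond to whether $t$ lies outside or inside the integration interval; the kernel $R(t;\cdot)^s$ is regular at boundary points $\neq t$, vanishes suitably when $t$ is on the far side, and produces a delta-like contribution (evaluated via the explicit constant $2\sqrt\pi\,\Gamma(s+1/2)/\Gamma(s)$) when $t$ is inside. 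This is exactly the computation underlying the integral-transform interpretation in~\cite{BLZm}, so I would cite the relevant formula there (around~\cite[(1.6), (1.7)]{BLZm}) and adapt the constant; the normalization factor $2i$ noted in Remark~\ref{rmk:transition} has to be tracked carefully. The assumption that one may take $U\supseteq\uhp$ in the definition of $s$-analytic boundary behaviour, remarked upon after~\eqref{def_ff}, is what allows the free deformation of the path here.

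For parts~\eqref{ioiii} and~\eqref{ioiv}, the key point is the analytic jump at a point $\xi\in\Gamma 1$ (an ordinary point). Using $\Gamma$-equivariance of $R(\cdot;\cdot)^s$ and of $\{\cdot,\cdot\}$, I may reduce to $\xi = 1$. Near $1$, the set $\Omega$ of ordinary points looks, on one side, like part of the funnel interval $(\fixTS^-,\fixTS^+)$, and on the other side one crosses into the limit set. The claim is that $I_u(\xi,z)$ is real-analytic separately on a left interval $(\alpha,1)_c$ and a right interval $(1,\beta)_c$ with each piece extending real-analytically across $1$, i.e.\ it has an analytic jump in the sense of Definition~\ref{def:conditions}(iii). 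To see the two one-sided analytic continuations, I would again deform the path: approaching $t\to 1$ from inside $(\xi,\eta)_c$ versus from outside gives two a priori different real-analytic functions by part~\eqref{ioii}, each of which extends real-analytically past $1$ because $A$ does (for the "inside" branch) and because the integrand is regular at $1$ when $t$ is on the "outside" branch. Their difference at $1$ is governed by the jump in formula~\eqref{eq:Iu}, which is real-analytic, so both branches individually continue real-analytically. This yields $I_u(\xi,z)\in\V s {\om;-;\aj}[\xi]$ for~\eqref{ioiii}; then~\eqref{ioiv} follows by writing $I_u(\xi,\eta) = I_u(\xi,z) - I_u(\eta,z)$ for any $z\in\uhp$ via the cocycle relation and noting that the singularities are then contained in $\{\xi,\eta\}$ with an analytic jump at each. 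I expect the main obstacle to be the careful bookkeeping in part~\eqref{ioiii}: one must show the analytic jump structure precisely (rather than merely a weaker semi-analytic singularity) and control the continuation across $\xi=1$ from the funnel side, where the core $A$ of $u$ is real-analytic but the relation between $\Omega$ and the geometry near $1$ needs to be invoked explicitly; this is the step where the specific structure of Hecke triangle groups (the fixed points $\fixTS^\pm$ of $TS$ and the connectedness of $[1,\lambda-1]$) enters most directly.
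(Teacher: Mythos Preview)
Your plan for parts~\eqref{ioi} and~\eqref{ioii} is essentially the paper's approach: closedness of the Green's form gives path-independence, the $s$-analytic boundary behaviour controls the endpoint contributions, and for~\eqref{ioii} one deforms the integration to the real segment $[\xi,\eta]$. The paper carries out~\eqref{ioii} by an explicit semicircle computation around $t$ (parametrising $p_{t,\delta}(\theta)=t+\delta e^{i\theta}$ and letting $\delta\searrow 0$), rather than citing a formula from \cite{BLZm}; this yields the constant $2\sqrt\pi\,\Gamma(s+\tfrac12)/\Gamma(s)$ directly from $\int_0^\pi(\sin\theta)^{2s}\,d\theta$, so you need not worry about tracking the $2i$ normalisation.

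For parts~\eqref{ioiii} and~\eqref{ioiv} you have the logical order inverted, and this is what creates the difficulty you anticipate. The paper proves~\eqref{ioiv} first, and it is immediate from the explicit formula~\eqref{eq:Iu}: on $(\eta,\xi)_c$ the function is identically $0$, on $(\xi,\eta)$ it is a constant multiple of the real-analytic core $A(t)$, and since $A$ is real-analytic on the whole open interval $J\ni\xi,\eta$, both one-sided pieces extend real-analytically across $\xi$ and across $\eta$. That is the definition of an analytic jump; nothing about $\Gamma$, the fixed points $\fixTS^\pm$, or the limit set enters. Then~\eqref{ioiii} follows by choosing any auxiliary $\eta\in J$, $\eta\neq\xi$, and writing
\[
I_u(\xi,z)\;=\;I_u(\xi,\eta)+I_u(\eta,z)\,,
\]
where the first summand has an analytic jump at $\xi$ by~\eqref{ioiv} and the second is real-analytic at $\xi$ by~\eqref{ioi} (since $\xi\notin\{\eta,z\}$).

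Your direct attack on~\eqref{ioiii} runs into trouble because you try to invoke~\eqref{ioii} for $I_u(\xi,z)$ with $z\in\uhp$, where~\eqref{ioii} does not apply (it needs both endpoints in $J$), and you compensate by bringing in group-theoretic structure that the statement does not require. In particular, your remark that near $1$ ``on the other side one crosses into the limit set'' is incorrect: $1$ lies in the interior of the funnel interval $(\fixTS^-,\fixTS^+)$, so both sides are ordinary. The hypothesis $\xi\in\Gamma\,1$ in~\eqref{ioiii}--\eqref{ioiv} is there only so that the resulting singularity is of the type recorded by the module $\V s{\om;-;\aj}$ (which imposes conditions at $\Gamma\,1$); the analytic-jump property itself comes purely from $\xi\in J$.
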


\begin{proof}
Let $\xi,\eta\in J\cup\uhp$ and suppose that $p\colon [0,1]\to \uhp\cup\RR$ is a path from~$\xi$ to~$\eta$ as in~\eqref{ioi}. For the proof of~\eqref{ioi} and~\eqref{ioii} we evaluate the one-form in the integrand. To that end let $A\colon U\to \CC$ be a real-analytic core of~$u$ near~$J$. We may and shall assume that $\uhp\subseteq U$. Thus, 
\[
u(z) = y^s A(z)
\]
for all~$z\in \uhp$. On~$\uhp$ and for all~$t\in\RR$ we have
\begin{equation}\label{acc-expr} 
\begin{aligned} 
\bigl\{ u, R(t;\cdot)^s \bigr\} &= \frac{i\, y^{2s}}{(t-z)^s (t-\bar z)^s}\, \biggl( A_z\,dz - A_{\bar z}\, d\bar z + s A\Bigl(\frac{ d\bar z}{t-\bar z} - \frac{dz}{t-z} \Bigr)\biggr)
\\
&= \frac{y^{2s}}{|t-z|^{2s}}\, \biggl( \Bigl( A_y + 2s \frac y{|t-z|^2} A\Bigr)\, dx 
\\
 & \hphantom{\frac{y^{2s}}{|t-z|^{2s}}\, A_y + 2s {|t-z|}} + \Bigl( - A_x + 2s \frac{t-x}{|t-z|^2}A \Bigr)\, dy \biggr)\,. 
\end{aligned}
\end{equation}
Let $t\in\RR\smallsetminus\{\xi,\eta\}$ (as in~\eqref{ioi}). Since the path~$p$ from~$\xi$ to~$\eta$ is bounded away from~$t$, the value of~$|t-z|$ is bounded away from~$0$ on the path~$p$. From this, the requirement that $\Rea s > 0$, and the fact that the image of the path~$p$ is contained in $U$, it follows that the integral
\[
 \int_{p} \bigl\{ u, R(t;\cdot)^s\bigr\}
\]
converges, which is just the latter integral in~\eqref{eq:def_int_trafo}.

Since the one-form~$\{u,R(t;\cdot)^s\}$ is closed, the value of the integral in~\eqref{eq:def_int_trafo} neither depends on the choice of the path~$p$ nor on the choice of the real-analytic core~$A$. 

Further, $I_u(\xi,\eta)$ is real-analytic on~$\RR\smallsetminus\{\xi,\eta\}$ as a parameter integral with real-analytic integrand. By definition, $I_u(\xi,\eta)$ extends to an element of~$\V s \om[\xi,\eta]$ if $\tau_s(S)I_u(\xi,\eta)$ extends real-analytically to~$0$. For $t\in\RR$, $t\not=0$, we have
\begin{align*}
 \tau_s(S) I_u(\xi,\eta)(t) & = \int_\xi^\eta |t|^{-2s} \bigl\{ u, R\big(-\tfrac1t;\cdot\big)^s \bigr\}\,.
\end{align*}
A straightforward calculation shows that the integrand extends real-analytically to~$t=0$, and the argumentation as above yields the convergence of the integral for~$t=0$ as well. This completes the proof of~\eqref{ioi}.

For establishing~\eqref{ioii} we first suppose that $t\in \RR\smallsetminus [\xi,\eta]$. Then we can deform the path~$p$ in the integral~\eqref{eq:def_int_trafo} to one which has the interval~$[\xi,\eta]$ as image, which is completely contained in~$\uhp$. Then~\eqref{acc-expr} shows that the integrand is zero, and hence $I_u(\xi,\eta)(t)=0$. Now real-analyticity yields that $I_u(\xi,\eta)(t)=0$ also for~$t=\infty\in (\eta,\xi)_c$.

Suppose now that $t\in (\xi,\eta)$.  Then we deform the path~$p$ to one as indicated in the following figure:
\begin{center}
\includegraphics[width=8cm]{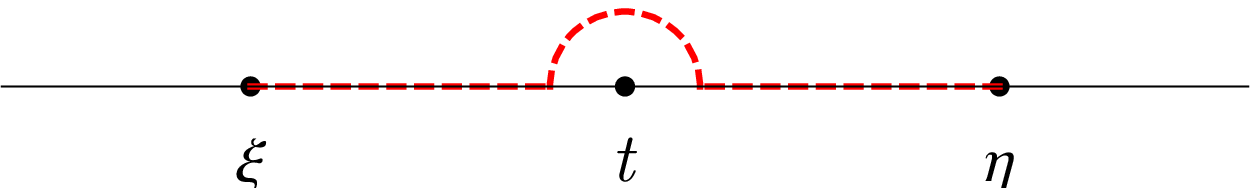}
\end{center}
It splits into the three subpaths 
\begin{align*}
 p_{-,\delta} & \colon (\xi, t-\delta) \to (\xi, t-\delta)\,,&\quad x&\mapsto x\,,
 \\
 p_{t,\delta} & \colon (0,\pi) \to \uhp\cup\RR\,,&\quad \theta &\mapsto t + \delta e^{i\theta}\,,
 \\
 p_{+,\delta} & \colon (t+\delta,\eta) \to (t+\delta,\eta)\,,&\quad x&\mapsto x\,,
\end{align*}
for some (small)~$\delta > 0$ (of which we will take the limit~$\delta\searrow 0$ further below). From~\eqref{acc-expr} we see that the integrand of~\eqref{eq:def_int_trafo} vanishes along~$p_{-,\delta}$ and~$p_{+,\delta}$. Along~$p_{t,\delta}$ we find 
\begin{align*}
\int_{p_{t,\delta}} \bigl\{ u, R(t;\cdot)^s \bigr\} 
& = \int_{\theta=0}^\pi \delta \big(\sin\theta\big)^{2s} \Big[ A_z\big(p_{t,\delta}(\theta)\big) e^{i\theta} + A_{\bar z}\big(p_{t,\delta}(\theta)\big) e^{-i\theta} \Big] \\
&\qquad\hbox{}
+ 2s \big(\sin\theta\big)^{2s} A\big(p_{t,\delta}(\theta)\big) \, d\theta\,.
\end{align*}
Since each term of the integrand is bounded and the real-analytic core~$A$ is continuous, we find
\begin{align*}
 I_u(\xi,\eta)(t) & = \lim_{\delta\searrow0} \int_{p_{t,\delta}}  \bigl\{ u, R(t;\cdot)^s \bigr\} 
 \\
 & = 2s A(t) \int_0^\pi \big(\sin\theta\big)^{2s}\, d\theta = 2 \sqrt{\pi} \frac{\Gamma\big(s+\tfrac12\big)}{\Gamma(s)} A(t)\,.
\end{align*}
This completes the proof of~\eqref{ioii}.

For~\eqref{ioiv} we note that since $A$ is real-analytic on~$J$, and $(\xi,\eta)_c\subseteq J$, the expression in~\eqref{eq:Iu} shows that $I_u(\xi,\eta)$ has analytic jumps at~$\xi$ and~$\eta$. For~\eqref{ioiii} we fix $\eta\in J$, $\eta\not=\xi$. Then 
\[
 I_u(\xi,z) = I_u(\xi,\eta) + I_u(\eta,z)\,.
\]
By~\eqref{ioiv}, $I_u(\xi,\eta)$ has an analytic jump at~$\xi$, and by~\eqref{ioi}, $I_u(\eta,z)$ is real-analytic at~$\xi$. In turn, $I_u(\xi,z)$ has an analytic jump at~$\xi$.
\end{proof}

Let $s\in\CC$, $\Rea s > 0$, and $u\in \mc E_s$. Then the integral  \index[symbols]{I@$I_u$}
\begin{equation}\label{eq:extended}
 I_u(\xi,\eta)(t) \coloneqq  \int_\xi^\eta \big\{ u, R(t;\cdot)^s\big\} = \int_p\big\{ u, R(t;\cdot)^s\big\}
\end{equation}
in~\eqref{eq:def_int_trafo} is also well-defined for any pair~$(\xi,\eta)\in \big(\Omega\cup\uhp\big)^2$. (Recall from Section~\ref{sec:ordinary} that $\Omega$ denotes the ordinary set of $\Gm$, i.\,e., $\Omega=\proj\RR\smallsetminus\Lambda(\Gm)$.) In Lemma~\ref{lem-io} we considered only the case that if $\xi,\eta$ are in~$\Omega$, then they are contained in a \emph{connected} subset of~$\proj\RR$ near which $u$ has $s$-analytic boundary behavior. However, by splitting the integral into several paths, one easily sees that the convergence and well-definedness of the integrals in~\eqref{eq:extended} are valid in this larger generality.

\begin{lem}\label{lem:int_trans}
Let $s\in\CC$, $\Rea s>0$, and $u\in\mc E_s$. For all~$\xi,\eta\in\Omega\cup\uhp$ and all~$g\in\Gamma$ we have
 \[
 \tau_s(g)I_u(\xi,\eta) = I_u(g\xi,g\eta)\,.
\]
\end{lem}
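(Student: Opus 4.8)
The plan is to reduce the statement to the two key equivariance properties already recorded in Section~\ref{sect-coaief}, namely the $\PSL_2(\RR)$-equivariance of the Green's form,
\[
 \{u\circ g, v\circ g\} = \{u,v\}\circ g \qquad (g\in\PSL_2(\RR))\,,
\]
and the equivariance of the Poisson-like kernel, $\tau_s(g)\bigl(R(\cdot;z)^s\bigr)(t) = R(t;gz)^s$. First I would fix $g\in\Gamma$, $\xi,\eta\in\Omega\cup\uhp$ and a point $t\in\RR$ with $t\notin\{\xi,\eta,g\xi,g\eta\}$ (the excluded set is finite, so this suffices to pin down the real-analytic functions involved by the identity theorem, invoking Lemma~\ref{lem-io}(i) for the extension to all of $\V s \om[\cdot,\cdot]$). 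Choosing a path $p$ from $\xi$ to $\eta$ in $\uhp\cup\RR$ with at most its endpoints on $\RR$ — possible by the discussion after Lemma~\ref{lem-io}, splitting into finitely many subpaths if $\xi$ or $\eta$ lies in $\Omega$ — the path $g\circ p$ runs from $g\xi$ to $g\eta$ and has the analogous property, since $g$ preserves $\uhp$, $\proj\RR$ and maps $\Omega$ into $\Omega$. Then by the substitution $z\mapsto gz$ in the line integral (all one-forms here are $C^\infty$ and the integrals converge by Lemma~\ref{lem-io}(i)),
\begin{align*}
 I_u(g\xi,g\eta)(t) &= \int_{g\circ p}\bigl\{u,R(t;\cdot)^s\bigr\}
  = \int_{p}\bigl\{u\circ g,\ R(t;g\cdot)^s\bigr\}\,.
\end{align*}

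Next I would rewrite the integrand. Since $u\in\E_s^\Gamma$ is $\Gamma$-invariant we have $u\circ g = u$. For the kernel, the equivariance $R(t;gz)^s = \tau_s(g)\bigl(R(\cdot;z)^s\bigr)(t) = |ct+d|^{-2s}R\bigl(\tfrac{at+b}{ct+d};z\bigr)^s$, where $g=\bmat abcd$, shows that $z\mapsto R(t;gz)^s$ equals $|ct+d|^{-2s}$ times $z\mapsto R(g^{-1}t;z)^s$ — wait, more carefully: the relation reads $R(t;gz)^s$ as a function of $z$ equals the $\tau_s(g)$-transform of $z\mapsto R(\cdot;z)^s$ evaluated at $t$, i.e.\ it is the fixed scalar $|ct+d|^{-2s}$ times $R\bigl(g\cdot t;z\bigr)^s$ where I write $g\cdot t$ for the fractional linear action on $t\in\RR$. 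Plugging this in and pulling the $z$-independent scalar out of the line integral gives
\[
 I_u(g\xi,g\eta)(t) = |ct+d|^{-2s}\int_{p}\bigl\{u,\ R(g\cdot t;\cdot)^s\bigr\}
  = |ct+d|^{-2s}\,I_u(\xi,\eta)(g\cdot t) = \bigl(\tau_s(g)I_u(\xi,\eta)\bigr)(t)\,,
\]
which is exactly the claimed identity at the point $t$.

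Finally I would upgrade this pointwise identity to an identity of elements of the appropriate space of (semi-)analytic functions. Both sides, as functions of $t\in\RR$, are real-analytic away from the finite sets $\{g\xi,g\eta\}$ and $\{\xi,\eta\}$ respectively (by Lemma~\ref{lem-io}(i), together with the fact that $\tau_s(g)$ preserves real-analyticity, cf.\ Section~\ref{sec:regularity}); they agree on the cofinite set where all four points are avoided, hence on their common domain of real-analyticity, hence as elements of $\V s \om[g\xi,g\eta] = \V s \om[\xi,\eta]\cdot\text{(via }\tau_s(g)\text{)}$ after taking the canonical extensions. The only point needing a word of care is the behaviour at $\infty$: if $g\cdot t$ or $t$ passes through $\infty$ one uses Lemma~\ref{lem-io}(i), which already asserts the extension to $\V s \om[\xi,\eta]$ including real-analyticity at $\infty$ in the sense of Section~\ref{sect-psa}, and the fact that $\tau_s(g)$ respects this regularity at $\infty$. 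I do not expect a genuine obstacle here; the main (mild) bookkeeping point is simply to justify the change of variables in the line integral and to handle the finitely many excluded values of $t$ cleanly, both of which are routine given Lemma~\ref{lem-io} and the two equivariance properties quoted above.
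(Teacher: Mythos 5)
Your proposal is correct and, although written out in detail, amounts to exactly the same approach as the paper: the paper's proof is a one-line citation to \cite[(1.10), (2.25)]{BLZm}, which are precisely the $\PSL_2(\RR)$-equivariance of the Green's form and of the Poisson-like kernel that you combine with a change of variables in the line integral. (The one spot where you hesitate, the explicit $|ct+d|^{-2s}$ factor and whether the matrix entries should come from $g$ or $g^{-1}$, is notationally sloppy relative to the convention in~\eqref{taudef}, but since you apply the same interpretation consistently in extracting the scalar from the integral and then reabsorbing it into $\tau_s(g)I_u(\xi,\eta)$, the argument is valid as it stands.)
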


\begin{proof}
Let $g\in\Gamma$ and $\xi,\eta\in\Omega\cup\uhp$.  By~\cite[(1.10), (2.25)]{BLZm} we have 
\[
 \tau_s(g)\int_\xi^\eta \big\{ u, R(t;\cdot)^s\big\} = \int_{g\xi}^{g\eta} \big\{ u, R(t;\cdot)^s\big\}\,.
\]
\end{proof}

The following three lemmas will be needed for the discussion of the relation between different extensions of the cocycles and, in case of resonant funnel forms, for showing the uniqueness of these extensions. The first lemma is a generalization of~\cite[Proposition~4.1]{BLZm}.

\begin{lem}\label{lem:hyp_inv}
Let $s\in\CC\smallsetminus(-\NN_0)$. Let $\eta\in\Gamma$ be hyperbolic and let $I\subseteq \proj\RR$ be an open interval that contains at least one fixed point of~$\eta$. Suppose that $f\in\V s \om(I)$ satisfies
\[
 \tau_s(\eta)f= f \quad\text{on\ \ $I\cap\eta I$\,.}
\]
Then $f=0$.
\end{lem}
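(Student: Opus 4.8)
We want to show that an eigenfunction $f$ in the real-analytic principal series on an interval $I$ containing a fixed point of a hyperbolic element $\eta$, which is invariant under $\tau_s(\eta)$ on the overlap $I\cap\eta I$, must vanish. The key geometric fact is that a hyperbolic element $\eta\in\PSL_2(\RR)$ is conjugate to a dilation $z\mapsto \mu z$ with $\mu>1$ (say $\mu>1$ after possibly replacing $\eta$ by $\eta^{-1}$), with attracting fixed point $\xi^+$ and repelling fixed point $\xi^-$. On the side of $\xi^+$, iterating $\eta$ contracts towards $\xi^+$; on the side of $\xi^-$, iterating $\eta^{-1}$ contracts towards $\xi^-$. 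The invariance relation then lets us bootstrap the value of $f$ near a fixed point using the behavior of $f$ and of the automorphy factor under iteration. Since $\Rea s$ controls the automorphy factor $|ct+d|^{-2s}$, and $s\notin -\NN_0$ guarantees the relevant gamma-type constant does not vanish, a Taylor-expansion argument at the fixed point forces all Taylor coefficients of $f$ (suitably normalized) to be zero.

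\emph{First}, I would reduce to a normal form: conjugating by a suitable $g\in\PGL_2(\RR)$ (using that $\tau_s$ extends to $\PGL_2(\RR)$ and preserves real-analyticity, as recalled in Section~\ref{sec:regularity}), we may assume $\eta$ is the dilation $z\mapsto\mu^2 z$ for some $\mu>1$, with fixed points $0$ and $\infty$. Without loss of generality $I$ contains $0$ (the case $\infty\in I$ being handled by the chart $(\RR,S)$ and the same argument, or by replacing $\eta$ with $\eta^{-1}$). In the line model the invariance $\tau_s(\eta)f=f$ becomes $\mu^{-2s} f(\mu^{-2} t) = f(t)$ for $t$ in $I\cap\eta I$, i.e. for $t$ in a punctured neighborhood of $0$ intersected with its $\eta$-image; since $0\in I$ and $\eta$ fixes $0$, this holds on a genuine (two-sided, or at least one-sided) neighborhood of $0$. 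Iterating: $f(t) = \mu^{-2ns} f(\mu^{-2n} t)$ for all $n\geq 1$ and all $t$ in a fixed neighborhood $U$ of $0$.

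\emph{Second}, I would expand $f$ in its convergent Taylor series $f(t)=\sum_{k\geq 0} a_k t^k$ on $U$ (using real-analyticity of $f$), plug into $f(t)=\mu^{-2s}f(\mu^{-2}t)$, and compare coefficients: this gives $a_k = \mu^{-2s-2k} a_k$ for every $k\geq 0$, i.e. $a_k(1-\mu^{-2(s+k)})=0$. Since $\mu>1$ and $\Rea s>0$ would already force $|\mu^{-2(s+k)}|<1$ for all $k\geq 0$; but in fact we only need $\mu^{-2(s+k)}\neq 1$, which holds because $\mu>1$ forces $\Rea(s+k)$ to be nonzero unless $s=-k$ — and $s\notin -\NN_0$ excludes exactly this. (If $\Rea s$ could be zero, one still has $\mu^{2(s+k)}=1$ only if $2(s+k)\log\mu\in 2\pi i\ZZ$, which for a \emph{specific} hyperbolic $\eta$ could conceivably happen; but the hypothesis is stated with $s\notin-\NN_0$ and this is the generality I would prove, noting that in the application $\Rea s>0$ anyway.) Hence $a_k=0$ for all $k$, so $f\equiv 0$ on $U$, and then $f\equiv 0$ on all of $I$ by the identity theorem for real-analytic functions (connectedness of the interval $I$).

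\emph{The main obstacle.} The genuinely delicate point is the bookkeeping of \emph{where} the relation $\tau_s(\eta)f=f$ is actually valid: it is assumed only on $I\cap\eta I$, which a priori need not be a full neighborhood of the fixed point. I would handle this by observing that, at the \emph{attracting} fixed point $\xi^+$, one has $\eta^n I\cap I$ eventually containing a neighborhood of $\xi^+$ for large $n$ (since $\eta$ contracts towards $\xi^+$), and one can propagate the invariance via the cocycle identity $\tau_s(\eta^n)=\tau_s(\eta)^n$ to get $\tau_s(\eta^n)f=f$ on an interval shrinking down to $\xi^+$; conversely at the repelling fixed point one uses $\eta^{-1}$. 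The other minor subtlety is checking the $\PGL_2$-conjugation does not introduce sign issues in the automorphy factor — here I would invoke the remark at the end of Section~\ref{sec:regularity} that all definitions extend without change to $\PGL_2(\RR)$. Everything else is a routine Taylor-coefficient comparison.
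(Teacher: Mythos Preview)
Your approach is essentially the same as the paper's: conjugate $\eta$ to a diagonal dilation, Taylor-expand $f$ at the fixed point, and compare coefficients to force all of them to vanish (the paper phrases it contrapositively but the content is identical). One remark: the ``main obstacle'' you flag is not an obstacle at all --- since the fixed point $\xi$ lies in the open set $I$ and $\eta\xi=\xi$, the point $\xi$ also lies in the open set $\eta I$, so $I\cap\eta I$ is automatically an open neighborhood of $\xi$; no iteration or propagation is needed, and the paper accordingly does not discuss it. Your parenthetical observation that $\mu^{-2(s+k)}=1$ could in principle occur for purely imaginary $s+k$ is sharper than the paper's phrasing, but as you note this never arises in the applications where $\Rea s\in(0,1)$.
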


\begin{proof}
We proceed by contraposition. To that end let $s\in\CC$, and suppose that $f\in \V s \om(I)$ satisfies all hypothesis and does not vanish identically. We need to show that $s\in -\NN_0$. Without loss of generality we may assume that 
\[
 \eta = \bmat{r^{\frac12}}{0}{0}{r^{-\frac12}}
\]
with~$r>0$. The fixed points of~$\eta$ are~$0$ and~$\infty$. 

Suppose first that~$0\in I$. By real-analyticity, 
\[
 f(t) = \sum_{n\in\NN_0} c_n t^n
\]
in a neighborhood of~$0$ in~$\RR$, for suitable~$c_n\in\CC$, $n\in\NN_0$. Thus, in a (possibly smaller) neighborhood of~$0$ we have
\[
 f(t) = \tau_s(\eta)f(t) = \sum_{n\in\NN_0} c_n r^{s+n} t^n\,.
\]
Since $f\not=0$, there exists~$n\in\NN_0$ such that $s+n=0$, and hence $s\in -\NN_0$. 

Suppose now that $\infty\in I$. Then 
\[
 h\coloneqq  \tau_s(S)f
\]
is real-analytic in a neighborhood of~$0$ in~$\RR$, and is $\tau_s(S\eta S)$-invariant near~$0$. Thus, near~$0$, we have
\[
 h(t) = \sum_{n\in\NN_0} c_n t^n
\]
for suitable~$c_n\in\CC$, $n\in\NN_0$, and hence
\[
 h(t) = \tau_s(S\eta S)h(t) =  \sum_{n\in\NN_0} c_n r^{-s-n}t^n\,.
\]
From $h\not=0$ it follows that $s\in -\NN_0$. This completes the proof.
\end{proof}

\begin{lem}\label{lem:vanish_cob}
Let $s\in\CC$, and let $V,W$ be $\Gamma$-modules. Suppose that the cocycle $c\in Z^1(F^\tess_\bullet; V, W)$ vanishes on $F^{\tess,Y}_\bullet$ (i.\,e., $c=0$ on~$F^{\tess,Y}_\bullet$). Then $c\in B^1(F^\tess_\bullet; V, W)$.
\end{lem}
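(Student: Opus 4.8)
The plan is to exploit the structure of the tesselation resolution $F^\tess_\bullet$ together with its subresolution $F^{\tess,Y}_\bullet$, and to build a $\Gamma$-equivariant potential $f\colon X^\tess_0\to W$ for $c$ by first defining it on the inner vertices $X^{\tess,Y}_0 = \Gamma\{i,iY\}$ and then extending it to the cuspidal and funnel vertices $\Gamma\infty$ and $\Gamma 1$ using the values of $c$ on the connecting edges $\Gamma e_\infty$ and $\Gamma e_1$. Recall that a cocycle $c\in Z^1(F^\tess_\bullet;V,W)$ may be regarded as a map $c\colon X^\tess_1\to W$ which is $\Gamma$-equivariant, sends edges of $X^{\tess,Y}_1$ into $V$, and satisfies $dc(V_j)=0$ for $j\in\{0,1,\infty\}$; a coboundary is one of the form $c(e)=f(t(e))-f(h(e))$ for a $\Gamma$-equivariant $f\colon X^\tess_0\to W$ with $f(X^{\tess,Y}_0)\subseteq V$.

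First I would use the hypothesis that $c$ vanishes on $F^{\tess,Y}_\bullet$. Since $F^{\tess,Y}_\bullet$ is a projective resolution of $\ZZ$ over $\CC[\Gamma]$ (as noted in Section~\ref{sec:cohomtess}, this is what makes Lemma~\ref{lem:iso_cohom} work), the restriction of $c$ to $F^{\tess,Y}_\bullet$ being zero means in particular that $c\vert_{F^{\tess,Y}_\bullet}$ is a coboundary there, but more is true: it is literally the zero cocycle. I want to produce a $\Gamma$-equivariant potential. The natural choice on the inner vertices is $f(i)\coloneqq 0$ and $f(iY)\coloneqq 0$; since $c$ vanishes on $\Gamma e_1, \Gamma e_2, \Gamma f_1, \Gamma f_\infty$ and $c(e)=f(t(e))-f(h(e))$ must hold, consistency on $X^{\tess,Y}_1$ forces (and is satisfied by) the choice $f\equiv 0$ on $X^{\tess,Y}_0$, because $c=0$ there. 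The $\Gamma$-equivariance on $X^{\tess,Y}_0$ then holds trivially, and $f(X^{\tess,Y}_0)=\{0\}\subseteq V$.

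Next I would extend $f$ to $\Gamma\infty$ and $\Gamma 1$. By $\Gamma$-equivariance it suffices to define $f(\infty)$ and $f(1)$; the only constraint is well-definedness under the stabilizers, i.e., $f(\infty)$ must satisfy $f(\infty)=f(\infty)\vert g + \psi_g$ where $\psi$ is the group cocycle attached to the would-be potential, but since $\Gamma_1=\{\id\}$ there is no constraint at $1$, and at $\infty$ the stabilizer is $\langle T\rangle$. The edge $e_\infty$ runs from a vertex of $X^{\tess,Y}_0$ (namely $iY$) to $\infty$; setting $f(\infty)\coloneqq f(iY) - c(e_\infty) = -c(e_\infty)$ makes the coboundary relation hold on $e_\infty$. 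I then need to check the relation $dc(V_\infty)=0$ is compatible with this choice together with $\Gamma$-equivariance extended to the translates $T^n e_\infty$ — this is where one verifies that the value $f(\infty)$ is consistent with the parabolic relation, using precisely that $c(V_\infty)$-boundary vanishes and that $c$ is $\Gamma$-equivariant. Similarly $f_0$ connects the funnel vertex $1$ to an inner vertex, and the edge $e_1$ connects $i$ (or the appropriate inner vertex) to $1$; I set $f(1)\coloneqq -c(e_1)$ (with appropriate orientation) and check compatibility with $dc(V_0)=0$. The main obstacle I expect is bookkeeping: verifying that the piecewise definition of $f$ on the three orbit-types $\Gamma\{i,iY\}$, $\Gamma\infty$, $\Gamma 1$ glues into a single well-defined $\Gamma$-equivariant map such that the $\Gamma$-translates of the relations $c(e)=f(t(e))-f(h(e))$ for $e\in\{e_1,e_2,e_\infty,f_0,f_1,f_\infty\}$ all hold simultaneously, and that $f(X^{\tess,Y}_0)\subseteq V$. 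The first two conditions reduce, via the face relations $dc(V_j)=0$, to finitely many identities among the generators, and the third is immediate from $f\equiv 0$ on $X^{\tess,Y}_0$. Once $f$ is exhibited, $c=df$ shows $c\in B^1(F^\tess_\bullet;V,W)$, completing the proof.
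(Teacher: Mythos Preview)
Your approach is correct and produces exactly the $\Gamma$-equivariant potential the paper uses, but your execution is considerably more laborious than necessary. The paper's proof is three lines: fix $x_0\in X^{\tess,Y}_0$, take the potential $q(x)\coloneqq c(e_{x,x_0})$, and compute the associated group cocycle $\psi_{g^{-1}} = q(gx_0) - q(x_0)\vert g$. Since $x_0,gx_0\in X^{\tess,Y}_0$, the path $e_{gx_0,x_0}$ can be taken entirely in $X^{\tess,Y}_1$, where $c$ vanishes; hence $\psi=0$, which means $q$ is automatically $\Gamma$-equivariant, and $c=dq$ is a coboundary. The condition $q(X^{\tess,Y}_0)\subseteq V$ is immediate for the same reason (those values are all zero).

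This bypasses every compatibility check you describe: the face relations $dc(V_j)=0$, the stabilizer conditions at $\infty$, and the verification on each generating edge are all absorbed into the single observation that $\psi$ vanishes. Your explicit construction is of course the same potential (your $f$ agrees with $q$ on orbit representatives), and your checks would go through, but the group-cocycle formalism from Section~\ref{sec:potentials} does the bookkeeping for you.
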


\begin{proof}
We fix $x_0\in X_0^{\tess,Y}$ and consider the potential
\[
 q\colon X_0^\tess \to W\,, \quad q(x)\coloneqq c(e_{x,x_0})\,,
\]
of~$c$. The associated group cocycle~$\psi\colon\Gamma\to W$ vanishes everywhere because
\[
 \psi_{g^{-1}} = q(gx_0) - \tau_s(g)q(x_0) = 0
\]
for all $g\in\Gamma$. Thus, $c$ is a coboundary.
\end{proof}

\begin{lem}\label{lem:coc_unique}
Let $s\in\CC$, $\Rea s\in (0,1)$. Suppose that the cocycle
\[
c\in Z^1\bigl(F^\tess_\bullet; \V s \om(\proj\RR), \V s {\fxi;\smp;}(\proj\RR)\bigr)_\sic^\van
\]
vanishes on~$F^{\tess,Y}_\bullet$. Then:
\begin{enumerate}[{\rm (i)}]
\item If $s\not=\frac12$, then $c=0$. 
\item If $c(e_\infty)\in \V s {\fxi;\infty;}$, then $c=0$ also for $s=\frac12$.
\end{enumerate}
\end{lem}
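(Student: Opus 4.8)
The plan is to show that under these hypotheses the cocycle $c$ is automatically a coboundary with an essentially unique potential, and then to use the vanishing condition together with Lemma~\ref{lem:hyp_inv} and the module condition to force that potential, hence $c$, to vanish.

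First I would observe that the group cocycle associated to $c$ is trivial: since $c$ vanishes on the (connected) subresolution $F^{\tess,Y}_\bullet$, for a base vertex $z_0\in X^{\tess,Y}_0$ one has $\psi^{c,z_0}_{g^{-1}}=c(gz_0,z_0)=0$ for every $g\in\Gm$. By (the argument of) Lemma~\ref{lem:vanish_cob}, $c=df$ is a coboundary, with $\Gm$-equivariant potential $f\colon X^\tess_0\to \V s{\fxi;\smp;}(\proj\RR)$ given by $f(x)=c(e_{x,z_0})$ for a balanced path $e_{x,z_0}$ from $x$ to $z_0$. Because $c$ vanishes on $F^{\tess,Y}_\bullet$ and this part of the tesselation is connected, $f$ is identically $0$ on $X^{\tess,Y}_0$. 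By $\Gm$-equivariance it therefore suffices to prove $f(1)=0$ and $f(\infty)=0$; once this is known, $f\equiv 0$ on $X^\tess_0$ and hence $c=df=0$ in both cases.

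Next I would prove $f(1)=0$ using the vanishing condition. Choosing $e_{1,z_0}$ so that it meets $\Gm 1$ only at its tail $1$ and using the singularity condition \tsic, we get $\bsing f(1)\subseteq\{1\}$, so $f(1)$ is a section of $\V s\om$ on all of $\proj\RR\smallsetminus\{1\}$. Since $\lambda-1=TS\cdot 1$, the identity $c=df$ together with $\Gm$-equivariance gives
\[
c(f_0)=f(1)-f(\lambda-1)=\bigl(1-\tau_s(TS)\bigr)f(1)
\]
as elements of $\V s\om[\{1,\lambda-1\}]$ (up to replacing $TS$ by its inverse, depending on the orientation of $f_0$; both are hyperbolic with fixed points $\fixTS^\pm$). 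The vanishing condition \tvan asserts $c(f_0)\vert_{(\lambda-1,1)_c}=0$, that is $\tau_s(TS)f(1)=f(1)$ on $(\lambda-1,1)_c=\proj\RR\smallsetminus[1,\lambda-1]$, which contains both fixed points $\fixTS^\pm$ of $TS$. As $\Rea s\in(0,1)$ implies $s\notin-\NN_0$, Lemma~\ref{lem:hyp_inv} (with $\eta=TS$ and $I=(\lambda-1,1)_c$) yields $f(1)\vert_{(\lambda-1,1)_c}=0$; by the identity theorem for real-analytic sections on the connected set $\proj\RR\smallsetminus\{1\}$ it follows that $f(1)=0$. This step is valid for all $s$ with $\Rea s\in(0,1)$, including $s=\tfrac12$.

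Finally I would treat $f(\infty)$, which is the heart of the lemma and the place where $s\neq\tfrac12$ (resp.\ the extra hypothesis in (ii)) enters. From $T\infty=\infty$ and the $\Gm$-equivariance of $f$ we get $\tau_s(T)f(\infty)=f(\infty)$, so $f(\infty)$ is a $\lambda$-periodic function on $\RR$; it is real-analytic on $\RR$ (as $\bsing f(\infty)\subseteq\{\infty\}$) and lies in $\V s{\fxi;\smp;}(\proj\RR)$, hence has at worst a simple singularity at $\infty$, i.e.\ an admissible asymptotic expansion whose most singular term is a multiple of $\lvert t\rvert^{-2s}t=\mathrm{sgn}(t)\lvert t\rvert^{1-2s}$. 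A nonzero $\lambda$-periodic real-analytic function is bounded on $\RR$ and does not tend to $0$ at $\infty$; comparing this with the above asymptotics, a direct computation shows that for $\Rea s\in(0,1)$, $s\neq\tfrac12$, the only possibility is $f(\infty)=0$, proving (i). For $s=\tfrac12$ the leading term $\mathrm{sgn}(t)\lvert t\rvert^{1-2s}$ reduces to $\mathrm{sgn}(t)$ and no longer decays, so this argument degenerates; this is exactly the value excluded in (i). Under the extra hypothesis of (ii), however, $c(e_\infty)=\pm f(\infty)$ lies in $\V s{\fxi;\infty;}(\proj\RR)$, so $f(\infty)$ is in addition smooth at $\infty$, whence $f(\infty)(t)=O\bigl(\lvert t\rvert^{-2\Rea s}\bigr)\to 0$; being $\lambda$-periodic, $f(\infty)$ then vanishes identically also when $s=\tfrac12$. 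In either case $f(\infty)=0$, and combined with $f(1)=0$ and $f\vert_{X^{\tess,Y}_0}=0$ this gives $c=df=0$. The only delicate point is the asymptotic analysis at $\infty$ isolating the degenerate value $s=\tfrac12$.
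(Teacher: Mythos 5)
Your proof is correct and follows essentially the same skeleton as the paper's: show $c$ is a coboundary with a $\Gm$-equivariant potential vanishing on the interior part of the tesselation, then kill the potential at the funnel vertex $1$ using the \tvan and \tsic conditions together with Lemma~\ref{lem:hyp_inv}, and kill it at the cusp vertex $\infty$ using $T$-invariance and the singularity restriction. The two main differences are presentational and substantive in one spot. Presentationally, the paper first disposes of $c(e_\infty)$ directly from $dc(V_\infty)=0$ (giving $T$-invariance of $c(e_\infty)$ without reference to a potential) and only then invokes Lemma~\ref{lem:vanish_cob} and the potential to handle $e_1$ and $f_0$; you uniformize by computing the potential once and treating all three basis vertices through it, which is slightly cleaner. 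Substantively, for the vanishing at $\infty$ the paper cites \cite[Propositions~9.9, 9.13]{BLZm}, whereas you replace this with a hands-on argument: $f(\infty)$ is $\lambda$-periodic, real-analytic on $\RR$, with a two-sided asymptotic expansion whose leading singular term is $\propto \mathrm{sgn}(t)\lvert t\rvert^{1-2s}$, and a bounded periodic function cannot have such an asymptotic unless the expansion vanishes. This direct argument is a genuine, more elementary alternative. One caveat: the phrase ``a direct computation shows'' hides a genuinely delicate subcase, namely $\Rea s=\tfrac12$ with $\Ima s\neq 0$. There the modulus $\lvert t\rvert^{1-2\Rea s}=1$ of the leading term is bounded and does \emph{not} tend to zero; the correct way to exclude a nonzero leading coefficient is to observe that $e^{-2i\Ima s\ln\lvert t\rvert}$ oscillates without a limit, so a $\lambda$-periodic function (whose values along $t+n\lambda$ are constant in $n$) cannot be asymptotic to it. That is exactly the content of the cited propositions in \cite{BLZm}, and spelling it out would make your argument fully rigorous; as written it is slightly under-justified at that one point, though the claim is true and the rest of the proof is sound.
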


\begin{proof}
By the $\Gamma$-equivariance of~$c$, the definition of~$F^\tess_\bullet$ as the $\CC$-module with basis~$X^\tess_\bullet$, and the vanishing of~$c$ on~$F^{\tess,Y}_\bullet$ it suffices to show that~$c$ vanishes on 
\[
 X^\tess_1 \smallsetminus X^{\tess,Y}_1 = \{ e_1, e_\infty, f_0 \}\,.
\]
Since $c(f_\infty) = 0$, it follows that 
\[
 0=dc(V_\infty) = \big( 1 -\tau_s(T^{-1})\big)c(e_\infty) + c(f_\infty) = \big(1-\tau_s(T^{-1})\big)c(e_\infty)\,,
\]
showing that 
\[
 c(e_\infty) \in \bigl( \V s {\fxi;\smp;}(\proj\RR) \bigr)^T\,.
\]
For $s\not=\frac12$, \cite[Proposition~9.13]{BLZm} yields 
\[
 c(e_\infty) = 0\,.
\]
In the case that $c(e_\infty)\in \V s {\fxi;\infty;}(\proj\RR)$, \cite[Proposition~9.9]{BLZm} shows $c(e_\infty) = 0$ also for~$s=\frac12$.

To show that $c$ also vanishes on~$e_1$ and~$f_0$, we note that $c$ is a coboundary by Lemma~\ref{lem:vanish_cob}. Thus,
we find a $\Gamma$-equivariant potential 
\[
 p\colon X^\tess_0 \to \V s {\fxi;\smp;}(\proj\RR)
\]
of~$c$. From
\[
 c(f_0) = p(1) - p(\lambda-1) = \big(1 - \tau_s(TS)\big) p(1)
\]
and the vanishing property~\tvan of~$c$ it follows that 
\[
 p(1) = \tau_s(TS)p(1) \quad\text{on $(\lambda-1,1)_c$\,.}
\]
The singularity property~\tsic of~$c$ implies
\[
 p(1) \in \V s \om[1]\,.
\]
Thus, $p(1) = 0$ on~$(\lambda-1,1)_c$ by Lemma~\ref{lem:hyp_inv} (choose in Lemma~\ref{lem:hyp_inv} first $I=(\lambda-1,\fixTS^-)_c$ and $\eta=TS$, and then $I=(\fixTS^+,1)_c$ and $\eta=(TS)^{-1}$). Then real-analyticity yields $p(1)=0$ on $\proj\RR\smallsetminus\{1\}$, and hence $p(1)=0$ as element of the sheaf~$\V s \fxi$. It follows that
\[
 c(f_0)=0\,.
\]
Further, 
\[
 0 = dc(V_0) = \big( 1 - \tau_s(TS)\big) c(e_1) + c(f_0) - c(f_1) = \big( 1 - \tau_s(TS)\big) c(e_1)\,.
\]
Since~$c(e_1)\in \V s \om[1]$, Lemma~\ref{lem:hyp_inv} yields
\[
 c(e_1) = 0\,.
\]
This completes the proof.
\end{proof}

\begin{proof}[Proof of Proposition~\ref{prop-af-co}]
Let $s\in\CC$, $\Rea s\in (0,1)$, and let $u\in\A_s$. In case that $u\notin\A_s^0$, the additional restriction $s\not=\frac12$ applies. Lemma~\ref{lem:vanish_cob} shows that any two extensions of any cocycle in~$Z^1\bigl(F^{\tess,Y}_\bullet; \V s \om(\proj\RR)\bigr)$ differ by a coboundary in the mixed cohomology spaces. In case that $u\in\A_s^1$, uniqueness of the claimed extensions follows from Lemma~\ref{lem:coc_unique}. It remains to show the existence of the extensions.

Let $c_u\in Z^1\bigl( F^{\tess,Y}_\bullet;\V s \om(\proj\RR) \bigr)$ be the cocycle associated to~$u$. In what follows we construct an extension of~$c_u$ to a cocycle on~$F^\tess_\bullet$ with values in the~$\Gamma$-modules and properties as claimed. We denote this extension by~$c_u$ as well even though for~$u\notin\A_s^1$ it does not need to be unique. In order to define this extension we first extend it to 
\[
 \{ e_\infty, e_1, f_0\} \subseteq X^\tess_1\smallsetminus X^{\tess,Y}_1\,,
\]
and then continue it $\CC[\Gm]$-linearly to all of~$F^\tess_\bullet$, using that $\Gm$ acts freely on~$X_1^\tess$. For the extension to~$e_\infty$ we take advantage of~\cite[Proposition 12.1]{BLZm}, for the definition on~$e_1$ and~$f_0$ we use Lemma~\ref{lem-io}. We set 
\[
 c_u(e_\infty) \coloneqq  -\av{s,T}^+ c_u(f_\infty)\,, \quad
 c_u(e_1) \coloneqq  I_u(i,1) \quad\text{and}\quad c_u(f_0) \coloneqq  I_u(1,\lambda-1)\,.
\]
To show the cocycle property, we observe that
\begin{align*}
 dc_u(V_\infty) &= \big(1-\tau_s(T^{-1})\big)c_u(e_\infty) + c_u(f_\infty) 
 \\
 & = -\big(1-\tau_s(T^{-1})\big)\av{s,T}^+ c_u(f_\infty) + c_u(f_\infty) 
 \\
 & = -c_u(f_\infty) + c_u(f_\infty) = 0
\end{align*}
by~\eqref{avrel}, and that
\begin{align*}
 dc_u(V_0) & = c_u(e_1) + c_u(f_0) - \tau_s(T)c_u(e_1) - c_u(f_1)  
 \\
 & = I_u(i,1) + I_u(1,\lambda-1) - I_u(\lambda+i,\lambda-1) - I_u(i,\lambda+i) = 0
\end{align*}
by Lemma~\ref{lem:int_trans}. By $\CC[\Gm]$-linearity, $c_u$ satisfies the cocycle condition on all of~$F^\tess_\bullet$. 
By Lemma~\ref{lem-io}, 
\[
 c_u(f_0)\vert_{(\lambda-1,1)_c} = 0\,,
\]
and hence $c_u$ satisfies the vanishing condition~\tvan. Further
\[
 c_u(e_1) \in \V s {\om;-;\aj}[1] \quad\text{and}\quad c_u(f_0)\in \V s {\om;-;\aj}[1,\lambda-1]\,.
\]
As in~\cite[Proposition~12.1]{BLZm} one proves that 
\begin{align*}
 c_u(e_\infty) & \in \V s {\om;\exc,\infty;}(\proj\RR) && \text{if $u\in \A_s^0$\,,}
 \\
 c_u(e_\infty) & \in \V s {\om;\exc,\smp;}[\infty] && \text{if $u\in \A_s^1$\,,}
 \\
 c_u(e_\infty) & \in \V s {\om;\exc;}[\infty] && \text{if $u\in \A_s$\,.}
\end{align*}
Therefore, $c_u$ satisfies the singularity condition~\tsic, and $c_u$ has values in the module as claimed in the statement of this proposition.
\end{proof}

\begin{rmk}
For $u\in\A^0_s$ the extension of the cocycle~$c_u$ in~\eqref{cu} to a cocycle in~$Z^1\bigl(F^\tess_\bullet; \V s \om(\proj\RR),\V s {\fxi;\exc,\infty;\aj}(\proj\RR) \bigr)^\van_\sic$ is given by
\[
 c_u(e_\infty)(t) \coloneqq \int_{e_\infty} \{ u, R(t;\cdot)^s\}\,.
\]
The exponential decay of~$u$ towards~$\infty$ causes the integral to converge.
\end{rmk}

The following lemma not only implies that the vertical maps on the right hand side of the diagram in Proposition~\ref{prop-cohAH} are embeddings, it will also be needed to establish the injectivity of the three lower horizontal maps.

\begin{lem}\label{lem:mixed_inj}
Let $s\in\CC$ and suppose that $W$ is a $\Gamma$-module that satisfies \[\V s \fxi(\proj\RR) \subseteq W \subseteq \V s \om(\proj\RR)\,.\] Then the map\footnote{We recall that we identify cocycles in $Z^1\bigl(F_\bullet^{\tess,Y}; \V s \om(\proj\RR)\bigr)$ with maps $c\colon X_1^{\tess,Y}\to \V s \om(\proj\RR)$ that obey certain relations. See Section~\ref{sec:tesselation}.}
\[
 H^1\bigl(F_\bullet^\tess; \V s \om(\proj\RR), W\bigr)_\sic^\van \to H^1\bigl(F_\bullet^{\tess,Y}; \V s \om(\proj\RR)\bigr)\,,\quad [c]\mapsto \bigl[c\vert_{X_1^{\tess,Y}}\bigr]\,,
\]
is injective.
\end{lem}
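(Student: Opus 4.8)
The plan is to show that any mixed cocycle $c\in Z^1\bigl(F^\tess_\bullet;\V s \om(\proj\RR),W\bigr)_\sic^\van$ whose restriction to $F^{\tess,Y}_\bullet$ is a coboundary is already a coboundary in the mixed complex. So suppose that $\bigl[c\vert_{X^{\tess,Y}_1}\bigr]=0$ in $H^1\bigl(F^{\tess,Y}_\bullet;\V s \om(\proj\RR)\bigr)$, and fix a $\Gamma$-equivariant map $f_0\colon X^{\tess,Y}_0\to\V s \om(\proj\RR)$ with $c(e)=f_0(t(e))-f_0(h(e))$ for all $e\in X^{\tess,Y}_1$. Recall that $X^{\tess,Y}_0=\Gm\{i,iY\}$ while $X^\tess_0=\Gm\{1,i,iY,\infty\}$, so that $X^\tess_0\smallsetminus X^{\tess,Y}_0=\Gm1\sqcup\Gm\infty$.

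First I would extend $f_0$ to a $\Gamma$-equivariant map $f\colon X^\tess_0\to W$. The only orbits to treat are $\Gm1$ and $\Gm\infty$. Since the stabilizer $\Gm_1$ is trivial, the value $f(1)$ may be prescribed freely; since $\Gm_\infty=\langle T\rangle$, a value $f(\infty)$ is admissible as soon as $\tau_s(T)f(\infty)=f(\infty)$. Both requirements are met by the trivial choice $f(1)\coloneqq0$ and $f(\infty)\coloneqq0$. Keeping $f\vert_{X^{\tess,Y}_0}=f_0$ and extending $\Gamma$-equivariantly then gives a well-defined map $f\colon X^\tess_0\to W$ with $f\bigl(X^{\tess,Y}_0\bigr)\subseteq\V s \om(\proj\RR)$ (here I use that $\V s \om(\proj\RR)\subseteq W$ and that $\tau_s$ preserves $\V s \om(\proj\RR)$). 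Consequently $df\in B^1\bigl(F^\tess_\bullet;\V s \om(\proj\RR),W\bigr)$.

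Next, set $c'\coloneqq c-df\in Z^1\bigl(F^\tess_\bullet;\V s \om(\proj\RR),W\bigr)$. For every $e\in X^{\tess,Y}_1$ one has $t(e),h(e)\in X^{\tess,Y}_0$, hence $c'(e)=c(e)-f(t(e))+f(h(e))=c(e)-f_0(t(e))+f_0(h(e))=0$; that is, $c'$ vanishes on $F^{\tess,Y}_\bullet$. Lemma~\ref{lem:vanish_cob} then yields $c'\in B^1\bigl(F^\tess_\bullet;\V s \om(\proj\RR),W\bigr)$, and therefore $c=df+c'\in B^1\bigl(F^\tess_\bullet;\V s \om(\proj\RR),W\bigr)$. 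Since moreover $c\in Z^1\bigl(F^\tess_\bullet;\V s \om(\proj\RR),W\bigr)_\sic^\van$, it follows that $c\in B^1\bigl(F^\tess_\bullet;\V s \om(\proj\RR),W\bigr)_\sic^\van$ --- here using the tesselation analogue of Proposition~\ref{prop:char_sic} (equivalently, the definition of the $\sic$/$\van$-coboundary space as the intersection of $B^1$ with the $\sic$/$\van$-cocycles), which applies at once because $c$ is already known to satisfy the conditions $\sic$ and $\van$. Thus $[c]=0$ in $H^1\bigl(F^\tess_\bullet;\V s \om(\proj\RR),W\bigr)_\sic^\van$, which is the asserted injectivity.

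There is no genuinely hard step here; the one point to get right is that extending the potential $f_0$ across the cusp orbit $\Gm\infty$ and the ``funnel'' orbit $\Gm1$ is obstruction-free, since $\Gm_1$ is trivial and $0$ is $\tau_s(T)$-fixed, after which Lemma~\ref{lem:vanish_cob} carries the argument. The only bookkeeping that needs a word is the handling of the $\sic$ and $\van$ decorations on the coboundary space, which is done exactly as in Proposition~\ref{prop:char_sic}.
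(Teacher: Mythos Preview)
Your proof is correct, but it proceeds differently from the paper's. The paper extends the potential~$p$ of~$c\vert_{X_1^{\tess,Y}}$ to a $\Gamma$-equivariant potential~$q$ of~$c$ itself by prescribing the specific values $q(1)\coloneqq p(i)-c(e_1)$ and $q(\infty)\coloneqq p(\lambda+iY)-c(e_\infty)$, and then verifies edge by edge that $dq=c$ on $\{e_1,e_\infty,f_0\}$ (using the face relations $dc(V_\infty)=0$ and $dc(V_0)=0$). Your approach instead extends $f_0$ trivially by~$0$, forms $c'=c-df$, and invokes Lemma~\ref{lem:vanish_cob} to dispose of~$c'$; this is cleaner in that it avoids the explicit edge computations and packages them into the already-proved lemma. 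Conversely, the paper's explicit potential~$q$ visibly satisfies $\bsing q(\xi)\subseteq\{\xi\}\cap\proj\RR$ (since $\bsing c(e_1)\subseteq\{1\}$ and $\bsing c(e_\infty)\subseteq\{\infty\}$ by~\tsic), so membership in $B^1_\sic$ follows without appeal to any analogue of Proposition~\ref{prop:char_sic}. Your remark that the $\sic$/$\van$ decorations on~$B^1$ are handled ``exactly as in Proposition~\ref{prop:char_sic}'' is adequate---and in fact the combined potential $f+g$ you construct also satisfies the singularity condition, since $f$ has no boundary singularities and $g(\xi)=c'(e_{\xi,x_0})=(c-df)(e_{\xi,x_0})$ has $\bsing g(\xi)\subseteq\{\xi\}\cap\proj\RR$ by~\tsic for~$c$.
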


\begin{proof}
It suffices to show that each cocycle $c\in Z^1(F_\bullet^\tess;\V s \om(\proj\RR), W)_\sic^\van$ that satisfies $c\vert_{X_1^{\tess,Y}}\in B^1(F_\bullet^{\tess,Y};\V s \om(\proj\RR))$ is contained in~$B^1(F_\bullet^\tess; \V s \om(\proj\RR), W)_\sic^\van$.

Let $c\in Z^1(F_\bullet^\tess;\V s \om(\proj\RR), W)_\sic^\van$ be such a cocycle and set~$\tilde c\coloneqq  c\vert_{X_1^{\tess,Y}}$. By hypothesis, $\tilde c\in B^1(F_\bullet^{\tess,Y}; \V s \om(\proj\RR))$. Thus we find a $\Gamma$-equivariant potential 
\[
p\colon X_0^{\tess,Y} \to \V s \om(\proj\RR)
\]
of~$\tilde c$. We claim that~$p$ extends to a $\Gamma$-equivariant potential~$q\colon X_0^\tess \to W$ of $c$. To that end, we let
\begin{align}
 q_1 &\coloneqq p(i) - c(e_1) \label{def_q1}
 \\
 q_\infty & \coloneqq p(\lambda + iY) - c(e_\infty)\,. \label{def_qinfty}
\end{align}
We define the map~$q$ by
\[
 q\vert_{X_0^{\tess,Y}} \coloneqq p
\]
and
\[
 q(g1)\coloneqq \tau_s(g)q_1\,,\qquad q(g\infty)\coloneqq \tau_s(g)q_\infty
\]
for all~$g\in\Gamma$. This is indeed well-defined: We have 
\begin{align*}
\tau_s(T^{-1})q_\infty & = \tau_s(T^{-1})p(\lambda+iY) - \tau_s(T^{-1})c(e_\infty) = p(iY) - c(T^{-1}e_\infty) 
\\
& = p(iY) - c(f_\infty) - c(e_\infty)  = p(\lambda + iY) - c(e_\infty) 
\\
& = q_\infty\,,
\end{align*}
where we used the $\Gm$-equivariance of~$p$ for the second equality, and the identity $0=dc(V_\infty) = -c(T^{-1}e_\infty) + c(f_\infty)+c(e_\infty)$ for the third equality, and the property $c(f_\infty) = p(iY) - p(\lambda+iY)$ for the forth equality. Thus, for all~$g\in\Gm_\infty$, 
\[
 q(g\infty) = \tau_s(g)q_\infty = q_\infty\,.
\]
The stabilizer group~$\Gm1$ of~$1$ is trivial. Thus, $q$ is a $\Gamma$-equivariant extension of~$p$ to~$X_0^\tess$ with values in~$W$. 

It remains to show that $q$ is a potential of~$c$. Due to $\Gamma$-equivariance, it suffices to show that $dq(e) = c(e)$ for $e\in\{e_0,e_2,e_\infty, f_0,f_1,f_\infty\}$. Since $q$ extends~$p$, which is a potential of $c\vert_{X_1^{\tess,Y}}$, the identity $dq(e)=p(e)$ is clearly satisfied for $e\in \{f_1,f_\infty\}$. Further, for $e=e_0$ the definition of~$q$ and $q_1$ from~\eqref{def_q1} yields
\[
 c(e_1) = p(i) - q_1 = q(i) - q(1) = dq(e_0)\,.
\]
Analogously, using~\eqref{def_qinfty} we find
\[
 c(e_\infty) = p(\lambda + iY) -q_\infty = q(\lambda+iY) - q(\infty) = dq(e_\infty)\,.
\]
Finally, taking advantage of 
\[
0=dc(V_0)=c(e_1) + c(f_0) -\tau_s(TS)c(e_1) - c(f_1)
\]
we find that 
\begin{align*}
dq(f_0) & = q(1) - q(\lambda-1) = q(1)  - \tau_s(TS)q(1) 
\\
& = p(i) - c(e_1) - \tau_s(TS)\big( p(i) - c(e_1)\big) 
\\
& = p(i) - \tau_s(TS)p(i) - c(e_1) + \tau_s(TS)c(e_1)
\\
& = p(i) - p(\lambda+i) - c(e_1) + \tau_s(TS) c(e_1)
\\
& = c(f_1) + \tau_s(TS)c(e_1) -c(e_1)
\\
& = dc(V_0) + c(f_0)
\\
& = c(f_0)\,.
\end{align*}
Then $\Gamma$-equivariance implies that $c=dq$ on all of~$X_1^\tess$. This completes the proof.
\end{proof}

Proposition~\ref{prop-cohAH} now follows from a combination of Propositions~\ref{prop:map_step1}, \ref{prop-af-co} and Lemma \ref{lem:mixed_inj}. We provide a few more details.

\begin{proof}[Proof of Proposition~\ref{prop-cohAH}]
We only show the claimed properties for the top rectangle in the diagram of the statement, that is, the claims in relation with the full space~$\A_s$ of funnel forms. The properties of the remaining maps can be shown analogously. 

The injectivity of the maps 
\[
 \coh_s\colon\E_s^\Gamma \to H^1\bigl(F^{\tess,Y}_\bullet;\V s \om(\proj\RR)\bigr)
\]
and 
\[
 \varrho_s^Y\colon H^1\bigl(F^\tess_\bullet;\V s \om(\proj\RR), \V s {\fxi;\exc;\aj}(\proj\RR)\bigr)_\sic^\van \to H^1\bigl(F^{\tess,Y}_\bullet; \V s \om(\proj\RR)\bigr)
\]
are the statements of Proposition~\ref{prop:map_step1} and Lemma~\ref{lem:mixed_inj}. The injectivity of the map
\[
 \A_s \to \E_s^\Gamma\,,\quad u\mapsto u
\]
is clear. Let $u\in\A_s$ and let $c_u\in Z^1\bigl(F^{\tess,Y}_\bullet;\V s \om(\proj\RR)\bigr)$ be the representative of the cocycle class~$\coh_s(u)\in H^1\bigl(F^{\tess,Y}_\bullet;\V s \om(\proj\RR)\bigr)$ from~\eqref{cu}. By Proposition~\ref{prop-af-co}, the cocycle~$c_u$ extends to an element~$\tilde c_u\in Z^1\bigl(F^\tess_\bullet;\V s \om(\proj\RR), \V s {\fxi;\exc;\aj}(\proj\RR)\bigr)_\sic^\van$ that is unique up to a coboundary in~$B^1\bigl(F^\tess_\bullet;\V s \om(\proj\RR),\V s {\fxi;\exc;\aj}(\proj\RR)\bigr)_\sic^\van$. Thus, $c_u$ (and hence $\coh_s(u)$) induces a unique element in~$H^1\bigl(F^\tess_\bullet;\V s \om(\proj\RR),\V s {\fxi;\exc;\aj}(\proj\RR)\bigr)_\sic^\van$. We denote the map
\begin{equation}\label{mapcohsfunnel}
 \A_s \to H^1\bigl(F^\tess_\bullet;\V s \om(\proj\RR), \V s {\fxi;\exc;\aj}(\proj\RR)\bigr)_\sic^\van\,,\quad u\mapsto [\tilde c_u]
\end{equation}
by~$\coh_s$ as well. By construction, 
\[
 \coh_s\vert_{\A_s} = \varrho_s^Y\circ\coh_s\,.
\]
Then the injectivity of~$\varrho_s^Y$ implies the injectivity of~$\coh_s$ in~\eqref{mapcohsfunnel}.
\end{proof}

\section{Surjectivity I: Boundary germs}
\markright{14.  BOUNDARY GERMS}

This section and Section~\ref{sect-coc-ff} below are devoted to the proof of the surjectivity of the three lower horizontal maps~$\coh_s$ in the diagram in Proposition~\ref{prop-cohAH}. In rough terms, we will construct funnel forms, which are Laplace eigenfunctions defined on \emph{all} of the upper half plane, from a family (a cocycle) in~$\V s \fxi(\proj\RR)$, which induces a family of holomorphic functions in a (usually rather \emph{small}) complex neighborhood of certain subsets of~$\proj\RR$. In the context of \emph{cofinite} Fuchsian groups, discussed in~\cite{BLZm}, it turned out to be helpful to substitute the sheaf~$\V s \om$ by a sheaf~$\W s \om$ of $s$-analytic boundary germs. We apply the same approach here.
In this section we define and study the sheaf~$\W s \om$. In particular, for any choices of~$\tcond_1\in\{ \underline{\ \ }, \smp,\infty\}$ and $\tcond_2\in\{\underline{\ \ },\sic\}$, $\tcond_3\in\{\underline{\ \ },\van\}$ we will find a natural isomorphism
between the two spaces
\[
H^1\bigl(F^\tess_\bullet; \V s \om(\proj\RR), \V s {\fxi;\exc,\cond_1;\aj}(\proj\RR)\bigr)_{\cond_2}^{\cond_3}
\]
and
\[
H^1\bigl(F^\tess_\bullet; \W s \om(\proj\RR), \W s {\fxi;\exc,\cond_1;\aj}(\proj\RR)\bigr)_{\cond_2}^{\cond_3}\,.
\]
For results we will often refer to~\cite{BLZ13, BLZm} and only indicate where changes and modifications are needed.

\subsection{Analytic boundary germs and semi-analytic modules}\label{sect-abg}

For any open subset~$Z\subseteq\uhp$ we set \index[symbols]{Eac@$\E_s(Z)$}
\begin{equation}
 \E_s(Z) \coloneqq\bigl\{\,u\in C^2(Z) \setmid \text{$\Delta u = s(1-\nobreak s) u$
on~$Z$}\,\vphantom{C^2(Z)}\bigr\}\,.
\end{equation}
In this notation, $\E_s^\Gamma=\{u\in\E_s(\uhp)\setmid \forall\, g\in \Gm\colon u\circ g=u\}$.

Suppose that $I\subseteq\proj\RR$ is an open subset, $U$ an open neighborhood of $I$ in~$\proj\CC$ and \mbox{$u\in\E_s(U\cap\uhp)$}. In extension of the definition in Section~\ref{sec:def_ff} we say that the map~$u$ has \emph{$s$-analytic boundary behavior near~$I$} \index[defs]{$s$-analytic boundary behavior} if 
\begin{equation}
 U\cap\uhp\to\CC\,,\quad z\mapsto y^{-s}u(z)
\end{equation}
extends to a real-analytic function on a neighborhood of~$I$ in~$\proj\CC$. Here, $y=y(z)=\Ima z$ (as defined in Section~\ref{sec:notation}). We recall from Section~\ref{sec:regularity} that real-analyticity in~$\infty$ for the spectral parameter~$s$ means that the map
\[
 z\mapsto \left(\frac{y}{|z|^2}\right)^{-s} u\left(-\frac1z\right)
\]
extends to a real-analytic function on a complex neighborhood of~$0$. As in Section~\ref{sec:def_ff} we call any such real-analytic extension a \emph{real-analytic core of~$u$ near~$I$}. \index[defs]{real-analytic core} We set\index[symbols]{B@$\mc B_s(I,U)$}
\[
 \mc B_s(I,U) \coloneqq \bigl\{\, u\in\mc E_s(U\cap\uhp)   \setmid  \text{$u$ has $s$-analytic boundary behavior near~$I$}\, \bigr\}\,.
\]
If $U_1\subseteq U_2$ are open neighborhoods of~$I$ in~$\proj\CC$, then $\mc B_s(I,U_2)\subseteq \mc B_s(I,U_1)$. We use the inclusion of sets as a direction on the set~$\mc U_o(I)$ of all open neighborhoods of~$I$ in~$\proj\CC$ and endow $\big(\mc B_s(I,U)\big)_{U\in\mc U_o(I)}$ with the structure of a directed system using the natural embeddings. Let \index[symbols]{Waa@$\W s \om$}
\begin{equation}
 \W{s}\om(I) \coloneqq \varinjlim \mc B_s(I,U)
\end{equation}
denote the direct limit of this directed system. The family of all spaces~$\W s \om(I)$, \mbox{$I\subseteq \proj\RR$}~open, forms the sheaf~$\W s \om$ of \emph{$s$-analytic boundary germs}.\index[defs]{$s$-analytic boundary germ}\index[defs]{boundary germ} The sheaf~$\W s \om$ is naturally isomorphic to the sheaf~$\V s \om$ as explained in what follows.

For any ~$u\in \mc B_s(I,U)$ and any real-analytic core~$A$ of~$u$ near~$I$, the restriction~$A\vert_I$ is an element of~$\V s \om(I)$. The assignment~$u\mapsto A\vert_I$ induces a \emph{restriction map} \index[defs]{restriction map} \index[symbols]{R@$\rho_s$}
\begin{equation}
\rho_{s,I}\colon \W s \om(I)\to \V s \om(I)
\end{equation}
and further the \emph{restriction map} (of sheaves)
\[
 \rho_s\colon\W {s}\om \to \V{s} \om\,.
\]
Moreover, the action \index[symbols]{T@$\tau$}
\[
 \tau\colon\Gamma\times\mc B_s \to \mc B_s\,,\quad \tau(g^{-1})u \coloneqq u\circ g\,,
\]
where \index[symbols]{B@$\mc B_s$}
\[
 \mc B_s \coloneqq \bigcup_{\substack{I\subseteq \proj\RR\\ \text{open}}} \bigcup_{U\in\mc U_o(I)} \mc B_s(I,U)\,,
\]
descends to a $\Gamma$-action on~$\W s \om$, which we also denote by~$\tau$, turning $\W s\om$ into a \mbox{$\Gamma$-}equivariant sheaf. By~\cite[(5.10)]{BLZ13}, the restriction map~$\rho_s$ intertwines $\tau$ on~$\W s \om$ with the action of~$\tau_s$ on~$\V s \om$. 

The following result, which follows immediately from~\cite[Theorem 5.6]{BLZ13}, allows us to use the \mbox{$\Gamma$-}equi\-variant sheaf~$\W s \om$ instead of~$\V s \om$ for the proof of the surjectivity of the maps~$\coh_s$.

\begin{prop}[\cite{BLZ13}]\label{thm-resiso}
\begin{enumerate}[{\rm (i)}]
\item For each open subset~$I\subseteq\proj\RR$ the restriction map $\rho_{s,I}\colon\W s \om(I)\to\V s \om(I)$ is an isomorphism.
\item The restriction map induces an isomorphism $\rho_s\colon \W s\om\to\V s\om$  of $\Gm$-equivariant sheaves.
\end{enumerate}
\end{prop}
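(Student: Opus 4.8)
The statement to prove is Proposition~\ref{thm-resiso}, which asserts that the restriction map $\rho_{s,I}\colon \W s \om(I)\to\V s \om(I)$ is an isomorphism for every open $I\subseteq\proj\RR$, and that these assemble to an isomorphism of $\Gamma$-equivariant sheaves $\rho_s\colon\W s\om\to\V s\om$. Since the excerpt explicitly tells us this follows immediately from \cite[Theorem~5.6]{BLZ13}, the plan is essentially to reduce the claim to that cited result and to check the few compatibilities that make the reduction legitimate.

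The plan is as follows. First I would record that $\rho_{s,I}$ is well-defined: given $u\in\mc B_s(I,U)$ with two real-analytic cores $A_1,A_2$ near $I$, the difference $A_1-A_2$ vanishes on $U\cap\uhp$ (both equal $y^{-s}u$ there) and hence, being real-analytic on a neighborhood of $I$, vanishes on $I$; so $A_1\vert_I=A_2\vert_I$ and the assignment $u\mapsto A\vert_I$ is independent of the chosen core. Compatibility with the direct-limit structure (shrinking $U$) is immediate because a core near $I$ for the larger neighborhood is still a core for the smaller one, and restriction to $I$ does not see $U$; thus $\rho_{s,I}$ descends to $\W s \om(I)=\varinjlim_U \mc B_s(I,U)$. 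Linearity is clear.

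Second, for part~(i) I would invoke \cite[Theorem~5.6]{BLZ13}. That theorem is precisely the statement that, near a boundary interval, every real-analytic function of the spectral-parameter type extends uniquely to an $s$-analytic boundary germ, i.e.\ that the ``extend to a $\Delta$-eigenfunction with $s$-analytic boundary behavior'' operation is the two-sided inverse of ``restrict the real-analytic core.'' Concretely: surjectivity of $\rho_{s,I}$ says that every $\varphi\in\V s \om(I)$ arises as $A\vert_I$ for some $u\in\mc B_s(I,U)$ — this is the existence-and-uniqueness of the eigenfunction extension in \cite{BLZ13}; injectivity says that if $u\in\mc B_s(I,U)$ has a core $A$ with $A\vert_I=0$, then $A\equiv 0$ near $I$ and hence $u=y^sA=0$ on $U\cap\uhp$, so $u$ is the zero germ. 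Both directions are exactly the content of the cited theorem, so part~(i) follows with essentially no further work; one only needs to note that the ``regularity at $\infty$'' convention of Section~\ref{sec:regularity} matches the one used in \cite{BLZ13}, which is the case by construction.

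Third, for part~(ii) I would check that the isomorphisms $\rho_{s,I}$ are compatible with sheaf restriction maps and with the $\Gamma$-action. Compatibility with restriction $\W s \om(I_2)\to\W s\om(I_1)$ for $I_1\subseteq I_2$ is immediate since restricting the core near $I_2$ and then to $I_1$ gives the same as restricting to $I_1$ directly; hence $\rho_s$ is a morphism of sheaves, and being an isomorphism on every open set it is a sheaf isomorphism. For $\Gamma$-equivariance one uses that for $g\in\PSL_2(\RR)$ the map $\tau(g^{-1})u=u\circ g$ has real-analytic core $A\circ g$ times the appropriate automorphy factor, which is exactly the recipe defining $\tau_s(g^{-1})$ on $\V s\om$; this intertwining is \cite[(5.10)]{BLZ13}, quoted in the excerpt, so $\rho_s$ respects the $\Gamma$-actions and is therefore an isomorphism of $\Gamma$-equivariant sheaves.

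The only genuine subtlety — and hence the ``main obstacle,'' though it is a bookkeeping obstacle rather than a mathematical one — is making sure that the notion of ``$s$-analytic boundary behavior near $I$'' used here, including the behavior at $\infty$ and the choice of branch for $|z|^{-2s}$ versus $(z^2)^{-s}$, is literally the one under which \cite[Theorem~5.6]{BLZ13} is proved, so that the cited statement can be applied verbatim on each coordinate chart of $\proj\RR$ and the two charts glued. Once that identification of conventions is made explicit (as the excerpt does in Section~\ref{sec:regularity} and in the remark about $z\mapsto(z^2)^{-s}$), nothing further is needed: the proposition is a direct transcription of \cite[Theorem~5.6]{BLZ13} together with the equivariance formula \cite[(5.10)]{BLZ13}.
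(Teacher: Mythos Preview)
Your proposal is correct and matches the paper's approach exactly: the paper gives no proof of its own for this proposition, simply attributing it to \cite[Theorem~5.6]{BLZ13}, and your write-up is precisely the unpacking of why that citation suffices (well-definedness of $\rho_{s,I}$, the cited theorem for bijectivity, and the sheaf/equivariance compatibilities via \cite[(5.10)]{BLZ13}).
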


The map~$\rho_s$ being an isomorphism allows us to formulate the definition of concepts involving singularities in terms of $s$-analytic boundary germs, as it is done in~\cite{BLZm}: For any finite subset~$F\subseteq\proj\RR$ we let (as in Section~\ref{sect-sars}) \index[symbols]{Wad@$\W s \om[F]$}\index[symbols]{Wab@$\W s \fs(\proj\RR)$}
\begin{equation}
 \W s \om[F] \coloneqq \W s \om(\proj\RR\smallsetminus F)\,, \qquad  \W s \fs\!(\proj\RR) \coloneqq \varinjlim_F \W s \om[F]\,.
\end{equation}
and\index[symbols]{Wac@$\W s \fxi(\proj\RR)$}
\begin{equation}
 \W s \fxi(\proj\RR) \coloneqq \varinjlim_{F\subseteq \Xi} \W s \om[F]\,.
\end{equation}
The restriction map~$\rho_s\colon \W s \om \to \V s \om$ obviously extends to a $\Gamma$-equivariant isomorphism \index[symbols]{R@$\rho_s$}
\begin{equation}
 \rho_s\colon \W s \fs\!(\proj\RR) \to \V s \fs\!(\proj\RR)\,;
\end{equation}
the sets of singularities of sections of~$\V s \fs\!(\proj\RR)$ and $\W s \fs\!(\proj\RR)$ correspond to each other under~$\rho_s$. 
For~$w\in\W s \fs\!(\proj\RR)$ we set\index[symbols]{B@$\bsing$}\index[defs]{boundary singularities}\index[defs]{singularities!boundary}
\begin{equation}
 \bsing w \coloneqq \bsing \rho_s(w)\,.
\end{equation}
Likewise, conditions on the singularities can be formulated for elements in the space~$\W s \fxi(\proj\RR)$. We say that an element~$w$ of~$\W s \fxi(\proj\RR)$ satisfies the conditions~\tsmp, $\infty$, \taj or~\texc if and only if $\rho_s(w) \in \V s \fxi(\proj\RR)$ 
has the considered property. More explicitly, for $\tcond\in\{\smp,\infty,\aj,\exc\}$, an element~$w\in\W s \fxi(\proj\RR)$ satisfies the condition~\tcond if and only if there exist a finite set~$F\subseteq \proj\RR$, an open neighborhood~$U$ of~$\proj\RR\smallsetminus F$ in~$\proj\CC$ and a map~$A\colon U\to \CC$ such that 
\begin{enumerate}[{\rm (i)}]
 \item $A$ is real-analytic on $U$\,,
 \item $A(z) = y^{-s}w(z)$ for all $z\in U\cap\uhp$\,,
 \item $A\vert_{\proj\RR} \in \V s \fxi$ satisfies~\tcond. For \tcond=\,\texc we require in addition that $U$ is rounded at all points in~$F\cap \Gamma\infty$ and that~$A\vert_{\proj\RR\smallsetminus F}$ extends holomorphically to~$U$.
\end{enumerate}

For completeness we remark that our terminology is not completely identical to the one in~\cite{BLZ13,BLZm}. However, the essence is preserved. In~\cite[\S5.1]{BLZ13} and~\cite[\S3.1]{BLZm} the disk model of the hyperbolic plane is used for the definition of $s$-analytic boundary germs, and in~\cite{BLZ13} they are called eigenfunction germs.

\subsection{Cohomology classes attached to funnel forms}\label{sect-ccaffabg}

The restriction map $\rho_s$ induces a restriction map
\[ 
H^1\bigl( F^\tess_\bullet; \W{s}\om(\proj\RR),\W{s}{\fxi;\exc,\cond;\aj}(\proj\RR)\bigr)
\stackrel{\rho_s}\longrightarrow H^1\bigl( F^\tess_\bullet; \V{s}\om(\proj\RR),\V{s}{\fxi;\exc,\cond;\aj}(\proj\RR)\bigr)\,, 
\]
which, by Proposition~\ref{thm-resiso}, is an isomorphism for any choice of~$\tcond\in\{\underline{\ \ },\smp,\infty\}$. Set $V\coloneqq\W s \om(\proj\RR)$ and $W\coloneqq \W s {\fxi;\exc,\cond;\aj}(\proj\RR)$. We let (cf.\@ Section~\ref{sec:def_mixed})
\begin{align*}
 Z^1(F^\tess_\bullet; V,W)_\sic \coloneqq \Big\{ c\in Z^1(F^\tess_\bullet; & V, W) \setmid
 \\
 & \forall\, e\in X_1^\tess\colon \bsing c(e) \subseteq \proj\RR \cap \{t(e),h(e)\} \Big\}
\end{align*}
be the space of cocycles with singularity condition, and
\[
 Z^1(F^\tess_\bullet; V,W)^\van \coloneqq \left\{ c\in Z^1(F^\tess_\bullet; V,W) \setmid \rho_s(c(f_0))\vert_{(\lambda-1,1)_c} = 0 \right\}
\]
the space of cocycles  with vanishing condition in~$Z^1(F^\tess_\bullet; V,W)$. The definitions of the spaces~$Z^1(F^\tess_\bullet; V,W)_{\cond_1}^{\cond_2}$, $B^1(F^\tess_\bullet; V,W)_{\cond_1}^{\cond_2}$ and  $H^1(F^\tess_\bullet; V,W)_{\cond_1}^{\cond_2}$ for the conditions~$\tcond_1\in\{\underline{\ \ }, \sic\}$, $\tcond_2\in\{\underline{\ \ },\van\}$ are then immediate. The restriction map~$\rho_s$ from above descends to an isomorphism 
\begin{equation}\label{res-coh}
\begin{aligned}
 H^1\bigl( F^\tess_\bullet; \W{s}\om(\proj\RR),&\W{s}{\fxi;\exc,\cond;\aj}(\proj\RR)\bigr)_{\cond_1}^{\cond_2}
\\ &
 \qquad\stackrel{\rho_s}\longrightarrow H^1\bigl( F^\tess_\bullet; \V{s}\om(\proj\RR),\V{s}{\fxi;\exc,\cond;\aj}(\proj\RR)\bigr)_{\cond_1}^{\cond_2}
\end{aligned}
\end{equation}
for all choices of~$\cond\in\{\underline{\ \ },\smp,\infty\}$, $\cond_1\in\{\underline{\ \ }, \sic\}$ and $\cond_2\in\{\underline{\ \ },\van\}$, which we continue to call~$\rho_s$. Let \index[symbols]{Q@$\qcoh_s$}
\begin{equation}\label{qcoh}
 \qcoh_{s} \coloneqq \rho_s^{-1}\circ \coh_s \colon \A_s^* \to H^1\bigl( F^\tess_\bullet;
\W{s}\om(\proj\RR),\W{s}{\fxi;\exc,\cond(*);\aj}(\proj\RR)\bigr)^\van_\sic\,,
\end{equation}
where~$\coh_s$ is the map from Proposition~\ref{prop-cohAH}, $*\in\{\underline{\ \ }, 0, 1\}$ and 
\[
\cond(\underline{\ \ }) = \underline{\ \ },\quad \cond(1) = \smp,\quad \cond(0) = \infty\,.
\]
Proposition~\ref{prop-cohAH} yields that $\qcoh_s$ is injective (for appropriate values of~$s$); in the following section we will show that $\qcoh_s$ is also surjective. 

Similar to the map~$\coh_s$, also the map~$\qcoh_{s}$ can be described explicitly with a kernel function:
We replace the Poisson kernel~$R(t;z)^s$ used for~$\coh_s$ by the kernel function (see~\cite[(3.8), (A.8), (A.9)]{BLZ13})
\begin{equation}\label{def:kernel_qcoh}
q_s(z,z') \coloneqq Q_{s-1}\bigl( \cosh d(z,z')\bigr),
\end{equation}
where $d(z,z')$ denotes the hyperbolic distance between~$z,z'\in\uhp$, and $Q_{s-1}$ denotes the $Q$-Legendre function with parameter~$s-1$. Formula~\eqref{def:kernel_qcoh} defines the kernel function for~$s\in \CC\smallsetminus \ZZ_{\leq 0}$ and on~$\uhp^2\smallsetminus\text{(diagonal)}$. 
For any fixed~$z'\in\uhp$, the map~$q_s(\cdot, z')$ represents an element of~$\W s \om\bigl(\proj\RR)$. Its image under the restriction map~$\rho_s$ is closely related to the kernel function of~$\coh_s$: 
\begin{align*}
\rho_s q_s(\,\cdot\,,z') &\ceqq  b(s) \, R(\,\cdot\,;z')^s\,,\\
b(s) &\ceqq  \frac{\Gf(s)\,\Gf(1/2)}{\Gf(s+1/2)}\,. 
\end{align*}

For any funnel form~$u$, the cohomology class~$\qcoh_{s} u$ can be represented by a cocycle~$\psi_u $ on $F^\tess_1$. For edges~$e\in X^{\tess,Y}_1$ we have
\begin{equation}\label{quY} 
\psi_u(e)(z) \ceqq  \int_{e} \bigl\{ u, q_s(\,\cdot\,,z)
\bigr\}
\end{equation}
and
\[
\rho_s \psi_u(e) = b(s)\, c_u(e)\,,
\]
where~$c_u\in Z^1(F_\bullet^\tess; \V s \om(\proj\RR), \V s \fxi)$ is a cocycle associated to~$u$ by Proposition~\ref{prop-af-co}. For edges that involve the boundary~$\proj\RR$ of~$\uhp$ we set
\begin{equation}
\psi_u(e) \coloneqq b(s)^{-1}\, \rho_s^{-1} c_u(e)\,,
\end{equation}
avoiding the normalization process for the boundary germs.

An important property of the kernel function~$q_s$ is the following theorem, which is similar to the Cauchy theorem for holomorphic
functions.

\begin{thm}[Theorem 3.1 in \cite{BLZ13}]\label{thm-psC}
Let $s\in \CC \smallsetminus\frac12\ZZ_{\leq 0}$, let $C$ be a piecewise smooth positively oriented simple closed curve in~$\uhp$, and let $U\subset \uhp$ be an open set containing~$C$ and the region enclosed by~$C$. Suppose that $u\in \E_s(U)$. Then for each~$z\in \uhp\smallsetminus C$
\[ 
\frac2\pi \int_C \bigl\{ u,q_s(\cdot,z )\bigr\} 
\ceqq  
\begin{cases} 
u(z) &\text{ if $z$ is inside the region encircled by~$C$}\,,\\
0 &\text{ if $z$ is outside the region encircled by~$C$}\,.
\end{cases}
\]
\end{thm}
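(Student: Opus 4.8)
The statement to prove is Theorem~\ref{thm-psC}, a Cauchy-type integral theorem for the kernel $q_s$. My plan is to follow the classical pattern of the Cauchy integral formula, replacing holomorphicity by the Laplace eigenfunction equation and using the special properties of $q_s(\cdot,z) = Q_{s-1}(\cosh d(\cdot,z))$.

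\textbf{The outside case.} First I would handle the case $z$ outside the region $\Omega_C$ encircled by $C$. Here the key observation is that for any fixed $z\in\uhp$, the function $w\mapsto q_s(w,z)$ lies in $\E_s(\uhp\smallsetminus\{z\})$; this is because $q_s(\cdot,z)$ is a radial eigenfunction of the Laplacian centered at $z$ with eigenvalue $s(1-s)$, which is one of the standard properties of the $Q$-Legendre kernel (it is the Green's function / resolvent kernel up to normalization, cf.\ the references~\cite{BLZ13}). Thus, when $z\notin\Omega_C$, both $u$ and $q_s(\cdot,z)$ are in $\E_s$ on an open set containing $C$ and its interior, with the \emph{same} eigenvalue. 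By the discussion in Section~\ref{sect-coaief}, the Green's form $\{u,q_s(\cdot,z)\}$ is then a \emph{closed} one-form on a neighborhood of $C\cup\Omega_C$. Since $C$ bounds $\Omega_C$ (which lies in $U$), Stokes' theorem gives $\int_C\{u,q_s(\cdot,z)\}=\int_{\Omega_C}d\{u,q_s(\cdot,z)\}=0$, which is the claim.

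\textbf{The inside case.} Now suppose $z\in\Omega_C$. Pick a small geodesic circle $C_\varepsilon$ of hyperbolic radius $\varepsilon$ centered at $z$, contained in $\Omega_C$. On the region between $C$ and $C_\varepsilon$ the one-form $\{u,q_s(\cdot,z)\}$ is closed (now $z$ is excluded from this region), so Stokes gives
\[
\int_C\{u,q_s(\cdot,z)\} = \int_{C_\varepsilon}\{u,q_s(\cdot,z)\}\,,
\]
with $C_\varepsilon$ positively oriented. It then remains to compute $\lim_{\varepsilon\to 0}\int_{C_\varepsilon}\{u,q_s(\cdot,z)\}$ and show it equals $\tfrac{\pi}{2}u(z)$. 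For this I would use the known local behavior of $Q_{s-1}(\cosh r)$ as $r\to 0$: it has a logarithmic singularity, $Q_{s-1}(\cosh r) = -\log r + O(1)$ (up to a universal constant, independent of $s$, since the leading singularity of the resolvent kernel is dimension-dependent only). Parametrizing $C_\varepsilon$ by hyperbolic polar coordinates centered at $z$ and inspecting the explicit expression for the Green's form (analogous to~\eqref{acc-expr}, but in geodesic polar coordinates around $z$), the terms involving $u$ times the \emph{radial derivative} of $q_s(\cdot,z)$ dominate: $\partial_r Q_{s-1}(\cosh r)\sim -1/r$, and integrating $u(z)\cdot(-1/\varepsilon)$ times the arclength element of $C_\varepsilon$ (which is $\sinh\varepsilon\, d\theta \sim \varepsilon\, d\theta$) over $\theta\in[0,2\pi)$ yields a finite limit proportional to $u(z)$; the terms with derivatives of $u$ contribute a factor $\varepsilon\log\varepsilon\to 0$ and vanish. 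Tracking the normalization constants (the $\tfrac 2\pi$ and the precise residue of $Q_{s-1}$) pins down the constant to be exactly $1$, giving $\tfrac 2\pi\int_C\{u,q_s(\cdot,z)\} = u(z)$.

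\textbf{Main obstacle.} The routine parts are Stokes' theorem and the closedness of the Green's form (already available from Section~\ref{sect-coaief}). The genuine work is the local analysis near $z$: one must (a) know precisely the singular expansion of $Q_{s-1}(\cosh r)$ at $r=0$, including that the residue-type constant is independent of $s$, and (b) carefully expand the Green's form in geodesic polar coordinates around $z$ to see which terms survive the $\varepsilon\to 0$ limit and to extract the exact constant. This is essentially a residue computation transported to the hyperbolic setting, and getting the normalization constant $\tfrac 2\pi$ right is the delicate point. Since the statement is cited verbatim as \cite[Theorem 3.1]{BLZ13}, in the paper itself one would simply invoke that reference; the plan above is how one would reconstruct the proof from scratch.
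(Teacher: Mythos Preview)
Your reading is correct: the paper does not prove this statement but cites \cite[Theorem~3.1]{BLZ13}, adding only the observation that the constant $\tfrac{2}{\pi}$ (rather than the $\tfrac{1}{2\pi}$ of \cite{BLZ13}) comes from the relation $\{u,v\}=2i[u,v]-i\,d(uv)$ between the two Green's forms, the exact term integrating to zero over the closed curve~$C$. Your from-scratch reconstruction via Stokes' theorem and a local residue computation near~$z$ is the standard argument and is correct in outline; the leading singularity $Q_{s-1}(\cosh r)\sim -\log r$ as $r\to 0$ is indeed $s$-independent, and the delicate normalization is exactly what the paper sidesteps by quoting the reference.
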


In~\cite[Theorem 3.1]{BLZ13} this theorem is proved for the disk model of hyperbolic space and the Green's form~$[u,q_s]$. The transition to the Green's form~$\{u, q_s\}$ used here requires the multiplicative factor in front of the integral is~$\frac 2\pi$ instead of~$\frac1{2\pi}$ as in~\cite{BLZ13} (see Remark~\ref{rmk:transition}).  For a closed path the exact contribution~$-i\,d\bigl(uq_s(\cdot,z)\bigr)$ (by which the two Green's forms differ) does not matter, and for the cocycles on~$F^\tess_1$ it amounts to the addition of a coboundary.

\subsection{Representatives of boundary germs}

Our aim is to construct a funnel form from a cocycle with values in the boundary germs. This requires to go from Laplace eigenfunctions defined only near the boundary of~$\uhp$ to Laplace eigenfunctions on the whole of~$\uhp$. The following proposition (proved in~\cite{BLZ13}) shows that non-trivial boundary germs cannot be represented by elements of~$\E_s(\uhp)$.

\begin{prop}[Proposition~5.3 in \cite{BLZ13}]\label{prop:trivrep}
For~$s\in \CC\smallsetminus \ZZ_{\leq 0}$, the zero element is the only element of~$\W{s}\om(\proj\RR)$ that can be represented by an element of~$\E_s(\uhp)$.
\end{prop}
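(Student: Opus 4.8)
\textbf{Proof proposal for Proposition~\ref{prop:trivrep}.}

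The plan is to argue by contradiction: suppose $w\in\W s\om(\proj\RR)$ is represented by some $u\in\E_s(\uhp)$, i.e.\ $u$ has $s$-analytic boundary behavior near all of~$\proj\RR$ and $w$ is the boundary germ of~$u$; I want to deduce $w=0$, which by Proposition~\ref{thm-resiso} is equivalent to showing that the restriction $\rho_{s}(w) = A\vert_{\proj\RR}\in\V s\om(\proj\RR)$ vanishes, where $A$ is a real-analytic core of $u$ near~$\proj\RR$. The key tool is the Cauchy-type integral formula for the kernel $q_s$, namely Theorem~\ref{thm-psC}: since $u\in\E_s(\uhp)$ is an eigenfunction on the whole upper half plane, for every piecewise smooth positively oriented simple closed curve $C$ in~$\uhp$ and every $z$ in the region encircled by~$C$ one has $\tfrac2\pi\int_C\{u,q_s(\cdot,z)\}=u(z)$.

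First I would fix a basepoint $z_0\in\uhp$ and choose a family of closed curves $C_\varepsilon$ that degenerate onto $\proj\RR$ — for instance the boundary of the region $\{z\in\uhp\colon \Ima z>\varepsilon,\ |z|<1/\varepsilon\}$ (in the half-plane picture, a large semicircle capped by a horizontal segment at height~$\varepsilon$, taking care of the point~$\infty$ by the usual chart via~$S$). For fixed $z_0$ inside $C_\varepsilon$ we have $u(z_0)=\tfrac2\pi\int_{C_\varepsilon}\{u,q_s(\cdot,z_0)\}$ for all small~$\varepsilon$. The next step is to pass to the limit $\varepsilon\to 0$. Here the $s$-analytic boundary behavior $u(z)=y^sA(z)$ near~$\proj\RR$ enters: on the horizontal parts of $C_\varepsilon$ the Green's form $\{u,q_s(\cdot,z_0)\}$ can be computed from the expression analogous to~\eqref{acc-expr}, and the factor $y^s=\varepsilon^s$ forces the integrand to tend to~$0$ as $\varepsilon\downarrow0$ (using $\Rea s>0$ — this is exactly why negative and zero integers are excluded; for $s\in\ZZ_{\le 0}$ the Legendre function $Q_{s-1}$ is not even defined, consistent with the hypothesis $s\in\CC\smallsetminus\ZZ_{\le 0}$). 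Likewise the contribution of the far arc $|z|=1/\varepsilon$ vanishes because $q_s(\cdot,z_0)$, as an element of $\W s\om$, decays towards the boundary. One concludes $u(z_0)=0$. Since $z_0$ was arbitrary, $u\equiv0$ on~$\uhp$, hence $w=0$.

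Alternatively — and this is perhaps cleaner, so I would present it as the main line and only sketch the degeneration argument — I can use Theorem~\ref{thm-psC} in the \emph{exterior} form: pick any small simple closed curve $C$ \emph{not} encircling~$z_0$ but lying in a region where $u$ extends (which, having $s$-analytic boundary behavior everywhere, it does across~$\proj\RR$), and combine the interior and exterior evaluations to isolate the boundary germ. The cleanest packaging is: the boundary germ $w$ of $u$ is represented near each point of~$\proj\RR$ by $u$ itself, and $\rho_s(w)$ is encoded by the analytic core $A\vert_{\proj\RR}$; running the Cauchy formula with $C$ shrunk to the boundary shows that this core is recovered as a boundary integral of $u$ against $q_s$, while the same formula with $z_0$ outside gives $0$, and the difference of the two is precisely (a nonzero multiple of) $\rho_s(w)$ evaluated against a test object, forcing $\rho_s(w)=0$, hence $w=0$ by Proposition~\ref{thm-resiso}.

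The main obstacle I anticipate is \emph{not} the algebra but the analytic justification of the limit $\varepsilon\to0$ and of the decay of the $q_s$-kernel along the degenerating curves: one must control $\{u,q_s(\cdot,z_0)\}$ uniformly on the horizontal and far-arc pieces of $C_\varepsilon$, using the explicit asymptotics of $Q_{s-1}(\cosh d(z,z_0))$ near the boundary of~$\uhp$ together with the $s$-analytic boundary expansion $u=y^sA$ with $A$ real-analytic (hence bounded) up to~$\proj\RR$. Since the excerpt provides $q_s(\cdot,z')\in\W s\om(\proj\RR)$ and the precise relation $\rho_s q_s(\cdot,z')=b(s)R(\cdot;z')^s$, together with Theorem~\ref{thm-psC}, all the needed ingredients are available; the estimate is then a routine (if slightly technical) boundary-asymptotics computation of the type already carried out in the proof of Lemma~\ref{lem-io}. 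In fact, since this proposition is quoted verbatim from \cite[Proposition~5.3]{BLZ13}, the cleanest exposition is simply to reduce to that reference after noting that the transition from the disk model and the Green's form $[\cdot,\cdot]$ to the present conventions only changes a multiplicative constant (cf.\ Remark~\ref{rmk:transition}), and I would structure the write-up so that the self-contained Cauchy-formula argument above is the substance and the citation is the fallback.
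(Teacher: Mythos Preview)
The paper gives no proof of this proposition; it simply quotes \cite[Proposition~5.3]{BLZ13}. Your final fallback --- ``reduce to that reference'' --- is therefore exactly what the paper does, and there is nothing further in the paper to compare against.

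Your Cauchy-formula degeneration idea is sound, but the execution has two issues. The minor one: in the half-plane model the curve $C_\varepsilon$ has unbounded Euclidean length, so the ``routine'' estimate near~$\infty$ is not routine; switching to the disk model (as \cite{BLZ13} does) fixes this cleanly, since $C_\varepsilon=\{|w|=1-\varepsilon\}$ has bounded length, both $u$ and $q_s(\cdot,z_0)$ are of the form $(1-|w|^2)^s\cdot(\text{real-analytic on a neighborhood of }\overline\DD)$, the leading $(1-|w|^2)^{2s-1}$ terms in the Green's form cancel exactly as in~\eqref{acc-expr}, and one gets $\{u,q_s(\cdot,z_0)\}=O(\varepsilon^{2s})$ on~$C_\varepsilon$, hence $\int_{C_\varepsilon}\to 0$ for $\Rea s>0$.

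The real gap is the range of~$s$. Your parenthetical ``using $\Rea s>0$ --- this is exactly why negative and zero integers are excluded'' conflates two unrelated restrictions: the hypothesis $s\in\CC\smallsetminus\ZZ_{\leq 0}$ permits, e.g., $s=-\tfrac12$ or $s=i$, and for any $\Rea s\leq 0$ the bound $O(\varepsilon^{2\Rea s})$ fails to vanish. Moreover Theorem~\ref{thm-psC} itself is stated only for $s\notin\tfrac12\ZZ_{\leq 0}$, already short of the range needed here. So the Cauchy-formula route, as written, does not reach the full statement. For the range $\Rea s\in(0,1)$ actually used in this paper your disk-model argument would be adequate; for the proposition as stated you must either invoke \cite{BLZ13} directly or supplement the argument (for instance by analytic continuation in~$s$, or by analyzing the boundary asymptotics of the elementary spherical eigenfunctions, which is where the exclusion of $\ZZ_{\leq 0}$ genuinely enters). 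Your ``alternative'' exterior-curve argument is too vague to evaluate; note that $u$ itself does not extend across~$\proj\RR$, only $y^{-s}u$ does, so the Cauchy formula cannot be applied to curves leaving~$\uhp$.
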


For the construction of funnel forms, we use therefore representatives of $s$-analytic boundary germs that are \textit{a priori} functions on all of~$\uhp$ but not necessarily Laplace eigenfunctions.  

For any open subset~$I\subset \proj\RR$ we let $\G{s}\om(I)$ \index[symbols]{G@$\G s \om$} denote the vector space of functions in~$C^2(\uhp)$ that represent a section of~$\W{s}\om(I)$. Thus, an element~$f\in \G{s}\om(I)$ is a \mbox{$C^2$-}function on~$\uhp$ for which there exists an open neighborhood~$U$ of~$I$ in~$\proj\CC$ such that the restriction of~$f$ to~$U\cap\uhp$ is in~$\E_s(U\cap\uhp)$, and that the function
\[
U\cap\uhp\to\CC\,,\quad z\mapsto y^{-s}\, f(z)\,,
\]
extends to a real-analytic function on~$U$. In particular, elements of~$\G{s}\om(\proj\RR)$ represent $s$-analytic boundary germs on~$\proj\RR$. 

From any open neighborhood~$U$ of~$I$ in~$\proj\CC$ and any representative of an $s$-analytic boundary germ in~$\mc E_s(U\cap\uhp)$ we can construct an element of~$\G s \om(I)$ representing the same $s$-analytic boundary germ by using a $C^2$ cut-off function. 
In this way we get a short exact sequence of sheaves 
\[
0 \longrightarrow \N{s}\om \rightarrow \G{s}\om \rightarrow \W{s}\om
\rightarrow 0\,,
\]
where $\N{s}\om$ is defined as the kernel of the map $\G s \om \to \W s \om$ described above. \index[symbols]{N@$\N s \om$} A section in~$\N{s}\om(I)$ is a function in~$C^2(\uhp)$ that vanishes on~$U \cap \uhp$ for some neighborhood~$U$ of~$I$ in~$\CC$. In particular, $\N{s}\om(\proj\RR)=C^2_c(\uhp)$.

We extend these definitions, in a way analogous to the procedures in Sections~\ref{sect-sars} and \ref{sect-abg}, to form the spaces~$\G{s}\om[F] \supseteq \N{s}\om[F]$ for finite subsets~$F\subseteq\proj\RR$, and the $\Gm$-modules~$\G{s}\fxi(\proj\RR) \supseteq \N{s}\fxi(\proj\RR)$, etc. \index[symbols]{G@$\G s \fxi$}\index[symbols]{N@$\N s \fxi$} For~$f\in C^2(\uhp)$ we call the smallest closed subset~$Z\subset \uhp$ such that $f$ restricts to~$\E_s(\uhp\smallsetminus Z)$ its set of \emph{singularities}~$\sing s(f)$. \index[defs]{set of singularities}\index[defs]{singularities}\index[symbols]{S@$\sing s$} We remark that this concept of singularities depends on the spectral parameter~$s$. Indeed, if $u \in \E_{s_1}(\uhp)$ such that $u\neq 0$ and if $s\in\CC$ such that~$s_1(1-\nobreak s_1) \neq s(1-\nobreak s)$, then $\sing s(u) = \uhp$. We remark further that if $f$ represents the $s$-analytic boundary germ~$w$, then the sets~$\sing s(f)$ and $\bsing(w)$ are always disjoint. While $\sing s(f)$ is a subset of~$\uhp$, $\bsing(w)$ is a subset of~$\proj\RR$.

\section{Surjectivity II: From cocycles to funnel forms}\label{sect-coc-ff}
\markright{15. FROM COCYCLES TO FUNNEL FORMS}

In this section we construct an isomorphism \index[symbols]{T@$\alphu_s$}
\[ 
\alphu_s\colon H^1\bigl( F^\tess_\bullet; \W s \om(\proj\RR), \W s {\fxi;\exc;\aj}(\proj\RR)\bigr)^\van_\sic \rightarrow \A_s
\]
that descends to isomorphisms 
\begin{align*}
\alphu_s&\colon H^1\bigl( F^\tess_\bullet;\W s \om(\proj\RR), \W s {\fxi;\exc,\smp;\aj}(\proj\RR)\bigr)^\van_\sic \rightarrow \A_s^1
\intertext{and}
\alphu_s&\colon H^1\bigl( F^\tess_\bullet; \W s \om(\proj\RR), \W s {\fxi;\exc,\infty;\aj}(\proj\RR)\bigr)^\van_\sic \rightarrow \A_s^0\,,
\end{align*}
each one inverting the corresponding instance of the map~$\qcoh_s$ (see~\eqref{qcoh}). As already mentioned above, we follow largely the approach in~\cite[\S7, \S12.2--4]{BLZm}, where similar results were obtained for \emph{cofinite} Fuchsian groups, and put the main emphasis of the discussion on the extension to funnels. 
We have to carry out a number of tasks:
\begin{enumerate}[(T1)]
\item\label{task1} For a given cocycle~$\psi$ construct an invariant eigenfunction~$u_\psi$, and show that $u_\psi$ does not depend on the choice of the cocycle in its cohomology class or on other choices made in the construction, and yields zero if $\psi$ is a coboundary.
\item\label{task2} Show that $u_\psi$ is a funnel form, and is a resonant or cuspidal funnel form if the cocycle~$\psi$ has values in the appropriate submodule.
\item\label{task3} Show that if the cocycle~$\psi$ represents a funnel form~$u$ (i.\,e., if $\qcoh_s(u)=\psi$), then $u_\psi = u$.
\item\label{task4} Show that if $u_\psi =0$, then $\psi$ is a coboundary.
\end{enumerate}

We start with proposing a construction of $\Gamma$-invariant eigenfunctions from cocycles in~$Z^1\bigl(F^\tess_\bullet;\W s \om(\proj\RR), \W s {\fxi;\exc;\aj}(\proj\RR)\bigr)^\van_\sic$.

\subsection{From a cocycle to an invariant eigenfunction}\label{sect-coiei}

In this section we discuss how to associate to a cocycle $\psi \in Z^1\bigl(F^\tess_\bullet;\W s \om(\proj\RR), \W s {\fxi;\exc;\aj}(\proj\RR)\bigr)^\van_\sic$ a $\Gamma$-invar\-iant eigenfunction~$u_\psi$ of $\Delta$ with spectral parameter~$s$. We will first assign to any cocycle~$\psi$ and any $\eps>0$ a cochain~$\tilde\psi_\eps\in C^1(F^\tess_\bullet;\G s \om(\proj\RR), \G s {\fxi;\exc;\aj}(\proj\RR)\bigr)$. One of the essential properties of the cochain~$\tilde\psi_\eps$ will be that for each edge~$e\in X^\tess_1$, the function~$\tilde\psi_\eps(e)\in\G s \fxi(\proj\RR)$ is a representative of~$\psi(e)\in \W s \fxi(\proj\RR)$, and that the set~$\sing s \tilde\psi_\eps(e)$ is `sufficiently close' to~$e$. Evaluating this cochain along well-chosen cycles around points in~$\uhp$ will then define an element~$u_\psi\in\mc E_s^\Gamma$. 

To state the properties of the associated cochain~$\tilde\psi_\eps$ more precisely we need a few definitions. For $R>0$ and $e\in X^{\tess,Y}_1$ we let $N_R(e)$ denote the closed hyperbolic $R$-neighborhood of~$e$, that is, the set of points in~$\uhp$ with hyperbolic distance to~$e$ at most~$R$. Further we set \index[symbols]{N@$N_R$}
\begin{equation}
 N_R(\infty) \coloneqq \bigl\{ z\in\uhp \setmid |\Rea z| \leq R,\ |\Ima z| \geq \tfrac1R \bigr\}\,,
\end{equation}
and for~$g\in\Gamma$, 
\begin{align}
 N_R(ge_\infty) & \coloneqq gT\,N_R(\infty)\,,
 \\
 N_R(ge_1) & \coloneqq gh\,N_R(\infty)\,,
\end{align}
where
\[
 h \coloneqq \frac{1}{\sqrt{2Y}}\bmat{1}{-Y}{1}{Y}
\]
is the element in $\PSL_2(\RR)$ that maps the geodesic path from~$iY$ to~$\infty$ to the edge~$e_1$. For any $R>Y$ the set~$N_R(e_\infty)$ is a neighborhood of~$e_\infty$, and the set~$N_R(e_1)$ is a neighborhood of~$e_1$. For any $\eps>0$ we let
\[
 E_\eps(0) \coloneqq \{ z\in \uhp\setmid \Ima z < \eps\}
\]
denote the set of point in~$\uhp$ that are $\eps$-near to~$\RR$. For $g\in\Gamma$ we set
\begin{equation}
 E_\eps(gf_0) \coloneqq gE_\eps(0)\,,
\end{equation}
which we may understand as the set of points in~$\uhp$ that are $\eps$-near to $gf_0$ in the imaginary direction.

\begin{prop}\label{prop:eps_cochain}
Let $s\in\CC$, $\Rea s>0$. Suppose that 
\[
\psi\in Z^1\bigl(F^\tess_\bullet; \W s \om(\proj\RR), \W s {\fxi;\exc;\aj}(\proj\RR)\bigr)_\sic^\van
\]
and $\eps>0$. Then there exists a cochain
\[
\tilde\psi_\eps\in C^1\bigl(F^\tess_\bullet;\G s \om(\proj\RR), \G s {\fxi;\exc;\aj}(\proj\RR)\bigr)
\]
with the following properties:
\begin{enumerate}[{\rm (i)}]
 \item For all~$e\in X^\tess_1$, the function~$\tilde\psi_\eps(e)\in \G s \fxi(\proj\RR)$ is a representative of the element~$\psi(e)\in\W s \fxi(\proj\RR)$.
 \item There exists~$R>0$ such that for all~$e\in X^\tess_1\smallsetminus(\Gamma f_0)$ we have 
 \[\sing s \tilde\psi_\eps(e)\subseteq N_R(e)\,.\]
 \item\label{prop:cochainiii} There exist~$R>0$ such that for all~$g\in\Gamma$ we have
 \[
  \sing s \tilde\psi_\eps(gf_0) \subseteq \bigl( N_R(ge_1) \cup N_R(gTSe_1) \bigr) \cap E_\eps(gf_0)\,.
 \]
\end{enumerate}
\end{prop}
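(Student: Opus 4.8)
\textbf{Plan of proof for Proposition~\ref{prop:eps_cochain}.}
The idea is to produce, edge by edge on the finitely many $\Gamma$-orbit representatives in $X^\tess_1$, a $C^2$-function on $\uhp$ that represents the prescribed boundary germ $\psi(e)$ and whose interior singular set is squeezed into a prescribed small neighbourhood of $e$, and then to extend $\Gamma$-equivariantly, using that $\Gamma$ acts freely on $X^\tess_1$. Concretely, for each of the representative edges $e\in\{e_1,e_2,e_\infty,f_0,f_1,f_\infty\}$ one starts from a genuine local representative of $\psi(e)$: by definition of $\W s \fxi(\proj\RR)$ there is an open neighbourhood $U$ in $\proj\CC$ of $\proj\RR$ minus the (finite) singular set $\bsing\psi(e)\subseteq\Xi$ on which a Laplace eigenfunction $u_e\in\mc E_s(U\cap\uhp)$ with $s$-analytic boundary behaviour and the required regularity (\texc at points of $\Gamma\infty$, and analytic jump at points of $\Gamma 1$ in the $f_0$-case) is defined. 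One then cuts $u_e$ off: choose a $C^2$ cut-off function that is $\equiv 1$ on a smaller neighbourhood of the relevant part of $\proj\RR$ (away from the singular boundary points) and is supported in $U\cap\uhp$, whose transition region is forced into a thin hyperbolic tube around $e$ for interior edges, or into the horizontal strip $E_\eps(0)$ (translated appropriately) for $e=f_0$. The cut-off product lies in $\G s \om$ (respectively $\G s {\fxi;\exc;\aj}$), represents the same boundary germ $\psi(e)$, and its singular set $\sing s$ is contained in the closure of the transition region; this is where the neighbourhoods $N_R(e)$, respectively $\bigl(N_R(ge_1)\cup N_R(gTSe_1)\bigr)\cap E_\eps(gf_0)$, come from. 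For $e=f_0$ the key point is that the vanishing condition~\tvan together with the analytic-jump property at $1$ and $\lambda-1$ lets one split $\psi(f_0)$ into pieces supported, in the complex-neighbourhood sense, near the two ``attached'' inner edges $e_1$ and $TSe_1$, so that after cutting off the singular set meets only the union $N_R(ge_1)\cup N_R(gTSe_1)$, and one can additionally arrange (since the analytic continuations across $1$ and $\lambda-1$ reach genuinely into $\uhp$) that the remaining singularities lie below height $\eps$, i.e.\ inside $E_\eps(gf_0)$.

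The single uniform $R$ in (ii) and (iii) is obtained by taking the maximum over the finitely many orbit representatives of the (finitely many) $R$'s produced for each, and then transporting by $\Gamma$; here one uses that $\Gamma$ acts by isometries, so $\sing s \tilde\psi_\eps(ge)=g\,\sing s\tilde\psi_\eps(e)\subseteq g N_R(e)=N_R(ge)$ by the very definition of $N_R(ge_\infty)=gT N_R(\infty)$ and $N_R(ge_1)=ghN_R(\infty)$, and likewise $E_\eps(gf_0)=gE_\eps(0)$ is $\Gamma$-equivariant by construction. No cocycle relation is needed at this stage: $\tilde\psi_\eps$ is only a \emph{cochain}, not required to satisfy $d\tilde\psi_\eps(V_j)=0$; all that is asked is edgewise representation of $\psi(e)$ together with the localisation of singularities. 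That is exactly why one is free to make independent cut-off choices on the orbit representatives. (The cocycle identity will be recovered only modulo compactly supported functions in~$\uhp$, and that discrepancy will be absorbed later when $u_\psi$ is defined by integrating $\tilde\psi_\eps$ around cycles; it is not part of the present statement.)

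The main technical obstacle is the $f_0$-edge, i.e.\ property~\eqref{prop:cochainiii}: unlike the interior edges, $f_0$ is a free side lying \emph{on} $\proj\RR$, its endpoints $1$ and $\lambda-1$ are in $\Xi$ where $\psi(f_0)$ carries analytic jumps, and one must simultaneously (a) keep the singular set inside the \emph{imaginary-direction} strip $E_\eps(gf_0)$ of controlled height $\eps$, and (b) confine it to the union of the two tubes $N_R(ge_1)\cup N_R(gTSe_1)$ while avoiding any contribution over the funnel interval $(\lambda-1,1)_c$. Here the vanishing condition~\tvan ($\rho_s(\psi(f_0))|_{(\lambda-1,1)_c}=0$) is essential: it says that on the funnel side the boundary germ already extends as the zero germ, so a representative can be chosen that is genuinely zero near $(\lambda-1,1)_c$, and one only has to manufacture a representative with the right jumps near $1$ and near $\lambda-1$ separately and glue. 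Using the analytic-jump structure one writes $\psi(f_0)$, via Proposition~\ref{prop-sepsing} (separation of singularities) applied to the two boundary points $1,\lambda-1$, as a sum of a piece singular only at $1$ and a piece singular only at $\lambda-1$, represents each by an eigenfunction defined on a full one-sided complex neighbourhood extending into $\uhp$, cuts off into a thin slab below height $\eps$ hugging the corresponding inner edge, and adds the results. The remaining edges ($e_\infty$ via one-sided averages and the condition~\texc, $e_1$ and $f_1$, $f_\infty$) are handled by the same cut-off recipe but are easier because no $\eps$-in-the-imaginary-direction constraint is imposed and the neighbourhoods $N_R(\cdot)$ are genuinely two-dimensional tubes. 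Throughout I would cite the short-exact-sequence discussion $0\to\N s\om\to\G s\om\to\W s\om\to0$ from the end of Section~\ref{sect-abg} for the existence of $C^2$ representatives, and Proposition~\ref{thm-resiso} and Proposition~\ref{prop:trivrep} to keep track of which germ is being represented.
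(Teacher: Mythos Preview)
Your plan is essentially the paper's approach: choose lifts on the six orbit representatives $\{e_1,e_2,e_\infty,f_0,f_1,f_\infty\}$ with singularities confined as required, then extend $\Gamma$-equivariantly; the uniform $R$ comes from the finite maximum and the built-in $\Gamma$-equivariance of the sets $N_R(\cdot)$ and $E_\eps(\cdot)$.

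Two small points of divergence. First, for $f_0$ the paper does \emph{not} invoke separation of singularities. It observes directly that, by~\tvan, the germ $\psi(f_0)$ is zero on $(\lambda-1,1)_c$, so a representative $\Psi$ can be taken to equal $0$ for $\Ima z\ge\eps$ simply by placing the transition zone between $0$ and $y^sA(z)$ arbitrarily close to $[1,\lambda-1]$; the $N_R(e_1)\cup N_R(TSe_1)$ confinement comes from the analytic-jump structure at the two endpoints exactly as for $e_1$. Your proposed splitting $\psi(f_0)=A_1+A_{\lambda-1}$ via Proposition~\ref{prop-sepsing} is not wrong, but it is unnecessary and in fact makes the $E_\eps$-bound harder: the individual summands $A_1,A_{\lambda-1}$ need not vanish on $(\lambda-1,1)_c$, so you would have to re-extract the vanishing after splitting. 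The paper's direct argument is cleaner. Second, one-sided averages play no role in lifting $\psi(e_\infty)$ here (they were used earlier to \emph{define} $c_u(e_\infty)$ from a funnel form); what matters for the lift is the condition~\texc, and the paper simply cites \cite[\S12.2]{BLZm} for that case. Proposition~\ref{prop:trivrep} is likewise not needed in this proof.
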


For showing the existence of the cochain~$\tilde\psi_\eps$ in Proposition~\ref{prop:eps_cochain} we will prescribe it on the six edges in $\{e_1, e_2, e_\infty, f_0,f_1,f_\infty\}$ (see Figure~\ref{fig-gen}, p.~\pageref{fig-gen}) which form a $\Gamma$-basis of~$X^\tess_1$, and then extend it to all of~$X^\tess_1$ in the unique way such that~$\tilde\psi_\eps$ is $\Gamma$-equivariant. The values of $\tilde\psi_\eps$ on these six edges are independent of each other. The value of~$\eps$ has no influence on the value of $\tilde\psi_\eps$ on the five edges in $\{e_1,e_2,e_\infty, f_1,f_\infty\}$, it only influences the choice of~$\tilde\psi_\eps(f_0)$. This property can already be expected from the statement of Proposition~\ref{prop:eps_cochain} considering that only for $\tilde\psi_\eps(f_0)$ (and its $\Gamma$-translates) the bounding domain for the set of singularities depends on~$\eps$. There is a lot of freedom in the choice of~$\tilde\psi_\eps$ (see the proof below of Proposition~\ref{prop:eps_cochain}). Nevertheless  we cannot expect to find a choice of~$\tilde\psi_\eps$ satisfying the cocycle relation, and hence, up to some exceptional cases, $\tilde\psi_\eps$ is only a cochain, not a cocycle. 

For cofinite Fuchsian groups such a result is provided in~\cite[Section~12.2]{BLZm}, of which we will take advantage here. The presence of ordinary points in our setup makes the construction more complicated. In particular, compared to~\cite{BLZm}, the treatment of the edge~$f_0$ requires a new approach.

\begin{proof}[Proof of Proposition~\ref{prop:eps_cochain}]
For $e\in \{e_2, f_1, f_\infty\}$ the function~$\psi(e)$ is 
an element of~$\W s \om(\proj\RR)$ since $e\in X^{\tess,Y}_1$. For $\tilde\psi_\eps(e)$ we fix any lift of~$\psi(e)$ to~$\G s \om(\proj\RR)$, not depending on~$\eps$. Since we find, by definition of~$\G s \om(\proj\RR)$, an open neighborhood~$U$ of~$\proj\RR$ in~$\proj\CC$ such that the restriction of~$\tilde\psi_\eps(e)$ to~$U\cap\uhp$ is in~$\mc E_s(U\cap\uhp)$, the set~$\sing s \tilde\psi_\eps(e)$ of singularities of~$\tilde\psi_\eps$ is in a compact subset of~$\uhp$. Thus, for all sufficiently large~$R>0$,
\[
\sing s \tilde\psi_\eps(e)\subseteq N_R(e)\,.
\]

For $e=e_\infty$, we rely on~\cite[\S~12.2]{BLZm} for the choice of~$\tilde\psi_\eps(e_\infty)$. In this situation, the condition~\texc at the singularities of~$\psi(e_\infty)$ yields that we can find a lift~$\tilde\psi_\eps(e_\infty)$ of~$\psi(e_\infty)$ in~$\G s {\fxi;\exc;\aj}(\proj\RR)$ that satisfies $\sing s \tilde\psi_\eps(e_\infty) \subseteq  N_R(e_\infty)$ for all sufficiently large~$R>0$.

For $e=e_1$ we have $\psi(e_1) \in \W s{\om;\exc;\aj}[1]$. Thus, $\psi(e_1)(z) = y^s A(z)$ on~$U \cap \uhp$ for some open neighborhood~$U$ of~$\proj\RR\smallsetminus \{1\} $ in~$\CC$ and a real-analytic map~$A\colon U\to\CC$. The condition~\taj (`analytic jump') implies that there is a real-analytic function~$A_+$ on a neighborhood~$U_+$ of~$(1-\nobreak \eps,\infty)$ in~$\CC$ such that $A_+$ coincides with~$A$ on and near~$(1,\infty)$, and similarly there is a real-analytic function~$A_-$ on a complex neighborhood of~$ (-\infty,1+\nobreak\eps)$ that coincides with~$A$ on and near~$(-\infty,1)$. This means that near the point~$1$ the element~$\psi(e_1)$ is represented by an $s(1-\nobreak s)$-eigenfunction of~$\Delta$ with a singularity on the points of~$e_1$. Therefore we find (and fix) a lift~$\tilde\psi_\eps(e_1)\in \G s {\fxi;\exc;\aj}(\proj\RR)$ of~$\psi(e_1)$ with $\sing s \tilde\psi_\eps(e_1) \subseteq N_R(e_1)$ for all sufficiently large~$R>0$.

For $e=f_0$, an analogous discussion shows that the condition~\taj makes it possible to lift $\psi(f_0)\in\W s {\om;\exc;\aj}[1,\lambda-1]$ to a map~$\Psi\in\G s {\fxi;\exc;\aj}(\proj\RR)$ with
\[
\sing s \Psi  \subset N_R(e_1) \cup N_R(TS e_1)
\]
for all sufficiently large~$R>0$. 

We now show that we can choose $\Psi$ such that $\sing s \Psi$ is, in addition, contained in~$E_\eps(0) = \{ z\in \uhp\setmid \Ima z < \eps\}$. Let~$A$ be a real-analytic core of~$\psi(f_0)$ (or of~$\Psi$). For an arbitrary choice of~$\Psi$, its set~$\sing s \Psi$ of singularities is contained in a domain as indicated in Figure~\ref{fig-f0}.
\begin{figure}
\centering
\[\setlength\unitlength{1.2cm}
\begin{picture}(7,1.9)(0,-.1)
\put(0,0){\line(1,0){6}}
\put(1.5,0){\circle*{.1}}
\put(5,0){\circle*{.1}}
\put(1.4,-.3){$1$}
\put(4.7,-.3){$\lambda\!-\!1$}
\put(0.5,.7){$0$}
\put(5.9,.9){$0$}
\put(2.9,.16){$y^s A(z)$}
\put(2.4,.8){singularities}
\thicklines
\qbezier(1.5,0)(1.5,.5)(1,.6)
\qbezier(5,0)(5,.5)(5.7,.6)
\qbezier(1,.6)(1.2,1.1)(1.7,1.4)
\qbezier(5.7,.6)(5.4,1.1)(4.8,1.4)
\put(1.7,1.4){\line(1,0){3.1}}
\put(3.25,0){\oval(3.5,.9)[t]}
\end{picture}
\]
\caption{Lift of~$\psi(f_0)$} \label{fig-f0}
\end{figure}
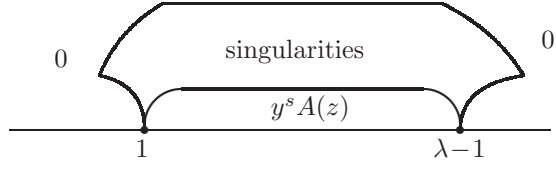
The condition~\tvan on the cocycle~$\psi$ shows that $\psi(f_0)$ vanishes identically near~$0$. Since $0$ has an obvious real-analytic extension to~$\uhp$, we can put the transition zone between~$0$ and~$y^s \, A(z)$ as near to the interval~$[1,\lambda-1]$ as is convenient. In particular, we can choose $\Psi$ such that $\Psi(z) = 0$ for~$\Ima z \geq \eps$, and hence $\sing s \Psi$ is $\eps$-near to~$\RR$. We set $\tilde\psi_\eps(f_0)$ to be any such choice of~$\Psi$. 

Extending the choices for $\tilde\psi_\eps(e)$ with $e\in\{e_1,e_2,e_\infty, f_0,f_1,f_\infty\}$ to a $\Gamma$-equiv\-ari\-ant function~$\tilde\psi_\eps$ on all of $X^\tess_0$ completes the proof.
\end{proof}

We now discuss how to associate to $\psi\in Z^1\bigl(F^\tess_\bullet;\W s \om(\proj\RR), \W s {\fxi;\exc;\aj}(\proj\RR) \bigr)$ an element $u_\psi\in \E_s^\Gamma$. In view of Theorem~\ref{thm-psC} and the integral transform in~\eqref{quY} our definition is rather natural; we proceed as in~\cite[\S12.2]{BLZm}. To that end let $Z$ be a compact subset of~$\uhp$ (which one should imagine as being rather large; we will define the eigenfunction first on~$Z$ and then enlarge~$Z$). We choose~$\eps>0$, a lift $\tilde\psi_\eps\in C^1(F^\tess_\bullet;\G s \om(\proj\RR), \G s {\fxi;\exc;\aj}(\proj\RR)\bigr)$ of $\psi$ (as provided by Proposition~\ref{prop:eps_cochain}) and a (wide) path~$C$ along edges of~$\tess$ such that 
\begin{itemize}
\item $C$ encircles~$Z$ once in the positive direction, and
\item for each edge~$e\in X^\tess_1$ that is part of~$C$, the set~$Z$ is contained in a component of~$\uhp\smallsetminus\sing s \tilde\psi_\eps(e)$ that has a part of~$\proj\RR$ in its boundary. 
\end{itemize}
The path~$C$ may have to go through cusps and along edges in~$\Gm f_0$. The difference with the cofinite case in~\cite{BLZm} is that if~$C$ passes through~$f_0$ or a $\Gm$-translate of~$f_0$, then we cannot always make that path~$C$ any wider at that place. The dependence on~$\eps$ of the sets of singularities of~$\tilde\psi_\eps(g\,f_0)$ (for~$g\in\Gm$) ensures that the hyperbolic distance of~$\sing s \tilde\psi_\eps(g\,f_0)$ to~$Z$ can be increased as much as we want by decreasing~$\eps$. Therefore, the choice of a pair~$(\eps, C)$ with the claimed properties is indeed possible. 

For $z\in Z$ we set 
\begin{equation}\label{eq:def_u}
u_{\psi,Z}(z) \coloneqq \frac2\pi\, \tilde\psi_\eps(C)(z)\,.
\end{equation}
In the same way as in~\cite[\S7,\S12]{BLZm} we deduce that the value of the right hand side of~\eqref{eq:def_u} does not depend on the choice of the compact set~$Z$ that contains~$z$, and not on the choices of~$\eps$, $C$ and~$\tilde\psi_\eps$ as long as these objects obey the requirements imposed above on their choices. Therefore, $u_{\psi,Z}$ is indeed only depending on~$\psi$ and~$Z$. Further, for any compact set~$Z'\subseteq\uhp$ with $Z'\supseteq Z$, the map~$u_{\psi,Z}$ is the restriction of~$u_{\psi,Z'}$ to~$Z$. Thus, exhausting $\uhp$ with increasing sequences of compact subsets, we get in the limit a function \index[symbols]{U@$u_\psi$}
\begin{equation}
u_\psi\colon\uhp\to\CC
\end{equation}
only depending on~$\psi$. This function is a $\Gamma$-invariant Laplace eigenfunction with eigenvalue~$s(1-s)$. In total, we constructed a map
\begin{equation}\label{eq:coc_u}
Z^1\bigl(F^\tess_\bullet;\W s \om(\proj\RR), \W s {\fxi;\exc;\aj}(\proj\RR)\bigr)^\van_\sic\to \mc E_s^\Gamma\,,\qquad \psi\mapsto u_\psi\,.
\end{equation}

\begin{prop}\label{prop-aldef}
Let $s\in \CC$. 
\begin{enumerate}[{\rm (i)}]
\item The map in~\eqref{eq:coc_u} induces a linear map \index[symbols]{T@$\alphu_s$}
\begin{equation}\label{al-def} 
\alphu_s\colon H^1\bigl(F^\tess_\bullet; \W s \om(\proj\RR),\W
s{\fxi;\exc;\aj}(\proj\RR)\bigr)^\van_\sic \rightarrow \E_s^\Gm\,.
\end{equation}
\item Suppose that $\Rea s\in (0,1)$. If $u\in \A_s$ and $s\neq \frac12$, or if $u\in \A_s^0$, then
\begin{equation}\label{al-coh}
\alphu_s \, \qcoh_s u = u\,.
\end{equation}
\end{enumerate}
\end{prop}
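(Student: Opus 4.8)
\textbf{Proof plan for Proposition~\ref{prop-aldef}.}

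The plan is to handle the two parts in sequence, reusing the machinery of~\cite[\S7, \S12]{BLZm} wherever the argument is insensitive to the presence of the funnel, and to supply the new argument only where the edge~$f_0$ and its $\Gamma$-translates intervene. For part~(i), the first task is to check that $\psi \mapsto u_\psi$ is well-defined as a map on cocycles, which was essentially accomplished in the construction preceding the statement: the independence of $u_{\psi,Z}(z)$ from the choices of $Z$, $\eps$, $C$ and $\tilde\psi_\eps$ follows by the same Cauchy-theorem argument as in the cofinite case (Theorem~\ref{thm-psC}), once one has observed that two admissible pairs $(\eps,C)$ and $(\eps',C')$ can always be joined by a homotopy of wide paths across which $\tilde\psi_\eps$ and $\tilde\psi_{\eps'}$ agree up to a coboundary of cochains with singularity sets staying uniformly far from~$Z$; this uses property~(iii) of Proposition~\ref{prop:eps_cochain} to push the singularities of the $f_0$-edges arbitrarily close to~$\RR$. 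Next I would verify linearity (immediate, since the construction is $\CC$-linear in $\psi$ at every step) and that $u_\psi = 0$ when $\psi$ is a coboundary: if $\psi = d f$ for a $\Gamma$-equivariant $f\colon X^\tess_0 \to \W s{\fxi;\exc;\aj}(\proj\RR)$, then along any closed wide path $C$ the telescoping sum $\tilde\psi_\eps(C)$ is, up to a $C^2_c$-correction, a difference $\tilde f(h) - \tilde f(t)$ of lifted values at coinciding endpoints, hence represents $0$ in $\mc E_s$; one must only check that the singularity-condition~\tsic and the vanishing condition~\tvan are exactly what is needed for the lift $\tilde f$ to be chosen compatibly on both sides of each $f_0$-translate. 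This yields the factored linear map $\alphu_s$ in~\eqref{al-def}.

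For part~(ii), the claim $\alphu_s\,\qcoh_s u = u$ is proved by producing, for a given funnel form $u$ (with $s\neq\frac12$, or $u\in\A_s^0$), an \emph{explicit} admissible lift of the cocycle $\psi_u = \qcoh_s u$ and evaluating~\eqref{eq:def_u}. The natural choice is $\tilde\psi_\eps(e)(z) = \int_e \{u, q_s(\cdot,z)\}$ on the inner edges $e\in X^{\tess,Y}_1$ and the extended integrals $\int_e\{u, q_s(\cdot,z)\}$ on $e\in\{e_\infty, e_1\}$, where convergence at the cusp is guaranteed by the growth hypotheses ($s$-analyticity plus, in the cuspidal case, exponential decay — exactly as in the remark after Proposition~\ref{prop-af-co}), and convergence at the funnel is Lemma~\ref{lem-io}(i). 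For the edge $f_0$ one uses Lemma~\ref{lem-io}(ii): $\psi_u(f_0)$ is represented by $z\mapsto \int_1^{\lambda-1}\{u, q_s(\cdot,z)\}$, which is $0$ on $(\lambda-1,1)_c$ (the vanishing condition) and a multiple of the real-analytic core $A$ on $(1,\lambda-1)$; a $C^2$ cut-off supported $\eps$-near $[1,\lambda-1]$ turns this into an admissible $\tilde\psi_\eps(f_0)$. Then for $z\in Z$ with $C$ a wide positively oriented path encircling $Z$, additivity of the integral along $C$ gives $\tilde\psi_\eps(C)(z) = \int_C\{u, q_s(\cdot,z)\}$, and Theorem~\ref{thm-psC} evaluates this to $\tfrac\pi2\,u(z)$, so $u_{\psi_u,Z}(z) = \tfrac2\pi\cdot\tfrac\pi2\,u(z) = u(z)$. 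Letting $Z$ exhaust $\uhp$ gives $u_{\psi_u} = u$, and since $u_{\psi_u}$ depends only on the cohomology class $\qcoh_s u$, this is~\eqref{al-coh}.

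\textbf{Main obstacle.} The delicate point is the treatment of the edge $f_0$ throughout — both the verification that distinct admissible pairs $(\eps,C)$ give the same value when $C$ must traverse $\Gamma$-translates of $f_0$ (where, unlike in the cofinite case, the path cannot always be widened locally and one instead shrinks $\eps$), and the bookkeeping of singularity sets in property~(iii) of Proposition~\ref{prop:eps_cochain} so that the Cauchy-theorem deformation argument still applies across the funnel. The second, closely related subtlety is making sure that for a coboundary $\psi = df$ the lift $\tilde f$ can be chosen so that its restrictions to the two half-neighborhoods of each $gf_0$ glue to a genuine representative in $\G s{\fxi;\exc;\aj}(\proj\RR)$; this is exactly where the analytic-jump condition~\taj on the module and the vanishing condition~\tvan on the cocycle are indispensable, and it is the one place where the argument genuinely departs from~\cite{BLZm} rather than merely citing it. Everything else (convergence of the $q_s$-integrals, the Cauchy formula, independence from $Z$) is either Lemma~\ref{lem-io}, Theorem~\ref{thm-psC}, or a routine adaptation of the cofinite construction.
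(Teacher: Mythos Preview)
Your treatment of part~(i) is in line with the paper's: both defer to \cite[\S7,\S12]{BLZm} for the independence from choices and the vanishing on coboundaries.

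For part~(ii), however, your route via a \emph{wide} path~$C$ has a genuine gap in the non-cuspidal case. You claim that $\tilde\psi_\eps(e_\infty)(z)$ can be taken as the integral $\int_{e_\infty}\{u,q_s(\cdot,z)\}$, with ``convergence at the cusp guaranteed by the growth hypotheses.'' But for a general funnel form $u\in\A_s$ there is \emph{no} growth restriction at the cusp whatsoever---the Fourier expansion~\eqref{Feu} may contain exponentially increasing $I$-Bessel terms---so this integral need not converge; the remark after Proposition~\ref{prop-af-co} covers only $u\in\A_s^0$. Moreover, Theorem~\ref{thm-psC} is stated for closed curves in~$\uhp$, so applying it to a path that passes through~$\infty$ or runs along $\Gm f_0\subset\RR$ requires an additional limiting argument that you have not supplied. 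Thus your identity $\tilde\psi_\eps(C)(z)=\int_C\{u,q_s(\cdot,z)\}$ and the subsequent evaluation both fail for $u\in\A_s\smallsetminus\A_s^0$.

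The paper sidesteps both problems by choosing the path to be $\partial V_1$, the boundary of the \emph{inner} face. All edges of $\partial V_1$ lie in $X_1^{\tess,Y}$ and hence entirely in~$\uhp$; on these edges the representative of $\qcoh_s u$ is literally $z\mapsto\int_e\{u,q_s(\cdot,z)\}$ (see~\eqref{quY}), with singularity set contained in~$e$ itself. For $z$ in the interior of~$V_1$ one may therefore take $C=\partial V_1$ and apply Theorem~\ref{thm-psC} directly to obtain $u_\psi(z)=u(z)$; equality on all of~$\uhp$ then follows from $\Gm$-invariance and real-analyticity. So the obstacle you flagged---the edge $f_0$---is not where part~(ii) is delicate; it is the cusp edge $e_\infty$, and the paper's choice of path simply avoids it.
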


\begin{proof}
These statements follow as in~\cite[\S7, \S12]{BLZm}: If $\psi$ in~\eqref{eq:coc_u} is a coboundary, then $u_\psi=0$, and hence the map in~\eqref{eq:coc_u} descends to cocycle classes. 

For the second group of statements suppose that $u\in\A_s$ and $\Rea s\in (0,1)$. If $s\not=\frac12$ or $u\in\A_s^0$, then Proposition~\ref{prop-cohAH} in combination with~\eqref{res-coh} shows that 
\[\qcoh_s u \in H^1\bigl(F^\tess_\bullet; \W s \om(\proj\RR),\W
s{\fxi;\exc;\aj}(\proj\RR)\bigr)^\van_\sic\,.\]
If now $\psi$ represents~$\qcoh_s u$, then a reasoning as in~\cite[\S12.2]{BLZm} (using Theorem~\ref{thm-psC} on the boundary of the face~$V_1\in X^\tess_2$; see Figure~\ref{fig-gen} on p.~\pageref{fig-gen}) implies that $u_\psi=u$.
\end{proof}

\begin{rmk}
\begin{enumerate}[{\rm (i)}]
\item The existence of the map~$\alphu_s$ does not require any assumptions on the spectral parameter~$s$. The restrictions on~$s$ in the second group of statements in Proposition~\ref{prop-aldef} are needed to guarantee that $\qcoh_s u$ is in the cohomology space on which~$\alphu_s$ is defined.
\item Proposition~\ref{prop-aldef} completes Tasks~\ref{task1} and~\ref{task3} (from the list on p.~\pageref{task1}).
\item In~\cite[\S7.2, (12.5)]{BLZm} the invariant eigenfunction~$u_\psi$ is represented by a sum of $\Gm$-translates of a
function (averaging operators). We do not know whether in the context of a discrete group with infinite covolume such a representation is useful or easy to handle.
\end{enumerate}
\end{rmk}

\subsection{A cocycle on an orbit of ordinary points}

In this section we complete Task~\ref{task4} and the first part of Task~\ref{task2} (from the list on p.~\pageref{task4}). For the latter, i.\,e., showing that $u_\psi$ is a funnel form if $\psi\in Z^1\bigl(F^\tess_\bullet; \W s \om(\proj\RR), \W s{\fxi;\exc;\aj}(\proj\RR)\bigr)^\van_\sic$, we need to understand the behavior of~$u_\psi$ near the set~$\Omega$ of ordinary points. In view of~\eqref{eq:def_u} this means that we need to understand the behavior of~$\psi$ on paths connecting points in~$\Gamma\,1$. For this reason we start with studying the $1$-cocycle on~$\Xi$ induced by~$\psi$, and `good' representatives for it in~$\G s \fxi$. 

Throughout let $\psi \in Z^1\bigl(F^\tess_\bullet; \W s \om(\proj\RR), \W s{\fxi;\exc;\aj}(\proj\RR)\bigr)^\van_\sic$. We recall from Section~\ref{sec:tesselation} that $\psi$ determines a $1$-cocycle~$c$ on~$\Xi\subseteq X^\tess_0$ by 
\begin{equation}\label{eq:coc_path}
 c(\xi,\eta) = \psi(p)\,,
\end{equation}
where $p\in\CC_1[X^\tess_1]$ is any balanced path with $\big(t(p),h(p)\big) = (\xi,\eta)$. The cocycle class~$[\psi]$ of the cocycle~$\psi$ on~$F^\tess_\bullet$ is completely determined by the cocycle~$c$ on~$\Xi$. 

For any~$\xi,\eta\in\Xi$, the element~$c(\xi,\eta)$ is in~$\W s {\fxi;\exc;\aj}(\proj\RR)$ with boundary singularities satisfying $\bsing c(\xi,\eta)\subseteq\{\xi,\eta\}$. In what follows we will find a function~$\tilde c(\xi,\eta)$ in~$\mc E_s(\uhp)$ that represents $c(\xi,\eta)$ on~$(\eta,\xi)_c$. If $\xi=\eta$, then $c(\xi,\eta)=0$. In this case we set $\tilde c(\xi,\eta) \coloneqq 0$. 

From now on we suppose that $\xi\not=\eta$. In the definition of~$c(\xi,\eta)$ in~\eqref{eq:coc_path} we will use only those paths~$p\in\CC_1[X^\tess_1]$ from~$\xi$ to $\eta$ that do not intersect the interval $(\eta,\xi)_c$, i.\,e., none of the edges that is part of~$p$ has an endpoint in~$(\eta,\xi)_c$. We call such paths \emph{$(\xi,\eta)$-reduced}. We assign to~$\psi$ a family~$(\tilde\psi_\eps)_{\eps>0}$ of cochains in~$C^1(F^\tess_\bullet;\G s \om(\proj\RR),\G s{\fxi;\exc;\aj}(\proj\RR)\bigr)$ as provided by Proposition~\ref{prop:eps_cochain}.

\begin{figure}
\centering
\includegraphics[height=8cm]{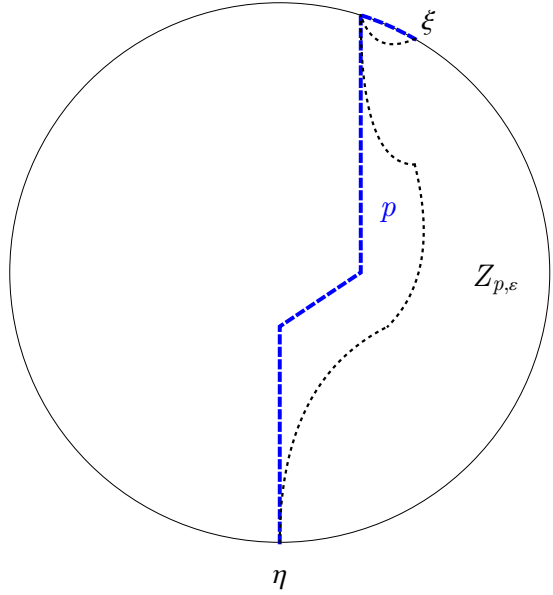}
\caption{A $(\xi,\eta)$-reduced path~$p$ from~$\xi$ to~$\eta$ depicted in the disk model of
the hyperbolic plane, and an indication of a choice of~$Z_{p,\eps}$.} \label{fig-pld0}
\end{figure}
Let $\eps>0$ and $p\in\CC_1[X^\tess_1]$ be a balanced $(\xi,\eta)$-reduced path from~$\xi$ to~$\eta$. Then the map~$\tilde \psi_\eps(p)\in \G s {\fxi;\exc;\aj}(\proj\RR)$ represents $c(\xi,\eta)$. Further, between the path~$p$ and the interval~$[\eta,\xi]_c$ there is a region~$Z_{p,\eps} \subseteq\uhp$ such that
\begin{enumerate}[(Z1)]
\item\label{propZ1} $\sing s \tilde\psi_\eps(p) \cap Z_{p,\eps} = \emptyset$,
\item\label{propZ2} $Z_{p,\eps}$ is connected, 
\item\label{propZ3} $(\eta,\xi)_c$ is in the closure of~$Z_{p,\eps}$ in~$\proj\CC$, and there exists an open neighborhood~$U$ of~$(\eta,\xi)_c$ in $\proj\CC$ such that $Z_{p,\eps}$ contains $U\cap\uhp$.
\end{enumerate}
The restricted function~$\tilde\psi_\eps(p)\vert_{Z_{p,\eps}}$ is in~$\E _s \bigl( \mathring Z_{p,\eps} \bigr)$ and represents $c(\xi,\eta)$ on~$(\eta,\xi)_c$. Therefore it has $s$-analytic boundary behavior on~$(\eta,\xi)_c$. In what follows we always assume that $Z_{p,\eps}$ is chosen to be the maximal set with  Properties~\ref{propZ1}-\ref{propZ3}. 

If we pick another balanced path from~$\xi$ to~$\eta$, say $p'\in\CC_1[X^\tess_1]$, and any~$\eps'>0$ then 
\begin{equation}
 \tilde Z \coloneqq Z_{p',\eps'} \cap Z_{p,\eps} \not= \emptyset\,.
\end{equation}
Since $Z_{p',\eps'}$ and $Z_{p,\eps}$ satisfy~\ref{propZ1}-\ref{propZ3}, it immediately follows that the closure of~$\tilde Z$  contains~$(\eta,\xi)_c$, and there is an open neighborhood~$U$ of~$(\eta,\xi)_c$ in~$\proj\CC$ such that $U\cap\uhp$ is contained in~$\tilde Z$. Let $Z$ be the connected component of~$\tilde Z$ that contains~$(\eta,\xi)_c$. 
\begin{figure}
\centering
\includegraphics[height=8cm]{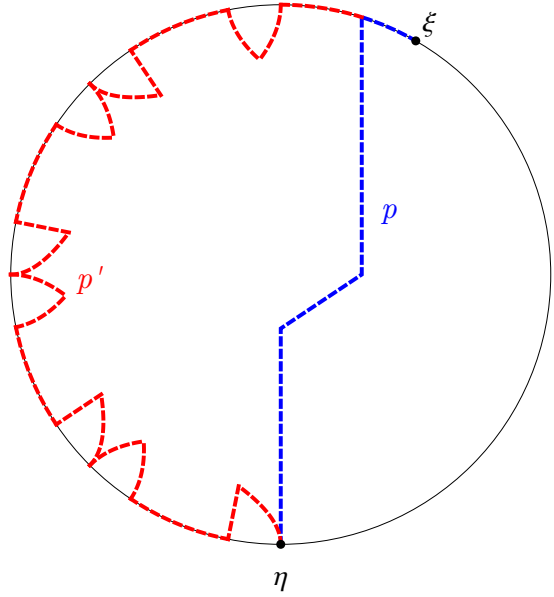}
\caption{Two $(\xi,\eta)$-reduced paths from~$\xi$ to~$\eta$ depicted in the disk model of
the hyperbolic plane. The path~$p'$ is nearer to the interval~$[\xi,\eta]_c$ in~$\proj\RR$ than~$p$.} \label{fig-pld}
\end{figure}
Since $\tilde\psi_\eps(p)\vert_Z$ and $\tilde\psi_{\eps'}(p')\vert_Z$ both represent $c(\xi,\eta)$ on~$(\eta,\xi)_c$, we may choose $U$ so small such that 
\[
\tilde\psi_\eps(p)\vert_{U\cap\uhp} = \tilde\psi_{\eps'}(p')\vert_{U\cap\uhp} \quad\eqqcolon\Psi
\]
and
\[
  \Psi\in \mc E_s(U\cap\uhp)\,.
\]
Because $\tilde\psi_\eps(p)\vert_Z, \tilde\psi_{\eps'}(p')\vert_Z\in\mc E_s(\mathring Z)$ and $Z$ is connected, it now follows that 
\[
 \tilde\psi_\eps(p)\vert_Z = \tilde\psi_{\eps'}(p')\vert_Z\,.
\]
(The maps~$\tilde\psi_\eps(p)$ and $\tilde\psi_{\eps'}(p')$ might differ on larger domains.) We can find a sequence~$(p_j)_{j\in\NN}$ of $(\xi,\eta)$-reduced paths in~$\CC_1[X^\tess_1]$ from~$\xi$ to~$\eta$ that `moves towards' the interval~$[\xi,\eta]_c$ as~$j\to\infty$ and a sequence~$(\eps_j)_j$ in~$\RR_{>0}$ with $\eps_j\to 0$ as~$j\to \infty$ such that $\big(Z_{p_j,\eps_j}\big)_j$ is an increasing sequence of sets exhausting~$\uhp$. (We refer for the possibility of finding these paths to the explanation after the proof of Proposition~\ref{prop:eps_cochain}.) Fixing such sequences, we define $\tilde c(\xi,\eta)\colon\uhp\to \CC$ by 
\begin{equation}\label{eq:seq_paths}
 \tilde c(\xi,\eta)(z) \coloneqq \tilde\psi_{\eps_j}(p_j)(z) \qquad\text{for $z\in Z_{p_j,\eps_j}$, $j\in\NN$\,.}
\end{equation}
Then $\tilde c(\xi,\eta)$ is indeed well-defined, independent of the chosen sequences, and is an element in~$\E_s(\uhp)$ that represents $c(\xi,\eta)$ on~$(\eta,\xi)_c$.

Even though the map 
\begin{equation}\label{eq:laplace_ef}
 \tilde c\colon \Xi\times\Xi \to \E_s(\uhp)\,,\quad (\xi,\eta) \mapsto \tilde c(\xi,\eta)
\end{equation}
does not necessarily inherit the anti-symmetry $c(\xi,\eta)=-c(\eta,\xi)$ from $c$, the elements~$\tilde c(\xi,\eta)$ and~$\tilde c(\eta,\xi)$ are not unrelated. 

\begin{lem}\label{lem:ps-decomp}
For $\xi,\eta\in\Xi$, $\xi\not=\eta$, 
\[
 \tilde c(\xi,\eta) + \tilde c(\eta,\xi) \ceqq  \frac \pi 2 \, u_\psi\,.
\]
\end{lem}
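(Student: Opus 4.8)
The plan is to compute the sum $\tilde c(\xi,\eta) + \tilde c(\eta,\xi)$ by realizing both summands via the same cochain $\tilde\psi_\eps$ and two paths that together encircle a region containing the point $z$ at which we evaluate, and then to invoke the Cauchy-type Theorem~\ref{thm-psC} for the kernel $q_s$. The key observation is that $\tilde c(\xi,\eta)$ is built from $(\xi,\eta)$-reduced paths $p$ from $\xi$ to $\eta$ that avoid $(\eta,\xi)_c$, and $\tilde c(\eta,\xi)$ is built from $(\eta,\xi)$-reduced paths $p'$ from $\eta$ to $\xi$ that avoid $(\xi,\eta)_c$; concatenating such a $p$ with such a $p'$ produces a closed path $C = p * p'$ in $X_1^\tess$ that goes around the whole of $\uhp$ once (in the positive orientation, after fixing orientations consistently), with the singularities of the applied cochain pushed arbitrarily close to $\proj\RR$.

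First I would fix $z\in\uhp$ and choose, as in the construction of~\eqref{eq:seq_paths} and in the definition of $u_{\psi,Z}$ in~\eqref{eq:def_u}, a large compact set $Z\subseteq\uhp$ containing $z$ together with $\eps>0$ small enough and $(\xi,\eta)$-reduced and $(\eta,\xi)$-reduced paths $p$, $p'$ such that $Z$ lies in the regions $Z_{p,\eps}$ and $Z_{p',\eps}$ respectively, hence $Z$ is separated from $\sing_s\tilde\psi_\eps(p)$ and from $\sing_s\tilde\psi_\eps(p')$; the $\eps$-dependence of the singular sets of $\tilde\psi_\eps$ on $\Gamma f_0$ (Proposition~\ref{prop:eps_cochain}\eqref{prop:cochainiii}) is exactly what makes this possible even when the paths pass through edges in $\Gamma f_0$. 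Then $\tilde c(\xi,\eta)(z) = \tilde\psi_\eps(p)(z)$ and $\tilde c(\eta,\xi)(z) = \tilde\psi_\eps(p')(z)$ by~\eqref{eq:seq_paths}, since $z\in Z\subseteq Z_{p,\eps}\cap Z_{p',\eps}$ and these values are independent of the chosen exhausting sequences.

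Next I would add the two expressions. Since $\tilde\psi_\eps$ is a cochain on $F^\tess_1$ evaluated on elements of $\CC_1[X_1^\tess]$, and $C = p * p'$ (with the appropriate sign so that the orientations match up into a genuine positively oriented closed loop in $\uhp$ surrounding $z$), we get $\tilde\psi_\eps(p)(z) + \tilde\psi_\eps(p')(z) = \tilde\psi_\eps(C)(z)$, where along every edge $e$ of $C$ the function $\tilde\psi_\eps(e)$ is a $C^2$ representative whose singular set stays away from $Z$ and in particular from $z$ and the region encircled. Applying Theorem~\ref{thm-psC} edge by edge (the integral $\int_C\{u_\psi,q_s(\cdot,z)\}$ reconstructs $u_\psi(z)$ up to the factor, cf.\ the normalization $\frac2\pi$ used in~\eqref{eq:def_u} and discussed after Theorem~\ref{thm-psC}), I would obtain $\tilde\psi_\eps(C)(z) = \frac\pi2 u_\psi(z)$, which is the claimed identity. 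Here one should note that along each edge the representative $\tilde\psi_\eps(e)$ is only an $s$-eigenfunction of $\Delta$ near $\proj\RR$, not on all of $\uhp$, but for the Cauchy theorem argument only the values near the path $C$ matter, and $C$ together with the region it encircles can be taken (by widening) to lie in the $\Delta$-eigenfunction locus of each $\tilde\psi_\eps(e)$ except possibly in a compact piece, which is handled exactly as in~\cite[\S7, \S12]{BLZm}.

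\textbf{Main obstacle.} The delicate point is orientation bookkeeping: one must check that concatenating a $(\xi,\eta)$-reduced path with a $(\eta,\xi)$-reduced path, with the correct signs dictated by the chosen orientations of the basic edges $e_1,e_2,e_\infty,f_0,f_1,f_\infty$, really yields a single positively oriented closed curve winding once around $z$, so that Theorem~\ref{thm-psC} produces $+u_\psi(z)$ rather than $0$ or $-u_\psi(z)$; and one must verify that the passage through cusps and through edges in $\Gamma f_0$ does not spoil this, using that $\tilde\psi_\eps$ is a genuine $\Gamma$-equivariant cochain and that the $\eps$-shrinking of $\sing_s\tilde\psi_\eps(\Gamma f_0)$ keeps the encircled region inside the eigenfunction locus. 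This is essentially the same verification as in~\cite[\S12.2]{BLZm}, with the funnel edges $f_0$ requiring the extra care already built into Proposition~\ref{prop:eps_cochain}.
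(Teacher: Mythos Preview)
Your approach is correct and essentially identical to the paper's: pick a $(\xi,\eta)$-reduced path~$p$ and a $(\eta,\xi)$-reduced path~$p'$, observe that $\tilde c(\xi,\eta)+\tilde c(\eta,\xi)=\tilde\psi_\eps(p)+\tilde\psi_\eps(p')=\tilde\psi_\eps(C)$ on $Z_{p,\eps}\cap Z_{p',\eps}$ for the closed path $C=p+p'$, and then exhaust~$\uhp$. One small correction: the identity $\tilde\psi_\eps(C)(z)=\frac\pi2\,u_\psi(z)$ is not obtained by applying Theorem~\ref{thm-psC} to~$u_\psi$ (your parenthetical suggests $\tilde\psi_\eps(C)(z)=\int_C\{u_\psi,q_s(\cdot,z)\}$, which it is not) --- it is simply the \emph{definition}~\eqref{eq:def_u} of~$u_\psi$, and Theorem~\ref{thm-psC} enters only earlier, in establishing that this definition is independent of the choices of~$C$, $\eps$, and~$\tilde\psi_\eps$.
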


\begin{proof}
For any~$\eps>0$ and any $(\xi,\eta)$-reduced path $p'\in\CC_1[X^\tess_1]$ from~$\xi$ to~$\eta$ and any $(\eta,\xi)$-reduced path $p''\in\CC_1[X^\tess_1]$ from~$\eta$ to~$\xi$ we have
\[
 \tilde \psi_\eps(p') + \tilde\psi_\eps(p'') = \frac\pi2 u_\psi\quad\text{on\ \ $Z_{p',\eps}^{(\xi,\eta)}\cap Z_{p'',\eps}^{(\eta,\xi)}$}
\]
by~\eqref{eq:def_u}. Here, $Z_{p',\eta}^{(\xi,\eta)}$ denotes the region used in the construction of~$\tilde c(\xi,\eta)$ (denoted above by~$Z_{p',\eta}$), and $Z_{p'',\eta}^{(\eta,\xi)}$ denotes the region used in the construction of~$\tilde c(\eta,\xi)$. Picking sequences of paths~$(p'_j)_j$, $(p''_j)_j$ and a null sequence~$(\eps_j)_j$ such that the sequence  
\[
 \Big(Z_{p'_j,\eps_j}^{(\xi,\eta)} \cap Z_{p''_j,\eps_j}^{(\eta,\xi)}\Big)_j
\]
increases and exhausts~$\uhp$ yields the claimed statement.
\end{proof}

Restricted to the ordinary points~$\Gamma\,1$, the cocycle~$c$ enjoys additional properties, in particular certain vanishing properties. In the following lemma we present a first result, in Lemma~\ref{lem:u_definite} a second one. Both results are essentially consequences of the fact that between nearby points in $\Gamma\,1$, we can choose a path along edges of~$X^\tess_1$ that are completely contained in~$\proj\RR$.

\begin{lem}\label{lem:1_boundary}
Let $\xi,\eta\in\Gamma\,1$, $\xi\not=\eta$, and suppose that $(\xi,\eta)_c\subseteq\Omega$. Then $\tilde c(\xi,\eta)=0$ and $u_\psi= \frac{\pi}2\,\tilde c(\eta,\xi)$. 
\end{lem}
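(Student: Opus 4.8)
The key point is that if $\xi,\eta\in\Gamma\,1$ lie in a common funnel interval with $(\xi,\eta)_c\subseteq\Omega$, then there is a balanced path $p\in\CC_1[X^\tess_1]$ from $\xi$ to $\eta$ that stays \emph{inside} $\proj\RR$, running along $\Gamma$-translates of the free edge $f_0$ (together with possibly a few $\Gamma\,e_1$ and other edges whose shadows land in the funnel). More precisely, since $\fixTS^\pm$ are the endpoints of the funnel interval containing $[1,\lambda-1]$ and $[1,\lambda-1)$ is a fundamental domain for the $\langle TS\rangle$-action on it, any two points of $\Gamma\,1$ that bound a subinterval of $\Omega$ are related by the action of a product of conjugates of $TS$, so the edge $f_0=e_{1,\lambda-1}$ and its images $g f_0$ ($g\in\Gamma$) provide such a path. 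First I would make this combinatorial statement precise: exhibit, for the given $\xi,\eta$, a balanced $(\xi,\eta)$-reduced path $p$ all of whose edges $e$ satisfy $e\subseteq\proj\RR$ (equivalently $e\in\Gamma f_0$, up to edges like $g e_1$ glued at the ends, which must be handled with a short extra argument, or one reduces to the case that the only edges needed are $\Gamma$-translates of $f_0$).

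\textbf{Second step: exploit the vanishing condition.} For an edge $g f_0$ with $g\in\Gamma$, the $\Gamma$-equivariance of $\psi$ gives $\psi(g f_0)=\tau_s(g)\,\psi(f_0)$, and the vanishing condition \tvan says $\psi(f_0)\vert_{(\lambda-1,1)_c}=0$, i.e.\ the boundary germ $c(1,\lambda-1)$ vanishes on the complementary arc $(\lambda-1,1)_c$. The representative $\tilde\psi_\eps(f_0)$ produced by Proposition~\ref{prop:eps_cochain} can be chosen so that $\tilde\psi_\eps(f_0)$ literally vanishes for $\Ima z\geq\eps$ near $0$; more to the point, by construction $\tilde c(1,\lambda-1)$ represents $c(1,\lambda-1)$ on $(\lambda-1,1)_c$, and that germ is zero there, so $\tilde c(1,\lambda-1)=0$ as an element of $\E_s(\uhp)$ by Proposition~\ref{prop:trivrep} (the only element of $\W s\om$ representable on all of $\uhp$ is $0$) --- here one uses that $\tilde c(1,\lambda-1)\in\E_s(\uhp)$ by the construction in~\eqref{eq:seq_paths}, and that it represents the zero germ on the arc $(\lambda-1,1)_c$, hence is the zero germ, hence is the zero eigenfunction. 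Transporting by $\tau_s(g)$ and using that $\tilde c$ is built $\Gamma$-equivariantly gives $\tilde c(g1,g(\lambda-1))=0$ for all relevant $g$; summing along the path $p$ via the additive relation $\tilde c(\xi,\eta)$ inherits from the cocycle (more carefully: via the defining exhaustion~\eqref{eq:seq_paths}, since along an interior path all the pieces vanish) yields $\tilde c(\xi,\eta)=0$.

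\textbf{Third step: conclude.} Once $\tilde c(\xi,\eta)=0$, Lemma~\ref{lem:ps-decomp} immediately gives
\[
 u_\psi = \frac{2}{\pi}\Bigl(\tilde c(\xi,\eta)+\tilde c(\eta,\xi)\Bigr)\cdot\frac{\pi}{2}\cdot\frac{2}{\pi}
\]
--- rather, reading Lemma~\ref{lem:ps-decomp} directly as $\tilde c(\xi,\eta)+\tilde c(\eta,\xi)=\frac{\pi}{2}u_\psi$, the vanishing $\tilde c(\xi,\eta)=0$ forces $\tilde c(\eta,\xi)=\frac{\pi}{2}u_\psi$, i.e.\ $u_\psi=\frac{2}{\pi}\tilde c(\eta,\xi)$. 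Comparing with the normalization used elsewhere (the factor $\tfrac\pi2$ in~\eqref{eq:def_u}), the statement $u_\psi=\frac{\pi}{2}\tilde c(\eta,\xi)$ should be read with the convention fixed in Lemma~\ref{lem:ps-decomp}, and I would simply quote that lemma. So the proof is: produce the interior path, show each elementary piece $\tilde c(g1,g(\lambda-1))$ vanishes using \tvan and Proposition~\ref{prop:trivrep}, deduce $\tilde c(\xi,\eta)=0$, and invoke Lemma~\ref{lem:ps-decomp}.

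\textbf{Main obstacle.} The delicate point is the first step --- justifying that a genuinely \emph{interior} balanced path from $\xi$ to $\eta$ exists and that the exhaustion regions $Z_{p_j,\eps_j}$ from~\eqref{eq:seq_paths} for such a path can be taken to lie on the correct side, so that $\tilde c(\xi,\eta)$ really is computed from the vanishing pieces and not picking up a contribution from edges like $e_1$ at the endpoints. I expect one needs the explicit description of $\Omega$ and of the funnel interval $(\fixTS^-,\fixTS^+)=\bigcup_n (TS)^n[1,\lambda-1]$ from Section~\ref{sec:hecke}, together with the fact that the tesselation $\tess$ has $\Gamma f_0$ as its free edges lying in $\proj\RR$, to see that nearby points of $\Gamma\,1$ bounding an arc in $\Omega$ are joined by a chain of $f_0$-translates; the case of $\xi,\eta$ not in the same fundamental translate of the funnel interval but still with $(\xi,\eta)_c\subseteq\Omega$ needs the cocycle relation to concatenate, which is routine once the single-piece vanishing is in hand.
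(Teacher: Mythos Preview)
Your overall strategy is right --- build a path from $\xi$ to $\eta$ inside $\proj\RR$ using $\Gamma$-translates of $f_0$, use the vanishing condition on each piece, and finish with Lemma~\ref{lem:ps-decomp} --- and this is exactly the paper's approach. Two points in the execution need correction, though.

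First, your invocation of Proposition~\ref{prop:trivrep} is misplaced: that result concerns elements of $\W s\om(\proj\RR)$, i.e.\ germs defined on \emph{all} of $\proj\RR$, whereas $c(1,\lambda-1)$ has boundary singularities at $1$ and $\lambda-1$ and $\tilde c(1,\lambda-1)$ only represents it on the arc $(\lambda-1,1)_c$. The proposition does not apply. What you actually need is simpler: if an element of $\E_s$ on a connected domain represents the zero germ on an arc, it vanishes on an open set near that arc (by definition of germ equality) and hence everywhere by real-analyticity of Laplace eigenfunctions.

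Second, and more seriously, you cannot ``sum $\tilde c(g1,g(\lambda-1))$ along the path'' to get $\tilde c(\xi,\eta)$: the map $\tilde c$ is \emph{not} additive (indeed Lemma~\ref{lem:ps-decomp} already shows $\tilde c(\xi,\eta)+\tilde c(\eta,\xi)\neq 0$ in general). Your parenthetical ``more carefully: via the defining exhaustion'' is in fact the correct argument and is what the paper does: take the full path $p=\sum_j g_j f_0$ as a single $(\xi,\eta)$-reduced path, use additivity of the genuine cocycle $c$ to conclude $c(\xi,\eta)=\sum_j c(\xi_{j-1},\xi_j)=0$ on $(\eta,\xi)_c$, deduce that $\tilde\psi_\eps(p)$ represents the zero germ on $(\eta,\xi)_c$ and hence vanishes on $Z_{p,\eps}$, and then let $\eps\to 0$. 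So promote your parenthetical to the main argument and drop the piece-by-piece $\tilde c$ computation. Finally, your worry about needing $\Gamma e_1$ edges is unfounded: since $(\xi,\eta)_c\subseteq\Omega$ and $\Gamma f_0$ tiles the ordinary set, translates of $f_0$ alone suffice.
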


\begin{proof}
As soon as $\tilde c(\xi,\eta) = 0$ is shown, Lemma~\ref{lem:ps-decomp} yields 
\[
 u_\psi = \frac{\pi}2 \bigl( \tilde c(\xi,\eta) + \tilde c(\eta,\xi)\bigr) = \frac{\pi}2 \tilde c(\eta,\xi)\,.
\]
So it remains to show that $\tilde c(\xi,\eta)=0$ for $\xi\not=\eta$. To that end, we fix for each $\eps>0$ a lift 
\[
\tilde\psi_\eps\in C^1\bigl(F^\tess_\bullet; \G s \om(\proj\RR), \G s {\fxi;\exc;\aj}(\proj\RR)\bigr)
\]
of~$\psi$ as provided by Proposition~\ref{prop:eps_cochain}. In what follows we evaluate these lifts on an, in a certain sense, optimal path from~$\xi$ to~$\eta$ to determine~$\tilde c(\xi,\eta)$.

Since $\xi,\eta\in\Gamma\,1$ and since the interval~$(\xi,\eta)_c$ is contained in the set~$\Omega$ of ordinary points, there exists a path~$p$ from~$\xi$ to~$\eta$ of the form 
\[
 p = \sum_{j=1}^m g_jf_0
\]
for suitable $m\in\NN$ and $g_j\in\Gamma$, $j\in\{1,\ldots,m\}$ such that 
\[
t(g_1f_0) = \xi\,,\quad h(g_mf_0) = \eta
\]
and
\[
h(g_jf_0) = t(g_{j+1}f_0)\ \text{for $j\in\{1,\ldots, m-1\}$}\,.
\]
(In other words, this path is the direct connection from~$\xi$ to~$\eta$ along~$\proj\RR$.) To simplify notation we set $\xi_0 \coloneqq \xi$ and 
\[
 \xi_j \coloneqq h(g_jf_0) \quad\text{for $j\in\{1,\ldots,m\}$}\,.
\]
We note that $\xi_m=\eta$. For all $j\in\{1,\ldots,m \}$ we have (using \eqref{eq:coc_path})
\[
 c(\xi_{j-1},\xi_j) = \psi( g_jf_0 ) = 0 \quad\text{on $(\xi_j,\xi_{j-1})_c$} 
\]
by the vanishing condition~\tvan. Thus, 
\[
 c(\xi,\eta) = \sum_{j=1}^m c(\xi_{j-1},\xi_j) = 0 \quad\text{on $(\eta,\xi)_c$\,.} 
\]
This implies that for each $\eps>0$, 
\[
 \tilde\psi_\eps(p) = 0 \quad\text{on $Z_{p,\eps}$\,,}
\]
and hence (using \eqref{eq:seq_paths})
\[
 \tilde c(\xi,\eta) = 0 \quad\text{on $Z_{p,\eps}$\,.}
\]
Taking the limit $\eps\to 0$ shows $\tilde c(\xi,\eta) = 0$ on all of~$\uhp$. This completes the proof.
\end{proof}

With these preparations we can carry out the first part of Task~\ref{task2} (from the list on p.~\pageref{task2}).

\begin{prop}\label{prop-alff}
For all $s\in \CC$,
\[ 
\alphu_s H^1\bigl( F^\tess_\bullet; \W s \om(\proj\RR), \W s
{\fxi;\exc;\aj}(\proj\RR)\bigr)^\van_\sic \;\subseteq\; \A_s\,. 
\]
\end{prop}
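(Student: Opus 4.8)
The plan is to show that for any cocycle $\psi \in Z^1\bigl(F^\tess_\bullet; \W s \om(\proj\RR), \W s{\fxi;\exc;\aj}(\proj\RR)\bigr)^\van_\sic$, the associated invariant eigenfunction $u_\psi \in \E_s^\Gm$ (constructed in Section~\ref{sect-coiei}) actually lies in $\A_s$, i.e.\ has $s$-analytic boundary behavior near every open interval $I\subseteq\Omega$. Since the property of $s$-analytic boundary behavior is stable under the $\PSL_2(\RR)$-action (see Section~\ref{sec:def_ff}, in particular~\eqref{Omega_funddom}), and the map $\psi\mapsto u_\psi$ descends to a well-defined map on cohomology classes by Proposition~\ref{prop-aldef}, it suffices to verify $s$-analytic boundary behavior near the single funnel representative, i.e.\ near the interval $(1,\lambda-1)$, or more precisely near the whole funnel interval $(\fixTS^-,\fixTS^+)$. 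First I would reduce to understanding the behavior of $u_\psi$ near the points of $\Gamma\,1$ and on the ordinary arcs between them.

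The key step is to use Lemma~\ref{lem:1_boundary}: for $\xi,\eta\in\Gamma\,1$ with $(\xi,\eta)_c\subseteq\Omega$, we have $\tilde c(\xi,\eta)=0$ and hence $u_\psi = \frac{\pi}{2}\,\tilde c(\eta,\xi)$. The function $\tilde c(\eta,\xi)$ is, by its construction in~\eqref{eq:seq_paths}, an element of $\mc E_s(\uhp)$ that represents $c(\eta,\xi)\in \W s {\fxi;\exc;\aj}(\proj\RR)$ on the arc $(\xi,\eta)_c$. Because the module used is $\W s {\fxi;\exc;\aj}$, the germ $c(\eta,\xi)$ has boundary singularities contained in $\{\xi,\eta\}$ (by the singularity condition~\tsic) and, being an element of the sheaf $\W s \om$ on a neighborhood of $(\xi,\eta)_c$, it is represented by an actual eigenfunction with $s$-analytic boundary behavior there. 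So $u_\psi$, which coincides with a multiple of $\tilde c(\eta,\xi)$ on all of $\uhp$, inherits $s$-analytic boundary behavior near the open arc $(\xi,\eta)_c$. Taking $\xi,\eta\in\Gamma\,1$ to be the two endpoints of the funnel representative (e.g.\ $\xi=1$ and $\eta=\lambda-1$, or rather the pair bounding the full funnel interval $(\fixTS^-,\fixTS^+)$, reached by applying powers of $TS$), and noting that every open interval $I\subseteq\Omega$ is a $\Gamma$-translate of a subinterval of such an arc (using~\eqref{Omega_funddom}), we conclude that $u_\psi$ has $s$-analytic boundary behavior near every $I\subseteq\Omega$, hence $u_\psi\in\A_s$.

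A small amount of care is needed to handle the endpoints of the funnel interval correctly: the interval $(\fixTS^-,\fixTS^+)$ is the union $\bigcup_{n\in\ZZ}(TS)^n[1,\lambda-1]$, so to cover an arbitrary open interval $I\subseteq\Omega$ one combines the $\Gamma$-equivariance of $\tilde c$ (Lemma~\ref{lem:int_trans}-type reasoning, or directly the $\Gamma$-equivariance of $c$ and $\psi$) with Lemma~\ref{lem:1_boundary} applied to various pairs of ordinary points in $\Gamma\,1$ that lie in the closure of $I$. Since $s$-analytic boundary behavior is a local property, it is enough to verify it near each point of $I$, and every such point lies in some arc $(\xi,\eta)_c$ with $\xi,\eta\in\Gamma\,1$ of the required type. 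I would also remark that no restriction on $s$ is needed here (consistent with the statement ``for all $s\in\CC$''), because the construction of $u_\psi$ and the vanishing lemmas hold for all $s$; the spectral-parameter restrictions in Proposition~\ref{prop-aldef} were only needed to ensure that $\qcoh_s u$ lands in the correct cohomology space, which is irrelevant for the present inclusion.

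The main obstacle I anticipate is making rigorous the passage ``$u_\psi = \frac{\pi}{2}\tilde c(\eta,\xi)$ on all of $\uhp$ implies $u_\psi$ has $s$-analytic boundary behavior near $(\xi,\eta)_c$'': one must check that $\tilde c(\eta,\xi)$, although built as a limit of the cochain values $\tilde\psi_{\eps_j}(p_j)$ on an exhausting sequence of regions $Z_{p_j,\eps_j}$, genuinely represents the boundary germ $c(\eta,\xi)$ near the \emph{entire} open arc $(\xi,\eta)_c$ and not merely near a sub-arc — this is exactly Property~\ref{propZ3} in the construction (the region $Z_{p,\eps}$ contains $U\cap\uhp$ for an open neighborhood $U$ of $(\eta,\xi)_c$ in $\proj\CC$), so it should go through, but it requires invoking the \taj condition at $\xi$ and $\eta$ to guarantee that the real-analytic core extends past those points from both sides. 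Once this is in place, the rest is bookkeeping with $\Gamma$-translates.
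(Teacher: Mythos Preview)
Your approach is essentially the same as the paper's, and correct. Two minor points where the paper is cleaner:

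First, the paper avoids the patching argument by choosing the single pair $\xi = ST^{-1}\cdot 1 = \frac{1}{\lambda-1}$ and $\eta = TS\cdot 1 = \lambda-1$. One checks directly that $(\frac{1}{\lambda-1},\lambda-1)\subseteq(\fixTS^-,\fixTS^+)\subseteq\Omega$, so Lemma~\ref{lem:1_boundary} applies, and this open interval already contains the funnel representative $[1,\lambda-1)$; hence a single application suffices. Your suggestion of using ``the pair bounding the full funnel interval $(\fixTS^-,\fixTS^+)$'' does not work as stated, since $\fixTS^\pm$ are hyperbolic fixed points and lie in the limit set, not in $\Gamma\cdot 1$.

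Second, your concern about needing the condition~\taj at the endpoints $\xi,\eta$ is misplaced. The proof requires only the singularity condition~\tsic: from $\bsing c(\eta,\xi)\subseteq\{\xi,\eta\}$ one gets $c(\eta,\xi)\in\W s \om[\xi,\eta]$, hence $\tilde c(\eta,\xi)$ has $s$-analytic boundary behavior on the \emph{open} interval $(\xi,\eta)$. No extension of the real-analytic core past the endpoints is needed, because the definition of $\A_s$ concerns open intervals $I\subseteq\Omega$ only. The paper's proof does not invoke~\taj at all.
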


\begin{proof}
Let $\psi\in Z^1(F^\tess_\bullet;\W s \om(\proj\RR), \W s {\fxi;\exc;\aj}(\proj\RR))^\van_\sic$. We use the notation from above.  It suffices to show that the $\Gamma$-invariant eigenfunction~$u_\psi$ has $s$-analytic boundary behavior near the interval
\[
\Bigl(\frac1{\lambda-1}, \lambda-1 \Bigr) = \bigl(ST^{-1}\, 1, TS\, 1\bigr)
\]
because all ordinary points of~$\Gm$ are in the union of the $\Gm$-translates of this interval.

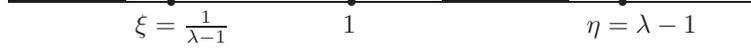
\begin{figure}
\centering
\[\setlength\unitlength{1.2cm}
\begin{picture}(8,.7)(-3.5,-.2)
\put(-3.5,0){\line(1,0){8}}
\put(-2,0){\circle*{.1}}
\put(0,0){\circle*{.1}}
\put(3,0){\circle*{.1}}
\put(-2.4,-.35){$\xi = \frac{1}{\lambda-1}$}
\put(-0.1,-.35){$1$}
\put(2.6,-.35){$\eta = \lambda-1$}
\thicklines
\put(-3.8,0){\line(1,0){1.3}}
\put(1,0){\line(1,0){1.4}}
\end{picture}
\]
\caption{Relative positions of $\xi$, $1$ and $\eta$.}
\label{fig:extension}
\end{figure}

We set 
\[
 \xi\coloneqq \frac{1}{\lambda-1}\quad\text{and}\quad \eta \coloneqq \lambda-1\,.
\]
Lemma~\ref{lem:1_boundary} shows that  
\[
 u_\psi = \frac{\pi}{2}\, \tilde c(\eta,\xi)\,.
\]
By construction, $\tilde c(\eta,\xi) \in \G s {\fxi}(\proj\RR)$ represents $c(\eta,\xi)\in \W s {\fxi}(\proj\RR)$ on $(\xi,\eta)$. Further, since $c$ satisfies the singularity condition~\tsic, the boundary singularities of $c(\eta,\xi)$ are contained in $\{\xi,\eta\}$. In particular, $c(\eta,\xi)$ has no singularities in $(\xi,\eta)$, and hence $c(\eta,\xi)\in \W s \om[\xi,\eta]$. Therefore, $\tilde c(\eta,\xi)$, and hence $u_\psi$, has $s$-analytic boundary behavior near~$(\xi,\eta)$.
\end{proof}

Complementary to Lemma~\ref{lem:1_boundary} is the following lemma, which we will use to complete Task~\ref{task4}. (See Proposition~\ref{prop-alinj}.)

\begin{lem}\label{lem:u_definite}
If $u_\psi =0$, then the cocycle $c$ vanishes on $\Gm\,1\times\Gm\,1$.
\end{lem}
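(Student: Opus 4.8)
The goal is to show that the $1$-cocycle $c$ on $\Xi$ induced by $\psi$ vanishes on $\Gm\,1\times\Gm\,1$, i.e.\ that $c(\xi,\eta)=0$ in $\W s \fxi(\proj\RR)$ for all $\xi,\eta\in\Gm\,1$. The case $\xi=\eta$ is trivial, so assume $\xi\neq\eta$. I would work with the eigenfunctions $\tilde c(\xi,\eta),\tilde c(\eta,\xi)\in\E_s(\uhp)$ constructed in Section~\ref{sect-coc-ff}: recall that $\tilde c(\xi,\eta)$ represents $c(\xi,\eta)$ on the arc $(\eta,\xi)_c$, that $\tilde c(\eta,\xi)$ represents $c(\eta,\xi)$ on $(\xi,\eta)_c$, and that $c(\eta,\xi)=-c(\xi,\eta)$ by the cocycle relation.

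The first step is to exploit $u_\psi=0$. By Lemma~\ref{lem:ps-decomp}, $\tilde c(\xi,\eta)+\tilde c(\eta,\xi)=\tfrac\pi2\,u_\psi=0$, so the single eigenfunction $w\coloneqq\tilde c(\xi,\eta)=-\tilde c(\eta,\xi)\in\E_s(\uhp)$ represents $c(\xi,\eta)$ on $(\eta,\xi)_c$ and, via $-\tilde c(\eta,\xi)$, also on $(\xi,\eta)_c$. Since $c$ satisfies the singularity condition, $\bsing c(\xi,\eta)\subseteq\{\xi,\eta\}\subseteq\Gm\,1$, and hence $w$ represents $c(\xi,\eta)$ near every point of $\proj\RR\smallsetminus\{\xi,\eta\}$.

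The key step is to upgrade this to $s$-analytic boundary behavior of $w$ near $\xi$ and near $\eta$ as well; I do this for $\xi$, the point $\eta$ being identical. Because $\psi$ has values in $\W s {\fxi;\exc;\aj}(\proj\RR)$ and $\bsing c(\xi,\eta)\subseteq\Gm\,1$, the element $c(\xi,\eta)$ has at worst an analytic jump at $\xi$ (Definition~\ref{def:conditions}): picking $\alpha\in(\eta,\xi)_c$ and $\beta\in(\xi,\eta)_c$ sufficiently close to $\xi$, the real-analytic cores of $c(\xi,\eta)$ on $(\alpha,\xi)_c$ and on $(\xi,\beta)_c$ — both computed from the representative $w$, i.e.\ from $y^{-s}w$ — extend real-analytically across $\xi$ to functions $A_\pm$ on a complex ball $B$ around $\xi$. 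On the connected half-disk $B\cap\uhp$ both $A_-$ and $A_+$ coincide with the real-analytic function $y^{-s}w$ (each coincides with it on a non-empty open subset near the respective side of $\xi$, and $y^{-s}w$ is real-analytic on the connected set $B\cap\uhp$), so by the identity theorem $A_-=A_+$ on $B\cap\uhp$, hence on $B$. Therefore $y^{-s}w$ extends real-analytically to a neighborhood of $\xi$ in $\proj\CC$, i.e.\ $w$ has $s$-analytic boundary behavior near $\xi$. Combining this with the corresponding statement at $\eta$ and with the behavior near $\proj\RR\smallsetminus\{\xi,\eta\}$ noted above, $w$ represents an element of $\W s\om(\proj\RR)$.

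It then remains to invoke Proposition~\ref{prop:trivrep} (applicable for the spectral parameters relevant here, in particular $s\notin\ZZ_{\leq 0}$): an element of $\W s\om(\proj\RR)$ represented by an element of $\E_s(\uhp)$ is zero, so $w$ vanishes on $U\cap\uhp$ for some complex neighborhood $U$ of $\proj\RR$; being a real-analytic eigenfunction on the connected domain $\uhp$ that vanishes on a non-empty open set, $w\equiv 0$. Hence $c(\xi,\eta)$, being represented by $w=0$ on $\proj\RR\smallsetminus\{\xi,\eta\}$, is the zero element of $\W s\fxi(\proj\RR)$, which is the assertion. I expect the main obstacle to be exactly the key step: arguing that the analytic jump of $c(\xi,\eta)$ at each of $\xi,\eta$ is forced to be trivial. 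This is precisely where the hypothesis $u_\psi=0$ is essential — through Lemma~\ref{lem:ps-decomp} it makes the two one-sided boundary germs of $c(\xi,\eta)$ at $\xi$ be represented by one and the same globally defined eigenfunction $w$ on $\uhp$, which therefore cannot jump.
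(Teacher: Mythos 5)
Your proof is correct, and it takes a genuinely different route than the paper's. The paper splits the argument into two cases: first, pairs $\xi,\eta$ lying in a common funnel interval (handled via Lemma~\ref{lem:1_boundary}, which in turn invokes the vanishing condition~\tvan), and then arbitrary pairs $\nu,\mu\in\Gm\,1$, bootstrapped through intermediate adjacent pairs using the singularity condition~\tsic. You instead treat any $\xi\neq\eta\in\Gm\,1$ uniformly: after Lemma~\ref{lem:ps-decomp} produces a single globally defined $w\in\E_s(\uhp)$ representing $c(\xi,\eta)$ near $\proj\RR\smallsetminus\{\xi,\eta\}$, you use the analytic-jump condition~\taj at $\xi$ (and $\eta$) together with the identity theorem on the connected half-disk $B\cap\uhp$ to force the two one-sided cores to agree, upgrading $w$ to have $s$-analytic boundary behavior on all of $\proj\RR$. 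Then both arguments finish with Proposition~\ref{prop:trivrep}. The trade-off: your argument is shorter and avoids Lemma~\ref{lem:1_boundary} and hence the vanishing condition~\tvan entirely, putting \taj to essential use; the paper's argument uses \taj only in a trivial way (extending the zero function by zero, for which an \texc-free alternative is even noted) and instead relies on \tvan.

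One step you compress slightly: you pass from ``$c(\xi,\eta)$ has an analytic jump at $\xi$'' to ``the real-analytic cores $A_\pm$ extend to a complex ball $B$ around $\xi$.'' The condition \taj, as defined, gives a one-variable real-analytic continuation of $\rho_s c(\xi,\eta)$ across $\xi$ along $\proj\RR$; lifting this to a two-variable real-analytic core on a full complex neighborhood uses the isomorphism $\rho_s\colon\W s\om\to\V s\om$ of Proposition~\ref{thm-resiso} applied to the extended interval. This is routine given the framework, but worth stating explicitly since it is the step at which the identity theorem then acquires something to compare against.
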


\begin{proof}
On the diagonal of $\Gm1\times\Gm1$, the cocycle $c$ vanishes independently of any requirements on~$u_\psi$. To show that $c$ also vanishes off the diagonal, we will first consider the case that the two elements in $\Gm1$ are in the same connected component of the set~$\Omega$ of ordinary points, and then we will use this first result to discuss the general case. Throughout we will consider $c$ as a cocycle on $\Gm\,1$ instead of on all of~$\Xi$.

\begin{enumerate}[{\rm (i)}]
\item\label{defi_step1} Let $\xi,\eta\in\Gm\,1$, $\xi\not=\eta$ and suppose that $(\xi,\eta)_c\subseteq\Omega$.  Lemma~\ref{lem:1_boundary} shows that $\tilde c(\xi,\eta)=0$. Further, since $u_\psi=0$ by hypothesis, Lemma~\ref{lem:1_boundary} yields $\tilde c(\eta,\xi) = 0$. 
Now $\tilde c(\xi,\eta)$ represents $c(\xi,\eta)$ on~$(\eta,\xi)_c$ by construction, which implies that $c(\xi,\eta)=0$ on~$(\eta,\xi)_c$. Since $\tilde c(\eta,\xi)$ represents $c(\eta,\xi) = - c(\xi,\eta)$ on~$(\xi,\eta)_c$, we find that $c(\xi,\eta) = 0$ on~$(\xi,\eta)_c$. Thus, $c(\xi,\eta) = 0$ on $\proj\RR\smallsetminus\{\xi,\eta\}$.  Since $c(\xi,\eta)$ satisfies the condition~\taj (`analytic jump'), it extends real-analytically to~$\xi$ and~$\eta$ by the null function from \emph{both} sides. From this it follows that $c(\xi,\eta) = 0$ on all of $\proj\RR$. (Alternatively, one can use the argument that $c(\xi,\eta)$ is represented by $0$ on $\proj\RR\smallsetminus\{\xi,\eta\}$. Thus, as an element of $\W s \fxi(\proj\RR)$, it is equal to the null function.)
\item Let now $\nu,\mu\in\Gamma\,1$ be such that $\nu\not=\mu$ (and not necessarily in the same connected component of~$\Omega$). We find (and fix) $g_\nu,g_\mu\in\Gamma$  such that the path~$g_\nu f_0$ starts at~$\nu$ and the path~$g_\mu f_0$ ends at~$\mu$. We set 
\[
 \nu'\coloneqq h\bigl(g_\nu f_0)\quad\text{and}\quad \mu'\coloneqq t\bigl(g_\mu f_0\bigr)\,.
\]
Then the intervals~$(\nu,\nu')_c$ and $(\mu',\mu)_c$ are contained in~$\Omega$. Without loss of generality we may assume that the points $\nu,\nu',\mu,\mu'$ are pairwise distinct, and that $(\mu,\nu)_c\subseteq (\mu',\nu')_c$. By~\eqref{defi_step1}, $c(\nu,\nu')=0$ and hence
\begin{equation}\label{eq:cocyclesstep}
 c(\nu,\mu) = c(\nu,\nu') + c(\nu',\mu) = c(\nu',\mu)\,.
\end{equation}
Since $c$ satisfies the singularity condition~\tsic, we have
\[
 \bsing c(\nu,\mu) \subseteq \{\nu,\mu\}\quad\text{and}\quad \bsing c(\nu',\mu)\subseteq\{\nu',\mu\}\,.
\]
Therefore, the equality in~\eqref{eq:cocyclesstep} implies
\begin{equation}\label{eq:wheresing1}
 \bsing c(\nu,\mu) \subseteq\{\mu\}\,.
\end{equation}
Analogously, we get from 
\[
 c(\nu,\mu) = c(\nu,\mu) + c(\mu,\mu') = c(\nu,\mu')
\]
that 
\begin{equation}\label{eq:wheresing2}
 \bsing c(\nu,\mu) \subseteq \{\nu\}\,.
\end{equation}
Combining~\eqref{eq:wheresing1} and~\eqref{eq:wheresing2} shows that $c(\nu,\mu)$ does not have singularities. Thus, $c(\nu,\mu)\in \W s \om(\proj\RR)$. We now show that $c(\nu,\mu)$ is represented by an element $\varphi\in\E_s(\uhp)$ on all of~$\proj\RR$. Since $c(\nu,\nu') = 0$ by the discussion in~\eqref{defi_step1}, the definition in~\eqref{eq:coc_path} yields that 
\[
 \psi(g_\nu f_0) = c(\nu,\nu') = 0\,,
\]
and hence
\[
\psi(g f_0) = 0 \quad\text{for all~$g\in\Gamma$}
\]
by $\Gamma$-equivariance. Therefore we may and shall choose the lift~$\tilde\psi_\eps$ of $\psi$ such that $\tilde\psi_\eps(g_\nu\,f_0)=0$. (The construction in the proof of Proposition~\ref{prop:eps_cochain} shows that this choice is indeed possible.) From the definition in~\eqref{eq:seq_paths} it follows that 
\[
 \tilde c(\nu,\nu') = \tilde\psi_\eps(g_\nu f_0) = 0\,.
\]
Analogous reasoning shows that 
\[
 \tilde c(\mu',\mu) = \tilde\psi_\eps(g_\mu f_0) = 0\,.
\]
Therefore
\begin{align*}
 \tilde c(\nu,\mu) = \tilde c(\nu,\nu') + \tilde c(\nu',\mu') + \tilde c(\mu',\mu) = \tilde c(\nu',\mu')\,.
\end{align*}
We set $\varphi\coloneqq \tilde c(\nu,\mu)$. By~\eqref{eq:laplace_ef}, $\varphi\in\E_s(\uhp)$.  By construction,
\[
  \varphi\in \G s \om\bigl( \proj\RR\smallsetminus\{\nu,\mu\}\bigr) \cap \G s \om\bigl( \proj\RR\smallsetminus\{\nu',\mu'\}\bigr)\,.
\]
Since the points~$\nu,\nu',\mu,\mu'$ are pairwise distinct, it follows that
\[
  \varphi\in \G s \om\bigl( \proj\RR \bigr)\,.
\]
Again by construction, the map~$\varphi=\tilde c(\nu',\mu')$ represents $c(\nu,\mu) = c(\nu',\mu')$ on~$(\mu',\nu')_c$. Further, since $\tilde c(\nu,\mu) = -\tilde c(\mu,\nu)$ because of $u_\psi=0$ and the result from Lemma~\ref{lem:u_definite}, the map $\varphi = -\tilde c(\mu,\nu)$ represents $c(\nu,\mu) = -c(\mu,\nu)$ also on~$(\nu,\mu)_c$. Therefore, $\varphi$ represents $c(\nu,\mu)$ on
\[
 (\mu',\nu')_c \cup (\nu,\mu)_c = \proj\RR\,,
\]
and hence $\varphi$ is a map in~$\E_s(\uhp)$ that represents~$c(\nu,\mu)\in \W s \om(\proj\RR)$ on all of~$\proj\RR$. From~\cite[Proposition~5.3]{BLZm} (see also Proposition~\ref{prop:trivrep}) it follows that $c(\nu,\mu) = 0$.
\end{enumerate}
This completes the proof.
\end{proof}

We are now able to complete Task~\ref{task4} (from the list on p.~\pageref{task4}).

\begin{prop}\label{prop-alinj}
The linear map $\alphu_s$ in~\eqref{al-def} is injective for all $s\in\CC$.
\end{prop}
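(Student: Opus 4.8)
The plan is to show that if $\alphu_s[\psi] = u_\psi = 0$ for a cocycle $\psi \in Z^1\bigl(F^\tess_\bullet; \W s \om(\proj\RR), \W s {\fxi;\exc;\aj}(\proj\RR)\bigr)^\van_\sic$, then $\psi$ is a coboundary, hence $[\psi]=0$ in the relevant cohomology space. By Lemma~\ref{lem:vanish_cob} (applied with $V = \W s \om(\proj\RR)$ and $W = \W s {\fxi;\exc;\aj}(\proj\RR)$) it suffices to prove that $\psi$ vanishes on the subresolution $F^{\tess,Y}_\bullet$, i.e.\ that $\psi(e)=0$ for all $e\in X^{\tess,Y}_1$. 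Once that is established, Lemma~\ref{lem:vanish_cob} gives $\psi\in B^1(F^\tess_\bullet; V,W)$, and since the singularity and vanishing conditions are preserved (a coboundary arising from a $\Gamma$-equivariant potential built from the trivial group cocycle has a potential supported appropriately), one also checks $\psi\in B^1(F^\tess_\bullet; V,W)^\van_\sic$; more simply, once $\psi$ vanishes on $F^{\tess,Y}_\bullet$ and $u_\psi=0$ forces the remaining values to vanish too, we conclude $\psi=0$ outright.

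First I would exploit Proposition~\ref{prop:isoEcohom} / Proposition~\ref{prop:map_step1}: the hypothesis $u_\psi=0$ combined with the injectivity of $\coh_s$ (equivalently $\qcoh_s$) is tempting, but the logical direction is reversed — we are not given $\psi$ in the image of $\qcoh_s$. Instead the argument must run through the explicit structure. The key input is Lemma~\ref{lem:u_definite}, which already tells us that $u_\psi=0$ implies the induced $1$-cocycle $c$ on $\Xi$ vanishes on $\Gm\,1\times\Gm\,1$. In particular $\psi(f_0) = c(1,\lambda-1) = 0$, and by $\Gamma$-equivariance $\psi(gf_0)=0$ for all $g\in\Gamma$. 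Next I would use the face relations $dc(V_j)=0$ for $j\in\{0,1,\infty\}$ together with the vanishing already obtained. From $0 = dc(V_0) = \psi(e_1) + \psi(f_0) - \tau_s(TS)\psi(e_1) - \psi(f_1) = \bigl(1-\tau_s(TS)\bigr)\psi(e_1) - \psi(f_1)$ and the fact (from Lemma~\ref{lem:u_definite} and the construction of $\tilde c$) that $c(1,\infty)$ restricted appropriately controls $\psi(e_1)\in\W s \om[1]$, one deduces relations on $\psi(e_1), \psi(e_2), \psi(f_1), \psi(f_\infty)$.

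To pin down the remaining edge values I would argue along the lines of the proof of Lemma~\ref{lem:coc_unique}: the potential $q\colon X^\tess_0\to W$ of the coboundary $c$ (which exists once $c$ vanishes on enough of $X^\tess_1$, via Lemma~\ref{lem:vanish_cob}) must be $\Gamma$-equivariant; combined with Lemma~\ref{lem:hyp_inv} applied to the hyperbolic element $TS$ on a funnel interval, and with \cite[Propositions~9.9, 9.13]{BLZm} applied at the cusp to handle $\psi(e_\infty)$ (using that $\bigl(1-\tau_s(T^{-1})\bigr)\psi(e_\infty)=0$ forces $\psi(e_\infty)=0$ when $\Rea s\in(0,1)$, $s\neq\tfrac12$; for $s=\tfrac12$ the submodule $\W s {\fxi;\infty;}$ is needed, which here is subsumed since $\psi(e_\infty)\in\W s {\fxi;\exc;\aj}$ and $u_\psi=0$ gives the extra decay), one forces $q(1)=0$ on the relevant funnel interval, hence everywhere by real-analyticity, and then propagates the vanishing of $q$ through $X^\tess_0$ by $\Gamma$-equivariance and the cocycle relations. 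Then $c=dq=0$.

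The main obstacle I anticipate is the careful bookkeeping at the funnel edge $f_0$ and the adjacent edge $e_1$: unlike the cofinite case in~\cite{BLZm}, the vanishing condition~\tvan only gives $c(1,\lambda-1)=0$ on one side $(\lambda-1,1)_c$, so to conclude $\psi(f_0)$ and $\psi(e_1)$ vanish as sheaf sections I must invoke Lemma~\ref{lem:u_definite} (which uses $u_\psi=0$ crucially, via Lemma~\ref{lem:1_boundary}) to get vanishing on the \emph{other} side as well, and then use the analytic-jump condition~\taj to extend the zero across the points $1$ and $\lambda-1$ — exactly the trick used at the end of step~(i) in the proof of Lemma~\ref{lem:u_definite}. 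The rest — deducing $\psi$ vanishes on $e_\infty, e_2, f_1, f_\infty$ — is then a matter of chasing the three face relations and applying the invariance lemmas, and I expect it to be routine. I would therefore structure the proof as: (1) reduce to showing $\psi=0$ via Lemma~\ref{lem:vanish_cob}; (2) invoke Lemma~\ref{lem:u_definite} to kill $c$ on $\Gm\,1\times\Gm\,1$, hence $\psi(gf_0)=0$ and $\psi(ge_1)=0$ for all $g$; (3) use $dc(V_\infty)=0$ plus \cite[Prop.~9.9, 9.13]{BLZm} to kill $\psi(e_\infty)$; (4) use $dc(V_1)=0$ and $dc(V_0)=0$ to kill the remaining interior edges; (5) conclude $\psi=0$.
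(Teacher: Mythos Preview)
Your approach has a genuine gap: you are trying to prove that $\psi$ vanishes on $F^{\tess,Y}_\bullet$ (and then invoke Lemma~\ref{lem:vanish_cob}), but this target is too strong and in fact false in general. A nonzero coboundary already furnishes a counterexample. Concretely, pick any nonzero $\tau_s(S)$-invariant element $a\in \W s\om(\proj\RR)$ (for instance the germ corresponding to $t\mapsto (1+t^2)^{-s}$), and define a $\Gamma$-equivariant $q\colon X^\tess_0\to \W s\om(\proj\RR)$ by $q(i)=a$ and $q(iY)=q(1)=q(\infty)=0$. Then $\psi\coloneqq dq$ lies in $Z^1\bigl(F^\tess_\bullet;\W s\om(\proj\RR),\W s{\fxi;\exc;\aj}(\proj\RR)\bigr)^\van_\sic$ and satisfies $u_\psi=0$ (it is a coboundary), yet $\psi(e_2)=q(i)-q(iY)=a\neq 0$, so $\psi$ does not vanish on $F^{\tess,Y}_\bullet$. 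Thus your step~(4) cannot succeed, and the overall plan of ``chasing face relations to kill all edges'' collapses.

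Several intermediate steps also fail for the same underlying reason. In step~(2) you claim $\psi(ge_1)=0$ follows from Lemma~\ref{lem:u_definite}; but $e_1$ runs from $i$ to~$1$, and $i\notin\Gm\,1$, so $\psi(e_1)=\psi(i,1)$ is not an instance of $c$ restricted to $\Gm\,1\times\Gm\,1$. In step~(3) you need $(1-\tau_s(T^{-1}))\psi(e_\infty)=0$ before invoking \cite[Prop.~9.9,~9.13]{BLZm}, which via $dc(V_\infty)=0$ would require $\psi(f_\infty)=0$; but $f_\infty\in X^{\tess,Y}_1$ and you have no control on it. (Your remark about $s=\tfrac12$ is also unfounded: the condition \texc{} does not imply the smoothness condition~$\infty$, and ``$u_\psi=0$ gives the extra decay'' is not justified.)

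The paper's argument avoids all of this by not attempting to show $\psi=0$. It uses Lemma~\ref{lem:u_definite} exactly as you do to get $c=0$ on $\Gm\,1\times\Gm\,1$, and then simply takes the potential $p(\,\cdot\,)=\psi(\,\cdot\,,\xi_0)$ with a base point $\xi_0\in\Gm\,1$. The associated group cocycle is $\chi_g=\psi(g^{-1}\xi_0,\xi_0)=c(g^{-1}\xi_0,\xi_0)=0$, so $p$ is $\Gamma$-equivariant and $\psi=dp$ is a coboundary. No edge-by-edge vanishing is needed, and the argument works for all $s\in\CC$ without any case distinction at $s=\tfrac12$.
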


\begin{proof}
Suppose that $\psi\in Z^1\bigl(F^\tess_\bullet;\W s \om(\proj\RR), \W s {\fxi;\exc;\aj}(\proj\RR)\bigr)^\van_\sic$ is such that $u_\psi=0$. It suffices to show that $\psi$ is a coboundary. We use the notation from above.

Lemma~\ref{lem:u_definite} shows that $c = 0$ on $\Gm\,1\times\Gm\,1$. Let $\chi\in Z^1\bigl(\Gamma; \W s \fxi(\proj\RR)\bigr)$ denote a group cocycle associated to~$c$ via~\eqref{pick_p} and~\eqref{cocpot} (see also Section~\ref{sec:tesselation}). If we use $\xi_0\in\Gm\,1$ in~\eqref{pick_p} and  $\xi\in\Gm\,1$ in~\eqref{cocpot}, then we find $\chi=0$. In turn, the potential~$p$ of~$c$ defined by~\eqref{pick_p} is $\Gm$-equivariant and can serve as the map~$f$ in~\eqref{cobchar2}. Thus, $c$, and hence $\psi$, is a coboundary.
\end{proof}

\begin{rmk}
Here we use an orbit of ordinary points of~$\Gm$ to prove the injectivity of~$\alphu_s$. In the cofinite case with cusps the approach
in~\cite[\S12.3]{BLZm} uses the set of cusps for this purpose. It is based on the long exact sequence for mixed parabolic cohomology. See~\cite[Proposition 11.9]{BLZm}. We could have used that approach here as well, however the present approach seems simpler.

In the cocompact case these two possibilities are absent. In~\cite[\S7.3]{BLZm} the orbit of a hyperbolic fixed point is added to the set $X^\tess_0$, and used for the proof of the injectivity.
\end{rmk}

\subsection{Isomorphisms}

In Proposition~\ref{prop-alff} we have seen that the image of the map~$\alphu_s$ in~\eqref{al-def} consists of funnel forms. The proof of
this result is part of Task~\ref{task2} (from the list on p.~\pageref{task2}). We still have to consider the image of~$\alphu_s$ when we restrict it to the mixed cohomology spaces with values in~$\W s {\fxi;\exc,\smp;\aj}(\proj\RR)$ and~$\W s {\fxi;\exc,\infty;\aj}(\proj\RR)$. Once that is completed we have obtained
the final result of this section.

\begin{thm}\label{thm-alcoh}
The linear maps~$\alphu_s$ in~\eqref{al-def} and $\qcoh_s$ in~\eqref{qcoh} are inverse isomorphisms in the following
situations:

For $s\in\CC$ with $\Rea s\in (0,1)$ and $s\neq \frac12:$
\begin{align} \label{qcoh-al}
\qcoh_s&:\quad \A_s \stackrel\cong\longleftrightarrow H^1\bigl(
F^\tess_\bullet;\W s \om(\proj\RR), \W s{\fxi;\exc;\aj}(\proj\RR)\bigr)^\van
_\sic\quad : \alphu_s\,,\\
\label{qcoh-alq}
 \qcoh_s&:\quad \A_s^1 \stackrel\cong\longleftrightarrow H^1\bigl(
F^\tess_\bullet;\W s \om(\proj\RR), \W s{\fxi;\exc,\smp;\aj}(\proj\RR)\bigr)^\van
_\sic\quad : \alphu_s\,.
\end{align}
For $s\in\CC$ with $\Rea s \in (0,1):$
\begin{equation}\label{qcoh-al0}
 \qcoh_s:\quad \A_s^0 \stackrel\cong\longleftrightarrow H^1\bigl(
F^\tess_\bullet;\W s \om(\proj\RR), \W
s{\fxi;\exc,\infty;\aj}(\proj\RR)\bigr)^\van _\sic\quad :\alphu_s\,. 
\end{equation}
\end{thm}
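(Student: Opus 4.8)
The plan is to assemble Theorem~\ref{thm-alcoh} from the pieces already established, treating the three cases \eqref{qcoh-al}, \eqref{qcoh-alq}, \eqref{qcoh-al0} in parallel. By Proposition~\ref{prop-cohAH} together with the restriction isomorphism~\eqref{res-coh}, the map $\qcoh_s=\rho_s^{-1}\circ\coh_s$ is a well-defined injective linear map into the mixed boundary-germ cohomology spaces, in each of the three ranges of~$s$ indicated (with the restriction $s\neq\tfrac12$ in the first two cases). By Proposition~\ref{prop-aldef}(i) the map $\alphu_s$ is defined on $H^1\bigl(F^\tess_\bullet;\W s \om(\proj\RR),\W s{\fxi;\exc;\aj}(\proj\RR)\bigr)^\van_\sic$ for all $s$, and by Proposition~\ref{prop-alinj} it is injective there; a fortiori its restriction to each of the two smaller cohomology spaces (with the submodules $\W s {\fxi;\exc,\smp;\aj}$ and $\W s {\fxi;\exc,\infty;\aj}$) is injective. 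Proposition~\ref{prop-alff} shows that the image of $\alphu_s$ lands in~$\A_s$. Finally, Proposition~\ref{prop-aldef}(ii) gives $\alphu_s\,\qcoh_s u = u$ for $u\in\A_s$, $s\neq\tfrac12$, and for $u\in\A_s^0$ (any $s$ with $\Rea s\in(0,1)$); this already proves that in each case $\qcoh_s$ is injective with a left inverse $\alphu_s$ whose image contains $\A_s^{*}$.

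What remains is to pin down the image of $\alphu_s$ on the two smaller cohomology spaces, i.e.\ to complete the second half of Task~(T\ref{task2}): show that if $\psi$ has values in $\W s {\fxi;\exc,\smp;\aj}(\proj\RR)$ then $u_\psi\in\A_s^1$, and if $\psi$ has values in $\W s {\fxi;\exc,\infty;\aj}(\proj\RR)$ then $u_\psi\in\A_s^0$. Here the extra condition (simple singularity, resp.\ smoothness) is imposed at the cuspidal points $\Gamma\infty$, so the analysis is local at the cusp and parallels the cofinite treatment in~\cite[\S12.4]{BLZm}. Concretely, I would examine the Fourier expansion~\eqref{Feu} (resp.\ its $s=\tfrac12$ analogue) of $u_\psi$ at $\infty$ and relate its coefficients $A_0,B_n$ to the behaviour of $\psi(e_\infty)$ near $\infty$: the image $c(e_\infty)=\psi(e_\infty)\in\W s {\fxi;\exc,\cond;\aj}(\proj\RR)$ satisfies $\bigl(1-\tau_s(T^{-1})\bigr)c(e_\infty)=-c(f_\infty)\in\W s \om(\proj\RR)$ (a smooth section near $\infty$), so $c(e_\infty)$ is, modulo a smooth germ, a $T$-invariant semi-analytic germ at $\infty$ with a simple singularity (resp.\ smooth). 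Invoking the structural results on $T$-invariant semi-analytic vectors from~\cite[\S9]{BLZm}—the same propositions already used in Lemma~\ref{lem:coc_unique}, namely \cite[Prop.~9.9, 9.13]{BLZm}—one reads off that the simple-singularity condition forces $B_n=0$ for all $n$ (so $u_\psi\in\A_s^1$), and the smoothness condition forces in addition $A_0=0$ (so $u_\psi\in\A_s^0$), using the characterization of $\A_s^1,\A_s^0$ via the vanishing of $B_n$, resp.\ of $B_n$ and $A_0$, recorded at the end of Section~\ref{sec:def_ff}. For $s=\tfrac12$ one checks that the smoothness condition kills the anomalous $y^{1/2}\ln y$ term as well, which is why $\A_s^0$—and only $\A_s^0$—admits the extension to $s=\tfrac12$.

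Putting it together: in each of the three situations $\alphu_s$ maps the displayed cohomology space into $\A_s$, $\A_s^1$, $\A_s^0$ respectively (Proposition~\ref{prop-alff} plus the cusp analysis just sketched), it is injective (Proposition~\ref{prop-alinj}), and it is left-inverted by $\qcoh_s$ on the relevant space of automorphic forms (Proposition~\ref{prop-aldef}(ii)); conversely $\qcoh_s$ maps $\A_s^{*}$ injectively into the cohomology space (Proposition~\ref{prop-cohAH}). Two injective maps between two vector spaces, one a one-sided inverse of the other, are mutually inverse bijections, so both $\alphu_s$ and $\qcoh_s$ are isomorphisms in each case, which is exactly \eqref{qcoh-al}--\eqref{qcoh-al0}.

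I expect the main obstacle to be the cusp analysis for the two smaller modules, i.e.\ rigorously translating the local conditions \tsmp\ and $\infty$ on the germ $c(e_\infty)$ into the vanishing of the Fourier coefficients $B_n$ (and $A_0$) of $u_\psi$, and in particular handling the degenerate behaviour at $s=\tfrac12$ so that the extension to that value works precisely for cuspidal funnel forms. This is the step where one genuinely leans on the finite-area results of~\cite[\S9, \S12.4]{BLZm}; everything else is bookkeeping of already-proven injectivity and left-inverse statements.
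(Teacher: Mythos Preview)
Your assembly of the pieces matches the paper's proof exactly: Propositions~\ref{prop-cohAH} (with~\eqref{res-coh}), \ref{prop-aldef}, \ref{prop-alff}, and~\ref{prop-alinj} combine to give~\eqref{qcoh-al}, and the only remaining work is indeed the cusp analysis you flag for the two smaller modules.

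That cusp sketch, however, has a gap. The propositions you cite from~\cite{BLZm}, namely 9.9 and~9.13, assert that $T$-invariant elements of $\V s {\fxi;\infty;}$ resp.\ $\V s {\fxi;\smp;}$ vanish; they do not relate $\psi(e_\infty)$ to the Fourier coefficients of~$u_\psi$. The paper instead invokes \cite[Propositions~9.11 and~9.15]{BLZm}, and to apply them it performs a step you omit: after setting $u\coloneqq u_\psi\in\A_s$, one uses~\eqref{al-coh} and~\eqref{quY} to \emph{replace~$\psi$ within its cohomology class} by the representative whose values on the inner edges $e\in X_1^{\tess,Y}$ are the integrals $\int_e\{u,q_s(\cdot,z)\}$. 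Only after this replacement is $\psi(f_\infty)$ the genuine integral-cocycle value attached to~$u$, so that the equation $\psi(f_\infty)=h-\tau_s(T^{-1})h$ with $h=\psi(e_\infty)\in\W s {\om;\cond}[\infty]$ feeds into \cite[Propositions~9.11, 9.15]{BLZm} to yield $u(z)=p\,y^{1-s}+O(e^{-\eps y})$ (for \tsmp) or $O(e^{-\eps y})$ (for $\infty$). Without this change of representative, your relation $(1-\tau_s(T^{-1}))c(e_\infty)=-c(f_\infty)$ is merely the face relation $dc(V_\infty)=0$ and carries no direct information about the expansion of~$u_\psi$ at the cusp.
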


\begin{proof}
Proposition~\ref{prop-cohAH} together with the inverse of
the restriction map in \eqref{res-coh} shows that $\qcoh_s $ has its
image in the mixed cohomology spaces on the right.

Proposition~\ref{prop-aldef} shows that $\alphu_s$ is left inverse to~$\qcoh_s\vert_{\A_s}$.
By Proposition~\ref{prop-alff} the image of $\alphu_s$ is in~$\A_s$. Therefore
Proposition~\ref{prop-alinj} yields that $\alphu_s$ is indeed inverse to~$\qcoh_s$, not only left-inverse. This establishes~\eqref{qcoh-al}.

For the two other cases we have to show that the image under~$\alphu_s$ is indeed in the space of resonant funnel forms~$\A_s^1$ and cuspidal funnel forms~$\A_s^0$, respectively. To that end let $\psi\in Z^1\bigl( F^\tess_\bullet; \W s \om(\proj\RR),\W s {\fxi;\exc,\cond;\aj}(\proj\RR)\bigr)^\van_\sic$, where \tcond denotes either $\infty$ or~\tsmp. In the latter case we assume $s\not=\frac12$.  Set $u\coloneqq u_\psi$, which is in~$\A_s$ by Proposition~\ref{prop-alff}. By~\eqref{al-coh} and~\eqref{quY} we can change $\psi$ in its cohomology class in~$H^1\bigl( F^\tess_\bullet; \W s \om(\proj\RR),\W s {\fxi;\exc,\cond;\aj}(\proj\RR)\bigr)^\van_\sic$ such that $\psi(e)(z) = \int_e \bigl\{ u,q_s(\cdot,z)\bigr\}$ for~$e\in X_1^{\tess,Y}$. So there exists $h\in \W s {\om;\cond}[\infty]$ (e.\,g.\@ $h=\psi(e_\infty)$) such that $\psi(f_\infty) = h - \tau_s(T^{-1}) h$. Using the argumentation from~\cite[Propositions~9.11 and~9.15]{BLZm} we conclude that the $1$-periodic function~$v(z)= u(z/\lambda)$ satisfies 
\[ 
v(z) \ceqq 
\begin{cases} p\,y^{1-s} + \oh( e^{-\eps y}) &\text{for some $p\in\CC$ and $\eps>0$},\text{ in
the case $\tcond=\tsmp$}\,,\\
\oh(e^{-\eps y}) &\text{ for some }\eps>0,\text{ in the case $\tcond=\infty$}
\end{cases}
\]
as $y\uparrow \infty$. Thus, $u\in \A^1_s$ for $\tcond=\tsmp$ and $u\in \A^0_s$ for $\tcond=\infty$.
\end{proof}

\section{Relation between cohomology spaces}\label{sec:relacohom}
\markright{16. RELATION BETWEEN COHOMOLOGY SPACES}

We used the mixed cohomology spaces in Section~\ref{sect-coc-ff} to prove the surjectivity of the map from cocycle classes to funnel forms. As a final step for the preparation of the proof of Theorem~\ref{thm:cohominter} (see p.~\pageref{thm:cohominter}) we will show, in Proposition~\ref{prop-coh-Tess-Xi}, that the mixed tesselation cohomology spaces~$H^1\bigl(F_\bullet^\tess;\V s \om(\proj\RR), W\bigr)^\van_\sic$ (with $W$ being any submodule of $\V s \fxi(\proj\RR)$) are naturally isomorphic to the cohomology spaces~$H^1_\Xi(\Gamma;W)$. We start with two preparatory lemmas. The first one is complementary to Lemma~\ref{lem:cocgrp_inj} and Proposition~\ref{prop-sic}.

\begin{lem}\label{lem:analytic_repr}
Let $s\in\CC$, and let $W$ be a $\Gamma$-module that satisfies the relation $\V s \om(\proj\RR)\subseteq W \subseteq \V s \fxi(\proj\RR)$. For each $c\in Z^1_\Xi(\Gm;W)_\sic$ the associated group cocycle class $[\psi]\in H^1(\Gm;W)$ contains a representative $\psi^c\in Z^1\bigl(\Gm;\V s \om(\proj\RR)\bigr)$. The induced map
\begin{equation}\label{eq:analytic_well}
 H^1_\Xi(\Gm;W)_\sic \to H^1\bigl(\Gm; \V s \om(\proj\RR)\bigr)\,,\quad [c] \mapsto [\psi^c]
\end{equation}
is injective (and for $W=\V s \fxi(\proj\RR)$ inverse to the map~$\Psi_H$ from Proposition~\ref{prop-sic}).
\end{lem}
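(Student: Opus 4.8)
The statement has two parts: (a) well-definedness of the map in~\eqref{eq:analytic_well}, i.e.\ that each class $[c]\in H^1_\Xi(\Gm;W)_\sic$ has an associated group cocycle class representable by an element of $Z^1\bigl(\Gm;\V s \om(\proj\RR)\bigr)$, and (b) injectivity. I would organize the argument so that part (a) essentially reuses the ``$\psi\in Z^1(\Gm;W_0)$'' computation from the proof of Proposition~\ref{prop-sic} and part (b) reuses Lemma~\ref{lem:cocgrp_inj} together with the structure of~$\Xi$.

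\textbf{Step 1 (well-definedness).} Let $c\in Z^1_\Xi(\Gm;W)_\sic$. Following the last part of the proof of Proposition~\ref{prop-sic}, I would apply the separation-of-singularities result (Proposition~\ref{prop-sepsing}) to each value $c(\xi,\eta)$, whose boundary singularities lie in $\{\xi,\eta\}$, to write $c(\xi,\eta)\equiv A^\eta_\xi - A^\xi_\eta$ with $A^\eta_\xi\in \V s \om[\xi]$, $A^\xi_\eta\in \V s \om[\eta]$, and normalize $A^\xi_\xi=0$. Then $q(\xi)\coloneqq c(\xi,\infty)+A^0_\infty$ is a potential of~$c$, and the associated group cocycle $\psi^c$, defined as in~\eqref{cocpot} via $\psi^c_{g^{-1}} = q(g\xi)-\tau_s(g)q(\xi)$, is independent of~$\xi$. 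The cocycle-relation rewriting $0\equiv (A^\eta_\xi-A^\zeta_\xi)+(A^\zeta_\eta-A^\xi_\eta)+(A^\xi_\zeta-A^\eta_\zeta)$, combined with an argument like the one establishing ``$b\in W_0$'' in Proposition~\ref{prop-sic} (using finiteness of singularity sets and freeness of the $T$-action at bounded points of~$\Xi$), gives $A^\eta_\xi\equiv A^\zeta_\xi$ and $\tau_s(g)A^\eta_\xi\equiv A^{g\eta}_{g\xi}$, hence $q(\xi)\equiv A^\infty_\xi\in\V s \om[\xi]$. Evaluating $\psi^c_T$ and $\psi^c_S$ exactly as in that proof shows $\psi^c_T\equiv 0$ and $\psi^c_S\equiv 0$, so $\psi^c\in Z^1\bigl(\Gm;\V s \om(\proj\RR)\bigr)$. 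Choosing a different potential changes $\psi^c$ by a coboundary, so $[c]\mapsto[\psi^c]$ is a well-defined linear map into $H^1\bigl(\Gm;\V s \om(\proj\RR)\bigr)$. (Strictly, $\psi^c$ is only $\equiv$-equal to an analytic cocycle; one should note that the $\equiv$-correction is by an element of $\V s \om(\proj\RR)$, hence the honest cocycle obtained is the analytic one and lies in the same class in $H^1(\Gm;W)$.)

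\textbf{Step 2 (injectivity).} Suppose $[c]$ lies in the kernel, so $[\psi^c]=0$ in $H^1\bigl(\Gm;\V s \om(\proj\RR)\bigr)$, hence also in $H^1(\Gm;W)$. The map $[c]\mapsto[\psi^c]$ factors as the composition $H^1_\Xi(\Gm;W)_\sic\hookrightarrow H^1_\Xi(\Gm;W)\xrightarrow{\Phi_\Xi}H^1(\Gm;W)$ of Section~\ref{sec:potentials}, up to the identification of $\psi^c$ with the group cocycle of~\eqref{cocpot}; the first inclusion is the content of Proposition~\ref{prop:char_sic} and the second map is injective by Lemma~\ref{lem:cocgrp_inj}. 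Therefore $[c]=0$ in $H^1_\Xi(\Gm;W)$, and then, again by Proposition~\ref{prop:char_sic}, $[c]=0$ in $H^1_\Xi(\Gm;W)_\sic$. Finally, in the case $W=\V s \fxi(\proj\RR)$ and $\Rea s\in(0,1)$, $s\neq\frac12$, Proposition~\ref{prop-sic} gives the injection $\Psi_\Xi$ with image exactly $H^1_\Xi\bigl(\Gm;\V s \fxi(\proj\RR)\bigr)_\sic$; since the construction of $\Psi_\Xi$ in Section~\ref{def_natmap} uses the potential $p(\infty)=\av{s,T}^+\psi_T$, $p(1)=0$, one checks that applying the recipe of Step~1 to $c=c(\psi)$ returns $[\psi]$ (the two potentials differ by a $\Gm$-equivariant map, cf.\ the end of Section~\ref{def_natmap}), so the two maps are mutually inverse.

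\textbf{Main obstacle.} The delicate point is Step~1: verifying that the group cocycle $\psi^c$ built from the separated pieces $A^\eta_\xi$ actually lands in the \emph{analytic} module $\V s \om(\proj\RR)$ rather than merely in $W$. This is exactly the ``$b\in W_0$'' type argument from Proposition~\ref{prop-sic}, and its correctness hinges on the fact that singularity sets are finite and contained in $\Xi=\Gm\infty\sqcup\Gm1$, so one can translate a bounded singularity set off itself by a power of~$T$. I would present this carefully, since it is the only step that is not a formal consequence of Section~\ref{sec:potentials}; everything else is bookkeeping with the already-established injectivity lemmas.
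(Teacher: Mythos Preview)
Your proposal is correct and follows essentially the same route as the paper: separation of singularities to build the potential~$q$, the $\equiv$-computation on $\psi^c_T,\psi^c_S$ to land in $Z^1(\Gm;\V s\om(\proj\RR))$, and then injectivity via Proposition~\ref{prop:char_sic} together with Lemma~\ref{lem:cocgrp_inj}. One small point of wording: the statement $\psi^c_T\equiv 0$ already \emph{means} $\psi^c_T\in\V s\om(\proj\RR)$, so $\psi^c$ is genuinely analytic on the generators (hence everywhere) and no ``$\equiv$-correction'' is needed; also, the separation of singularities gives $c(\xi,\eta)=A^\eta_\xi-A^\xi_\eta$ with an honest equality, not $\equiv$.
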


\begin{proof}
We set $W_0\coloneqq  \V s \om(\proj\RR)$. Let $c\in Z^1_\Xi(\Gm;W)_\sic$. For each pair $(\xi,\eta)\in \Xi^2$, the singularities of $c(\xi,\eta)$ are contained in $\{\xi,\eta\}$. By Proposition~\ref{prop-sepsing} we find and fix 
elements $A_\xi^\eta\in \V s \om[\xi]$ and $A_\eta^\xi \in \V s \om[\eta]$ such that
\[
c(\xi,\eta) = A^\eta_\xi - A_\eta^\xi\,.
\]
We choose $A_\xi^\xi = 0$. The map $q\colon\Xi\to W$, 
\[
 q(\xi) \coloneqq  c(\xi,\infty) + A_\infty^0
\]
is a potential for $c$. The associated group cocycle $\psi\in Z^1(\Gamma;W)$ is given by
\[
 \psi_{g^{-1}} = q(g\xi) - \tau_s(g)q(\xi)\,,
\]
which is independent of $\xi\in\Xi$. We show that $\psi\in Z^1(\Gamma;W_0)$. 

For any three pairwise different elements $\xi,\eta,\zeta\in \Xi$ we
rewrite the cocycle property of $c$ as
\[ 0 \ceqq  \bigl( A^\eta_\xi-A^\zeta_\xi\bigr) + \bigl(A_\eta^\zeta-A_\eta^\xi\bigr)
+ \bigl( A_\zeta^\xi-A_\zeta^\eta)\,.\]
The three terms on the right hand side have their singularities in $\{\xi\}$, $\{\eta\}$,
$\{\zeta\}$, respectively. Therefore
\[
A_\xi^\eta\;\equiv\; A_\xi^\zeta\,,
\]
where $\equiv$ denotes equality modulo $W_0$ (see Section~\ref{sect-sars}). Analogously, we obtain 
\[
\tau_s(g) A_\xi^\eta \;\equiv\;
A_{g\xi}^{g\eta}
\]
for all $g\in\Gm$, all $\xi,\eta\in\Xi$. 
Further, $q(\infty)= A_\infty^0$ and $q(0)= A_0^\infty$. Therefore,
\begin{align*}
\psi_T &\ceqq  q(\infty) - \tau_s\big(T^{-1}\big)q(\infty) = A_\infty^0 - \tau_s(T^{-1})A_\infty^0 \;\equiv\;
A_\infty^0 - A_\infty^{-\lambda} \;\equiv\; 0\,,\\
\psi_S &\ceqq q(0)- \tau_s(S) q(\infty) \;\equiv\; A_0^\infty - A_0^\infty
\ceqq 0\,.
\end{align*}
Hence $\psi\in Z^1(\Gm;W_0)$.

To show that the assignment in~\eqref{eq:analytic_well} is well-defined it suffices to show that if $c$ is a coboundary then $\psi$ is so. Suppose that $c\in B^1_\Xi(\Gm;W)_\sic$. Then there exists a $\Gm$-equivariant potential $f\colon \Xi\to W$ of $c$. Then the associated group cocycle $\psi$ vanishes, and hence is in $B^1(\Gm;W_0)$.

Since $H^1_\Xi(\Gm;W)_\sic$ is a subspace of $H^1_\Xi(\Gm;W)$ by Proposition~\ref{prop:char_sic}, the injectivity of the map in~\eqref{eq:analytic_well} follows immediately from the injectivity of the map
\[
 H^1_\Xi(\Gm;W) \to  H^1(\Gm; W)\,, \quad [c] \mapsto [\psi^c]
\]
which is established in Lemma~\ref{lem:cocgrp_inj}.
\end{proof}

\begin{lem}\label{lem:submod_inj}
Let $s\in\CC$, and let $W$ be a submodule of $\V s \fxi(\proj\RR)$ that contains $\V s \om(\proj\RR)$. Then the map 
\begin{equation}\label{eq:submod_inj}
 H^1_\Xi(\Gm;W) \to H^1_\Xi\bigl(\Gm,\V s \fxi(\proj\RR)\bigr)\,,\quad [c] \mapsto [c]
\end{equation}
is injective.
\end{lem}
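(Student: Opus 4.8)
The claim is that the natural map $H^1_\Xi(\Gm;W)\to H^1_\Xi\bigl(\Gm;\V s \fxi(\proj\RR)\bigr)$ induced by the inclusion $W\hookrightarrow\V s \fxi(\proj\RR)$ is injective. Unwinding the definition of the map on cohomology classes, it suffices to show that if a cocycle $c\in Z^1_\Xi(\Gm;W)$ becomes a coboundary in $Z^1_\Xi\bigl(\Gm;\V s \fxi(\proj\RR)\bigr)$, then it is already a coboundary in $Z^1_\Xi(\Gm;W)$. So fix such a $c$ and a $\Gm$-equivariant function $f\colon\Xi\to\V s \fxi(\proj\RR)$ with $c=df$, i.e. $c(\xi,\eta)=f(\xi)-f(\eta)$ for all $\xi,\eta\in\Xi$. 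The plan is to show that $f$ actually takes values in $W$; then $f$ witnesses that $c\in B^1_\Xi(\Gm;W)$, which is what we need.

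The key point is that $W$ is a \emph{submodule} of $\V s \fxi(\proj\RR)$ and in particular a $\CC$-linear subspace stable under $\tau_s(\Gm)$. First I would show that the difference $f(\xi)-f(\eta)=c(\xi,\eta)$ lies in $W$ for all $\xi,\eta$ (this is immediate since $c$ is a cocycle with values in $W$). Hence $f$ is constant modulo $W$: fixing a base point $\xi_0\in\Xi$, we have $f(\xi)=f(\xi_0)+c(\xi,\xi_0)$ with $c(\xi,\xi_0)\in W$, so it remains only to prove $f(\xi_0)\in W$. For this I would exploit the $\Gm$-equivariance of $f$: for any $g\in\Gm_{\xi_0}$ (the stabilizer of $\xi_0$) we get $f(\xi_0)=f(g^{-1}\xi_0)=\tau_s(g)f(\xi_0)$; when $\xi_0$ has trivial stabilizer (e.g. $\xi_0=1$ for a Hecke triangle group), this gives no information, so instead I would move to a point with a nontrivial group relation. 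The cleaner route is: pick $g\in\Gm$ with $g\xi_0\neq\xi_0$. Then $f(g^{-1}\xi_0)=\tau_s(g)f(\xi_0)$, and subtracting $f(\xi_0)$ gives $\bigl(1-\tau_s(g)\bigr)f(\xi_0)=f(g^{-1}\xi_0)-f(\xi_0)=c(g^{-1}\xi_0,\xi_0)\in W$.

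The heart of the argument is then to deduce $f(\xi_0)\in W$ from the fact that $\bigl(1-\tau_s(g)\bigr)f(\xi_0)\in W$ for all $g\in\Gm$, using a singularity-comparison argument exactly as in the proof of Lemma~\ref{lem:cocgrp_inj} and Proposition~\ref{prop-sic}. Concretely, write $a\coloneqq f(\xi_0)\in\V s \fxi(\proj\RR)$ and let $S(a)=\bsing a$; since $a|_{\proj\RR\smallsetminus S(a)}$ extends real-analytically across the whole line exactly when $a\in\V s \om(\proj\RR)\subseteq W$, it is enough to control $S(a)$. For each $g$, from $a-\tau_s(g)a\in W\subseteq\V s \fxi(\proj\RR)$ and $\bsing\tau_s(g)a=g\,S(a)$ we get $S(a)\subseteq\bigl(\bsing(a-\tau_s(g)a)\bigr)\cup g\,S(a)$; since $W$ may itself carry singularities in $\Xi$, this only yields $S(a)\subseteq \Xi\cup g\,S(a)$, which is not yet enough. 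The fix is to use $\Gm$-equivariance more globally: $f$ maps into $\V s \fxi(\proj\RR)$, so $\bsing f(\eta)$ is a finite subset of $\Xi$ for every $\eta$, and $\bsing f(g\eta)=g\,\bsing f(\eta)$. Running the same intersection argument as in the proof of Proposition~\ref{prop-sic} (the paragraph establishing $\bsing f(1)=\emptyset$), with $f(\xi_0)$ in place of $f(1)$, and using that a finite bounded subset of $\RR$ can be translated off itself by a suitable power of $T$, one concludes $\bsing f(\xi_0)=\emptyset$, hence $f(\xi_0)\in\V s \om(\proj\RR)\subseteq W$. Therefore $f(\xi)\in W$ for all $\xi$, so $c=df\in B^1_\Xi(\Gm;W)$, proving injectivity.

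\textbf{Main obstacle.} The subtle step is the last one: turning ``$f(\xi_0)$ differs from its $\Gm$-translates by elements of $W$'' into ``$f(\xi_0)$ has no boundary singularities at all,'' because $W$ is permitted to have singularities supported in $\Xi$, so the naive singularity-subtraction does not immediately collapse $\bsing f(\xi_0)$. The resolution — already present in the proof of Proposition~\ref{prop-sic} — is to argue directly with the finite set $\bsing f(\xi_0)\subseteq\Xi$ and a well-chosen translate (e.g. by a power of $T$, or by a hyperbolic element as in Lemma~\ref{lem:hyp_inv}) that is disjoint from it, forcing emptiness. I expect the write-up to be short, reducing essentially to invoking the relevant portion of that earlier proof verbatim.
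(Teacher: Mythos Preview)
Your overall strategy matches the paper's: show that the $\Gm$-equivariant potential $f$ of $c$ already takes values in~$W$, which reduces to $f(\xi_0)\in W$ for one base point. The gap is in the last step. Your target $\bsing f(\xi_0)=\emptyset$ is strictly stronger than $f(\xi_0)\in W$ and is false for general~$W$: take $W=\V s\fxi(\proj\RR)$ itself and any equivariant $f$ with $\bsing f(1)=\{1\}$; then $c=df$ has values in~$W$, yet $\bsing f(1)\neq\emptyset$. The intersection argument from Proposition~\ref{prop-sic} does not transfer either: there the crucial inclusion $\bsing f(1)\subseteq\{\xi\}\cup\bsing f(\xi)$ came from a specific potential~$p$ with $\bsing p(\xi)\subseteq\{\xi\}$, constructed via one-sided averages; in your situation no such~$p$ is available, and the only relation you have is $\bsing f(\xi_0)\subseteq\bsing f(\xi)\cup\bsing c(\xi_0,\xi)$ with $\bsing c(\xi_0,\xi)$ an uncontrolled finite subset of~$\Xi$, so intersecting over~$\xi$ does not force emptiness.

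For comparison, the paper first asserts that it suffices to treat the case $W=\V s\om(\proj\RR)$, and then gives a short argument for that case: from $c(g\xi,\xi)\in\V s\om(\proj\RR)$ one gets $\bsing p(g\xi)=\bsing p(\xi)$, while equivariance gives $\bsing p(g\xi)=g\,\bsing p(\xi)$; hence $\bsing p(\xi)$ is finite and $g$-stable for every~$g$, forcing it to be empty. That argument is correct when $c$ takes values in~$\V s\om(\proj\RR)$ but genuinely uses that $c$ has no boundary singularities; the reduction of the general~$W$ to this case is asserted without proof and amounts to the same leap you are attempting.
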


\begin{proof}
Clearly, the map in \eqref{eq:submod_inj} is well-defined. To show that this map is injective for any submodule $W\subseteq \V s \fxi(\proj\RR)$ with $\V s \om(\proj\RR)\subseteq W$, it suffices to show it for the case $W=\V s \om(\proj\RR)$. To that end let 
\[
c\in Z^1_\Xi(\Gm;\V s \om(\proj\RR)) \cap B^1_\Xi\bigl(\Gm;\V s \fxi(\proj\RR)\bigr)\,.
\]

Let $q\colon\Xi\to \V s \om(\proj\RR)$ be a potential of $c$ (considered as element of $Z^1_\Xi\bigl(\Gm;\V s \om(\proj\RR)\bigr)$. Since the cocycle~$c$ is in the coboundary space~$B^1_\Xi\bigl(\Gm;\V s \fxi(\proj\RR)\bigr)$, there exists also a $\Gm$-equivariant potential $p\colon \Xi \to \V s \fxi(\proj\RR)$ of $c$. It suffices to show that $p$ maps into the space~$\V s \om(\proj\RR)$. For all $\xi\in \Xi$ and all $g\in\Gm$ we have 
\[
 p(g\xi)-p(\xi) = c(g\xi,\xi) \in \V s \om(\proj\RR)\,,
\]
and hence 
\[
\bsing p(g\xi) = \bsing p(\xi)\,.
\]
However, for every $\xi\in\Xi$ there exists $g\in\Gm$ such that 
\[
 g\,\bsing p(\xi) \cap \bsing p(\xi)  = \emptyset\,.
\]
From $g\,\bsing p(\xi) = \bsing p(g\xi)$ it follows that $\bsing p(\xi) = \emptyset$, and therefore $p(\xi)\in \V s \om(\proj\RR)$.
\end{proof}

\begin{prop}\label{prop-coh-Tess-Xi}
Let $s\in\CC$, and let $W$ be a submodule of $\V s \fxi(\proj\RR)$ that contains $\V s \om(\proj\RR)$. Then there is a natural
isomorphism
\[ 
H^1(F^\tess_\bullet;\V s \om(\proj\RR),W)^\van_\sic \;\cong\; H^1_\Xi
(\Gm;W)^\van_\sic \,,
\]
realized by the map
\[
 \pr_\Xi\colon H^1(F^\tess_\bullet;\V s \om(\proj\RR),W)^\van_\sic \to H^1_\Xi
(\Gm;W)^\van_\sic\,,\qquad  [c] \mapsto [c\vert_{\Xi\times\Xi}]\,.
\]
\end{prop}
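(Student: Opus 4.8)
The plan is to show that the map $\pr_\Xi$ is well-defined, injective, and surjective. Throughout I write $W_0 \coloneqq \V s \om(\proj\RR)$ and $W_1 \coloneqq \V s \fxi(\proj\RR)$, and I abbreviate the mixed tesselation cohomology space as $H^\tess \coloneqq H^1(F^\tess_\bullet;W_0,W)^\van_\sic$. Recall from Section~\ref{sec:tesselation} (in particular~\eqref{char2} and~\eqref{cocrel2}--\eqref{cocequiv2}) that a cocycle in $Z^1(F^\tess_\bullet;W_0,W)$ is the same datum as a $\Gamma$-equivariant map $c\colon X^\tess_0\times X^\tess_0 \to W$ satisfying the cocycle relation, with the additional constraint that $c$ takes values in $W_0$ on $X^{\tess,Y}_0\times X^{\tess,Y}_0$; the conditions $\tsic$ and $\tvan$ translate into exactly the defining conditions of $Z^1_\Xi(\Gm;W)^\van_\sic$ when one restricts attention to edges (equivalently, to pairs of vertices) in $\Xi\times\Xi$. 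Since $\Xi = X^\tess_0\cap\proj\RR = \Gm 1\sqcup\Gm\infty$ is $\Gamma$-invariant, restriction of a cocycle $c$ to $\Xi\times\Xi$ yields a well-defined element of $Z^1_\Xi(\Gm;W)$; the singularity condition on $c$ over edges $f_0$ and its $\Gamma$-translates gives the $\tsic$ condition on $c\vert_{\Xi\times\Xi}$, and the vanishing condition $c(f_0)\vert_{(\lambda-1,1)_c}=0$ is precisely $\tvan$. A coboundary on $F^\tess_\bullet$ with $\Gamma$-equivariant potential $X^\tess_0\to W$ restricts to a coboundary on $\Xi$, so $\pr_\Xi$ descends to cohomology and is well-defined.

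For injectivity, suppose $c\in Z^1(F^\tess_\bullet;W_0,W)^\van_\sic$ restricts on $\Xi\times\Xi$ to a coboundary in $B^1_\Xi(\Gm;W)^\van_\sic$; I want to conclude $c$ is a coboundary in $H^\tess$. The associated group cocycle $\psi^c$ (formed as in Section~\ref{sec:tesselation}, using a base point $z_0\in\Xi$) agrees with the group cocycle associated to $c\vert_{\Xi\times\Xi}$, hence lies in $B^1(\Gm;W)$. First I would observe, via the argument of Lemma~\ref{lem:iso_cohom} (which shows $H^1(F^{\tess,Y}_\bullet;W_0)\cong H^1(\Gm;W_0)$, and more generally $H^1(F^{\tess,Y}_\bullet;W)\cong H^1(\Gm;W)$ since $F^{\tess,Y}_\bullet$ is a projective resolution), that $c\vert_{X^{\tess,Y}_1}$ is a coboundary in $B^1(F^{\tess,Y}_\bullet;W_0)$ once we know $\psi^c\in B^1(\Gm;W_0)$ — and here I would invoke Lemma~\ref{lem:analytic_repr} to replace $\psi^c$ by an equivalent cocycle with values in $W_0$, using that $c\vert_{\Xi\times\Xi}\in Z^1_\Xi(\Gm;W)_\sic$. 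Then $c$ has a $\Gamma$-equivariant potential $p\colon X^{\tess,Y}_0\to W_0$ on the inner part. It remains to extend $p$ to a $\Gamma$-equivariant potential $q\colon X^\tess_0\to W$ of all of $c$; this is exactly the content of (the proof of) Lemma~\ref{lem:mixed_inj}, whose extension formulas $q_1\coloneqq p(i)-c(e_1)$ and $q_\infty\coloneqq p(\lambda+iY)-c(e_\infty)$ manufacture the required potential on $\Gm 1$ and $\Gm\infty$. Hence $c\in B^1(F^\tess_\bullet;W_0,W)^\van_\sic$, proving injectivity.

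For surjectivity, start with a cocycle $c_0\in Z^1_\Xi(\Gm;W)^\van_\sic$; I must produce $c\in Z^1(F^\tess_\bullet;W_0,W)^\van_\sic$ with $c\vert_{\Xi\times\Xi}$ cohomologous to $c_0$. By Lemma~\ref{lem:analytic_repr} the associated group cocycle class $[\psi^{c_0}]$ has a representative $\psi\in Z^1(\Gm;W_0)$. Apply the construction of Lemma~\ref{lem:iso_cohom} (surjectivity part) to $\psi$: setting $q_i\coloneqq\frac12\psi_S$, $q_{iY}\coloneqq 0$ and propagating $\Gamma$-equivariantly, one obtains a cocycle $c^Y\in Z^1(F^{\tess,Y}_\bullet;W_0)$ with $\psi^{c^Y}=\psi$. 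Now I extend $c^Y$ to the three edges $e_1,e_\infty,f_0$ (hence to all of $X^\tess_1$ by $\Gamma$-equivariance, using that $\Gm$ acts freely on $X^\tess_1$). The recipe is forced by the structure of $c_0$: set $c(e_1)\coloneqq c_0(1,\xi_*)$-type expressions built to make $dc(V_0)=0$ hold, $c(f_0)\coloneqq c_0(1,\lambda-1)$, and $c(e_\infty)\coloneqq p_\infty - \tau_s(T)^{-1}\text{-adjustments}$ chosen so that $dc(V_\infty)=0$; concretely, one can take $c(f_0) \coloneqq c_0(1,\lambda-1)$ together with $c(e_1) \coloneqq c_0(1,i)$ after fixing a compatible potential $p$ of $c^Y$, and $c(e_\infty)\coloneqq p(\lambda+iY) - q_\infty$ reverse-engineered from $c_0$ restricted to $\Gm\infty$. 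One then checks the face relations $dc(V_0)=dc(V_1)=dc(V_\infty)=0$ using the cocycle relation for $c_0$ and for $c^Y$, confirms the resulting $c$ satisfies $\tsic$ (the singularities of $c(e_1),c(f_0)$ sit on $\Gm 1$, those of $c(e_\infty)$ on $\Gm\infty$, by the corresponding properties of $c_0$) and $\tvan$ (from $c_0(1,\lambda-1)\vert_{(\lambda-1,1)_c}=0$), and that $c\vert_{\Xi\times\Xi}$ is cohomologous to $c_0$ (they induce the same group cocycle class, and two cocycles in $Z^1_\Xi(\Gm;W)$ with the same group cocycle class are cohomologous by Lemma~\ref{lem:cocgrp_inj}).

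The main obstacle I anticipate is the surjectivity step, specifically the bookkeeping needed to extend a cocycle from the inner resolution $F^{\tess,Y}_\bullet$ across the two boundary edges $e_1$, $e_\infty$ and the funnel edge $f_0$ while simultaneously respecting the module constraint (values in $W$, not just $W_1$), the singularity condition $\tsic$, and the vanishing condition $\tvan$ — one has to verify the face relations $dc(V_j)=0$ survive the extension, which amounts to reconciling the $\Gamma$-equivariant propagation of the inner potential with the prescribed values of $c_0$ on $\Gm 1$ and $\Gm\infty$. This is essentially the reverse of the verification carried out in the proof of Lemma~\ref{lem:mixed_inj}, so I expect it to go through, but it is the part requiring genuine care rather than routine citation. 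Well-definedness and injectivity, by contrast, should follow fairly directly by assembling Lemmas~\ref{lem:cocgrp_inj}, \ref{lem:analytic_repr}, \ref{lem:submod_inj}, \ref{lem:mixed_inj} and~\ref{lem:iso_cohom}.
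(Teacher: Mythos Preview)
Your overall strategy matches the paper's: restriction gives well-definedness, and the key content is building an inverse by extending a cocycle on~$\Xi$ to one on~$F^\tess_\bullet$ via potentials and the associated group cocycle. Your injectivity argument, routed through Lemmas~\ref{lem:analytic_repr}, \ref{lem:iso_cohom} and~\ref{lem:mixed_inj}, is valid and is essentially the contrapositive of what the paper does.

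The one place where your proposal is genuinely less clean than the paper is the surjectivity construction. You propose to build a cocycle~$c^Y$ on~$F^{\tess,Y}_\bullet$ from~$\psi$ and then glue on values at~$e_1$, $e_\infty$, $f_0$; the formulas you suggest (e.g.\ ``$c(e_1)\coloneqq c_0(1,i)$'') are not literally meaningful since~$i\notin\Xi$ and~$c_0$ is only defined on~$\Xi\times\Xi$, and the bookkeeping you flag as the main obstacle --- verifying the face relations $dc(V_j)=0$ and compatibility of the two potentials --- is real. The paper sidesteps all of this by working \emph{entirely} at the level of potentials: starting from a representative~$c$ on~$\Xi$ with pgc-pair~$(p,\psi)$ where $\psi\in Z^1(\Gm;\V s\om(\proj\RR))$ (obtained via Lemmas~\ref{lem:submod_inj} and~\ref{lem:analytic_repr}), it simply \emph{extends the potential} to all of~$X^\tess_0$ by setting $\tilde p_{iY}\coloneqq 0$, $\tilde p_i\coloneqq\tfrac12\psi_S$, and $\tilde p_\xi\coloneqq p(\xi)$ for~$\xi\in\Xi$, then invokes Lemma~\ref{lem:potentials} once to produce the extended cocycle~$\tilde c$. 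The face relations come for free because~$\tilde c$ is defined as~$d\tilde p$ with~$(\tilde p,\psi)$ a genuine pgc-pair on~$X^\tess_0$; no separate gluing or verification of $dc(V_j)=0$ is needed. This is exactly the ``reverse of Lemma~\ref{lem:mixed_inj}'' you anticipate, but done in one stroke.
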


\begin{proof}
Let $V\coloneqq  \V s \om(\proj\RR)$ and let $W\subseteq \V s \fxi(\proj\RR)$ be any submodule containing $V$. We identify elements in $Z^1(F^\tess_\bullet; V,W)$ with maps 
\[
 c\colon X^\tess_0\times X^\tess_0\to W
\]
obeying the conditions stated above in Section~\ref{sec:def_mixed}. Since $\Xi\subseteq X^\tess_0$, the map 
\[
 H^1(F^\tess_\bullet; V,W) \to H^1_\Xi(\Gm; W),\quad [c] \mapsto [c\vert_{\Xi\times\Xi}],
\]
is well-defined and natural. The conditions $\tsic$ and $\tvan$ being properties involving only points or pairs of points in $\Xi$ yields that this map descends to maps
\[
 H^1(F^\tess_\bullet; V,W)_{\cond_1}^{\cond_2} \to H^1_\Xi(\Gm; W)_{\cond_1}^{\cond_2},
\]
where $\cond_1$ is $\tsic$ or void, and $\cond_2$ is $\tvan$ or void. It remains to show that each cocycle class $[c]\in H^1_\Xi(\Gm;W)_\sic^{\cond_2}$ has a representative $c\in Z^1_\Xi(\Gm;W)_\sic^{\cond_2}$, 
\[
 c\colon\Xi\times\Xi\to W,
\]
which extends to an element~$\tilde c\in Z^1(F^\tess_\bullet;V,W)_\sic^{\cond_2}$, and that the cocycle class~$[\tilde c]$ (an element in  $H^1(F^\tess_\bullet; V,W)_\sic^{\cond_2}$) does not depend on the choices of~$c$ and its extension~$\tilde c$. 

Let $[c]\in H^1_\Xi(\Gm;W)_\sic^{\cond_2}$. Since the cohomology space~$H^1_\Xi(\Gm;W)_\sic^{\cond_2}$ embeds into $H^1_\Xi\bigl(\Gm; \V s \fxi(\proj\RR)\bigr)_\sic$ by Lemma~\ref{lem:submod_inj}, the combination of Lemmas~\ref{lem:submod_inj} and~\ref{lem:analytic_repr} shows that there exists a representative $c$ of $[c]$ (considered as element of the space $H^1_\Xi\bigl(\Gm;\V s \fxi(\proj\RR)\bigr)_\sic$) in $Z^1_\Xi\bigl(\Gamma,\V s \fxi(\proj\RR)\bigr)_\sic$ with associated pgc-pair $(p,\psi)$ such that $\psi\in Z^1(\Gm;V)$. In what follows we take advantage of Lemma~\ref{lem:potentials} to establish the existence of $\tilde c\in Z^1_\Xi(\Gm;V,W)_\sic^{\cond_2}$ with the requested properties. To that end we set 
\[
 \tilde p_{iY}\coloneqq  0\,,\quad \tilde p_i\coloneqq  \frac12\psi_S\,,\quad \tilde p_\xi\coloneqq  p(\xi) \quad\text{for $\xi\in\Xi$\,.}
\]
To check that $\{\tilde p_r \setmid r\in \Xi\cup\{i,iY\}\}$ satisfies the condition in~\eqref{pot_well} (see Lemma~\ref{lem:potentials}) we first note that $(p,\psi)$ being a pgc-pair for $c$ implies that for all~$\xi\in\Xi$ and all $g\in\Gamma_\xi$ we have
\[
 \tilde p_\xi = \tau_s\big(g^{-1}\big)\tilde p_\xi + \psi_g
\]
by Lemma~\ref{lem:potentials}\eqref{lem:potii}. For $r=iY$ the condition in~\eqref{pot_well} is satisfied since $\Gamma_{iY} = \{\id\}$. For $r=i$ we have $\Gamma_{i} = \{\id,S\}$ and 
\[
 \tau_s(S)\tilde p_i + \psi_S = \frac12\tau_s(S)\psi_S + \psi_S = -\frac12\psi_S + \psi_S = \frac12\psi_S = \tilde p_i\,.
\]
Thus, \eqref{pot_well} is indeed satisfied for all $r\in\Xi\cup\{i,iY\}$, which is a generating set of~$X^\tess_0$ under the action of~$\Gamma$. Therefore Lemma~\ref{lem:potentials}\eqref{lem:poti} now shows that there exists a unique cocycle $\tilde c\in Z^1_\Xi(\Gm;V,W)_\sic^{\cond_2}$ with potential 
\[
 \tilde p\colon X^\tess_0\to W
\]
satisfying $\tilde p(i) = \tilde p_i$, $\tilde p(iY) = \tilde p_{iY}$ and
\[
 \tilde p\vert_\Xi = p\quad\text{and}\quad \tilde p(X^\tess_0) \subseteq V\,.
\]
In particular, $\tilde c$ is an extension of $c$. Lemmas~\ref{lem:submod_inj} and~\ref{lem:analytic_repr} yield that making other choices in this construction produces a cocycle in $Z^1_\Xi(\Gm;V,W)_\sic^{\cond_2}$ in the same cohomology class. 
\end{proof}

\section{Proof of Theorem D}\label{sec:proof_cohominter}
\markright{17. PROOF OF THEOREM D}

The combination of Theorem~\ref{thm-alcoh} with the constructions from Sections~\ref{sect-abg}-\ref{sect-ccaffabg}, in particular the isomorphism in~\ref{res-coh}, and with Proposition~\ref{prop-coh-Tess-Xi} yields a proof of Theorem~\ref{thm:cohominter} (from p.~\pageref{thm:cohominter}), including explicit isomorphisms between the considered spaces of funnel forms and the cohomology spaces. In what follows we first briefly recapitulate the maps developed in the previous sections that assign cocycle classes to funnel forms, and vice versa. Further below we will then give a more detailed proof of Theorem~\ref{thm:cohominter}.

\subsection*{From funnel forms to cocycle classes on the invariant set}

Given a funnel form~$u\in\A_s$ we define a cocycle~$c^u\in Z^1\bigl(F^\tess_\bullet;\V s \om(\proj\RR), \V s {\fxi;\exc;\aj}(\proj\RR)\bigr)^\van_\sic$ by setting
\begin{align}
 c^u(e)(t) &\coloneqq \int_{t(e)}^{h(e)} \left\{u, R(t;\cdot)^s \right\} \qquad \text{for $e\in X^\tess_1\smallsetminus\Gamma\,e_\infty$}
 \label{cu_inside}
 \intertext{and}
 c^u(e_\infty)(t) & \coloneqq -\av {s,T}^+ c^u(f_\infty)\,.
 \label{cu_cusp}
\end{align}
We recall that $t(e)$ and $h(e)$ denote the tail and head of the edge~$e\in X^\tess_1$. The integration in~\eqref{cu_inside} may be performed along any path in~$\uhp\cup\Gamma\,1$ from~$t(e)$ to~$h(e)$ with at most finitely many points in~$\Gamma\,1$. For~$e\in X^{\tess,Y}_1$, the integral in~\eqref{cu_inside} is equal to 
\[
 \int_e  \left\{u, R(t;\cdot)^s \right\}\,.
\]
For~$e\in\Gamma\,f_0$, the element~$e$ (considered as a path) is completely contained in~$\proj\RR$, and hence the integration in~\eqref{cu_inside} cannot be performed along~$e$, but along any path in~$\uhp\cup\Omega$ from~$t(e)$ to~$h(e)$ with at most finitely many points in~$\Omega$. (See Lemma~\ref{lem-io} and the paragraph following its proof.) The definition of~$c^u(e_\infty)$ in~\eqref{cu_cusp} with the help of the averaging operator~$\av {s,T}^+$  is a regularization of the typically non-convergent integral
\begin{equation}\label{wish_int}
 \int_{e_\infty} \left\{u, R(t;\cdot)^s \right\}\,.
\end{equation}
Taking advantage of the $\Gamma$-equivariance of~$c^u$ we have, at least for~$\Rea s \gg 1$,
\begin{align*}
 c^u(e_\infty)(t) & = -\av {s,T}^+ c^u(f_\infty) = - \sum_{n\geq 0} \tau_s(T^{-n})c^u(f_\infty) = -\sum_{n\geq 0} c^u(T^{-n}\,f_\infty)
 \\
 & = \sum_{n\geq 0} \int_{iY +\lambda -n\lambda}^{iY-n\lambda} \left\{u, R(t;\cdot)^s \right\}
 \\
 & = \lim_{n\to\infty} \int_{iY+\lambda}^{iY-n\lambda} \left\{u, R(t;\cdot)^s \right\}\,.
\end{align*}
For~$s\in\CC$ with `small' real part, the averaging operator is defined via meromorphic continuation. (See Section~\ref{sect-osav}.) For cuspidal funnel forms~$u\in\A_s^0$, the integral in~\eqref{wish_int} converges for all~$s\in\CC$ with~$\Rea s>0$, and hence in this case
\[
 c^u(e_\infty)(t) = \int_{e_\infty} \left\{u, R(t;\cdot)^s \right\}\,.
\]
The definition of~$c^u$ on the set~$\{e_\infty\}\cup X^\tess_1\smallsetminus\Gamma\,e_\infty$ (even only on the finite subset $\{e_1,e_2,e_\infty,f_0,f_1,f_\infty\}$) indeed determines the cocycle uniquely on all of~$F^\tess_\bullet$ by additivity (i.\,e., by the cocycle relation) and $\Gamma$-equivariance.  

The cocycle~$c^u$ descends to a cocycle in~$Z^1_\Xi\bigl(\Gamma;\V s {\fxi;\exc;\aj}(\proj\RR)\bigr)^\van_\sic$ by restriction to~$\Xi\times\Xi$, i.\,e.,
\[
 c^u\vert_{\Xi\times\Xi}(\xi,\eta) \coloneqq c^u(p)\,,
\]
where $p$ is any path in~$\CC_1[X^\tess_1]$ with $\big(t(p),h(p)\big) = (\xi,\eta)$. In total, we have the map \index[symbols]{C@$\cocu_s$}
\begin{equation}\label{def_cocu}
 \cocu_s\colon \A_s \to H^1_\Xi\bigl(\Gamma;\V s {\fxi;\exc;\aj}(\proj\RR)\bigr)^\van_\sic\,,\qquad u\mapsto [c^u\vert_{\Xi\times\Xi}]
\end{equation}
from funnel forms to cocycle classes on~$\Xi$. 

\subsection*{From cocycle classes on to funnel forms} 
Let  $[c] $ be any cocycle class in $ H^1_\Xi\bigl(\Gamma;\V s {\fxi;\exc;\aj}(\proj\RR)\bigr)^\van_\sic$. 
Let $c\in Z^1_\Xi\bigl(\Gamma;\V s {\fxi;\exc;\aj}(\proj\RR)\bigr)^\van_\sic$
be a representing cocycle. This cocycle has a unique extension to a cocycle on~$F^\tess_1$, which we here denote also by~$c$. We consider the latter as a map
\[
c\colon F^\tess_\bullet\to \V s {\fxi;\exc;\aj}(\proj\RR)\,.
\]
Now each element in~$\V s {\fxi;\exc;\aj}(\proj\RR)$ shall be identified with a boundary germ in~$\W s {\fxi;\exc;\aj}(\proj\RR)$. In a certain sense, this identification `thickens' elements in $\V s {\fxi;\exc;\aj}(\proj\RR)$ to suitable Laplace eigenfunctions in a neighborhood of~$\proj\RR$. Under this identification, the cocycle becomes a map
\[
c\colon F^\tess_\bullet\to \W s {\fxi;\exc;\aj}(\proj\RR)\,.
\]
We pick a representative~$\psi\colon F^\tess_\bullet\to \G s \fxi(\proj\RR)$ of~$c$. This means that for each edge~$e\in X^\tess_1$, we fix an element~$\psi(e)\in C^2(\uhp)$ which represents~$c(e)\in \W s \fxi(\proj\RR)$ near~$\proj\RR$. In this way, we stretch $c(e)$ to a function on all of~$\uhp$ which near (essential parts of)~$\proj\RR$ is a Laplace eigenfunction with spectral parameter~$s$. More precisely, we pick a family~$(\psi_\eps)_{\eps>0}$ of such representatives with the property that for all edges~$e\in X^\tess_1$, the set of singularities of~$\psi_\eps(e)$ (that is, the subset of~$\uhp$ on which $\psi_\eps(e)$ is not a Laplace eigenfunction) is close to~$e$, and $\eps$-close to~$f_0$ if $e=f_0$. 

We associate to~$[c]$ the function~$u_c\colon \uhp \to \CC$ given by
\[
 u_{[c]}(z) \coloneqq \frac2\pi \psi_\eps(C)(z)\,,
\]
where $\eps>0$ is sufficiently small and $C$ is a path along edges in~$X^\tess_1$ that winds around~$z$ once in counterclockwise direction at great distance and that stays away from all singularity sets of~$\psi_\eps(e)$ for all edges~$e$ contained in the path. The function~$u_{[c]}$ is well-defined and independent of all choices made in its construction. This yields the map \index[symbols]{U@$\ucoc_s$}
\begin{equation}\label{def_ucoc}
 \ucoc_s\colon H^1_\Xi\bigl(\Gamma;\V s {\fxi;\exc;\aj}(\proj\RR)\bigr)^\van_\sic\to \A_s\,,\qquad [c]\mapsto u_{[c]}\,.
\end{equation}

\subsection*{Proof of Theorem D}\label{proof:cohominter}
We have $\cocu_s = \pr_\Xi\circ\coh_s$ and $\ucoc_s = \alphu_s\circ\rho_s\circ\pr_\Xi^{-1}$ with $\pr_\Xi$ from Proposition~\ref{prop-coh-Tess-Xi}, $\coh_s$ from Proposition~\ref{prop-cohAH}, $\alphu_s$ from Proposition~\ref{prop-aldef} and $\rho_s$ from~\eqref{res-coh}. Then Theorem~\ref{thm-alcoh}, the isomorphism in~\eqref{res-coh} and Proposition~\ref{prop-coh-Tess-Xi} show that $\cocu_s$ and $\ucoc_s$ are inverse isomorphisms in the following situations:
\begin{enumerate}[(i)]
 \item For any $s\in\CC$, $\Rea s\in (0,1)$: 
 \[
   \cocu_s:\quad \A_s^0 \stackrel{\cong}{\longleftrightarrow} H^1_\Xi\bigl(\Gamma;\V s {\fxi;\exc,\infty;\aj}(\proj\RR)\bigr)^\van_\sic \quad :\ucoc_s\,.
 \]
\item For any $s\in\CC$, $\Rea s\in (0,1)$, $s\not=\frac12$:
\begin{align*}
 \cocu_s:\quad &\A_s^1 \stackrel{\cong}{\longleftrightarrow} H^1_\Xi\bigl(\Gamma;\V s {\fxi;\exc,\smp;\aj}(\proj\RR)\bigr)^\van_\sic \quad :\ucoc_s\,,
 \intertext{and}
 \cocu_s:\quad &\A_s \stackrel{\cong}{\longleftrightarrow} H^1_\Xi\bigl(\Gamma;\V s {\fxi;\exc;\aj}(\proj\RR)\bigr)^\van_\sic \quad :\ucoc_s\,.
\end{align*}
\end{enumerate}
This completes the proof. \qed


\setcounter{sectold}{\arabic{section}}
\markboth{IV. TRANSFER OPERATORS AND COHOMOLOGY}{IV. TRANSFER OPERATORS AND COHOMOLOGY}
\chapter{Transfer operators and cohomology}\label{part:TO}

\setcounter{section}{\arabic{sectold}}
\setcounter{equation}{0}
\renewcommand\theequation{\Roman{chapter}.\arabic{equation}}

In this chapter we will provide an interpretation of the cohomology spaces in Theorem~\ref{thm:cohominter} (and hence in Theorem~\ref{thm-alcoh}) in terms of complex period functions. We recall from Section~\ref{sect-sars} the $\Gamma$-module~$\V s \fxi(\proj\RR)$ of elements in the injective limit that are represented by maps that are real-analytic on~$\proj\RR$ up to finitely many singularities. We further recall the sets~$\V s \fxi(I)$ for open subsets~$I\subseteq\proj\RR$ consisting of elements that are represented by maps that are real-analytic on~$I$ up to finitely many singularities. Finally, we recall that we refer to elements of~$\V s \fxi(I)$ as `functions.'

In Section~\ref{sec:heuristic} we alluded at the existence of a relation between (real) period functions~$f=(f_1,f_2)$ and cocycle classes~$[c]\in H^1_\Xi\bigl(\Gm;\V s \fxi(\proj\RR) \bigr)$ satisfying the well-motivated identifications 
\begin{align}
 c(1,\infty)\vert_{(\infty,1)_c}  &= -f_2 
 \label{c1short}
 \\
 c(-1,\infty)\vert_{(-1,\infty)_c} & = f_1\,.
 \label{c2short}
\end{align}
We will now show that each cocycle~$c\in Z^1_\Xi\bigl(\Gamma;\V s \fxi(\proj\RR)\bigr)$ is uniquely determined by its two values 
\[
 c(1,\infty)\vert_{(\infty,1)_c} \quad\text{and}\quad c(-1,\infty)\vert_{(-1,\infty)_c}\,,
\]
and that these values can attain every function in the spaces~$\V s \fxi\big( (\infty,1)_c\big)$ and $\V s \fxi\big( (-1,\infty)_c\big)$, respectively. This gives us a linear and bijective map \index[symbols]{P@$\pc$}
\begin{equation}\label{map_pc_first}
 \pc\colon \V s \fxi\big( (-1,\infty)_c\big) \times \V s \fxi\big( (\infty,1)_c\big) \to Z^1_\Xi\bigl(\Gamma;\V s \fxi(\proj\RR)\bigr)
\end{equation}
that is uniquely determined by~\eqref{c1short} and~\eqref{c2short}.

The main result of this chapter is then the following theorem, which together with Theorem~\ref{thm-alcoh} (Theorem~\ref{thm:cohominter}, p.~\pageref{thm:cohominter}) yields a proof of Theorems~\ref{thmA_new} and~\ref{thmB_new} (from p.~\pageref{thmA_new}). For the definitions of the various spaces of complex period functions we refer to Sections~\ref{sec:periodfunctions} and \ref{sec:def_cplxperiod}.

\begin{mainthm}\label{thm-FE-coh}
Let $s\in\CC$.
\begin{enumerate}[{\rm (i)}]
\item The map~$\pc$ induces an isomorphism
\[
\FE^\om_s(\CC)  / \BFE^\om_s(\CC) \longrightarrow H^1_\Xi\bigl(\Gm;\V
s {\fxi;\exc;\aj}(\proj\RR)\bigr)^\van_\sic\,. 
\]
\item If $\Rea s\in (0,1)$, $s\not=\frac12$, then the map~$\pc$ induces an isomorphism
\[
 \FE_s^{\om,1}(\CC) \to H^1_\Xi\bigl(\Gm;\V s {\fxi;\exc,\smp;\aj}(\proj\RR)\bigr)^\van_\sic\,.
\]
\item If $\Rea s\in (0,1)$, then the map~$\pc$ induces an isomorphism 
\[
 \FE_s^{\om,0}(\CC) \to H^1_\Xi\bigl(\Gm;\V s {\fxi;\exc,\infty;\aj}(\proj\RR)\bigr)^\van_\sic\,.
\]
\end{enumerate}
\end{mainthm}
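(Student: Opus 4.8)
The plan is to establish Theorem~\ref{thm-FE-coh} by first making precise and proving the claim that $\pc$ in~\eqref{map_pc_first} is a well-defined linear bijection, and then showing that under $\pc$ the three pairs of conditions match up as stated. For the bijectivity of $\pc$, I would start from the cocycle relation and $\Gamma$-equivariance: given $c\in Z^1_\Xi(\Gm;\V s \fxi(\proj\RR))$ and the presentation $\Gamma=\langle T,S\mid S^2=1\rangle$ from~\eqref{presentation_Gamma}, the cocycle is determined by the group cocycle $\psi$ with $\psi_T$ and $\psi_S$, equivalently by $c(-1,\infty)$ and $c(1,\infty)$ (note $-1=S\infty$, $1=TS\infty$, $\infty=T\infty$, so all of $\Gamma\,1\cup\Gamma\,\infty$ is reached). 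The identifications~\eqref{c1short}--\eqref{c2short} together with the heuristic relations from Section~\ref{sec:heuristic} (in particular~\eqref{def_heur1} and the formula for $c(0,\infty)$) should let me reconstruct $c$ on \emph{all} pairs $(\xi,\eta)\in\Xi^2$ from the two germs $f_1\in\V s \fxi((-1,\infty)_c)$ and $f_2\in\V s \fxi((\infty,1)_c)$; conversely every such pair arises, since there is no constraint linking $f_1,f_2$ at this stage. The one subtlety is to verify that the reconstructed $c$ genuinely satisfies the cocycle relation $c(\xi,\eta)+c(\eta,\zeta)=c(\xi,\zeta)$ globally and lands in $\V s \fxi(\proj\RR)$ (finitely many singularities, all in $\Xi$), which follows by $\CC[\Gamma]$-linearity once it is checked on the finitely many generating configurations.

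Next I would translate each of the three side conditions on period functions into cohomological conditions. The key dictionary entries are: (1)~$f=(f_1,f_2)$ is a $1$-eigenfunction of $\tro s$ (i.e.\ $f\in\FE_s^\omega$) $\iff$ the reconstructed $c$ is actually a cocycle on $F^\tess_\bullet$ with the vanishing condition~\tvan — this is exactly the content of the heuristic in Section~\ref{sec:heuristic}, where~\eqref{heur_cycle} recovers $\tro s f=f$; (2)~the holomorphic extendability defining $\FE_s^\omega(\CC)$ (extension of $f_1$ to $\CC\smallsetminus(-\infty,-1]$ and $f_2$ to $\CC\smallsetminus[1,\infty)$, rounded at $\infty$, Proposition~\ref{prop-FE-RC}) corresponds to the condition~\texc on $c(e_\infty)$ and, via the analytic-jump structure of $c(e_1)$ and $c(f_0)$ coming from Lemma~\ref{lem-io}\eqref{ioiii}--\eqref{ioiv}, to the condition~\taj on $\Gamma\,1$; (3)~the extra constraints $f=\ftro s f$ (resp.\ $f=\ftro s f$ together with $f_1(0)=-f_2(0)$) translate into the simple-singularity condition \tsmp\ (resp.\ smoothness $\infty$) at the cuspidal points, since, by the construction in Section~\ref{sec:constrRoelof}, $\ftro s$ is obtained from $\tro s$ by one-sided averaging over $\langle T\rangle$, and Proposition~\ref{fast_merom} already pins down how these averaging properties interact with the zeroth Fourier coefficient at the cusp (the parameter $a_0$ of~\eqref{h_exp}, whose vanishing is the condition $f_1(0)+f_2(0)=0$, matches $A_0=0$ in the Fourier expansion~\eqref{Feu} and hence cuspidality). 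I would phrase (3) using~\cite[Propositions~9.9, 9.13]{BLZm} exactly as in the proofs of Lemma~\ref{lem:coc_unique} and Theorem~\ref{thm-alcoh}.

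The boundary period functions $\BFE_s^\omega(\CC)$ are the elements $(-b,b)$ with $\tau_s(T)b=b$; under $\pc$ these should map precisely to the coboundaries in $B^1_\Xi(\Gm;\V s {\fxi;\exc;\aj}(\proj\RR))^\van_\sic$ (the $\Gm$-equivariant potential being essentially $b$ itself, placed on $\Xi$), which accounts for passing to the quotient in part~(i); in parts~(ii) and~(iii) Proposition~\ref{prop:coboundaries}, cited in the introduction, already tells us $\BFE_s^\omega(\CC)$ meets $\FE_s^{\omega,1}(\CC)$ and $\FE_s^{\omega,0}(\CC)$ trivially, so no quotient is needed and $\pc$ descends to an honest isomorphism. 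To organize the argument cleanly I would prove a single lemma identifying $\pc$ as a bijection $\V s \fxi((-1,\infty)_c)\times\V s \fxi((\infty,1)_c)\xrightarrow{\ \sim\ }Z^1_\Xi(\Gm;\V s \fxi(\proj\RR))$, then restrict it step by step: first to cocycles satisfying \tsic\ (automatic here, since by~\eqref{c1short}--\eqref{c2short} and $\CC[\Gamma]$-linearity the singularities of $c(\xi,\eta)$ sit in $\{\xi,\eta\}$), then \tvan, then \texc\ and \taj, then \tsmp\ or $\infty$, each restriction being matched on the period-function side by one of $\tro s f=f$, holomorphy/roundedness, and $\ftro s f=f$ / $f_1(0)=-f_2(0)$.

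The main obstacle I anticipate is the precise bookkeeping in the bijectivity of $\pc$ and, relatedly, the verification that the holomorphic-extension condition \texc\ on the cohomological side is \emph{equivalent} to — not merely implied by — membership of $f$ in $\FE_s^\omega(\CC)$. One direction (period function $\Rightarrow$ \texc) is close to the extension argument of Proposition~\ref{prop-FE-RC}; the converse requires running the bootstrapping of~\eqref{FEeqa}--\eqref{FEeqb} backwards from the germs $c(e_\infty),c(e_1),c(f_0)$ to recover genuine holomorphic functions on the full slit planes, using that \texc\ guarantees a rounded neighborhood at $\infty$ and that \taj\ controls the behavior across $\Gamma\,1$, with the hyperbolic fixed points $\pm\fixTS^+$ of $ST^{\mp1}$ as the only genuine barriers (exactly as in the discussion preceding Proposition~\ref{prop-FE-RC} and in Proposition~\ref{prop-re}). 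Handling the special value $s=\tfrac12$ in part~(iii) — where, per Proposition~\ref{fast_merom}\eqref{fmiv}, $\ftro s$ remains holomorphic after restriction to $C^{\omega,0}(D_\RR)$ — will need a small separate remark, but no new idea beyond what Proposition~\ref{fast_merom} and the Fourier expansion at $s=\tfrac12$ already provide.
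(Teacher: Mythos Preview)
Your overall strategy---establish $\pc$ as a bijection (this is Proposition~\ref{prop:cocycle} in the paper), then match the conditions \tvan, \tsic, \taj, \texc, \tsmp, $\infty$ one by one with properties of $(f_1,f_2)$, and finally identify the coboundaries with $\BFE_s^\om(\CC)$---is precisely the paper's approach. But one step is genuinely wrong.

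You assert that \tsic\ is ``automatic here, since by~\eqref{c1short}--\eqref{c2short} and $\CC[\Gamma]$-linearity the singularities of $c(\xi,\eta)$ sit in $\{\xi,\eta\}$.'' This fails. With the potential $p$ from Proposition~\ref{prop:cocycle} one has $p(\infty)=0$ but $p(1)\in\V s\om[1,\infty]$ and $p(-1)\in\V s\om[-1,\infty]$; for a generic pair $(\xi,\eta)\in\Xi^2$ the difference $p(\xi)-p(\eta)$ picks up a possible singularity at~$\infty$ (or at a $\Gamma$-translate of~$\infty$) that need not lie in $\{\xi,\eta\}$. The paper handles this by a separation-of-singularities argument (Lemma~\ref{lem:analytic}, using Proposition~\ref{prop-sepsing}): one writes $\psi_S=A_0-A_\infty$ with $A_\infty\in\V s\om[\infty]$ and passes to the shifted potential $\tilde p=p+A_\infty$, which satisfies $\tilde p(\xi)\in\V s\om[\xi]$ \emph{provided} the piecewise function~\eqref{ext_infty2} extends real-analytically to~$\infty$. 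This ``\teext'' condition is the exact content of \tsic\ (Proposition~\ref{prop:iso_sic}), and it is not free---it is implied by $\tro s f=f$ (Corollary~\ref{cor:per_sicvan}), but not by real-analyticity of $f_1,f_2$ alone. So your proposed order ``\tsic\ first (automatic), then \tvan'' does not work; you must impose \tvan\ first, and then verify \tsic\ via the eigenfunction relation.

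A secondary point: you derive \taj\ from Lemma~\ref{lem-io}\eqref{ioiii}--\eqref{ioiv}, but that lemma is about the integral transform $u\mapsto I_u$ from funnel forms, not about $\pc$. The paper instead shows (Proposition~\ref{prop:an_jump}) that \taj\ is \emph{automatic} once \tsic\ and \tvan\ both hold, because Proposition~\ref{prop-re} extends $f_1,f_2$ real-analytically past $\mp1$, giving $p(1)$ an analytic jump at~$1$. Your treatment of \texc, of \tsmp/$\infty$ via the one-sided averages, and of the coboundaries is on the right track and close to what the paper does in Propositions~\ref{prop:iso_exc}, \ref{prop:EF_TO_fast}, and~\ref{prop:coboundaries}.
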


After having established $\pc$ as an isomorphism with domain and range as in~\eqref{map_pc_first} (see Section~\ref{sec:pc}), we will show, in Section~\ref{sec:pc_real}, that $\pc$ descends to an isomorphism
\[
 \FE_s^\om(\RR) \to Z^1_\Xi\bigl(\Gm; \V s {\fxi;-;\aj}(\proj\RR)\bigr)^\van_\sic.
\]
For suitable values of~$s$, this map induces isomorphisms
\begin{align*}
 \FE_s^\om(\CC) & \to Z^1_\Xi\bigl(\Gm; \V s {\fxi;\exc;\aj}(\proj\RR)\bigr)^\van_\sic,
 \\
 \FE_s^{\om,1}(\CC) & \to Z^1_\Xi\bigl(\Gm; \V s {\fxi;\exc,\smp;\aj}(\proj\RR)\bigr)^\van_\sic
 \intertext{and}
 \FE_s^{\om,0}(\CC) & \to Z^1_\Xi\bigl(\Gm; \V s {\fxi;\exc,\infty;\aj}(\proj\RR)\bigr)^\van_\sic,
\end{align*}
as we will show in Section~\ref{sec:pc_complex}. Identifying the preimages of coboundaries under these maps is then the final ingredient for the proof of Theorem~\ref{thm-FE-coh}, which we will provide in Section~\ref{sec:proof_thm-FE-coh}.

\renewcommand\theequation{\arabic{section}.\arabic{equation}}

\section{The map from functions to cocycles}\label{sec:pc}
\markright{18. FROM FUNCTIONS TO COCYCLES}

In Section~\ref{sec:heuristic} we explained insights for a constructive and geometrically motivated isomorphism between eigenfunctions of transfer operators and cocycles. In this section we will show that, guided by these insights, we can even construct an isomorphism between the function space~$\V s \fxi\big( (-1,\infty)_c\big) \times \V s \fxi\big( (\infty,1)_c\big)$ and the cocycle space~$Z^1_\Xi\bigl(\Gamma;\V s \fxi(\proj\RR)\bigr)$ for every $s\in\CC$. We recall from~\eqref{eq:def_DR} that 
\[
D_\RR = (-1,\infty)_c \uplus (\infty,1)_c\,.
\]
In analogy to the conventions in Section~\ref{sec:slowTO} we set \index[symbols]{Vaaaf@$\V s \fxi(D_\RR)$}
\begin{equation}
 \V {s} \fxi(D_\RR) \coloneqq \V s \fxi\big( (-1,\infty)_c\big) \times \V s \fxi\big( (\infty,1)_c\big)\,.
\end{equation}

\begin{prop}\label{prop:cocycle}
\begin{enumerate}[{\rm (i)}]
\item\label{prop:coci} 
Each cocycle~$c\in Z^1_\Xi\bigl(\Gamma;\V s \fxi(\proj\RR)\bigr)$ is uniquely determined by its two values 
\[
 c(1,\infty)\vert_{(\infty,1)_c} \quad\text{and}\quad c(-1,\infty)\vert_{(-1,\infty)_c}\,,
\]
and these values can attain every function in the spaces~$\V s \fxi\big( (\infty,1)_c\big)$ and $\V s \fxi\big( (-1,\infty)_c\big)$, respectively. 
The induced map 
\[
 \pc\colon \V {s} \fxi(D_\RR) \to Z^1_\Xi\bigl(\Gamma;\V s \fxi(\proj\RR)\bigr)\,,\quad f=(f_1,f_2) \mapsto c\coloneqq \pc(f)\,,
\]
satisfying 
\begin{align}
 c(1,\infty)  = -f_2  &\qquad\text{on $(\infty,1)_c$}  \label{c1shortprop}
 \\
 c(-1,\infty)  = f_1 &\qquad\text{on $(-1,\infty)_c$\,,} \label{c2shortprop}
\end{align}
is linear and  bijective.
\item Let $f=(f_1,f_2) \in \V {s} \fxi(D_\RR)$ and let $c= \pc(f)$ be the associated cocycle in~$Z_\Xi^1\bigl(\Gamma;\V s \fxi(\proj\RR)\bigr)$. Then there is a (unique) potential~$p\colon\Gamma\to \V s \fxi(\proj\RR)$ of~$c$ determined by $p(\infty) = 0$ and
\begin{equation}\label{pot_p1}
p(1) = 
\begin{cases}
 -f_2 & \text{on $(\infty,1)_c$}
 \\
 f_1 + \tau_s(S)f_1 + \tau_s(S)f_2 & \text{on $(1,\infty)_c$\,.}
\end{cases}
\end{equation}
This potential satisfies
\begin{equation}\label{pot_p2}
p(-1) =
\begin{cases}
-f_2 - \tau_s(S)f_1 - \tau_s(S)f_2 & \text{on $(\infty,-1)_c$}
\\
f_1 & \text{on $(-1,\infty)_c$\,.}
\end{cases}
\end{equation}
The group cocycle~$\psi\in Z^1\bigl(\Gamma;\V s \fxi(\proj\RR)\bigr)$ associated to~$p$ is determined by $\psi_T=0$ and 
\begin{equation}\label{def_grpcoc}
\psi_S = 
\begin{cases}
-\tau_s(S)f_1 - f_2 & \text{on $(\infty,0)_c$}
\\
f_1 + \tau_s(S)f_2 & \text{on $(0,\infty)_c$\,.}
\end{cases}
\end{equation}
\end{enumerate}
\end{prop}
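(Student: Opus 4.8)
The statement has two parts. For part (i) the plan is to exploit the group presentation $\Gamma = \langle T,S\mid S^2=1\rangle$ from~\eqref{presentation_Gamma}: a group cocycle $\psi\in Z^1(\Gamma;\V s\fxi(\proj\RR))$ is determined by the two values $\psi_S,\psi_T$ subject only to the single relation forced by $S^2=1$, namely $\psi_S + \tau_s(S)\psi_S = 0$ (using that $\tau_s$ is an honest $\Gamma$-action on $\V s\fxi(\proj\RR)$, and that $\tau_s(S^2)=\tau_s(\id)=1$). The normalisation from Section~\ref{def_natmap} (realised here via $p(\infty)=0$, $\psi_T=0$) identifies $Z^1_\Xi(\Gamma;\V s\fxi(\proj\RR))$ with the parabolic cocycles, i.e.\ those with $\psi_T=0$, so such a cocycle is determined by $\psi_S$ alone, subject to the $S^2$-relation. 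First I would pin down, via the cocycle relation and $\Gamma$-equivariance, that for any $c\in Z^1_\Xi(\Gamma;\V s\fxi(\proj\RR))$ one has, using the potential normalisation $p(\infty)=c(\cdot,\infty)$ with $p(1)=c(1,\infty)$, the identities
\[
 c(-1,\infty) = c(S^{-1}\cdot 1,\,S^{-1}\cdot\infty)\big|{?}\quad\text{— rather: } c(-1,\infty)=\tau_s(S)^{-1}\,c(1,S\infty)\text{-type manipulation,}
\]
so that $c(-1,\infty)$ is expressible through $c(1,\infty)$ and $\psi_S$. Concretely, $c(-1,\infty) = c(-1,0)+c(0,\infty)$ and $c(0,\infty) = -\psi_S$ (up to the sign/inverse convention), while $c(-1,0) = \tau_s(S)c(1,\infty)$ since $S$ maps $(1,\infty)\to(-1,0)$ and $S^{-1}\{-1,0\}=\{1,\infty\}$. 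This is exactly the intuition spelled out in~\eqref{def_heur_f1}--\eqref{def_heur1}. From these relations one reads off that the pair $\bigl(c(-1,\infty)|_{(-1,\infty)_c},\,c(1,\infty)|_{(\infty,1)_c}\bigr)$ determines $\psi_S$ on all of $\proj\RR$ (the two half-lines $(-1,\infty)_c$ and $(\infty,1)_c$ cover $\proj\RR$, overlapping on $(-1,1)_c$), hence determines $c$; this gives injectivity of $\pc$. For surjectivity: given an arbitrary $(f_1,f_2)\in\V s\fxi(D_\RR)$, define $\psi_S$ by~\eqref{def_grpcoc}, check it lands in $\V s\fxi(\proj\RR)$ (the two pieces patch because on the overlap $(0,\infty)\cap(\ldots)$ — wait, the two defining intervals $(\infty,0)_c$ and $(0,\infty)_c$ only share the point $0$, so one must verify the two germs agree to infinite order / as semi-analytic functions near $0$; this uses $f_1,f_2\in\V s\fxi$ and the branch conventions), verify $\psi_S+\tau_s(S)\psi_S=0$, set $\psi_T=0$, and invoke Lemma~\ref{lem:potentials} with representatives $p_\infty=0$, $p_1$ as in~\eqref{pot_p1} to build the cocycle $c=\pc(f)$; then confirm~\eqref{c1shortprop}--\eqref{c2shortprop} hold by unwinding the potential. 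Linearity of $\pc$ is immediate from the formulas.

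For part (ii), once $c=\pc(f)$ is constructed, the potential $p$ with $p(\infty)=0$ is the one produced by Lemma~\ref{lem:potentials}, hence unique; and $p(1)=c(1,\infty)$, so~\eqref{pot_p1} on $(\infty,1)_c$ is just~\eqref{c1shortprop}, while on $(1,\infty)_c$ it follows from $c(1,\infty)=c(1,0)+c(0,\infty)$ together with $c(1,0)=\tau_s(S)c(-1,\infty)$ (as $S\colon(-1,\infty)\to(1,0)$ up to orientation) and $c(0,\infty)=-\psi_S = f_1+\tau_s(S)f_2$ on $(0,\infty)_c$ — reconciling this with $c(-1,\infty)=f_1$ on $(-1,\infty)_c$ gives the stated $f_1+\tau_s(S)f_1+\tau_s(S)f_2$. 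Formula~\eqref{pot_p2} for $p(-1)=c(-1,\infty)$ is derived symmetrically, using $c(-1,\infty)=f_1$ on $(-1,\infty)_c$ and $c(-1,\infty)=c(-1,0)+c(0,\infty)$ with $c(-1,0)=\tau_s(S)c(1,\infty)=-\tau_s(S)f_2$ on $(\infty,-1)_c$ and $c(0,\infty)=-\psi_S=-\tau_s(S)f_1-f_2$ there. Finally,~\eqref{def_grpcoc} is just the definition $\psi_S = p(S^{-1}\xi)-\tau_s(S)p(\xi)$ evaluated at $\xi=\infty$ (so $\psi_S=p(0)-\tau_s(S)p(\infty)=p(0)$ since $p(\infty)=0$), and $p(0)=c(0,\infty)$, which one computes via $c(0,\infty)=c(0,1)+c(1,\infty)=-\tau_s(S)c(-1,\infty)+c(1,\infty)$ on each half, matching the two cases; $\psi_T=0$ holds by the normalisation (cf.~\eqref{repr_p} with $\psi_T=0$, giving $p_\infty=\av{s,T}^+\cdot 0=0$).

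\textbf{Main obstacle.} The delicate point is not the algebra of the cocycle relation — that is bookkeeping guided entirely by the pictures in Section~\ref{sec:heuristic} — but the \emph{gluing of semi-analytic germs at the origin}: the formulas~\eqref{pot_p1}, \eqref{pot_p2}, \eqref{def_grpcoc} each define a function on $\proj\RR$ by two different real-analytic-plus-singularities expressions on the two cyclic half-lines meeting at $0$ (and at $\infty$), and one must check these define a consistent element of $\V s\fxi(\proj\RR)$, i.e.\ that the two branches agree as germs wherever their domains overlap and that the only new singularities introduced lie in $\Xi=\Gamma1\sqcup\Gamma\infty$ (here $0=S\infty\notin\Xi$, so in fact \emph{no} singularity may be created at $0$). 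This forces a careful argument that the apparent discontinuity at $0$ in, say, $f_1+\tau_s(S)f_2$ versus its continuation is spurious — it cancels because of how $\tau_s(S)$ acts and because $f_1$ is real-analytic across $0\in(-1,\infty)_c$ while $f_2$ is real-analytic across $0\in(\infty,1)_c$. I expect this to require the branch-of-logarithm discussion from Section~\ref{sec:holomext} and the remark after Definition~\ref{def:modulescond} about using $z\mapsto(z^2)^{-s}$ near $\infty$; handled correctly, it is routine, but it is the one place where naive patching would be wrong. Everything else — bijectivity, linearity, the potential and group-cocycle formulas — follows formally from Lemma~\ref{lem:potentials}, the presentation of $\Gamma$, and the cocycle/equivariance relations.
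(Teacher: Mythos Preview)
Your overall strategy matches the paper's proof almost exactly: build the group cocycle~$\psi$ from $\psi_T=0$ and~$\psi_S$ given by~\eqref{def_grpcoc}, verify the $S^2$-relation $\psi_S=-\tau_s(S)\psi_S$, invoke Lemma~\ref{lem:potentials} with $p(\infty)=0$ and $p(1)$ as in~\eqref{pot_p1} to produce~$c$, check~\eqref{c1shortprop}--\eqref{c2shortprop}, and for uniqueness reverse the process. The paper carries this out in precisely this order (existence first, then uniqueness), and your cocycle manipulations for part~(ii) are the same computations the paper does, just organised slightly differently.

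Your ``main obstacle'' is a phantom, however, caused by a factual slip: you write ``$0=S\infty\notin\Xi$'', but in fact $0=S\infty\in\Gamma\infty\subseteq\Xi=\Gamma\infty\cup\Gamma 1$. Elements of~$\V s\fxi(\proj\RR)$ are \emph{allowed} to have singularities at any point of~$\Xi$, so the piecewise definition of~$\psi_S$ on $(\infty,0)_c$ and $(0,\infty)_c$ need not glue real-analytically at~$0$; a genuine jump there is perfectly admissible. The same applies to the potential singularities of $p(1)$ and $p(-1)$ at $1$, $-1$, and~$\infty$ --- all of these lie in~$\Xi$. This is why the paper's proof contains no discussion of gluing whatsoever: there is nothing to check. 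Once you correct this, the branch-of-logarithm worries and the $(z^2)^{-s}$ discussion become irrelevant here (they matter elsewhere, for the \texc\ condition, but not for membership in~$\V s\fxi(\proj\RR)$).

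One minor gap in your injectivity sketch: knowing $\psi_S$ alone does \emph{not} determine~$c$, since $\Gamma_1$ is trivial and hence $p(1)$ is unconstrained by~\eqref{pot_well}. You must also recover $p(1)$ on $(1,\infty)_c$ from the data, which you do correctly later (in your part~(ii) discussion) via $p(1)=\tau_s(S)p(-1)+\psi_S$; just make sure this step appears in the uniqueness argument proper, as the paper does.
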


The expressions for the potential~$p$ and the group cocycle~$\psi$ in the statement of Proposition~\ref{prop:cocycle} are consistent with the insights as explained in Section~\ref{sec:heuristic}. (See Figure~\ref{fig:cocycle}.) In the proof of Proposition~\ref{prop:cocycle} we take advantage of these intuitions and use them for the proof that the prescriptions in~\eqref{c1shortprop} and~\eqref{c2shortprop} indeed extend to a cocycle in~$Z^1_\Xi\bigl(\Gamma;\V s \fxi(\proj\RR)\bigr)$. To that end we start by building the cocycle bottom-up from a group cocycle and a potential. Only afterwards we prove uniqueness.

\begin{proof}[Proof of Proposition~\ref{prop:cocycle}]
Let $f \coloneqq (f_1,f_2) \in \V {s} \fxi(D_\RR)$. We first show that there is indeed a cocycle~$c \in Z^1_\Xi\bigl(\Gamma;\V s \fxi(\proj\RR)\bigr)$ satisfying \eqref{c1shortprop} and \eqref{c2shortprop}. To that end we define an inhomogeneous group cocycle~$\psi\in Z^1\bigl(\Gamma;\V s \fxi(\proj\RR)\bigr)$ satisfying \eqref{def_grpcoc}, then we use the group cocycle to define a map~$p\colon \Gamma\to \V s \fxi(\proj\RR)$ which satisfies \eqref{pot_p1}-\eqref{pot_p2} and serves as a potential for a cocycle~$c\in Z^1_\Xi\bigl(\Gamma;\V s \fxi(\proj\RR)\bigr)$ that satisfies \eqref{c1shortprop} and \eqref{c2shortprop}.

Since $\Gamma$ has the presentation 
\[
 \Gamma = \langle T, S \mid S^2 = 1 \rangle\,,
\]
any group cocycle~$\psi\in Z^1\bigl(\Gamma;\V s \fxi(\proj\RR)\bigr)$ is uniquely determined by its values on~$T$ and~$S$, subject to the condition $\psi_S=-\tau(S)\psi_S$. We set $\psi_T\coloneqq  0$ and
\[
\psi_S\coloneqq  
\begin{cases}
-\tau_s(S)f_1-f_2 & \text{on $(\infty,0)_c$}
\\
f_1 + \tau_s(S)f_2 & \text{on $(0,\infty)_c$\,.}
\end{cases}
\]
Obviously, $\psi_S = -\tau_s(S)\psi_S$, and hence these initial values uniquely extend to a group cocycle $\psi\in  Z^1\bigl(\Gamma;\V s \fxi(\proj\RR)\bigr)$. 

We define a map~$p\colon\Gamma\to \V s \fxi$ by taking advantage of Lemma~\ref{lem:potentials}. We set $p(\infty) \coloneqq  0$ and
\[
p(1) \coloneqq  
\begin{cases}
-f_2 & \text{on $(\infty,1)_c$}
\\
f_1 + \tau_s(S)f_1 + \tau_s(S)f_2 & \text{on $(1,\infty)_c$\,.}
\end{cases}
\]
The stabilizer group of~$1$ is trivial. In turn, Lemma~\ref{lem:potentials} shows that these initial values determine (uniquely) a potential~$p$. Let $c$ be the cocycle determined by~$p$.

We show that the cocycle~$c\in Z^1_\Xi\bigl(\Gamma;\V s \fxi(\proj\RR)\bigr)$ with potential~$p$ satisfies \eqref{c1shortprop} and \eqref{c2shortprop}. Since $c(1,\infty) = p(1)-p(\infty) = p(1)$, the relation in~\eqref{c1shortprop} is obviously satisfied. Further we have 
\begin{align*}
 c(-1,\infty) & = p(-1) = p(S.1) = \tau_s(S)p(1) + \psi_S 
 \\
 & = 
 \begin{cases}
 -\tau_s(S)f_2 & \text{on $(0,-1)_c$}
 \\
 \tau_s(S)f_1 + f_1 + f_2 & \text{on $(-1,0)_c$}
 \end{cases}
+
\begin{cases}
 -\tau_s(S)f_1 - f_2 & \text{on $(\infty,0)_c$}
 \\
 f_1 + \tau_s(S)f_2 & \text{on $(0,\infty)_c$}
\end{cases}
\\
& = 
\begin{cases}
-\tau_s(S)f_2 - \tau_s(S)f_1 - f_2 & \text{on $(\infty,-1)_c$}
\\
f_1& \text{on $(-1,0)_c$}
\\
f_1 & \text{on $(0,\infty)_c$\,.}
\end{cases}
\end{align*}
Thus, \eqref{c2shortprop} is satisfied.

It remains to prove uniqueness. Suppose that $\tilde c$ is a cocycle in~$Z^1_\Xi\bigl(\Gamma;\V s \fxi(\proj\RR)\bigr)$ that satisfies \eqref{c1shortprop} and \eqref{c2shortprop}. Let $\tilde p\colon \Gamma\to \V s \fxi(\proj\RR)$ be the potential of~$\tilde c$ which satisfies $\tilde p(\infty) = 0$. In the following we show that $\tilde p$ equals the potential used in the proof of existence. From this it immediately follows that $\tilde c$ is identical to the cocycle $c$ constructed above, and hence the cocycle in~$Z^1_\Xi\bigl(\Gamma;\V s \fxi(\proj\RR)\bigr)$ is uniquely determined by~\eqref{c1shortprop} and~\eqref{c2shortprop}.

Let $\tilde \psi\in Z^1\bigl(\Gamma;\V s \fxi(\proj\RR)\bigr)$ be the group cocycle determined by the potential~$\tilde p$. From~\eqref{c1shortprop} and \eqref{c2shortprop} it follows that 
\[
 \tilde p(1) = -f_2 \quad\text{on $(\infty,1)_c$\,,}
\]
and
\[ 
 \tilde p(-1) = f_1 \quad\text{on $(-1,\infty)_c$\,.}
\]
From $\tilde \psi_S = \tilde p(1) - S.\tilde p(-1)$ it follows that
\[
 \tilde \psi_S  =   -f_2 - \tau_s(S)f_1  \quad\text{on $(\infty,0)_c$\,.}
\]
Note that $S\,(-1,\infty)_c = (1,0)_c$. From $S^2 = 1$ it follows that 
\[
 \tilde \psi_S = \tau_s(S)f_2 + f_1 \quad\text{on $(0,\infty)_c$\,.}
\]
Hence, the group cocycle~$\tilde \psi$ is identical to the group cocycle~$\psi$ used above in the proof of the existence of a cocycle in~$Z^1_\Xi\bigl(\Gamma;\V s \fxi(\proj\RR)\bigr)$ with~\eqref{c1shortprop} and \eqref{c2shortprop}. From 
\[
 \tilde p(1) = \tau_s(S)\tilde p(-1) + \tilde \psi_S 
\]
it now follows that 
\[
 \tilde p(1) = f_1 + \tau_s(S) f_1 + \tau_s(S)f_2 \quad\text{on $(1,\infty)_c$\,.}
\]
Thus, $\tilde p$ is identical to the potential~$p$ used in the proof of existence above. This completes the proof of Proposition~\ref{prop:cocycle}.
\end{proof}

\section{Real period functions and semi-analytic cocycles}\label{sec:pc_real}\markright{19. REAL PERIOD FUNCTIONS AND COCYCLES}

In this section we will show that the map~$\pc$ defined in Proposition~\ref{prop:cocycle} descends to an isomorphism 
\begin{equation}\label{eq:pcfinal}
 \FE_s^\omega(\RR) \to Z^1_\Xi\bigl(\Gamma;\V s {\fxi;-;\aj}(\proj\RR)\bigr)_\sic^\van
\end{equation}
for all $s\in\CC$. In the course of establishing this result, we will see that we can characterize the two properties~\tvan (vanishing) and \tsic (singularities) \emph{separately} in terms of properties of elements in~$\V {s} \fxi(D_\RR)$. To that end we set
\begin{equation}
 \FE_s^\fxi(\RR) \coloneqq \bigl\{ f\in\V {s} \fxi(D_\RR) \setmid f=\TO_s^\slow f\bigr\}
\end{equation}
and prove that the map~$\pc$ induces isomorphisms
\begin{align}
 \FE_s^{\omega(\Xi)}(\RR) &\to Z^1_\Xi\bigl(\Gamma;\V s \fxi(\proj\RR)\bigr)^\van \label{isovan}
 \intertext{and}
 \spaceext &\to Z^1_\Xi\bigl(\Gamma;\V s \fxi(\proj\RR)\bigr)_\sic\,, \label{isosic}
\end{align}
where $\spaceext$ is a subspace of~$\V {s} \fxi(D_\RR)$ of pairs of real-analytic functions that satisfy a certain regularity condition at~$\infty$. The superscript~\teext shall indicate that these regularity conditions yield that a certain function formed from the elements in~$\spaceext$ admits a real-analytic extension to~$\infty$. A precise definition is provided in~\eqref{ext_infty2} below. We will then show that 
\[
 \FE_s^\om(\RR) = \FE_s^\fxi(\RR) \cap \spaceext\,,
\]
and hence deduce from~\eqref{isovan} and~\eqref{isosic} that $\pc$ induces an isomorphism
\[
 \FE_s^\om(\RR) \to Z^1_\Xi\bigl(\Gm;\V s \fxi(\proj\RR)\bigr)^\van_\sic\,.
\]
(See Corollary~\ref{cor:per_sicvan} and its proof.) The combination of~\tvan and \tsic causes a certain rigidity which yields that 
\[
 Z^1_\Xi\bigl(\Gm;\V s \fxi(\proj\RR)\bigr)^\van_\sic = Z^1_\Xi\bigl(\Gm;\V s {\fxi;-;\aj}(\proj\RR)\bigr)^\van_\sic\,,
\]
and which will then conclude the proof that the map in~\eqref{eq:pcfinal} is well-defined and bijective.

We start with the observation that the $1$-eigenfunctions of~$\TO_s^\slow$ correspond under the map~$\pc$ to the cocycles with the vanishing property.

\begin{lem}\label{lem:EFcoc}
Let $s\in\CC$. Suppose that $f\in  \V {s} \fxi(D_\RR)$ and $c\coloneqq\pc(f)$. Then $c$ satisfies the vanishing property~$\van$ if and only if $f$ is a $1$-eigenfunction of~$\TO_s^\slow$.
\end{lem}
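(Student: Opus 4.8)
The statement is an equivalence, so I would prove both implications by direct computation using the explicit formulas for $c = \pc(f)$, its potential $p$ and the associated group cocycle $\psi$ provided in Proposition~\ref{prop:cocycle}. The key point is that the vanishing condition $\tvan$ in its original form~\eqref{def_tvan} reads $c(1,\lambda-1)\vert_{(\lambda-1,1)_c} = 0$, and that $c(1,\lambda-1) = c(1,\infty) + c(\infty, \lambda-1) = c(1,\infty) - \tau_s(T)\,c(1,\infty)$, using $\Gamma$-equivariance of $c$ together with $\lambda - 1 = T\cdot(-1)$ and the fact that $T\infty = \infty$ gives $c(\infty, \lambda-1) = c(\infty, T(-1)) = \tau_s(T)\,c(\infty, -1) = -\tau_s(T)\,c(-1,\infty)$ --- wait, more carefully: $c(\infty,\lambda-1) = c(T\infty, T(-1)) = \tau_s(T)^{-1}\,c(\infty,-1)$ according to the equivariance convention $c(g^{-1}\xi, g^{-1}\eta) = c(\xi,\eta)\vert g$, so I would first nail down the exact index gymnastics against the conventions fixed in Section~\ref{sec:setcohom}. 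In any case, on the interval $(\lambda-1,1)_c$ the relevant expressions are governed by the values $c(1,\infty)\vert_{(\lambda-1,1)_c}$ and $c(-1,\infty)\vert_{(\cdot)}$, which by~\eqref{c1shortprop} and~\eqref{c2shortprop} (and the potential formulas~\eqref{pot_p1}--\eqref{pot_p2}) are explicitly $-f_2$ and, on the relevant sub-intervals, either $f_1$ or the combination $f_1 + \tau_s(S)f_1 + \tau_s(S)f_2$.

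Concretely, I would split $(\lambda-1,1)_c$ into the three pieces $(\lambda-1,\infty)$, $\{\infty\}$, $(-\infty,-1)$, $(-1,1)$ dictated by where the singularities of $c(1,\lambda-1)$ and of the translated pieces sit, and on each piece reduce the equation $c(1,\lambda-1)=0$ to a relation among $f_1$, $f_2$ and their images under $\tau_s(T^{\pm1}S)$, $\tau_s(T^{\pm 1})$. The claim is that the totality of these relations is precisely the system~\eqref{FEeqa}--\eqref{FEeqb} defining $\TO_s^\slow f = f$. I would expect that, after using $\Gamma$-equivariance to express $c(1,\lambda-1)$ on each piece via $c(-1,\infty)$ and $c(1,\infty)$ and then substituting~\eqref{c1shortprop}--\eqref{c2shortprop}, the identity on $(\lambda-1,\infty)$ yields one of the two scalar functional equations and the identity on $(-\infty,-1)$ yields the other, with the value at $\infty$ and on $(-1,1)$ being automatic consequences (or imposing no new condition). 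The Remark following the definition of $Z^1_\Xi(\Gm;W)^\van$ already records that $\tvan$ is equivalent to $c(1,\infty) = \tau_s(T)\,c(-1,\infty)$ on $(\lambda-1,1)_c$; I would take this as the starting identity and plug in $c(1,\infty)\vert_{(\lambda-1,1)_c}$ and $c(-1,\infty)\vert_{(\lambda-1,1)_c}$ from~\eqref{pot_p1}--\eqref{pot_p2}, landing directly on~\eqref{FEeqa}--\eqref{FEeqb}.

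For the converse, assuming $f = \TO_s^\slow f$, the same chain of substitutions run backwards shows $c(1,\infty) = \tau_s(T)\,c(-1,\infty)$ on $(\lambda-1,1)_c$, hence $c$ satisfies $\tvan$. Because $\pc$ is a bijection onto $Z^1_\Xi\bigl(\Gamma;\V s\fxi(\proj\RR)\bigr)$ by Proposition~\ref{prop:cocycle}, there is nothing further to check: the correspondence $f \leftrightarrow c$ is already established, and the lemma only asserts that it matches up the $\tvan$-subspace on the cocycle side with the $\TO_s^\slow$-fixed subspace on the function side.

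\textbf{Main obstacle.} The genuine difficulty is purely bookkeeping: getting the equivariance conventions, the inverse/non-inverse placements, and the interval decomposition of $(\lambda-1,1)_c$ exactly right, so that the explicit restriction of $c(1,\infty)$ and $c(-1,\infty)$ to each sub-interval is pinned down unambiguously from~\eqref{pot_p1}--\eqref{pot_p2}, and then recognizing the resulting equalities as literally~\eqref{FEeqa}--\eqref{FEeqb} rather than some cosmetically different rearrangement. There is also a minor subtlety in that $f_1, f_2$ live on the disjointified domain $D_\RR$ and $c(1,\infty), c(-1,\infty)$ on $\proj\RR$ minus finitely many points, so one must be careful that the identities are asserted on the correct (overlapping, in $\proj\RR$) intervals and that the pieces of $p(1)$ and $p(-1)$ are being compared on genuinely common domains; but this is exactly the content already baked into the formulas of Proposition~\ref{prop:cocycle}, so no new idea is needed.
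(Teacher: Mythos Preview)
Your plan is correct and is essentially the paper's own proof: compute $c(1,\lambda-1)=p(1)-\tau_s(T)p(-1)$ (using $\psi_{T^{-1}}=0$) from the explicit formulas in Proposition~\ref{prop:cocycle}, and observe that its vanishing on $(\lambda-1,1)_c$ is literally the pair \eqref{FEeqa}--\eqref{FEeqb}. The only simplification relative to your outline is that the case split is governed by where $p(1)$ and $\tau_s(T)p(-1)$ change form, namely at $1$ and $\lambda-1$, so on $(\lambda-1,1)_c$ only the two pieces $(\infty,1)_c$ and $(\lambda-1,\infty)_c$ arise and your further split at $-1$ is unnecessary.
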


\begin{proof}
Let $(f_1,f_2)\coloneqq f$ denote the component functions of~$f$. Let $p$ be the potential of~$c$ which satisfies $p(\infty) =0$. Recall $p(1)$ and $p(-1)$ and the associated group cocycle~$\psi$ from Proposition~\ref{prop:cocycle}. We have
\begin{align*}
c&(1,\lambda-1)  = p(1) - p(\lambda-1) 
\\
& = p(1) - p(T(-1)) 
\\
& = p(1) - \tau_s(T)p(-1) - \psi_{T^{-1}}
\\
& = 
\begin{cases}
-f_2 + \tau_s(T)f_2 + \tau_s(TS)f_1 + \tau_s(TS)f_2 & \text{on $(\infty,1)_c$}
\\
f_1 + \tau_s(S)f_1 + \tau_s(S)f_2 + \tau_s(T)f_2 + \tau_s(TS)f_1 + \tau_s(TS)f_2 & \text{on $(1,\lambda-1)_c$}
\\
f_1 + \tau_s(S)f_1 + \tau_s(S)f_2 - \tau_s(T)f_1 & \text{on $(\lambda-1,\infty)_c$\,.}
\end{cases}
\end{align*}
It follows that $(f_1,f_2)$ is a $1$-eigenfunction of~$\TO_s^\slow$ if and only if $c(1,\lambda-1) = 0$ on~$(\lambda-1,1)_c$.
\end{proof}

Lemma~\ref{lem:EFcoc} immediately implies the following result.

\begin{prop}\label{prop:iso_van}
For any $s\in\CC$, the map~$\pc$ descends to an isomorphism between~$\FE_s^{\omega(\Xi)}(\RR)$ and $Z^1_\Xi\bigl(\Gamma;\V s \fxi(\proj\RR)\bigr)^\van$.
\end{prop}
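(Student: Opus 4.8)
The plan is to deduce Proposition~\ref{prop:iso_van} as a direct consequence of the bijectivity of the map~$\pc$ from Proposition~\ref{prop:cocycle}\eqref{prop:coci} together with the equivalence established in Lemma~\ref{lem:EFcoc}. First I would recall that by Proposition~\ref{prop:cocycle}\eqref{prop:coci} the map
\[
 \pc\colon \V {s} \fxi(D_\RR) \to Z^1_\Xi\bigl(\Gamma;\V s \fxi(\proj\RR)\bigr)
\]
is linear and bijective. By definition, $\FE_s^{\omega(\Xi)}(\RR)$ is the subspace of~$\V {s} \fxi(D_\RR)$ consisting of the $1$-eigenfunctions of~$\TO_s^\slow$, and $Z^1_\Xi\bigl(\Gamma;\V s \fxi(\proj\RR)\bigr)^\van$ is by definition the subspace of~$Z^1_\Xi\bigl(\Gamma;\V s \fxi(\proj\RR)\bigr)$ consisting of the cocycles with the vanishing property~\tvan. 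Lemma~\ref{lem:EFcoc} asserts precisely that, for $f\in\V {s} \fxi(D_\RR)$ and $c=\pc(f)$, the cocycle~$c$ lies in~$Z^1_\Xi\bigl(\Gamma;\V s \fxi(\proj\RR)\bigr)^\van$ if and only if $f\in\FE_s^{\omega(\Xi)}(\RR)$.

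Combining these two facts, the restriction of the linear bijection~$\pc$ to the preimage of~$Z^1_\Xi\bigl(\Gamma;\V s \fxi(\proj\RR)\bigr)^\van$ is exactly the restriction of~$\pc$ to~$\FE_s^{\omega(\Xi)}(\RR)$. Since $\pc$ is injective, this restriction is injective; since $\pc$ is surjective onto~$Z^1_\Xi\bigl(\Gamma;\V s \fxi(\proj\RR)\bigr)$ and Lemma~\ref{lem:EFcoc} identifies the preimage of the subspace~$Z^1_\Xi\bigl(\Gamma;\V s \fxi(\proj\RR)\bigr)^\van$ with~$\FE_s^{\omega(\Xi)}(\RR)$, this restriction is also surjective onto~$Z^1_\Xi\bigl(\Gamma;\V s \fxi(\proj\RR)\bigr)^\van$. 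Linearity is inherited from~$\pc$, and linearity of the inverse follows since a linear bijection has a linear inverse. Hence $\pc$ restricts to a linear isomorphism $\FE_s^{\omega(\Xi)}(\RR) \to Z^1_\Xi\bigl(\Gamma;\V s \fxi(\proj\RR)\bigr)^\van$.

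There is essentially no obstacle here: the entire content has already been carried out in Proposition~\ref{prop:cocycle} and Lemma~\ref{lem:EFcoc}, and the statement follows by the elementary observation that a linear bijection restricts to a linear bijection between any subspace and its image, applied to the subspace~$\FE_s^{\omega(\Xi)}(\RR)$ whose image under~$\pc$ is~$Z^1_\Xi\bigl(\Gamma;\V s \fxi(\proj\RR)\bigr)^\van$ by Lemma~\ref{lem:EFcoc}. The only minor point to make explicit is that the equivalence in Lemma~\ref{lem:EFcoc} is stated for an arbitrary $f$ with $c=\pc(f)$, so that bijectivity of~$\pc$ guarantees that \emph{every} element of~$Z^1_\Xi\bigl(\Gamma;\V s \fxi(\proj\RR)\bigr)^\van$ arises as~$\pc(f)$ for some $f$, which is then automatically in~$\FE_s^{\omega(\Xi)}(\RR)$; I would spell this out in one sentence.
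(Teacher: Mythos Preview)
Your proposal is correct and matches the paper's approach exactly: the paper simply states that Lemma~\ref{lem:EFcoc} immediately implies Proposition~\ref{prop:iso_van}, and what you have written is a careful unpacking of that one-line deduction, combining the bijectivity of~$\pc$ from Proposition~\ref{prop:cocycle}\eqref{prop:coci} with the equivalence in Lemma~\ref{lem:EFcoc}.
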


We now develop an interpretation of the singularity condition~\tsic for the coycles in~$Z^1_\Xi\bigl(\Gm;\V s \fxi(\proj\RR)\bigr)$ in terms of properties of elements in~$\V {s} \fxi(D_\RR)$. From~\eqref{c1shortprop} and \eqref{c2shortprop} it follows immediately that the subset of~$\V {s} \fxi(D_\RR)$ corresponding to
the space $Z^1_\Xi\bigl(\Gm;\V s \fxi(\proj\RR)\bigr)_\sic$ contains only elements~$f=(f_1,f_2)$ whose both component functions~$f_1$ and~$f_2$ are real-analytic on their domains. For that reason we set, for any~$s\in\CC$,
\[
 \V {s} \om(D_\RR) \coloneqq \V s \omega( (-1,\infty)_c ) \times \V s \omega( (\infty, 1)_c )\,.
\]
Let $c\in Z^1_\Xi\bigl(\Gamma;\V s \fxi(\proj\RR)\bigr)_\sic$.  For investigating the location of its boundary singularities we will take advantage of a well-chosen potential of~$c$. For the potential~$p$ from Proposition~\ref{prop:cocycle} we have 
\[
 \bsing p(\infty) = \emptyset
\]
and
\[
 \bsing p(1) \subseteq \{ 1,\infty\}\,,
\]
which most likely is indeed
\[
 \bsing p(1) = \{1,\infty\}\,.
\]
Thus, the boundary singularities of~$p$ are distributed in a rather unbalanced way and does not make it easy to study the singularities of~$c$. Therefore, we establish for a \emph{certain subclass} of cocycles~$c\in Z^1_\Xi\bigl(\Gamma;\V s \fxi(\proj\RR)\bigr)$ the existence of a potential~$\tilde p$ of~$c$ such that 
\[
 \bsing \tilde p(\xi) \subseteq \{\xi\}
\]
for all~$\xi\in\Xi$. (See Lemma~\ref{lem:analytic}.) The idea behind finding $\tilde p$ is to move the singularity of~$p(1)$ at~$\infty$ over to~$p(\infty)$. 
In Proposition~\ref{prop:iso_sic} we complete then the task of identifying the subset of~$\V s \om(D_\RR)$ corresponding to~$Z^1_\Xi\bigl(\Gamma;\V s \fxi(\proj\RR)\bigr)_\sic$ under~$\pc$. We recall from Section~\ref{sect-sars} that $f_1,f_2\in\V s \fxi$ are called equivalent, $f_1\equiv f_2$, if $f_1-f_2$ extends to an element of~$\V s \om(\proj\RR)$.

\begin{lem}\label{lem:analytic}
Let $f=(f_1,f_2)\in \V s \omega(D_\RR)$, let $c\coloneqq  \pc(f) \in Z^1_\Xi\bigl(\Gamma;\V s \fxi(\proj\RR)\bigr)$ be the associated cocycle, and let $p\colon \Gamma\to \V s \fxi$ and $\psi\in Z^1\bigl(\Gamma;\V s \fxi(\proj\RR)\bigr)$ be the potential and group cocycle associated to~$c$ by Proposition~\ref{prop:cocycle}. Then there exist elements $A_0 \in \V s \omega[0]$, $B_1 \in \V s \omega[1]$, $A_\infty, B_\infty\in \V s \omega[\infty]$ such that 
\begin{equation}\label{sdcp} 
\begin{aligned}
\psi_S&\ceqq  c(0,\infty) \ceqq  A_0-A_\infty\,,
\\
p(1) &\ceqq  c(1,\infty) \ceqq  B_1-B_\infty\,.
\end{aligned}
\end{equation}
For any choice of~$A_0, A_\infty, B_1, B_\infty$ we have
\begin{equation}\label{sing_equiv1}
A_\infty \;\equiv\; B_\infty\,,\qquad \tau_s(S) A_0 \;\equiv\; A_\infty\,.
\end{equation}
Moreover, 
\begin{equation}\label{sing_equiv2}
 \tau_s(T^{-1}) A_\infty \;\equiv\; A_\infty
\end{equation}
if and only if the map
\begin{equation}\label{ext_infty}
 \begin{cases}
  \left( 1  - \tau_s(T^{-1}) \right) \left(\tau_s(S)f_1 + f_2 \right) & \text{on $(\infty,-\lambda)_c$}
  \\
  -\left( 1 - \tau_s(T^{-1}) \right) \left( f_1 + \tau_s(S)f_2 \right) & \text{on $(0,\infty)_c$}
 \end{cases}
\end{equation}
extends real-analytically to~$\infty$. In this case, the map 
\begin{equation}\label{newcoc}
 \tilde p\coloneqq  p+A_\infty \colon \Gamma\to \V s \fxi
\end{equation}
is a potential of~$c$ satisfying 
\[
 \tilde p(\xi) \in \V s \omega[\xi]
\]
for all~$\xi\in\Xi$. The associated group cocycle 
\[
 \tilde\psi\coloneqq  \psi- dA_\infty \colon \gamma \mapsto \psi_\gamma + (1-\tau_s(\gamma^{-1}))A_\infty
\]
is analytic, i.\,e., $\tilde\psi\in Z^1(\Gamma;\V s \omega)$.
\end{lem}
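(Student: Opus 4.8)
\textbf{Proof proposal for Lemma~\ref{lem:analytic}.}

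The plan is to work entirely at the level of the equivalence relation~$\equiv$ (equality modulo~$\V s \om(\proj\RR)$) and to use the separation-of-singularities result (Proposition~\ref{prop-sepsing}) together with the explicit formulas for~$p$ and~$\psi$ from Proposition~\ref{prop:cocycle}. First I would record that since $f=(f_1,f_2)\in\V s \om(D_\RR)$, both $c(0,\infty)=\psi_S$ and $c(1,\infty)=p(1)$ are elements of~$\V s \fxi(\proj\RR)$ with boundary singularities contained in $\{0,\infty\}$ and $\{1,\infty\}$, respectively (this follows from~\eqref{def_grpcoc} and~\eqref{pot_p1} and the real-analyticity of~$f_1,f_2$). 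An application of Proposition~\ref{prop-sepsing} then yields the decompositions~\eqref{sdcp} with $A_0\in\V s \om[0]$, $A_\infty,B_\infty\in\V s \om[\infty]$ and $B_1\in\V s \om[1]$. The two equivalences in~\eqref{sing_equiv1} are obtained by a standard singularity-comparison argument of the type already used repeatedly in Section~\ref{sec:sing}: from $c(0,\infty)\equiv c(1,\infty)$ modulo terms singular only at~$1$ resp.\ only at~$0$ (using the cocycle relation $c(0,\infty)=c(0,1)+c(1,\infty)$ and $\bsing c(0,1)\subseteq\{0,1\}$) one forces $A_\infty\equiv B_\infty$; similarly applying $\tau_s(S)$ to the first line of~\eqref{sdcp} and using $\tau_s(S)c(0,\infty)=c(S0,S\infty)=c(\infty,0)=-c(0,\infty)$, together with $\tau_s(S)\V s\om[0]=\V s\om[\infty]$ and $\tau_s(S)\V s\om[\infty]=\V s\om[0]$, gives $\tau_s(S)A_0\equiv A_\infty$.

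Next I would address the equivalence~\eqref{sing_equiv2}. The point is that $\tau_s(T^{-1})A_\infty\equiv A_\infty$ is a condition on the germ of~$A_\infty$ at~$\infty$ only, and since $A_\infty\equiv B_\infty\equiv p(1)$ near~$\infty$, and $p(1)$ is given explicitly by~\eqref{pot_p1} with the values on $(1,\infty)_c$ being $f_1+\tau_s(S)f_1+\tau_s(S)f_2$ and on $(\infty,1)_c$ being $-f_2$, one computes $\bigl(1-\tau_s(T^{-1})\bigr)A_\infty$ modulo $\V s\om(\proj\RR)$ directly from these formulas. Using $\tau_s(T^{-1})$ to shift domains and the explicit action, one checks that $\bigl(1-\tau_s(T^{-1})\bigr)A_\infty$ coincides, near~$\infty$ and up to an element real-analytic everywhere, with the two-branch expression in~\eqref{ext_infty}; hence $\tau_s(T^{-1})A_\infty\equiv A_\infty$ is equivalent to that expression extending real-analytically across~$\infty$. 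This is the step I expect to be the main obstacle: it is essentially a bookkeeping computation with the $\tau_s$-action on the various cyclic intervals $(\infty,-\lambda)_c$, $(0,\infty)_c$, $(\infty,1)_c$, $(1,\infty)_c$, and one has to be careful that the ``$\equiv$'' is genuinely controlled at~$\infty$ and that the contributions away from~$\infty$ cancel. The identity $c(-1,\infty)=p(-1)$ together with~\eqref{pot_p2} and $A_\infty\equiv B_\infty$ may be used as a cross-check.

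Finally, assuming~\eqref{sing_equiv2} holds, I would define $\tilde p\coloneqq p+A_\infty$ as in~\eqref{newcoc}; this is a potential of~$c$ because $A_\infty$ is a constant shift (in the sense of the $\V s\fxi(\proj\RR)$-valued potential theory of Section~\ref{sec:potentials}\eqref{pot_shift}, adding a fixed element of the module to all values of a potential gives again a potential of the same cocycle). Then $\tilde p(\infty)=A_\infty\in\V s\om[\infty]$, $\tilde p(1)=p(1)+A_\infty=B_1-B_\infty+A_\infty\equiv B_1\in\V s\om[1]$ by~\eqref{sing_equiv1}, and $\tilde p(-1)=p(-1)+A_\infty$; using $c(-1,\infty)=p(-1)$ and~\eqref{pot_p2} one gets $\tilde p(-1)\in\V s\om[-1]$ (the singularity of $p(-1)$ at~$\infty$, which by~\eqref{sing_equiv1} is $\equiv B_\infty\equiv A_\infty$, is cancelled by adding~$A_\infty$). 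By $\Gamma$-equivariance $\tilde p(\gamma^{-1}\xi)=\tau_s(\gamma)^{-1}\tilde p(\xi)+\tilde\psi_\gamma$ with $\tilde\psi_\gamma=\psi_\gamma+(1-\tau_s(\gamma^{-1}))A_\infty$, and since each $\xi\in\Xi=\Gamma1\sqcup\Gamma\infty$ is a $\Gamma$-translate of $1$ or $\infty$, the property $\tilde p(\xi)\in\V s\om[\xi]$ propagates to all of~$\Xi$ (using $\tau_s(\gamma)\V s\om[\xi]=\V s\om[\gamma\xi]$ and $(1-\tau_s(\gamma^{-1}))A_\infty\in\V s\om(\proj\RR)$ for those~$\gamma$ arising, which is exactly the content of~\eqref{sing_equiv2} on the generator~$T$ and is automatic on~$S$). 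The analyticity of~$\tilde\psi$ follows: $\tilde\psi_T=\psi_T+(1-\tau_s(T^{-1}))A_\infty=(1-\tau_s(T^{-1}))A_\infty\equiv0$ by~\eqref{sing_equiv2} and $\tilde\psi_S=\psi_S+(1-\tau_s(S))A_\infty=A_0-A_\infty+A_\infty-\tau_s(S)A_\infty=A_0-\tau_s(S)A_\infty\equiv0$ by~\eqref{sing_equiv1}; since $T,S$ generate~$\Gamma$ and $\V s\om$ is a $\Gamma$-submodule, $\tilde\psi\in Z^1(\Gamma;\V s\om(\proj\RR))$, which completes the proof.
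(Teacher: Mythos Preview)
Your overall architecture matches the paper's proof closely: separation of singularities for the existence of~\eqref{sdcp}, a singularity-comparison argument for~\eqref{sing_equiv1}, a computation for the equivalence~\eqref{sing_equiv2}$\Leftrightarrow$\eqref{ext_infty}, and the shift~$\tilde p=p+A_\infty$ at the end. Two points deserve comment.

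\textbf{A genuine gap in the argument for $A_\infty\equiv B_\infty$.} You write that this follows from the cocycle relation $c(0,\infty)=c(0,1)+c(1,\infty)$ together with $\bsing c(0,1)\subseteq\{0,1\}$. But at this stage of the lemma no singularity condition on~$c$ is assumed; from what you have established, $c(0,1)=\psi_S-p(1)$ has boundary singularities a priori in~$\{0,1,\infty\}$, and the claim that the singularity at~$\infty$ is absent is precisely equivalent to $A_\infty\equiv B_\infty$. So the step is circular as written. The paper closes this by a direct one-line computation from~\eqref{pot_p1} and~\eqref{def_grpcoc}: on a punctured neighbourhood of~$\infty$ one has $p(1)-\psi_S=\tau_s(S)f_1$, which is real-analytic at~$\infty$ because $f_1$ is real-analytic at~$0$; hence $A_\infty-B_\infty\in\V s\om(\proj\RR)$.

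\textbf{A less direct route for~\eqref{ext_infty}.} You propose to read off $(1-\tau_s(T^{-1}))A_\infty$ (modulo~$\V s\om(\proj\RR)$) from~$p(1)$. This can be made to work, but gives expressions like $-(1-\tau_s(T^{-1}))(f_1+\tau_s(S)f_1+\tau_s(S)f_2)$ on~$(1,\infty)_c$, which then have to be matched with~\eqref{ext_infty} up to the analytic term $(1-\tau_s(T^{-1}))\tau_s(S)f_1$. The paper avoids this detour by working with~$\psi_S$ instead of~$p(1)$: since $A_0\in\V s\om[0]$ is analytic at~$\infty$, the condition $\tau_s(T^{-1})A_\infty\equiv A_\infty$ is equivalent to real-analyticity at~$\infty$ of $\tau_s(T^{-1})\psi_S-\psi_S$, and computing the latter from~\eqref{def_grpcoc} on $(\infty,-\lambda)_c$ and $(0,\infty)_c$ yields exactly the two branches of~\eqref{ext_infty} with no leftover terms. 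The final paragraph of your argument (shift by~$A_\infty$, check $\tilde\psi_T,\tilde\psi_S\in\V s\om(\proj\RR)$, propagate to all of~$\Xi$) is essentially the paper's; note only the sign: the singular part of~$p(1)$ (and of~$p(-1)$) at~$\infty$ is~$-B_\infty\equiv -A_\infty$, so adding~$A_\infty$ indeed cancels it.
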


\begin{proof}
The formulas in~\eqref{pot_p1} and~\eqref{def_grpcoc} defining $p(1)$ and $\psi_S$, respectively, and the regularity of~$f$ yield that $p(1)\in\V s \omega[1,\infty]$ and $\psi_S\in\V s \omega[0,\infty]$. Thus, Proposition~\ref{prop-sepsing} implies the existence of~$A_0,A_\infty, B_1,B_\infty$ with the properties as claimed in~\eqref{sdcp}. 

We prove the equivalences in~\eqref{sing_equiv1}. Let $I$ be a (small) neighborhood of~$\infty$. Comparing~\eqref{pot_p1} and \eqref{def_grpcoc} shows that on~$I\smallsetminus\{\infty\}$ we have
\[
 p(1) - \psi_S = \tau_s(S)f_1\,.  
\]
Clearly, $\tau_s(S)f_1$ is real-analytic in a neighborhood of~$\infty$ and hence the difference of~$p(1)$ and~$\psi_S$ is so. In turn, the difference between~$A_\infty$ and~$B_\infty$ is real-analytic in such a neighborhood, and therefore $A_\infty\equiv B_\infty$.

The equality $\tau_s(S)\psi_S = - \psi_S$ implies that $A_\infty-\tau_s(S) A_0 = A_0 - \tau_s(S) A_\infty$. The left hand side of this equation is in~$\V s \om[\infty]$ and the right hand side in~$\V s \om[0]$. Hence both sides are in~$\V s \om[0]\cap\V s \om[\infty]=\V s \om(\proj\RR)$, and thus $\tau_s(S)A_0 \equiv A_\infty$.

To prove the equivalence of~\eqref{sing_equiv2} and~\eqref{ext_infty} we note that 
\[
 \tau_s(T^{-1}) A_\infty \equiv A_\infty
\]
if and only if 
\[
\tau_s(T^{-1})\psi_S - \psi_S = \tau_s(T^{-1})A_0 - A_0 + \tau_s(T^{-1})A_\infty - A_\infty
\]
is real-analytic in~$\infty$. By~\eqref{def_grpcoc}, the latter is the case if and only if 
\begin{align*}
 \Big(\tau_s(T^{-1})\psi_S &- \psi_S\Big)\vert_{(\infty,-\lambda)_c\cup (0,\infty)_c}
 \\
 & = 
 \begin{cases}
  -\tau_s(T^{-1}S) f_1 - \tau_s(T^{-1})f_2 + \tau_s(S)f_1 + f_2 & \text{on $(\infty,-\lambda)_c$}
  \\
  \tau_s(T^{-1})f_1 + \tau_s(T^{-1}S)f_2 - f_2 - \tau_s(S)f_2 & \text{on $(0,\infty)_c$}
 \end{cases}
 \\
 & = 
 \begin{cases}
  \left( 1 - \tau_s(T^{-1})\right) \left(\tau_s(S)f_1 + f_2 \right) & \text{on $(\infty,-\lambda)_c$}
  \\
  -\left( 1 - \tau_s(T^{-1})\right) \left( f_1 + \tau_s(S)f_2 \right) & \text{on $(0,\infty)_c$}
 \end{cases}
\end{align*}
extends real-analytically to~$\infty$.

For the rest of this proof we suppose that \eqref{sing_equiv2} or, equivalently, \eqref{ext_infty} is satisfied. Obviously, $\tilde p$ defines the same cocycle in~$Z^1_\Xi\bigl(\Gamma;\V s \fxi(\proj\RR)\bigr)$ as~$p$, and $\tilde\psi$ is the group cocycle associated to~$\tilde p$. We have
\[
 \tilde\psi_S = \psi_S + (1 - \tau_s(S))A_\infty = A_0 - \tau_s(S)A_\infty\,,
\]
which is in~$\V s \omega(\proj\RR)$ by~\eqref{sing_equiv1}. Further, 
\[
 \tilde\psi_T = \psi_T + (1-\tau_s(T^{-1}))A_\infty = (1-\tau_s(T^{-1}))A_\infty\,,
\]
which is in~$\V s \omega(\proj\RR)$ by~\eqref{sing_equiv2}. Thus, $\tilde\psi\in Z^1_\Xi(\Gamma; \V s \omega(\proj\RR))$.

It remains to show the regularity statement for the potential~$\tilde p$. Using the regularity properties of~$\tilde\psi$ we find for any~$\gamma\in\Gamma$,
\begin{align*}
\tilde p(\gamma\infty) & =  \tau_s(\gamma)\tilde p(\infty) + \tilde\psi_{\gamma^{-1}} 
\\
& \equiv \tau_s(\gamma)\big( p(\infty) + A_\infty\big) 
\\
& = \tau_s(\gamma)A_\infty  \in \V s \omega [\gamma\infty]
\end{align*}
and
\begin{align*}
\tilde p(\gamma 1) & = \tau_s(\gamma) \tilde p(1) + \tilde\psi_{\gamma^{-1}}
\\
& \equiv \tau_s(\gamma)\big( p(1) + A_\infty \big)
\\
& = \tau_s(\gamma) \big( B_1 - B_\infty + A_\infty \big)
\\
& \equiv \tau_s(\gamma) B_1 \in \V s \omega [\gamma 1]\,.
\end{align*}
This completes the proof.
\end{proof}

For $s\in\CC$ let $\spaceext$ \index[symbols]{Vaaac@$\spaceext$}\index[symbols]{E@$\mext$}\index[defs]{condition!$\extm$}\index[defs]{ext@$\extm$} denote the space of elements~$f=(f_1,f_2) \in \V s \omega(D_\RR)$ such that the map
\begin{equation}\label{ext_infty2}
 \begin{cases}
  \left( 1  - \tau_s(T^{-1}) \right) \left(\tau_s(S)f_1 + f_2 \right) & \text{on $(\infty,-\lambda)_c$}
  \\
  -\left( 1 - \tau_s(T^{-1}) \right) \left( f_1 + \tau_s(S)f_2 \right) & \text{on $(0,\infty)_c$}
 \end{cases}
\end{equation}
extends real-analytically to~$\infty$.

\begin{prop}\label{prop:iso_sic}
For any $s\in\CC$, the map~$\pc$ descends to an isomorphism between~$\spaceext$ and~$Z^1_\Xi\bigl(\Gamma;\V s \fxi(\proj\RR)\bigr)_\sic$.
\end{prop}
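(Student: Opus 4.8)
The goal is to show that $\pc$ restricts to a bijection $\spaceext \to Z^1_\Xi\bigl(\Gamma;\V s \fxi(\proj\RR)\bigr)_\sic$. Since we already know from Proposition~\ref{prop:cocycle} that $\pc\colon \V {s}\fxi(D_\RR)\to Z^1_\Xi\bigl(\Gamma;\V s \fxi(\proj\RR)\bigr)$ is a linear bijection, it suffices to prove the set-theoretic equality $\pc(\spaceext) = Z^1_\Xi\bigl(\Gamma;\V s \fxi(\proj\RR)\bigr)_\sic$; the restricted map is then automatically a linear isomorphism. I will prove the two inclusions separately.

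\textbf{Forward inclusion $\pc(\spaceext)\subseteq Z^1_\Xi(\Gamma;\V s \fxi(\proj\RR))_\sic$.} Let $f=(f_1,f_2)\in\spaceext$ and put $c\coloneqq\pc(f)$. By definition of $\spaceext$, the map in~\eqref{ext_infty2} extends real-analytically to $\infty$, which is exactly the hypothesis~\eqref{ext_infty} of Lemma~\ref{lem:analytic}. Hence by that lemma (using also that $f$ has both component functions real-analytic, so $f\in\V s \om(D_\RR)$) there is a potential $\tilde p = p + A_\infty$ of $c$ with $\tilde p(\xi)\in\V s \om[\xi]$ for all $\xi\in\Xi$. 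Now for any $\xi,\eta\in\Xi$ we have $c(\xi,\eta) = \tilde p(\xi) - \tilde p(\eta)$, and since $\tilde p(\xi)\in\V s \om[\xi]$, $\tilde p(\eta)\in\V s \om[\eta]$, the boundary singularities of $c(\xi,\eta)$ are contained in $\{\xi\}\cup\{\eta\}=\{\xi,\eta\}$. This is precisely the singularity condition $\sic$, so $c\in Z^1_\Xi\bigl(\Gamma;\V s \fxi(\proj\RR)\bigr)_\sic$.

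\textbf{Reverse inclusion $Z^1_\Xi(\Gamma;\V s \fxi(\proj\RR))_\sic\subseteq\pc(\spaceext)$.} Let $c\in Z^1_\Xi\bigl(\Gamma;\V s \fxi(\proj\RR)\bigr)_\sic$ and let $f=(f_1,f_2)\coloneqq\pc^{-1}(c)$. By~\eqref{c1shortprop} and~\eqref{c2shortprop}, $c(1,\infty)|_{(\infty,1)_c} = -f_2$ and $c(-1,\infty)|_{(-1,\infty)_c}=f_1$; since $c$ satisfies $\sic$, these restrictions are restrictions of functions with boundary singularities in $\{1,\infty\}$, respectively $\{-1,\infty\}$, so in particular $f_1$ is real-analytic on $(-1,\infty)_c$ and $f_2$ is real-analytic on $(\infty,1)_c$, i.e. $f\in\V s \om(D_\RR)$. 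It remains to verify that $f$ satisfies the extendability condition~\eqref{ext_infty2}. For this I invoke the characterization in Lemma~\ref{lem:analytic}: the map~\eqref{ext_infty} extends real-analytically to $\infty$ if and only if $\tau_s(T^{-1})A_\infty\equiv A_\infty$ in~\eqref{sing_equiv2}, where $A_\infty\in\V s \om[\infty]$ is any element with $\psi_S\equiv A_0 - A_\infty$ as in~\eqref{sdcp}. So I must show that the $\sic$ hypothesis on $c$ forces $\tau_s(T^{-1})A_\infty\equiv A_\infty$. The key point is that $c$ satisfying $\sic$ gives $\bsing c(\infty,T^{-1}\infty) = \bsing c(\infty,\infty)=\emptyset$ trivially (since $T\infty=\infty$), but more usefully one uses $\psi_T = 0$ together with the relation $\psi_{T^{-1}} = c(T\xi_0,\xi_0)$ and the $\sic$ condition applied to pairs of points in $\Xi$ to conclude that $p(1) = c(1,\infty)$ has boundary singularity at $\infty$ controlled in the right way. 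Concretely: by~\eqref{sing_equiv1} we have $A_\infty\equiv B_\infty$ where $p(1)\equiv B_1 - B_\infty$, and $\sic$ applied to $c(1,\infty)$ only guarantees $\bsing c(1,\infty)\subseteq\{1,\infty\}$, which is automatic. The genuine input comes from relating $A_\infty$ to a cocycle value \emph{between two points whose separation is moved by $T$}: one computes $c(1,\lambda+1) = c(1,\infty) + c(\infty,\lambda+1) = c(1,\infty) - \tau_s(T)c(1,\infty)$ using $\Gamma$-equivariance and $\psi_T=0$, so $\bsing\bigl( (1-\tau_s(T))\,p(1)\bigr)\subseteq\{1,\lambda+1\}$; in particular this expression is real-analytic at $\infty$. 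Subtracting the contribution of $B_1$ (which has its singularity at $1$, hence $(1-\tau_s(T))B_1$ is real-analytic near $\infty$ away from $1$ and $\lambda+1$ — one checks it is real-analytic \emph{at} $\infty$ directly from $B_1\in\V s \om[1]$) leaves $(1-\tau_s(T))B_\infty\equiv 0$ near $\infty$, i.e. $\tau_s(T)B_\infty\equiv B_\infty$, equivalently $\tau_s(T^{-1})A_\infty\equiv A_\infty$. This is~\eqref{sing_equiv2}, so by Lemma~\ref{lem:analytic} the map~\eqref{ext_infty} = \eqref{ext_infty2} extends real-analytically to $\infty$, i.e. $f\in\spaceext$.

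\textbf{Main obstacle.} The delicate step is the reverse inclusion, specifically extracting from the purely local singularity condition $\sic$ on all cocycle pairs $c(\xi,\eta)$ the single global consequence $\tau_s(T^{-1})A_\infty\equiv A_\infty$. The care needed is in choosing the right pair of points — here $1$ and $\lambda+1 = T\cdot 1$ works because $T$ fixes $\infty$ but translates the other endpoint — and in verifying cleanly that $(1-\tau_s(T))B_1$ is real-analytic at $\infty$ (this uses that $B_1\in\V s \om[\infty]$ has no singularity at $\infty$, together with the fact that $\tau_s(T)$ preserves real-analyticity at $\infty$, cf. the remarks in Section~\ref{sec:regularity}), so that the singularity-at-$\infty$ of $(1-\tau_s(T))p(1)$ is entirely carried by the $B_\infty$ term. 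Everything else is a routine application of Proposition~\ref{prop:cocycle}, Lemma~\ref{lem:analytic} and the separation-of-singularities Proposition~\ref{prop-sepsing}.
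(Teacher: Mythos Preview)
Your proof is correct and follows essentially the same approach as the paper. The forward inclusion is identical; for the reverse inclusion the paper applies the $\sic$ condition to $c(0,-\lambda) = (1-\tau_s(T^{-1}))c(0,\infty)$ and reads off $\tau_s(T^{-1})A_\infty\equiv A_\infty$ directly from the $A$-decomposition, whereas you apply it to $c(1,\lambda+1) = (1-\tau_s(T))c(1,\infty)$, deduce $\tau_s(T)B_\infty\equiv B_\infty$ from the $B$-decomposition, and then pass to $A_\infty$ via $A_\infty\equiv B_\infty$ from Lemma~\ref{lem:analytic} --- a harmless extra step. (One typo: in your final paragraph you write ``$B_1\in\V s \om[\infty]$'' where you mean $B_1\in\V s \om[1]$.)
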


\begin{proof}
By Proposition~\ref{prop:cocycle}\eqref{prop:coci} it suffices to establish the set equality 
\[
\pc(\spaceext) =  Z^1_\Xi\bigl(\Gamma;\V s \fxi(\proj\RR)\bigr)_\sic\,.
\]
Let $f=(f_1,f_2)\in \spaceext$ and let $c\coloneqq  \pc(f)$. By Proposition~\ref{prop:cocycle} the cocycle~$c$ is in~$Z^1_\Xi\bigl(\Gamma;\V s \fxi(\proj\RR)\bigr)$. Thus, it remains to show the singularity condition for~$c$. To that end let $\tilde p$ be the potential of~$c$ given by Lemma~\ref{lem:analytic} (see~\eqref{newcoc}). Recall that for all~$\xi\in \Xi$ we have $\tilde p(\xi) \in \V s \omega[\xi]$. Hence, for all~$\xi,\eta\in \Xi$ we have
\[
 c(\xi,\eta) = \tilde p(\xi) - \tilde p(\eta) \in \V s \omega[\xi,\eta]\,.
\]
This shows that $c\in Z^1_\Xi\bigl(\Gamma;\V s \fxi(\proj\RR)\bigr)_\sic$.

Conversely, let $c\in Z^1_\Xi\bigl(\Gamma;\V s \fxi(\proj\RR)\bigr)_\sic$, and let $f=(f_1,f_2) \coloneqq  \pc^{-1}(c)$, thus (see~\eqref{c1shortprop}-\eqref{c2shortprop})
\[
 f_1 = c(-1,\infty)\vert_{(-1,\infty)_c}\quad\text{and}\quad f_2 = - c(1,\infty)\vert_{(1,\infty)_c}\,.
\]
The singularity condition for~$c$ yields that $f_1$ and $f_2$ are real-analytic. It remains to establish~\eqref{ext_infty2}. To that end choose~$A_0\in \V s \omega[0]$, $A_\infty\in \V s \omega [\infty]$ such that 
\[
 c(0,\infty) = A_0 - A_\infty\,,
\]
see~\eqref{sdcp}. Then 
\begin{align*}
c(0,-\lambda) & = c(0,\infty) - c(-\lambda,\infty) = c(0,\infty) - \tau_s(T^{-1})c(0,\infty)
\\
& = A_0 - \tau_s(T^{-1})A_0 + A_\infty - \tau_s(T^{-1})A_\infty\,.
\end{align*}
From $\bsing c(0,-\lambda)\subseteq\{0,-\lambda\}$ it follows that 
\[
 A_\infty \equiv \tau_s(T^{-1})A_\infty\,,
\]
which, by Lemma~\ref{lem:analytic}, implies~\eqref{ext_infty2}.
\end{proof}

The combination of Propositions~\ref{prop:iso_van} and \ref{prop:iso_sic} shows that $\pc$ descends to an isomorphism
\begin{equation}
 \FE_s^\fxi(\RR)\cap \spaceext \to Z^1_\Xi\bigl(\Gamma;\V s \fxi(\proj\RR)\bigr)_\sic^\van\,.
\end{equation}
With Corollary~\ref{cor:per_sicvan} we will prove that 
\[
 \FE_s^\fxi(\RR) \cap \spaceext = \FE_s^\om(\RR)\,,
\]
yielding that the map~$\pc$ descends to an isomorphism between the spaces~$\FE_s^\omega(\RR)$ and $Z^1_\Xi\bigl(\Gamma;\V s \fxi(\proj\RR)\bigr)_\sic^\van$. The combination of the singularity and the vanishing conditions imply further that all cocycles in~$Z^1_\Xi\bigl(\Gamma;\V s \fxi(\proj\RR)\bigr)_\sic^\van$ have analytic jumps at the points in~$\Gamma\,1$. See Proposition~\ref{prop:an_jump}. The geometric reason behind this phenomenon is the contraction property of the acting elements in~$\TO_s^\slow$ or, equivalently, the real-analytic extendability of the elements in~$\FE_s^\omega(\RR)$ to larger domains. It has the effect that the map~$\pc$ descends to an isomorphism between~$\FE_s^\omega(\RR)$ and~$Z^1_\Xi\bigl(\Gamma;\V s {\fxi;-;\aj}(\proj\RR)\bigr)_\sic^\van$. See Corollary~\ref{cor:iso_aj}.

\begin{cor}\label{cor:per_sicvan} 
For any $s\in\CC$, the map~$\pc$ descends to an isomorphism between $\FE_s^\omega(\RR)$ and  $Z^1_\Xi\bigl(\Gamma;\V s \fxi(\proj\RR)\bigr)_\sic^\van$.
\end{cor}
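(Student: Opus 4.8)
The plan is to combine Propositions~\ref{prop:iso_van} and~\ref{prop:iso_sic} with a description of the relationship between the spaces $\FE_s^\fxi(\RR)$, $\spaceext$, and $\FE_s^\om(\RR)$. By Proposition~\ref{prop:iso_van} the map $\pc$ restricts to an isomorphism $\FE_s^\fxi(\RR) \to Z^1_\Xi\bigl(\Gm;\V s \fxi(\proj\RR)\bigr)^\van$, and by Proposition~\ref{prop:iso_sic} it restricts to an isomorphism $\spaceext \to Z^1_\Xi\bigl(\Gm;\V s \fxi(\proj\RR)\bigr)_\sic$. Since both restrictions are realizations of the one bijective map $\pc$ from Proposition~\ref{prop:cocycle}\eqref{prop:coci}, intersecting domains and ranges shows that $\pc$ descends to an isomorphism
\[
 \FE_s^\fxi(\RR) \cap \spaceext \;\longrightarrow\; Z^1_\Xi\bigl(\Gm;\V s \fxi(\proj\RR)\bigr)^\van_\sic\,.
\]
So the entire content to be proven is the \emph{set equality}
\[
 \FE_s^\fxi(\RR) \cap \spaceext \;=\; \FE_s^\om(\RR)\,,
\]
after which the corollary follows immediately.

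\textbf{Proving the set equality.} The inclusion $\FE_s^\om(\RR) \subseteq \FE_s^\fxi(\RR) \cap \spaceext$ should be the easy direction: an element $f=(f_1,f_2)\in\FE_s^\om(\RR)$ has real-analytic components $f_1,f_2$ (so in particular lies in $\V s \om(D_\RR)\subseteq\V{s}\fxi(D_\RR)$) and satisfies $\TO_s^\slow f = f$, hence $f\in\FE_s^\fxi(\RR)$; for membership in $\spaceext$ one must verify that the map in~\eqref{ext_infty2} extends real-analytically to~$\infty$, which should follow from the functional equations~\eqref{FEeqa}--\eqref{FEeqb} by substituting $\big(1-\tau_s(T^{-1})\big)f_1 = \tau_s(T^{-1}S)(f_1+f_2)$ and $\big(1-\tau_s(T)\big)f_2 = \tau_s(TS)(f_1+f_2)$ (compare~\eqref{prto}), identifying the expression in~\eqref{ext_infty2} with something manifestly real-analytic near~$\infty$ — essentially the observation already made in the proof of Lemma~\ref{lem:analytic} tying~\eqref{sing_equiv2} to~\eqref{ext_infty}. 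For the reverse inclusion $\FE_s^\fxi(\RR) \cap \spaceext \subseteq \FE_s^\om(\RR)$: if $f=(f_1,f_2)$ lies in the left-hand side, then membership in $\spaceext$ forces $f_1,f_2$ to be real-analytic on their intervals $(-1,\infty)$ and $(-\infty,1)$, and membership in $\FE_s^\fxi(\RR)$ gives $\TO_s^\slow f = f$; since $\FE_s^\om(\RR)$ is by definition the set of $1$-eigenfunctions of $\TO_s^\slow$ in $C^\om(D_\RR)$, this is exactly what is required.

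\textbf{Anticipated obstacle.} The main subtlety is the bookkeeping at~$\infty$ in the first inclusion: one has to confirm that the two branches of~\eqref{ext_infty2}, defined on $(\infty,-\lambda)_c$ and $(0,\infty)_c$ respectively, glue to a single real-analytic germ at~$\infty$, and not merely that each branch is separately real-analytic where it is defined. This requires comparing the two branches through the functional equation and checking they agree (as germs at~$\infty$) after the $\tau_s(S)$-transformation to a neighborhood of~$0$; the automorphy-factor conventions (principal value logarithm, the extensions $(z+\lambda)^{-2s}$ and $(\lambda-z)^{-2s}$ from Section~\ref{sec:slowTO}, and the $z\mapsto(z^2)^{-s}$ choice when testing regularity at~$\infty$ via~$S$) must be handled consistently, exactly as in the remark following Definition~\ref{def:modulescond}. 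Once this germ-matching at~$\infty$ is established, everything else is a direct assembly of the already-proved propositions, and the isomorphism claimed in the corollary follows.
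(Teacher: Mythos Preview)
Your proposal is correct and follows essentially the same route as the paper: intersect the isomorphisms of Propositions~\ref{prop:iso_van} and~\ref{prop:iso_sic} and reduce to the set equality $\FE_s^\fxi(\RR)\cap\spaceext=\FE_s^\om(\RR)$, with the only nontrivial step being $\FE_s^\om(\RR)\subseteq\spaceext$. Your anticipated obstacle dissolves in the execution: once one substitutes the functional equations~\eqref{FEeqa}--\eqref{FEeqb} into the two branches of~\eqref{ext_infty2}, both reduce to the \emph{same} expression $-\tau_s(T^{-1}S)f_1-\tau_s(S)f_2$, which is visibly real-analytic at~$\infty$, so no separate germ-matching is needed.
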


\begin{proof}
Taking advantage of Propositions~\ref{prop:iso_van} and \ref{prop:iso_sic} it suffices to establish that each element of~$\FE_s^\omega(\RR)$ satisfies~\eqref{ext_infty2}. To that end let $f=(f_1,f_2)\in \FE_s^\omega(\RR)$. We reformulate~\eqref{ext_infty2}. On~$(0,\infty)_c$, \eqref{FEeqa} implies the identity
\begin{align*}
 -\left(1-\tau_s(T^{-1})\right)\left(f_1 + \tau_s(S)f_2\right) & = -f_1 + \tau_s(T^{-1})f_1 + \tau_s(T^{-1}S)f_2 - \tau_s(S)f_2
 \\
 & = -\tau_s(T^{-1}S)f_1 - \tau_s(S)f_2\,.
\end{align*}
On~$(\infty,-\lambda)_c$, \eqref{FEeqb} yields the identity
\begin{align*}
 \left(1-\tau_s(T^{-1})\right)&\left(\tau_s(S)f_1 + f_2\right) \\
 & = -\tau_s(T^{-1}S)f_1 + \tau_s(T^{-1})\left( -f_2 + \tau_s(T)f_2 + \tau_s(TS)f_1\right)
 \\
 & = -\tau_s(T^{-1}S)f_1 - \tau_s(S)f_2\,.
\end{align*}
The map~$\tau_s(T^{-1}S)f_1 + \tau_s(S)f_2$ is real-analytic in~$\infty$. Hence, \eqref{ext_infty2} is satisfied.
\end{proof}

\begin{prop}\label{prop:an_jump}
For all $s\in\CC$ we have
\[
 Z^1_\Xi\bigl(\Gamma;\V s {\fxi;-;\aj}(\proj\RR)\bigr)_\sic^\van = Z^1_\Xi\bigl(\Gamma;\V s \fxi(\proj\RR)\bigr)_\sic^\van\,.
\]
\end{prop}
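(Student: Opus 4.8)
The inclusion ``$\subseteq$'' is trivial, since $\V s {\fxi;-;\aj}(\proj\RR)$ is a submodule of $\V s \fxi(\proj\RR)$ and the singularity condition $\sic$ and the vanishing condition $\van$ are imposed verbatim in both spaces. So the plan is to prove the reverse inclusion: I will show that every cocycle $c\in Z^1_\Xi\bigl(\Gm;\V s \fxi(\proj\RR)\bigr)_\sic^\van$ automatically takes values in $\V s {\fxi;-;\aj}(\proj\RR)$, that is, that each $c(\xi,\eta)$ has an analytic jump at every point of $\bsing c(\xi,\eta)\cap\Gm\,1$. Then $c$ satisfies $\sic$ and $\van$ with values in the smaller module, which is exactly the required membership.

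First I would reduce this to a single statement. By the $\Gamma$-equivariance of $c$ together with the fact, recorded in the remark after Definition~\ref{def:modulescond}, that the property of having an analytic jump is preserved by each $\tau_s(g)$, $g\in\Gm$, it is enough to verify the analytic jump at the point $1$. Since $\bsing c(\xi,\eta)\subseteq\{\xi,\eta\}$ by $\sic$, the point $1$ can lie in $\bsing c(\xi,\eta)$ only when $1\in\{\xi,\eta\}$; as a function has an analytic jump at a point exactly when its negative does, one may take $\xi=1$. Finally, for $\eta\neq 1$ the cocycle relation gives $c(1,\eta)=c(1,\infty)-c(\eta,\infty)$, and $c(\eta,\infty)$ is real-analytic on a neighborhood of $1$ (again by $\sic$, since $\bsing c(\eta,\infty)\subseteq\{\eta,\infty\}$ and $1\notin\{\eta,\infty\}$); the case $\eta=1$ is vacuous. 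Hence the whole claim comes down to showing that $c(1,\infty)$ has an analytic jump at $1$.

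To settle this I would make the germ of $c(1,\infty)$ near $1$ explicit. By Corollary~\ref{cor:per_sicvan} the element $f=(f_1,f_2)\coloneqq\pc^{-1}(c)$ is a real period function in $\FE_s^\om(\RR)$, so $f_1,f_2$ are real-analytic on $(-1,\infty)$ and $(-\infty,1)$ respectively, and by Proposition~\ref{prop-re} they extend to real-analytic functions $\tilde f_1$ on $(-\fixTS^+,\infty)$ and $\tilde f_2$ on $(-\infty,\fixTS^+)$ still obeying \eqref{FEeqa}--\eqref{FEeqb}; since $\fixTS^+=\tfrac12\bigl(\lambda+\sqrt{\lambda^2-4}\bigr)>1$ for $\lambda>2$, both of these extended domains contain neighborhoods of $-1$ and of $1$. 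On the other hand, by Proposition~\ref{prop:cocycle} the potential $p$ of $c$ normalized by $p(\infty)=0$ satisfies $c(1,\infty)=p(1)$, and \eqref{pot_p1} identifies $p(1)$ with $-f_2$ on $(\infty,1)_c$ and with $f_1+\tau_s(S)f_1+\tau_s(S)f_2$ on $(1,\infty)_c$. I would then check directly that each of these two one-sided germs extends real-analytically across $1$: the left germ $-f_2$ does because $\tilde f_2$ is real-analytic on a neighborhood of $1$; the right germ does because $f_1$ is real-analytic near $1$, while $\tau_s(S)f_1(t)=|t|^{-2s}f_1(-1/t)$ and $\tau_s(S)f_2(t)=|t|^{-2s}f_2(-1/t)$ are real-analytic for $t$ near $1$ --- indeed $|t|^{-2s}$ is real-analytic near $t=1\neq 0$, and $\tilde f_1,\tilde f_2$ are real-analytic near the arguments $-1/t$, which lie near $-1$. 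This is precisely the analytic-jump condition of Definition~\ref{def:conditions}(iii) at $1$, which completes the argument.

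The proof is essentially a bookkeeping exercise once the reduction to $c(1,\infty)$ is in place; the only substantive input beyond the definitions is the real-analytic extendability of period functions past $\pm\fixTS^+$ furnished by Proposition~\ref{prop-re}, and this is exactly what upgrades the a priori merely semi-analytic behavior of $c$ at $\Gm\,1$ to a genuine analytic jump. I do not anticipate a serious obstacle; the points requiring a little care are the cyclic-interval conventions (matching the two branches of $p(1)$ in \eqref{pot_p1} to the two sides of $1$) and the observation that the $\tau_s(S)$-twisted terms in \eqref{pot_p1} evaluate $f_1$ and $f_2$ near $-1$, which is where the extension from Proposition~\ref{prop-re} is genuinely used rather than just the bare real-analyticity of $f_1,f_2$ on their original domains.
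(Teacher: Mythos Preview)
Your proof is correct and follows essentially the same approach as the paper: reduce to showing that $c(1,\infty)=p(1)$ has an analytic jump at $1$, then use the explicit formula \eqref{pot_p1} together with the real-analytic extendability of $f_1,f_2$ past $\pm 1$ from Proposition~\ref{prop-re}. The paper carries out the reduction via the potential $\tilde p$ of Lemma~\ref{lem:analytic} (with $\tilde p(\xi)\in\V s\om[\xi]$) rather than your direct cocycle-relation argument, but this is a cosmetic difference.
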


\begin{proof}
It suffices to show that 
\[ Z^1_\Xi\bigl(\Gamma;\V s \fxi(\proj\RR)\bigr)_\sic^\van\subseteq Z^1_\Xi\bigl(\Gamma;\V s {\fxi;-;\aj}(\proj\RR)\bigr)_\sic^\van\,.\]
 Let $c\in Z^1_\Xi\bigl(\Gamma;\V s \fxi(\proj\RR)\bigr)_\sic^\van$. By Corollary~\ref{cor:per_sicvan} there exists a (unique) period function $f = (f_1,f_2)\in \FE_s^\omega(\RR)$ such that $c=\pc(f)$. Let $\tilde p$ be the potential of~$c$ provided by Lemma~\ref{lem:analytic}. Due to the regularity properties of~$\tilde p$ it suffices to show that $\tilde p(1)$ has an analytic jump at~$1$. 

Let $p$ be the potential of~$c$ provided by Proposition~\ref{prop:cocycle}, and let $A_\infty$ be as in Lemma~\ref{lem:analytic}. Since (see~\eqref{newcoc})
\[
 \tilde p(1) = p(1) + A_\infty
\]
and since $A_\infty$ is real-analytic in~$1$, it suffices to show that $p(1)$ has an analytic jump at~$1$. Recall from Proposition~\ref{prop:cocycle} that 
\[
 p(1) = 
 \begin{cases}
  -f_2 & \text{on $(\infty,1)_c$}
  \\
  f_1 + \tau_s(S)f_1 + \tau_s(S)f_2 & \text{on $(1,\infty)_c$\,.}
 \end{cases}
\]
Since $f_1\in \V s \omega( (-1,\infty)_c )$ and $f_2\in \V s \omega( (\infty, 1)_c)$, the map~$p(1)$ is real-analytic on~$(\infty,1)_c$ and on~$(1,\infty)_c$. By Proposition~\ref{prop-re}, $f_1$ extends real-analytically beyond~$-1$, and $f_2$ extends real-analytically beyond~$1$, which implies that $p(1)\vert_{(\infty,1)_c}$ and $p(1)\vert_{(1,\infty)_c}$ both have real-analytic extensions beyond~$1$. Thus, $p(1)$ has an analytic jump at~$1$. This completes the proof.
\end{proof}

Combining Corollary~\ref{cor:per_sicvan} and Proposition~\ref{prop:an_jump} immediately implies the following isomorphism.

\begin{cor}\label{cor:iso_aj}
 The map~$\pc$ descends to an isomorphism between~$\FE_s^\omega(\RR)$ and $Z^1_\Xi\bigl(\Gamma;\V s {\fxi;-;\aj}(\proj\RR)\bigr)_\sic^\van$.
\end{cor}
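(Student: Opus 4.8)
The plan is to obtain Corollary~\ref{cor:iso_aj} as a direct consequence of the two preceding results, with no new computation required. First I would recall the content of Corollary~\ref{cor:per_sicvan}: for every $s\in\CC$ the map $\pc$ from Proposition~\ref{prop:cocycle} restricts to a linear bijection
\[
\pc\colon \FE_s^\omega(\RR) \;\stackrel{\cong}{\longrightarrow}\; Z^1_\Xi\bigl(\Gamma;\V s \fxi(\proj\RR)\bigr)_\sic^\van\,.
\]
This in turn rests on Proposition~\ref{prop:iso_van} (the vanishing property~\tvan corresponds to the $1$-eigenfunction equation $\tro s f = f$), on Proposition~\ref{prop:iso_sic} (the singularity property~\tsic corresponds to membership in $\spaceext$), and on the identity $\FE_s^\fxi(\RR)\cap\spaceext = \FE_s^\omega(\RR)$, which is verified by rewriting condition~\eqref{ext_infty2} with the help of the functional equations~\eqref{FEeqa} and~\eqref{FEeqb}. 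None of this has to be reproved at this stage.

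Second, I would invoke Proposition~\ref{prop:an_jump}, which gives the set equality
\[
Z^1_\Xi\bigl(\Gamma;\V s {\fxi;-;\aj}(\proj\RR)\bigr)_\sic^\van \;=\; Z^1_\Xi\bigl(\Gamma;\V s \fxi(\proj\RR)\bigr)_\sic^\van
\]
of subspaces of $Z^1_\Xi(\Gamma;\V s \fxi(\proj\RR))$. Substituting the left-hand side for the target of the isomorphism above then yields exactly the asserted isomorphism
\[
\pc\colon \FE_s^\omega(\RR) \;\stackrel{\cong}{\longrightarrow}\; Z^1_\Xi\bigl(\Gamma;\V s {\fxi;-;\aj}(\proj\RR)\bigr)_\sic^\van\,.
\]
Injectivity is inherited from Proposition~\ref{prop:cocycle}\eqref{prop:coci}; surjectivity onto the $\aj$-module follows because Proposition~\ref{prop:an_jump} is an honest equality of cocycle spaces, so $\pc$ already lands in $Z^1_\Xi(\Gamma;\V s {\fxi;-;\aj}(\proj\RR))_\sic^\van$ on the nose. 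Since we are dealing with cocycle spaces and not cohomology spaces, there is nothing to check about coboundaries, and the target is a legitimate object of the theory because $\V s {\fxi;-;\aj}(\proj\RR)$ is a $\Gamma$-submodule of $\V s \fxi(\proj\RR)$ by the remark following Definition~\ref{def:modulescond}.

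Consequently there is no genuine obstacle at this step: the mathematical substance resides entirely in Corollary~\ref{cor:per_sicvan} and Proposition~\ref{prop:an_jump}, whose proofs draw on the real-analytic continuation of period functions (Proposition~\ref{prop-re}) and on the dictionary between the conditions~\tvan, \tsic and properties of elements of $\V {s} \fxi(D_\RR)$. The only point that merits a cautionary sentence is to read Proposition~\ref{prop:an_jump} as the equality of the full $\sic$-and-$\van$ cocycle subspaces rather than as a mere inclusion up to isomorphism, which is what makes the substitution above literal rather than just canonical. The proof therefore consists of two citations and a one-line identification of the target.
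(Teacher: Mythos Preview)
Your proposal is correct and matches the paper's approach exactly: the corollary is obtained by combining Corollary~\ref{cor:per_sicvan} with the set equality of Proposition~\ref{prop:an_jump}, with no further argument needed.
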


\section{Complex period functions and semi-analytic cohomology}\label{sec:pc_complex}
\markright{20. COMPLEX PERIOD FUNCTIONS AND COHOMOLOGY}

In Section~\ref{sec:pc_real} we showed that $\pc$ descends to an isomorphism 
\[
 \FE_s^\om(\RR) \to Z^1_\Xi\bigl(\Gamma;\V s {\fxi;-;\aj}(\proj\RR)\bigr)^\van_\sic\,.
\]
We recall from Section~\ref{sec:periodfunctions} that for any~$s\in\CC$, the space~$\FE_s^\om(\CC)$ of complex period functions can be identified with the subspace of~$\FE_s^\om(\RR)$ consisting of those real period functions~$f=(f_1,f_2)\in\FE_s^\om(\RR)$ for which $f_1$ extends holomorphically to the domain~$\CC\smallsetminus (-\infty,-1]$ and $f_2$ extends holomorphically to~$\CC\smallsetminus [1,\infty)$. Taking advantage of this identification, the map~$\pc$ naturally induces a map 
\[
 \FE_s^\om(\CC) \to Z^1_\Xi\bigl(\Gamma;\V s {\fxi;-;\aj}(\proj\RR)\bigr)^\van_\sic\,,
\]
which we also call~$\pc$. In this section we will show that $\pc$ establishes isomorphisms
\begin{align*}
 \FE_s^\om(\CC) &\to Z^1_\Xi\bigl(\Gamma;\V s {\fxi;\exc;\aj}(\proj\RR)\bigr)^\van_\sic\,,
 \\
 \FE_s^{\om,1}(\CC) & \to Z^1_\Xi\bigl(\Gamma;\V s {\fxi;\exc,\smp;\aj}(\proj\RR)\bigr)^\van_\sic 
 \intertext{and}
 \FE_s^{\om,0}(\CC) & \to Z^1_\Xi\bigl(\Gamma;\V s {\fxi;\exc,\infty;\aj}(\proj\RR)\bigr)^\van_\sic
\end{align*}
for appropriate values of~$s\in\CC$. We refer to Propositions~\ref{prop:iso_exc} and \ref{prop:EF_TO_fast} for precise statements. Moreover, we characterize the coboundaries in terms of properties of complex period functions.

\begin{prop}\label{prop:iso_exc}
For any $s\in\CC$, the map~$\pc$ descends to an isomorphism between $\FE_s^\omega(\CC)$ and $Z^1_\Xi\bigl(\Gamma;\V s {\fxi;\exc;\aj}(\proj\RR)\bigr)_\sic^\van$. 
\end{prop}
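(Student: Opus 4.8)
By Corollary~\ref{cor:iso_aj} the map~$\pc$ is already known to be an isomorphism between~$\FE_s^\omega(\RR)$ and~$Z^1_\Xi\bigl(\Gamma;\V s {\fxi;-;\aj}(\proj\RR)\bigr)_\sic^\van$, so the task is purely to match up the two subspaces: on the function side, the real period functions whose components extend holomorphically to~$\CC\smallsetminus(-\infty,-1]$ and~$\CC\smallsetminus[1,\infty)$ respectively; on the cocycle side, the cocycles whose values satisfy condition~\texc (holomorphic extension to a rounded neighborhood) at the cusps. Since~$\pc$ restricts to a bijection already, it suffices to prove the set equality
\[
 \pc\bigl( \FE_s^\omega(\CC) \bigr) = Z^1_\Xi\bigl(\Gamma;\V s {\fxi;\exc;\aj}(\proj\RR)\bigr)_\sic^\van\,,
\]
and by $\Gamma$-equivariance of cocycles it is enough to control condition~\texc at a single representative cusp, namely~$\infty$ (condition~\texc is automatic at ordinary points, where the cocycle values are real-analytic).

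\textbf{First direction: from complex period functions to \texc.} Let $f=(f_1,f_2)\in\FE_s^\omega(\CC)$ and $c\coloneqq\pc(f)$. I would use the potential~$\tilde p$ from Lemma~\ref{lem:analytic}, which satisfies $\tilde p(\xi)\in\V s \omega[\xi]$ for all~$\xi\in\Xi$; then $c(\xi,\eta)=\tilde p(\xi)-\tilde p(\eta)$, so it suffices to show that each~$\tilde p(\gamma\infty)$ satisfies~\texc at~$\gamma\infty$ and each~$\tilde p(\gamma 1)$ is real-analytic near~$\gamma\infty$, for all~$\gamma\in\Gamma$. By $\Gamma$-equivariance (condition~\texc being stable under~$\tau_s(g)$, see the remark after Definition~\ref{def:modulescond}) this reduces to checking~\texc at~$\infty$ for~$\tilde p(1)$ and~$\tilde p(\infty)$. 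Recall $\tilde p(1)=p(1)+A_\infty$ and $\tilde p(\infty)=A_\infty$, with $p(1)$ given on~$(\infty,1)_c$ by $-f_2$ and on~$(1,\infty)_c$ by $f_1+\tau_s(S)f_1+\tau_s(S)f_2$. Now $f_1$ extends holomorphically to~$\CC\smallsetminus(-\infty,-1]$ and $f_2$ to~$\CC\smallsetminus[1,\infty)$, so both branches of~$p(1)$ extend holomorphically past~$\infty$ — more precisely, $-f_2$ extends to a holomorphic function on a set left-rounded at~$\infty$ and $f_1+\tau_s(S)f_1+\tau_s(S)f_2$ extends to a holomorphic function on a set right-rounded at~$\infty$ (here one uses that $\tau_s(S)$ applied to a function holomorphic near~$0$ produces a function holomorphic near~$\infty$ in the sense of Section~\ref{sec:regularity}, with the $z\mapsto(z^2)^{-s}$ convention so that no spurious branch cuts appear). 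Since these two holomorphic extensions agree near~$\infty$ (because $p(1)$ is a single real-analytic boundary germ there, by the analysis already carried out), they glue to a single holomorphic function on a neighborhood of~$\infty$ that is rounded at~$\infty$; hence $p(1)$, and therefore $\tilde p(1)$, satisfies~\texc at~$\infty$. For $\tilde p(\infty)=A_\infty$: $A_\infty\in\V s\omega[\infty]$ is real-analytic at~$\infty$, and by construction $A_\infty\equiv B_\infty\equiv p(1)$ near~$\infty$, so $A_\infty$ inherits the holomorphic extension to a rounded neighborhood of~$\infty$ from~$p(1)$. Thus $c\in Z^1_\Xi\bigl(\Gamma;\V s {\fxi;\exc;\aj}(\proj\RR)\bigr)_\sic^\van$.

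\textbf{Second direction, and the main obstacle.} Conversely, let $c\in Z^1_\Xi\bigl(\Gamma;\V s {\fxi;\exc;\aj}(\proj\RR)\bigr)_\sic^\van$ and set $(f_1,f_2)\coloneqq\pc^{-1}(c)$, so $f_1=c(-1,\infty)\vert_{(-1,\infty)_c}$ and $f_2=-c(1,\infty)\vert_{(1,\infty)_c}$. By Corollary~\ref{cor:iso_aj} this pair is already a real period function; I must upgrade it to a complex one. From condition~\texc, $c(1,\infty)$ has a holomorphic extension to an open subset of~$\proj\CC$ rounded at~$\infty$, hence $f_2=-c(1,\infty)$ extends holomorphically to a neighborhood of~$(-\infty,1)$ that is right-rounded at~$\infty$; applying a suitable $\gamma\in\Gamma$ and condition~\texc at $\gamma\infty$ together with the cocycle relation, one similarly produces a holomorphic extension of~$f_1$ to a neighborhood of~$(-1,\infty)$ that is left-rounded at~$\infty$ (concretely, $f_1=c(-1,\infty)$, and $c(-1,\infty)=c(-1,1-\lambda)+c(1-\lambda,\infty)$, where the second term, being $-\tau_s(T^{-1})$ of a value satisfying~\texc at~$\infty$, extends holomorphically near~$\infty$, while the first is real-analytic past~$1-\lambda$ by the \taj condition — assembling these gives the left-rounded neighborhood). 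The hard part is precisely this: checking that the holomorphic extensions furnished by~\texc are left/right-rounded \emph{at~$\infty$} in the exact sense required by Proposition~\ref{prop-FE-RC}, and that the bootstrapping there (which applies only $\tau_s(P)$ for $P\in\{T^{-1}S,TS,T^{-1},T\}$) respects a coherent choice of holomorphic branch of the automorphy factors — this is the content of Section~\ref{sec:holomext} and the discussion preceding Proposition~\ref{prop-FE-RC}. Once the rounded-at-$\infty$ property is established, Proposition~\ref{prop-FE-RC} applies directly and yields that $(f_1,f_2)$ extends to a complex period function $\tilde f\in\FE_s^\omega(\CC)$ with $\pc(\tilde f)=c$. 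This closes the set equality and, combined with the bijectivity of~$\pc$ from Proposition~\ref{prop:cocycle}\eqref{prop:coci} and Corollary~\ref{cor:iso_aj}, proves the proposition.
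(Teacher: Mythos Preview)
Your overall strategy matches the paper's: invoke Corollary~\ref{cor:iso_aj} and then prove the set equality, using the potential~$\tilde p$ from Lemma~\ref{lem:analytic} for the forward direction and Proposition~\ref{prop-FE-RC} for the backward direction. However, there are two genuine problems and one unnecessary detour.

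\textbf{A false claim in the forward direction.} You assert that the two holomorphic branches of~$p(1)$ near~$\infty$ ``agree near~$\infty$ (because $p(1)$ is a single real-analytic boundary germ there)'' and hence glue. This is wrong: as the paper notes just before Lemma~\ref{lem:analytic}, one typically has $\bsing p(1)=\{1,\infty\}$, so $p(1)$ is \emph{not} real-analytic at~$\infty$ and the branches do \emph{not} glue. Fortunately the gluing is also unnecessary: condition~\texc only asks for a holomorphic extension to an open set that is \emph{rounded} at~$\infty$, and the union of a left half-plane and a right half-plane is already rounded at~$\infty$ without containing~$\infty$. (Incidentally, your left/right labels are swapped: $-f_2$ extends to~$\CC\smallsetminus[1,\infty)$, which is \emph{right}-rounded at~$\infty$.) The paper avoids this issue by working with~$\psi_S$ instead of~$p(1)$: since $\tilde p(\infty)=A_\infty=A_0-\psi_S$ and $A_0\in\V s\om[0]$ is real-analytic at~$\infty$, one need only check~\texc for~$\psi_S$; the formula~\eqref{def_grpcoc} then gives extensions to~$\{\Rea z<0\}$ and~$\{\Rea z>0\}$ directly. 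Also note that~$\tilde p(1)\in\V s\om[1]$ by Lemma~\ref{lem:analytic}, so it is real-analytic at~$\infty$ and trivially satisfies~\texc there; your analysis of~$p(1)$ for this purpose is superfluous.

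\textbf{The reduction to~$\infty$ is not ``by $\Gamma$-equivariance''.} The potential~$\tilde p$ is \emph{not} $\Gamma$-equivariant, so you cannot reduce from~$\tilde p(g\infty)$ to~$\tilde p(\infty)$ that way. The correct reduction (which the paper uses) is the relation $\tilde p(g\infty)=\tau_s(g)\tilde p(\infty)+\tilde\psi_{g^{-1}}$ together with the crucial fact from Lemma~\ref{lem:analytic} that $\tilde\psi\in Z^1\bigl(\Gm;\V s\om(\proj\RR)\bigr)$ is analytic.

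\textbf{The backward direction is simpler than you make it.} Your decomposition $c(-1,\infty)=c(-1,1-\lambda)+c(1-\lambda,\infty)$ is unnecessary. The condition~\texc applied \emph{directly} to~$c(-1,\infty)$ at~$\infty$ already yields that $c(-1,\infty)$, and hence $f_1$, extends holomorphically to some right half-plane~$\{\Rea z>x_0\}$; this is a neighborhood of~$(-1,\infty)$ that is left-rounded at~$\infty$, exactly as Proposition~\ref{prop-FE-RC} requires. Likewise \texc on~$c(1,\infty)$ gives~$f_2$ a holomorphic extension to a left half-plane. Then Proposition~\ref{prop-FE-RC} applies immediately, with no further work on branch cuts or the \taj condition.
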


\begin{proof}
In view of Proposition~\ref{prop:cocycle} it suffices to establish the equality (of sets) 
\[
\pc(\FE_s^\omega(\CC))=Z^1_\Xi\bigl(\Gamma;\V s {\fxi;\exc;\aj}(\proj\RR)\bigr)_\sic^\van\,.
\]
We first show that $\pc(\FE_s^\omega(\CC))$ is a subset of~$Z^1_\Xi\bigl(\Gamma;\V s {\fxi;\exc;\aj}(\proj\RR)\bigr)_\sic^\van$. To that end let $f=(f_1,f_2)\in \FE_s^\omega(\CC)$ and let $c=\pc(f) \in Z^1_\Xi\bigl(\Gamma;\V s \fxi(\proj\RR)\bigr)$ be the associated cocycle. Since $f\in \FE_s^\omega(\RR)$, Corollary~\ref{cor:iso_aj} shows that $c\in Z^1_\Xi\bigl(\Gamma;\V s {\fxi;-;\aj}(\proj\RR)\bigr)_\sic^\van$. Thus, it remains to prove that~$c$ satisfies the condition~\texc at all cusps, that is, it remains to show that for all~$\xi,\eta\in\Xi$, all $g\in\Gamma$, the map~$c(\xi,\eta)$ satisfies~\texc at~$g\infty$.

Let $\tilde p$ and $\tilde\psi$ be the potential of~$c$ and its associated group cocycle provided by Lemma~\ref{lem:analytic}, respectively. Recall that for all~$\xi\in\Xi$, 
\[
 \tilde p(\xi) \in \V s \omega[\xi]\,.
\]
Thus, it remains to show that for all~$g\in \Gamma$, the map~$\tilde p(g\infty)$ satisfies~\texc at~$g\infty$. Recall further that $\tilde\psi\in Z^1\bigl(\Gamma;\V s \omega(\proj\RR)\bigr)$, implying that $\tilde\psi_{g^{-1}}$ is analytic. Thus, the relation
\[
 \tilde p(g\infty) = \tau_s(g)\tilde p(\infty) + \tilde\psi_{g^{-1}}
\]
yields that it suffices to show that $\tilde p(\infty)$ satisfies~\texc at~$\infty$. To that end recall the potential~$p$ and its associated group cocycle~$\psi$ from Proposition~\ref{prop:cocycle} and let the elements~$A_0\in \V s \omega[0]$, $A_\infty\in \V s \omega[\infty]$ be as in Lemma~\ref{lem:analytic}. Then 
\[
 \tilde p(\infty) = p(\infty) + A_\infty = A_\infty = A_0 - \psi_S\,.
\]
Since $A_0$ obviously satisfies~\texc at~$\infty$, it remains to show that $\psi_S$ does so. Recall from Proposition~\ref{prop:cocycle} (see~\eqref{def_grpcoc}) that 
\[
 \psi_S = 
 \begin{cases}
-\tau_s(S)f_1 - f_2 & \text{on $(\infty,0)_c$}
\\
f_1 + \tau_s(S)f_2 & \text{on $(0,\infty)_c$\,.}
 \end{cases}
\]
Recall that $f_1$ extends holomorphically to~$\CC\smallsetminus (-\infty,-1]$, and $f_2$ extends holomorphically to~$\CC\smallsetminus [1,\infty)$.  On~$(\infty,0)_c$ we have
\[
 \psi_S(x) = -\tau_s(S)f_1(x) - f_2(x) = - \big(x^{-2}\big)^s f_1 \left(-\frac1x\right) - f_2(x)\,.
\]
Since the map~$z\mapsto z^{-2}$ is holomorphic on 
\[
 \{ \Rea z > 0 \} \cup \{ \Rea z<0\}\,,
\]
the extendability properties of~$f_1$ and~$f_2$ yield that $\psi_S$ extends holomorphically to 
\[
 \{ \Rea z < 0\}\,.
\]
On~$(0,\infty)_c$ we have
\[
 \psi_S(x) = f_1(x) + \tau_s(S)f_2(x) = f_1(x) + \big(x^{-2}\big)^s f_2\left(-\frac1x\right)\,, 
\]
which extends holomorphically to 
\[
 \{ \Rea z > 0\}\,.
\]
Thus, $\psi_S$ satisfies~\texc at~$\infty$. In turn, $c\in Z^1_\Xi\bigl(\Gamma;\V s {\fxi;\exc;\aj}(\proj\RR)\bigr)_\sic^\van$.

We now show that $Z^1_\Xi\bigl(\Gamma;\V s {\fxi;\exc;\aj}(\proj\RR)\bigr)_\sic^\van$ is a subset of~$\pc(\FE_s^\omega(\CC))$. To that end let $c\in Z^1_\Xi\bigl(\Gamma;\V s {\fxi;\exc;\aj}(\proj\RR)\bigr)_\sic^\van$ and let $f=(f_1,f_2)\coloneqq  \pc^{-1}(c)$, thus
\[
 f_1 = c(-1,\infty)\vert_{(-1,\infty)_c}\quad\text{and}\quad f_2 = -c(1,\infty)\vert_{(\infty,1)_c}\,.
\]
By Corollary~\ref{cor:iso_aj}, $f\in \FE_s^\omega(\RR)$. The property~\texc yields that $c(-1,\infty)$ is holomorphic on~$\{ \Rea z>x_0\}$ for some~$x_0\in\RR$. Thus, $f_1$ extends holomorphically to a complex neighborhood of~$(-1,\infty)_c$ that is rounded at~$\infty$. Likewise, since $c(1,\infty)$ is holomorphic on~$\{ \Rea z < x_1\}$ for some~$x_1\in\RR$, the map~$f_2$ extends holomorphically to a complex neighborhood of~$(\infty, 1)_c$ that is rounded at~$\infty$. Proposition~\ref{prop-FE-RC} now shows that $f\in\FE_s^\omega(\CC)$. This completes the proof.
\end{proof}

We now show that the map~$\pc$ identifies the subspaces~$\FE_s^{\om,1}(\CC)$ and $\FE_s^{\om,0}(\CC)$ of all complex period functions with the vector  spaces~$Z^1_\Xi\bigl(\Gamma;\V s {\fxi;\exc,\smp;\aj}(\proj\RR)\bigr)^\van_\sic$ and $Z^1_\Xi\bigl(\Gamma;\V s {\fxi;\exc,\infty;\aj}(\proj\RR)\bigr)^\van_\sic$, respectively, of cocycles with values in functions that have a certain singularity behavior at the cuspidal points~$\Gm\infty$. In terms of period functions, the behavior at these points is controlled by smoothing properties of the transfer operator~$\TO_s^\fast$. Since $\TO_s^\fast$ is closely related to the one-sided averaging operators~$\av{s,T}^\pm$ (see Sections~\ref{sect-osav} and \ref{sec:fastconv}), the smoothing properties of these operators are essentially identical. We start with a brief study of the smoothing properties of the averaging operators.

We recall from Section~\ref{sect-psa} that for any interval~$I\subseteq\proj\RR$ with~$\infty\in I$, a map $\varphi\colon I\to\nobreak\CC$ is an element of~$\V s \infty(I)$ if $\varphi\in C^\infty(I\smallsetminus\{\infty\})$ and if it has an asymptotic expansion
\begin{equation}\label{aei}
\varphi(  t) \sim |t|^{-2s} \sum_{m\geq 0} a_m t^{-m} \quad
\text{ as }t\rightarrow\pm \infty\,.
\end{equation}

\begin{lem}\label{lem:asymptotics}
Let $s\in\CC$, $\Rea s > 0$. Let $a,b\in\RR$, $a\leq b$, and $\varphi\in \V s \omega\big( (b,a)_c \big)$. Set
\[
 \psi\coloneqq 
 \begin{cases}
\av{s,T}^+\varphi & \text{on $(b,\infty)_c$}
\\
\av{s,T}^-\varphi & \text{on $(\infty,a)_c$\,.}
 \end{cases}
\]
\begin{enumerate}[{\rm (i)}]
\item\label{asympi} Suppose that $s\not=\frac12$. Then $\psi$ has a simple singularity at $\infty$.
\item\label{asympii} Suppose that in the asymptotic expansion in~\eqref{aei} of $\varphi$ we have $a_0=0$. Then $\psi$ extends to an element in $\V s \infty\big( (b,a)_c\big)$.
\end{enumerate}
\end{lem}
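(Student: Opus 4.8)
\textbf{Proof plan for Lemma~\ref{lem:asymptotics}.}

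The statement concerns the one-sided averages $\av{s,T}^\pm$ applied to a function $\varphi$ real-analytic on the interval $(b,a)_c$ (which contains $\infty$), and the regularity at $\infty$ of the function $\psi$ obtained by gluing $\av{s,T}^+\varphi$ on $(b,\infty)_c$ and $\av{s,T}^-\varphi$ on $(\infty,a)_c$. My plan is to deduce everything directly from the results on one-sided averages collected in Section~\ref{sect-osav}, in particular the asymptotic expansions in~\eqref{avasexp} and the discussion of convergence and meromorphic continuation there. The central observation is that the two branches of $\psi$ share the \emph{same} coefficients $c_m$ in their respective asymptotic expansions at $+\infty$ and $-\infty$, which is exactly what turns a two-sided asymptotic expansion into a statement about (simple-singularity or smoothness) regularity at the single point $\infty\in\proj\RR$.

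First I would record the set-up precisely: by~\eqref{eq:Av_ra} (and its meromorphic-continuation analogue), for $\Rea s>0$, $s\neq\frac12$, both $\av{s,T}^+\varphi$ and $\av{s,T}^-\varphi$ are real-analytic away from $\infty$ on $(b,\infty)_c$ and $(\infty,a)_c$ respectively, so $\psi$ is well-defined and real-analytic on $\proj\RR\smallsetminus\{\infty\}$. Then I invoke~\eqref{avasexp}: there are coefficients $c_m$, $m\geq -1$, the same for both branches, with $\av{s,T}^+\varphi(t)\sim |t|^{-2s}\sum_{m\geq -1}c_m t^m$ as $t\uparrow\infty$ and $\av{s,T}^-\varphi(t)\sim |t|^{-2s}\sum_{m\geq -1}c_m t^m$ as $t\downarrow-\infty$. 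For part~\eqref{asympi}, I want to show $\psi$ has a simple singularity at $\infty$, which by Definition~\ref{def:conditions}(i) means $t\mapsto t^{-1}\psi(t)$ extends to a $C^\infty$ function near $\infty$ in $\proj\RR$. Transforming to the chart at $\infty$ via $\tau_s(S)$ converts $|t|^{-2s}\sum_{m\geq -1}c_m t^m$ into a power series in the local coordinate starting at degree $-1$ (the $c_{-1}$ term); multiplying by the appropriate local uniformizer kills that pole, and the matching of the coefficients on the two sides guarantees that the resulting one-sided asymptotic expansions at $0^+$ and $0^-$ agree to all orders, hence (being asymptotic expansions of a real-analytic-away-from-$0$ function with a common asymptotic series) glue to a single smooth function. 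For part~\eqref{asympii}, the hypothesis $a_0=0$ in the expansion~\eqref{aei} of $\varphi$ means — via the relation between the $a_m$ and the $c_m$ recalled after~\eqref{avasexp}, where $c_{-1}$ is a multiple of $a_0$ — that $c_{-1}=0$. Then the asymptotic series in~\eqref{avasexp} starts at $m=0$, the convergence issue at $s=\frac12$ disappears (as noted in Section~\ref{sect-osav}, $a_0=0$ makes~\eqref{conv+}--\eqref{conv-} converge for all $\Rea s>0$), and the same gluing argument as in part~\eqref{asympi}, now with no pole to remove, shows that $\psi$ itself (not $t^{-1}\psi$) has an asymptotic expansion of the form~\eqref{aei} at $\infty$, i.e. $\psi\in\V s \infty((b,a)_c)$.

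The step I expect to require the most care is the passage from "two one-sided asymptotic expansions with matching coefficients" to "one function smooth (resp.\ with a simple singularity) at $\infty$". One has to be careful that an asymptotic expansion at $\infty$ does not by itself produce a smooth extension across $\infty$: the key is that after the coordinate change $t\mapsto -1/t$ the function $\tau_s(S)\psi$ (or the suitably renormalised version in part~\eqref{asympi}) is \emph{already} real-analytic on a punctured neighborhood of $0$ — it is built from the convergent series~\eqref{conv+}--\eqref{conv-} with a real-analytic $h$ — so it is a genuine function on the punctured disk whose one-sided asymptotic expansions at $0$ coincide; since a function real-analytic on $(-\delta,0)\cup(0,\delta)$ whose left and right asymptotic (Taylor-type) expansions at $0$ agree to all orders extends $C^\infty$ (indeed the expansion is then the Taylor series of a smooth extension), we conclude. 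I would cite the relevant statements in~\cite[Section~4.2]{BLZm} (which is what Section~\ref{sect-osav} is summarising) for the precise bookkeeping of the coefficients and for the smooth-gluing lemma, and otherwise the proof is a short deduction.
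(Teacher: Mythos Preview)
Your approach is exactly the paper's: invoke the one-sided average results from Section~\ref{sect-osav} (which summarise \cite[Section~4.2]{BLZm}) to get real-analyticity of $\psi$ away from~$\infty$ and the two-sided asymptotic expansion~\eqref{avasexp} with common coefficients~$c_m$, then read off~\eqref{asympi} directly and use $a_0=0\Rightarrow c_{-1}=0$ (via \cite[(4.12)]{BLZm}) for~\eqref{asympii}. The paper's proof is essentially those two sentences.

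One correction to your final paragraph: the general principle you invoke, that a function real-analytic on $(-\delta,0)\cup(0,\delta)$ whose one-sided asymptotic expansions at~$0$ agree to all orders must extend to a $C^\infty$ function, is \emph{false}. For instance $g(x)=e^{-1/x^2}\sin\bigl(e^{1/x^2}\bigr)$ is real-analytic on $\RR\smallsetminus\{0\}$ with asymptotic expansion~$0$ from both sides, yet $g'$ has no limit at~$0$. So this cannot be the mechanism. What actually makes the step immediate here is that the paper \emph{defines} membership in $\V s\infty$ at~$\infty$ (and likewise ``simple singularity at~$\infty$'') via the existence of a two-sided asymptotic expansion of the form~\eqref{aei}; this is spelled out in Section~\ref{sec:regularity} and recalled explicitly in the paragraph containing~\eqref{aei} just before the lemma. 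Under that convention there is nothing to glue: the matching expansions in~\eqref{avasexp} \emph{are} the conclusion. So you can drop the gluing discussion entirely and simply point to~\eqref{aei} and Definition~\ref{def:conditions}(i) once you have~\eqref{avasexp}.
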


\begin{proof}
Suppose first that $s\not=\tfrac12$. As discussed in Section~\ref{sect-osav},  
\begin{equation}\label{phi_omega}
\psi\in \V s \omega\big( (\infty,a)_c\cup (b,\infty)_c \big)
\end{equation}
and $\psi$ has asymptotic expansions 
\begin{align}\label{asympexp1}
 \psi(x) &\sim |x|^{-2s} \sum_{m\geq -1} c_mx^m \qquad\text{as $x\uparrow \infty$}
 \intertext{and}
 \psi(x) &\sim |x|^{-2s} \sum_{m\geq -1} c_mx^m \qquad\text{as $x\downarrow \infty$}\label{asympexp2}
\end{align}
with the same coefficients in both cases. Thus, $\psi$ has a simple singularity at $\infty$. This proves \eqref{asympi}.

Suppose now that $s\in\CC$ with $\Rea s\in (0,1)$ arbitrary (including $s=\tfrac12$), and suppose that $a_0=0$ in the asymptotic expansion in~\eqref{aei} of $\varphi$. Then \eqref{phi_omega} and the asymptotic expansions in~\eqref{asympexp1}-\eqref{asympexp2} remain valid also for the case $s=\tfrac12$. By \cite[(4.12)]{BLZm}, $a_0=0$ implies $c_{-1}=0$, and hence $\psi$ extends smoothly to $\infty$. This shows \eqref{asympii}.
\end{proof}

\begin{prop}\label{prop:EF_TO_fast}
\begin{enumerate}[{\rm (i)}]
\item\label{eti} For $s\in\CC$, $\Rea s\in (0,1)$, $s\not=\tfrac12$,
the map $\pc$ descends to an isomorphism between $\FE_s^{\omega,1}(\CC)$ and $Z^1_\Xi\bigl(\Gamma;\V s {\fxi;\exc,\smp;\aj}(\proj\RR)\bigr)^\van_\sic$.
\item\label{etii} For any $s\in\CC$, $\Rea s\in (0,1)$, the map $\pc$ descends to an isomorphism between $\FE_s^{\omega,0}(\CC)$ and $ Z^1_\Xi\bigl(\Gamma;\V s {\fxi;\exc,\infty;\aj}(\proj\RR)\bigr)^\van_\sic$.
\end{enumerate}
\end{prop}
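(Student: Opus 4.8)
The plan is to reduce Proposition~\ref{prop:EF_TO_fast} to the already-established isomorphism from Proposition~\ref{prop:iso_exc}, namely that $\pc$ identifies $\FE_s^\omega(\CC)$ with $Z^1_\Xi\bigl(\Gamma;\V s {\fxi;\exc;\aj}(\proj\RR)\bigr)^\van_\sic$. Since $\FE_s^{\omega,1}(\CC)\subseteq\FE_s^\omega(\CC)$ and $\FE_s^{\omega,0}(\CC)\subseteq\FE_s^{\omega,1}(\CC)$, it suffices to show that under $\pc$ the subspace $\FE_s^{\omega,1}(\CC)$ corresponds exactly to those cocycles whose values additionally have a simple singularity at every cuspidal point, and that $\FE_s^{\omega,0}(\CC)$ corresponds to those cocycles whose values are additionally smooth there. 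By $\Gamma$-equivariance and the cocycle relation, and because (as in the proof of Proposition~\ref{prop:iso_exc}) the potential $\tilde p$ and the group cocycle $\tilde\psi$ from Lemma~\ref{lem:analytic} satisfy $\tilde p(\xi)\in\V s \omega[\xi]$ for $\xi\in\Xi$ and $\tilde\psi\in Z^1(\Gamma;\V s \omega(\proj\RR))$, it is enough to analyze the behavior of $\tilde p(\infty)$ at the single point $\infty$; equivalently, since $\tilde p(\infty)=A_0-\psi_S$ with $A_0\in\V s \omega[0]$ real-analytic at $\infty$, it is enough to analyze the behavior of $\psi_S$ at $\infty$.

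First I would record, from~\eqref{def_grpcoc}, that
\[
 \psi_S =
 \begin{cases}
  -\tau_s(S)f_1 - f_2 & \text{on $(\infty,0)_c$,}
  \\
  f_1 + \tau_s(S)f_2 & \text{on $(0,\infty)_c$,}
 \end{cases}
\]
and then relate $\psi_S$ near $\infty$ to the fast transfer operator $\ftro s$ via the one-sided averages $\av{s,T}^\pm$. The key computational link is that, for $f=(f_1,f_2)\in\FE_s^\omega(\CC)$, the functional equation $f=\tro s f$ in the form~\eqref{prto} lets one write $f_1$ and $f_2$ on suitable domains as one-sided averages of $\tau_s(T^{-1}S)(f_1+f_2)$ and $\tau_s(S)(f_1+f_2)$ respectively (this is precisely the content of Section~\ref{sec:constrRoelof} and of the proof of Proposition~\ref{fast_merom}, where $\ftro s f = \bigl(\av{s,T}^+\tau_s(T^{-1}S)(f_1+f_2),\, -\av{s,T}^-\tau_s(S)(f_1+f_2)\bigr)$). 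Thus $f$ is a $1$-eigenfunction of $\ftro s$ exactly when these average representations hold, and in that case $\psi_S$, near $\infty$, is expressed through $\av{s,T}^\pm$ applied to the real-analytic function $\varphi \coloneqq \tau_s(S)(f_1+f_2)\in\V s\omega\bigl((\infty,0)_c\bigr)$ (after the obvious bookkeeping with $\tau_s(T^{-1})$-shifts identifying the two pieces of $\psi_S$ on either side of $\infty$ with the $+$ and $-$ averages). Lemma~\ref{lem:asymptotics} then applies: for $s\neq\tfrac12$, part~\eqref{asympi} gives that such $\av{s,T}^\pm\varphi$ has a simple singularity at $\infty$, proving that $\pc$ maps $\FE_s^{\omega,1}(\CC)$ into $Z^1_\Xi\bigl(\Gamma;\V s {\fxi;\exc,\smp;\aj}(\proj\RR)\bigr)^\van_\sic$; and the refinement $f_1(0)=-f_2(0)$, i.e. $(f_1+f_2)(0)=0$, forces the leading coefficient $a_0$ in the asymptotic expansion of $\varphi=\tau_s(S)(f_1+f_2)$ at $\infty$ to vanish, so part~\eqref{asympii} upgrades the simple singularity to smoothness, handling $\FE_s^{\omega,0}(\CC)$ and also removing the restriction $s\neq\tfrac12$ in case~\eqref{etii}.

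For the converse inclusions, I would start from a cocycle $c\in Z^1_\Xi\bigl(\Gamma;\V s {\fxi;\exc,\cond;\aj}(\proj\RR)\bigr)^\van_\sic$ with $\cond\in\{\smp,\infty\}$. By Proposition~\ref{prop:iso_exc}, $f\coloneqq\pc^{-1}(c)\in\FE_s^\omega(\CC)$, so by Proposition~\ref{fast_merom}\eqref{fmi}--\eqref{fmii} the vector $\ftro s f$ is defined (meromorphically in $s$, holomorphic once we impose $\cond=\infty$ via $C^{\omega,0}(D_\RR)$). The task is to show $\ftro s f = f$, using that $c(e_\infty)=\tilde p(\infty)$ (up to the analytic summand $A_0$, hence up to $\equiv 0$) has a simple singularity, resp. is smooth, at $\infty$. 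Concretely, the simple-singularity condition on $\psi_S$ at $\infty$ is equivalent, via~\eqref{avrel} and Lemma~\ref{lem:asymptotics}, to the statement that the average representations $f_1=\av{s,T}^+\tau_s(T^{-1}S)(f_1+f_2)$ on $(-1,\infty)_c$ and $f_2=-\av{s,T}^-\tau_s(S)(f_1+f_2)$ on $(\infty,1)_c$ hold, i.e. $f=\ftro s f$; and in the smooth case the additional vanishing of $c_{-1}$ corresponds to $(f_1+f_2)(0)=0$, placing $f$ in $\FE_s^{\omega,0}(\CC)$. Care is needed to check that the identities~\eqref{FEeqa}--\eqref{FEeqb} used to pass between $\psi_S$ and the averages, and the holomorphic extensions of the relevant automorphy factors, are all consistent on overlapping domains; this, together with the precise matching of the two halves of $\psi_S$ across $\infty$ with $\av{s,T}^+$ and $\av{s,T}^-$, is the main bookkeeping obstacle. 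Finally, because $\pc$ is already known to be a bijection onto $Z^1_\Xi\bigl(\Gamma;\V s \fxi(\proj\RR)\bigr)$ (Proposition~\ref{prop:cocycle}) and onto the $\exc$-version (Proposition~\ref{prop:iso_exc}), the established set equalities $\pc(\FE_s^{\omega,1}(\CC))=Z^1_\Xi\bigl(\Gamma;\V s {\fxi;\exc,\smp;\aj}(\proj\RR)\bigr)^\van_\sic$ and $\pc(\FE_s^{\omega,0}(\CC))=Z^1_\Xi\bigl(\Gamma;\V s {\fxi;\exc,\infty;\aj}(\proj\RR)\bigr)^\van_\sic$ immediately give the asserted linear isomorphisms, completing the proof.
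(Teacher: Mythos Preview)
Your forward direction is correct and matches the paper's approach: reduce to the behavior of $\tilde p(\infty)$ at $\infty$, express this (up to a term analytic at $\infty$) through one-sided averages of $h=\tau_s(S)(f_1+f_2)$, and invoke Lemma~\ref{lem:asymptotics}. The only cosmetic difference is that you work with $\psi_S$ while the paper works with $B_\infty$; since $A_\infty=A_0-\psi_S$ and $A_\infty\equiv B_\infty$, these are equivalent.

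The converse direction, however, has a genuine gap in the simple-singularity case. You assert that ``the simple-singularity condition on $\psi_S$ at $\infty$ is equivalent, via~\eqref{avrel} and Lemma~\ref{lem:asymptotics}, to the statement that the average representations \ldots\ hold,'' and you label the remaining work as bookkeeping. But~\eqref{avrel} only gives $\av{s,T}^+(1-\tau_s(T^{-1}))\varphi=\varphi$ for $\varphi$ real-analytic at $\infty$; it does \emph{not} say that $\av{s,T}^+(1-\tau_s(T^{-1}))$ is the identity on functions with a simple singularity there. Writing $\tilde f_1\coloneqq\av{s,T}^+\tau_s(T^{-1}S)(f_1+f_2)=\av{s,T}^+(1-\tau_s(T^{-1}))f_1$, what you actually get is only $(1-\tau_s(T^{-1}))(\tilde f_1-f_1)=0$, so $r_1\coloneqq\tilde f_1-f_1$ is $\lambda$-periodic. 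The paper then uses that both $f_1$ and $\tilde f_1$ have at most simple singularities at $\infty$, so $r_1$ has a two-sided asymptotic expansion $r_1(x)\sim(x^{-2})^s\sum_{m\ge -1}c_m x^{-m}$; combined with $\lambda$-periodicity and $\Rea s\in(0,1)$ this forces $r_1=0$. This periodic-obstruction argument is the substantive point, not bookkeeping, and it is where the hypothesis $\Rea s\in(0,1)$ enters.

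In the smooth case your converse is closer to correct, since smoothness of $f_1,f_2$ at $\infty$ does allow direct use of~\eqref{avrel} to get $f=\ftro s f$. But the further conclusion $f_1(0)+f_2(0)=0$ is not just ``vanishing of $c_{-1}$'': the paper derives it by comparing two asymptotic expansions of $h$ at $\infty$, one coming from $\tau_s(S)(f_1+f_2)$ and one from $(\tau_s(T)-1)f_1=h$ together with smoothness of $f_1$ at $\infty$, to see that the zeroth coefficient of $h$ must vanish. You should supply this step as well.
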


\begin{proof}
Major parts of the proofs of~\eqref{eti} and~\eqref{etii} are identical, for which reason we start by  considering both cases simultaneously and then will split the discussion only at the very end. We refer to~\eqref{eti} as `Case~1', and to~\eqref{etii} as `Case~2'.

By Proposition~\ref{prop:cocycle} it suffices to show the two set equalities 
\begin{align*} \pc(\FE_s^{\omega,1}(\CC)) &= Z^1_\Xi\bigl(\Gamma;\V s {\fxi;\exc,\smp;\aj}(\proj\RR)\bigr)_\sic^\van\\
\text{ and }\quad\pc(\FE_s^{\omega,0}(\CC)) &= Z^1_\Xi\bigl(\Gamma;\V s {\fxi;\exc,\infty;\aj}(\proj\RR)\bigr)_\sic^\van\,.\end{align*}
We start by showing that the images of $\FE_s^{\omega,1}(\CC)$ and $\FE_s^{\omega,0}(\CC)$ under $\pc$ are contained in the cocycle spaces as claimed.

Let $f=(f_1,f_2)\in \FE_s^\omega(\CC)$, $f=\ftro s f$, and let $c\coloneqq  \pc(f)$. By Proposition~\ref{prop:iso_exc} the cocycle~$c$ is in~$Z^1_\Xi\bigl(\Gamma;\V s {\fxi;\exc;\aj}(\proj\RR)\bigr)_\sic^\van$. Thus, in Case~1 it remains to show that $c$ has at most simple singularities at all cusps, and in Case~2 that $c$ extends smoothly to all cusps. Let $p$ be the potential of~$c$ provided by Proposition~\ref{prop:cocycle}, let $B_1\in \V s \omega[1]$, $A_\infty,B_\infty\in \V s \omega[\infty]$ be as in Lemma~\ref{lem:analytic}, and let $\tilde p$ be the potential of~$c$ provided by Lemma~\ref{lem:analytic}. Arguing as in the proof of Proposition~\ref{prop:iso_exc} we see that it suffices to show that 
\[
 \tilde p(\infty) = p(\infty) + A_\infty = A_\infty
\]
has a simple singularity at~$\infty$ (for Case~1) or extends smoothly to~$\infty$ (for Case~2). Since $A_\infty$ and $B_\infty$ differ only by an additive element in~$\mc V_s^\omega(\proj\RR)$, it suffices to establish these properties for~$B_\infty$.

Let $h\coloneqq  \tau_s(S)(f_1+f_2)$. From $f= \ftro s f$ it follows that 
\begin{align}\label{equal_f1}
 f_1 &=  \av{s,T}^+ \tau_s(T^{-1}) h = \av{s,T}^+h - h 
 \\
 f_2 & = -\av{s,T}^- h\,. \label{equal_f2}
\end{align}
Let 
\begin{equation}\label{def_g}
 \tilde g \coloneqq  
 \begin{cases}
 \av{s,T}^-h & \text{on $(\infty,-1)_c$}
 \\
  \av{s,T}^+h & \text{on $(1,\infty)_c$\,.}
 \end{cases}
\end{equation}
Comparing~\eqref{pot_p1} with \eqref{equal_f1}-\eqref{def_g} shows that 
\[
 p(1) = \tilde g \quad\text{on $(1,-1)_c$\,.}
\]
Recall that 
\[
 p(1) = B_1 - B_\infty\,.
\]
Thus, $B_\infty$ has the same regularity at~$\infty$ as~$\tilde g$, for which we apply Lemma~\ref{lem:asymptotics} in both cases. We note that  
\[
 h\in \V s \omega\big( (1,-1)_c\big)\,.
\]
Suppose that in a neighborhood of~$0$, the Taylor expansions of~$f_1$ and $f_2$ are given by
\begin{align*}
 f_1(x) &= \sum_{n=0}^\infty a_n x^n\,, 
 \\
 f_2(x) & = \sum_{n=0}^\infty b_nx^n\,,
\end{align*}
respectively. Then 
\[
 h(x) = \tau_s(S)(f_1+f_2)(x) \sim \big(x^{-2}\big)^s \sum_{n=0}^\infty (-1)^n(a_n+b_n)x^{-n}\qquad\text{as $x\to\infty$\,.}
\]
We note that this limit is indeed two-sided. In Case~1, Lemma~\ref{lem:asymptotics} shows that $\tilde g$ has a simple singularity at~$\infty$, and hence $B_\infty \in \V s {\omega,\smp}[\infty]$. In Case~2, Lemma~\ref{lem:asymptotics} shows that $\tilde g$ extends smoothly to~$\infty$, and hence $B_\infty\in \V s {\omega,\infty}[\infty]$. Thus, in both cases, the cocycle~$c$ has the properties claimed.

For the converse direction we again start by considering both cases simultaneously. Let $s\in\CC$, $\Rea s\in (0,1)$ (also allowing~$s=\tfrac12$) and take a cocycle $c\in Z^1_\Xi\bigl(\Gamma;\V s {\fxi;\exc,\smp;\aj}(\proj\RR)\bigr)_\sic^\van$. Set 
\[
f=(f_1,f_2)\coloneqq  \pc^{-1}(c)
\]
and let $h\coloneqq  \tau_s(S)(f_1+f_2)$. Then 
\[
 h\in \V s \omega\big( (1,-1)_c \big)
\]
and hence 
\[
  \tilde f = \begin{pmatrix}\tilde f_1 \\ \tilde f_2 \end{pmatrix} \coloneqq \ftro s f = \begin{pmatrix} \av{s,T}^+\tau_s(T^{-1})h \\ - \av{s,T}^-h\end{pmatrix}
\]
converges. By Proposition~\ref{prop:iso_exc}, $f=\TO_s^\slow f$. Hence \eqref{prto} shows that
\begin{align}\label{smooth1}
 \big(1 - \tau_s(T^{-1})\big) f_1 & = \tau_s(T^{-1})h\,,
 \\
 \big(1 - \tau_s(T)\big) f_2 & = \tau_s(T)h\,. \label{smooth2}
\end{align}

Suppose now that we are in Case~2. The smoothness condition implies that $f_1$ and $f_2$ extend smoothly to a neighborhood of~$\infty$. In what follows we identify the functions~$f_1$ and $f_2$ with their smooth extensions. Then~\eqref{avrel} implies
\begin{align}\label{findef1}
 f_1 & = \av{s,T}^+ \big(1 - \tau_s(T^{-1})\big)f_1 = \av{s,T}^+\tau_s(T^{-1})h = \tilde f_1
 \intertext{and}
 -\tau_s(T) f_2 & = \tau_s(T) \av{s,T}^-\big( \tau_s(T^{-1}) - 1 \big) f_2 = \av{s,T}^-\big(1-\tau_s(T)\big)f_2 \label{findef2}
 \\
 & = \av{s,T}^-\tau_s(T)h = \tau_s(T)\av{s,T}^-h = -\tau_s(T)\tilde f_2\,. \nonumber
\end{align}
Thus, $f=\tilde f$. To complete Case~2 it remains to show that $f_1(0)+f_2(0) = 0$. 

Since $f_1$ and $f_2$ are smooth at~$0$ (even real-analytic), $\tau_s(S)h$ is so. Thus, for suitable coefficients~$c_n\in\CC$, $n\in\NN_0$,
\[
 \tau_s(S)h(x) = (f_1+f_2)(x) \sim \sum_{n=0}^\infty c_nx^n\qquad\text{as $x\to0$\,,}
\]
or equivalently, 
\begin{equation}\label{eq:exp_h_infty}
 h(x) \sim \bigl(x^{-2}\bigr)^s \sum_{n=0}^\infty (-1)^n c_nx^{-n}\qquad\text{as $x\to\infty$\,.}
\end{equation}
Further, from~\eqref{smooth1} it follows that 
\begin{equation}\label{eq:at_infty}
 \bigl( \tau_s(T) - 1\bigr)f_1 = h \qquad\text{on~$(-1+\lambda,\infty)_c$\,.}
\end{equation}
Since $f_1$ and $h$ are smooth at~$\infty$, the equality in~\eqref{eq:at_infty} remains valid in a neighborhood of~$\infty$. We use~\eqref{eq:at_infty} to deduce an asymptotic expansion of~$h$ at~$\infty$. To that end we recall that $f_1$ has an asymptotic expansion at~$\infty$ of the form
\begin{equation}\label{eq:f1_asymp1}
 f_1(x) \sim \bigl(x^{-2}\bigr)^s \sum_{n=0}^\infty a_nx^{-n}\qquad\text{as $x\to\infty$\,.}
\end{equation}
This implies that, as $x\to\infty$, 
\begin{equation}\label{eq:f1_asymp2}
\begin{aligned}
 \tau_s(T)f_1(x) & \sim \bigl( (x-\lambda)^{-2} \bigr)^s \sum_{n=0}^\infty a_n (x-\lambda)^{-n} 
 \\
 & \sim \bigl(x^{-2}\bigr)^s \Bigl( a_0 + \sum_{n=1}^\infty \tilde a_n x^{-n} \Bigr) 
\end{aligned}
\end{equation}
for suitable choices of $\tilde a_n\in\CC$, $n\in\NN$. We stress that the zero-th coefficient of the asymptotic expansion does not change. Combining~\eqref{eq:f1_asymp1} and~\eqref{eq:f1_asymp2} with~\eqref{eq:at_infty} shows that 
\begin{equation}\label{eq:exp_h_new}
 h(x) \sim \bigl(x^{-2}\bigr)^s \sum_{n=1}^\infty \tilde a_nx^{-n}\qquad\text{as $x\to\infty$}
\end{equation}
for suitable choices of~$\tilde a_n\in\CC$, $n\in\NN$. We note that the zero-th coefficient of the asymptotic expansion in~\eqref{eq:exp_h_new} vanishes. Comparing \eqref{eq:exp_h_infty} and~\eqref{eq:exp_h_new} shows that $c_0=0$ in~\eqref{eq:exp_h_infty}, and hence
\[
 f_1(0) + f_2(0) = 0\,.
\]
We now complete the proof of the converse direction in Case~1. Thus, the singularity condition implies that $f_1$ and $f_2$ have an (at most simple) singularity at~$\infty$. Therefore, the conclusions in~\eqref{findef1}-\eqref{findef2} are not necessarily true anymore. However, \eqref{smooth1}-\eqref{smooth2} imply that 
\[
 \big(1-\tau_s(T^{-1})\big) f_1 \quad\text{and}\quad \big(1-\tau_s(T)\big)f_2
\]
are smooth at~$\infty$. Thus, \eqref{aei} yields
\begin{align*}
\big(1 - \tau_s(T^{-1})\big) f_1 & = 
\big(1 - \tau_s(T^{-1})\big)\av{s,T}^+ \big(1-\tau_s(T^{-1})\big) f_1
\\
& = \big(1 - \tau_s(T^{-1})\big)\av{s,T}^+\tau_s(T^{-1})h
\\
& = \big(1 - \tau_s(T^{-1})\big)\tilde f_1\,.
\end{align*}
Thus,
\[
 \big( 1 - \tau_s(T^{-1}) \big) (\tilde f_1 - f_1) = 0 \qquad\text{on $(-1,\infty)_c$}
\]
and, analogously,
\[
 \big( 1 - \tau_s(T^{-1}) \big) (\tilde f_2 - f_2) = 0 \qquad\text{on $(\infty, 1-\lambda)_c$\,.}
\]
Thus, there are $\lambda$-periodic functions $r_1$ on~$(-1,\infty)_c$ and $r_2$ on~$(\infty,1-\lambda)_c$ such that 
\[
 \tilde f_j = f_j + r_j \qquad (j=1,2)\,.
\]
We claim that $r_j=0$ for $j=1,2$. For~$j\in\{1,2\}$, the functions~$\tilde f_j$ and~$f_j$ are both smooth in a punctured neighborhood of~$\infty$ and may have a simple singularity at~$\infty$. Thus we have the two-sided asymptotic expansion
\[
 r_j(x) = \tilde f_j(x) - f_j(x) \sim (x^{-2})^s \sum_{m=-1}^\infty c_m x^{-m}\qquad\text{as $x\to\infty$\,.}
\]
Since $r_j$ is $\lambda$-periodic and $\Rea s \in (0,1)$, it follows that $c_m=0$ for all~$m\geq -1$, and hence $r_j=0$. This shows that $f=\tilde f$. 
\end{proof}

We end this section by characterizing the preimages of the coboundary spaces under the map~$\pc$. This is the last ingredient needed for a proof of Theorem~\ref{thm-FE-coh}. We recall from Section~\ref{sec:periodfunctions} that 
\[
 \BFE_s^\omega(\CC) = \{ (-b,b)\in \FE_s^\omega(\CC) \setmid \text{$b$ holomorphic on $\CC$, $\tau_s(T)b = b$} \}\,.
\]

\begin{prop}\label{prop:coboundaries}
For any $s\in\CC$, the map $\pc$ descends to an isomorphism 
\[
 \pc\colon\BFE_s^\omega(\CC) \to B^1_\Xi\bigl(\Gamma;\V s {\fxi;\exc;\aj}(\proj\RR)\bigr)_\sic^\van\,.
\]
Moreover, for $\Rea s\in (0,1)$,
\begin{equation}\label{cobound_vanish}
 B^1_\Xi\bigl(\Gamma; \V s {\fxi;\exc,\smp;\aj}(\proj\RR)\bigr)_\sic^\van = B^1_\Xi\bigl(\Gamma;\V s {\fxi;\exc,\infty;\aj}(\proj\RR)\bigr)_\sic^\van = \{0\}\,.
\end{equation}
\end{prop}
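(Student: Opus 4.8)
The plan is to prove Proposition~\ref{prop:coboundaries} in two parts, using the structural results about $\pc$ established earlier in Chapter~\ref{part:TO}.

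\textbf{First part: $\pc$ restricts to an isomorphism onto the coboundaries.} By Proposition~\ref{prop:iso_exc}, the map $\pc$ is already an isomorphism from $\FE_s^\omega(\CC)$ onto $Z^1_\Xi\bigl(\Gamma;\V s {\fxi;\exc;\aj}(\proj\RR)\bigr)_\sic^\van$. Hence it suffices to show that $\pc$ maps $\BFE_s^\omega(\CC)$ \emph{onto} $B^1_\Xi\bigl(\Gamma;\V s {\fxi;\exc;\aj}(\proj\RR)\bigr)_\sic^\van$. For the inclusion $\pc(\BFE_s^\omega(\CC)) \subseteq B^1_\Xi(\cdots)_\sic^\van$: given $f=(-b,b)$ with $b$ entire and $\tau_s(T)b=b$, I would compute the cocycle $c=\pc(f)$ via Proposition~\ref{prop:cocycle}. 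With $f_1=-b$, $f_2=b$ one has $f_1+f_2=0$, so the formula~\eqref{def_grpcoc} gives $\psi_S = -\tau_s(S)f_1-f_2 = \tau_s(S)b - b = (\tau_s(S)-1)b$ on $(\infty,0)_c$, and $\psi_S = f_1+\tau_s(S)f_2 = -b+\tau_s(S)b = (\tau_s(S)-1)b$ on $(0,\infty)_c$ — so in fact $\psi_S=(1-\tau_s(S))(-b)$ globally, and $\psi_T=0=(1-\tau_s(T^{-1}))(-b)$ since $\tau_s(T)b=b$. Thus the group cocycle $\psi$ is the coboundary $d(-b)$ of the $\Gamma$-invariant-up-to-that element $-b\in\V s \om(\proj\RR)$ (note $b$ entire and $\tau_s(T)b=b$ force $b$ to be real-analytic on all of $\proj\RR$ — this needs the argument that an entire $\lambda$-periodic function in the appropriate weight is a genuine element of $\V s \om(\proj\RR)$). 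Consequently $c$ has a $\Gamma$-equivariant potential, so $c\in B^1_\Xi(\Gamma;\V s \om(\proj\RR))$, and since $c\in Z^1_\Xi(\cdots)_\sic^\van$ it lies in $B^1_\Xi(\Gamma;\V s {\fxi;\exc;\aj}(\proj\RR))_\sic^\van$. Conversely, if $c=\pc(f)$ is a coboundary in this space, it has a $\Gamma$-equivariant potential $q$; using $\pc$'s injectivity and $\psi_T=0$, the potential constructed in Proposition~\ref{prop:cocycle} differs from $q$ by a constant $a\in\V s {\fxi;\exc;\aj}(\proj\RR)$, and $\Gamma$-equivariance of $q$ forces $\psi_g = (1-\tau_s(g^{-1}))a$ for all $g$, in particular $\psi_T=(1-\tau_s(T^{-1}))a = 0$, i.e. $\tau_s(T)a=a$, and $\psi_S = (1-\tau_s(S))a$. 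Comparing with~\eqref{def_grpcoc} on $(0,\infty)_c$ and $(\infty,0)_c$ and using $\tau_s(S)^2=\mathrm{id}$ on the relevant domains, one extracts $f_1+\tau_s(S)f_2 = (1-\tau_s(S))a$ and a companion identity, from which $f_1 = -f_2$ (set $b:=f_2$) follows after eliminating $a$; the $T$-invariance $\tau_s(T)b=b$ and holomorphy of $b$ on $\CC$ come from the holomorphic extension properties in $\FE_s^\omega(\CC)$ together with $f_1=-f_2$ removing the branch cuts. So $f\in\BFE_s^\omega(\CC)$.

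\textbf{Second part: triviality of the coboundary spaces in the smp- and $\infty$-cases.} For $\Rea s\in(0,1)$, I would argue that the only element of $\BFE_s^\omega(\CC)$ whose image lands in $Z^1_\Xi\bigl(\Gamma;\V s {\fxi;\exc,\smp;\aj}(\proj\RR)\bigr)_\sic^\van$ (resp.\ the $\infty$-version) is $0$, which combined with the first part gives~\eqref{cobound_vanish}. Concretely: take $f=(-b,b)\in\BFE_s^\omega(\CC)$ with $b$ entire, $\tau_s(T)b=b$. The cocycle value controlling the cusp singularity is essentially $\psi_S = (\tau_s(S)-1)b$ (from the computation above), and its relevant boundary-germ piece at $\infty$ is $\tau_s(S)b$, i.e.\ $t\mapsto |t|^{-2s}b(-1/t)$. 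For this to have merely a simple singularity (resp.\ to be smooth) at $\infty$, the function $b$ — being entire and $\lambda$-periodic with weight $2s$, $\Rea s\in(0,1)$ — must satisfy a growth restriction that forces $b\equiv 0$: an entire $\lambda$-periodic function has a Fourier expansion $b(z)=\sum_{n\in\ZZ}\beta_n e^{2\pi i n z/\lambda}$, and the requirement that $|t|^{-2s}b(-1/t)$ extend with a simple singularity (or smoothly) at $\infty$, i.e.\ that $b$ grow no faster than a fixed power as $\Ima z\to+\infty$ \emph{and} $\Ima z\to-\infty$, kills all $\beta_n$ with $n\neq 0$, and then $\tau_s(T)b=b$ with $\Rea s\in(0,1)$ kills $\beta_0$. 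Alternatively, and more in the spirit of the paper, I would invoke Proposition~\ref{fast_merom}\eqref{fmvi}: $\BFE_s^\omega(\CC)$ is in the kernel of $\TO_s^\fast$, so $\BFE_s^\omega(\CC)\cap\FE_s^{\omega,1}(\CC)=\{0\}$ (a $1$-eigenfunction of $\TO_s^\fast$ that is also killed by it must vanish), and then transport this through the isomorphism of Proposition~\ref{prop:EF_TO_fast}, noting that $B^1_\Xi(\Gamma;\V s {\fxi;\exc,\smp;\aj}(\proj\RR))_\sic^\van = \pc(\BFE_s^\omega(\CC)) \cap Z^1_\Xi(\Gamma;\V s {\fxi;\exc,\smp;\aj}(\proj\RR))_\sic^\van = \pc(\BFE_s^\omega(\CC)\cap\FE_s^{\omega,1}(\CC)) = \{0\}$, and likewise for the $\infty$-version via $\FE_s^{\omega,0}(\CC)$.

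\textbf{Main obstacle.} The delicate point is the first part, specifically verifying that an entire $\lambda$-periodic function $b$ with $\tau_s(T)b=b$ genuinely defines an element of $\V s \om(\proj\RR)$ (so that $-b$ is a legitimate $\Gamma$-equivariant potential with values in the \emph{analytic} module, not merely the semi-analytic one), and conversely extracting $f_1=-f_2$ cleanly from the $\Gamma$-equivariance of the potential without sign or domain errors in the $(\infty,0)_c$ versus $(0,\infty)_c$ bookkeeping for $\psi_S$. This is exactly the kind of place where the cyclic-interval conventions and the non-representation behavior of $\tau_s$ under holomorphic continuation (Section~\ref{sec:holomext}) must be handled carefully; I would lean on the already-established machinery (Propositions~\ref{prop:cocycle}, \ref{prop:iso_exc}, \ref{prop:EF_TO_fast} and~\ref{fast_merom}\eqref{fmvi}) to keep the direct computation minimal.
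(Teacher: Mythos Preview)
Your forward direction (from $\BFE_s^\om(\CC)$ to coboundaries) is essentially fine, though your claim that $b\in\V s\om(\proj\RR)$ is wrong: an entire $\lambda$-periodic function need not be real-analytic at~$\infty$ in the weight-$2s$ sense (consider $b(z)=e^{2\pi i z/\lambda}$). What you actually need, and what holds, is $-b\in\V s{\fxi;\exc;\aj}(\proj\RR)$ with $\bsing(-b)\subseteq\{\infty\}$; the paper handles this by directly exhibiting the $\Gamma$-equivariant potential $p(\infty)=-b$, $p(1)=0$.

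The genuine gap is in your converse direction. You try to read off $f_1=-f_2$ from the two formulas for~$\psi_S$ on $(0,\infty)_c$ and $(\infty,0)_c$ combined with $\psi_g=(1-\tau_s(g^{-1}))a$. But those two formulas are $\tau_s(S)$-conjugates of each other and give only \emph{one} relation, namely $f_1+\tau_s(S)f_2=(1-\tau_s(S))a$; together with $\tau_s(T)a=a$ this does not force $f_1+f_2=0$. You never use the vanishing condition~\tvan, and without it the conclusion fails. The paper's argument is substantially different here: working with the $\Gamma$-equivariant potential~$p$, it uses \tvan to obtain $(1-\tau_s(TS))p(1)=0$ on $(\lambda-1,1)_c$, then a singularity-propagation argument along the $TS$-orbit to confine $\bsing p(1)\subseteq\{1\}$, and finally the hyperbolic rigidity lemma (Lemma~\ref{lem:hyp_inv}, equivalently \cite[Proposition~4.1]{BLZm}) applied near the fixed points $\theta^\pm$ of $TS$ to conclude $p(1)=0$. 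Only then do $f_1=-p(\infty)$ and $f_2=p(\infty)$ follow.

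For the second part, your alternative route via Proposition~\ref{fast_merom}\eqref{fmvi} and Proposition~\ref{prop:EF_TO_fast} is correct and pleasantly short for the $\infty$-case and for the \tsmp-case when $s\neq\tfrac12$; note however that Proposition~\ref{prop:EF_TO_fast}\eqref{eti} excludes $s=\tfrac12$, so you still need a direct argument there. Your Fourier-expansion sketch does not work as stated: the step ``$\tau_s(T)b=b$ with $\Rea s\in(0,1)$ kills $\beta_0$'' is false, since $\tau_s(T)$ acts with trivial automorphy factor and fixes constants. The paper instead uses that the \tsmp~condition gives $p(\infty)$ a two-sided asymptotic expansion $|t|^{-2s}\sum_{k\ge -1}a_k t^{-k}$ at~$\infty$; $\lambda$-periodicity together with $\Rea s\in(0,1)$ then forces every $a_k=0$.
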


\begin{proof}
Let $c\in B^1_\Xi\bigl(\Gamma;\V s {\fxi;\exc;\aj}(\proj\RR)\bigr)_\sic^\van$ and suppose that 
\[ p\colon\Xi\to \V s {\fxi;\exc;\aj}(\proj\RR)\]
is a $\Gamma$-equivariant potential of~$c$. Then $p$ is determined by~$p(\infty)$ and~$p(1)$. Since $T$ fixes~$\infty$, $p(\infty)$ is $\tau_s(T)$-invariant, hence $\lambda$-periodic. Further, $p(\infty)$ satisfies the condition~\texc at~$\infty$. Therefore $p(\infty)$ is holomorphic on a domain of the form 
\[
 \{ \Rea z > x_0\} \cup \{ \Rea z < x_1\}
\]
for some~$x_0,x_1\in \RR$. The $\lambda$-periodicity then implies that $p(\infty)$ is indeed holomorphic on all of~$\CC$. 

We claim that for all~$g\in\Gamma$, 
\begin{equation}\label{p1_vanish}
 p(g1) = 0\,.
\end{equation}
To that end we note that the conditions~\tvan and~\tsic imply that all singularities of
\begin{equation}\label{sing_diff}
 c(1,\lambda-1) = p(1) - p(\lambda-1) 
\end{equation}
are contained in~$\{1,\lambda-1\}$, and that the map~$c(1,\lambda-1)$ vanishes on~$(\lambda-1,1)_c$. The $\Gamma$-equivariance implies
\begin{equation}\label{equivariance}
 p(\lambda-1) = \tau_s(TS)p(1)\,.
\end{equation}
We study the set of singularities of~$p(1)$. 

Let $r\in \Xi$ such that $(TS)^nr\not=1$ for all~$n\in\NN_0$. If $r$ were a singularity of~$p(1)$, then~\eqref{equivariance} would imply that $TSr$ is a singularity of~$p(\lambda-1)$. By~\eqref{sing_diff} (note that $TS r\notin \{1,\lambda-1\}$), $TSr$ would be a singularity of~$p(1)$. Thus, by induction, for all~$n\in\NN$, $(TS)^nr$ would be a singularity of~$p(1)$. Since $p(1)$ has only finitely many singularities, $r$ needs to be a fixed point of~$TS$. However, none of these fixed points is contained in~$\Xi$. In turn, $r$ is not a singularity of~$p(1)$.

Suppose now that $r\in\Xi$ and $n_0\in\NN$ such that $(TS)^{n_0}r=1$. If $r$ were a singularity of~$p(1)$, then, since $r\notin\{1,\lambda-1\}$, \eqref{sing_diff} would imply that $r$ is a singularity of~$p(\lambda-1)$. Then~\eqref{equivariance} would show that $(TS)^{-1}r$ is a singularity of~$p(1)$. By induction, for any~$n\in\NN$, $(TS)^{-n}r$ would be a singularity of~$p(1)$. As above we find that this is impossible. In turn
\[
 \bsing p(1) \subseteq \{1\}\,.
\]
Combining the vanishing of~\eqref{sing_diff} on~$(\lambda-1,1)_c$ with~\eqref{equivariance} we find
\[
 p(1) = \tau_s(TS)p(1) \quad\text{on $(\lambda-1,1)_c$\,.}
\]
Since $\fixTS^+$ and $\fixTS^-$ are the attracting and repelling fixed points of~$TS$, respectively, for any interval~$I$ that is compactly contained in~$(\fixTS^-,\fixTS^+)_c$ there exists $n_I\in\NN_0$ such that $(TS)^{n_I}I\subseteq (\lambda-1,\fixTS^+)_c$. We define a map 
\[
a\colon \proj\RR\smallsetminus\{\fixTS^-\} \to \V s {\fxi;\exc;\aj}(\proj\RR)
\]
by setting for any such interval $I$,
\[
 a\coloneqq   \tau_s((TS)^{n_I})p(1) \quad\text{on~$I$\,,}
\]
and
\[
 a\coloneqq  p(1) \quad\text{on~$(\lambda-1,\fixTS^-)_c$\,.}
\]
Then $a$ is well-defined and a $\tau_s(TS)$-invariant element of~$\V s \omega[\fixTS^-]$. By~\cite[Proposition~4.1]{BLZm} (after conjugating $\fixTS^+$ to~$0$, and $\fixTS^-$ to $\infty$), $a=0$. Thus, $p(1) = 0$ on~$(\lambda-1,\fixTS^-)_c$. 

Working analogously with the map~$(TS)^{-1}$ instead of~$TS$ (in which case $\fixTS^-$ becomes the attracting fixed point) we find that $p(1) = 0$ on~$(\fixTS^+,1)_c$. Thus, $p(1) = 0$ on~$\proj\RR$. By $\Gamma$-equivariance, \eqref{p1_vanish} follows.

Let $f=(f_1,f_2)\coloneqq  \pc^{-1}(c)$. Thus
\begin{align*}
 f_1 &= c(-1,\infty)\vert_{(-1,\infty)_c} = -p(\infty)\vert_{(-1,\infty)_c}
 \\
 f_2 & = -c(1,\infty)\vert_{(\infty,1)_c} = p(\infty)\vert_{(\infty,1)_c\,.}
\end{align*}
Hence, $f\in \BFE_s^\omega(\CC)$.

Vice versa, let $f = (-b,b)\in \BFE_s^\omega(\CC)$ and let $c\coloneqq  \pc(f)$. By Proposition~\ref{prop:iso_exc}, the cocycle~$c$ is in $Z^1_\Xi\bigl(\Gamma;\V s {\fxi;\exc;\aj}(\proj\RR)\bigr)_\sic^\van$.  We define a $\Gamma$-equivariant map 
\[
p\colon\Xi\to \V s {\fxi;\exc;\aj}(\proj\RR)
\]
by 
\begin{align*}
 p(\infty) &\coloneqq  -b \quad\text{on $\proj\RR\smallsetminus\{\infty\}$\,,}
 \\
 p(1) &\coloneqq  0 \quad\text{on $\proj\RR$\,.}
\end{align*}
By Proposition~\ref{prop:cocycle}, 
\begin{align*}
 c(1,\infty) &= b \quad\text{on $\proj\RR\smallsetminus\{1,\infty\}$\,,}
 \\
 c(-1,\infty) & = b \quad\text{on $\proj\RR\smallsetminus\{-1,\infty\}$\,.}
\end{align*}
Obviously, $p$ is a $\Gamma$-equivariant potential of $c$, and hence $c$ is a coboundary. This completes the proof of the first statement.

It remains to show the statement in~\eqref{cobound_vanish} on vanishing coboundary spaces. To that end suppose that $\Rea s\in (0,1)$ and suppose that $p(\infty) \in \V s {\fxi;\exc,\smp;\aj}(\proj\RR)$. We claim that $p(\infty)=0$. With~\eqref{p1_vanish} it then follows that $p=0$ and hence the cocycle~$c$ is $0$.

To simplify notation we set $\varphi\coloneqq p(\infty)$. The condition~\tsmp (`simple singularity') yields that $\varphi$ has the two-sided asymptotic expansion
\[
 \varphi(t) \sim |t|^{-2s} \sum_{k\geq -1} a_k t^{-k} \qquad\text{as~$t\to\infty$}
\]
for suitable coefficients~$a_k\in\CC$, $k\geq -1$. From the $\lambda$-periodicity of~$\varphi$ it now follows that $a_k=0$ for all~$k\geq -1$, and hence $\varphi=0$. 
This completes the proof.
\end{proof}

\section{Proof of Theorem E}\label{sec:proof_thm-FE-coh}
\markright{21. PROOF OF THEOREM E}

For the proof of Theorem~\ref{thm-FE-coh} (from p.~\pageref{thm-FE-coh}) we note that Propositions~\ref{prop:iso_exc} and \ref{prop:EF_TO_fast} imply that the map~$\pc$ induces the linear maps 
\begin{align*}
 \FE_s^\om(\CC) & \to H^1_\Xi\bigl(\Gamma;\V s {\fxi;\exc;\aj}(\proj\RR)\bigr)^\van_\sic 
 \\
 \FE_s^{\om,1}(\CC) & \to H^1_\Xi\bigl(\Gamma;\V s {\fxi;\exc,\smp;\aj}(\proj\RR)\bigr)^\van_\sic 
 \\
 \FE_s^{\om,0}(\CC) & \to H^1_\Xi\bigl(\Gamma;\V s {\fxi;\exc,\infty;\aj}(\proj\RR)\bigr)^\van_\sic\,,
\end{align*}
for $s$ being in the range as stated for each case. By Proposition~\ref{prop:coboundaries}, the kernel of the first map is~$\BFE_s^\om(\CC)$, the kernels of the other two maps are~$\{0\}$. This completes the proof. \qed


\setcounter{sectold}{\arabic{section}}
\markboth{V. PROOFS AND RECAPITULATION}{V. PROOFS AND RECAPITULATION}

\chapter{Proofs of Theorems A and B, and a recapitulation}\label{part:proof_mainthms}

\setcounter{section}{\arabic{sectold}}
\renewcommand\theequation{\Roman{chapter}.\arabic{equation}}
\setcounter{equation}{0}

Combining the results from Chapters~\ref{part:autom} and \ref{part:TO} now leads immediately to Theorems~\ref{thmA_new} and \ref{thmB_new} (as stated in Section~\ref{sec:intro}, the Introduction). Before we will provide more details of the proofs, we briefly recapitulate the major isomorphisms from Chapters~\ref{part:autom} and \ref{part:TO}.

In Chapter~\ref{part:TO} we showed that each element~$f=(f_1,f_2)\in \FE_s^\om(\CC)$ determines a unique cocycle~$c=c_f\in Z^1_\Xi\bigl(\Gm;\V s {\fxi;\exc;\aj}(\proj\RR)\bigr)_\sic^\van$ as soon as we require
\[
 c(1,\infty)\vert_{(\infty,1)_c} = -f_2 \quad\text{and}\quad c(-1,\infty)\vert_{(-1,\infty)_c} = f_1\,.
\]
The resulting map~$\pc\colon \FE_s^\om(\CC)\to Z^1_\Xi\bigl(\Gm;\V s {\fxi;\exc;\aj}(\proj\RR)\bigr)_\sic^\van$ descends to isomorphisms 
\[
 \FE_s^\om(\CC)/\BFE_s^\om(\CC) \to H^1_\Xi\bigl(\Gm;\V s {\fxi;\exc;\aj}(\proj\RR)\bigr)^\van_\sic 
\]
for all $s\in\CC$,
\[
 \FE_s^{\om,1}(\CC) \to H^1_\Xi\bigl(\Gm;\V s {\fxi;\exc,\smp;\aj}(\proj\RR)\bigr)^\van_\sic 
\]
if $\Rea s\in (0,1)$, $s\not=\frac12$, and
\[
 \FE_s^{\om,0}(\CC) \to H^1_\Xi\bigl(\Gm;\V s {\fxi;\exc,\infty;\aj}(\proj\RR)\bigr)^\van_\sic 
\]
if $\Rea s\in (0,1)$. See Theorem~\ref{thm-FE-coh}, p.~\pageref{thm-FE-coh}.

In Chapter~\ref{part:autom} we showed how to construct 
a funnel form~$u_c\in \A_s$
from a cocycle~$c\in Z^1_\Xi\bigl(\Gm;\V s {\fxi;\exc;\aj}(\proj\RR)\bigr)_\sic^\van$. This construction lead to isomorphisms 
\[
 H_\Xi^1\bigl(\Gm;\V s {\fxi;\exc;\aj}(\proj\RR)\bigr)_\sic^\van \to \A_s
\]
and 
\[
 H_\Xi^1\bigl(\Gm;\V s {\fxi;\exc,\smp;\aj}(\proj\RR)\bigr)_\sic^\van \to \A_s^1
\]
for all~$s\in\CC$ with $\Rea s\in (0,1)$, $s\not=\tfrac12$, and 
\[
 H_\Xi^1\bigl(\Gm;\V s {\fxi;\exc,\infty;\aj}(\proj\RR)\bigr)_\sic^\van \to A_s^0
\]
for any~$s\in\CC$ with $\Rea s\in (0,1)$. See Theorem~\ref{thm:cohominter} (p.~\pageref{thm:cohominter}) and Section~\ref{sec:proof_cohominter}, where we also provided a brief survey of this construction. 

Let us suppose that $c=c_f\in Z^1_\Xi\bigl(\Gm;\V s {\fxi;\exc;\aj}(\proj\RR)\bigr)_\sic^\van$ is the cocycle associated to the period function~$f=(f_1,f_2)\in \FE_s^\om(\CC)$, and let $u_f=u_c \in \A_s$ be the funnel form associated to~$c_f$. In very rough terms, to find the value of~$u_f$ at a point~$z\in\uhp$ we proceed as follows: 
We extend the functions~$f_1$ and~$f_2$ to $C^2$-maps~$\tilde f_1$ and~$\tilde f_2$ on~$\uhp$ that are Laplace eigenfunctions in a (small) neighborhood of~$(-1,\infty)_c$ and~$(\infty,1)_c$, respectively. Then we define $u_f(z)$ as an average of the values of~$\tilde f_1$ and~$\tilde f_2$ at a certain (well-specified) set of points in~$\uhp$. 

As the isomorphisms from above show, the map~$\FE_s^\om(\CC)\to \A_s$, $f\mapsto u_f$, is not invertible; its kernel is~$\BFE_s^\om(\CC)$. (Restricted to~$\FE_s^{\om,1}(\CC)$ or even to~$\FE_s^{\om,0}(\CC)$, this map becomes invertible.) Nevertheless, the inverse map 
\[
 \A_s \to \FE_s^\om(\CC)/\BFE_s^\om(\CC)
\] 
can be stated rather easily. For $u\in \A_s$ an element~$f=(f_1,f_2)\in \FE_s^\om(\CC)$ with $u=u_f$ is given by   
\begin{equation}\label{eq:def_f1}
 f_1(t) = \int_{-1}^\infty \{ u_f, R(t;\cdot)^s\} \quad\text{for $t\in (-1,\infty)_c$}
\end{equation}
and 
\begin{equation}\label{eq:def_f2}
 f_2(t) = -\int_1^\infty \{ u_f, R(t;\cdot)^s \}\quad\text{for $t\in (\infty,1)_c$\,,}
\end{equation}
more precisely, by regularizations of these integrals at~$\infty$. In both cases, the integral is performed along any path in~$\uhp$ with endpoints in~$\proj\RR$. For the case that $u_f$ is a cuspidal funnel form, these integrals converge (and hence no regularization is needed). In Section~\ref{sec:heuristic} we gave a geometric interpretation of the assignments~\eqref{eq:def_f1} and \eqref{eq:def_f2}.

\begin{proof}[Proof of Theorems~\ref{thmA_new} and \ref{thmB_new}]
For $s\in\CC$, $\Rea s\in (0,1)$, $s\not=\frac12$ we set
\[
A_s \coloneqq \ucoc_s\circ\pc\,,
\]
where $\pc$ is given by~\eqref{map_pc_first} and $\ucoc_s$ is defined in~\eqref{def_ucoc}. Proposition~\ref{prop:iso_exc} and Theorem~\ref{thm:cohominter} (p.~\pageref{thm:cohominter}; more precisely, the discussion in Section~\ref{sec:proof_cohominter}) show that $A_s$ constitutes a surjective linear map
\[
 A_s \colon \FE_s^\om(\CC) \to \A_s\,.
\]
The remaining statements from Theorems~\ref{thmA_new} and \ref{thmB_new} now follow immediately from Theorem~\ref{thm-FE-coh} (p.~\pageref{thm-FE-coh}) and Theorem~\ref{thm:cohominter} (p.~\pageref{thm:cohominter}).
\end{proof}

\renewcommand\theequation{\arabic{section}.\arabic{equation}}


\setcounter{sectold}{\arabic{section}}
\markboth{PART VI. PARITY}{PART VI. PARITY}

\chapter{Parity}\label{part:parity}
\setcounter{section}{\arabic{sectold}}

In the previous chapters we considered the Hecke triangle group~$\Gm$ as a subgroup of~$\PSL_2(\RR)$. Since $\PSL_2(\RR)$ embeds canonically into \index[symbols]{P@$\PGL_2(\RR)$}
\[
 \PGL_2(\RR) \coloneqq \GL_2(\RR)/(\RR^\times\one)\,,
\]
the projective group of~$\GL_2(\RR)$, we may consider $\Gm$ as a subgroup of~$\PGL_2(\RR)$ as well. Within~$\PGL_2(\RR)$, the Hecke triangle group~$\Gm$ is normalized by \index[symbols]{J@$J=\textbmat{-1}001$}
\[
 J=\bmat{-1}001\,,
\]
which is the element in~$\PGL_2(\RR)$ that is represented by~$\textmat{-1}{0}{0}{1}\in\GL_2(\RR)$. 

The action of~$J$ on~$\Gm$ by conjugation may be seen as an outer automorphism or exterior symmetry of~$\Gm$, and we may use it to define---in a natural way---an involution on each of the spaces of funnel forms, cocycles and period functions. Each of these involutions can be understood as a realization of the action of~$J$, and hence as an exterior symmetry, on the particular space, which then decomposes into its subspaces of elements that are invariant or anti-invariant under this action of~$J$. In other words, these involutions yield a natural notion of parity and allow us to split the space of funnel forms into even and odd funnel forms, the cohomology spaces into spaces of even and odd cocycle classes, and the space of period functions into even and odd period functions. These subspaces shall be denoted by adding a~`$+$' (for `even') or a~`$-$' (for `odd') to the symbol of the full space. For precise definitions we refer to Section~\ref{sec:def_oddeven}.

In this section we will prove Theorem~\ref{thmC} (from p.~\pageref{thmC}). Thus we will show that the isomorphisms in Theorem~\ref{thm:cohominter} (as given in Section~\ref{sec:proof_cohominter}) and in Theorem~\ref{thm-FE-coh} (p.~\pageref{thm-FE-coh}) are anti-equivariant with respect to these involutions, and hence the isomorphisms in Theorem~\ref{thmA_new} (p.~\pageref{thmA_new}) and Theorem~\ref{thmB_new} (p.~\pageref{thmB_new}) descend to isomorphisms between the subspaces of funnel forms (resonant funnel forms, or cuspidal funnel forms) and period functions of same parity. 

\setcounter{mainthmprim}{2}

\begin{mainthmprim}\label{thm:main_parity}
The isomorphisms from Theorem~\ref{thm-alcoh} (the map~$\ucoc_s$ defined in Section~\ref{sec:proof_cohominter}) and from Theorem~\ref{thm-FE-coh} (induced by the map~$\pc$) descend to the following isomorphisms:
\begin{enumerate}[{\rm (i)}]
\item For any~$s\in\CC$, $\Rea s\in (0,1)$, $s\not=\frac12:$
\[
 \FE_s^{\om,\pm}(\CC)/\BFE_s^{\om,\pm}(\CC) \stackrel{\sim}{\longrightarrow} H_\Xi^{1,\mp}\bigl(\Gm; \V s {\fxi;\exc;\aj}(\proj\RR)\bigr)_\sic^\van \stackrel{\sim}{\longrightarrow} \A_s^{\pm}\,.
\]
\item For any~$s\in\CC$, $\Rea s\in (0,1)$, $s\not=\frac12:$
\[
 \FE_s^{\om,1,\pm}(\CC) \stackrel{\sim}{\longrightarrow} H_\Xi^{1,\mp}\bigl(\Gm; \V s {\fxi;\exc,\smp;\aj}(\proj\RR)\bigr)_\sic^\van \stackrel{\sim}{\longrightarrow} \A_s^{1,\pm}\,.
\]
\item For any~$s\in\CC$, $\Rea s\in (0,1):$
\[
 \FE_s^{\om,0,\pm}(\CC) \stackrel{\sim}{\longrightarrow} H_\Xi^{1,\mp}\bigl(\Gm; \V s {\fxi;\exc,\infty;\aj}(\proj\RR)\bigr)_\sic^\van \stackrel{\sim}{\longrightarrow} \A_s^{0,\pm}\,.
\]
\end{enumerate}
\end{mainthmprim}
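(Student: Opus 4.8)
The plan is to define the three involutions explicitly via the conjugation action of $J=\textbmat{-1}001$ on each of the relevant spaces, and then to verify that the isomorphisms $\pc$ and $\ucoc_s$ already established are compatible with these involutions up to a sign on the cohomological side. Since $J$ normalizes $\Gm$ (one checks $JSJ^{-1}=S^{-1}=S$ and $JTJ^{-1}=T^{-1}$ in $\PGL_2(\RR)$), the first task is to extend the representation $\tau_s$ to $\PGL_2(\RR)$ by the formula~\eqref{taudef}, as announced in Section~\ref{sec:PGL} (anticipated in Section~\ref{sec:regularity}). On period functions, $J$ acts by interchanging the two coordinate charts and swapping the two components, so $(f_1,f_2)\mapsto (\tau_s(J)f_2,\tau_s(J)f_1)$ (possibly up to a sign); one must check that this map sends $\FE_s^\om(\CC)$ to itself, which amounts to observing that the slow transfer operator~$\tro s$ from~\eqref{stroGm} is conjugated to itself under $J$ because $T^{-1}S$ and $TS$ are exchanged by $J$-conjugation. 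On funnel forms, $J$ acts by $u\mapsto u\circ J$ (using that $\Delta$ and the hyperbolic metric are $J$-invariant, that $J$ preserves $\uhp$, and that $J$-conjugation preserves $\Gm$, so $\Gm$-invariance and $s$-analytic boundary behavior are preserved; the Fourier-coefficient characterizations of $\A_s^1$ and $\A_s^0$ are likewise preserved since $J$ acts on the cusp $\infty$). On the cohomology spaces, $J$ acts on cocycles $c\colon\Xi\times\Xi\to M$ by $c\mapsto c'$ with $c'(\xi,\eta)=\tau_s(J)\,c(J^{-1}\xi,J^{-1}\eta)$; here one checks that $J\Xi=\Xi$ (since $J\infty=\infty$ and $J1=-1\in\Gm\,1$), that the modules $\V s {\fxi;\exc,\cond;\aj}(\proj\RR)$ are $J$-stable (the conditions \tsic, \texc, \taj, \tsmp, $\infty$ are defined pointwise in a $\PGL_2(\RR)$-equivariant manner, cf.\ the remark after Definition~\ref{def:modulescond}), and that the vanishing condition \tvan is preserved because $J$-conjugation maps $TS$ to $(TS)^{-1}$ so the pair $(1,\lambda-1)$ and the interval $(\lambda-1,1)_c$ are mapped to a $\Gm$-equivalent configuration; this induces the desired involutions on $H^1_\Xi$, defining the $\pm$ eigenspaces.

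Second, I would trace the sign through the map $\pc$. By Proposition~\ref{prop:cocycle}, $\pc(f_1,f_2)$ is the unique cocycle with $c(-1,\infty)\vert_{(-1,\infty)_c}=f_1$ and $c(1,\infty)\vert_{(\infty,1)_c}=-f_2$. Applying the $J$-involution to $c$ and using $J(-1,\infty)=(1,-\infty)=(\infty,-1)$ wait---more carefully, $J$ sends the oriented pair $(-1,\infty)$ to $(1,\infty)$ and $(1,\infty)$ to $(-1,\infty)$, with the intervals correspondingly swapped. Matching the defining data, one finds that if $c=\pc(f_1,f_2)$ then $c'=\pc$ applied to the $J$-image of $(f_1,f_2)$ \emph{up to an overall sign}: the extra sign appears precisely because $c(1,\infty)$ carries a $-f_2$ while $c(-1,\infty)$ carries a $+f_1$, so the swap is not sign-symmetric. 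This is the source of the index-flipping $\pm\leftrightarrow\mp$ in the statement: a $J$-invariant ($+$) period function corresponds to a $J$-anti-invariant ($-$) cocycle class, and vice versa. I would carry this out explicitly on the potential $p$ and group cocycle $\psi$ from~\eqref{pot_p1}--\eqref{def_grpcoc}, where the $J$-action is transparent ($\psi_S$ is $J$-conjugated to itself with a sign, $\psi_T=0$ is mapped to $\psi_{T^{-1}}=0$), since cocycle classes are determined by their associated group cocycle classes via the injective map $\Phi_\Xi$ of Lemma~\ref{lem:cocgrp_inj}.

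Third, I would check the anti-equivariance of $\ucoc_s=\alphu_s\circ\rho_s\circ\pr_\Xi^{-1}$. The maps $\pr_\Xi$ and $\rho_s$ are manifestly $J$-equivariant (the former because $J$-conjugation acts compatibly on the tesselation $\tess$: $J$ permutes the faces $V_0,V_1,V_\infty$ in a way compatible with the $\Gm$-action once one notes $JV_1J^{-1}$ is $\Gm$-equivalent to $V_1$ etc., or more simply because $\pr_\Xi$ is just restriction to $\Xi\times\Xi$; the latter by the $\PGL_2(\RR)$-equivariance of the restriction map in Proposition~\ref{thm-resiso}). For $\alphu_s$, which sends a cocycle class to $u_\psi(z)=\tfrac2\pi\tilde\psi_\eps(C)(z)$, the key point is that the kernel function $q_s$ and the Green's form $\{\cdot,\cdot\}$ are $\PSL_2(\RR)$-equivariant, hence also $\PGL_2(\RR)$-equivariant for the extended $\tau_s$; transporting the defining cycle $C$ by $J$ and relabeling shows $u_{\psi'}=u_\psi\circ J$. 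Combining the two sign-computations, the composite $\ucoc_s\circ\pc$ intertwines the $J$-involution on $\FE_s^\om(\CC)$ with the $J$-involution on $\A_s$ \emph{without} an extra sign (the sign from $\pc$ and the sign from the cohomological involution cancel), consistently with the statement that $\FE^\om_s(\CC)^{\pm}$ and $\A_s^{\pm}$ match via the $\mp$ cohomology space. Since $\BFE_s^\om(\CC)$ is itself $J$-stable (it consists of pairs $(-b,b)$ with $b$ $\tau_s(T)$-invariant entire; $J$ maps such a pair to $(\tau_s(J)b,-\tau_s(J)b)$ up to sign, which is again of this form because $\tau_s(T)$-invariance is mapped to $\tau_s(T^{-1})$-invariance, i.e.\ the same condition), all isomorphisms pass to the quotient and to the $\pm$ eigenspaces. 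Finally one invokes Theorem~\ref{thm-FE-coh} and Theorem~\ref{thm:cohominter} on each eigenspace separately.

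\textbf{The main obstacle.} The routine part is checking $J$-stability of all the modules and conditions; the genuinely delicate step is pinning down \emph{exactly which sign} appears where, so that the $\pm\leftrightarrow\mp$ bookkeeping in the statement comes out right rather than reversed. This requires being scrupulous about the orientation conventions built into~\eqref{c1shortprop}--\eqref{c2shortprop} (the asymmetric $+f_1$ versus $-f_2$), about the orientation of the tesselation edges in Figure~\ref{fig-gen} and how $J$ acts on them, and about the orientation of the integration cycle $C$ in the construction of $\alphu_s$. I would organize this by verifying the sign on the level of the associated group cocycle $\psi$ (via $\Phi_\Xi$), where everything reduces to the single identity relating $\psi_S$ and $\tau_s(J)\psi_S$, and only then transporting back to cocycles and to funnel forms, so that the sign is computed once and reused.
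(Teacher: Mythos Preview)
Your overall strategy matches the paper's: define the three involutions $\mc J^F$, $\mc J_s^H$, $\mc J_s^E$ via the action of $J$, check that all modules and side conditions are $J$-stable, and then verify that both $\pc$ and the cohomology--funnel-form map are anti-equivariant, so that $\pm$ period functions and $\pm$ funnel forms both land in the $\mp$ cohomology eigenspace. Your treatment of $\pc$ and of the $J$-stability of the various conditions is essentially what the paper does (Lemma~\ref{lem:sic_van_J}, Proposition~\ref{prop:FE_cohom_pm}).

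There is, however, a genuine error in your treatment of the funnel-form side. You write that ``the kernel function $q_s$ and the Green's form $\{\cdot,\cdot\}$ are $\PSL_2(\RR)$-equivariant, hence also $\PGL_2(\RR)$-equivariant,'' and conclude $u_{\psi'}=u_\psi\circ J$. This is false: $J$ is orientation-reversing on $\uhp$, and the Green's form is a $1$-form, so it transforms with a sign. A direct computation from~\eqref{Gfacc2} gives
\[
\{u\circ J,\,v\circ J\}\;=\;-\,\{u,v\}\circ J\,,
\]
and this minus sign is precisely what makes the map $\cocu_s$ \emph{anti}-equivariant rather than equivariant. Without it, your bookkeeping would yield $H^{1,\mp}\to\A_s^{\mp}$ instead of $H^{1,\mp}\to\A_s^{\pm}$. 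Your vague ``sign from the cohomological involution'' does not rescue this: the involution $\mc J_s^H$ is defined once and for all, and there is no further hidden sign in it; the second sign really comes from the orientation reversal in the Green's form.

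The paper also handles this side more economically than you propose: rather than chasing the sign through $\ucoc_s=\alphu_s\circ\rho_s\circ\pr_\Xi^{-1}$ and the reconstruction via cycles~$C$ and lifts~$\tilde\psi_\eps$, it works with the \emph{inverse} map $\cocu_s=\pr_\Xi\circ\coh_s$, which is given by the explicit integral $c_u(x)(t)=\int_x\{u,R(t;\cdot)^s\}$. The identity above, together with $R(t;\cdot)\circ J=R(-t;\cdot)$, then gives $c_{u\circ J}=-\mc J_s^H c_u$ in two lines (see the proof of Proposition~\ref{prop:funnel_cohom_pm}). Your fallback plan of computing on the group cocycle $\psi_S$ via $\Phi_\Xi$ would eventually work, but the integral formula makes the source of the sign completely transparent and avoids having to unwind the construction of $\alphu_s$.
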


Moreover we will see (Remark~\ref{rmk:equalspaces}) that the additional rigidity introduced by the parity yields 
\[
  \FE_s^{\om,1,-}(\CC) = \FE_s^{\om,0,-}(\CC) 
\]
and
\[
 H_\Xi^{1,+}\bigl(\Gm; \V s {\fxi;\exc,\smp;\aj}(\proj\RR) \bigr)^\van_\sic = H_\Xi^{1,+}\bigl(\Gm; \V s {\fxi;\exc,\infty;\aj}(\proj\RR)\bigr)^\van_\sic
\]
for~$s\in\CC$, $\Rea s\in (0,1)$, $s\not=\frac12$.

Alternatively to considering odd period functions as period functions of~$\Gm$ with odd parity under~$J$ we may understand these as genuine period functions for the triangle group~$\tGm$ generated by~$\Gm$ and~$J$ in~$\PGL_2(\RR)$, and analogously for the other spaces. Parity properties under~$J$ of objects defined for~$\Gm$ correspond then to conditions for associated objects defined for~$\tGm$ at the boundary on the space~$\tGm\backslash\uhp$. This point of view was taken in~\cite{Pohl_spectral_hecke} (for cofinite Hecke triangle groups), in~\cite{Pohl_hecke_infinite} (for infinite Hecke triangle groups) and in~\cite{Pohl_representation, Adam_Pohl} (for all Hecke triangle groups). From this point of view, Theorem~\ref{thm:main_parity} is the analogue of Theorems~\ref{thmA_new} and~\ref{thmB_new} for the group~$\tGm$ instead of~$\Gm$. In our treatment we will not rely on this point of view but will briefly indicate, in Section~\ref{sec:def_oddeven}, how the objects necessary for it can be defined as intrinsic objects for the triangle group~$\tGm$ (instead as objects for~$\Gamma$ satisfying an additional invariance).

\section{The triangle group in the projective general linear group}\label{sec:PGL}
\markright{22. THE TRIANGLE GROUP IN PGL(2,R)}

\subsection{Two actions of the projective general linear group}

As mentioned above, we consider $\PSL_2(\RR)$ as a subgroup of~$\PGL_2(\RR)$. As for $\PSL_2(\RR)$, we denote the equivalence class of~$\textmat{a}{b}{c}{d}\in\nobreak\GL_2(\RR)$ in~$\PGL_2(\RR)$ by
\[
 \bmat{a}{b}{c}{d}\,.
\]
For any such element of~$\PGL_2(\RR)$ we will use throughout only representatives with~$ad-bc=\pm 1$.

We extend the action in~\eqref{fraclinaction} of~$\PSL_2(\RR)$ on~$\proj\CC$ in two different ways to an action of~$\PGL_2(\RR)$. The first, denoted by~$g \mapsto g z$, is given by fractional linear transformations, applying the formula~\eqref{fraclinaction} to elements of~$\PGL_2(\RR)$ as well. Thus,
\[
 gz = \frac{az+b}{cz+d}
\]
for any~$g=\textbmat{a}{b}{c}{d} \in \PGL_2(\RR)$ (and $\infty = \frac10$). Under this action, elements of~$\PGL_2(\RR)$ with negative determinant interchange~$\uhp$ and~$\lhp$, and preserve the space~$\proj\RR$. We will use this action whenever we consider $\PGL_2(\CC)$ (or a subgroup) to act on a space of functions defined on (a subset of)~$\proj\CC$.

One special instance is the principal series representations of~$\PGL_2(\RR)$ with spectral parameter~$s\in \CC$. This is the action of the group~$\PGL_2(\RR)$ on the space of functions~$f\colon\proj\RR\to\CC$, given by 
\begin{equation}\label{eq:princ_ser_tilde}
\left(\tau_{s}(g^{-1}) f\right)(t) \ceqq  |c
t+d|^{-2s} \, f\left( gt\right)
\qquad\text{with }g=\bmat abcd \,.
\end{equation}

The second action of~$\PGL_2(\RR)$ on~$\proj\CC$, denoted~$g\mapsto g\cdot z$, is the extension of~\eqref{fraclinaction} to~$\PGL_2(\RR)$ which identifies $\PGL_2(\RR)$ with the group of Riemannian isometries of~$\uhp$. This action is given by 
\begin{equation}\label{gdz2}
\bmat abcd \cdot z
\ceqq 
\begin{cases}
\frac{az+b}{cz+d}&\text{ if $ad-bc>0$}\,,
\\[1mm] 
\frac{a \bar z+b}{c\bar z+d}&\text{ if $ad-bc<0$}\,
\end{cases}
\end{equation}
for~$g=\bmat abcd\in \PGL_2(\RR)$ and $z\in \proj\CC$ (and $\infty = \frac10$). It preserves~$\uhp$, $\lhp$, and~$\proj\RR$. We will use this action whenever we consider $\PGL_2(\CC)$ (or a subgroup) as a group of Riemannian isometries.

\subsection{The triangle group}

The element
\[
 J = \bmat {-1}001
\]
normalizes the Hecke triangle group~$\Gm$. The subgroup of~$\PGL_2(\RR)$ generated by~$\Gm$ and~$J$ is the \emph{triangle group}\index[defs]{triangle group}\index[symbols]{Gaam@$\tGm$}
\[ 
\tGm=\Gm\sqcup J \Gm\,.
\]
Using~\eqref{gdz2} to define an action of~$\tGm$ on~$\uhp$, the group~$\tGm$ becomes a discrete group of Riemannian isometries. A fundamental domain for~$\tGm\backslash\uhp$ is indicated in Figure~\ref{fig-fd2}. \index[symbols]{F@$\tilde\fd$}
\begin{figure}
\centering
\includegraphics[height=4cm]{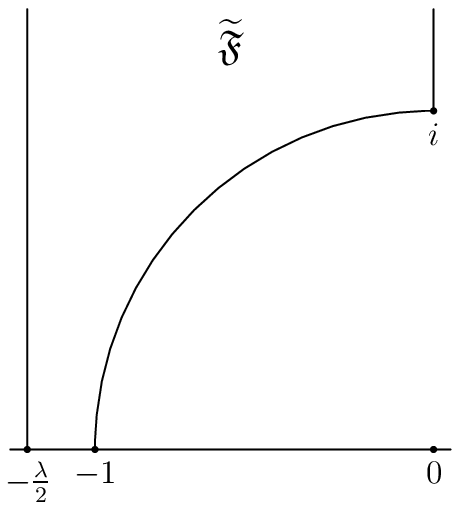}
\caption{Fundamental domain~$\tilde\fd$ for~$\tGm\backslash \uhp$.}\label{fig-fd2}
\end{figure}
The group $\tGm$ is generated by the reflections on the sides of this fundamental domain (which justifies to call it a triangle group). The sets of ordinary points and of cuspidal points of~$\tGm$ are identical to those of~$\Gm$. In other words, the set~$\Om(\Gm)$ of ordinary points of~$\Gm$, the limit set~$\Lambda(\Gamma)$, the set of cuspidal points, and the set of all funnel intervals of~$\Gm$ are invariant under the action of~$J$.

\section{Odd and even funnel forms, cocycles, and period functions}\label{sec:def_oddeven}
\markright{23. ODD AND EVEN}

In this section we will define the involutions on funnel forms, cocycles and period functions that are induced by the action of~$J$. We will denote the involution on funnel forms by~$\mc J^F$ with `F' indicating `\textbf{f}unnel forms'. Further we will denote the involution on cocycles and cocycle classes by~$\mc J_s^H$, where `H' stands for `co\textbf{h}omology' and~$s$ is the spectral parameter from the modules in which the cocycles have their values, and we will denote the involution on period functions by~$\mc J_s^E$, where `E' stands for `\textbf{e}igenfunctions of transfer operators' (period functions) and $s$ is the parameter of the transfer operator.

These involutions induce a notion of parity on each of the considered spaces and allow us to distinguish between odd and even elements, as defined below. For funnel forms and cocycles it is straightforward how even and odd funnel forms and cocycles can be defined as genuine objects of the triangle group~$\tGm$, and any such discussion will be omitted. For odd and even period functions such an alternative definition is not obvious, for which reason we will briefly indicate the natural relation of their definition to the billiard flow on~$\tGm\backslash\uhp$.

\subsection{Odd and even funnel forms}

For any element in the function space \index[symbols]{F@$\Fct(\uhp;\CC)$}
\[
\Fct(\uhp;\CC)\coloneqq \{\,u\colon\uhp\to\CC\,\}
\]
we set \index[symbols]{$u\circ J$}
\begin{equation}
 (u\circ J)(z) \coloneqq u(J\cdot z) = u(-\overline z)\,.
\end{equation}
We define the map~$\mc J^F\colon \Fct(\uhp;\CC)\to \Fct(\uhp;\CC)$ by \index[symbols]{J@$\mc J^F$}
\begin{equation}
 \mc J^F\colon u\mapsto u\circ J\,.
\end{equation}

A straightforward calculation shows that the map~$\mc J^F$ stabilizes any eigenspace of the Laplace--Beltrami operator~$\Delta$. In particular, for any~$s\in\CC$, the space~$\E_s$ of \mbox{$\Gamma$-}invariant Laplace eigenfunctions with spectral parameter~$s$ is invariant under~$\mc J^F$. Since the map~$\mc J^F$ is an involution, the space~$\E_s$ decomposes into the direct sum of the space \index[symbols]{Eab@$\E_s^\pm$}
\[
 \E_s^+ \coloneqq \{\, u\in\E_s \setmid \forall\,z\in\uhp\colon u(z) = u(-\overline{z})\,\}
\]
of $\mc J^F$-eigenfunctions for the eigenvalue~$1$, and the space 
\[
 \E_s^- \coloneqq \{\, u\in\E_s \setmid \forall\,z\in\uhp\colon u(z) = -u(-\overline{z})\,\}
\]
of $\mc J^F$-eigenfunctions for the eigenvalue~$-1$. Thus,
\[
 \E_s = \E_s^+ \oplus \E_s^-\,.
\]
As usual, we call the elements of~$\E_s^+$ the \emph{even} Laplace eigenfunctions with spectral parameter~$s$, and the elements of~$\E_s^-$ the \emph{odd} ones.

The condition of $s$-analytic boundary behavior as well as each of the conditions in~\eqref{resgrwth} and \eqref{expdecay} on asymptotic growth behavior are preserved under the map~$\mc J^F$. Therefore, each of the spaces of funnel forms, resonant funnel forms and cuspidal funnel forms decomposes into a direct sum of even and odd funnel forms of the appropriate type. More precisely,
\[
 \A_s = \A_s^+ \oplus \A_s^-\,,\qquad \A_s^1 = \A_s^{1,+} \oplus \A_s^{1,-}\,,\qquad \A_s^0 = \A_s^{0,+} \oplus \A_s^{0,-}\,,
\]
where \index[symbols]{AAB@$\A_s^+,\A_s^-$}\index[symbols]{AAD@$\A_s^{1,+},\A_s^{1,-}$}\index[symbols]{AAE@$\A_s^{0,+},\A_s^{0,-}$} \index[defs]{even funnel forms}\index[defs]{odd funnel forms}\index[defs]{funnel form!even}\index[defs]{funnel form!odd}\index[defs]{resonant funnel form!even}\index[defs]{resonant funnel form!odd}\index[defs]{cuspidal funnel form!even}\index[defs]{cuspidal funnel form!odd}
\begin{align*}
 \A_s^+ &\coloneqq \A_s \cap \E_s^+ && \text{is the space of \emph{even} funnel forms,}
 \\
 \A_s^- &\coloneqq \A_s \cap \E_s^- && \text{is the space of \emph{odd} funnel forms,}
 \\
 \A_s^{1,+} & \coloneqq \A_s^1 \cap \E_s^+ && \text{is the space of \emph{even} resonant funnel forms,}
 \\
 \A_s^{1,-} &\coloneqq \A_s^1 \cap \E_s^- && \text{is the space of \emph{odd} resonant funnel forms,}
 \\
 \A_s^{0,+} & \coloneqq \A_s^0 \cap \E_s^+ && \text{is the space of \emph{even} cuspidal funnel forms, and }
 \\
 \A_s^{0,-} &\coloneqq \A_s^0 \cap E_s^- && \text{is the space of \emph{odd} cuspidal funnel forms,}
\end{align*}
in each case for the spectral parameter~$s$.

\pagebreak[4]

\subsection{Odd and even cocycles}

\medskip

\subsubsection{Cohomology on an invariant set}\label{sec:extended_cohom} 
We extend the definitions of Section~\ref{sec:setcohom} to~$\tGm$ by defining the action of~$J$ on any element of~$\CC[\Xi^2]$ by \index[symbols]{$(\cdot,\cdot)\vert J$}
\begin{equation}\label{eq:J_on_coc}
 (\xi,\eta)\vert J \coloneqq (J\cdot\xi,J\cdot\eta) = (-\xi,-\eta)\,.
\end{equation}
For any subspace~$M$ of~$\V s \fxi$ that is invariant under the action of~$\tGm$ by~$\tau_s$ (recall the extension of~$\tau_s$ to all of~$\tGm$ from~\eqref{eq:princ_ser_tilde}), the map in~\eqref{eq:J_on_coc} induces an involution on the space~$C^1_\Xi(\Gm;M)$ of $1$-cochains by \index[symbols]{J@$\mc J_s^H$}
\begin{equation}
 \mc J_s^H c \coloneqq \tau_s(J)(c\vert J)\,,
\end{equation}
where
\[
 (c\vert J)(\xi,\eta) \coloneqq c(-\xi,-\eta)\,.
\]
Thus
\begin{equation}
 \bigl(\mc J_s^H c\bigr)(\xi,\eta)(t) = c(-\xi,-\eta)(-t)
\end{equation}
for all~$\xi,\eta\in\Xi$, $t\in\proj\RR$. The map~$\mc J_s^H$ leaves invariant the spaces of cocycles and coboundaries, induces involutions on these spaces and leads to a splitting in $\pm 1$-eigenspaces of the cohomology space~$H^1_\Xi(\Gm;M)$. We let \index[symbols]{H@$H_\Xi^{1,\pm}(\Gamma;M)$}
\begin{align*}
 H_\Xi^{1,\pm}(\Gamma;M) & \coloneqq \bigl\{\, [c] \in H_\Xi^1(\Gamma;M) \setmid \mc J_s^H c = \pm c\,\bigr\}
 \\
 & \ = \bigl\{\, [c]\in H_\Xi^1(\Gamma;M) \setmid \tau_s(J)c = \pm c\vert J\,\bigr\}
\end{align*}
denote the space of \emph{even} (for~`$+$') and \emph{odd} (for~`$-$') cocycle classes. 
\index[defs]{even cocycle}\index[defs]{odd cocycle}\index[defs]{cocycle!even}\index[defs]{cocycle!odd} 
Hence
\begin{equation}\label{eq:splitting_cohom}
H^1_\Xi(\Gm;M) = H_\Xi^{1,+}(\Gm;M) \oplus H_\Xi^{1,-}(\Gm;M)\,.
\end{equation}

For~$M$ we will use 
the vector spaces
\[ \V s \om(\proj\RR)\,,\; \V s {\fxi;\exc;\aj}(\proj\RR)\,,\;
\V s {\fxi;\exc,\smp;\aj}(\proj\RR)\,,\text{ and }
\V s {\fxi;\exc,\infty;\aj}(\proj\RR)\,,
\] 
all of which are indeed $\tGm$-modules under the action of~$\tau_s$.

\begin{lem}\label{lem:sic_van_J}
Let $s\in\CC$ and $M$ be a submodule of~$\V s \fxi$ that is $\tGm$-invariant.
\begin{enumerate}[{\rm (i)}]
\item If $c\in Z^1_\Xi(\Gm;M)$ satisfies the singularity condition~$\sic$, then so does~$\mc J_s^Hc$.
\item\label{svii} If $c\in Z^1_\Xi(\Gm;M)$ satisfies the vanishing condition~$\van$, then so does~$\mc J_s^Hc$.
\end{enumerate}
\end{lem}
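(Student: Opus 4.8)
The plan is to check both claims directly from the definitions, exploiting the fact that $J$ conjugates the relevant generators of $\Gamma$ in a controlled way and fixes (as a set) the distinguished point $1$ and the interval $(\lambda-1,1)_c$ up to the sign flip $t\mapsto -t$. First I would record the basic conjugation identities: $J\infty = \infty$, $J\cdot(\pm 1) = \mp 1$, $JTJ^{-1} = T^{-1}$, and $JSJ^{-1} = S$ (all in $\PGL_2(\RR)$), together with the compatibility $\tau_s(J)\tau_s(g) = \tau_s(JgJ^{-1})\tau_s(J)$ for $g\in\Gamma$, which follows from the definition~\eqref{eq:princ_ser_tilde} of the extended $\tau_s$-action and the multiplicativity of the automorphy factor. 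I would also note that $\tau_s(J)$ acts on functions on $\proj\RR$ by $(\tau_s(J)f)(t) = f(-t)$ (the automorphy factor of $J$ being $1$), so that for $w\in\V s\fxi(\proj\RR)$ one has $\bsing \tau_s(J)w = -\bsing w$, and more generally $\tau_s(J)$ carries the analytic-jump, simple-singularity, smoothness and \texc conditions at a point $x_0$ to the same conditions at $-x_0$; in particular it preserves the submodule $M$.

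For part (i): let $c\in Z^1_\Xi(\Gamma;M)_\sic$, so $\bsing c(\xi,\eta)\subseteq\{\xi,\eta\}$ for all $\xi,\eta\in\Xi$. Then $(\mc J^H_s c)(\xi,\eta) = \tau_s(J)\big(c(-\xi,-\eta)\big)$, and since $-\xi,-\eta\in\Xi$ (the set $\Xi$ being $J$-invariant) we get $\bsing c(-\xi,-\eta)\subseteq\{-\xi,-\eta\}$, hence $\bsing (\mc J^H_s c)(\xi,\eta) = -\bsing c(-\xi,-\eta)\subseteq\{\xi,\eta\}$. That is exactly the singularity condition for $\mc J^H_s c$. (One should also remark that $\mc J^H_s c$ is again a cocycle in $Z^1_\Xi(\Gamma;M)$ — the cocycle relation and $\Gamma$-equivariance are preserved because $J$ normalizes $\Gamma$ and $\tau_s(J)$ intertwines the $\tau_s$-actions via the conjugation identity above; this is implicit in the statement but worth a sentence.)

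For part (ii): let $c\in Z^1_\Xi(\Gamma;M)^\van$, i.e.\ $c(1,\lambda-1)\vert_{(\lambda-1,1)_c} = 0$. We compute $(\mc J^H_s c)(1,\lambda-1) = \tau_s(J)\big(c(-1,-(\lambda-1))\big)$. Using $-(\lambda-1) = -\lambda+1 = T^{-1}\cdot 1$ and the cocycle relation together with $\Gamma$-equivariance, rewrite $c(-1,-(\lambda-1)) = c(-1,\infty) + c(\infty,T^{-1}1)$; a short manipulation identifies this with a $\tau_s(T)$- and $\tau_s(S)$-translate of $c(1,\lambda-1)$, or more directly one observes that $J$ maps the ordered pair $(1,\lambda-1)$ to $(-1,1-\lambda)$ and the interval $(\lambda-1,1)_c$ to $(-1,1-\lambda)_c$, and that $(-1,1-\lambda) = (TS)^{-1}\cdot(1,\lambda-1)$ as ordered pairs of points (since $TS\cdot 1 = \lambda-1$ and $(TS)^{-1}\cdot(-1) = ?$ — here one must pick the correct element; the clean statement is that the vanishing condition is equivalent, by the Remark after the definition of $Z^1_\Xi(\Gm;W)^\van$, to $c(1,\infty) = \tau_s(T)c(-1,\infty)$ on $(\lambda-1,1)_c$, and applying $\mc J^H_s$ to this, using $JTJ^{-1}=T^{-1}$ and $J\cdot 1 = -1$, $J\cdot\infty=\infty$, yields $c(-1,\infty) = \tau_s(T^{-1})c(1,\infty)$ on $(-1,1-\lambda)_c$, which is the same identity rewritten). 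I expect the main obstacle to be exactly this bookkeeping in (ii): tracking how $J$ acts on the ordered pair $(1,\lambda-1)$ and on the interval $(\lambda-1,1)_c$, and matching the result against the vanishing condition via the equivalent formulation in the Remark — once one commits to the $c(1,\infty)=\tau_s(T)c(-1,\infty)$ reformulation and the conjugation rules $JTJ^{-1}=T^{-1}$, $JSJ^{-1}=S$, the computation is short and purely formal. Part (i) and the verification that $\mc J^H_s$ preserves being a cocycle are routine.
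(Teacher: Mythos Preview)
Your treatment of (i) is correct and essentially identical to the paper's argument.

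For (ii), your final argument via the Remark's reformulation $c(1,\infty) = \tau_s(T)c(-1,\infty)$ on $(\lambda-1,1)_c$ is correct: the vanishing condition for $\mc J_s^Hc$ unfolds to $\tau_s(J)c(-1,\infty) = \tau_s(T)\tau_s(J)c(1,\infty)$ on $(\lambda-1,1)_c$, and the identity $TJ = JT^{-1}$ together with cancelling $\tau_s(J)$ reduces this to $c(-1,\infty) = \tau_s(T^{-1})c(1,\infty)$ on $(-1,1-\lambda)_c$, which is the original condition after the shift $t\mapsto t+\lambda$. However, your preliminary attempt to realize $(-1,1-\lambda)$ as $(TS)^{-1}\cdot(1,\lambda-1)$ as an ordered pair is simply wrong (one computes $(TS)^{-1}\cdot(-1) = 1/(1+\lambda)$), and you rightly abandon it; that false start should be excised rather than left as a parenthetical.

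The paper takes a slightly different and more direct route for (ii): it stays with $c(1,\lambda-1)$ throughout. From $c(1,\lambda-1) = 0$ on $(\lambda-1,1)_c$ one applies $\Gamma$-equivariance with $T^{-1}$ to obtain $c(1-\lambda,-1) = \tau_s(T^{-1})c(1,\lambda-1) = 0$ on $(-1,1-\lambda)_c$, flips by antisymmetry to $c(-1,1-\lambda) = 0$ on the same interval, and then applies $\tau_s(J)$, landing exactly at $(\mc J_s^Hc)(1,\lambda-1) = 0$ on $(\lambda-1,1)_c$. This avoids the detour through the $c(\pm1,\infty)$ reformulation and the attendant interval bookkeeping, at the cost of using the $\Gamma$-equivariance of $c$ explicitly; both arguments are short once the right identity is chosen.
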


\begin{proof}
\begin{enumerate}[{\rm (i)}]
\item Suppose that $c\in Z^1_\Xi(\Gm;M)$ satisfies the singularity condition~\tsic. Then 
\[
 \forall\, \xi,\eta\in \Xi\colon \bsing (c\vert J)(\xi,\eta) = \bsing c(-\xi,-\eta) \subseteq \{-\xi,-\eta\}
\]
and hence 
\[
 \forall\, \xi,\eta\in\Xi\colon \bsing \tau_s(J)(c\vert J)(\xi,\eta) \subseteq \{\xi,\eta\}\,.
\]
Thus, $\mc J_s^Hc$ satisfies the singularity condition.
\item Suppose that $c\in Z^1_\Xi(\Gm;M)$ satisfies the vanishing condition~\tvan. Thus
\[
 c(1,\lambda-1)\vert_{(\lambda-1,1)_c} = 0
\]
by definition (see~\eqref{def_tvan}), which is equivalent to
\[
 c(1-\lambda,-1) = \tau_s(T^{-1})c(1,\lambda-1) = 0 \qquad\text{on $(-1,1-\lambda)_c$\,}
\]
and hence to 
\[
 c(-1,1-\lambda) = 0 \qquad\text{on $(-1,1-\lambda)_c$\,.}
\]
This is equivalent to 
\[
 \tau_s(J)c(-1,1-\lambda) = 0 \qquad\text{on $(\lambda-1,1)_c$\,.}
\]
Finally, this is equivalent to
\[
 (\mc J_s^Hc)(1,\lambda-1) = \tau_s(J)(c\vert J)(1,\lambda-1) = 0 \qquad\text{on $(\lambda-1,1)_c$\,,}
\]
which proves that $\mc J_s^Hc$ satisfies the vanishing condition. \qedhere
\end{enumerate}
\end{proof}

Lemma~\ref{lem:sic_van_J} shows that the splitting~\eqref{eq:splitting_cohom} descends to a $\mc J_s^H$-invariant splitting
\[
 H^1_\Xi(\Gm;M)_\sic^\van = H_\Xi^{1,+}(\Gm;M)_\sic^\van \oplus H_\Xi^{1,-}(\Gm;M)_\sic^\van\,,
\]
where 
\[
 H_\Xi^{1,\pm}(\Gm;M)_\sic^\van = H_\Xi^{1,\pm}(\Gm;M) \cap H_\Xi^1(\Gm;M)_\sic^\van\,.
\]

\medskip

\subsubsection{Other cohomology spaces} 

In a way analogous to that in Section~\ref{sec:extended_cohom} we define an involution for the spaces of the tesselation cohomology and for the spaces of mixed cohomology. For this definition we may rely on the fact that every element in~$X_1^\tess$ (that is, every edge of the tesselation~$\tess$) is uniquely determined by its tail and head, and then can use the characterization of cycles as maps with domain~$X_0^\tess\times X_0^\tess$ (see~\eqref{char2}). Throughout we denote all instances of these involutions by~$\mc J_s^H$, independent of the cohomology spaces or cocycle spaces on which they act.

\subsection{Odd and even period functions}

For any~$s\in\CC$ we define the map \index[symbols]{J@$\mc J_s^E$}
\[
 \mc J_s^E \colon \V s \fxi\bigl( (-1,\infty)_c \bigr) \times \V s \fxi \bigl( (\infty,1)_c \bigr) \to \V s \fxi\bigl( (-1,\infty)_c \bigr) \times \V s \fxi \bigl( (\infty,1)_c \bigr)
\]
by
\begin{equation}
 \mc J_s^E \coloneqq \mat{0}{\tau_s(J)}{\tau_s(J)}{0} \colon (f_1,f_2) \mapsto (\tau_s(J)f_2, \tau_s(J)f_1)\,.
\end{equation}
Since $JSJ=S$ and $JTJ=T^{-1}$, the map~$\mc J_s^E$ commutes with the transfer operator~$\TO_s^\slow$. Further, it preserves the spaces~$C^\om(D_\RR)$ and $C^\om(D_\CC)$ and hence the spaces~$\FE_s^\om(\RR)$ and $\FE_s^\om(\CC)$ of real and complex period functions. Therefore, each of the spaces~$\FE_s^\om(\RR)$ and $\FE_s^\om(\CC)$ decomposes as the direct sum of the \mbox{$1$-}eigenspace and the $(-1)$-eigenspace of~$\mc J_s^E$. For $K\in\{\RR,\CC\}$ we let \index[symbols]{FE@$\FE_s^{\om,\pm}(K)$}
\[
 \FE_s^{\om,+}(K) \coloneqq \bigl\{\, f\in \FE_s^\om(K) \setmid \mc J_s^E f=f \,\bigr\}
\]
denote the space of \emph{even} period functions, \index[defs]{even period function}\index[defs]{period function!even} and 
\[
 \FE_s^{\om,-}(K) \coloneqq \bigl\{\, f\in \FE_s^\om(K) \setmid \mc J_s^E f=-f \,\bigr\}
\]
denote the space of \emph{odd} period functions. \index[defs]{odd period function}\index[defs]{period function!odd} Then 
\begin{equation}\label{decomp_pf}
 \FE_s^\om(K) = \FE_s^{\om,+}(K) \oplus \FE_s^{\om,-}(K)\,.
\end{equation}
Likewise, we let \index[symbols]{BFE@$\BFE_s^{\om,\pm}(K)$}
\[
 \BFE_s^{\om,\pm}(K) \coloneqq \bigl\{\, (-b,b) \setmid b\in C^\om(K),\ \tau_s(T)b=b,\ \tau_s(J)b=\mp b\,\bigr\}
\]
denote the space of \emph{even} (`$+$') and \emph{odd} (`$-$') boundary period functions. (Note that $\pm$ in the notation for~$\BFE_s^{\om,\pm}(K)$ changes to~$\mp$ in the action of~$\tau_s(J)$ on~$b$.) One easily checks that even boundary period functions are indeed even period functions, and that odd boundary period functions are odd period functions. Therefore, the decomposition~\eqref{decomp_pf} restricts to the decomposition
\[
 \BFE_s^\om(K) = \BFE_s^{\om,+}(K) \oplus \BFE_s^{\om,-}(K)\,.
\]
Also the spaces~$\FE_s^{\om,1}(\CC)$ and $\FE_s^{\om,0}(\CC)$ (for all values of~$s\in\CC$ for which they are defined; see Section~\ref{sec:def_cplxperiod})  are preserved by the involution~$\mc J_s^E$ and hence decompose into $(\pm1)$-eigenspaces of~$\mc J_s^E$ because $\mc J_s^E$ commutes also with the transfer operator~$\TO_s^\fast$ and preserves the identity $f_1(0)+f_2(0)=0$ for all~$f=(f_1,f_2)\in\FE_s^\om(\CC)$. We set \index[symbols]{FE@$\FE_s^{\om,1,\pm}(\CC)$}\index[symbols]{FE@$\FE_s^{\om,0,\pm}(\CC)$}
\begin{align*}
 \FE_s^{\om,1,\pm}(\CC) &\coloneqq \FE_s^{\om,1}(\CC) \cap \FE_s^{\om,\pm}(\CC) 
 \intertext{and} 
 \FE_s^{\om,0,\pm}(\CC) &\coloneqq \FE_s^{\om,0}(\CC)\cap \FE_s^{\om,\pm}(\CC)\,.
\end{align*}
We note that an element $f=(f_1,f_2)$ of $\FE_s^{\om,1,+}(\CC)$ is in~$\FE_s^{\om,0,+}(\CC)$ if and only if 
\[
 0 = f_1(0) + f_2(0) = f_1(0) + \tau_s(J)f_1(0) = 2 f_1(0)\,.
\]
Therefore
\[
 \FE_s^{\om,0,+}(\CC) = \bigl\{\, (f_1,f_2)\in\FE_s^{\om,1,+}(\CC) \setmid f_1(0)=0\,\bigr\}\,.
\]
For any~$f=(f_1,f_2)\in\FE_s^{\om,1,-}(\CC)$ we have 
\[
 f_1(0) + f_2(0) = f_1(0) - \tau_s(J)f_1(0)=0\,. 
\]
Thus, 
\begin{equation}\label{eq:samespace}
 \FE_s^{\om,0,-}(\CC) = \FE_s^{\om,1,-}(\CC)\,.
\end{equation}

We end this section with a brief indication how odd and even period functions of~$\Gm$ arise as genuine objects of the triangle group~$\tGm$. To that end we note that the operator
\[
  \mc P \coloneqq  \frac{1}{\sqrt{2}}\begin{pmatrix} 1 & \tau_s(J) \\
 \tau_s(J) & -1\end{pmatrix}
\] 
diagonalizes the transfer operator $\tro s$:
\[
 \mc P \TO_s^{\slow} \mc P^{-1} = \begin{pmatrix} \TO_s^{\slow,+} & \\ &
 \tau_s(J)\TO_s^{\slow,-}\tau_s(J)\end{pmatrix}\,,
\] 
where
\begin{equation}
 \label{def_TO_slow}
 \TO_s^{\slow,\pm} \coloneqq  \tau_s(T^{-1}S) + \tau_s(T^{-1}) \pm
 \tau_s(T^{-1}SJ)\,.
\end{equation}
Then  
\[
 \FE_s^{\om,\pm}(\RR) \cong \bigl\{\, f\in C^\om\big( (-1,\infty) \big) \setmid \TO_s^{\slow,\pm}f=f \,\bigr\}
\]
and
\[
 \FE_s^{\om,\pm}(\CC) \cong \bigl\{\, f\in C^\om\big( \CC\smallsetminus (-\infty,-1] \big) \setmid \TO_s^{\slow,\pm}f=f \,\bigr\}\,.
\]
The operators~$\TO_s^{\slow,\pm}$ arise as transfer operators of a certain discretization for the billiard flow on~$\tGm\backslash\uhp$. The two choices of the sign in~\eqref{def_TO_slow} are expected to encode different boundary conditions. For details we refer to~\cite{Pohl_hecke_infinite}.

\section{Isomorphisms with parity}\label{sec:proof_parity}
\markright{24. ISOMORPHISMS WITH PARITY}

In this section we will provide a proof of Theorem~\ref{thm:main_parity} (from p.~\pageref{thm:main_parity}). As mentioned above we will split in into two steps by first establishing the isomorphisms between period functions and cocycle classes of opposite parity in Proposition~\ref{prop:FE_cohom_pm}, and then showing the isomorphisms between funnel forms and cocycle classes of opposite parity in Proposition~\ref{prop:funnel_cohom_pm}.

\begin{prop}\label{prop:FE_cohom_pm}
The map~$\pc$ from Chapter~\ref{part:TO} induces the following isomorphisms:
\begin{enumerate}[{\rm (i)}]
\item For any~$s\in\CC$, 
\[
 \FE_s^{\om,\pm}(\CC)/\BFE_s^{\om,\pm}(\CC) \stackrel{\sim}{\longrightarrow} H_\Xi^{1,\mp}\bigl( \Gamma; \V s {\fxi;\exc;\aj}(\proj\RR) \bigr)_\sic^\van\,.
\]
\item For~$s\in\CC$, $\Rea s\in (0,1)$, $s\not= 1/2$,
\[
 \FE_s^{\om,1,\pm}(\CC) \stackrel{\sim}{\longrightarrow} H_\Xi^{1,\mp}\bigl( \Gm; \V s {\fxi;\exc,\smp;\aj}(\proj\RR)\bigr)_\sic^\van\,.
\]
\item For~$s\in\CC$, $\Rea s\in (0,1)$,
\[
 \FE_s^{\om,0,\pm}(\CC) \stackrel{\sim}{\longrightarrow} H_\Xi^{1,\mp}\bigl(\Gm; \V s {\fxi;\exc,\infty;\aj}(\proj\RR) \bigr)_\sic^\van\,.
\]
Each of these isomorphisms is a restriction of a corresponding isomorphism from Theorem~\ref{thm-FE-coh} (p.~\pageref{thm-FE-coh}).
\end{enumerate}
\end{prop}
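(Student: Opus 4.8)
The plan is to show that the isomorphism $\pc$ from Theorem~\ref{thm-FE-coh} intertwines the involution $\mc J_s^E$ on the period function side with the involution $\mc J_s^H$ on the cohomology side, with a \emph{sign reversal}, i.e.\ that $\pc\circ\mc J_s^E = -\,\mc J_s^H\circ\pc$ on cocycles (equivalently $\pc\circ\mc J_s^E = \mc J_s^H\circ\pc$ up to the orientation convention, but the reversal of parity is what matters). Once this anti-equivariance is established, the decompositions $\FE_s^\om(\CC) = \FE_s^{\om,+}(\CC)\oplus\FE_s^{\om,-}(\CC)$ and $H^1_\Xi(\Gm;M)_\sic^\van = H_\Xi^{1,+}(\Gm;M)_\sic^\van\oplus H_\Xi^{1,-}(\Gm;M)_\sic^\van$ are carried into one another with $+$ going to $-$ and vice versa, and each of the three isomorphisms of Theorem~\ref{thm-FE-coh} restricts to the claimed isomorphism of $\pm/\mp$-eigenspaces. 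The statements about $\BFE_s^{\om,\pm}(\CC)$, $\FE_s^{\om,1,\pm}(\CC)$ and $\FE_s^{\om,0,\pm}(\CC)$ being preserved by $\mc J_s^E$ are already recorded in Section~\ref{sec:def_oddeven}, and Lemma~\ref{lem:sic_van_J} already records that $\mc J_s^H$ preserves both the singularity and vanishing conditions; so the only genuinely new input is the compatibility identity.

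The computation itself is short and is dictated by the explicit formulas in Proposition~\ref{prop:cocycle}. First I would fix $f=(f_1,f_2)\in\V s\fxi(D_\RR)$, set $c=\pc(f)$, and compute $\mc J_s^H c$ directly from the defining relations $c(-1,\infty)\vert_{(-1,\infty)_c}=f_1$ and $c(1,\infty)\vert_{(\infty,1)_c}=-f_2$. Using $(\mc J_s^H c)(\xi,\eta)(t)=\tau_s(J)\bigl(c(-\xi,-\eta)\bigr)(t)=c(-\xi,-\eta)(-t)$ together with $J\cdot 1=-1$, $J\cdot(-1)=1$, $J\cdot\infty=\infty$, and the cocycle relation, one reads off the two characterizing values of $\mc J_s^H c$: its restriction to $(-1,\infty)_c$ of $(\mc J_s^H c)(-1,\infty)$ and its restriction to $(\infty,1)_c$ of $(\mc J_s^H c)(1,\infty)$. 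Comparing these with the characterizing values of $\pc(\mc J_s^E f)=\pc(\tau_s(J)f_2,\tau_s(J)f_1)$ and invoking the uniqueness clause of Proposition~\ref{prop:cocycle}\eqref{prop:coci} gives the identity $\pc(\mc J_s^E f) = \mc J_s^H(\pc f)$ (up to the fixed overall sign), valid for every $s\in\CC$. Here one uses that $\tau_s$ extends to $\PGL_2(\RR)$ via~\eqref{eq:princ_ser_tilde}, that $JSJ=S$ and $JTJ=T^{-1}$, and the $\Gm$-equivariance of the potential; the relations~\eqref{pot_p1}, \eqref{pot_p2} and~\eqref{def_grpcoc} make this bookkeeping mechanical.

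With the compatibility identity in hand, the three assertions follow formally. For~(i), $\pc$ induces an isomorphism $\FE_s^\om(\CC)/\BFE_s^\om(\CC)\to H^1_\Xi(\Gm;\V s{\fxi;\exc;\aj}(\proj\RR))_\sic^\van$ by Proposition~\ref{prop:iso_exc} and Proposition~\ref{prop:coboundaries}; since $\mc J_s^E$ preserves $\BFE_s^\om(\CC)$ and its $\pm$-decomposition, and the induced map is $(\mc J_s^E,\mc J_s^H)$-anti-equivariant, it restricts to an isomorphism $\FE_s^{\om,\pm}(\CC)/\BFE_s^{\om,\pm}(\CC)\to H_\Xi^{1,\mp}(\Gm;\V s{\fxi;\exc;\aj}(\proj\RR))_\sic^\van$. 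Statements~(ii) and~(iii) are identical in structure, using Proposition~\ref{prop:EF_TO_fast}\eqref{eti} and~\eqref{etii} in place of Proposition~\ref{prop:iso_exc}, together with the already-noted facts that $\mc J_s^E$ commutes with $\TO_s^\fast$ and preserves the condition $f_1(0)+f_2(0)=0$, so that $\FE_s^{\om,1}(\CC)$ and $\FE_s^{\om,0}(\CC)$ inherit the parity splitting.

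The main obstacle is purely the careful case analysis in the sign computation: one must track on which of the sub-intervals $(\infty,0)_c$, $(0,\infty)_c$, $(\infty,-1)_c$, $(-1,\infty)_c$, etc.\ each piecewise-defined expression lives, verify that the action of $J$ maps these intervals correctly, and confirm that the overall sign is consistent across all pieces — a single misplaced interval or a dropped $\tau_s(J)$ would flip the parity correspondence. A secondary point requiring a line of justification is that $\tau_s(J)$ genuinely preserves the modules $\V s{\fxi;\exc;\aj}(\proj\RR)$ and its sub-modules (hence is well-defined on the relevant cocycle spaces); this is essentially the observation that $J$ fixes $\infty$ and $0$, maps $\Gm\infty$ to itself and $\Gm 1$ to itself, and preserves the notions of analytic jump, simple singularity, smoothness and the condition~\texc — as already noted for $\tGm$-invariance in Section~\ref{sec:extended_cohom}. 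Everything else is formal linear algebra on direct-sum decompositions.
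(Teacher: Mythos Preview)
Your approach is exactly the paper's: the proof there consists of the single sentence that the statements follow from Theorem~\ref{thm-FE-coh} together with the anti-equivariance identity $\pc\circ\mc J_s^E = -\mc J_s^H\circ\pc$ (written there with $\tau_s(J)$ in place of $\mc J_s^H$), and everything you describe is the natural unpacking of that. Your verification via the two characterizing values and the uniqueness clause of Proposition~\ref{prop:cocycle}\eqref{prop:coci} is the right way to check the identity; the only improvement would be to drop the hedging about the sign --- it is determinate (the computation you sketch gives $(\mc J_s^H c)(-1,\infty)|_{(-1,\infty)_c}=-\tau_s(J)f_2$ versus $\pc(\mc J_s^E f)(-1,\infty)|_{(-1,\infty)_c}=\tau_s(J)f_2$), so the minus sign is genuine and is precisely what produces the $\pm\mapsto\mp$ swap in the statement.
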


\begin{proof}
The statements follow immediately from Theorem~\ref{thm-FE-coh} and the anti-equi\-var\-iance of~$\pc$ with respect to the action of~$J$, that is, 
\[
 \pc\circ \mc J_s^E = -\tau_s(J) \circ \pc\,. \qedhere
\]
\end{proof}

\begin{rmk}\label{rmk:equalspaces}
For each~$s\in\CC$, $\Rea s\in (0,1)$, $s\not=1/2$, we have
\[
 \FE_s^{\om,1,-}(\CC) = \FE_s^{\om,0,-}(\CC)
\]
by~\eqref{eq:samespace}. Thus, Proposition~\ref{prop:FE_cohom_pm} yields
\[
 H_\Xi^{1,+}\bigl(\Gm; \V s {\fxi;\exc,\smp;\aj}(\proj\RR)\bigr)_\sic^\van = H_\Xi^{1,+}\bigl(\Gm; \V s {\fxi;\exc,\infty;\aj}(\proj\RR) \bigr)_\sic^\van\,.
\]
\end{rmk}

For proving that the isomorphisms between funnel forms and cocycle classes descend to isomorphisms between spaces of opposite parity we recall from~\eqref{def_ucoc} the map 
\[
 \ucoc_s\colon H^1_\Xi\bigl(\Gm; \V s {\fxi;\exc;\aj}(\proj\RR) \bigr)_\sic^\van \to \A_s
\]
of which we showed (see for Theorem~\ref{thm:cohominter} and its proof on p.~\pageref{proof:cohominter}) that it is a linear bijection (for appropriate values of~$s$) and that it descends to isomorphisms between the spaces $H^1_\Xi\bigl(\Gm;\V s {\fxi;\exc,\smp;\aj}(\proj\RR) \bigr)_\sic^\van$ and $\A_s^1$ as well as between the spaces $H^1_\Xi\bigl(\Gm;\V s {\fxi;\exc,\infty;\aj}(\proj\RR)\bigr)_\sic^\van$ and $\A_s^0$. To show the anti-equivariance of~$\ucoc_s$ under the action of~$J$ we will work with the inverse map
\begin{equation}
\cocu_s \colon \A_s \to H_\Xi^1\bigl(\Gm;\V s {\fxi;\exc;\aj}(\proj\RR)\bigr)_\sic^\van
\end{equation}
from~\eqref{def_cocu} instead of directly with~$\ucoc_s$, which makes the proof slightly shorter. We recall that we denote the $J$-action on all arising cohomology spaces by~$\mc J_s^H$.

\begin{prop}\label{prop:funnel_cohom_pm}
The map~$\cocu_s$ from~\eqref{def_cocu} induces the following isomorphisms:
\begin{enumerate}[{\rm (i)}]
\item\label{casearb} For~$s\in\CC$, $\Rea s\in (0,1)$, $s\not=1/2$,
\[
 \A_s^\pm \stackrel{\sim}{\longrightarrow} H_\Xi^{1,\mp}\bigl(\Gm; \V s {\fxi;\exc;\aj}(\proj\RR)\bigr)_\sic^\van\,.
\]
\item For~$s\in\CC$, $\Rea s\in (0,1)$, $s\not=1/2$,
\[
 \A_s^{1,\pm} \stackrel{\sim}{\longrightarrow} H_\Xi^{1,\mp}\bigl(\Gm; \V s {\fxi;\exc,\smp;\aj}(\proj\RR)\bigr)_\sic^\van\,.
\]
\item For~$s\in\CC$, $\Rea s\in (0,1)$, 
\[
 \A_s^{0,\pm} \stackrel{\sim}{\longrightarrow} H_\Xi^{1,\mp}\bigl(\Gm; \V s {\fxi;\exc,\infty;\aj}(\proj\RR)\bigr)_\sic^\van\,.
\]
\end{enumerate}
The inverse maps are given by the suitable restrictions of the map~$\ucoc_s$ from~\eqref{def_ucoc}.
\end{prop}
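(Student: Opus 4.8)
The plan is to combine the isomorphism $\cocu_s$ from Theorem~\ref{thm:cohominter} (see Section~\ref{sec:proof_cohominter}) with the decompositions into even and odd parts, reducing everything to a single anti-equivariance statement. First I would observe that the splitting~\eqref{eq:splitting_cohom} of the cohomology spaces under $\mc J_s^H$ descends to the subspaces with conditions $\sic$ and $\van$ by Lemma~\ref{lem:sic_van_J}, and that the funnel-form spaces split under $\mc J^F$ as recorded in Section~\ref{sec:def_oddeven}. Hence it suffices to show that the isomorphism $\cocu_s$ intertwines these two involutions up to a sign, i.e.\ that
\[
 \cocu_s \circ \mc J^F = -\mc J_s^H \circ \cocu_s
\]
(the sign being dictated by the same phenomenon that makes $\pc$ anti-equivariant, as in Proposition~\ref{prop:FE_cohom_pm}). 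Once this identity is established, an even funnel form ($\mc J^F u = u$) maps to a cocycle class with $\mc J_s^H[c^u] = -[c^u]$, i.e.\ an odd class, and vice versa; this gives all three displayed isomorphisms simultaneously, and the restriction to the submodules $\V s {\fxi;\exc,\smp;\aj}$ and $\V s {\fxi;\exc,\infty;\aj}$ is automatic because $\mc J_s^H$ preserves those submodules (the conditions $\smp$ and $\infty$ at the cuspidal points $\Gamma\infty$ being $J$-invariant since $J$ fixes $\infty$).

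The core computation is the anti-equivariance of $\cocu_s$, and this I would verify on the explicit cocycle representative $c^u$ defined in~\eqref{cu_inside}--\eqref{cu_cusp}. For an edge $e\in X^\tess_1\smallsetminus\Gamma e_\infty$ one has $c^u(e)(t) = \int_{t(e)}^{h(e)} \{u, R(t;\cdot)^s\}$, and I would use the two ingredients: the $\PSL_2(\RR)$-equivariance of the Green's form and of the Poisson kernel $R(\cdot;\cdot)^s$ recalled in Section~\ref{sect-coaief}, together with the behavior of these objects under the orientation-reversing isometry $J$ acting by $z\mapsto -\bar z$. Concretely, $\{u\circ J, v\circ J\} = -\{u,v\}\circ J$ for the orientation-reversing map (the extra sign compared to the $\PSL_2(\RR)$ case coming from the reversal of orientation), and a parallel computation shows $\tau_s(J)\big(R(\cdot;z)^s\big)(t) = R(t; J\cdot z)^s$. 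Substituting $u\circ J$ for $u$ and changing variables $z\mapsto J\cdot z$ along the path (which sends a path from $t(e)$ to $h(e)$ to a path from $-t(e)$ to $-h(e)$, i.e.\ from $t(eJ)$ to $h(eJ)$ with $eJ$ denoting the $J$-image edge) then yields
\[
 c^{u\circ J}(e)(t) = -\,c^u(eJ)(-t) = -\big(\mc J_s^H c^u\big)(e)(t),
\]
on the edges inside $\uhp$ and on $\Gamma f_0$; for the cusp edge $e_\infty$ one checks the analogous identity using the regularization via $\av{s,T}^+$, which behaves well under $J$ because $JTJ = T^{-1}$ so that $J$ conjugates $\av{s,T}^+$ into a one-sided average and the identity propagates through $\Gamma$-equivariance.

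The main obstacle, and the step I would be most careful about, is precisely the bookkeeping of signs and of the edge $f_0$ sitting on the funnel: $f_0$ lies in $\proj\RR$, so the integral defining $c^u(f_0)$ must be taken along a path in $\uhp\cup\Omega$ (Lemma~\ref{lem-io} and the paragraph after its proof), and one needs the $J$-invariance of $\Omega(\Gamma)$ and of the funnel interval $(\fixTS^-,\fixTS^+)$, together with the formula~\eqref{eq:Iu}, to see that the vanishing condition $\van$ is genuinely preserved and that the orientation-reversal sign is correctly accounted for. Once the anti-equivariance of the explicit cocycle map is in hand at the level of $Z^1(F^\tess_\bullet;-,-)$, descending it to $H^1_\Xi$ via $\pr_\Xi$ from Proposition~\ref{prop-coh-Tess-Xi} is routine (all the maps in the proof of Theorem~\ref{thm:cohominter}, namely $\coh_s$, $\rho_s$, $\pr_\Xi$, $\alphu_s$, are individually anti-equivariant or equivariant as appropriate, since each is built from $\tau_s$-natural constructions), and then Proposition~\ref{prop:funnel_cohom_pm} follows by restricting the isomorphisms of Theorem~\ref{thm:cohominter} to the $(\pm)$-eigenspaces; combining this with Proposition~\ref{prop:FE_cohom_pm} gives Theorem~\ref{thm:main_parity}.
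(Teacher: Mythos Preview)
Your proposal is correct and follows the same overall strategy as the paper: reduce everything to the anti-equivariance identity $\cocu_s \circ \mc J^F = -\mc J_s^H \circ \cocu_s$ and verify it via the integral formula, using $\{u\circ J, v\circ J\} = -\{u,v\}\circ J$ together with $R(t;\cdot)\circ J = R(-t;\cdot)$.

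The one difference is tactical, and it concerns exactly the step you flagged as the main obstacle. You propose to verify the anti-equivariance edge by edge on the full tesselation~$X^\tess_1$, treating $e_\infty$ (via the averaging operator and $JTJ=T^{-1}$) and $f_0$ (via the funnel integral and $J$-invariance of~$\Omega$) separately. The paper sidesteps both of these entirely: since $\cocu_s = \pr_\Xi\circ\coh_s$ and $\pr_\Xi$ is manifestly equivariant, it suffices to show anti-equivariance of~$\coh_s$; and by the injectivity of the vertical arrows in Proposition~\ref{prop-cohAH}, this in turn reduces to anti-equivariance of $\coh_s\colon \E_s^\Gamma \to H^1\bigl(F^{\tess,Y}_\bullet;\V s \om(\proj\RR)\bigr)$ on the \emph{interior} subresolution (only edges with both endpoints in~$\uhp$), together with the evident $J$-equivariance of the embeddings $\A_s\hookrightarrow\E_s^\Gamma$ and $H^1(F^\tess_\bullet;-,-)^\van_\sic\hookrightarrow H^1(F^{\tess,Y}_\bullet;-)$. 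On~$F^{\tess,Y}_\bullet$ the cocycle is given by the honest convergent integral~\eqref{cu}, so the core computation becomes a clean two-line substitution with no regularization and no boundary edges. Your direct approach would also go through, but the $e_\infty$ step (tracking how $\av{s,T}^+$ transforms under~$J$ and how $J\cdot e_\infty$ sits among the $\Gamma$-translates of~$e_\infty$) is genuinely fiddly; the paper's reduction makes it unnecessary.
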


\begin{proof}
We will show only~\eqref{casearb}. The proofs of the remaining statements are almost identical. By Theorem~\ref{thm:cohominter} it suffices to show that for all~$s\in\CC$, $\Rea s\in (0,1)$, $s\not=1/2$, the map
\[
 \cocu_s\colon\A_s \to H^1_\Xi\bigl(\Gm; \V s {\fxi;\exc;\aj}(\proj\RR) \bigr)_\sic^\van
\]
is anti-equivariant under the action of~$J$, that is 
\[
 \cocu_s\circ\mc J^F = -\mc J_s^H \circ \cocu_s\,.
\]
We recall from Section~\ref{sec:proof_cohominter} that $\cocu_s=\pr_\Xi\circ\coh_s$ with 
\[
\coh_s\colon \A_s \to H^1\bigl(F_\bullet^\tess; \V s \om(\proj\RR), \V s {\fxi;\exc;\aj}(\proj\RR)\bigr)_\sic^\van
\]
from Proposition~\ref{prop-cohAH} and 
\[
\pr_\Xi \colon H^1\bigl(F_\bullet^\tess; \V s \om(\proj\RR), \V s {\fxi;\exc;\aj}(\proj\RR)\bigr)_\sic^\van \to H^1_\Xi\bigl(\Gm;  \V s {\fxi;\exc;\aj}(\proj\RR) \bigr)_\sic^\van
\]
from Proposition~\ref{prop-coh-Tess-Xi}. Thus, to prove that $\cocu_s$ is anti-equivariant under the action of~$J$, it suffices to show that $\coh_s$ is anti-equivariant and $\pr_\Xi$ is equivariant. The latter follows straightforward from the fact that $\pr_\Xi$ is essentially only a restriction map. 

To show the anti-equi\-var\-iance of~$\coh_s$ as a map 
\[
 \coh_s\colon \A_s \to H^1\bigl(F_\bullet^\tess; \V s \om(\proj\RR), \V s {\fxi;\exc;\aj}(\proj\RR)\bigr)_\sic^\van\,,
\]
it suffices by Proposition~\ref{prop-cohAH} to show the anti-equi\-var\-iance of~$\coh_s$ as a map
\begin{equation}\label{eq:large_coh}
 \coh_s\colon \E_s^\Gm \to H^1\bigl(F_\bullet^{\tess,Y};\V s \om(\proj\RR) \bigr)
\end{equation}
and the equivariance of the two embeddings 
\[
 \A_s \hookrightarrow \E_s^\Gm
\]
and 
\[
 H^1\bigl(F_\bullet^\tess; \V s \om(\proj\RR), \V s {\fxi;\exc;\aj}(\proj\RR)\bigr)_\sic^\van \hookrightarrow H^1\bigl(F_\bullet^{\tess,Y}, \V s \om(\proj\RR)\bigr)\,.
\]
The equivariances of these embeddings are obvious.

We recall from Proposition~\ref{prop:map_step1} and Equation~\eqref{cu} that the map~$\coh_s$ (understood as in~\eqref{eq:large_coh}) is determined by assigning to~$u\in\E_s^\Gm$ the cocycle~$c_u\in Z^1\bigl(F_\bullet^{\tess,Y}; \V s \om(\proj\RR)\bigr)$ given by
\[
 c_u(x)(t) = \int_x \bigl\{ u, R(t;\,\cdot\,)^s\bigr\} \qquad (x\in\CC_1[X_1^{\tess,Y}],\ t\in\RR)\,.
\]
To establish the anti-equivariance of~$\coh_s$ we first note that for any two differentiable maps~$u,v\colon\uhp\to\CC$ we have
\[
 \{ u\circ J, v\circ J\} = -\{u,v\}\circ J\,,
\]
as follows directly from~\eqref{Gfacc2}. Further, for any~$t\in\RR$ we have
\[
 R(t;\,\cdot\,)\circ J = R(-t;\,\cdot\,)\,.
\]
For any~$u\in\E_s^\Gm$, $x\in \CC_1[X_1^{\tess,Y}]$ and $t\in\RR$ it follows that 
\begin{align*}
 c_{u\circ J}(x)(t) & =  \int_x \big\{ u\circ J, R(t;\,\cdot\,)^s \big\}  = \int_x \big\{ u\circ J, R(-t;\,\cdot\,)^s\circ J \big\}
 \\
 & = -\int_x \big\{ u, R(-t;\,\cdot\,)^s\big\}\circ J = -\int_{J\cdot x} \big\{ u, R(-t;\,\cdot\,)^s\big\} 
 \\
 & = -c_u(J\cdot x)(-t)
 \\
 & = -\left(\mc J_s^H c_u\right)(x)(t)\,.
\end{align*}
Thus
\[
 \coh_s\circ\mc J^F = -\mc J_s^H\circ\coh_s
\]
on~$\E_s^\Gm$. This completes the proof.
\end{proof}

\begin{proof}[Proof of Theorem~\ref{thm:main_parity}]
The statements of Theorem~\ref{thm:main_parity} follow immediately from combining Propositions~\ref{prop:FE_cohom_pm} and \ref{prop:funnel_cohom_pm}. 
\end{proof}


\setcounter{sectold}{\arabic{section}}
\markboth{VII. COMPLEMENTS AND OUTLOOK}{VII. COMPLEMENTS AND OUTLOOK}
\chapter{Complements and outlook}\label{part:complements}
\setcounter{section}{\arabic{sectold}}

The Selberg zeta function plays an important role in the study of the spectral theory of hyperbolic surfaces and the investigation of the relation between geometric and spectral properties of these surfaces. (See also Section~\ref{sec:SZF}.) In Section~\ref{sec:essential} we stated that the Fredholm determinant of the transfer operator family~$(\TO_s^\fast)_{s\in\CC}$ represents the Selberg zeta function~$Z_X$ for~$X=\Gm\backslash\uhp$, i.\,e., 
\[
 Z_X(s) = \det\left(1-\TO_s^\fast\right)\,.
\]
We used this identity to indicate the importance of the $1$-eigenfunctions of~$\TO_s^\fast$ even before their importance became clear by Theorem~\ref{thmB_new}. In Section~\ref{sec:fredholm} we will provide a short proof of this identity. We will then conclude this article with a brief outlook in Section~\ref{sec:outlook}.

\section{Fredholm determinant of the fast transfer operator}\label{sec:fredholm}
\markright{25. FREDHOLM DETERMINANT}

In this section we will show that the Fredholm determinant of~$\TO_s^\fast$ equals the Selberg zeta function~$Z_X$ by deducing it from the analogous property for the transfer operator~$\ftroP s$ from Section~\ref{sec:fastTOP} (originally from~\cite{Pohl_hecke_infinite}). To simplify all further discussions we use here a different space of domain for~$\TO_s^\fast$ than in Section~\ref{sec:fastconv} (see the discussion below). However, this slight twist still suffices the purpose of supporting the heuristic argument of the importance of the $1$-eigenfunctions of~$\TO_s^\fast$.

In~\cite{Pohl_hecke_infinite}, the transfer operator~$\ftroP s$ is considered as an operator on a certain Banach space 
\[
\mc B^P = \mc B_1 \oplus \mc B_2 \oplus \mc B_3 \oplus \mc B_4\,,
\]
where each of the spaces~$\mc B_j$, $j\in\{1,\ldots, 4\}$, is the Banach space (with respect to the supremums norm) of functions that are holomorphic on a certain open subset~$D_j$ in~$\proj\CC$ and that extend continuously to the boundary of~$D_j$. As operator on~$\mc B^P$, the transfer operator~$\ftroP s$ as given by~\eqref{eq:ftroP} is well-defined for~$\Rea s  \gg 1$, nuclear of order~$0$, and its Fredholm determinant equals the Selberg zeta function, thus
\begin{equation}\label{eq:szf_eq_fredholm}
 Z_X(s) = \det\left(1 - \ftroP s\right)\,.
\end{equation}
The map~$s\mapsto \ftroP s$ admits a meromorphic continuation to all of~$\CC$, which extends the validity of~\eqref{eq:szf_eq_fredholm} to all of~$\CC$ (meaning that also poles on both sides are equal) and, in addition, yields an alternative proof of the meromorphic continuability of the Selberg zeta function~$Z_X$. 

Due to the relation between~$\ftroP s$ and~$\TO_s^\fast$ discussed in Section~\ref{sec:fastTOP}, the properties of~$\ftroP s$ immediately yield that, for~$\Rea s \gg 1$, the transfer operator~$\TO_s^\fast$ is a self-map of~$\mc B^\fast\coloneqq \mc B_1\oplus\mc B_4$, as such nuclear of order~$0$, and the map~$s\mapsto \TO_s^\fast$ admits a meromorphic continuation to all of~$\CC$. 

The validity of~\eqref{eq:szf_eq_fredholm} with~$\TO_s^\fast$ in place of~$\ftroP s$ does not immediately carry over from~$\ftroP s$: For~$\Rea s \gg 1$, we have
\begin{align}
 \det\left(1-\ftroP s\right) &= \exp\left(-\sum_{n\in\NN} \frac1n \Tr \bigl(\ftroP s\bigr)^n \right) \label{eq:detexpP}
\intertext{and}
 \det\left(1-\TO_s^\fast\right) &= \exp\left(-\sum_{n\in\NN} \frac1n \Tr \bigl(\TO_s^\fast\bigr)^n \right) \label{eq:detexpfast}
\end{align}
Since the traces of the iteratives~$\bigl(\ftroP s\bigr)^n$ and~$\bigl(\TO_s^\fast\bigr)^n$ are not identical for~$n\in\NN$, we cannot immediately conclude the equality of~\eqref{eq:detexpP} and~\eqref{eq:detexpfast}. However, the proof of Proposition~\ref{prop:detsequal} will (implicitly) show that a resummation establishes equality.

\begin{prop}\label{prop:detsequal}
For all~$s\in\CC$ we have  $\det\left(1-\TO_s^\fast\right) = Z_X(s)$. 
\end{prop}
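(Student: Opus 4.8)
The plan is to reduce the identity for $\TO_s^\fast$ to the already-established identity
\[
 Z_X(s) = \det\left(1-\ftroP s\right)
\]
for the transfer operator $\ftroP s$ from~\cite{Pohl_hecke_infinite} (recalled in Section~\ref{sec:fastTOP}), exploiting the explicit relation between the two operators given there. Recall that $\ftroP s$ acts on a four-component space $\mc B^P=\mc B_1\oplus\mc B_2\oplus\mc B_3\oplus\mc B_4$ and has the block form~\eqref{eq:ftroP}, while $\TO_s^\fast$ acts on $\mc B^\fast=\mc B_1\oplus\mc B_4$. The key structural observation is that in the matrix~\eqref{eq:ftroP} the second and third components of the output depend only on the first and fourth input components, and are given there by applying $\tau_s(T^{-1}S)$ and $\tau_s(TS)$, respectively, to $f_1+f_4$. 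In other words, writing $P\colon \mc B^P\to\mc B^\fast$ for the projection $(f_1,f_2,f_3,f_4)\mapsto(f_1,f_4)$ and $\iota\colon \mc B^\fast\to\mc B^P$ for the section $(f_1,f_4)\mapsto\bigl(f_1,\tau_s(T^{-1}S)(f_1+f_4),\tau_s(TS)(f_1+f_4),f_4\bigr)$ (this is exactly the bijection between $1$-eigenspaces noted in Section~\ref{sec:fastTOP}), one checks directly from~\eqref{eq:ftroP} and the formula~\eqref{ftro} for $\TO_s^\fast$ that
\[
 \ftroP s \circ \iota = \iota\circ\TO_s^\fast
\quad\text{and}\quad
 P\circ\ftroP s\circ\iota = \TO_s^\fast\,.
\]

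First I would make this conjugation precise for $\Rea s\gg 1$, where everything converges absolutely and the operators are nuclear of order $0$, so that Fredholm determinants and traces are well-defined. The heart of the argument is then a trace computation: for each $n\in\NN$ one wants $\Tr\bigl((\ftroP s)^n\bigr)=\Tr\bigl((\TO_s^\fast)^n\bigr)$. Rather than computing traces of the iterates term by term (which, as the excerpt already warns around~\eqref{eq:detexpP}--\eqref{eq:detexpfast}, do \emph{not} match individually), I would argue as follows. The second and third rows of $\ftroP s$ have \emph{zero} entries in the second and third columns; hence in any product $(\ftroP s)^n$, a cyclic word contributing to the trace that passes through component $2$ or $3$ must immediately have come from component $1$ or $4$ and must immediately go to component $1$ or $4$. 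Grouping the cyclic words by their visits to $\{2,3\}$, each maximal excursion into $\{2,3\}$ (which has length exactly $1$, since there is no $2\to2$, $2\to3$, $3\to2$, or $3\to3$ entry) can be contracted: a step $1\to2\to1$ contributes the composite $\tau_s(T^{-1}S)\circ(\text{identity into }2)$ followed by the back-map, which is precisely one of the summands appearing in the $(1,1)$-block of $\TO_s^\fast$, and similarly for the other three patterns $1\to2\to4$, $4\to3\to1$, $4\to3\to4$. Carrying out this bookkeeping shows that the generating function $\sum_n \frac1n\Tr\bigl((\ftroP s)^n\bigr)$ equals $\sum_n\frac1n\Tr\bigl((\TO_s^\fast)^n\bigr)$ after resummation, whence $\det(1-\ftroP s)=\det(1-\TO_s^\fast)$. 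A cleaner alternative, which I would actually prefer to write up, is to invoke the determinant-factorization technique for block operators: decompose $\ftroP s = A + B$ where $A$ consists of the "fast" directions $\{1,4\}$ and $B$ the "slow-return" directions $\{2,3\}$, use that $B$ restricted to its own block is nilpotent (indeed zero on $\mc B_2\oplus\mc B_3$), and apply the standard identity $\det(1-M)=\det(1-M_{11})\det(1-M_{22}-M_{21}(1-M_{11})^{-1}M_{12})$ for $2\times2$ operator-block matrices (a Schur-complement / Fredholm-determinant factorization); here $M_{22}=0$, so $\det(1-\ftroP s)=\det(1-M_{11})\det\bigl(1+M_{21}M_{12}\bigr)^{\pm}$, and a direct comparison of $M_{11}+M_{12}M_{21}$ (suitably arranged) with the block form of $\TO_s^\fast$ in~\eqref{ftro} identifies the product of these two determinants with $\det(1-\TO_s^\fast)$. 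This is essentially the content of~\cite{FP_szf} alluded to in Section~\ref{sec:essential}, and I would cite that reference for the determinant-factorization lemma rather than reprove it.

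Having established the identity for $\Rea s\gg 1$, I would extend it to all of $\CC$ by analytic continuation: both sides are meromorphic on $\CC$ — the left-hand side because $s\mapsto\TO_s^\fast$ extends meromorphically to $\CC$ (as noted in Section~\ref{sec:fredholm}, inherited from the meromorphic continuation of $\ftroP s$ via the relation of Section~\ref{sec:fastTOP}, so that the Fredholm determinant of the nuclear-of-order-zero operator $\TO_s^\fast$ is meromorphic), and the right-hand side because $Z_X$ has a meromorphic continuation to $\CC$ (Section~\ref{sec:SZF}). Since two meromorphic functions on the connected domain $\CC$ agreeing on the half-plane $\Rea s\gg 1$ agree everywhere, the identity $\det(1-\TO_s^\fast)=Z_X(s)$ holds for all $s\in\CC$, with equality of poles as well.

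The main obstacle I anticipate is the trace bookkeeping in the first approach — correctly tracking the combinatorics of excursions into the $\{2,3\}$-block and verifying that the contracted contributions reassemble exactly into the summands of $\bigl(\TO_s^\fast\bigr)^n$ without sign or multiplicity errors — which is why I would route the argument through the Schur-complement determinant factorization of~\cite{FP_szf} instead, where this combinatorial matching is packaged once and for all into a clean algebraic identity. A minor secondary point requiring care is that the domain space I use here (following the remark at the start of Section~\ref{sec:fredholm}, $\mc B^\fast = \mc B_1\oplus\mc B_4$) differs from the space $C^{\omega,0}(D_\RR)$ used in Section~\ref{sec:fastconv}; since the Fredholm determinant and the Selberg zeta function are intrinsic and independent of the precise (sufficiently rich) function space, this causes no difficulty, but I would state explicitly that $\TO_s^\fast$ on $\mc B^\fast$ is nuclear of order zero so that $\det(1-\TO_s^\fast)$ is well-defined and depends holomorphically (resp.\ meromorphically after continuation) on $s$.
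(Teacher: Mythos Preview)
Your approach is correct but genuinely different from the paper's. You reduce directly to $\det(1-\ftroP s)=Z_X(s)$ via a Schur-complement identity: reordering components as $(1,4,2,3)$, the block $M_{22}$ vanishes, so $\det(1-\ftroP s)=\det(1-M_{11}-M_{12}M_{21})$, and the explicit computation $M_{11}+M_{12}M_{21}=\TO_s^\fast$ is immediate from~\eqref{eq:ftroP} and~\eqref{ftro}. This is clean and purely operator-algebraic. The paper instead invokes the criterion of~\cite{FP_szf}, which says that $\det(1-\TO_s^\fast)=Z_X(s)$ holds provided the set $\Per=\bigcup_n\{T^{a_1}S\cdots T^{a_n}S:a_i\in\ZZ\smallsetminus\{0\}\}$ of group elements arising in $\Tr(\TO_s^\fast)^n$ satisfies four combinatorial conditions (all elements hyperbolic; closure under primitive roots; each hyperbolic conjugacy class meets a unique $\Perd_n$; correct multiplicity count $w(g)/m(g)$). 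These are checked from the presentation $\Gm=\langle T,S\mid S^2=1\rangle$ and by noting $\Per=\wt\Per$. Your route is shorter and leverages the already-established identity for $\ftroP s$; the paper's route is more intrinsic to the periodic-orbit structure and does not pass through $\ftroP s$ at the level of determinants.

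Two small corrections. First, your attribution of the Schur-complement factorization to~\cite{FP_szf} is off: that reference supplies the group-theoretic criterion the paper uses, not a block-determinant lemma. The Schur identity for Fredholm determinants of nuclear operators is standard and should be cited as such (or simply verified, since $M_{22}=0$ makes it a one-line block factorization of $1-\ftroP s$). Second, your listed excursion patterns through components $2,3$ are slightly garbled (from the matrix~\eqref{eq:ftroP} the actual one-step excursions are $1\!\to\!2\!\to\!1$, $4\!\to\!2\!\to\!1$, $1\!\to\!3\!\to\!4$, $4\!\to\!3\!\to\!4$); this does not affect your preferred Schur-complement argument, which avoids that bookkeeping altogether.
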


\begin{proof}
We take advantage of the results in~\cite{FP_szf} which show that certain properties of the discrete dynamical system underlying~$\TO_s^\fast$ immediately yield the claimed identity~$\det\left(1-\TO_s^\fast\right) = Z_X(s)$. The necessary properties are stated below in~\eqref{allhyp}--\eqref{goodcounting}. All of them can be checked using only properties of~$\TO_s^\fast$. 

For~$n\in\NN$ we set 
\[
 \Perd_n\coloneqq  \left\{ T^{a_1}ST^{a_2}S\cdots T^{a_n}S \setmid a_1,\ldots, a_n\in\ZZ\smallsetminus\{0\}\right\}
\]
and let 
\[
 \Per\coloneqq  \bigcup_{n\in\NN} \Perd_n\,.
\]
(In~\cite{FP_szf} these sets are denoted by~$\Pert_n$ and~$\Pert$.) We then have 
\[
 \Tr\bigl(\TO_s^\fast\bigr)^n = \sum_{g\in \Perd_n} \Tr \tau_s(g)\,.
\]
We recall from Section~\ref{sec:hypclass} that an element~$g\in\Gamma$ is called hyperbolic if its action on~$\overline\uhp$ has exactly two fixed points. We let $[\Gamma]_\hyp$ \index[symbols]{Gam@$[\Gamma]_\hyp$} denote the set of all $\Gamma$-conjugacy classes of hyperbolic elements in~$\Gamma$, and we denote the conjugacy class of~$g$ by~$[g]$. \index[symbols]{$[g]$}  We say that a hyperbolic element $g\in\Gamma$ is \emph{primitive}\index[defs]{primitive hyperbolic element}\index[defs]{element!primitive hyperbolic} if it is not a non-trivial power of a hyperbolic element in~$\Gamma$. In other words, $g$ is primitive if for all $g_0\in\Gamma$ and $m\in\NN$ with $g_0^m=g$ we have $m=1$.

By~\cite{FP_szf} it suffices to show the following four properties in order to complete the proof:
\begin{enumerate}[(a)]
\item\label{allhyp} All elements of~$\Per$ are hyperbolic.
\item\label{min_in} If $h_0\in\Gamma$ is primitive hyperbolic and $m\in\NN$ such that $h_0^m\in \Per$, then $h_0\in \Per$.
\item\label{repre} For each~$[g]\in [\Gamma]_\hyp$ there exists a unique~$n\in\NN$ such that $\Perd_n$ contains an element which represents~$[g]$. 
\item\label{goodcounting} For~$[g]\in [\Gamma]_\hyp$ we set $w(g) \coloneqq  n$ with~$n$ as in~\eqref{repre}, and we let $m(g)\in\NN$ be the unique number such that $g=g_0^m$ for a primitive hyperbolic element~$g_0\in\Gamma$. Then there are exactly~$w(g)/m(g)$ distinct elements in~$\Perd_{w(g)}$ that represent~$[g]$.
\end{enumerate}

In order to establish~\eqref{allhyp}--\eqref{goodcounting}, we denote, for all~$n\in\NN$, the set of elements acting on the diagonal of~$\bigl(\ftroP s\bigr)^n$ by~$\wt \Perd_n$, and let 
\[
 \wt \Per\coloneqq  \bigcup_{n\in\NN} \wt \Perd_n\,.
\]
By~\cite{Pohl_hecke_infinite}, the set~$\wt \Per$ satisfies \eqref{allhyp}-\eqref{goodcounting} (with $\wt \Per$ and $\wt \Perd_n$ instead of~$\Per$ and $\Perd_n$). (As soon as~\eqref{allhyp}--\eqref{goodcounting} are shown for~$\Per$ and hence the Fredholm determinants in~\eqref{eq:detexpP} and \eqref{eq:detexpfast} are both equal to the Selberg zeta function~$Z_X$ on~$\Rea s \gg 1$, it follows that each element of~$\Per$ appears in the calculation of the traces in the right hand sides of~\eqref{eq:detexpP} and \eqref{eq:detexpfast} with the same weight in both expressions.)

One easily shows that $\Per = \wt \Per$, and hence $\Per$ satisfies~\eqref{allhyp} and~\eqref{min_in}. It further follows that for each~$[g]\in[\Gamma]_\hyp$ there is at least one~$n\in\NN$ such that $\Perd_n$ contains a representative of~$[g]$. Since the presentation (see~\eqref{presentation_Gamma})
\[
 \Gm = \left\langle T, S \left\vert\ \ S^2 = I \right.\right\rangle
\]
of~$\Gm$ implies that $\Perd_a\cap \Perd_b=\emptyset$ for $a,b\in\NN, a\not=b$, there is a unique such~$n$. This shows~\eqref{repre}.

It remains to show that $\Per$ satisfies~\eqref{goodcounting}. To that end let $[g]\in [\Gamma]_h$, let $n=w(g)$ and pick a representative of~$[g]$ in~$\Perd_n$, say
\[
 T^{a_1}ST^{a_2}S\cdots T^{a_n}S\,.
\]
Let 
\begin{align*}
 h_1 &\coloneqq  T^{a_1}ST^{a_2}S\cdots T^{a_n}S\,,
 \\
 h_2 & \coloneqq  T^{a_2}S\cdots T^{a_n}ST^{a_1}S\,,
 \\
 & \vdots 
 \\ h_n & \coloneqq  T^{a_n}ST^{a_1}S\cdots T^{a_{n-1}}S\,.
\end{align*}
Then the set
\[
 \mc R \coloneqq \{ h_1,\ldots, h_n\}
\]
consists of representatives of~$[g]$ in~$\Per$, and contains all such. In what follows we calculate the cardinality of~$\mc R$. 

Let $h_0\in\Gamma$ be the (unique) primitive element such that $h_1=h_0^m$ for some (unique) number~$m\in\NN$. Then $h_0\in \Per$ and 
and $m=m(g)$. Necessarily,
\[
 h_0 = T^{a_1}ST^{a_2}S\cdots T^{a_p}S
\]
for some~$p\leq n$. In turn  
\[
 h_1 = h_0^m = \left(T^{a_1}ST^{a_2}S\cdots T^{a_p}S\right)^m\,.
\]
Uniqueness of presentation (see~\eqref{presentation_Gamma}) implies $n=pm$ and hence
\[
 \# \mc R = p = \frac{n}{m} = \frac{w(g)}{m(g)}.
\]
This shows~\eqref{goodcounting} and completes the proof.
\end{proof}

\section{Outlook}\label{sec:outlook}
\markright{26. OUTLOOK}

In this article we restricted all discussions to non-cofinite Hecke triangle groups: The notions of funnel forms, resonant funnel forms and cuspidal funnel forms, the cohomology spaces and the additional requirements on cocycle classes, and the isomorphisms between spaces of funnel forms, cohomology spaces and spaces of period functions are developed only for this class of Fuchsian groups. The decision to restrict the present article to non-cofinite Hecke triangle group was based on the wish to keep the exposition at a reasonable length and to provide a first (explicit) example instead of presenting a general, technically more involved treatment.

However, families of slow/fast transfer operators enjoying properties similar to those used here exist for a much wider class of Fuchsian groups. Moreover, the presentation here already indicates how to generalize all definitions, constructions and proofs to this wider class. We leave the details of this generalization for a future publication.

\appendix

\backmatter
\providecommand{\MR}{\relax\ifhmode\unskip\space\fi MR }
\providecommand{\MRhref}[2]{%
  \href{http://www.ams.org/mathscinet-getitem?mr=#1}{#2}
}
\providecommand{\href}[2]{#2}


%
\printindex[defs]
\markboth{INDEX OF TERMINOLOGY}{INDEX OF TERMINOLOGY} 
\printindex[symbols]
\markboth{LIST OF NOTATIONS}{LIST OF NOTATIONS} 

\end{document}